\newtheorem{thm}{Theorem}[section]
\newtheorem{cor}[thm]{Corollary}
\newtheorem{lemma}[thm]{Lemma}
\newtheorem{prop}[thm]{Proposition}
\newtheorem{defn}[thm]{Definition}
\theoremstyle{definition}
\newtheorem{remark}[thm]{Remark}
\numberwithin{equation}{section}
\def\bolde{\boldsymbol{e}}
\def\al{\alpha}
\def\bal{\boldsymbol\al}
\def\be{\beta}
\def\ga{\gamma}
\def\bga{\boldsymbol\ga}
\def\de{\delta}
\def\ep{\varepsilon}
\def\bep{\boldsymbol\varepsilon}
\def\et{\eta}
\def\te{\theta}
\def\la{\lambda}
\def\ka{\kappa}
\def\si{\sigma}
\def\vp{\varphi}
\def\om{\omega}
\def\ze{\zeta}
\def\Om{\Omega}
\def\De{\Delta}
\def\bDe{\boldsymbol\Delta}
\def\La{\Lambda}
\def\Up{\Upsilon}
\def\Z{\mathbb{Z}}
\def\R{\mathbb{R}}
\def\C{\mathbb{C}}
\def\N{\mathbb{N}}
\def\T{\mathbb{T}}
\def\NN{\N_0}
\def\KA{\mathbf{K}}
\def\EA{\mathbf{E}}
\def\FA{\mathbf{F}}
\def\OmA{\mathbf{\Omega}}
\def\DeA{\mathbf{\Delta}}
\def\SA{\mathbf{S}}
\def\cH{\mathcal{H}}
\def\cA{\mathcal{A}}
\def\cF{\mathcal{F}}
\def\cK{\mathcal{K}}
\def\cL{\mathcal{L}}
\def\cM{\mathcal{M}}
\def\cN{\mathcal{N}}
\def\cU{\mathcal{U}}
\def\Span{\text{\rm Span}}
\def\sgn{\text{\rm sgn}}
\def\ot{\otimes}
\def\su{U_q({\mathfrak{su}}(1,1))}
\def\hf{\frac12}
\def\Id{\text{\rm{Id}}}
\def\Tr{\text{\rm{Tr}}}
\def\Aut{\text{\rm{Aut}}}
\def\bS{\boldsymbol S}
\newcommand{\Res}[1]{\underset{#1}{\mathrm{Res}}\,}
\newcommand{\rphis}[5]{\,_{#1}\vp_{#2} \left( \genfrac{.}{.}{0pt}{}{#3}{#4}
\ ;#5 \right)}
\newcommand{\rpsis}[5]{\,_{#1}\psi_{#2} \left( \genfrac{.}{.}{0pt}{}{#3}{#4}
\ ;#5 \right)}
\newcommand{\Psis}[3]{\Psi \left( \genfrac{.}{.}{0pt}{}{#1}{#2}
\ ;#3 \right)}
\begin{document}
\title[Dual quantum group]
{The dual quantum group for the
quantum group analogue of the normalizer of $SU(1,1)$ in
$SL(2,\C)$}

\author{Wolter Groenevelt, Erik Koelink and Johan Kustermans}
\address{Technische Universiteit Delft, DIAM, PO Box 5031,
2600 GA Delft, the Netherlands}
\email{w.g.m.groenevelt@tudelft.nl}
\address{Radboud Universiteit, IMAPP, FNWI, Heyendaalseweg 135, 6525 AJ Nijmegen,
the Netherlands}
\email{e.koelink@math.ru.nl}
\address{Blijde Inkomststraat 85 bus 22, B-3000 Leuven, Belgium}
\email{johan.kustermans@realdolmen.com}

\begin{abstract}
The quantum group analogue of the normalizer of $SU(1,1)$ in $SL(2,\C)$
is an important and non-trivial example of a non-compact quantum group. The general
theory of locally compact quantum groups in the operator
algebra setting implies the existence of the
dual quantum group. The first main goal of the paper is to give
an explicit description  of the dual quantum group for this example
involving the quantized enveloping algebra $\su$. It turns out that
$\su$ does not suffice to generate the dual quantum group. The
dual quantum group is graded with respect to commutation and anticommutation
with a suitable analogue of the Casimir operator characterized by an affiliation
relation to a von Neumann algebra.
This is used to
obtain an explicit set of generators. Having the dual quantum group the
left regular corepresentation of the  quantum group analogue of the normalizer
of $SU(1,1)$ in $SL(2,\C)$ is decomposed into irreducible corepresentations.
Upon restricting the irreducible corepresentations to $\su$-representation
one finds combinations of the positive and negative discrete series representations with
the strange series  representations as well as combinations of the principal unitary series
representations.
The detailed analysis of this example involves analysis of special functions of
basic hypergeometric type and, in particular, some results on these special functions
are obtained, which are stated separately.

The paper is split into two parts; the first part gives almost all of the statements
and the results, and the statements in the first part are independent of the
second part. The second part contains the proofs of all the statements.
\end{abstract}

\maketitle
\tableofcontents


\section*{Preamble}

The proofs of the statements in the paper are technical. To enhance
the readability of the paper, the paper is essentially split into
two parts. The first part contains all the statements and can
be read independently from the second part containing the proofs.
Moreover, Section \ref{sec:BHSresults} is independent of the
remainder of the paper, and in Section \ref{sec:BHSresults} we
state explicit results for special functions of basic hypergeometric
type. This section is meant for people interested in special functions.
For the convenience of the reader we have added an index, which includes
references to notations frequently used.

\section{Introduction}

On the one hand, the general theory of quantum groups has its roots
in approaches in the axiomatizations of generalizations of
groups such that the Pontryagin-van Kampen duality for locally compact abelian groups
extends to this
wider class. On the other hand, a large class of explicit and
interesting quantum groups arose from various cases, e.g. $R$-matrices as solutions
of the Yang-Baxter equation and the
RTF-formalism. For the quantum groups related to compact groups
arising in this way the duality is formulated on the level of
Hopf algebra duality between the quantized function algebra and the
quantized enveloping algebra.
For the historic development of the general theory for locally compact quantum
groups we refer to the papers --especially the introductions--
\cite{Kust}, \cite{KustVASENS}, \cite{MasuNW}, and books \cite{EnocS}, \cite{Timm}.
For the development of quantum groups involving the Yang-Baxter equation and
the RTF-formalism we refer to the books \cite{CharP}, \cite{EtinS}, \cite{Kass}.
It has turned out that many of the examples arising in this way fit into
the general theory of quantum groups, especially for the quantum group analogues
of compact groups. These quantum groups can usually be analyzed in an algebraic
way. For quantum group analogues of non-compact groups the situation is not
so clear.

As it turns out the Hopf algebra arising from the standard $R$-matrix for
$SL(2,\C)$ has three different $\ast$-structures \cite{MasuMNNSU}, and
we consider a $\ast$-structure making the Hopf algebra into a Hopf $\ast$-algebra as the
choice of an appropriate real form. The compact case, corresponding to the quantum
group analogue of $SU(2)$, has been studied extensively, see
\cite{CharP}, \cite{EtinS}, \cite{Kass} and references given there. This is also the basic
example of a quantum group having an
intimate link with special functions of
basic hypergeometric type \cite{GaspR}, see \cite{CharP}, \cite{EtinS}, \cite{Kass}, \cite{SchmK},
as well as \cite{KoelAAM}.
Then there is the non-compact case associated to the non-compact group
$SU(1,1)$ and a non-compact case associated to the group $SL(2,\R)$.
Although $SU(1,1)\cong SL(2,\R)$ as Lie groups the corresponding Hopf $\ast$-algebras
for the deformed case are different. The subject of this paper is the Hopf $\ast$-algebra associated
to the group $SU(1,1)$, in which the deformation parameter $q$ is real.
For the case of the Hopf $\ast$-algebra associated to $SL(2,\R)$ the deformation
parameter is on the unit circle, and the situation changes dramatically,
see \cite{CharP} and for recent progress on the level of associated special functions
see van de Bult \cite{vdBult}.

In this paper we focus on the Hopf $\ast$-algebra associated to $SU(1,1)$,
which is recalled in Section \ref{sec:quantumSU11description}. We also
recall that Woronowicz \cite{WoroCMP91} showed that there was no way to extend
the comultiplication of this Hopf $\ast$-algebra in analytic way, i.e.
to the level of operators on Hilbert spaces.
Based on work of Korogodsky \cite{Koro} and Woronowicz \cite{Woropp} it
is possible to show that there exists a quantum group analogue of the
normalizer of $SU(1,1)$ in $SL(2,\C)$ in the context of the definition
of Kustermans and Vaes \cite{KustVASENS}, \cite{KustVMS} (see also
\cite{Kust} and \cite{VanDpp} for an introduction) on the level of a von Neumann algebraic
quantum group. This has been shown in \cite{KoelKCMP}, where special
functions of basic hypergeometric type proved to be essential in the
construction. The purpose of this paper is to give an explicit description
of the dual quantum group and to decompose the left regular corepresentation
into irreducible corepresentations for this explicit quantum group.
In the decomposition of the left regular corepresentation we see
the analogy with the group case, since in the left regular
representation of the group $SU(1,1)\cong SL(2,\R)$ only discrete series representations and
principal unitary series occur. However, in the quantum group case the discrete series are no
longer split up into a positive discrete series and a negative
discrete series.

Some of these results have been announced in \cite{KoelK-LN}, and in this paper
we give full proofs of these statements. This paper can be read independently
from \cite{KoelK-LN}.
After browsing the
paper it should be clear to the casual reader that making the general
quantum group machinery work for this specific case is a very technical
business. However, we believe that this is worthwhile since
$SU(1,1)\cong SL(2,\R)$ is one of the most important
non-compact Lie groups \cite{vDijk}, \cite{HoweT}, \cite{Knap}, \cite{KoorEM}, \cite{Lang},
\cite{Sugi} and any reasonable quantum group theory has to have the
example of a quantum group analogue of $SU(1,1)$. Moreover, we hope that
understanding this example may also lead to other non-trivial examples
of non-compact quantum groups and related quantum homogeneous spaces, such
as quantum group analogues of $SU(n,1)$, and related homogeneous spaces
$SU(n,1)/S(U(n)\times U(1))$.
Moreover, in the operator algebra context $K$-theory is available, and the
first step in this direction is taken \cite{Diep}. In particular, one
can ask for a $K$-theoretic approach to discrete series representations
in this setting.
We expect that the
link with special functions can lead to new and deep results in the
theory of special functions, and we have included some
highly non-trivial examples
in Section \ref{sec:BHSresults}, but we expect that the relation is
deeper and not yet fully exploited. E.g. the link with twisted
primitive elements, suitable Cartan type decompositions and
(associated) spherical functions and corresponding transform as indicated in
\cite{KoelSRIMS} can be studied from an operator algebraic point of view,
see also \cite[Ch.~3]{DeSm} for a more general study of the Plancherel measure
in this context.
Having the decomposition of the left
regular representation available it is now also natural to consider other questions, e.g.
can we decompose tensor products, describe the intertwiners in terms
of special functions, etc? We are confident that the interpretation
of discrete series representations in this context gives a solution to
indeterminacy problems related to certain tensor product decompositions
of infinite dimensional representations of $\su$, see
cf. \cite{GroeAAM}, \cite{GroeRJ} for cases where the indeterminacy is absent.

We now describe the contents of the paper.
In Sections \ref{sec:vNalgquantumgroups}-\ref{sec:quantumSU11description}
we recall the necessary background on general locally compact quantum groups
in the von Neumann algebraic setting and the specific example that we study.
In Section \ref{sec:vNalgquantumgroups}
we recall the  Kustermans-Vaes approach to locally compact quantum groups on the
von Neumann algebraic level, which is the framework for this paper, and
Section \ref{sec:vNalgquantumgroups} is mainly based on \cite{KustVMS}.
Next in Section \ref{sec:quantumSU11description} we give a concise description of the
Hopf $\ast$-algebra and the quantum group analogue of the normalizer of
$SU(1,1)$ in $SL(2,\C)$ in the context of Section \ref{sec:vNalgquantumgroups}.
Section \ref{sec:quantumSU11description} is based on \cite{KoelKCMP}.
In Section \ref{sec:VonNeumanAlgDualQuantumGroup} we give an explicit description
of the dual quantum group in the case of the quantum group analogue of the normalizer of
$SU(1,1)$ in $SL(2,\C)$. In particular
we show that the generators of the quantized universal enveloping algebra $\su$
can be realized as unbounded operators affiliated to the von Neumann algebra
of the dual quantum group. We discuss how the (suitable extension of the) Casimir
operator can be used to find sufficiently many generators. It turns out that
the self-adjoint extension of the algebraically defined symmetric, but not essentially
self-adjoint, operator is characterized by affiliation to the von Neumann algebra
for the dual quantum group. We also show that comultiplication defined on
the von Neumann algebraic  group coincides with the
 comultiplication of the Hopf $\ast$-algebra $\su$.
In Section \ref{sec:decompleftregularcorep} the decomposition of the
left regular corepresentation is presented, it involves
analogues of the principal unitary series
representations and discrete series representations.
In Section \ref{sec:BHSresults}
we collect some interesting new (as far as we are aware) results for special
functions of basic hypergeometric type which are  byproducts of the
approach taken. In particular, the results discussed in Section \ref{sec:BHSresults} can be
read independently by someone only interested in special functions, but
the proofs are dependent on the rest of the paper.
Sections \ref{sec:VonNeumanAlgDualQuantumGroup}--\ref{sec:BHSresults} describe the
results of this paper in detail and form the core of the paper.
All the main results and its background can be obtained from
Sections \ref{sec:vNalgquantumgroups}--\ref{sec:BHSresults}.
The gist of the main results are obtained when reading only this part
of the paper, which can also be viewed as a very extended introduction.
The proofs of all statements in these sections are given in the remainder of the paper
consisting of Sections \ref{sec:extensionsgeneratorssu}--\ref{sec:specialfunctions}.
In Appendix \ref{app:notationterminology} we recall some notation and terminology
of von Neumann algebras, whereas we recall the necessary details of the special
functions involved in Appendix \ref{app:specialfunctions}. In Appendix
\ref{app:jacobi} we discuss a specific example of a Jacobi operator, whereas
Appendix \ref{app:proofsoflemmas} contains nitty-gritty proofs of some
intermediate lemmas.


\section{Von Neumann algebraic quantum groups}\label{sec:vNalgquantumgroups}

In this section we recall the definition of the von Neumann
algebraic quantum groups and related results. So we work
with a theory on the quantum group
analogue of locally compact groups in the realm
of operator algebras. We summarize the main features, and
we discuss the group case for a unimodular Lie group $G$.
The proofs of all statements
can be found in the papers \cite{KustVASENS}, \cite{KustVMS}
by Kustermans and Vaes. Introductory texts on this subject
are \cite{Kust}, \cite{VanDpp}, see also \cite{Timm}.
In Section  \ref{sec:quantumSU11description} we describe the example
we study, namely the von Neumann algebraic quantum group
associated to the normalizer of $SU(1,1)$ in $SL(2,\C)$, which
is essentially recalling the results of \cite{KoelKCMP}.

\begin{defn}\label{def:vNalgquantumgroup}
Consider a von Neumann algebra $M$ together with a unital
normal $\ast$-ho\-mo\-mor\-phism $\De\colon M\to M\otimes M$
(the comultiplication)\index{comultiplication} such that
$(\De\ot\Id)\De=(\Id\ot\De)\De$ (coassociativity).
Moreover, if there exist two normal semi-finite faithful weights
$\vp$, $\psi$ on $M$ such that
\[
\begin{split}
\vp\bigl((\om\ot\Id)\De(x)\bigr)&\, =\, \vp(x)\om(1), \qquad
\forall \ \om\in M^+_*,\, \forall\ x\in \cM^+_\vp
\qquad \text{(left invariance),}\\
\psi\bigl((\Id\ot\om)\De(x)\bigr)&\, =\, \psi(x)\om(1), \qquad
\forall \ \om\in M^+_*,\, \forall\ x\in \cM^+_\psi
\qquad \text{(right invariance),}
\end{split}
\]
then $(M,\De)$ is a von Neumann algebraic quantum group.\index{quantum group}
\index{von Neumann algebraic quantum group}
\end{defn}

Note that we suppress $\vp$ and $\psi$ from the notation
$(M,\De)$ for a von Neumann algebraic quantum group.

The notation in Definition \ref{def:vNalgquantumgroup}
follows the standard notation for weights,
tensor products and preduals, see e.g.
\cite{KadiR}, \cite{Take}, which are
briefly recalled in Appendix \ref{app:notationterminology}. We recall
here the basic constructions for weights, since
the related modular objects play an important role, see \cite{Take}.
In particular,
a weight\index{weight} is a map $\vp\colon M_+\to [0,\infty]$,
$M_+$\index{M@$M_+$} being the cone of positive elements in $M$,
such that $\vp(x+y)=\vp(x)+\vp(y)$ and
$\vp(\la x) = \la\vp(x)$ for $\la\geq 0$.
Then $\cM^+ =\{ x\in M_+ \mid \vp(x)<\infty\}$,
 $\cN= \{ x\in M\mid \vp(x^\ast x)<\infty\}$ is a left ideal
and $\cM$ is the linear span of $\cM^+$ in $M$.
Then $\cM =\cN^\ast\cN$, and $\vp$ extends
uniquely to $\cM$. The weight $\vp$ is faithful\index{weight!faithful} if
$\vp(x)\not=0$ for all non-zero $x\in M_+$. The weight
$\vp$ is semifinite\index{weight!semifinite} if $\cM$ is $\si$-strong-$\ast$
dense in $M_+$ or $(\cM)''=M$. The weight $\vp$
is normal\index{weight!normal}  if $\vp(\sup_{\la} x_\la)= \sup_{\la} \vp(x_\la)$ for
any bounded increasing net $\{ x_\la\}_{\la\in \Lambda}$ in $M_+$,
and this can be reformulated in various different ways.
Normal semifinite faithful weight is abbreviated to
nsf weight.\index{weight!nsf}

A GNS-construction\index{GNS-construction}\index{weight!GNS-construction} for a weight is similar to a GNS-construction
for a state. A GNS-construction for a weight $\vp$
is a triple $(\cH,\pi,\La)$  consisting of a Hilbert
space $\cH$, a $\ast$-ho\-mo\-mor\-phism $\pi\colon M\to B(\cH)$
and a linear map $\La\colon \cN\to \cH$ such that
\begin{enumerate}
\item $\La(\cN)$ is dense in $\cH$;
\item $\langle \La(a), \La(b) \rangle = \vp(b^\ast a)$ for all $a,b\in \cN$;
\item $\pi(x)\, \La(a) = \La (xa)$ for all $x\in M$, $a\in\cN$.
\end{enumerate}
In case $\vp$ is a nsf weight, the representation $\pi$
is injective, normal and nondegenerate, and $\La$ is
closed for the $\si$-strong-$\ast$ topology on $M$ and
the norm topology of $\cH$. In case we want to stress
the dependence on the weight $\vp$ we use the notation
$\cM^+_\vp$, $\cM_\vp$, $\cN_\vp$, $\cH_\vp$, $\pi_\vp$, $\La_\vp$ as
in Definition \ref{def:vNalgquantumgroup}.

The weight $\vp$, respectively $\psi$, in
Definition \ref{def:vNalgquantumgroup} is the
left, respectively right, Haar weight\index{Haar weight} for the
von Neumann algebraic quantum group $(M,\De)$.
It can be shown that the left and right Haar weights
are unique up to a constant.

In this paper, we mainly deal with the von Neumann
algebra $M$ and the corresponding von Neumann
algebra $\hat M$ for the dual von Neumann
algebraic quantum group, see Theorem \ref{thm:duallocallycompactquantumgroup},
and the weights do not
play a big role, but the associated modular operator,
modular conjugation and modular automorphism
group plays an important role. In order to
obtain the properties of these operators,
consider the GNS-representation for $\vp$ and the
antilinear map from $\La(\cN\cap \cN^\ast)\subset \cH$
to itself defined by $\La(x)\mapsto \La(x^\ast)$.
This map has polar decomposition $J\nabla^{1/2}$,
where $J\colon \cH\to \cH$ is an antilinear
isometry and $J^2=\Id$. $J$ is the modular
conjugation\index{modular conjugation} and the (generally unbounded) self-adjoint
operator $\nabla$ is the modular operator\index{modular operator} associated
with the weight $\vp$. Then
\begin{equation}\label{eq:JMJisMcommutant}
J \pi(M) J = \pi(M)', \qquad \nabla^{it} \pi(M) \nabla^{-it} = \pi(M),
\ \  t\in\R.
\end{equation}
Here $\pi(M)' = \{ x\in B(\cH) \mid xy=yx \ \forall\, y\in \pi(M)\}$ is
the commutant\index{commutant} of $\pi(M)$.
For a nsf weight $\pi$ is faithful, and then we identify
$\pi(M)$ with $M$, so that \eqref{eq:JMJisMcommutant}
gives $J\, M\, J = M'$, $\nabla^{it} M \nabla^{-it} = M$, $t\in\R$.
Then $\si_t(x)= \nabla^{it} x \nabla^{-it}$, $x\in M$, defines
a strongly continuous one-parameter group $\si$ of $\ast$-automorphisms
on $M$ for the nsf weight $\vp$. It is the
modular automorphism group\index{modular automorphism group} $\si=\si^\vp$ for the nsf weight $\vp$.

Having the GNS-construction for the left invariant
nsf weight $\vp$ we define
\[
W^\ast \bigl( \La(a)\ot \La(b) \bigr) =
\bigl( \La\ot\La\bigr) \bigl( \De(b)(a\ot 1)\bigr),
\]
then $W$ is a unitary operator on $\cH\ot\cH$, which
is known as the multiplicative unitary\index{multiplicative unitary} and is instrumental
in the development of locally compact quantum groups,
as pointed out initially in \cite{BaajS}.
Identifying
$M$ with $\pi(M)$, we obtain
$\De(x)= W^\ast(1\ot x)W$ for all $x\in M$, so that
the multiplicative unitary implements the
comultiplication.\index{comultiplication}

\begin{remark}\label{rmk:classicalcase1}
To see how groups are included in this definition take
a group $G$, which for convenience we assume to be
a unimodular Lie group. Then the von Neumann algebra $M\cong
L^\infty(G)$ is acting by multiplication operators
on the Hilbert space $L^2(G)$, defined with
respect to the left Haar measure $d_lg$.
So we consider $M$ as a subalgebra of $B(L^2(G))$.
Then
$\vp(f) = \int_G f(g)\, d_lg$ for $f\in L^\infty(G)\cap L^2(G)=\cM$,
and the corresponding GNS-construction
of $\vp$ is $(L^2(G), \Id, \La)$ where
$\La\colon \cN_\vp=L^2(G)\cap L^\infty(G) \to L^2(G)$, $x\mapsto x$.
In this case the predual is
$M_\ast= L^1(G) \subset M^\ast$ by considering
$L^1\ni f \mapsto (L^\infty(G)\ni x\mapsto \int_G f(g)x(g)\, d_l g)$
and $M \cong (M_\ast)^\ast$ and the $\si$-weak topology
is the $\si(M,M_\ast)$-topology. For $f, h\in L^2(G)$
a normal functional $\om_{f,h}$ is defined as
the matrix element $\om_{f,h}(x) =\langle xf,h\rangle
= \int_G x(g)f(g)\overline{h(g)}d_lg$.
In this case the multiplicative unitary $W$ is
\[
\begin{split}
&W \colon L^2(G) \ot L^2(G) \cong L^2(G\times G) \to  L^2(G) \ot L^2(G) \cong L^2(G\times G) \\
&\bigl(W^\ast f\bigr)(g,h) = f(g,gh), \qquad \bigl(W f\bigr)(g,h) = f(g,g^{-1}h).
\end{split}
\]
Particular to the unimodular Lie group case is that the antipode
$S\colon M\to M$, $(Sx)(g)=x(g^{-1})$ is bounded, but
in the general case it is not. To indicate how the antipode can be
obtained from the invariant weight in the general case note that
\[
\int_G x(h^{-1}g)\, y(g)\, d_lg = \int_G x(g)\, y(hg)\, d_lg
\]
so that
\[
S \colon (\Id\ot \vp)\bigl(\De(x)(1\ot y)\bigr) \mapsto
(\Id\ot \vp)\bigl((1\ot x)\De(y)\bigr).
\]
\end{remark}

This in particular gives the key to defining the antipode
$S$ on a von Neumann algebraic quantum group as an
unbounded operator. A basic result is a polar decomposition
of the antipode. To be precise, there exists a unique
$\ast$-anti-automorphism $R\colon M\to M$ and a unique
strongly continuous one-parameter group of $\ast$-automorphisms
$\tau\colon \R \to \Aut(M)$ satisfying
\begin{equation}\label{eq:Spolardecomposition}
S=R\tau_{-i/2}, \qquad R^2=\Id, \qquad \tau_t R= R \tau_t \ \   \forall t\in \R.
\end{equation}
$R$ is known as the unitary antipode,\index{unitary antipode} and
$\tau$ the scaling group.\index{scaling group}
One can show that $\vp R$ is a right invariant nsf weight, and
one can make the choice $\psi =\vp R$ for the right Haar weight,
which we assume from now on.

An interesting result in the theory of locally compact quantum
group is duality, see Theorems \ref{thm:duallocallycompactquantumgroup}
and \ref{thm:doubledualisback}.
For the dual locally compact
quantum group we have
\begin{equation}\label{eq:defdualhatM}\index{M@$\hat{M}$}
\hat{M} = \overline{\{ (\om\ot\Id)(W)\mid \om \in B(\cH)_\ast\}} \subset B(\cH),
\end{equation}
where the closure is with respect to the $\si$-strong-$\ast$ topology and
$\cH$ is the GNS-space for the left invariant weight $\vp$.

\begin{thm}[\cite{KustVMS}]\label{thm:duallocallycompactquantumgroup}
$\hat{M}$ is a von Neumann algebra acting on $\cH$, and there
exists a unique normal injective $\ast$-homomorphism
$\hat{\De} \colon \hat{M} \to \hat{M}\ot \hat{M}$,
$\hat{\De}(x) = \Sigma \, W  (x\ot 1) W^\ast\, \Sigma$ for
$x\in \hat{M}$. Moreover, $(\hat{M},\hat{\De})$ is a locally
compact quantum group; the Pontryagin dual of $(M,\De)$\index{Pontryagin dual}
or the dual locally compact quantum group.\index{dual locally compact quantum group}
\end{thm}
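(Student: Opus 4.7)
The plan is to follow the multiplicative unitary approach of Baaj--Skandalis, refined by Kustermans--Vaes, which extracts both the algebra $\hat M$ and the comultiplication $\hat\De$ directly from the pentagonal relation satisfied by $W$. First I would establish that $W$ obeys the pentagonal equation $W_{12}W_{13}W_{23}=W_{23}W_{12}$ on $\cH\ot\cH\ot\cH$. This is a direct consequence of the coassociativity of $\De$: apply the defining relation $W^\ast\bigl(\La(a)\ot \La(b)\bigr)=(\La\ot\La)(\De(b)(a\ot 1))$ twice, once in legs $(2,3)$ and once in legs $(1,2)$, and compare the two orders of computing $(\La\ot\La\ot\La)((\De\ot\Id)\De(c)(a\ot b\ot 1))$ versus $(\Id\ot\De)\De(c)$.

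Next I would prove that the space on the right-hand side of \eqref{eq:defdualhatM} is closed under multiplication and involution, hence is a von Neumann subalgebra of $B(\cH)$. For multiplicativity one slices the pentagonal equation in the first two tensor legs: from $W_{23}W_{12}=W_{12}W_{13}W_{23}$, applying $\om_1\ot\om_2\ot\Id$ and using standard bookkeeping with normal functionals yields
\[
(\om_1\ot\Id)(W)\,(\om_2\ot\Id)(W)=\bigl((\om_1\star\om_2)\ot\Id\bigr)(W),
\]
where $\om_1\star\om_2$ is the normal functional $x\mapsto (\om_1\ot\om_2)(W(1\ot x)W^\ast)$ on $B(\cH)$. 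Closure under the adjoint needs a little more: one invokes the modular (manageability) structure of $W$, i.e.\ the existence of the modular conjugation $J$ and modular operator $\nabla$ from the GNS-construction of $\vp$ together with their duals $\hat J, \hat\nabla$. These allow one to rewrite $(\om\ot\Id)(W)^\ast$ again as a slice of $W$, using an identity of the form $W=(J\ot\hat J)W^\ast(J\ot\hat J)$ that is extracted from \eqref{eq:JMJisMcommutant}. Taking $\si$-strong-$\ast$ closure then gives a von Neumann algebra $\hat M\subset B(\cH)$.

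Having $\hat M$, I would define $\hat\De(x)=\Sigma\,W(x\ot 1)W^\ast\,\Sigma$ and verify the three required properties. Normality and injectivity are formal, since conjugation by the unitary $W$ and the flip $\Sigma$ are normal and injective on all of $B(\cH\ot\cH)$. The fact that $\hat\De$ is a $\ast$-homomorphism with range inside $\hat M\ot\hat M$ follows again from the pentagonal equation: for $x=(\om\ot\Id)(W)$ a direct computation using $W_{23}W_{12}=W_{12}W_{13}W_{23}$ shows $\hat\De(x)=(\om\ot\Id\ot\Id)(W_{13}W_{12})$, which is manifestly a limit of elements of $\hat M\ot_{\mathrm{alg}}\hat M$. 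Coassociativity $(\hat\De\ot\Id)\hat\De=(\Id\ot\hat\De)\hat\De$ is another direct application of the pentagonal equation at the level of $B(\cH^{\ot 3})$.

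The hard part, and the real content of the Kustermans--Vaes result, is the construction of a left-invariant nsf weight $\hat\vp$ on $\hat M$ (and then $\hat\psi=\hat\vp\hat R$ on the right). The strategy is to single out the set $\cI$ of normal functionals $\om$ on $M$ such that $\La(\cN_\vp)\ni\La(a)\mapsto \om(a^\ast\cdot)^\ast\in \cH$ extends to a bounded functional, and to define $\La_{\hat\vp}((\om\ot\Id)(W))$ to be the vector representing this functional. One then verifies that this prescription is well-defined, closable, and that its closure is the GNS-map of an nsf weight on $\hat M$. Left invariance $\hat\vp((\eta\ot\Id)\hat\De(y))=\hat\vp(y)\eta(1)$ is checked on the dense set of slice elements by unwinding $\hat\De$ and using the pentagonal equation together with left invariance of $\vp$. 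This last step is technically the most demanding, since one has to simultaneously control the unboundedness of the weight, the closability of $\La_{\hat\vp}$, and the intricate interplay between $W$, $J$, $\nabla$ and their duals; all remaining quantum group axioms for $(\hat M,\hat\De)$ then fall out once $\hat\vp$ is in hand, giving the dual locally compact quantum group as claimed.
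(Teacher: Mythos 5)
This theorem is imported verbatim from Kustermans--Vaes \cite{KustVMS}; the paper offers no proof of it (the Preamble and Section \ref{sec:vNalgquantumgroups} explicitly defer all such statements to \cite{KustVASENS}, \cite{KustVMS}), so there is nothing internal to compare against. Your outline is, in substance, the standard Kustermans--Vaes argument: pentagonal equation from coassociativity, multiplicativity of the slices $(\om\ot\Id)(W)$, closure under the adjoint via the modular/antipode structure, $\hat{\De}$ from conjugation by $W$, and the construction of the dual weight from the set of representable functionals --- with the genuinely hard analytic content (closability of the dual GNS-map and left invariance of $\hat{\vp}$) correctly identified but only gestured at. Two small slips worth noting: the convolution functional should be $x\mapsto(\om_1\ot\om_2)(W^\ast(1\ot x)W)=(\om_1\ot\om_2)(\De(x))$ rather than $(\om_1\ot\om_2)(W(1\ot x)W^\ast)$, and the paper's convention \eqref{eq:hatJotJWhatJotJisWster} places $\hat{J}$ in the first leg, i.e. $(\hat{J}\ot J)W(\hat{J}\ot J)=W^\ast$; neither affects the structure of the argument.
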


Here $\Sigma\colon\cH\ot\cH\to\cH\ot\cH$\index{S@$\Sigma$} denotes the
flip operator $\Sigma \colon a\ot b\mapsto b\ot a$.

In particular, the dual locally compact quantum group
comes with two nsf weights $\hat{\vp}$ and
$\hat{\psi}$. Let $\hat{J}$ and $\hat{\nabla}$ be the
modular conjugation and modular group for the left invariant
dual weight $\hat{\vp}$. Then, in the realization of $\hat{M}$
on the GNS-space $\cH$ we have for the unitary antipode\index{unitary antipode}
as in \eqref{eq:Spolardecomposition} the relations
\begin{equation}\label{eq:actionunitaryantipode}\index{unitary antipode}
R(x) = \hat{J}\, x^\ast \, \hat{J}, \qquad \forall\, x\in M,  \qquad
\hat{R}(x) = J\, x^\ast\, J, \qquad \forall\, x\in \hat{M}.
\end{equation}

It follows from Theorem \ref{thm:duallocallycompactquantumgroup}
that the multiplicative unitary for the dual von Neumann
algebraic quantum group is $\hat{W}= \Sigma W^\ast \Sigma$.
The multiplicative unitary\index{multiplicative unitary} $W\in M\ot \hat{M}$, where we
consider $M$ acting on the GNS-space $H$ for the left invariant
weight $\vp$. Moreover,
\begin{equation}\label{eq:hatJotJWhatJotJisWster}
\bigl( \hat{J}\ot J\bigr)\, W \, \bigl( \hat{J}\ot J\bigr) \,=\, W^\ast.
\end{equation}

A unitary corepresentation\index{corepresentation}\index{corepresentation!unitary}
$U$ of a von Neumann algebraic quantum
group on a Hilbert space $H$ is a unitary element $U\in M\ot B(H)$
such that $(\De\ot\Id)(U)=U_{13}U_{23}\in M\ot M\ot B(H)$, where
the standard leg-numbering is used in the right hand side. In particular,
it follows from the pentagonal identity\index{pentagonal identity}
$W_{12}W_{13}W_{23}=W_{23}W_{12}$
and $\De(x)=W^\ast(1\ot x)W$ that the multiplicative unitary
$W$ defines a unitary corepresentation of $M$ on the GNS-space.
This corepresentation is the analogue of the left regular representation
of a Lie group $G$ on the Hilbert space $L^2(G)$.
A closed subspace $L\subseteq H$ for the unitary corepresentation is an
invariant subspace\index{invariant subspace}
if $(\om\ot \Id)(U)$ preserves $L$ for all $\om\in M_\ast$.
In particular, it follows from Definition
\ref{thm:duallocallycompactquantumgroup} that an invariant subspace
is precisely the closed subspace invariant for the action of the
dual von Neumann algebra $\hat{M}$, since it is generated by
$(\om\ot \Id)(W)$, $\om\in M_\ast$. A unitary corepresentation $U$
in the Hilbert space $H$
is irreducible\index{corepresentation!irreducible} if there are only trivial (i.e. equal to
$\{0\}$ or the whole Hilbert space $H$)  invariant subspaces.
In particular, $\{ (\om\ot\Id)(U)\mid \om \in M_\ast\}''= B(H)$ implies
that $U$ is an irreducible unitary corepresentation.

The nice feature of the von Neumann algebraic quantum groups is the
following theorem, due to Kustermans and Vaes \cite{KustVASENS}, \cite{KustVMS},
which is a far-reaching generalization of the Pontryagin-van Kampen duality.

\begin{thm}\label{thm:doubledualisback}
$(\hat{\hat{M}},\hat{\hat{\De}})= (M,\De)$.
\end{thm}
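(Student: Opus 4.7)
The plan is to recognize that Theorem \ref{thm:doubledualisback} is essentially the statement that the whole duality package $(M,\De,W,\hat M,\hat\De,\hat W)$ is self-reproducing when one reruns the construction of Theorem \ref{thm:duallocallycompactquantumgroup} with the input $(\hat M,\hat\De)$ in place of $(M,\De)$. I therefore aim to identify three things in turn: the multiplicative unitary $\hat{\hat W}$ of the double dual, the von Neumann algebra $\hat{\hat M}$, and its comultiplication $\hat{\hat\De}$; each of these should drop out of the formulas already stated in the excerpt once the symmetry of $W$ in its two tensor legs is exploited.

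First I would compute $\hat{\hat W}$. By Theorem \ref{thm:duallocallycompactquantumgroup} applied to $(\hat M,\hat\De)$, the multiplicative unitary of the dual quantum group is obtained by conjugating with $\Sigma$ and taking adjoints, so $\hat{\hat W}=\Sigma\,\hat W^\ast\,\Sigma=\Sigma(\Sigma W\Sigma)\Sigma=W$, using $\hat W=\Sigma W^\ast\Sigma$ and $\Sigma^2=\Id$. Next I would apply \eqref{eq:defdualhatM} to the dual: $\hat{\hat M}$ is the $\si$-strong-$\ast$ closure of the set of right slices $(\om\ot\Id)(\hat W)$, $\om\in B(\cH)_\ast$. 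A direct computation, using that for any $T\in B(\cH\ot\cH)$ one has $(\om\ot\Id)(\Sigma T\Sigma)=(\Id\ot\om)(T)$, rewrites these slices as the left slices $(\Id\ot\om)(W^\ast)$ of $W$. Since $\{(\Id\ot\om)(W^\ast):\om\in B(\cH)_\ast\}$ has the same $\si$-strong-$\ast$ closure as $\{(\Id\ot\om)(W):\om\in B(\cH)_\ast\}$, the problem reduces to identifying the latter set with $M$.

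The main obstacle is precisely this last identification, which is the symmetric counterpart to \eqref{eq:defdualhatM}: the multiplicative unitary $W$ is an element of $M\ot\hat M$ that generates $\hat M$ by its right slices and $M$ by its left slices. One inclusion, that the $\si$-strong-$\ast$ closure of the left slices lies inside $M$, follows directly from $W\in M\ot\hat M$. The nontrivial opposite inclusion can be obtained by pulling over the density proof used for $\hat M$ in \cite{KustVMS}: one uses the relation \eqref{eq:hatJotJWhatJotJisWster} to connect $W$ with $W^\ast$ through the modular conjugations, and combines this with the polar decomposition \eqref{eq:Spolardecomposition} of the antipode $S$, whose domain is spanned (as a weak-$\ast$ core) precisely by such left slices, to conclude that the closure exhausts $M$. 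This is the one step where I would lean directly on the Kustermans--Vaes machinery rather than manipulate $W$ formally.

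Finally, with $\hat{\hat M}=M$ established and $\hat{\hat W}=W$ in hand, the comultiplication drops out by a one-line computation: applying the formula in Theorem \ref{thm:duallocallycompactquantumgroup} to the dual,
\[
\hat{\hat\De}(x)=\Sigma\,\hat W\,(x\ot 1)\,\hat W^\ast\,\Sigma
=\Sigma(\Sigma W^\ast\Sigma)(x\ot 1)(\Sigma W\Sigma)\Sigma
=W^\ast(1\ot x)W=\De(x),
\]
using $\Sigma(x\ot 1)\Sigma=1\ot x$ and the identity $\De(x)=W^\ast(1\ot x)W$ recorded in Section \ref{sec:vNalgquantumgroups}. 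This proves $(\hat{\hat M},\hat{\hat\De})=(M,\De)$.
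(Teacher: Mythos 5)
This theorem appears in the paper only as a cited background result: Section \ref{sec:vNalgquantumgroups} states explicitly that the proofs of all its statements are to be found in the Kustermans--Vaes papers \cite{KustVASENS}, \cite{KustVMS}, so there is no in-paper proof to compare your argument against. Judged on its own terms, your sketch is the standard formal biduality argument and the computations you carry out are correct: $(\om\ot\Id)(\Sigma T\Sigma)=(\Id\ot\om)(T)$ gives $\hat{\hat W}=W$ and turns the right slices of $\hat W$ into the left slices of $W^\ast$, whose $\si$-strong-$\ast$ closure agrees with that of the left slices of $W$ because the adjoint is $\si$-strong-$\ast$ continuous; and the one-line conjugation computation for $\hat{\hat\De}=\De$ is fine.

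Two caveats about where the actual content is hiding. First, your very first step already presupposes the deepest part of the Kustermans--Vaes duality: to apply Theorem \ref{thm:duallocallycompactquantumgroup} to $(\hat M,\hat\De)$ you need the dual left Haar weight $\hat\vp$, the canonical identification of its GNS space with $\cH$, and the fact that under this identification the multiplicative unitary of $(\hat M,\hat\De)$ is $\Sigma W^\ast\Sigma$. The paper records this identity without proof, but it is not a formal consequence of the definitions --- it is essentially equivalent to constructing $\hat\vp$ and proving its left invariance, which is the bulk of the work in \cite{KustVMS}. Second, your route to the hard inclusion $M\subseteq\overline{\{(\Id\ot\om)(W)\}}$ is described only loosely; the usual mechanism is not the polar decomposition of $S$ per se but the left invariance of $\vp$, which gives the $\si$-weak density of $\De(M)(1\ot M)$ in $M\ot M$ and hence the density of elements of the form $(\Id\ot\vp)\bigl(\De(a^\ast)(1\ot b)\bigr)$ in $M$; these are exactly left slices of $W$. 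Since you explicitly defer this step to \cite{KustVMS}, your proposal is honest and correctly locates the two places where the theorem is nontrivial, but as written it is a reduction of the duality theorem to two other Kustermans--Vaes theorems rather than an independent proof --- which is, to be fair, exactly the level at which the paper itself treats the statement.
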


\begin{remark}\label{rmk:classicalcase2} We finish by discussing some
of the above in the case of a unimodular Lie group $G$ continuing
Remark \ref{rmk:classicalcase1}.
Identify $\om\in M_\ast$
with a function $k\in L^1(G)$, then $(\om\ot\Id)(W)\in B(L^2(G))$
is the convolution operator $f\mapsto k\ast f$,
$(k\ast f)(g)= \int_G k(s) f(s^{-1}g)\, d_ls$. Then the product
in $\hat{M}$ corresponds to the convolution product, and the
dual left invariant weight on such a convolution
operator is evaluation of the kernel at the identity of the group $G$.
To see that the corepresentation associated to the multiplicative
unitary corresponds to the left regular representation, say $\la$, we
check
\[
\begin{split}
&\Bigl( \bigl( (\om_{f_1,f_2} \ot \Id )(W)\bigr)\, f_3\Bigr) (h) =
\langle W(f_1\ot f_3), f_2\rangle_1\,  (h)  =\,
\int_G \bigl( W(f_1\ot f_3)\bigr) (g,h) \bar f_2(g)\, d_lg  \\
=&\, \int_G f_1(g)\, \bar f_2(g) \, f_3(g^{-1}h)  \, d_lg = \bigl( \la(f_1 \bar f_2)\, f_3\bigr)(h).
\end{split}
\]
Since the normal functional $\om_{f_1,f_2}$ corresponds to $f_1\bar f_2\in L^1(G)$,
the required result follows.
\end{remark}


\section{The quantum group analogue
of the normalizer of $SU(1,1)$ in $SL(2,\C)$}\label{sec:quantumSU11description}

In this section we recall the von Neumann algebraic quantum group
for which we calculate the dual von Neumann algebraic quantum group,
and for which we decompose the left regular corepresentation.
Except for the last paragraph, all the results described are
taken from \cite{KoelKCMP}.

The Lie group $SU(1,1)\cong SL(2,\R)$ is one of the most important
non-compact Lie groups. On the level of Hopf algebras, a classification
of real forms of the quantized universal enveloping algebra
$U_q(\mathfrak{sl}(2,\C))$ results in three different real forms,
i.e. Hopf $\ast$-algebras;
the compact case $U_q(\mathfrak{su}(2))$ for $0<q<1$, which is extensively
studied \cite{CharP}, \cite{EtinS}, \cite{Kass}, \cite{Lusz};
the non-compact case $U_q(\mathfrak{sl}(2,\R))$ with $q$ on the unit
circle, see e.g. the previously mentioned books and \cite{vdBult};
and the non-compact case $\su$ for $0<q<1$. In these cases there
is a related dual Hopf $\ast$-algebra which is a deformation of the
algebra of polynomials on the related group. We refer to the
books  \cite{CharP}, \cite{EtinS}, \cite{Kass}, \cite{Lusz}, as well
as to \cite{BurbK}, \cite{MasuMNNSU}, \cite{KoelSRIMS} for more
information and references. However, as Woronowicz \cite{WoroCMP91}
proved, there is no C$^\ast$-algebra interpretation for the related
Hopf $\ast$-algebra with a well-defined comultiplication. Later,
Korogodsky \cite{Koro} indicates how the ill-defined comultiplication
could be avoided.
With the introduction of the theory of von Neumann algebraic quantum
\cite{KustVASENS}, \cite{KustVMS} it is natural to ask whether or not this
important example can be incorporated in the theory of von Neumann
algebraic quantum groups. As it turns out the answer is yes, and the key
to the solution is using special functions.

All statements of this section are proved in
\cite{KoelKCMP}, except \eqref{eq:expressionhatJonfmpt} for which
a direct proof is given.

Throughout the paper, we fix a number $0 < q < 1$. Define $\cA_q$\index{A@$\cA_q$} to be the unital
$\ast$-algebra generated by elements
$\bal$, $\bga$ and $\bolde$ and relations
\begin{equation} \label{eq:generatorsrelations}
\begin{split}
&\bal^\dag \bal - \bga^\dag \bga = \bolde \qquad  \bal \bal^\dag - q^2 \, \bga^\dag \bga = \bolde
\qquad \bga^\dag \bga = \bga\, \bga^\dag
\\  &\bal \,\bga = q \, \bga\, \bal \qquad  \bal \,\bga^\dag = q \, \bga^\dag \bal   \\
&\bolde^\dag = \bolde  \qquad  \bolde^2 = 1 \qquad   \bal \, \bolde = \bolde \,\bal \qquad \bga \, \bolde = \bolde \,\bga
\end{split}
\end{equation}
where $\dag$ denotes the $\ast$-operation on $\cA_q$ (in order to distinguish this kind of adjoint with the adjoints of possibly unbounded operators in Hilbert spaces).
In case we take $\bolde=1$ in \eqref{eq:generatorsrelations} we obtain the
$\ast$-algebra which is usually associated with the algebra of polynomials
on the quantum analogue of $SU(1,1)$, see \cite{MasuMNNSU},
\cite{KoroV}, \cite{KoelSRIMS}.
The additional generator $\bolde$ has been introduced by Korogodsky \cite{Koro}.

For completeness we give the Hopf $\ast$-algebra\index{Hopf $\ast$-algebra $\cA_q$} structure on
$\cA_q$. By $\cA_q \odot \cA_q$ we denote the algebraic tensor product.
There exists a unique unital $\ast$-homomorphism $\bDe : \cA_q \rightarrow \cA_q \odot \cA_q$ such that
\begin{equation}\label{eq:Hopfalgebracomultiplication}
\begin{split}
&\bDe(\bal)  =  \bal \ot \bal + q \, (\bolde \, \bga^\dag) \ot \bga \qquad
\bDe(\bga) =  \bga \ot \bal + (\bolde \bal^\dag) \ot \bga \qquad
\bDe(\bolde)  =  \bolde \ot \bolde
 \end{split}
\end{equation}
The counit $\bep\colon \cA_q\to \cA_q$ and antipode $\bS\colon \cA_q\to \cA_q$ are given by
\begin{equation}\label{eq:Hopfalgebracounitantipode}
\begin{split}
& \bS(\bal) = \bolde\,\bal^\dag \qquad  \bS(\bal^\dag) = \bolde\, \bal \qquad
\bS(\bga) = - q \, \bga \qquad \bS(\bga^\dag) = - \frac{1}{q}\,\bga^\dag
\qquad \bS(\bolde) = \bolde \\
& \bep(\bal) = 1 \qquad \bep(\bga) = 0 \qquad \bep(\bolde) = 1
\end{split}
\end{equation}
This makes $\cA_q$ into a Hopf $\ast$-algebra.

To see that for $q=1$ we obtain the Hopf $\ast$-algebra of polynomials
on the group $SU(1,1)$ (when restricting to the sub-Hopf $\ast$-algebra
$\cA_q^1$ given by $\bolde =1$) and on the normalizer $N_{SL(2,\C)}(SU(1,1))$ of
$SU(1,1)$ in $SL(2,\C)$ we recall
\[
\begin{split}
SU(1,1) \,= \bigg\{ g\in SL(2,\C) \mid g^\ast J g = J = \begin{pmatrix} 1 & 0\\ 0 &-1  \end{pmatrix} \bigg\}
\, = \bigg\{ \begin{pmatrix} a & c \\ \bar c & \bar a \end{pmatrix} \mid a,c \in\C, \
|a|^2-|c|^2= 1 \bigg\}
\end{split}
\]
\index{S@$SU(1,1)$}
and we let $\bal(g)=a$, $\bga(g)=c$. Similarly,
\[
\begin{split}
&\, N_{SL(2,\C)}(SU(1,1)) \, = \{ g\in SL(2,\C) \mid g^\ast J g = \pm J\} \\
&\, = \bigg\{ \begin{pmatrix} a & c \\ \ep \bar c & \ep \bar a \end{pmatrix} \mid a,c \in\C, \
\ep \in \{\pm 1\}, |a|^2-|c|^2= \ep \bigg\}
= SU(1,1) \cup  SU(1,1)\, \begin{pmatrix}0&-1 \\1&0\end{pmatrix}
\end{split}
\]
\index{N@$N_{SL(2,\C)}(SU(1,1))$}
and we put $\bal(g)=a$, $\bga(g)=c$, $\bolde(g)=\ep$.

The following result by Woronowicz \cite{WoroCMP91} states that one cannot
expect a suitable quantum group on an operator algebra level arising from
Hopf $\ast$-algebra $\cA_q^1$ (i.e. with $\bolde=1$ in \eqref{eq:generatorsrelations}).
In Theorem \ref{thm:Woronowicznogothm} a representation of $\cA_q^1$ consists of
two closed operators $\al$ and $\ga$ acting in a Hilbert space $H$ such that
the domains of $\al$, $\ga$, $\al^\ast$, $\ga^\ast$ are equal, say $D$, and
such that the relations in \eqref{eq:generatorsrelations} are represented in
a weak sense, e.g. $\bal \,\bga = q \, \bga\, \bal$ is translated by
$\langle \ga v,\al^\ast w\rangle = q \langle \al v, \ga^\ast w\rangle$
for all $v,w \in D$, etc.

\begin{thm}[Woronowicz \cite{WoroCMP91}]\label{thm:Woronowicznogothm}
For $(\al^1,\ga^1)$, resp. $(\al^2,\ga^2)$, closed operators on an
infinite dimensional Hilbert space $H^1$, resp. $H^2$, representing the relations, there exist
no closed operators $\al$, $\ga$ acting on $H^1\ot H^2$ representing the
relations and extending
$\al^1 \ot \al^2 + q \, (\ga^1)^\ast \ot \ga^2$,
$\ga^1 \ot \al^2 +  (\al^1)^\ast \ot \ga^2$, such that $\al^\ast$, $\ga^\ast$ extend
$(\al^1)^\ast \ot (\al^2)^\ast + q \, \ga^1 \ot (\ga^2)^\ast$,
$(\ga^1)^\ast \ot (\al^2)^\ast +  \al^1\ot (\ga^2)^\ast$.
\end{thm}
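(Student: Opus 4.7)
The plan is to exploit the tension between the algebraic relations, which hold formally on the algebraic tensor product, and the analytic and spectral constraints forced on any closed extension. First I would recall the structural consequences of the relations~\eqref{eq:generatorsrelations}: $\ga$ must be normal, $|\al|^2 = 1+|\ga|^2\geq 1$ so $\al$ is invertible with $|\al|\geq 1$, and $\al\ga = q\ga\al$ together with $\al\ga^\ast = q\ga^\ast\al$ give $\al\,|\ga|^2\,\al^{-1} = q^2\,|\ga|^2$, forcing the spectrum of $|\ga|$ to be invariant under multiplication by $q$. For the factor representations I would take explicit principal-series models on $\ell^2(\Z)$ in which $\ga^i$ acts diagonally with spectrum $\{q^{n+s_i} : n\in\Z\}$ and $\al^i$ is a weighted shift with weights $(1+q^{2(n+s_i)})^{1/2}$.

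A direct computation using the relations for the factors shows that the formal operators
\[
\al_0 \,=\, \al^1 \ot \al^2 + q\,(\ga^1)^\ast \ot \ga^2, \qquad \ga_0 \,=\, \ga^1 \ot \al^2 + (\al^1)^\ast \ot \ga^2,
\]
defined on the algebraic tensor product $H^1 \odot H^2$, do satisfy the relations in the weak sense of the theorem on this dense core, so the obstruction must be purely analytic. In the product basis $\{e_n \ot f_m\}$ both $\al_0$ and $\ga_0$ appear as two-diagonal doubly-infinite matrices whose coefficients mix shifts in the two factors with incompatible scaling behaviour under multiplication by $q$.

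Assuming for contradiction that closed $\al,\ga$ with the properties of the theorem exist, the hypotheses that $\al^\ast$ and $\ga^\ast$ extend the algebraic-adjoint expressions written in the statement pin down the matrix coefficients of $\al$ and $\ga$ rigidly on the chosen dense core. I would then combine normality of $\ga$ with the $q$-invariance of its spectrum to produce a concrete inconsistency: look for a sequence of product-basis vectors along which $\al$ must map into directions lying outside every $q$-invariant spectral subspace of $|\ga|$, contradicting the required relation $\al\,|\ga|^2\,\al^{-1}=q^2\,|\ga|^2$. An alternative route is to compute the deficiency indices of the formally symmetric operator $\ga_0+(\ga_0)^\ast$, and its imaginary counterpart, on the core, and to show that every joint pair of self-adjoint extensions fails to give a normal closed $\ga$ with spectrum of the required geometric form.

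The main obstacle, which is also the technical heart of the argument, is the simultaneous ruling-out of every possible pair of closed extensions. The algebraic identities hold on a dense core but give no direct control at infinity, and different closures correspond to different boundary conditions at infinity on the lattice $\Z\times\Z$ indexing the product basis. I expect the decisive input to be a careful asymptotic analysis of the weighted-shift matrix coefficients of $\al_0$ and $\ga_0$ in the principal-series models, the key point being that the two summands in the expression for $\ga$ (and similarly for $\al$) have incompatible dominant directions along this lattice and cannot be amalgamated into a single closed operator preserving the required $q$-scaling symmetry.
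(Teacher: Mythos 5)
You should note at the outset that the paper does not prove this statement: it is quoted from Woronowicz \cite{WoroCMP91} purely as motivation for adjoining the extra generator $\bolde$, so there is no in-paper argument to measure yours against. Judged on its own terms, your proposal assembles the correct preliminary observations --- $\ga$ is normal, $\al$ is boundedly invertible since $\al^\ast\al=1+\ga^\ast\ga\geq 1$ and $\al\al^\ast=1+q^2\ga^\ast\ga\geq 1$, the commutation relations force $\si(|\ga|)$ to be invariant under multiplication by $q$, and the candidate operators do satisfy the relations weakly on the algebraic tensor product because $\bDe$ is a $\ast$-homomorphism. But the argument stops exactly where the theorem begins: every place a contradiction should be derived is phrased conditionally (``I would look for a sequence of product-basis vectors\dots'', ``An alternative route is to compute the deficiency indices\dots'', ``I expect the decisive input to be a careful asymptotic analysis\dots''), and no such sequence, index computation, or estimate is actually produced.

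The missing idea is the one that carries the whole theorem: a criterion for when the sum of two unbounded, $q^2$-commuting normal operators $T_1=\ga^1\ot\al^2$ and $T_2=(\al^1)^\ast\ot\ga^2$ (satisfying $T_1T_2=q^2T_2T_1$ and $T_1T_2^\ast=q^2T_2^\ast T_1$ weakly) admits a closed extension $\ga$ with $\overline{\ga_0}\subset\ga\subset(\ga_0^\dag)^\ast$ that is normal and has $|\ga|$ with $q$-invariant spectrum. This is not a soft statement about ``incompatible dominant directions''; it is Woronowicz's spectral condition for sums of $q$-commuting operators (the quantum exponential function machinery), and its failure for representations with $\si((\ga^i)^\ast\ga^i)\subset\{0\}\cup\la_i\,q^{2\Z}$ is precisely why Korogodsky's remedy is to enlarge the spectrum from $q^\Z$ to $I_q=-q^\N\cup q^\Z$ by adjoining $\bolde$. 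Moreover, your first proposed route --- exhibiting vectors that $\al$ ``must map outside every $q$-invariant spectral subspace of $|\ga|$'' --- is circular as stated: the $q$-invariance of $\si(|\ga|)$ and the correct intertwining by $\al$ are consequences of the relations you are assuming the hypothetical pair $(\al,\ga)$ to satisfy, so they cannot by themselves be contradicted. The contradiction has to come from the rigidity imposed by the prescribed action of $\ga_0$ and $\ga_0^\dag$ on the product core being incompatible with normality of any closed operator sandwiched between $\overline{\ga_0}$ and $(\ga_0^\dag)^\ast$, and that analysis --- the technical heart of \cite{WoroCMP91} --- is absent from your proposal.
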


Theorem \ref{thm:Woronowicznogothm} is a negative result, but Korogodsky
\cite{Koro} pointed out how to proceed by adding the additional
generator $\bolde$.

It is not hard to represent the commutation relations
\eqref{eq:generatorsrelations} by unbounded operators acting on
the Hilbert space $H=L^2(\T)\oplus L^2(I_q)$, where
$I_q = -q^\N \cup q^\Z$\index{I@$I_q$} and equipped with the counting measure.
Here $\T=\{z\in\C \mid |z|=1\}$\index{T@$\T$}
denotes the unit circle, $\N=\{1,2,\cdots\}$\index{N@$\N$} and $\NN= \{0,1,2,\cdots\}$\index{N@$\NN$}.
If $p \in I_q$, we define $\de_p(x) = \de_{x,p}$ for all $x \in I_q$,
so the family $\{ \de_p \mid p \in I_q\}$ is the natural orthonormal basis of
$L^2(I_q)$. For $L^2(\T)$ we have the natural orthonormal basis $\{ \ze^m \mid m \in \Z\}$,
with $\ze$ the identity function on $\T$. Then $\{ \ze^m \ot \de_p \mid
m\in \Z,\, p\in I_q\}$ is an orthonormal basis for $H$.
Define linear operators $\al_0$, $\gamma_0$, $e_0$ on the space $E$ of
finite linear combinations of $\ze^m\ot \de_p$ by
\begin{equation}\label{eq:formalrepsal0ga0}
\begin{split}
\al_0(\ze^m \ot \de_p ) &=  \sqrt{\sgn(p)+p^{-2}}\,\,\ze^m \ot \de_{qp},  \\
\ga_0(\ze^m \ot \de_p)  &=   p^{-1}\,\,\ze^{m+1} \ot  \de_p,  \qquad
 e_0(\ze^m \ot \de_p)  =  \sgn(p) \,\, \ze^m \ot \de_p.
\end{split}
\end{equation}
for all $p \in I_q$, $m \in \Z$. The actions of $\al_0^\dag$ and
$\ga_0^\dag$ on $E$ can be given in a similar fashion by taking formal adjoints, and
these satisfy the relations \eqref{eq:generatorsrelations}, and give a faithful representation
of the algebra $\cA_q$.
Then \cite[\S 2]{KoelKCMP} the operators
$\al_0$, $\ga_0$ are closable with densely defined closed unbounded
operators $\al$, $\ga$ as their closure.
Moreover, the adjoints $\al^\ast$ and $\ga^\ast$
are the closures of $\al_0^\dag$, $\ga_0^\dag$. Let $e$ be the closure
of $e_0$, then $e$ is a bounded linear self-adjoint operator on $H$.
As discussed by Woronowicz \cite{Woropp} and in \cite{KoelKCMP}, it
is not sufficient to consider the von Neumann algebra generated
by $\al$, $\ga$ and $e$ in order to obtain a well-defined
comultiplication. Consider the linear map $T\colon \ze^m\ot \de_p
\mapsto \ze^m\ot \de_{-p}$, $T\in B(H)$, where we take $\de_p=0$
in case $p\notin I_q$, and let $u$ be its partial isometry.

\begin{defn}\label{def:vNalgMinBH}
$M$ is the von Neumann algebra in $B(H)$ generated by $\al$, $\ga$, $e$ and $u$.
\end{defn}

By definition, see Appendix \ref{ssecA:affiliationandgenerators}, $\al$ and $\ga$
are affiliated to $M$.

It can be shown \cite[Lemma~2.4 (3)]{KoelKCMP} that $M= L^\infty(\T)\ot B(L^2(I_q))$.
We define the operators
\begin{equation*}
\Phi(m,p,t) \colon \zeta^r\otimes \de_x \mapsto  \de_{xt} \, \zeta^{m+r}\otimes
\de_p, \qquad m,r\in \Z, \, p,t,x\in I_q.
\end{equation*}
A straightforward calculation gives
\begin{equation*}
\begin{split}
\Phi(m_1,p_1,t_1)\, \Phi(m_2,p_2,t_2) = \de_{p_2, t_1} \, \Phi(m_1+m_2, p_1, t_2),
\qquad \Phi(m,p,t)^\ast = \Phi(-m,p,t)
\end{split}
\end{equation*}
In particular the finite linear span of the operators
$\Phi(m,p,t)$ form a $\si$-weakly dense $\ast$-subalgebra in $M$.

In order to show that $M$ is the von Neumann algebra of a
von Neumann algebraic quantum group we need to define the
comultiplication $\De$ and the left and right invariant nsf weights $\vp$ and $\psi$
such that the requirements of Definition \ref{def:vNalgquantumgroup}
are met. We start with the construction of the left invariant
nsf weight by writing down its GNS-construction.
Define $\Tr = \Tr_{L^\infty(\T)}\ot \Tr_{B(L^2(I_q))}$ on
$M$, where $\Tr_{L^\infty(\T)}$ and $\Tr_{B(L^2(I_q))}$ are
the canonical traces on $L^\infty(\T)$, i.e.
$\Tr_{L^\infty(\T)}(f) = \int_\T f(\ze)\, d\ze$ with
normalization $\Tr_{L^\infty(\T)}(1)=1$, and on
$B(L^2(I_q))$, normalized by $\Tr_{B(L^2(I_q))}(P)=1$
for any rank one orthogonal projection. Note that $\Tr$ is
a tracial weight on $M$ so in particular its
modular group is trivial. For $\Tr$ we have the following
GNS-construction:
\begin{itemize}
\item a Hilbert space $\cK = H \ot L^2(I_q)
= L^2(\T) \ot L^2(I_q) \ot L^2(I_q)$\index{K@$\cK$} equipped with the
orthonormal basis $\{f_{mpt}\mid m\in \Z,\, p,t\in I_q\}$;\index{F@$f_{mpt}$}
\item a unital $\ast$-homomorphism $\pi\colon M \to B(\cK)$,
$\pi(a) = a \ot \Id_{L^2(I_q)}$ for $a\in M$;
\item $\La_{\Tr}\colon \cN_{\Tr} \to \cK$,
$a\mapsto \sum_{p\in I_q} (a\ot \Id_{L^2(I_q))}) f_{0,p,p}$.
\end{itemize}
We define the left invariant nsf weight $\vp$ formally
as $\vp(x) = \Tr(|\ga|\, x \, |\ga|)$ with the operator
$|\ga|$ affiliated to $M$.
We proceed by
defining the set $D$ as the set of elements of $x\in M$
such that $x|\ga|$ extends to a bounded operator on $H$,
denoted by $\overline{x|\ga|}$, and such that
$\overline{x |\ga|}\in \cN_{\Tr}$, and for $x\in D$ we
put $\La(x) = \La_{\Tr}(\overline{x |\ga|})$. The set
$D$ is then a core for the operator $\La$ which is
closable for the $\si$-strong-$\ast$--norm topology.

\begin{defn}\label{def:leftinvweightSU11}
The nsf weight $\vp$ on $M$ is defined by its
GNS-construction $(\cK,\pi,\La)$.
\end{defn}

\begin{remark}\label{rmk:propertiessiJnabla}
From the general theory of nsf weights as recalled
in Section \ref{sec:vNalgquantumgroups} we know that $\vp$ comes with a
modular automorphism group $\si$, a modular
conjugation $J$ and modular operator $\nabla$.
In particular, as established
in \cite[\S 4]{KoelKCMP}, we have:
\begin{itemize}
\item $\si_t(x) = |\ga|^{2it}x|\ga|^{-2it}$ for all $x\in M$, $t\in\R$;
\item $\Phi(m,p,t)\in \cN_\vp$ and $\La(\Phi(m,p,t)) = |t|^{-1} f_{mpt}$;
\item $\Phi(m,p,t)\in \cM_\vp$ and $\vp(\Phi(m,p,t)) = |t|^{-2} \de_{m,0}\de_{p,t}$;
\item $\Phi(m,p,t)$ is analytic 
for $\si$ and $\si_z(\Phi(m,p,t))= |p^{-1}t|^{2iz}
\Phi(m,p,t)$ for all $z\in \C$;
\item $J\, f_{mpt} = f_{-m,t,p}$;
\item $f_{mpt}$ in the domain of $\nabla$ and $\nabla f_{mpt} = |p^{-1}t|^2\, f_{mpt}$.
\end{itemize}
\end{remark}

\begin{remark}\label{rmk:identifyMinGNSspacevp}
Note that in particular we can use $\pi$ to identify
$M\subset B(H)$ with its image $\pi(M)\subset B(\cK)$.
From now on we use this identification, and we
work with $M$ realized as von Neumann algebra
in $B(\cK)$.
\end{remark}

In \cite[\S 4]{KoelKCMP} it is observed that the right invariant
weight
$\psi=\vp$, so it remains to construct the
comultiplication which we give using the
multiplicative unitary $W\in B(\cK\ot\cK)$.
We give an explicit expression for
$W^\ast\in B(\cK\ot \cK)$ in terms
of basic hypergeometric series $a_p$ in
\eqref{eq:Wexplicitinapxy}.
The functions $a_p(\cdot,\cdot)$ are recalled in Definition \ref{def:functionap},
and the unitarity of the multiplicative unitary $W$ is
closely related to orthogonality properties of these
functions $a_p$. Then the comultiplication is given
by, recall Remark \ref{rmk:identifyMinGNSspacevp} that
we view $M\subset B(\cK)$,
\begin{equation}\label{eq:defcomultiplicationthroughW}
\De(x)= W^\ast(1\otimes x)W, \qquad x\in M.
\end{equation}
In fact, this formula has led to the definition
of the multiplicative unitary in
\eqref{eq:Wexplicitinapxy}, since the functions $a_p$
are interpreted as Clebsch-Gordan coefficients for
the tensor product decomposition of the representations
considered in \eqref{eq:formalrepsal0ga0}.
We refer to \cite[\S 3]{KoelKCMP} for a more
elaborate discussion of this motivation.

\begin{thm}\label{thm:vNalgqgroupforSU11}
The pair $(M,\De)$ is a von Neumann
algebraic quantum group.
\end{thm}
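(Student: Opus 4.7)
The plan is to verify the four requirements of Definition \ref{def:vNalgquantumgroup}: that $\De$ is a well-defined unital normal $\ast$-homomorphism from $M$ to $M\ot M$, coassociativity, and left/right invariance of $\vp$ (with $\psi = \vp$). Since $\De$ is defined via the multiplicative unitary $W$ through \eqref{eq:defcomultiplicationthroughW}, essentially everything reduces to properties of $W$, and the entire argument is driven by the interpretation of the functions $a_p(\cdot,\cdot)$ as Clebsch-Gordan coefficients for the representation in \eqref{eq:formalrepsal0ga0}.

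First I would establish that $W$ is a unitary on $\cK\ot\cK$. Using the explicit expression \eqref{eq:Wexplicitinapxy} in the orthonormal basis $\{f_{mpt}\}$, the unitarity translates into two orthogonality/completeness relations for the $a_p$ functions; these are precisely the Clebsch-Gordan orthogonality relations and should be extracted from the theory of basic hypergeometric series. Next I would verify the pentagonal identity $W_{12}W_{13}W_{23} = W_{23}W_{12}$, which on matrix entries becomes a quadratic Biedenharn--Elliott-type identity for the $a_p$; this is equivalent to the associativity of the tensor product decomposition of the Clebsch-Gordan series, and is the content of a nontrivial sum evaluation for basic hypergeometric series. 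With unitarity and the pentagon in hand, general theory (Baaj--Skandalis) gives that $\De(x) = W^*(1\ot x)W$ is coassociative and a normal unital $\ast$-homomorphism, and produces the von Neumann algebra $M_W = \overline{\{(\om\ot\Id)(W)\mid \om\in B(\cK)_*\}}^{\sigma\text{-}s^*}$ such that $\De$ maps $M_W$ into $M_W\ot M_W$.

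Second, I need to identify $M_W$ with $M$ from Definition \ref{def:vNalgMinBH}, so that $\De(M)\subset M\ot M$. For the inclusion $M_W\subset M$ one shows by a direct computation in the basis $\{f_{mpt}\}$ that the slices $(\om\ot\Id)(W)$ lie in the $\si$-weakly closed $\ast$-algebra generated by the $\Phi(m,p,t)$, which is $\si$-weakly dense in $M$ by the remark following Definition \ref{def:vNalgMinBH}. The reverse inclusion, which is essentially Takesaki's theorem identifying the generators of $M$ via their matrix coefficients against $W$, requires exhibiting each of $\al,\ga,e,u$ as a suitable limit (in affiliation, for unbounded ones) of slices of $W$; combinatorially this again amounts to known recursions for the $a_p$. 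Equivalently, one can check by hand that $\De(\al)$, $\De(\ga)$, $\De(e)$, $\De(u)$ lie in $M\ot M$ by expanding \eqref{eq:defcomultiplicationthroughW} on the basis $\{f_{mpt}\ot f_{m'p't'}\}$ and matching the result with the generators of $M\ot M = L^\infty(\T)\ot B(L^2(I_q))\ot L^\infty(\T)\ot B(L^2(I_q))$; this identification is guided by the formal Hopf-algebraic comultiplication \eqref{eq:Hopfalgebracomultiplication}.

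Finally, left invariance $\vp\bigl((\om\ot\Id)\De(x)\bigr) = \vp(x)\om(1)$ for $\om\in M_*^+$, $x\in \cM_\vp^+$, should be proved by a density argument: it suffices to check it on the analytic core spanned by $\Phi(m,p,t)$, on which Remark \ref{rmk:propertiessiJnabla} gives the exact value $\vp(\Phi(m,p,t))=|t|^{-2}\de_{m,0}\de_{p,t}$ and yields a closed formula for $\De(\Phi(m,p,t))$ via $W$. The invariance identity then reduces, after pairing against a rank-one functional $\om_{f_{m'p't'},f_{m''p''t''}}$, to a summation identity for $a_p$ equivalent to its orthogonality relations; this is the same analytic input that powered unitarity of $W$. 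Right invariance with $\psi=\vp$ follows either by an analogous direct calculation or, more economically, by constructing the unitary antipode $R$ and using $\psi = \vp\circ R$ together with the symmetry of $\vp$ under the involution $\Phi(m,p,t)\leftrightarrow \Phi(-m,t,p)$ already encoded in the action of $J$. I expect the main obstacle to be the pentagonal identity together with left invariance: both rest on nontrivial summation/orthogonality identities for the functions $a_p$, and it is exactly at this point that the detailed special function analysis deferred to later sections of the paper becomes indispensable.
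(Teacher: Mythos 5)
The first thing to note is that this paper does not itself prove Theorem \ref{thm:vNalgqgroupforSU11}: it is quoted from \cite[Thm.~4.9]{KoelKCMP}, and the only commentary offered here is that the genuinely hard step is the coassociativity $\De\ot\Id\circ\De=\Id\ot\De\circ\De$ and that the sign function $s(\cdot,\cdot)$ entering Definition \ref{def:functionap} is essential for it. Measured against that, your outline has the right shape --- unitarity of $W$ from the orthogonality relations of Propositions \ref{prop:orthorelapxy} and \ref{prop:dualorthorelapxy}, the pentagon equation, identification of the slice algebra with the $M$ of Definition \ref{def:vNalgMinBH}, and invariance of $\vp$ checked on the $\Phi(m,p,t)$ --- but it relocates, rather than resolves, the crux. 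The pentagon identity for the explicit $W$ of \eqref{eq:Wexplicitinapxy} is exactly equivalent to coassociativity, and it is not a known Biedenharn--Elliott-type identity that can simply be ``extracted from the theory of basic hypergeometric series'': the paper records in Remark \ref{rmk:thmsymmetryforsinglesumfourapxys} that a direct analytic proof of the resulting quadratic $a_p$-identities is not available and would be of independent interest, and that coassociativity holds only because the $q$-analogues of the Racah coefficients vanish at the appropriate places, which in turn hinges on the specific choice of $s(\cdot,\cdot)$. A proof that asserts the pentagon as a routine special-function fact therefore has a genuine gap at its single load-bearing step; everything else in your plan is soft.

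Two smaller points. First, the order of construction in \cite{KoelKCMP} is the reverse of yours: there the comultiplication is built from the Clebsch--Gordan data, coassociativity is proved directly, and $W$ is then obtained from the GNS construction of the invariant weight via \eqref{eq:defcomultiplicationthroughW}, so that unitarity of $W$ and left invariance of $\vp$ are essentially the same statement rather than two separate computations; your plan to verify left invariance independently on the $\Phi(m,p,t)$ duplicates work but is not wrong. Second, your alternative route to $\De(M)\subset M\ot M$ via the generators $\al$, $\ga$, $e$, $u$ needs real care: these are unbounded, and Theorem \ref{thm:Woronowicznogothm} shows that precisely this kind of extension argument fails for the version of the algebra without the extra generator $\bolde$, so the affiliation statements cannot be waved through; the slice-algebra argument is the safe one.
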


Theorem \ref{thm:vNalgqgroupforSU11} is
\cite[Thm. 4.9]{KoelKCMP}, and the really hard
part is to prove the coassociativity
$\Id\ot\De\circ\De = \De\ot\Id\circ\De$.
For this part the choice of sign $s(\cdot,\cdot)$
in Definition \ref{def:functionap} of the function $a_p$ is
essential. It should be noted that the results are obtained
in different order in \cite{KoelKCMP} than presented here.

All of the above is included in \cite{KoelKCMP}, but we
additionally need the action of the dual modular
conjugation\index{dual modular
conjugation} $\hat{J}$\index{J@$\hat{J}$} in the GNS-space $\cK$.
Explicitly, we have
\begin{equation} \label{eq:expressionhatJonfmpt}
\hat{J}\, f_{m,p,t} = \sgn(p)^{\chi(p)} \, \sgn(t)^{\chi(t)}\, (-1)^m \,
f_{-m,p,t}, \qquad p,t\in I_q,\ m\in\Z.
\end{equation}
This can be proved from the results in \cite{KoelKCMP} as follows.
Since the right invariant weight equals the left invariant
weight, we have $\hat{J} \La(x) = \La (R(x)^\ast)$ for
$x\in \cN$, see \cite[Prop.~2.11]{KustVMS}.
Using \cite[Prop.~4.14]{KoelKCMP} for the explicit expression
of the unitary antipode $R$
we see that applying this expression with $x=\Phi(m,p,t)$
gives \eqref{eq:expressionhatJonfmpt}.


\section{The von Neumann algebra for the dual quantum group}
\label{sec:VonNeumanAlgDualQuantumGroup}

The general theory as described in Section \ref{sec:vNalgquantumgroups}
shows that there is a dual von Neumann algebraic quantum group
associated to the von Neumann algebraic quantum group $(M,\De)$
associated to the normalizer of $SU(1,1)$ in $SL(2,\C)$,
see Theorem \ref{thm:duallocallycompactquantumgroup}. Since we
have the von Neumann algebra $M$ explicitly given by
Definition \ref{def:vNalgMinBH} and Theorem \ref{thm:vNalgqgroupforSU11}
it is natural to ask for an explicit description in terms of generators
for the von Neumann algebra $\hat{M}$ of the dual von Neumann algebraic
quantum group. On the level of Hopf algebras, there is a
duality between $\cA_q^1$ and the quantized universal enveloping
algebra $\su$, see \cite{MasuMNNSU}
and \cite{CharP}. So it is natural to expect that the
quantized enveloping algebra $\su$ plays a role in an explicit description
of $\hat{M}$, but also that $\su$ will not suffice to describe
$\hat{M}$. This is made explicit in Theorem \ref{thm:generatorsforhatM}.

Let us first recall the quantized universal
enveloping algebra $\su$\index{quantized universal
enveloping algebra $\su$} in order to fix the notation.
The study of $\su$ goes back to Vaksman and
Korogodski\u\i\ \cite{VaksK}, and Masuda et al. \cite{MasuMNNSU},
see also Burban and Klimyk \cite{BurbK}.
Its representation theory is also
needed in this paper, and we recall the irreducible
admissible representations in Section \ref{sec:Casimiroperator},
where we decompose the GNS-space with respect to the
$\su$-action.
For general information on quantized universal enveloping
algebras one can consult e.g. \cite{CharP}, \cite{EtinS},
\cite{Kass}, \cite{Lusz}, \cite{SchmK}.

Recall that $\su$\index{U@$\su$} is the complex unital
$\ast$-algebra generated by $\KA$, $\KA^{-1}$, $\EA$ and
$\FA$ subject to
\begin{equation}\label{eq:defrelUqsu11}
\KA \KA^{-1}=1=\KA^{-1}\KA,
\quad \KA \EA=q\, \EA \KA,
\quad \KA \FA=q^{-1}\FA \KA,\quad \FA \EA-\EA \FA =
\frac{\KA^2-\KA^{-2}}{q-q^{-1}}
\end{equation}
and where the $\ast$-structure is defined by $\KA^\ast = \KA$,
$\EA^\ast=\FA$. Since we assume $0<q<1$, the $\ast$-structure is
easily seen to be compatible with \eqref{eq:defrelUqsu11}.
(We identify $(A,B,C,D)$ of \cite{KoelSRIMS} by
$(\KA,\EA,-\FA,\KA^{-1})$ and compared to
the notation of \cite{MasuMNNSU} we have  $e = \EA$, $f = -\FA$
and $k = \KA$.)
The algebra $\su$ has more structure, since
it can be made into a Hopf $\ast$-algebra. For completeness
we recall the action of the antipode $\SA$ and
the comultiplication $\DeA$ on the
generators;
\begin{equation}\label{eq:defSongeneratorssu}
\SA(\KA) = \KA^{-1}, \quad
\SA(\EA) = -q^{-1}\EA, \quad \SA(\FA) = -q \FA, \quad
\SA(\KA^{-1}) = \KA.
\end{equation}
and
\begin{equation}\label{eq:deDeltaongeneratorssu}
\begin{split}
&\DeA(\KA) = \KA \otimes \KA , \quad \DeA(\EA) = \KA\otimes\EA + \EA\otimes \KA^{-1} \\
&\DeA(\FA) = \KA\otimes\FA + \FA\otimes \KA^{-1}, \quad \DeA(\KA^{-1}) = \KA^{-1} \otimes \KA^{-1}.
\end{split}
\end{equation}

The Casimir element
\begin{equation}\label{eq:Casimir}
\OmA = \hf \Bigl( (q^{-1}-q)^2\FA \EA - q\KA^2-q^{-1}\KA^{-2}\Bigr)
= \hf \Bigl( (q^{-1}-q)^2\EA \FA -
q\KA^{-2}-q^{-1}\KA^{2}\Bigr)
\end{equation}
is a central self-adjoint element in $\su$.
In fact, we use a slightly renormalized version of
the operator used
in \cite{MasuMNNSU}. If $\mathbf{C}$ denotes the element introduced
in \cite[Part II, (1.9)]{MasuMNNSU}, one has
$\OmA = -\frac{1}{2}\,(q-q^{-1})^2\,\mathbf{C} - 1$.
The Casimir element $\OmA$ generates the center of $\su$.

In order to represent the algebra $\su$ on the Hilbert space
$\cK$ of the GNS-re\-pre\-sen\-tation
some care has to be taken, since the operators are in general
unbounded.
We  define the dense subspace $\cK_0$ of $\cK$ as the linear subspace
consisting of finite linear combinations of the orthonormal
basis elements $f_{mpt}$, see the definition in Section \ref{sec:quantumSU11description}.
Equivalently $\cK_0$ can also be viewed as the linear
span of  elements of the form $\ze^m \ot f$ with $m \in \Z$,
$f \in \cK(I_q \times I_q)$, where $\cK(I_q\times I_q)$ is the space
of compactly supported function on $I_q\times I_q$.
Note that  $\cK_0$ is dense
in $\cK$ and that $\cK_0$ inherits the inner product of $\cK$,
so we can look at the space of adjointable
operators $\cL^+(\cK_0)$ for $\cK_0$,
see \cite[Prop.~2.1.8]{Schm}.
Recall that
\[
\cL^+(\cK_0) = \{ T\colon \cK_0 \to \cK_0 \text{ linear}
\mid \exists S \colon \cK_0 \to \cK_0 \text{ linear
so that } \langle Tx,y\rangle = \langle x, Sy\rangle \
\forall\, x,y \in \cK_0\}
\]
The
$\ast$-operation in $\cL^+(\cK_0)$ will be
denoted by $\dag$.

\begin{defn} \label{def:actiongeneratorssu}
We define operators $E_0$, $K_0$ in $\cL^+(\cK_0)$
by
\begin{equation}\label{eq:defE0forsu}
\begin{split}
(q-q^{-1})\,E_0\,f_{mpt} = &
\ \sgn(t)\,\,q^{-\frac{m-1}{2}}\,|p/t|^{\frac{1}{2}}
\,\sqrt{1+\kappa(q^{-1} t)}\,\,\, f_{m-1,p,q^{-1}t} \\
 &\ -\, \sgn(p)\,\,q^{\frac{m-1}{2}}\,|t/p|^{\frac{1}{2}}
 \, \sqrt{1+\kappa(p)}\,\,\, f_{m-1,qp,t}
\end{split}
\end{equation}
and $K_0\,f_{mpt} = q^{-\frac{m}{2}}\,
|p/t|^{\frac{1}{2}}\,f_{mpt}$
for all $m \in \Z$, $p,t \in I_q$.
\end{defn}

Here $\sgn$ denotes the sign, and $\ka(x) = \sgn(x) x^2$, see
Definition \ref{defn:chikappanuands}.

Definition \ref{def:actiongeneratorssu} is motivated by formal
calculations based on \cite[Part II (1.11), (1.12)]{MasuMNNSU}.

One easily checks that $K_0^\dag = K_0$
and that $K_0$ is invertible in $\cL^+(\cK_0)$.
Also $E_0 \in \cL^+(\cK_0)$ and
\begin{equation}\label{eq:dualE0forsu}
\begin{split}
(q - q^{-1})\,E_0^\dag\,f_{mpt} =  &
\ \sgn(t)\,\,q^{-\frac{m+1}{2}}\,|p/t|^{\frac{1}{2}}\,
\sqrt{1+\kappa(t)}\,\,\, f_{m+1,p,q t} \\  &\ - \,
\sgn(p)\,\,q^{\frac{m+1}{2}}\,|t/p|^{\frac{1}{2}}\,
\sqrt{1+\kappa(q^{-1} p)}\,\,\, f_{m+1,q^{-1} p,t}
\end{split}
\end{equation}
for all $m \in \Z$, $p,t \in I_q$.

At this point we observe that modular conjugation $J$ preserves
$\cK_0$, since $J\,f_{mpt}=f_{-m,t,p}$ see Remark \ref{rmk:propertiessiJnabla}, and it
follows straightforwardly
\begin{equation}\label{eq:JE0JisminE0dag}
J\, E_0^\dag \, J = - E_0\ \ \text{and}\ \ J\, K_0\, J = K_0^{-1}
\ \ \text{in}\  \cL^+(\cK_0).
\end{equation}
Using \eqref{eq:actionunitaryantipode} we see that
\eqref{eq:JE0JisminE0dag} is in
correspondence with \eqref{eq:defSongeneratorssu}.

The next proposition shows that $E_0$ and $K_0$ do
satisfy the defining relations \eqref{eq:defrelUqsu11}
for the $\ast$-algebra $\su$.

\begin{prop}\label{prop:relationssuaresatisfied}
We have
$$
K_0\,E_0 = q\, E_0\,K_0 \hspace{10ex} \text{and}
\hspace{10ex} E_0^\dag \, E_0 - E_0\, E_0^\dag =
\frac{K_0^2- K_0^{-2}}{q-q^{-1}} \, .
$$
and the elements from
$\{ \,K_0^m E_0^k (E_0^\dag)^l \mid m \in \Z, k,l \in \NN \,\}$
are linearly independent.
\end{prop}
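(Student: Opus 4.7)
The plan is to verify the two defining relations by direct computation on the orthonormal basis $\{f_{mpt}\}$ of $\cK_0$, and to deduce linear independence from a shift-and-extremal-term argument. Both halves only use the explicit formulas \eqref{eq:defE0forsu} and \eqref{eq:dualE0forsu} for the action of $E_0$ and $E_0^\dag$.

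The identity $K_0 E_0 = q\, E_0 K_0$ is immediate: both terms appearing in $E_0 f_{mpt}$, namely $f_{m-1,p,q^{-1}t}$ and $f_{m-1,qp,t}$, are $K_0$-eigenvectors with eigenvalue $q\cdot q^{-m/2}|p/t|^{1/2}$, which is $q$ times the $K_0$-eigenvalue of $f_{mpt}$. For the relation $E_0^\dag E_0 - E_0 E_0^\dag = (K_0^2 - K_0^{-2})/(q-q^{-1})$, I would expand both $E_0^\dag E_0 f_{mpt}$ and $E_0 E_0^\dag f_{mpt}$ as sums of four contributions, each supported on the three basis vectors $f_{m,p,t}$, $f_{m,q^{-1}p,q^{-1}t}$, $f_{m,qp,qt}$. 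Using $\kappa(qx) = q^2\kappa(x)$ and $\sgn(p)^2 = \sgn(t)^2 = 1$, one checks that the off-diagonal coefficients on $f_{m,q^{-1}p,q^{-1}t}$ and $f_{m,qp,qt}$ coincide in the two compositions and hence cancel in the commutator; on $f_{m,p,t}$, the pieces containing $\sgn(p)$ and $\sgn(t)$ (originating from the $\kappa$-expansion) cancel across compositions, leaving $q^{-m+1}|p/t| - q^{-m-1}|p/t| + q^{m-1}|t/p| - q^{m+1}|t/p| = (q-q^{-1})(q^{-m}|p/t| - q^m|t/p|)$. Dividing by $(q-q^{-1})^2$ yields exactly the eigenvalue of $(K_0^2 - K_0^{-2})/(q-q^{-1})$ on $f_{mpt}$.

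For linear independence, suppose $A = \sum_{(m',k,l)\in F} c_{m',k,l}\, K_0^{m'} E_0^k (E_0^\dag)^l = 0$ on $\cK_0$ for some finite $F \subset \Z\times\NN\times\NN$. Fix $p_0, t_0 \in q^\Z$ with $p_0, t_0 > 0$ and $n \in \Z$, and examine $Af_{n,p_0,t_0}$. Since $E_0$ moves $(p,t)$ to $(p,q^{-1}t)$ or $(qp,t)$ and $E_0^\dag$ moves $(p,t)$ to $(p,qt)$ or $(q^{-1}p,t)$, an enumeration of branchings shows that the basis vector $f_{n+l-k,\,q^k p_0,\,q^l t_0}$ appears in the expansion of $E_0^{k'}(E_0^\dag)^{l'} f_{n,p_0,t_0}$ only when $(k',l') = (k+r,l+r)$ for some $r\ge 0$. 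Choose $(k^*,l^*)$ in the projection of $F$ to the $(k,l)$-plane with $k^*+l^*$ maximal; then no $(k^*+r,l^*+r)$ with $r\ge 1$ lies in the projection, so the coefficient of $f_{n+l^*-k^*,\,q^{k^*}p_0,\,q^{l^*}t_0}$ in $Af_{n,p_0,t_0}$ equals a strictly positive constant (a product of positive $\sqrt{1+\kappa(\cdot)}$ factors from the unique ``all-$t$ then all-$p$'' branching) times $\sum_{m'} c_{m',k^*,l^*}\lambda_n^{m'}$, where $\lambda_n = q^{-n/2+k^*-l^*}(p_0/t_0)^{1/2}$ is the $K_0$-eigenvalue there. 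Varying $n\in\Z$ yields infinitely many distinct values of $\lambda_n$, so the finite Laurent polynomial $\sum_{m'} c_{m',k^*,l^*} x^{m'}$ must vanish identically, giving $c_{m',k^*,l^*} = 0$ for all $m'$. Removing these from $F$ and inducting on $k+l$ exhausts $F$.

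The principal technical obstacle is the commutator calculation for $E_0^\dag E_0 - E_0 E_0^\dag$: verifying that the eight contributions collapse into precisely the eigenvalue $(q^{-m}|p/t|-q^m|t/p|)/(q-q^{-1})$ requires careful bookkeeping of half-integer $q$-exponents and of the sign-functions feeding into the $\kappa$-terms, where the cancellations depend delicately on the normalization chosen in Definition \ref{def:actiongeneratorssu}.
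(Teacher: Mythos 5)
Your proof is correct and takes essentially the same route as the paper's: the two relations are verified by the same direct computation on the basis $f_{mpt}$ (with the same cancellation of the off-diagonal and $\kappa$-terms), and linear independence is obtained, just as in the paper (which fixes $r=k-l$ and maximizes $k$ where you maximize $k+l$ and pick the opposite corner), by isolating an extremal basis vector reachable from a unique $(k,l)$ and letting the $K_0$-eigenvalue range over infinitely many values to annihilate the Laurent polynomial in the coefficients. The only small inaccuracy is the claim that the extremal coefficient is strictly positive --- the $p$-branch of $E_0$ carries a factor $-\sgn(p)$ and $q-q^{-1}<0$, so it is merely nonzero --- which does not affect the argument.
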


Proposition \ref{prop:relationssuaresatisfied} implies that
there exists a unique unital $\ast$-re\-pre\-sen\-ta\-tion
$\rho\colon \su\to \cL^+(\cK_0)$ so that $\EA \mapsto E_0$ and
$\KA \mapsto K_0$, hence $\cK_0$ is turned into a $\su$-module.
Define $\cU$ to be the unital $\ast$-subalgebra of
$\cL^+(\cK_0)$ generated by $K_0$, $K_0^{-1}$ and $E_0$.
This is a $\ast$-representation of $\su$ by unbounded
operators in the sense of \cite[Ch. 8]{Schm}, so that
in particular each element of $\cU$ is closable.
The Poincar\'e-Birkhoff-Witt theorem, see e.g. \cite{CharP},
implies that the $\ast$-representation $\su \to \cL^+(\cK_0)$ is
faithful and $\cU$ is a concrete realization of $\su$.

An essential role in the representation theory of $\su$ is
played by the Casimir
operator \eqref{eq:Casimir}. An elaborate discussion
about its role in decomposing $\cK_0$ into irreducible
$\su$-modules is given in Section \ref{sec:Casimiroperator}.
We define the Casimir element $\Om_0 \in \cU\subset \cL^+(\cK_0)$
as $\Om_0=\rho(\OmA)$, i.e.
$$
\Om_0 = \frac{1}{2}\,
\bigl(\,(q-q^{-1})^2\,E_0^\dag \, E_0 - q\,K_0^2 -
q^{-1}\,K_0^{-2}\,\bigr) = \frac{1}{2}\,
\bigl(\, (q-q^{-1})^2\,E_0 E_0^\dag -
q^{-1}\,K_0^2-q\,K_0^{-2}\,\bigr) \ .
$$
By Definition \ref{def:actiongeneratorssu} and \eqref{eq:dualE0forsu} we have
the explicit expression
\begin{equation}\label{eq:actioncasimir}
\begin{split}
&\ 2 \, \Om_0 \,f_{mpt}  =   - \sgn(pt)\,
\sqrt{(1+\kappa(p))(1+\kappa(t))}\,\, f_{m,qp,qt}
\\ &  + (q^{m-1} p |t| + q^{-m-1} t |p|)\,f_{mpt}
 - \sgn(pt)\, \sqrt{(1+\kappa(q^{-1}p))(1+\kappa(q^{-1}t))}\,
 \, f_{m,q^{-1}p,q^{-1}t}
\end{split}
\end{equation}
for all $m \in \Z$ and $p,t \in I_q$.

Recall that we are using a  renormalized version
(and terminology) of the operator used in \cite{MasuMNNSU}.
The renormalization is chosen in such a way that the continuous
spectrum of the relevant self-adjoint extension of $\Om_0$
is given by $[-1,1]$ and the point spectrum of this extension
has a maximal degree of symmetry with respect to
the origin.

Not $K_0$,
$E_0$ and $\Om_0$ are the operators relevant to the
dual locally compact
quantum group $(\hat{M},\hat{\De})$ introduced
in Section \ref{sec:vNalgquantumgroups}, but rather the
\emph{right} closed extensions of these operators. Now
$K_0$ is essentially self-adjoint, so it is clear what extension
of $K_0$ to use. At this moment, it is not clear what kind of
extension of $E_0$ we need, but Proposition \ref{prop:KEaffiliatedtohatM}
shows that the closure of $E_0$ is
the natural extension in this setting.
Next the Casimir operator is discussed.

\begin{defn} \label{def:closedgenerators}
We define the densely defined, closed, linear operators $E$\index{E@$E$} and
$K$\index{K@$K$} in $\cK$ as the closures of $E_0$ and $K_0$
respectively.
\end{defn}

One expects at least that $K$ and $E$ are affiliated to the
dual von Neumann algebra $\hat{M}$. This is indeed the
case.

\begin{prop} \label{prop:KEaffiliatedtohatM}
$K$ is an injective positive self-adjoint operator in $\cK$.
The operators $K$ and $E$ are affiliated to
the von Neumann algebra $\hat{M}$.
\end{prop}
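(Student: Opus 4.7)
The plan is to treat the two assertions of the proposition separately. For the first, the formula $K_0\,f_{mpt} = q^{-m/2}\,|p/t|^{1/2}\,f_{mpt}$ exhibits $K_0$ as a diagonal operator in the orthonormal basis $\{f_{mpt}\}$ of $\cK$ with strictly positive eigenvalues. Its closure $K$ is then unitarily equivalent to multiplication by a strictly positive function, and is therefore injective, positive and self-adjoint by a direct application of the spectral theorem.

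For affiliation of $K$ to $\hat{M}$, I would verify that the associated unitary one-parameter group $K^{it}$, $t\in\R$, lies in $\hat{M}$; together with the spectral theorem this is equivalent to affiliation. Since by \eqref{eq:defdualhatM} $\hat{M}$ is generated by the slices $(\om\ot\Id)(W)$, $\om\in B(\cK)_\ast$, the natural route is to exhibit each $K^{it}$ as a $\si$-strong-$\ast$ limit of such slices. Using the explicit expression \eqref{eq:Wexplicitinapxy} for $W^\ast$ in terms of the basic hypergeometric functions $a_p$ and the fact that $K$ acts diagonally on $\{f_{mpt}\}$, one chooses $\om$ of the form $\om_{u,v}$ with $u,v$ suitable (linear combinations of) basis vectors and exploits the orthogonality and completeness of the $a_p$ to recover $K^{it}$ in the limit.

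For $E$ one cannot invoke the spectral theorem directly because $E$ is not self-adjoint. The cleanest strategy is to work with the bounded transform $z_E := E\,(1+E^\ast E)^{-1/2}$, whose membership in $\hat{M}$ is equivalent to affiliation of $E$ to $\hat{M}$. The positive self-adjoint operator $E^\ast E$ is, like $K$, well under control on $\cK_0$ via \eqref{eq:defE0forsu} and \eqref{eq:dualE0forsu}, so its bounded spectral projections can be approached by slices of $W$ in the same spirit as for $K$. The intertwining $K\,E = q\,E\,K$ from Proposition \ref{prop:relationssuaresatisfied} then constrains the partial isometry in the polar decomposition of $E$ sufficiently to place $z_E$ in $\hat{M}$ as well.

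The chief obstacle is building the bridge from the abstract presentation of $\hat{M}$ as the $\si$-strong-$\ast$ closure of slices of $W$ to the concrete unbounded operators $K$ and $E$ defined on the algebraic core $\cK_0$. This requires careful control of domains, of $\si$-strong-$\ast$ convergence, and of the basic hypergeometric orthogonalities already underpinning the unitarity of $W$ in Theorem \ref{thm:vNalgqgroupforSU11}. Once this bridge is in place, the desired affiliation is more a matter of bookkeeping than of fundamentally new ideas.
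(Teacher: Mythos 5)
Your treatment of $K$ as a diagonal, strictly positive operator is correct and matches the paper's argument for the first assertion. For affiliation, however, your route diverges in a way that creates real difficulties. You propose to exhibit each $K^{it}$ directly as a $\si$-strong-$\ast$ limit of slices $(\om\ot\Id)(W)$. The paper instead verifies the commutant condition: it shows that $K$ commutes with the operators $\hat{J}Q(p_1,p_2,n)\hat{J}$, whose linear span is strong-$\ast$ dense in $\hat{M}'$ (Corollary \ref{cor:propQppngeneratedualM}), and this is essentially immediate because $K$ acts as the scalar $q^m\sqrt{p}$ on $\cK(p,m,\ep,\et)$ while $\hat{J}Q(p_1,p_2,n)\hat{J}$ maps $\cK(p,m,\ep,\et)$ into $\cK(q^{2n}p,m-n,\cdot,\cdot)$, on which $K$ acts by the \emph{same} scalar. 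Your construction would instead require building explicit approximating nets out of the $Q(p_1,p_2,n)$ and controlling $\si$-strong-$\ast$ convergence of infinite sums over $p_1,p_2\in I_q$; that is far from bookkeeping, and you give no indication of how the convergence would be established.

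The gap for $E$ is more serious. First, $E^\ast E$ is not ``under control'' in the way $K$ is: on $\cK_0(p,0,+,+)$ the operator $E_0^\dag E_0$ is a Jacobi operator that is \emph{not} essentially self-adjoint (this is the case $c=q$ treated in Appendix \ref{app:jacobi}), so even identifying which self-adjoint operator $E^\ast E$ is --- let alone showing that its spectral projections lie in $\hat{M}$ --- requires the boundary analysis that the paper carries out with truncated inner products and the asymptotics of $\hat{J}Q(p_1,p_2,n)\hat{J}f_{mpt}$ (Lemmas \ref{lem:truncatedinnerproductforepet+} and \ref{lem:behaviourhatJQppnhatJincase++}). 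Second, and decisively, the relation $KE=qEK$ does not pin down the partial isometry $U$ in the polar decomposition $E=U|E|$ up to membership in $\hat{M}$: it only forces $U$ to shift the grading $\cK(p,m,\ep,\et)\to\cK(p,m+1,\ep,\et)$, and there are many such partial isometries outside $\hat{M}$ (multiplying $U$ by arbitrary phases depending on $(p,m,\ep,\et)$ preserves the commutation relation with $K$). What is actually needed --- and what the paper proves in Lemma \ref{lem:firststepEaffiliatedhatM} via the $q$-contiguous relations for the functions $a_p$, followed by the extension argument of Lemma \ref{lem:secondstepEaffiliatedhatM} --- is the genuine commutation of $E$ with the generators $\hat{J}Q(p_1,p_2,n)\hat{J}$ of $\hat{M}'$. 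That computation is the heart of the proof and is absent from your proposal.
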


Note that the spectrum $\si(K)$ consists of $q^{\hf \Z}\cup \{0\}$.
Moreover, $E^\ast$ is the closure of $E_0^\dag$, and there
exists a characterization of $E$ given in Proposition
\ref{prop:charEandrelationEstarwithE0dag}.

Next we want to define the Casimir operator on $\cK$
as the \emph{right} extension of $\Om_0$.
Since $\Om_0^\dag = \Om_0$, it is natural to look for a
self-adjoint extension of $\Om_0$ to be this
\emph{right} extension. But $\Om_0$ is not essentially
self-adjoint, which is discussed in Section \ref{sec:Casimiroperator},
thus, unlike the cases $E$ and $K$,
we can not merely use the closure of $\Om_0$.

\begin{defn} \label{def:closedCasimir}
We define the Casimir operator $\Om$\index{Casimir operator}\index{O@$\Om$} as the closure of the
operator
\[
\frac{1}{2}\,\, \bigl(\,(q-q^{-1})^2\,E^\ast  E -
q\,K^2 - q^{-1}\,K^{-2}\,\bigr) \, .
\]
\end{defn}

At this point it is not clear that Definition
\ref{def:closedCasimir} makes sense.

\begin{thm} \label{thm:Casimirwelldefinedandchar}
The Casimir operator $\Om$
is a well-defined self-adjoint operator. The Casimir operator commutes strongly with
the unbounded operators $E$ and  $K$.
Moreover, the Casimir operator $\Om$ is the unique self-adjoint
extension of $\Om_0$ that is affiliated to
the von Neumann algebra $\hat{M}$.
\end{thm}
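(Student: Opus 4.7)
The plan is to reduce the theorem to Jacobi operator theory via a decomposition of $\cK$ into $\Om_0$-invariant subspaces. Inspection of equation \eqref{eq:actioncasimir} shows that $\Om_0$ preserves the four quantities $m$, $\sgn(p)$, $\sgn(t)$ and $p/t$, so $\cK$ splits as an orthogonal direct sum $\cK=\bigoplus_\alpha \cK_\alpha$ indexed by these invariants, where each $\cK_\alpha$ is the closed linear span of one $(p,t)\mapsto(qp,qt)$-orbit of basis vectors. On each $\cK_\alpha$ the operator $\Om_0$ is a symmetric three-term Jacobi operator. If $\sgn(p)=-$ or $\sgn(t)=-$, the orbit in $I_q$ is bounded in one direction and the Jacobi coefficients stabilize to finite limits, so the restriction is bounded and essentially self-adjoint. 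If $\sgn(p)=\sgn(t)=+$, the orbit covers $\Z$; at $n\to+\infty$ the coefficients again stabilize (limit point), while at $n\to-\infty$ both diagonal and off-diagonal entries grow geometrically like $q^{2n}$, producing limit-circle behavior with deficiency indices $(1,1)$ and a one-parameter family of self-adjoint extensions on that piece.

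To make Definition \ref{def:closedCasimir} a bona fide self-adjoint operator, I first prove that $E^\ast E$ and $K$ strongly commute. The algebraic identity $KE=qEK$ from Proposition \ref{prop:relationssuaresatisfied}, combined with the identification of $E^\ast$ as the closure of $E_0^\dag$ given by Proposition \ref{prop:charEandrelationEstarwithE0dag}, yields $KE^\ast E=E^\ast EK$ on the common invariant core $\cK_0$. Strong commutation then follows by diagonalizing $K$ (whose non-zero spectrum is discrete, consisting of the eigenvalues $q^{n/2}$) and noting that $E^\ast E$ preserves each eigenspace. The joint functional calculus exhibits $\tfrac12((q-q^{-1})^2E^\ast E-qK^2-q^{-1}K^{-2})$ as a self-adjoint Borel function of the strongly commuting pair $(E^\ast E,K)$, and its closure $\Om$ is a self-adjoint extension of $\Om_0$, since all three summands restrict to their algebraic counterparts on $\cK_0$. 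Commutation of $\Om$ with $K$ is then automatic and commutation with $E$ follows from the algebraic identity $[\OmA,\EA]=0$ verified on $\cK_0$ together with the functional calculus. Affiliation of $\Om$ to $\hat M$ is immediate from the affiliation of $E$ and $K$ given by Proposition \ref{prop:KEaffiliatedtohatM}.

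For uniqueness, any self-adjoint extension $\Om'$ of $\Om_0$ automatically agrees with $\Om$ on the singly infinite $\cK_\alpha$, since those pieces are essentially self-adjoint. On each doubly infinite $\cK_\alpha$ the self-adjoint extensions form a one-parameter circle parametrized by boundary conditions at the limit-circle end. To conclude $\Om'=\Om$ from affiliation, I observe that the spectral projections of $K$ lie in $\hat M$ and cut out each individual basis vector $f_{mpt}$ at the stable end of the orbit; composing such a rank-one projection with the resolvent $(z-\Om')^{-1}$ produces an element of $\hat M$ whose explicit Jacobi matrix entries encode the boundary parameter via the deficiency vector of $\Om_0$ on $\cK_\alpha$. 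Matching these entries against the corresponding entries of $(z-\Om)^{-1}$ forces the parameters to coincide.

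The main obstacle will be this uniqueness step together with the rigorous promotion to strong commutation. Both require explicit control of the deficiency subspace of $\Om_0$ on each doubly infinite orbit, which naturally leads to basic hypergeometric special functions of the kind treated in Appendix \ref{app:jacobi}; identifying the boundary condition singled out by $\hat M$-affiliation and matching it with the one produced by the functional calculus formula is the real technical content of the argument.
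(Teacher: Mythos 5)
Your overall strategy (decompose $\cK$ by the invariants of $\Om_0$, identify each block with a Jacobi operator, define $\Om$ through the strongly commuting pair $(E^\ast E,K)$) is the same as the paper's, and the well-definedness and affiliation parts are essentially right. But there are two genuine gaps. First, strong commutation of $\Om$ with $E$ does \emph{not} follow from $[\OmA,\EA]=0$ on $\cK_0$ plus functional calculus: $E$ is not a function of the pair $(E^\ast E,K)$, and since $\Om$ is \emph{not} the closure of $\Om_0$ (it is a proper self-adjoint extension on the blocks $\cK(p,0,+,+)$), an algebraic identity on the core $\cK_0$ gives no control over $\Om$ there. What is actually needed is the operator identity $(E^{+,+}_{p,0})^\ast E^{+,+}_{p,0}=E^{+,+}_{p,-1}\,(E^{+,+}_{p,-1})^\ast+c\,\Id$ between two a priori \emph{different} self-adjoint extensions of the same non-essentially-self-adjoint symmetric operator; proving that they coincide (Lemma \ref{lem:commutationEE*pmepet}) is the technical core of the theorem, and it uses the vector $w=\hat J\,Q(1,p,0)\,\hat J\, f_{0,p,1}$, its asymptotics $x\,w(x)=h(x^{-2})$ with $h$ differentiable and $h(0)\neq0$, and Theorem \ref{thm:Jacobioperatorforcasec=q}. (A smaller error: the deficiency indices are $(1,1)$ only on the blocks with $\ep=\et=+$ \emph{and} $m=0$; for $m\neq0$ the doubly infinite Jacobi operator is still essentially self-adjoint despite its exponentially growing coefficients, by the criterion $c\leq q^4$ in base $q^2$ from Theorem \ref{thm:spectraldecompL}.)

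Second, the uniqueness argument rests on a misconception. Affiliation of a self-adjoint extension $\Om'$ to $\hat M$ means that $\Om'$ commutes with every unitary of the commutant $\hat M'$; it imposes no commutation with elements of $\hat M$ itself. So the spectral projections of $K$, which lie in $\hat M$, give no constraint on $\Om'$ whatsoever (and they are not rank one: each eigenspace of $K$ is infinite dimensional, so they do not ``cut out individual basis vectors''). The constraint must come from $\hat M'=\hat J\hat M\hat J$, whose generators are the operators $\hat J\,Q(p_1,p_2,n)\,\hat J$; your proposal never brings these in, and without them nothing distinguishes the extension singled out by $\hat M$ from any other. The paper's route is: the projections $P_p$ onto $\bigoplus_{m,\ep,\et}\cK(p,m,\ep,\et)$ are spectral projections of $\hat JK\hat J$, hence lie in $\hat M'$ and force $\Om'$ to decompose over $p$; then on each deficient block $\cK(p,0,+,+)$ the vector $\hat J\,Q(1,p,0)\,\hat J\,f_{0,p,1}$ must lie in $D(\Om')$, and its boundary behaviour at the limit-circle end pins the boundary condition to the one of Theorem \ref{thm:Jacobioperatorforcasec=q}. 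Your ``match the resolvent entries'' step has no input capable of performing this selection.
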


The proofs of these statements are given Section \ref{sec:Casimiroperator}.
At the same time it will emerge that $\Om$ is
not the closure of $\Om_0$.

The Casimir element $\Om_0$ belongs to the center of $\cU$,
and hence commutes with $E_0$ and $K_0$ in $\cL^+(\cK_0)$.
On the Hilbert space level, this result has an analogue to the
extent that the Casimir operator $\Om$ strongly commutes with
$E$ and $K$, see Theorem \ref{thm:Casimirwelldefinedandchar}.
However, since $(M,\De)$ is a quantization of
the normalizer of $SU(1,1)$ in $SL(2, \C)$,
and not of $SU(1,1)$, it is to be
expected that the Casimir operator does not commute with
all elements of $\hat{M}$.
Indeed, the Casimir
operator satisfies a graded commutation relation with
the elements of $\hat{M}$, i.e. there exists a
decomposition $\hat{M}=\hat{M}_+\oplus\hat{M}_-$ such that
the Casimir operator commutes with the elements of
$\hat{M}_+$ and anti-commutes with elements of $\hat{M}_-$,
see Proposition \ref{prop:decompMintoM+andM-andgradedcommutation}.

In order to formulate the graded commutation relation
involving the Casimir operator we provide $\cK$ and $\hat{M}$
with a natural $\Z_2$-grading.

\begin{defn}\label{def:defKpmandMpm}
We define the closed subspaces $\cK_+,\cK_- \subseteq \cK$\index{K@$\cK_+$}\index{K@$\cK_-$} as
$$
\cK_{\pm} = \overline{\Span}\{\,f_{m,p,t} \mid m \in \Z, p,t \in I_q
\text{ so that } \sgn(p t) = \pm \,\},
$$
So $\cK=\cK_+\oplus\cK_-$.
We define the $\si$-weakly closed subspaces
$\hat{M}_+, \hat{M}_- \subseteq \hat{M}$ as
$$\index{M@$\hat{M}_+$}\index{M@$\hat{M}_-$}
\hat{M}_+ = \{\, x \in \hat{M} \mid x \,\cK_\pm \subseteq \cK_\pm\,\}
\hspace{7ex} \text{ and } \hspace{7ex}
\hat{M}_- = \{\, x \in \hat{M} \mid x\, \cK_\pm \subseteq \cK_\mp\,\}\ .
$$
\end{defn}

Then $\hat{M}_+$ is a von Neumann algebra, and
$\hat{M}_-$ is a self-adjoint subspace so that
$\hat{M}_\pm\,\hat{M}_\mp \subseteq \hat{M}_-$ and
$\hat{M}_- \,\hat{M}_- \subseteq \hat{M}_+$.
In order to get a real $\Z_2$-grading on $\hat{M}$,
we need the following result.

\begin{prop}\label{prop:decompMintoM+andM-andgradedcommutation}
$\hat{M} = \hat{M}_+ \oplus \hat{M}_-$.
Let $x \in \hat{M}_+$ and $y \in \hat{M}_-$, then
$x\,\Om \subseteq \Om\,x$ and $y\,\Om \subseteq -\,\Om\,y$.
\end{prop}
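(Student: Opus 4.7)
Proof plan. Introduce the self-adjoint unitary $V$ on $\cK$ determined by $V f_{mpt} = \sgn(pt)\, f_{mpt}$, so that $\cK_\pm$ are the $(\pm 1)$-eigenspaces of $V$ and the definition of $\hat{M}_\pm$ in Definition \ref{def:defKpmandMpm} becomes $\hat{M}_\pm = \{x \in \hat{M} : VxV = \pm x\}$. The strategy is to first show that conjugation by $V$ preserves $\hat{M}$; the direct sum decomposition then follows because every $x \in \hat{M}$ splits uniquely as $x = \hf(x+VxV) + \hf(x-VxV)$ with the two summands in $\hat{M}_+$ and $\hat{M}_-$ respectively, and $\hat{M}_+ \cap \hat{M}_- = 0$ is automatic.

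To establish $V\hat{M}V = \hat{M}$, I would invoke the explicit form of the multiplicative unitary $W \in B(\cK \ot \cK)$ from \eqref{eq:Wexplicitinapxy}: inspection of the matrix coefficients of $W$, which are built from the basic hypergeometric functions $a_p(\cdot,\cdot)$ together with the prescribed sign $s(\cdot,\cdot)$ from Definition \ref{def:functionap}, shows that $W$ satisfies $(V \ot V)\, W\, (V \ot V) = W$. Since $\hat{M}$ is the $\si$-strong-$\ast$ closure of $\{(\om \ot \Id)(W) : \om \in B(\cK)_\ast\}$ by \eqref{eq:defdualhatM}, the relation $V(\om \ot \Id)(W)V = (\om(V\cdot V) \ot \Id)(W)$ immediately yields $V \hat{M} V = \hat{M}$.

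For the graded commutation, I first verify $V \Om = \Om V$: the three summands on the right of \eqref{eq:actioncasimir} all land on basis vectors $f_{m,p',t'}$ with $\sgn(p't') = \sgn(pt)$, since the shifts $p \mapsto q^{\pm 1}p$ and $t \mapsto q^{\pm 1}t$ preserve signs. Hence $V$ commutes with $\Om_0$ on $\cK_0$ and, by passing to the closure, with $\Om$. Now for $x \in \hat{M}_+$: I would argue that $\hat{M}_+$ coincides with the von Neumann algebra generated by $K$, $E$ and the bounded spectral functions of $\Om$, each of which strongly commutes with $\Om$ by Theorem \ref{thm:Casimirwelldefinedandchar}; consequently every element of $\hat{M}_+$ strongly commutes with $\Om$, and in particular $x\Om \subseteq \Om x$. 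For $y \in \hat{M}_-$: one exhibits a distinguished grading-flipping partial isometry $v \in \hat{M}_-$ satisfying $v\Om = -\Om v$, built from the additional generator of $\hat{M}$ beyond $\su$ provided by Theorem \ref{thm:generatorsforhatM}. Since $\hat{M}_-$ is an $\hat{M}_+$-bimodule equal to $\hat{M}_+ \cdot v$, any $y \in \hat{M}_-$ may be written $y = av$ with $a \in \hat{M}_+$, and then $y\Om = av\Om = -a\Om v = -\Om av = -\Om y$.

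The main obstacle lies in the first step: proving $V \hat{M} V = \hat{M}$ through the symmetry $(V \ot V) W (V \ot V) = W$ is essentially a delicate sign-tracking calculation inside the basic hypergeometric kernel of $W$, where the parity $\sgn(pt)$ must be shown to be invariant under the full set of shifts appearing in $a_p$ and its associated sign function $s$. Parallel to this, the identification of the grading-flipping element $v \in \hat{M}_-$ with $v\Om = -\Om v$ depends on having an explicit generator set for $\hat{M}$, which is the content of Theorem \ref{thm:generatorsforhatM}. Once these two structural inputs are secured, the remaining assertions of the proposition reduce to the routine algebraic manipulations sketched above.
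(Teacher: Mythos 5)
Your first step (the decomposition $\hat{M}=\hat{M}_+\oplus\hat{M}_-$ via the grading unitary $V$ and the symmetry $(V\ot V)W(V\ot V)=W$) is a reasonable variant of what the paper does through Proposition \ref{prop:QppngeneratedualM}, and the sign check does go through. The graded commutation, however, is where the substance lies, and there your argument breaks down in two ways. First, it is circular: you invoke Theorem \ref{thm:generatorsforhatM} to produce the grading-flipping partial isometry $v$, but in the paper that theorem is a \emph{consequence} of the present proposition --- the polar-type decomposition $Q(p_1,p_2,n)=U\,H(\Om)\,P$ that yields the generators $U_0^{+-},U_0^{-+}$ is derived from the fact that $Q(p_1,p_2,n)$ intertwines $\Om$ with $\sgn(p_1p_2)\Om$, i.e.\ from exactly the statement being proved. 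Second, the intermediate claim that $\hat{M}_+$ is generated by $K$, $E$ and bounded functions of $\Om$ is false: the operator $U_0^{--}=U_0^{+-}U_0^{-+}$ lies in $\hat{M}_+$ but maps $\cK(p,m,\ep,\et)$ to $\cK(p,m,-\ep,-\et)$, whereas the von Neumann algebra generated by $K$, $E$ and $\Om$ preserves each subspace $\bigoplus_m\cK(p,m,\ep,\et)$. Likewise $\hat{M}_-=\hat{M}_+\cdot v$ cannot hold for a single partial isometry $v$ that is not unitary (and $U_0^{\pm\mp}$ are genuinely partial isometries, since the discrete spectra of $\Om$ on $\cK(p,m,\ep,\et)$ and $\cK(p,m,\ep,-\et)$ do not match up to sign).

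More fundamentally, your proposal never engages with the central analytic difficulty: $\Om$ is \emph{not} the closure of $\Om_0$ but a specific self-adjoint extension singled out by affiliation to $\hat{M}$, so phrases like ``by passing to the closure'' do not transfer commutation with $\Om_0$ on $\cK_0$ to commutation with $\Om$. The paper's proof has three essential ingredients that your sketch omits: (i) the algebraic identity $\langle Q(p_1,p_2,n)u,\Om_0 v\rangle=\sgn(p_1p_2)\langle Q(p_1,p_2,n)\Om_0 u,v\rangle$ on $\cK_0$ (Lemma \ref{lem:sgncommutationQppnwithCasimir0}, a genuine computation with the contiguous relations for $a_p$); (ii) the verification, via the asymptotics of $x\mapsto x\,(Q(p_1,p_2,n)f_{mpt})(x)$ in Lemmas \ref{lem:behaviourhatJQppnhatJincase++} and \ref{lem:vinD(Om)}, that $Q(p_1,p_2,n)f_{mpt}$ actually lies in the domain of the correct self-adjoint extension $\Om$ (this is where the boundary condition of the non-essentially-self-adjoint Jacobi piece enters); and (iii) the bootstrap from $x\Om_0\subseteq\pm\Om x$ to $x\Om\subseteq\pm\Om x$ using the uniqueness of the self-adjoint extension affiliated to $\hat{M}$ (Proposition \ref{prop:OmuniqueextensionofOm0}), applied to unitaries in $\hat{M}_+$ and to the polar decomposition $y=v|y|$ of a general $y\in\hat{M}_-$. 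Without (ii) and (iii) the graded commutation relations cannot be established, so the proposal as it stands does not prove the proposition.
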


Proposition \ref{prop:decompMintoM+andM-andgradedcommutation}
implies that
$E$ and $K$ do not suffice to generate $\hat{M}$
because of Theorem \ref{thm:Casimirwelldefinedandchar}.
In order to determine $\hat{M}$,
Proposition \ref{prop:decompMintoM+andM-andgradedcommutation} also
provides the key ingredient once we have
determined the spectral decomposition of $\Om$
explicitly. Indeed, Proposition \ref{prop:decompMintoM+andM-andgradedcommutation}
implies that elements of $\hat{M}$ can be described
by mapping (generalized) eigenvectors for the
eigenvalue $\la$ of the Casimir operator to
(generalized) eigenvectors for the
eigenvalue $\pm\la$ of the Casimir operator.
For this we have to study the Casimir operator restricted to
suitable invariant subspaces on which the spectrum of
$\Om$ has simple spectrum, which is done in Section \ref{sec:Casimiroperator}.

We define bounded operators on the Hilbert space
$\cK$ of the GNS-representation using the multiplicative
unitary $W\in B(\cK\ot\cK)$. Using the normal
functionals $\om_{f,g}\in B(\cK)_\ast$ defined
by $\om_{f,g}(x) = \langle x\, f,g\rangle$, $f,g\in\cK$,  we define
\begin{equation}\label{eq:defQppn} \index{Q@$Q(p_1,p_2,n)$}
Q(p_1,p_2,n) =(\om_{f,g}\ot \Id )(W^\ast) \colon \cK \to
\cK, \qquad f = f_{0,p_1,1},\ g = f_{n,p_2,1}.
\end{equation}

\begin{prop}\label{prop:QppngeneratedualM}
The operators $Q(p_1,p_2,n) \in B(\cK)$, $p_1,p_2 \in I_q$, $n \in \Z$,
are in $\hat M$ and the linear span is strong-$\ast$ dense in
$\hat M$. Moreover, $Q(p_1,p_2,n)\in \hat{M}_{\sgn(p_1p_2)}$.
\end{prop}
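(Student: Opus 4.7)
The plan is to establish the three assertions in order: (i) $Q(p_1,p_2,n)\in\hat{M}$, (ii) $\sigma$-strong-$\ast$ density of the linear span in $\hat{M}$, and (iii) the grading $Q(p_1,p_2,n)\in\hat{M}_{\sgn(p_1p_2)}$. Part (i) is immediate: since $W\in M\ot\hat{M}$ we have $W^\ast\in M\ot\hat{M}$, and slicing the first leg by any normal functional produces an element of $\hat{M}$.

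For (ii), by \eqref{eq:defdualhatM} (applied to $W^\ast$ by taking adjoints), the linear span of $\{(\om\ot\Id)(W^\ast)\mid\om\in B(\cK)_\ast\}$ is $\sigma$-strong-$\ast$ dense in $\hat{M}$. Since $W^\ast\in M\ot\hat{M}$ the slice depends only on $\om|_M$, and the predual $M_\ast$ factors as the projective tensor product $L^1(\T)\,\widehat{\otimes}\,T(L^2(I_q))$. A direct computation identifies $\om_{f_{0,p_1,1},f_{n,p_2,1}}|_M$ with the product functional $\om_{1,\ze^n}\ot\om_{\de_{p_1},\de_{p_2}}$. The first factor is represented by $\ze^{-n}\in L^1(\T)$, whose span (the trigonometric polynomials) is norm-dense in $L^1(\T)$; the second factors span the finite-rank operators, which are norm-dense in $T(L^2(I_q))$. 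Hence the span of these product functionals is norm-dense in $M_\ast$, and norm continuity of the slice map transports this to $\sigma$-strong-$\ast$ density of $\{Q(p_1,p_2,n)\}$ in $\hat{M}$.

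For (iii), the goal is to show that every nonzero matrix element $\langle W^\ast(f_{0,p_1,1}\ot f_{m,p,t}),\,f_{n,p_2,1}\ot f_{m',p',t'}\rangle$ forces $\sgn(p't')=\sgn(p_1p_2)\sgn(pt)$; this gives $Q(p_1,p_2,n)\,\cK_\ep\subseteq\cK_{\ep\sgn(p_1p_2)}$ and hence the grading. Two commutation identities feed into this. First, the Hopf relation $\bDe(\bolde)=\bolde\ot\bolde$ passes to $\De(e)=e\ot e$ on $M$; via $\De(x)=W^\ast(1\ot x)W$ it rearranges to $W^\ast(1\ot e)=(e\ot e)W^\ast$, and since $e\,f_{m,p,t}=\sgn(p)\,f_{m,p,t}$, comparing matrix elements yields $\sgn(p)=\sgn(p_2\,p')$. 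Second, I invoke \eqref{eq:hatJotJWhatJotJisWster}: because $Jf_{m,p,t}=f_{-m,t,p}$ swaps the last two coordinates and, by \eqref{eq:expressionhatJonfmpt}, $\hat{J}$ merely introduces a real scalar on $f_{0,p_1,1}$ and $f_{n,p_2,1}$, the displayed matrix element equals a real scalar times the complex conjugate of $\langle W(f_{0,p_1,1}\ot f_{-m,t,p}),\,f_{-n,p_2,1}\ot f_{-m',t',p'}\rangle$. Applying the same $e$-commutation, now in the form $(1\ot e)W=W(e\ot e)$, to this $W$-matrix element forces $\sgn(t')=\sgn(p_1\,t)$. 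Multiplying the two sign relations yields the desired identity.

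Parts (i) and (ii) are essentially bookkeeping with slice-map theory and the explicit tensor-product factorization of $M$. The genuine work is in (iii): the main care point is the anti-linearity of $\hat{J}\ot J$ and the scalar factors coming from \eqref{eq:expressionhatJonfmpt}, but since those scalars are real they do not affect the vanishing constraints, and the two sign-preservation relations combine directly into the claimed $\Z_2$-grading.
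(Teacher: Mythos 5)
Your proof is correct in substance but follows a genuinely different route from the paper's at two of the three stages. For membership in $\hat M$ the paper does not use the slice-map argument on $W^\ast\in M\ot\hat M$; it instead conjugates by $\hat J\ot J$ via \eqref{eq:hatJotJWhatJotJisWster} and identifies $Q(p_1,p_2,n)$ (up to sign) with $J\,(\om\ot\Id)(W)\,J=\hat R\bigl(((\om\ot\Id)(W))^\ast\bigr)$, which lies in $\hat M$ since $\hat R$ preserves $\hat M$. Your slice-map argument is cleaner and equally valid. For the grading, the paper computes the matrix elements of $W^\ast$ explicitly via \eqref{eq:Wexplicitinapxy} and reads the sign constraints off the support condition $\sgn(xy)=\sgn(p)$ of the functions $a_p$ (this is Lemma \ref{lem:explicitactionQppn}); you instead use the group-like element $e$ and the relations $W^\ast(1\ot e)=(e\ot e)W^\ast$ combined with the $\hat J\ot J$ conjugation, which is more conceptual. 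The one point you must justify is the operator-level identity $\De(e)=e\ot e$: the paper only records the Hopf-algebraic relation $\bDe(\bolde)=\bolde\ot\bolde$, and the von Neumann algebraic version is a result of \cite{KoelKCMP} not restated here. It is true, and can alternatively be verified directly from \eqref{eq:Wexplicitinapxy} using the support of $a_p$ --- but note that this verification is essentially the computation the paper does anyway, so the conceptual detour only pays off if you take $\De(e)=e\ot e$ as given. Your density argument is close in spirit to the paper's (both rest on norm density of the span of the relevant matrix-coefficient functionals in $M_\ast$ and norm continuity of slicing); be aware that ``$M_\ast$ is the projective tensor product of the preduals'' is not literally correct as stated --- what you actually need, and what is true, is that the span of product functionals with factors from norm-dense subsets is norm-dense in $(L^\infty(\T)\ot B(L^2(I_q)))_\ast$. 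Finally, the paper's Lemma \ref{lem:QppnareallinhatM} proves slightly more than the Proposition asks, namely a Kaplansky-type approximation by a net bounded by $\|x\|$; your argument does not yield that refinement, but it is not part of the statement under review.
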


Proposition \ref{prop:QppngeneratedualM} is the key to the proof
of Proposition \ref{prop:decompMintoM+andM-andgradedcommutation}, and
describes sufficiently many elements of $\hat{M}$.

Since the operators $Q(p_1,p_2,n)$ span $\hat{M}$ linearly, we
calculate the structure constants.\index{structure constants}

\begin{prop}\label{prop:structureconstQs}
For $p_1,p_2,r_1,r_2 \in I_q$, $n,m \in \Z$,  we have
$Q(p_1,p_2,n)\, Q(r_1,r_2,m) = 0$ in case $\vert\frac{p_2}{p_1}\vert \not= q^m$
 or  $\vert \frac{r_1}{r_2}\vert \not= q^n$. In case
$\vert \frac{p_2}{p_1}\vert = q^m$
and  $\vert \frac{r_1}{r_2}\vert = q^n$ we have
\[
Q(p_1,p_2,n)\, Q(r_1,r_2,m) = \sum_{x_1,x_2 \in I_q} a_{x_1}(r_1,p_1)\, a_{x_2}(r_2, p_2) \,
Q(x_1,x_2,n+m)
\]
where the coefficients $a_{x_i}(r_i,p_i)$, $i=1,2$, are defined in
Definition \ref{def:functionap}.
\end{prop}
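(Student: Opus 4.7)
The plan is to derive the formula from the general fact that multiplication in $\hat{M}$ corresponds to convolution against the comultiplication of $M$, and then to compute the resulting matrix element explicitly using the expression of $W$ in terms of the functions $a_p$.

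First I would recall that the pentagon equation, together with the corepresentation property $(\De\ot\Id)(W)=W_{13}W_{23}$, implies (by taking adjoints) that
\[
(\omega_1\ot\Id)(W^\ast)\cdot(\omega_2\ot\Id)(W^\ast) \,=\, \bigl((\omega_2\ot\omega_1)\circ\De\ot\Id\bigr)(W^\ast)
\]
for all $\omega_1,\omega_2\in B(\cK)_\ast$. Applying this to the definition $Q(p_1,p_2,n)=(\omega_{f,g}\ot\Id)(W^\ast)$ with $f=f_{0,p_1,1}$, $g=f_{n,p_2,1}$, and the analogous vectors $f'=f_{0,r_1,1}$, $g'=f_{m,r_2,1}$ for $Q(r_1,r_2,m)$, reduces the proposition to computing the normal functional $x\mapsto (\omega_{f',g'}\ot\omega_{f,g})(\De(x))$ on $M$ and recognizing its $(\cdot\ot\Id)(W^\ast)$-image as a linear combination of $Q(x_1,x_2,n+m)$.

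Next I would use the identity $\De(x)=W^\ast(1\ot x)W$ to rewrite
\[
(\omega_{f',g'}\ot\omega_{f,g})(\De(x)) \,=\, \bigl\langle (1\ot x)\,W(f'\ot f),\,W(g'\ot g)\bigr\rangle
\]
and expand $W(f_{0,r_1,1}\ot f_{0,p_1,1})$ and $W(f_{m,r_2,1}\ot f_{n,p_2,1})$ in the orthonormal basis $\{f_{M,P,T}\ot f_{M',P',T'}\}$ using the explicit formula \eqref{eq:Wexplicitinapxy}. Because the third indices in both tensor factors are $1$, the expansion of each specializes to a sum in which the coefficients are, up to explicit scalar factors, of the form $a_{x_1}(r_1,p_1)$ for the first expansion and $\overline{a_{x_2}(r_2,p_2)}$ for the second, with the output vectors of the form $\bigl(\text{basis vector}\bigr)\ot f_{N,x_i,1}$; moreover the formula for $W$ carries a Kronecker delta in the first leg that relates the absolute values of the $p$-entries to the $\Z$-degree $n$ or $m$. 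Collapsing the inner product in the first tensor leg by orthogonality forces the surviving terms to satisfy $|p_2/p_1|=q^m$ and $|r_1/r_2|=q^n$; outside of these cases the inner product vanishes identically, which yields the first assertion.

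In the non-vanishing case, after the orthogonality collapse, the remaining second-leg inner product is exactly $\langle x\,f_{0,x_1,1},f_{n+m,x_2,1}\rangle=\omega_{f_{0,x_1,1},f_{n+m,x_2,1}}(x)$, with the total degree $n+m$ arising from adding the first-index shifts produced by $W$ on the two sides. This gives the identification
\[
(\omega_{f',g'}\ot\omega_{f,g})\De \,=\, \sum_{x_1,x_2\in I_q} a_{x_1}(r_1,p_1)\,a_{x_2}(r_2,p_2)\,\omega_{f_{0,x_1,1},\,f_{n+m,x_2,1}},
\]
(with the real-valuedness of $a_p$ absorbing the conjugate from the $g,g'$ side). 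Applying $(\cdot\ot\Id)(W^\ast)$ term-by-term, and continuity of this map, delivers the claimed structure-constant formula.

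The main technical obstacle is the second step: one must bookkeep the explicit $a_p$-expansion of $W$ carefully enough to verify that, when the two inputs have fixed third index $1$, the delta constraints arising from the first-leg inner product reduce exactly to the two conditions $|p_2/p_1|=q^m$ and $|r_1/r_2|=q^n$, and that the residual coefficients factor cleanly as $a_{x_1}(r_1,p_1)\,a_{x_2}(r_2,p_2)$ with no extraneous sign or normalization factor. This is essentially a computation inside the explicit Clebsch--Gordan framework through which the functions $a_p$ were originally defined.
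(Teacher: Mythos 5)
Your overall strategy is exactly the paper's: the identity $(\omega_1\ot\Id)(W^\ast)(\omega_2\ot\Id)(W^\ast)=\bigl((\omega_2\ot\omega_1)\circ\De\ot\Id\bigr)(W^\ast)$ is the same rewriting of the pentagonal equation used in \eqref{eq:productofQs1}, and the rest is indeed the expansion of $W(f'\ot f)$ and $W(g'\ot g)$ via \eqref{eq:Wactionexplicitinapxy} followed by an orthogonality collapse. However, your accounting of that expansion has a concrete gap. Each term of $W(f_{0,r_1,1}\ot f_{0,p_1,1})$ carries \emph{two} $a$-factors, namely $a_{y_1}(\sgn(x_1p_1)y_1r_1,1)\,a_{x_1}(r_1,p_1)$, summed over a pair $(x_1,y_1)$, and the second tensor leg of the output is $f_{\chi(y_1p_1),x_1,y_1}$ — its third index is the auxiliary variable $y_1$, not $1$ as you assert. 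The same holds for $W(f_{m,r_2,1}\ot f_{n,p_2,1})$ with a pair $(x_2,y_2)$. The reduction of the second-leg functional to a $Q$-operator via \eqref{eq:generalcasereducestoQppn} forces $y_1=y_2$, and the first-leg inner product then produces the constraints $|p_2/p_1|=q^m$ and $|r_1/r_2|=q^n$ as you predict; but after all deltas are used there \emph{remains} a sum over $y_1$, namely $\sum_{y_1} y_1^2\,\bigl(a_{y_1}(y_1|r_1|,1)\bigr)^2$, multiplying each term $a_{x_1}(r_1,p_1)a_{x_2}(r_2,p_2)Q(x_1,x_2,n+m)$.

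This residual sum is not bookkeeping: it equals $1$ only because of the symmetry relations \eqref{eq:symmetryforapxy} (which convert $y_1^2 a_{y_1}(y_1|r_1|,1)^2$ into $a_1(y_1,y_1|r_1|)^2$) combined with the orthogonality relations of Proposition \ref{prop:orthorelapxy} applied on the line $\ell_{|r_1|}$. This is precisely where the unitarity of $W$ enters the computation, and without invoking it your claimed structure constants would carry an unevaluated extra factor. So the proposal is the right route, and its first assertion (the vanishing conditions) goes through as described, but the clean factorization in the second assertion needs Proposition \ref{prop:orthorelapxy} as an explicit input rather than being absorbed into "careful bookkeeping."
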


Since $\hat{M}'=\hat{J}\hat{M}\hat{J}$, see \eqref{eq:JMJisMcommutant}
for the dual von Neumann algebra, Proposition \ref{prop:QppngeneratedualM}
leads to the following.

\begin{cor}\label{cor:propQppngeneratedualM}
The operators $\hat{J}Q(p_1,p_2,n)\hat{J} \in B(\cK)$, $p_1,p_2 \in I_q$, $n \in \Z$,
are in $\hat{M}'$ and the linear span is strong-$\ast$ dense in
$\hat{M}'$.
\end{cor}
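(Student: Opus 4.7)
The plan is to deduce the corollary directly from Proposition \ref{prop:QppngeneratedualM} together with the commutant identity $\hat{M}'=\hat{J}\hat{M}\hat{J}$, which is the instance of \eqref{eq:JMJisMcommutant} for the left invariant weight $\hat{\vp}$ on the dual quantum group. This identity is available in our situation because, as set up in Section \ref{sec:vNalgquantumgroups} and used throughout Section \ref{sec:VonNeumanAlgDualQuantumGroup}, the dual von Neumann algebra $\hat M$ acts on the GNS-space $\cK$ of $\vp$, and $\hat J$ is the modular conjugation of $\hat\vp$ on this same space, the explicit action of which is recorded in \eqref{eq:expressionhatJonfmpt}.

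The membership part is then immediate: by Proposition \ref{prop:QppngeneratedualM} one has $Q(p_1,p_2,n)\in \hat M$ for all $p_1,p_2\in I_q$ and $n\in\Z$, hence $\hat J\,Q(p_1,p_2,n)\,\hat J\in \hat J\hat M\hat J=\hat M'$.

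For the density statement, the key observation to spell out is that the map $\Phi\colon B(\cK)\to B(\cK)$, $x\mapsto \hat J\,x\,\hat J$, is a linear, involutive bijection that is a homeomorphism for the $\sigma$-strong-$\ast$ topology. This is a standard consequence of $\hat J$ being an antilinear isometric involution: for every $\xi\in\cK$ one has $\|\Phi(x)\xi\|=\|x(\hat J\xi)\|$ and $\|\Phi(x)^{\ast}\xi\|=\|x^{\ast}(\hat J\xi)\|$, so strong-$\ast$ convergence is preserved under $\Phi$ (and under $\Phi^{-1}=\Phi$). Moreover $\Phi$ restricts to a bijection $\hat M\to\hat M'$. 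Proposition \ref{prop:QppngeneratedualM} supplies a linear subspace $L\subseteq\hat M$ spanned by the operators $Q(p_1,p_2,n)$ that is strong-$\ast$ dense in $\hat M$; applying $\Phi$ yields a linear subspace $\Phi(L)\subseteq\hat M'$ spanned by the operators $\hat J\,Q(p_1,p_2,n)\,\hat J$ that is strong-$\ast$ dense in $\hat M'$.

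There is no real obstacle here: the proof reduces to invoking \eqref{eq:JMJisMcommutant} for $\hat M$ and noting the topological behaviour of conjugation by the antilinear isometry $\hat J$. Nothing further from the hard analysis of Sections \ref{sec:VonNeumanAlgDualQuantumGroup} and \ref{sec:BHSresults} is required; the content is entirely packaged in Proposition \ref{prop:QppngeneratedualM}.
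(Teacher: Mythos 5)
Your proof is correct and takes essentially the same route as the paper: the paper likewise deduces the corollary from Proposition \ref{prop:QppngeneratedualM} (via Lemma \ref{lem:QppnareallinhatM}) together with the Tomita--Takesaki identity $\hat{M}'=\hat{J}\hat{M}\hat{J}$. The only difference is that you spell out explicitly that conjugation by the antilinear isometric involution $\hat{J}$ is a strong-$\ast$ homeomorphism preserving density, a point the paper leaves implicit.
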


The main problem in proving Proposition \ref{prop:QppngeneratedualM} is that
the operators $Q(p_1,p_2,n)$ do not preserve the dense subspace
$\cK_0$. We have the following polar-type decomposition of these
operators.

\begin{lemma}\label{lem:polardecompositionQppn}
For fixed $p_1,p_2\in I_q$, $n\in\Z$, there exists an orthogonal
projection $P=P(p_1,p_2,n)\in B(\cK)$, a continuous
function $H(\cdot)=H(\cdot;p_1,p_2,n)$\index{H@$H(x;p_1,p_2,n)$} and a partial isometry
$U=U^{\sgn(p_1),\sgn(p_2)}_n$ so that
\[
Q(p_1,p_2,n) =  U\, H(\Om) \, P.
\]
\end{lemma}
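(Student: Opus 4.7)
The plan is to realize the formula as the polar decomposition $Q(p_1,p_2,n) = U\,|Q(p_1,p_2,n)|$, while additionally showing that the positive part $|Q(p_1,p_2,n)|$ is a continuous function of the Casimir operator $\Om$ times the initial projection.

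First, I would observe that by Proposition \ref{prop:QppngeneratedualM} we have $Q:=Q(p_1,p_2,n)\in\hat{M}_{\sgn(p_1p_2)}$, so the graded commutation relation of Proposition \ref{prop:decompMintoM+andM-andgradedcommutation} gives $Q\,\Om \subseteq \pm\,\Om\,Q$. Taking the adjoint relation and composing, the self-adjoint operator $Q^\ast Q$ strongly commutes with $\Om$, regardless of sign. Let $P=P(p_1,p_2,n)$ be the orthogonal projection onto $\overline{\mathrm{range}(Q^\ast)} = (\ker Q)^\perp$, which is the initial space in the standard polar decomposition $Q = U\,|Q|$, and let $U$ denote the partial isometry from that decomposition; then $U$ has initial space $P\cK$, and we already obtain $Q = U\,|Q|\,P$.

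The key step is to argue that $|Q|$ is of the form $H(\Om)$ on $P\cK$ for a continuous function $H$. Here I would invoke the spectral analysis of $\Om$ carried out in Section \ref{sec:Casimiroperator}: the Hilbert space $\cK$ decomposes into $\su$-invariant subspaces (corresponding to the principal unitary series, discrete series and strange series components) on which $\Om$ acts with simple spectrum. Since $Q^\ast Q$ commutes with $\Om$ and preserves each such subspace (as the commutation/anticommutation with $\Om$ preserves eigenspaces of $\Om^2$), the spectral theorem on each simple-spectrum component forces $Q^\ast Q$ to be a bounded Borel function $h(\Om)$ on $P\cK$. Setting $H(\la)=\sqrt{h(\la)}$ yields $|Q| = H(\Om)\,P$, giving the required factorisation $Q = U\,H(\Om)\,P$.

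To verify the continuity of $H$ and to see that $U$ depends only on the discrete data $\sgn(p_1)$, $\sgn(p_2)$, $n$, I would compute the matrix elements of $Q$ explicitly using the formula \eqref{eq:Wexplicitinapxy} for $W^\ast$ in terms of the basic hypergeometric functions $a_p$ from Definition \ref{def:functionap}, combined with the structure constants of Proposition \ref{prop:structureconstQs} applied to $Q^\ast Q$. In each irreducible $\su$-summand, the resulting coefficients are manifestly continuous functions of the spectral parameter of $\Om$, and the discrete permutation of basis vectors that realises $U$ is dictated solely by $\sgn(p_1)$, $\sgn(p_2)$, and the shift $n$.

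The main obstacle will be the bookkeeping that aligns the eigenbasis of $\Om$ given by Section \ref{sec:Casimiroperator} with the standard basis $\{f_{m,p,t}\}$ used in the definition of $Q$, and checking that, after this change of basis, the magnitude $|p_1|,|p_2|$ dependence is entirely absorbed into the scalar factor $H(\Om)$ so that $U$ is universal in $(\sgn p_1,\sgn p_2,n)$. Once this decoupling is in place, continuity of $H$ is automatic from the continuity of $a_p(\cdot,\cdot)$ in its arguments, and the lemma follows.
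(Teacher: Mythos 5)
Your overall strategy --- conjugate by the unitary diagonalizing $\Om$, use the graded commutation relation to see that $Q^\ast Q$ strongly commutes with $\Om$, and exploit the simplicity of the spectrum of $\Om$ on each block $\cK(p,m,\ep,\et)$ --- is the same skeleton as the paper's argument in Section \ref{sec:generatorshatM}. However, there is a genuine gap in the way you propose to produce $U$ and $H$. You take the literal polar decomposition $Q=U\,|Q|$ and set $H=\sqrt{h}$ where $Q^\ast Q=h(\Om)$ on $P\cK$; this forces $H\geq 0$, and then the partial isometry $U$ of the polar decomposition is \emph{not} of the form $U^{\sgn(p_1),\sgn(p_2)}_n$. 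The point is that, in the spectral picture, $Q(p_1,p_2,n)$ acts as $g\mapsto C(x)\,g(\sgn(p_1p_2)x)$ where $C$ is (up to a unimodular correction) the function $S(\,\cdot\,;p_1,p_2,n)$, a multiple of a ${}_2\vp_1$-series whose \emph{sign} on the spectrum of $\Om$ depends on $|p_1|,|p_2|$ and not only on $\sgn(p_1),\sgn(p_2),n$. With a positive $H$ this sign pattern must be carried by the partial isometry, so the polar-decomposition isometry $U(p_1,p_2,n)$ genuinely depends on the moduli of $p_1,p_2$. This is exactly why the paper does \emph{not} use the polar decomposition itself: it splits $S=B\cdot N$ (Lemma \ref{lem:S=BN}), replaces $U(p_1,p_2,n)$ by $U^{\si,\tau}_n=\Up^\ast\bigl(\sgn(N(\,\cdot\,))V(p_1,p_2,n)\bigr)\Up$, and correspondingly lets $H$ be a \emph{signed} continuous function (Proposition \ref{prop:polardecompQ}); the factor $\sgn(N(\Om))$ is a difference of spectral projections of $\Om$ that cannot be absorbed into a positive $H$. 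Your closing remark that the $|p_1|,|p_2|$-dependence "is entirely absorbed into the scalar factor $H(\Om)$" is therefore the crux, and it fails under your normalization $H=|Q|^{1/2}$-type positivity.

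A second, smaller gap: you assert that continuity of $H$ is "automatic from the continuity of $a_p(\cdot,\cdot)$". In the paper the identification of the multiplier $C(x)$ is the hard analytic step (Lemma \ref{lem:actionTC(x)}): since $Q(p_1,p_2,n)$ does not preserve $\cK_0$, one cannot simply read off matrix elements in the $\Om$-eigenbasis; instead one applies $T(p_1,p_2,n)$ to the eigenfunctions $g_z$ and extracts $C$ from the asymptotics as $z\to 0$ via Lemma \ref{lem:fundlemasymptoticsQ} and the asymptotics \eqref{eq:asymptoticsgz} of the Al-Salam--Chihara polynomials and little $q$-Jacobi functions. Also note that the paper's $P$ is the spectral projection of $K$ for the eigenvalue $\sqrt{q^{-n}|p_2/p_1|}$ rather than the projection onto $(\ker Q)^\perp$; since the lemma only asserts existence of some projection this is not an error on your part, but it signals that the intended decomposition is a "polar-type" one built from $K$, $\Om$ and a universal isometry, not the honest polar decomposition.
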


Since the elements $H(\Om)$ and $P$, as element of the spectral decomposition of $K$, are in
the von Neumann algebra generated by $E$ and $K$, we only need to incorporate
the partial isometries.
Now we can state the main theorem of this section, which gives an
explicit description of the von Neumann algebra for the
dual locally compact quantum group.

\begin{thm}\label{thm:generatorsforhatM}
The von Neumann algebra $\hat{M}$ is generated by
$K$, $E$, $U^{+-}_0$, $U^{-+}_0$.
\end{thm}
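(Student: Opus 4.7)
The strategy is to use Proposition \ref{prop:QppngeneratedualM} to reduce the claim to containments of the operators $Q(p_1,p_2,n)$, and then to use the polar-type factorization in Lemma \ref{lem:polardecompositionQppn} to isolate the partial isometries. Let $N$ denote the von Neumann algebra generated by $K$, $E$, $U^{+-}_0$, $U^{-+}_0$. The inclusion $N \subseteq \hat{M}$ is immediate: $K$ and $E$ are affiliated to $\hat{M}$ by Proposition \ref{prop:KEaffiliatedtohatM}, while $U^{+-}_0, U^{-+}_0$ lie in $\hat{M}$ as partial-isometry parts in the polar decomposition of the elements $Q(p_1,p_2,0) \in \hat{M}$ with $\sgn(p_1 p_2) = -$. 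For the reverse inclusion, since the linear span of the $Q(p_1,p_2,n)$ is $\si$-strong-$\ast$ dense in $\hat{M}$ by Proposition \ref{prop:QppngeneratedualM}, it suffices to show $Q(p_1,p_2,n) \in N$ for all $p_1,p_2 \in I_q$ and $n \in \Z$.

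The first reduction is to absorb the non-partial-isometry factors of Lemma \ref{lem:polardecompositionQppn} into $W^*(E,K)$. The expression in Definition \ref{def:closedCasimir} yields a self-adjoint extension of $\Om_0$ already inside $W^*(E,K)$, via the spectral calculus of $E^\ast E$, $K^2$, $K^{-2}$. By the uniqueness clause of Theorem \ref{thm:Casimirwelldefinedandchar}, this extension must coincide with $\Om$, so $\Om$ is affiliated to $W^*(E,K)$ and hence $H(\Om)\in W^*(E,K)\subseteq N$ for any bounded Borel function $H$. The projection $P(p_1,p_2,n)$ from Lemma \ref{lem:polardecompositionQppn} is a spectral projection of $K$ cutting out the appropriate sector, so $P(p_1,p_2,n)\in W^*(K)\subseteq N$. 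The claim thus reduces to showing $U^{\epsilon_1,\epsilon_2}_n\in N$ for every sign-pair and every $n\in\Z$.

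The $\Z_2$-grading from Proposition \ref{prop:decompMintoM+andM-andgradedcommutation} splits this into an even and an odd case. For the diagonal partial isometries $U^{++}_n, U^{--}_n\in\hat{M}_+$, I would argue that they already lie in $W^*(E,K)$ by identifying them as polar-decomposition pieces of concrete elements built from $E$, $E^\ast$, and spectral projections of $K$ that separate the $\cK_+$- and $\cK_-$-sectors; the explicit formulas for $E_0$, $E_0^\dag$, $K_0$ on the basis $\{f_{m,p,t}\}$ in Definition \ref{def:actiongeneratorssu} and \eqref{eq:dualE0forsu} make such identifications checkable on basis vectors. For the off-diagonal partial isometries $U^{+-}_n, U^{-+}_n\in\hat{M}_-$ with $n\neq 0$, I would establish identities of the form
\[
U^{+-}_n \;=\; V^{+-}_n\, U^{+-}_0, \qquad U^{-+}_n \;=\; V^{-+}_n\, U^{-+}_0,
\]
with $V^{\pm\mp}_n\in W^*(E,K)$ implementing the shift $m\mapsto m+n$ on the appropriate supports. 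The existence of these factors inside $W^*(E,K)$ is equivalent, via $U^{+-}_n(U^{+-}_0)^\ast\in\hat{M}_-\cdot\hat{M}_-\subseteq\hat{M}_+$, to the even-case assertion.

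The main obstacle is therefore the even step: showing that $W^*(E,K)$ is large enough inside $\hat{M}_+$ to contain all the $U^{\pm\pm}_n$. This requires tight control over the joint action of $E$ and $K$ on each spectral sector of $\Om$ restricted to $\cK_+$ and to $\cK_-$; the decomposition of $\cK$ into $\su$-irreducibles carried out in Section \ref{sec:Casimiroperator}, together with the resulting multiplicity picture, should provide exactly this control and hence pin down the diagonal partial isometries inside $W^*(E,K)$. Combined with the shift identities for the off-diagonal case, this places each $Q(p_1,p_2,n)$ in $N$ and completes the proof.
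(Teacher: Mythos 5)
Your overall route --- reduce to the operators $Q(p_1,p_2,n)$ via Proposition \ref{prop:QppngeneratedualM}, use the factorization $Q=U\,H(\Om)\,P$ of Lemma \ref{lem:polardecompositionQppn}, absorb $H(\Om)$ and the spectral projection $P$ of $K$ into the von Neumann algebra $W^*(E,K)$, and then deal with the partial isometries --- is essentially the paper's route, and the absorption step is fine (indeed $\Om$ is by Definition \ref{def:closedCasimir} the closure of an expression in the strongly commuting operators $E^\ast E$, $K^2$, $K^{-2}$, so it is affiliated to $W^*(E,K)$ directly, without any appeal to the uniqueness clause of Theorem \ref{thm:Casimirwelldefinedandchar}). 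But your ``even step'' contains a genuine error: $U^{--}_n$ does \emph{not} lie in $W^*(E,K)$. The operators $E$ and $K$ decompose as direct sums over the sectors $\cK(p,\ep,\et)=\bigoplus_m\cK(p,m,\ep,\et)$, see \eqref{eq:E0E0dagKoncK0} and \eqref{eq:decompE}, so the orthogonal projections onto these sectors commute with $E$ and $K$ and hence lie in $W^*(E,K)'$; consequently every element of $W^*(E,K)$ preserves each $\cK(p,\ep,\et)$ separately. The partial isometry $U^{--}_n$ maps $\cK(p,m,\ep,\et)$ into $\cK(p,m+n,-\ep,-\et)$ and is nonzero (its multiplier is nonzero on the continuous spectrum $\T$), so it flips both individual signs even though it preserves the product $\ep\et$: membership in $\hat{M}_+$ is not the same as membership in $W^*(E,K)$, and the ``multiplicity picture'' from Section \ref{sec:Casimiroperator} cannot rescue a containment that fails for this structural reason. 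Your diagnosis of the main obstacle is therefore aimed at a claim that is false as stated.

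The repair is exactly the composition rules the paper isolates in Lemma \ref{lem:propertiesofU}. Only $U^{++}_n=(U^{++}_1)^n$ lives in $W^*(E,K)$, with $U^{++}_1$ identified as the partial isometry in the polar decomposition of $E$; this identification is itself a nontrivial computation, since one must check that $V^{++}_1$ acts as the identity on overlapping spectral supports, which reduces to the relation $G(\la;p,m+1,\ep,\et)=\et\,G(\la;p,m,\ep,\et)$ for the functions of \eqref{eq:defEGnu}. The remaining partial isometries are then reached by $U^{+-}_n=U^{+-}_0U^{++}_n$, $U^{-+}_n=U^{++}_nU^{-+}_0$ and, crucially, $U^{--}_n=U^{+-}_nU^{-+}_0=U^{+-}_0U^{++}_nU^{-+}_0$, together with the adjoint relation of Lemma \ref{lem:propertiesofU}(v) for negative $n$. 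So the doubly sign-flipping isometries require \emph{both} off-diagonal generators and cannot be produced from $E$ and $K$ alone; with that correction your argument closes, and it then coincides with the proof in Section \ref{ssec:generatorsofhatM}.
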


It is interesting to connect the comultiplication of
the dual quantum group as in Theorem \ref{thm:duallocallycompactquantumgroup}
with the comultiplication \eqref{eq:deDeltaongeneratorssu}
of the quantized universal enveloping algebra.

\begin{prop}\label{prop:dualcomultiplicationandcomultiplicationsu}
We have $\hat{\De}(K) = K \ot K$, and
$$
K_0\odot E_0\, +\, E_0\odot K_0^{-1} \, \subset \, \hat{\De}(E) \qquad \text{and} \qquad
K_0\odot E_0^\dag\, +\, E_0^\dag\odot \,K_0^{-1}  \subset\, \hat{\De}(E^*).
$$
\end{prop}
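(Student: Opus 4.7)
The plan is to verify all three statements on the dense subspace $\cK_0\odot\cK_0$, using the definition $\hat{\De}(y)=\Sigma W(y\ot 1)W^*\Sigma$ together with the explicit form of the multiplicative unitary $W$ from \eqref{eq:Wexplicitinapxy} and the explicit actions of $K_0$ and $E_0$ on the basis $\{f_{mpt}\}$. By Proposition \ref{prop:KEaffiliatedtohatM}, $K$ and $E$ are affiliated to $\hat M$, so $\hat{\De}(K)$ and $\hat{\De}(E)$ are defined in the affiliated sense; to establish the claimed equality and inclusions, it suffices to check the action on the core $\cK_0\odot\cK_0$.

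For $\hat{\De}(K)=K\ot K$: this reduces to proving $W(K\ot 1)=(K\ot K)W$ on $\cK_0\odot\cK_0$. Since $K f_{m p t}=q^{-m/2}|p/t|^{1/2} f_{m p t}$ is diagonal on the basis, the required identity is a weight-conservation property of $W$: the kernel should couple $f_{m_1p_1t_1}\ot f_{m_2p_2t_2}$ only to outputs $f_{m_1'p_1't_1'}\ot f_{m_2'p_2't_2'}$ satisfying $q^{-(m_1'+m_2')/2}|p_1'p_2'/t_1't_2'|^{1/2}=q^{-m_1/2}|p_1/t_1|^{1/2}$. Inspection of the index constraints in \eqref{eq:Wexplicitinapxy} should confirm this balance, yielding the first claim.

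For the inclusions involving $E$ and $E^*$: one computes $\Sigma W(E\ot 1)W^*\Sigma$ on a basis tensor $\xi=f_{m_1p_1t_1}\ot f_{m_2p_2t_2}\in\cK_0\odot\cK_0$ by composing the explicit actions of $W^*,\ E$, and $W$, and compares with the action of $K_0\odot E_0+E_0\odot K_0^{-1}$ prescribed by Definition \ref{def:actiongeneratorssu}. Conceptually, this identity reflects that the kernels $a_p$ are Clebsch-Gordan coefficients for tensor products of the representations \eqref{eq:formalrepsal0ga0}, as noted after \eqref{eq:defcomultiplicationthroughW}; consequently $W$ intertwines the tensor product $\su$-action via $\DeA$ with the decomposed direct sum, and evaluating this intertwining at the generator $E$ reproduces the stated formula. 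The $E^*$ statement then follows by taking adjoints on $\cK_0\odot\cK_0$, using $K_0^\dag=K_0$ and the characterization of $E^*$ as the closure of $E_0^\dag$ noted after Proposition \ref{prop:KEaffiliatedtohatM}.

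The main obstacle is the rigorous execution of the $E$-computation: it amounts to a concrete three-term recurrence or contiguous-parameter shift identity for the functions $a_p$ that mirrors the coproduct $\DeA(E)=K\ot E+E\ot K^{-1}$ of $\su$. This is a nontrivial identity in basic hypergeometric series, whose derivation would invoke the recurrence and orthogonality properties of $a_p$ developed in Section \ref{sec:BHSresults} and Appendix \ref{app:specialfunctions}.
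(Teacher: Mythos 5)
Your overall strategy for $\hat{\De}(K)$ and for the $E$-inclusion is the same as the paper's: reduce everything to matrix elements of $W^\ast$ on the basis $f_{m_1p_1t_1}\ot f_{m_2p_2t_2}$ via \eqref{eq:Wexplicitinapxy}, observe that the Kronecker deltas enforce conservation of the $K\ot K$-eigenvalue, and then use a core argument plus self-adjointness to upgrade agreement on $\cK_0\odot\cK_0$ to the stated equality and inclusions. However, the decisive step is missing rather than merely deferred: after writing out both sides, the identity does not hold term by term but only after a nontrivial cancellation, which the paper isolates as the four-term relation \eqref{eq:comultiE3} and verifies using the two $q$-contiguous relations for $a_p$ (Lemma \ref{lemB:qcontiguousrelapxy} combined with the symmetries \eqref{eq:symmetryforapxy}). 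You correctly diagnose that such a contiguous relation is what is needed, but you do not produce or verify it, and this is the entire mathematical content of the proposition; without it the proof is a plausible plan, not a proof. A second, more minor point: applying the unbounded $E\ot 1$ directly to $W^\ast\xi$ (an infinite sum) needs justification; the paper sidesteps this by comparing $\langle W^\ast(E_0\odot K_0+K_0^{-1}\odot E_0)\xi,\eta\rangle$ with $\langle W^\ast\xi,(E^\ast\ot1)\eta\rangle$, where all unbounded operators act on basis vectors, and then invoking that the $f_{mpt}$ form a core for $E^\ast$.

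The argument you give for the $E^\ast$-statement is actually wrong, not just incomplete. Taking adjoints of $K_0\odot E_0+E_0\odot K_0^{-1}\subset\hat{\De}(E)$ yields $\hat{\De}(E^\ast)=\hat{\De}(E)^\ast\subset\bigl(K_0\odot E_0+E_0\odot K_0^{-1}\bigr)^\ast$, i.e.\ the inclusion in the \emph{opposite} direction; to extract $K_0\odot E_0^\dag+E_0^\dag\odot K_0^{-1}\subset\hat{\De}(E^\ast)$ from duality you would need $\cK_0\odot\cK_0$ to be a core for $\hat{\De}(E)$, which is not known at this stage (and is essentially stronger than what the proposition asserts). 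The paper instead proves the $E^\ast$-inclusion by repeating the matrix-element computation with $E_0^\dag$ in place of $E_0$.
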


In Proposition \ref{prop:dualcomultiplicationandcomultiplicationsu} the
left hand side denotes the algebraic tensor product of the
unbounded operators which are defined on the domain $\cK_0\odot \cK_0
\subset \cK\ot\cK$.
So we see that the comultiplication of the dual quantum group
corresponds to the  comultiplication  of the
quantized universal enveloping algebra, see \eqref{eq:Hopfalgebracomultiplication}.
Note that for an element $x$ affiliated to $\hat{M}$ we can calculate
$\hat{\De}(x)$ as an affiliated element of $\hat{M}\ot\hat{M}$.

We can also calculate
the comultiplication on the elements $Q(p_1,p_2,n)$
spanning $\hat{M}$, see Proposition \ref{prop:QppngeneratedualM},
using the pentagonal equation.

\begin{prop}\label{prop:hatDeonQppn}
For $p_1,p_2\in I_q$, $n\in\Z$,  we
have
\[
\hat{\De}\bigl(Q(p_1,p_2,n)\bigr)
=\sum_{m\in\Z,\, p\in I_q} Q(p,p_2,n-m)\ot Q(p_1,p,m),
\]
where the sum converges in the $\si$-weak-topology of $\hat{M}\ot\hat{M}$.
\end{prop}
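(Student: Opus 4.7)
The plan is to combine the pentagon equation for the multiplicative unitary $W$ with a slicing argument, and then to identify each surviving summand with an elementary $Q$-operator using the $\Phi$-algebra structure behind the GNS-level formula for $W^*$. Starting from $\hat\De(y) = \Sigma W(y\ot 1)W^*\Sigma$ with $y = Q(p_1,p_2,n) = (\om_{f,g}\ot\Id)(W^*)$ for $f = f_{0,p_1,1}$ and $g = f_{n,p_2,1}$, I introduce an auxiliary $\cK$-leg for the $\om$-slot so that $y\ot 1 = (\om\ot\Id\ot\Id)(W^*_{12})$ on $\cK_1\ot\cK_2\ot\cK_3$. The pentagon $W_{12}W_{13}W_{23} = W_{23}W_{12}$ rearranges to $W_{23}W^*_{12}W^*_{23} = W^*_{13}W^*_{12}$, and conjugating by $\Sigma_{23}$ gives
\[
\hat\De(Q(p_1,p_2,n)) = (\om_{f,g}\ot\Id\ot\Id)(W^*_{12}W^*_{13}).
\]

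Inserting the orthonormal basis $\{f_{m',p',t'}\}$ of $\cK$ in the first leg between $W^*_{12}$ and $W^*_{13}$ then produces the slicing identity
\[
\hat\De(Q(p_1,p_2,n)) = \sum_{m',p',t'} (\om_{f_{m',p',t'},f_{n,p_2,1}}\ot\Id)(W^*) \ot (\om_{f_{0,p_1,1},f_{m',p',t'}}\ot\Id)(W^*),
\]
with the sum converging in the $\si$-weak topology of $\hat M\ot\hat M$. It remains to collapse this a priori triple sum to the double sum in the statement.

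The collapse comes from the GNS-defining relation $W^*(\La(a)\ot\La(b)) = (\La\ot\La)(\De(b)(a\ot 1))$ applied to $a = \Phi(m_1,p_1,1)$, together with the product formula $\Phi(m,p,t)\Phi(m_1,p_1,1) = \de_{p_1,t}\Phi(m+m_1,p,1)$ recorded in Section \ref{sec:quantumSU11description}. This forces the first-factor $t$-index of $W^*(f_{m_1,p_1,1}\ot\xi)$ to be pinned at $1$, hence $(\om_{f_{0,p_1,1},f_{m',p',t'}}\ot\Id)(W^*) = \de_{t',1}\,Q(p_1,p',m')$. The same algebra delivers the $\Z$-shift covariance
\[
\langle W^*(f_{m'+k,p',1}\ot\xi),f_{n+k,p_2,1}\ot\eta\rangle = \langle W^*(f_{m',p',1}\ot\xi), f_{n,p_2,1}\ot\eta\rangle,
\]
whence $(\om_{f_{m',p',1},f_{n,p_2,1}}\ot\Id)(W^*) = Q(p',p_2,n-m')$. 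Substituting and renaming $m = m'$, $p = p'$ reduces the triple sum to the claimed double sum, with $\si$-weak convergence inherited from the slicing step.

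The main obstacle will be the careful verification of the $t$-pinning and the shift-covariance identities above. Both require a clean passage from the action of $\De$ on $\Phi$-monomials to the corresponding matrix elements at the GNS-level, making sure that only $\Phi$-monomials with the correct $t$-index survive. Once these two identities are nailed down, the remainder of the argument is routine bookkeeping.
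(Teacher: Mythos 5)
Your argument is essentially the paper's own proof: the pentagon equation turns $W(Q(p_1,p_2,n)\ot 1)W^*$ into $(\om_{f,g}\ot\Id\ot\Id)(W_{13}^*W_{12}^*)$, an orthonormal-basis insertion in the $\om$-leg produces the slicing sum, and the Kronecker deltas collapse it to the stated double sum. The ``main obstacle'' you flag (the $t$-pinning and $m$-shift covariance) is exactly the identity $(\om_{f_{m_1,p_1,t_1},f_{m_2,p_2,t_2}}\ot\Id)(W^*)=\de_{t_1,t_2}\,Q(p_1,p_2,m_2-m_1)$ of \eqref{eq:generalcasereducestoQppn}, which the paper has already established in the proof of Lemma \ref{lem:explicitactionQppn} by direct computation with the explicit $a_p$-expansion \eqref{eq:Wexplicitinapxy} of $W^*$, so no separate derivation via the $\Phi$-algebra is needed.
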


The action of the unitary antipode $\hat{R}$ and of the $\ast$-operator
on the generators $Q(p_1,p_2,n)$ of $\hat{M}$ is given in
Corollary \ref{cor:lemQppnareallinhatM}.


\section{The decomposition of the left regular corepresentation}
\label{sec:decompleftregularcorep}

As remarked in Section \ref{sec:vNalgquantumgroups} the multiplicative unitary acting in the
GNS-representation of the left invariant weight is the
analogue of the left regular representation. For
the Lie group $SU(1,1) \cong SL(2,\R)$, the decomposition
into irreducible representations involves the
principal unitary series and the discrete series,
see e.g. \cite{vDijk}, \cite{HoweT}, \cite{KoorEM}, \cite{Lang},
\cite{Sugi}. 
The decomposition is obtained by considering the action of
the Casimir operator, since its eigenspaces give invariant
subspaces as the Casimir operator is a central element.
Our next goal is to decompose  the left regular
corepresentation given by the multiplicative
unitary $W$ acting in the GNS-representation $\cK$
into irreducible corepresentations. We want to proceed
in a similar fashion, but as follows from
Proposition \ref{prop:decompMintoM+andM-andgradedcommutation}
we need to combine two eigenspaces of the Casimir operator.
We
first consider the discrete part, and next the
continuous part.

In Section \ref{sec:Casimiroperator} we decompose the GNS-space
$\cK$ into irreducible representations for $\su$ by decomposing the
action of the Casimir operator, and
since its generators are related to affiliated
operators to $\hat{M}$ we expect that this is a
building block in the decomposition. In this section we
describe the decomposition explicitly, and for each corepresentation
in the decomposition of the left regular corepresentation we indicate
its decomposition as $\su$-representation using its
representations as described in Section \ref{ssec:decompUq(su11)}.

In order to find the decomposition of the left regular corepresentation
we have to look for invariant subspaces of $(\om\ot\Id)(W)$, $\om\in M_\ast$,
which are the generators of $\hat{M}$. By Proposition
\ref{prop:decompMintoM+andM-andgradedcommutation} we can restrict to
eigenspaces for the Casimir operator for the eigenvalues $\la$ and
$-\la$. By considering combinations of such eigenspaces in
suitable invariant subspaces for the Casimir operator we can
determine invariant subspaces, hence irreducible corepresentations
occurring in the decomposition of the left regular corepresentation.
In this approach we have to distinguish between eigenvalues $\la$
of the Casimir operator $\Om$ satisfying $|\la|>1$, leading to
the analogue of discrete series representations of $SU(1,1)$, and
those satisfying $|\la|\leq 1$, leading to the analogue of
principal unitary series representations of $SU(1,1)$. The case $\la=0$
has to be considered separately.

In Section \ref{ssec:discreterepsinregularrep} we discuss the analogue
of the discrete series representations, and in
Section \ref{ssec:principalrepsinregularrep} we discuss the analogue
of the principal unitary series representations.
For the precise description of the results we need to use some
notation that is used in the proofs.

\subsection{Unitary corepresentations: discrete series}\label{ssec:discreterepsinregularrep} \index{corepresentation!discrete series}

In order to be able to describe the results we need to consider
the discrete spectrum of the Casimir operator. The complete
spectrum of the Casimir operator $\Om$ is described in
Section \ref{sec:Casimiroperator}, where for suitable
$\Om$-invariant subspaces $\cK(p,m,\ep,\et)\subset \cK$
the spectral decomposition of $\Om\vert_{\cK(p,m,\ep,\et)}$
is discussed in detail. The spectrum is simple and consists
of a continuous part $[-1,1]$ and a discrete part
depending on $\cK(p,m,\ep,\et)$ for $p\in q^\Z$, $m\in \Z$, $\ep,\et\in\{ \pm1\}$.
We refer to \eqref{eq:defKpmepeta} for the
definition of these subspaces.
Throughout this subsection we fix $p \in q^\Z$,
$\la \in - q^{-\N} \cup q^{-\N}$ and set $x = \mu(\la) = \hf (\la+\la^{-1})$. Thus,
$x$ is an isolated point of the spectrum of the Casimir operator $\Om$ if
$x\in \si_d(\Om)$, see Section \ref{ssec:spectraldecompCasimir}.
We denote $e^{\ep,\et}_m(p,x) \in D(\Om)\cap \cK(p,m,\ep,\et)$
to be the eigenvector of the Casimir operator $\Om$ for the
eigenvalue $\ep\et\,x$ in the subspace
$\cK(p,m,\ep,\et)$ of the GNS-space.  We
note that $e^{\ep,\et}_m(p,x) \not= 0$ if and only if
$\Om$ has an eigenvector with eigenvalue $\ep\et\,x$
inside $\cK(p,m,\ep,\et)$. By the results proved in Section \ref{ssec:spectraldecompCasimir} the eigenspace of
$\Om$ restricted to $\cK(p,m,\ep,\et)$ is at most one-dimensional,
so that $e^{\ep,\et}_m(p,x)$ is defined up to phase-factor
after putting $\|e^{\ep,\et}_m(p,x)\|=1$. The precise
choice is given in Section \ref{ssec:discreteseries}.

Recall we have to find closed invariant subspaces for the
action of $\hat{M}$, and we can define closed invariant subspaces
in terms of the eigenvectors of the Casimir operator $\Om$.
This is straightforward once we have described the actions of the
generators of $\hat{M}$ on the eigenvectors of $\Om$ in
Lemma \ref{lem:actionsofgeneratorsoneigvetsOm}.

\begin{lemma} \label{lem:closedsubspaceinregularrep}
We define the closed subspace $\cL_{p,x}$ of $\cK$ as
$$
\cL_{p,x} = \overline{\Span}\{\, e^{\ep,\et}_m(p,x) \mid  m \in \Z, \ep,\et \in \{-,+\}\,\}.
$$
The space $\cL_{p,x}$ is an invariant subspace of the
corepresentation $W$ of $(M,\De)$. If $\cL_{p,x} \not= \{0\}$ we say that
that $(p,x)$ determines a discrete series
corepresentation of $(M,\De)$. The element
$W_{p,x} = W\big\vert_{\cK \ot \cL_{p,x}}$ is
a unitary corepresentation of $(M,\De)$ on $\cL_{p,x}$.
\end{lemma}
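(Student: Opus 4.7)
The plan is to translate the invariance claim into a statement about the dual von Neumann algebra. As recalled in Section \ref{sec:vNalgquantumgroups}, a closed subspace $L \subseteq \cK$ is invariant for the corepresentation $W$ if and only if $L$ is $\hat{M}$-invariant, equivalently, the orthogonal projection $P_L$ lies in $\hat{M}'$. By Theorem \ref{thm:generatorsforhatM}, $\hat{M}$ is generated as a von Neumann algebra by $K$, $E$, $U^{+-}_0$ and $U^{-+}_0$, so the first task is to check that each of these four generators maps $\cL_{p,x}$ into itself; for the unbounded $K$ and $E$ this has to be understood through their bounded spectral transforms.

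For this I would combine two ingredients. First, the graded decomposition of Proposition \ref{prop:decompMintoM+andM-andgradedcommutation}: since $K$ and $E$ strongly commute with $\Om$ by Theorem \ref{thm:Casimirwelldefinedandchar}, their spectral projections act inside each $\Om$-eigenspace and thus belong to the even part $\hat{M}_+$, whereas $U^{+-}_0$ and $U^{-+}_0$ arise from the polar decomposition of operators $Q(p_1,p_2,n)$ with $\sgn(p_1p_2)=-$ and so lie in $\hat{M}_-$ by Proposition \ref{prop:QppngeneratedualM}, sending the $+x$-eigenspace of $\Om$ into its $-x$-eigenspace. Since $\cL_{p,x}$ is by definition the closed span of Casimir-eigenvectors for both $\pm x$, the eigenvalue label is automatically respected. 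Second, I would use the explicit formulas in Lemma \ref{lem:actionsofgeneratorsoneigvetsOm} for the action of the four generators on the normalized eigenvectors $e^{\ep,\et}_m(p,x)$ to verify that the $p$-label is left untouched and only the indices $m$ and $(\ep,\et)$ are shifted; this is the nontrivial piece, since a priori the generators could mix different spectral strata $\cK(p,m,\ep,\et)$ carrying the same Casimir eigenvalue but distinct $p$.

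Once $\cL_{p,x}$ is $\hat{M}$-invariant, the unitary corepresentation assertion is formal. The projection $P := P_{\cL_{p,x}}$ belongs to $\hat{M}'$, hence $1 \ot P$ commutes with $W \in M \ot \hat{M}$, so $W$ preserves $\cK \ot \cL_{p,x}$. The restriction $W_{p,x} := W\vert_{\cK \ot \cL_{p,x}}$ is therefore a unitary operator on $\cK \ot \cL_{p,x}$, and the corepresentation identity $(\De \ot \Id)(W_{p,x}) = (W_{p,x})_{13}(W_{p,x})_{23}$ follows by restricting the identity $(\De \ot \Id)(W) = W_{13} W_{23}$ to $\cK \ot \cK \ot \cL_{p,x}$, which both sides preserve.

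The main obstacle is the first step, namely verifying that the four generators of $\hat{M}$ do not mix different $p$-labels on the Casimir-eigenvectors. The graded commutation with $\Om$ is a clean but abstract constraint that only controls the eigenvalue label; the remaining bookkeeping is essentially a matrix computation in the Casimir-eigenvector basis constructed in Section \ref{sec:Casimiroperator}, carried out in Lemma \ref{lem:actionsofgeneratorsoneigvetsOm}, whose derivation ultimately rests on the three-term recurrences and orthogonality relations of the underlying basic hypergeometric functions.
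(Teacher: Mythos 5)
Your proof is correct, and its overall architecture (reduce invariance under the corepresentation $W$ to invariance under $\hat{M}$, then check a generating family against the Casimir eigenvectors, then restrict the pentagonal/corepresentation identity) is the same as the paper's. The difference is in the choice of generating family. The paper does not invoke Theorem \ref{thm:generatorsforhatM} at all here: it works directly with the bounded operators $Q(p_1,p_2,n)$, which by Proposition \ref{prop:QppngeneratedualM} linearly span a strong-$\ast$ dense self-adjoint subspace of $\hat{M}$ (self-adjoint by Lemma \ref{lem:QadjointisQ}). Since Lemma \ref{lem:explicitactionQppn} already shows $Q(p_1,p_2,n)\colon \cK(p,m,\ep,\et)\to\cK(p,m+n,\sgn(p_1)\ep,\sgn(p_2)\et)$ — so the $p$-label is preserved for free — and Lemma \ref{lem:xOm0subset+-Omx} gives $\sgn(p_1p_2)Q(p_1,p_2,n)\Om\subseteq\Om Q(p_1,p_2,n)$, each $Q(p_1,p_2,n)$ sends $e^{\ep,\et}_m(p,x)$ to a multiple of $e^{\sgn(p_1)\ep,\sgn(p_2)\et}_{m+n}(p,x)$, and invariance of the closed span follows with no domain issues whatsoever. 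Your route through $K$, $E$, $U^{+-}_0$, $U^{-+}_0$ and Lemma \ref{lem:actionsofgeneratorsoneigvetsOm} reaches the same conclusion, but at the cost of having to make precise what invariance under the unbounded $K$ and $E$ means (spectral projections of $K$, and the polar decomposition $E=\tilde{U}|E|$ with $|E|$ a function of $\Om$ and $K$ on each block); you flag this but do not carry it out. The paper's choice of the bounded spanning family is precisely what makes that bookkeeping unnecessary, and it is also logically lighter, since it does not rely on the generation theorem. Your final paragraph on unitarity and the corepresentation identity for the restriction matches the paper's (one-line) argument.
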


Using the explicit actions of the generators of
$\hat{M}$ as described in Theorem \ref{thm:generatorsforhatM}
on the eigenvectors of the Casimir operator we can
classify the values of $(p,x)$ such that
$\cL_{p,x}$ is a discrete series corepresentation of
$(M,\De)$. The result is the following.

\begin{prop} \label{prop:discretesubrepresinregrepr}
Consider $p \in q^\Z$ and $x = \mu(\lambda)$ where
$\lambda \in - q^{2\Z+1} p \cup q^{2\Z+1} p$ and $|\lambda| > 1$.
Let $j,l \in \Z$ be such that $|\lambda| = q^{1-2j} p^{-1} = q^{1+2l} p$,
so $l < j$. Then $(p,x)$ determines a
discrete series corepresentation of $(M,\De)$
in the following 3 cases, and these are the only cases:
\begin{trivlist}
\item[\ \,(i)] If $x > 0$, in which case
$$
\{\,e^{++}_m(p,x) \mid m \in \Z\,\}\,
\cup\,\{\,e^{-+}_m(p,x) \mid m \in \Z,
m \leq l\,\}\,\cup\,\{\,e^{+-}_m(p,x) \mid
m \in \Z, m \geq j\,\}
$$
is an orthonormal basis for $\cL_{p,x}$.
\item[\ \,(ii)] If $x < 0$, $l \geq 0$ and $j > 0$, in which case
$$
\{\,e^{-+}_m(p,x) \mid m \in \Z\,\}\,\cup\,
\{\,e^{++}_m(p,x) \mid m \in \Z, m \leq l\,\}
\,\cup\,\{\,e^{--}_m(p,x) \mid
m \in \Z, m \geq j\,\}
$$
is an orthonormal basis for $\cL_{p,x}$.
\item[\ \,(iii)] If $x < 0$, $l < 0$ and
$j \leq 0$, in which case
$$
\{\,e^{+-}_m(p,x) \mid m \in \Z\,\}\,\cup\,\{\,e^{--}_m(p,x)
\mid m \in \Z, m \leq l\,\}\,\cup\,\{\,e^{++}_m(p,x) \mid
m \in \Z, m \geq j\,\}
$$
is an orthonormal basis for $\cL_{p,x}$.
\end{trivlist}
\end{prop}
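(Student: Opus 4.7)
The plan is to combine the $\Z_2$-graded commutation between the Casimir operator and $\hat M$ (Proposition \ref{prop:decompMintoM+andM-andgradedcommutation}) with the spectral decomposition of $\Om\vert_{\cK(p,m,\ep,\et)}$ obtained in Section \ref{sec:Casimiroperator}. First, I would verify that $\cL_{p,x}$ is a closed invariant subspace for the left regular corepresentation $W$, i.e.\ invariant under every $(\om\ot\Id)(W)$ with $\om\in M_*$, equivalently under every element of $\hat M$. By Proposition \ref{prop:decompMintoM+andM-andgradedcommutation}, elements of $\hat M_+$ preserve $\Om$-eigenspaces while elements of $\hat M_-$ flip the sign of the $\Om$-eigenvalue. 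Combined with the explicit action of the generators $K,E,U^{+-}_0,U^{-+}_0$ of $\hat M$ on the eigenvectors $\{e^{\ep,\et}_m(p,x)\}$, to be recorded in Lemma \ref{lem:actionsofgeneratorsoneigvetsOm} on the basis of Theorem \ref{thm:generatorsforhatM}, which preserves the label $p$ and shifts $m$ by $\pm 1$, this yields invariance of $\cL_{p,x}$ and the unitary subcorepresentation $W_{p,x}$ of Lemma \ref{lem:closedsubspaceinregularrep}.

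Second, for the classification of when $(p,x)$ actually determines a nonzero discrete series corepresentation, the controlling input is the discrete spectrum of $\Om\vert_{\cK(p,m,\ep,\et)}$ as a function of $m$. In Section \ref{sec:Casimiroperator} this restriction is identified with a Jacobi operator whose discrete spectrum outside $[-1,1]$ is characterized by explicit arithmetic constraints tying an eigenvalue $\mu(\la')$ with $|\la'|>1$ to the parameters $(p,m,\ep,\et)$. For fixed $(p,x)$ with $x=\mu(\la)$ and $|\la|>1$, existence of $e^{\ep,\et}_m(p,x)$ reduces to a bookkeeping condition on $m$, and the parametrization $|\la|=q^{1-2j}p^{-1}=q^{1+2l}p$ (equivalently $p = q^{-j-l}$, forcing $l<j$) isolates exactly the two critical cutoffs in $m$.

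Third, I would carry out the case analysis on $\sgn(x)$ and the signs of $j,l$. For each sign-pattern $(\ep,\et)\in\{\pm\}^2$ the eigenvector $e^{\ep,\et}_m(p,x)$ either does not exist, exists only for $m\leq l$, exists only for $m\geq j$, or exists for all $m\in\Z$; these four alternatives distribute among the four sign patterns according to the position of $\la$ relative to $\pm 1$ and according to the signs of $j$ and $l$. The conditions $l\geq 0,j>0$ in case (ii) and $l<0,j\leq 0$ in case (iii) are precisely the conditions that keep both truncated ranges nonempty in the $x<0$ regime, while for $x>0$ no such auxiliary condition is needed, giving case (i); in every other configuration one of the required ranges becomes empty, or the relevant $|\la'|$ fails to exceed $1$, so the pattern collapses into one of the three listed.

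The main obstacle is the Jacobi operator spectral analysis of Section \ref{sec:Casimiroperator}: pinning down the precise discrete eigenvalue set of $\Om\vert_{\cK(p,m,\ep,\et)}$ for every parameter choice rests on fairly detailed orthogonality information for the underlying basic hypergeometric functions. Granted that spectral description and the generator action from Lemma \ref{lem:actionsofgeneratorsoneigvetsOm}, the proof is a coordinated enumeration that delivers both the invariance statement and the three-case list simultaneously.
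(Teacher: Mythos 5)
Your proposal follows essentially the same route as the paper: reduce the classification to bookkeeping with the explicit discrete spectra $D(p,m,\ep,\et)$ of $\Om\vert_{\cK(p,m,\ep,\et)}$ from the Jacobi-operator analysis of Section \ref{sec:Casimiroperator}, then enumerate over $\sgn(x)$ and the signs of $j,l$ to decide, for each sign pattern $(\ep,\et)$, whether $e^{\ep,\et}_m(p,x)$ exists for no $m$, for $m\le l$, for $m\ge j$, or for all $m$ (the invariance of $\cL_{p,x}$ you mention is the content of the separate Lemma \ref{lem:closedsubspaceinregularrep}). One small correction to your heuristic: the dichotomy $l\ge 0,\ j>0$ versus $l<0,\ j\le 0$ in cases (ii) and (iii) is not about the truncated ranges $\{m\le l\}$, $\{m\ge j\}$ being nonempty (they always are as subsets of $\Z$), but about which of the two mixed-sign spaces $\cK(p,m,-,+)$ or $\cK(p,m,+,-)$ carries the eigenvalue $-x$ for all $m$, with the remaining configuration $l<0<j$, $x<0$ yielding $\cL_{p,x}=\{0\}$.
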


Proposition \ref{prop:discretesubrepresinregrepr} gives a
complete list of discrete corepresentations occurring in the
left regular corepresentation.
In each of the cases listed in Proposition
\ref{prop:discretesubrepresinregrepr} we can consider the
representation of $\hat{M}$ as a representation of
$\su$ (by unbounded operators in the sense of \cite{Schm}), and then, by comparing
the action of $E$ and $K$ as given in Lemma \ref{lem:actionsofgeneratorsoneigvetsOm},
with the listing in Section \ref{ssec:decompUq(su11)}, we see
that $\cL_{p,x}$ in case (i), (ii) and (iii) of Proposition
\ref{prop:discretesubrepresinregrepr} corresponds to
\begin{equation}\label{eq:sudecompdiscretesubrepresinregrepr}
\pi^S_{\hf(\chi(p)-1)+j, \epsilon(p)} \oplus D^-_{-\hf\chi(p)-l}
\oplus D^+_{\hf\chi(p)+j}
\end{equation}
as $\su$-module, where the decomposition corresponds to
the order of the orthonormal basis. The notation for the $\su$-modules
is as in Section \ref{ssec:decompUq(su11)}.
Here $\chi(p)\in\Z$ is defined  in
Definition \ref{defn:chikappanuands} and $\epsilon(p) = \hf\chi(p)\mod 1$,\index{E@$\epsilon(p)$} so
$\epsilon(p)=0$ for $p\in q^{2\Z}$ and $\epsilon(p)=\hf$ for $p\in q^{2\Z+1}$, see \eqref{eq:epsilon(p)}.
So we see that a discrete series corepresentation in the
left regular corepresentation decomposes in the same way
as sum of three $\su$-representations involving a strange series
representation in combination with a positive and negative discrete series
representation.

\begin{prop}\label{prop:Wpxgivesirreduciblediscreteseries}
Assume that $(p,x)$ determines a discrete series corepresentation
of $(M,\De)$. Then $W_{p,x}$ is an irreducible corepresentation
of $(M,\De)$.
\end{prop}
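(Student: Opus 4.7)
The plan is to show that the only closed $\hat{M}$-invariant subspaces of $\cL_{p,x}$ are $\{0\}$ and $\cL_{p,x}$ itself. By Theorem \ref{thm:generatorsforhatM}, invariance under $\hat{M}$ is equivalent to invariance under $K$, $E$, $U^{+-}_0$, $U^{-+}_0$.

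The first step is to pin down, from an arbitrary nonzero closed $\hat{M}$-invariant subspace $\cL \subseteq \cL_{p,x}$, a specific basis vector of Proposition \ref{prop:discretesubrepresinregrepr} inside $\cL$. The Casimir operator $\Om$ is affiliated to $\hat{M}$ by Theorem \ref{thm:Casimirwelldefinedandchar}, so $\cL$ is preserved by its spectral projections; on $\cL_{p,x}$ the spectrum of $\Om$ is discrete, consisting of $\{+x,-x\}$, since $\Om\, e^{\ep,\et}_m(p,x) = \ep\et\, x\, e^{\ep,\et}_m(p,x)$. Hence $\cL$ decomposes along the $\pm x$-eigenspaces of $\Om$. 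Similarly $K$ is affiliated to $\hat{M}$ by Proposition \ref{prop:KEaffiliatedtohatM} and acts diagonally on the vectors $e^{\ep,\et}_m(p,x)$ with eigenvalue depending on $m$; refining the joint spectral decomposition of $\Om$ and $K$ then picks out individual basis vectors, using the fact that the $m$-ranges of the three $\su$-summands in \eqref{eq:sudecompdiscretesubrepresinregrepr} are disjoint enough to make the joint spectrum simple in each of the three cases of Proposition \ref{prop:discretesubrepresinregrepr}.

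The second step is a cyclicity argument starting from a single basis vector $e = e^{\ep_0,\et_0}_{m_0}(p,x) \in \cL$. The operators $E$ and $K$ belong to $\hat{M}_+$ and realize, on $\cL_{p,x}$, the $\su$-module structure described in \eqref{eq:sudecompdiscretesubrepresinregrepr}; since each of $\pi^S$, $D^+$, $D^-$ is $\su$-irreducible, applying $E$, $E^\ast$, $K$, $K^{-1}$ to $e$ yields all of the $\su$-summand containing $e$. The partial isometries $U^{+-}_0$, $U^{-+}_0$ belong to $\hat{M}_-$ and, by Proposition \ref{prop:decompMintoM+andM-andgradedcommutation}, anticommute with $\Om$; they therefore move vectors from the $+x$-eigenspace into the $-x$-eigenspace and conversely, so combining them with the $E,K$-action inside each eigenspace reaches the other two $\su$-summands.

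The main obstacle is to verify that $U^{+-}_0$ and $U^{-+}_0$ act nontrivially out of the starting $\su$-summand into each of the other two, rather than hitting only one of them or vanishing. This requires an explicit description of these partial isometries on the eigenvectors $e^{\ep,\et}_m(p,x)$, obtained from the polar-type factorization $Q(p_1,p_2,n)=U\,H(\Om)\,P$ of Lemma \ref{lem:polardecompositionQppn} together with the structure constants in Proposition \ref{prop:structureconstQs}; in particular one needs the support projection $P$ and the target range of $U$ to hit each of the $\su$-summands in \eqref{eq:sudecompdiscretesubrepresinregrepr}. Once this nonvanishing is verified case by case in Proposition \ref{prop:discretesubrepresinregrepr}, the cyclicity argument produces $\cL = \cL_{p,x}$ and irreducibility of $W_{p,x}$ follows.
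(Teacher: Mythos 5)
Your plan is essentially the paper's own argument: isolate one basis vector inside an arbitrary nonzero closed invariant subspace using spectral projections of operators affiliated to $\hat{M}$, then reach all the remaining basis vectors with the partial isometry of $E$ together with $U^{+-}_0$ and $U^{-+}_0$. Your first step is in fact a slight improvement on the paper's: since $l<j$, the joint $(\Om,K)$-spectrum on $\cL_{p,x}$ is simple in each of the three cases of Proposition \ref{prop:discretesubrepresinregrepr}, so every nonzero invariant subspace is spanned by a subset of the $e^{\ep,\et}_m(p,x)$; the paper uses only the $K$-projection for an $m$ with $l<m<j$ and therefore has to treat the boundary case $l+1=j$ separately by composing with the partial isometry of $E$ to cut the two-dimensional $K$-eigenspace down to a line. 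The one item you defer --- that $U^{+-}_0$ and $U^{-+}_0$ really carry each $\su$-summand onto the other two --- is exactly the content of Lemma \ref{lem:actionsofgeneratorsoneigvetsOm}: these operators act by unimodular scalars, $U_{0}^{+-}\, e_m^{\ep,\et}(p,x) = \et\, (-1)^{\upsilon(p)} \, e_m^{\ep,-\et}(p,x)$ and $U_{0}^{-+}\, e_m^{\ep,\et}(p,x) = \ep\, \et^{\chi(p)}(-1)^m \, e_m^{-\ep,\et}(p,x)$, hence they are nonzero precisely when the target vector is a genuine eigenvector in $\cL_{p,x}$. The substance behind this is the sign computation of Lemma \ref{lem:G(la)/G(-la)}, so your proposal is not self-contained at that point, but the step does go through and your case-by-case bookkeeping then matches the lists in Proposition \ref{prop:discretesubrepresinregrepr}. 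One small technical caution: invariance under $\hat{M}$ only gives invariance under its bounded elements, so within a summand you should apply the partial isometry in the polar decomposition of $E$ (which lies in $\hat{M}$) rather than $E$ itself, as the paper does with its operator $V$.
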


\subsection{Unitary corepresentations: principal series}\label{ssec:principalrepsinregularrep} \index{corepresentation!principal series}

Next we discuss the irreducible corepresentations of $(M,\De)$ in the
left regular corepresentation $W$ corresponding to the continuous
spectrum of the Casimir operator $\Om$. We cannot obtain these
representations by restriction to closed subspaces, so we
have to use another approach.

Motivated by Lemma \ref{lem:actionsofgeneratorsoneigvetsOm} and the
admissible irreducible representations of $\su$ as discussed in
Section \ref{ssec:decompositionofGNSspace} we define for
 $x=\cos\te\in [-1,1]$ and $p \in q^\Z$ a Hilbert space $\cL_{p,x}$ by
\[
\cL_{p,x} = \bigoplus_{\ep,\et \in \{-,+\}} \ell^2_{\ep,\et}(p,x),
\]
where each space $\ell^2_{\ep,\et}(p,x)$ denotes a copy of $\ell^2(\Z)$ with standard orthonormal basis $\{ e_m^{\ep,\et}(p,x) \mid m\in \Z\}$. We define operators $K, E, U_0^{+-}, U_0^{-+}$ on  $\cL_{p,x}$ by
\begin{equation} \label{eq:introprincipalseries}
\begin{split}
K \, e_m^{\ep,\et}(p,x) &= p^\hf q^m \, e_m^{\ep,\et}(p,x),\\
(q^{-1}-q) E\, e_m^{\ep,\et}(p,x) & = q^{-m-\hf}p^{-\hf} |1+\ep\et p q^{2m+1} e^{i\te}| \, e_{m+1}^{\ep,\et}(p,x),\\
U_{0}^{+-}\, e_m^{\ep,\et}(p,x) & = \et\, (-1)^{\upsilon(p)} \, e_m^{\ep,-\et}(p,x),\\
U_{0}^{-+}\, e_m^{\ep,\et}(p,x) & = \ep \et^{\chi(p)}(-1)^m \, e_m^{-\ep,\et}(p,x).
\end{split}
\end{equation}
Here $\chi(p) =\log_q (p)$ is defined in Definition \ref{defn:chikappanuands} and
$\upsilon(p)$ is defined in \eqref{eq:defupsilon}. Explicitly, for $p=q^{2k}$ or $p=q^{2k-1}$ with $k \in \Z$
we have $\upsilon(p)=k$.
The operators $E$ and $K$ are unbounded closable operators with dense core the finite linear combinations of the orthonormal basis vectors $e_m^{\ep,\et}(p,x)$, $m \in \Z$, $\ep,\et \in \{-,+\}$.
The operators $U_0^{+-}$ and $U_0^{-+}$ are bounded; they are isometries.

\begin{prop} \label{prop:introprincipalseriescorepresentation}
The operators $E, K, U_0^{+-}, U_0^{-+}$ defined by \eqref{eq:introprincipalseries} generate a von Neumann algebra $\hat M_{p,x}$ that is isomorphic to $\hat M$. Consequently, \eqref{eq:introprincipalseries} determines a unitary corepresentation $W_{p,x}$ of $(M,\De)$.
The corepresentation $W_{p,x}$ is reducible, and its decomposition into irreducible corepresentations
is given by
\[
\begin{split}
&W_{p,x} \, = \, W^1_{p,x}\, \oplus\, W^2_{p,x}, \qquad \text{in case } x\not=0, \, \text{ or }
p\in q^{2\Z+1}, \\
&W_{p,0} \, = \, W^{1,1}_{p,0}\, \oplus\, W^{1,2}_{p,0}\, \oplus\,W^{2,1}_{p,0}\, \oplus\,W^{2,2}_{p,0}, \qquad \text{in case } p\in q^{2\Z}. \\
\end{split}
\]
\end{prop}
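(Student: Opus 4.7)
The plan has three stages: constructing a normal $\ast$-homomorphism $\pi_{p,x}\colon \hat M\to B(\cL_{p,x})$, showing it is an isomorphism onto $\hat M_{p,x}$ and using this to produce the corepresentation $W_{p,x}$, and finally decomposing $W_{p,x}$ into irreducibles. First I would match the formulas \eqref{eq:introprincipalseries} with the actions of $E, K, U^{+-}_0, U^{-+}_0$ -- which by Theorem \ref{thm:generatorsforhatM} generate $\hat M$ -- on a generalised eigenvector of $\Om$ with eigenvalue $\ep\et x$ living in $\cK(p,m,\ep,\et)$. Combining Lemma \ref{lem:actionsofgeneratorsoneigvetsOm} (which gives this action on genuine eigenvectors) with the direct-integral spectral decomposition of $\Om$ on the principal part of $\cK$ established in Section \ref{sec:Casimiroperator}, the formulas \eqref{eq:introprincipalseries} are exactly the fibre action at the spectral point $x$. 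This yields a unital $\ast$-homomorphism from the algebra generated by the four named generators into $B(\cL_{p,x})$, which extends by normality to all of $\hat M$; faithfulness, hence $\hat M_{p,x}\cong \hat M$, comes from the fact that the fibre at a regular point of the continuous spectrum is a faithful representation by general direct-integral theory.

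Having the isomorphism $\pi_{p,x}$, the required unitary corepresentation is $W_{p,x}=(\Id\ot\pi_{p,x})(W)\in M\ot\hat M_{p,x}\subset M\ot B(\cL_{p,x})$; it is automatically a unitary corepresentation since $W$ is and $\pi_{p,x}$ is a normal $\ast$-isomorphism. To decompose it, it suffices to decompose $\cL_{p,x}$ under $\hat M_{p,x}$. The operators $E$ and $K$ preserve each summand $\ell^2_{\ep,\et}(p,x)$ and induce there an irreducible principal series $\su$-representation whose unitary equivalence class depends only on the Casimir value $\ep\et x$. Consequently the commutant of $\{E,K\}$ in $B(\cL_{p,x})$ is a direct sum of full matrix algebras on the multiplicity spaces of the (at most two) $\su$-equivalence classes occurring, and the $\hat M_{p,x}$-decomposition is governed by the restrictions of $U^{+-}_0$ and $U^{-+}_0$ to those finite-dimensional multiplicity spaces.

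Three subcases then finish the proof. For $p\in q^{2\Z+1}$ a direct verification from \eqref{eq:introprincipalseries} yields $(U^{+-}_0)^2=(U^{-+}_0)^2=-\Id$ and $U^{+-}_0 U^{-+}_0 = -U^{-+}_0 U^{+-}_0$; the Clifford-type algebra thereby generated inside the commutant of $\{E,K\}$ is a copy of $M_2(\C)$, whose own commutant there is again $M_2(\C)$, producing $W_{p,x}=W^1_{p,x}\oplus W^2_{p,x}$ into two inequivalent irreducibles. For $p\in q^{2\Z}$ and $x\neq 0$ the same squaring relations hold but now $U^{+-}_0$ and $U^{-+}_0$ commute; the Casimir operator, which belongs to $\hat M_{p,x}$ and separates the eigenvalues $\pm x$, rules out the finer splitting available at $x=0$, and one still obtains exactly two irreducible summands. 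For $p\in q^{2\Z}$ and $x=0$ the Casimir provides no separation, and the two commuting unitaries (each with eigenvalues $\pm i$) can be simultaneously diagonalised, yielding four joint eigenlines and hence $W^{1,1}_{p,0}\oplus W^{1,2}_{p,0}\oplus W^{2,1}_{p,0}\oplus W^{2,2}_{p,0}$. Irreducibility of each piece then follows from Schur's lemma and the irreducibility of the underlying principal series of $\su$.

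The main obstacle I expect is the isomorphism claim $\hat M_{p,x}\cong\hat M$: although the defining relations can be checked by direct computation on the basis $\{e^{\ep,\et}_m(p,x)\}$, identifying $\pi_{p,x}$ as a faithful normal homomorphism requires careful handling of the direct-integral decomposition of $\cK$ over the continuous spectrum of $\Om$ and verifying that almost every fibre yields a faithful $\hat M$-representation. Once this is in place, the remaining case analysis and irreducibility checks reduce to elementary finite-dimensional linear algebra on the four-element label set $\{(\ep,\et)\}$.
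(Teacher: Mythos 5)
Your proposal has a genuine gap at its foundation: the claim that ``the fibre at a regular point of the continuous spectrum is a faithful representation by general direct-integral theory'' is false, both in general and in this specific situation. Direct-integral theory only defines the fibre representations up to sets of measure zero and says nothing about faithfulness of any individual fibre; worse, here the fibre map is provably \emph{not} faithful. The Casimir $\Om$ is affiliated to $\hat M$ and has nonempty discrete spectrum (eigenvalues $\mu(\la)$ with $|\la|>1$), so the corresponding spectral projections are nonzero elements of $\hat M$; under any normal representation in which the image of $\Om$ has spectrum $\{\pm x\}\subset[-1,1]$ (which is what \eqref{eq:introprincipalseries} forces, cf.\ Remark \ref{rem:ell2epetassumodule}) these projections are sent to zero. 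So faithfulness of $\pi_{p,x}$ cannot be the route to $\hat M_{p,x}\cong\hat M$. A second, related gap is that you never actually establish the existence of a normal $\ast$-homomorphism $\pi_{p,x}$ on all of $\hat M$ at the \emph{specific} point $x$ (including $x=0$ and $x=\pm1$): the direct integral only gives fibres for almost every $x$, and pinning down a particular $x$ requires a continuity argument in the spectral parameter. This is exactly where the paper does its hard work: it builds $W_{p,x}$ explicitly via the operators $Q_{p,x}(p_1,p_2,n)=U_n^{\si,\tau}H(\Om)P$, verifies the structure constants of Proposition \ref{prop:structureconstQs} for them (Lemma \ref{lem:propertiesQpx}), and proves that $W_{p,x}^*$ is an isometry through orthogonality relations for the matrix coefficients $C$, obtained by a Dirac-delta approximation together with the uniform estimates of Lemma \ref{lem:asymptoticsforS} (Lemmas \ref{lem:explicitWpx}--\ref{lem:Wpxunitary}). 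None of this analysis is replaced by your appeal to general theory.

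Your decomposition strategy is viable in outline but contains errors in the bookkeeping. The operators $U_0^{+-}$ and $U_0^{-+}$ do \emph{not} commute with $E$ (they flip the sign of the Casimir eigenvalue $\ep\et x$, hence change $|1+\ep\et pq^{2m+1}e^{i\te}|$ unless $x=0$), so they do not ``restrict to the multiplicity spaces'' of the two $\su$-isotypic components; they interchange those components, and the correct argument computes which elements of the commutant of $\{E,K\}''$ additionally commute with these block-swapping partial isometries. Also, your statement that the commutant is a copy of $M_2(\C)$ producing ``two inequivalent irreducibles'' is internally inconsistent: a commutant which is a type $I_2$ factor yields two \emph{equivalent} irreducible summands, whereas two inequivalent ones correspond to an abelian two-dimensional commutant. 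The paper avoids the commutant computation altogether by exhibiting the invariant subspaces $\cL_{p,x}^1,\cL_{p,x}^2$ (and the four subspaces at $x=0$, $\epsilon(p)=0$) explicitly and proving irreducibility directly, using the spectral projections of $K$ and $\Om$ and repeated application of the generators (Propositions \ref{prop:Wpxforxneq0} and \ref{prop:decompprincserforx=0}).
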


\begin{remark}\label{rmk:propintroprincipalseriescorepresentation}
Denoting the corresponding invariant subspaces by $\cL^j_{p,x}$ and $\cL^{j,k}_{p,0}$ of
Proposition \ref{prop:introprincipalseriescorepresentation}, which
are described explicitly in Section \ref{ssec:principalseries}, we can consider these
irreducible constituents of Proposition \ref{prop:introprincipalseriescorepresentation}
as representations of $\su$.
If we consider the irreducible representations $W_{p,x}^j$, $j=1,2$, of $\hat M$ as representations of $\su$, they  decompose into irreducible principal series $\su$-representations as $\pi_{b(-x),\epsilon(p)}\oplus \pi_{b(x),\epsilon(p)}$, where $b(x)$ is determined by $\mu(q^{2ib(x)})=x$ and $\epsilon(p)=\hf \chi(p) \mod 1$,
as in the  decomposition  of the discrete series subcorepresentation of $W$ into $\su$-modules in
Section \ref{ssec:discreterepsinregularrep}
and the representations of $\su$ are described in Section \ref{ssec:decompUq(su11)}.
Similarly, it follows that for $x=0$ and $\epsilon(p)=0$, the irreducible representations $W_{p,0}^{j,k}$, $j,k=1,2$, of $\hat M$ can be considered as irreducible principal series representations $\pi_{b(0),0}$ of $\su$, where $b(0)=-\frac{\pi}{4\ln q}$.
This follows directly from the explicit description of the spaces $\cL^j_{p,x}$ and $\cL^{j,k}_{p,0}$
in Section \ref{ssec:principalseries} and \eqref{eq:introprincipalseries} compared to the listing
of irreducible representations of $\su$ in
Section \ref{ssec:decompUq(su11)}.
\end{remark}

In Section \ref{sec:Casimiroperator} we discuss for suitable
$\Om$-invariant subspaces $\cK(p,m,\ep,\et)\subset \cK$
the spectral decomposition of $\Om\vert_{\cK(p,m,\ep,\et)}$, and
we denote by $\cK_c(p,m,\ep,\et)\subset \cK(p,m,\ep,\et)\subset \cK$
the subspace corresponding to the continuous spectrum $[-1,1]$ of
$\Om\vert_{\cK(p,m,\ep,\et)}$.

\begin{prop} \label{prop:introdecompWprincipalseries}
For $p \in q^\Z$ let $\cK_c(p) \subset \cK$ be the subspace defined by
\[
\cK_c(p)=\bigoplus_{\substack{\ep,\et \in \{-,+\}\\ m \in \Z}}\cK_c(p,m,\ep,\et),
\]
then
\[
W\big\vert_{\cK \otimes \cK_c(p)} \cong \int_{-1}^1 W_{p,x}\, dx.
\]
\end{prop}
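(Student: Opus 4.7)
The plan is to combine the spectral decomposition of the Casimir operator $\Om$ from Section \ref{sec:Casimiroperator} with the explicit action of the generators of $\hat M$ on eigenvectors of $\Om$ (Lemma \ref{lem:actionsofgeneratorsoneigvetsOm}) in order to realize $\cK_c(p)$ as a direct integral of the fiber spaces $\cL_{p,x}$ from Section \ref{ssec:principalrepsinregularrep}. First I would check that $\cK_c(p)$ is an $\hat M$-invariant subspace. The even generators $K,E \in \hat M_+$ commute with $\Om$ (Theorem \ref{thm:Casimirwelldefinedandchar}) and hence preserve continuous spectral subspaces of $\Om$, while the odd generators $U_0^{+-}, U_0^{-+}\in \hat M_-$ anticommute with $\Om$ by Proposition \ref{prop:decompMintoM+andM-andgradedcommutation} and thus exchange the spectral subspaces at $x$ and $-x$, both of which lie in $\cK_c(p)$ since $[-1,1]$ is symmetric under negation. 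Preservation of the parameter $p\in q^\Z$ follows from the $K$-eigenvalue labeling, since the sets $p^{\hf}q^{\hf\Z}$ are disjoint for distinct $p\in q^\Z$. Consequently $W\vert_{\cK\ot\cK_c(p)}$ is a genuine subcorepresentation of $W$.

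Next I would invoke the explicit spectral decomposition of $\Om\vert_{\cK(p,m,\ep,\et)}$ carried out in Section \ref{ssec:spectraldecompCasimir}: the continuous part of the spectrum is $[-1,1]$ with multiplicity one, and the associated spectral transform is given explicitly in terms of special functions of basic hypergeometric type. The normalization of the generalized eigenvectors $e_m^{\ep,\et}(p,x)$ fixed in Section \ref{ssec:principalseries} is chosen precisely so that the resulting spectral measure on $\cK_c(p,m,\ep,\et)$ is Lebesgue measure $dx$ on $[-1,1]$; this yields a unitary $\cK_c(p,m,\ep,\et)\cong L^2([-1,1],dx)$ intertwining $\Om$ with multiplication by $\ep\et\, x$. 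Assembling these unitaries over $m\in\Z$ and $\ep,\et\in\{\pm\}$ and interchanging direct sum with integral produces a unitary
\[
V_p\colon \cK_c(p)\ \stackrel{\cong}{\longrightarrow}\ \int_{-1}^{1}\!\cL_{p,x}\,dx,
\]
where the fiber $\cL_{p,x}=\bigoplus_{\ep,\et,m}\C\,e_m^{\ep,\et}(p,x)$ is exactly the Hilbert space introduced at the start of Section \ref{ssec:principalrepsinregularrep}.

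Finally I would transport the action of each of the four generators of $\hat M$ (Theorem \ref{thm:generatorsforhatM}) through $V_p$. By Lemma \ref{lem:actionsofgeneratorsoneigvetsOm}, the operators $K,E,U_0^{+-},U_0^{-+}$ act on the basis vectors $e_m^{\ep,\et}(p,x)$ by precisely the formulas \eqref{eq:introprincipalseries}, which were designed with exactly this matching in mind. Hence each generator becomes a decomposable operator on $\int\cL_{p,x}\,dx$ whose fiber at $x$ is the corresponding operator on $\cL_{p,x}$ generating $\hat M_{p,x}$ as in Proposition \ref{prop:introprincipalseriescorepresentation}, and the entire $\hat M$-representation on $\cK_c(p)$ decomposes as the direct integral of the $\hat M_{p,x}$-representations. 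Since $W\in M\ot\hat M$ and the corepresentation structure is entirely determined by the action of $\hat M$ on its representation space, this immediately gives $W\vert_{\cK\ot\cK_c(p)}\cong\int_{-1}^{1}W_{p,x}\,dx$. The main obstacle lies in the spectral analysis underlying the second step: proving simple continuous spectrum $[-1,1]$ and arranging the normalization so that the spectral measure is Lebesgue requires the explicit construction of the generalized eigenvectors together with a Plancherel-type identity for the relevant basic hypergeometric family, which is the technical core of Section \ref{sec:Casimiroperator}.
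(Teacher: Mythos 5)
Your proposal is correct and follows essentially the same route as the paper: the intertwiner is exactly the map $I_p=\bigoplus_{\ep,\et}I_p^{\ep,\et}$ built from the spectral transforms $\Up_{p,m}^{\ep,\et}$ (with the Lebesgue normalization on $[-1,1]$ and the $\ep\et x$ relabelling of the fiber), and the fiberwise identification of the corepresentation is guaranteed by the construction of $W_{p,x}$ in Section \ref{ssec:principalseries}. The only cosmetic difference is that the paper verifies the equivalence by directly reading off the slices $(\om_{f,g}\ot\Id)(W_\Up^*)$ from the computation in the proof of Lemma \ref{lem:explicitWpx}, whereas you match the generators $K,E,U_0^{+-},U_0^{-+}$ against \eqref{eq:introprincipalseries} and then pass to the $Q$-operators; both verifications rest on the same function $C$ and the same design of the fiber spaces $\cL_{p,x}$.
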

For direct integrals of (co)representations we refer to \cite[Ch.8]{Schm}.

\subsection{Decomposition of the left regular corepresentation}\label{ssec:} \index{corepresentation!decomposition of the left regular}

Since $\cK=\cK_c\oplus\cK_d$ with $\cK_d$, respectively $\cK_c$,
the subspace corresponding to the discrete, respectively continuous, spectrum of
the Casimir operator, we find by
combining Propositions \ref{prop:Wpxgivesirreduciblediscreteseries}
and \ref{prop:introdecompWprincipalseries} the following decomposition of
the left regular corepresentation $W$ of $(M,\De)$.

\begin{thm}\label{thm:decompositionofleftregularcorep}
\[
W \cong \bigoplus_{p \in q^\Z} \Big( \int_{-1}^1 W_{p,x} dx \oplus \bigoplus_{x \in \si_d(\Om_p)} W_{p,x} \Big),
\]
where $\displaystyle \Om_p = \bigoplus_{\substack{\ep,\et \in \{-,+\} \\ m \in \Z}} \Om\vert_{\cK(p,m,\ep,\et)}$
and $\si_d$ denotes the discrete spectrum.
\end{thm}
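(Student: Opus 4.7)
The plan is to use the joint spectral data of $\Om$, $K$, and the $\Z_2$-grading of Definition \ref{def:defKpmandMpm} to decompose $\cK$ into a maximal family of closed subspaces that are invariant under the action of $\hat{M}$ (equivalently, under $W$ viewed as a corepresentation), and then to match each summand with either a discrete series $\cL_{p,x}$ of Lemma \ref{lem:closedsubspaceinregularrep} or with a continuous fibre from Proposition \ref{prop:introdecompWprincipalseries}. The invariant subspaces are the natural ones suggested by the spectral calculus of $\Om$ developed in Section \ref{sec:Casimiroperator}.

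The first step I would carry out is to group the $\Om$-invariant subspaces $\cK(p,m,\ep,\et)$ along $m$, $\ep$, $\et$ and define, for each $p \in q^\Z$,
\[
\cK_p \,=\, \bigoplus_{m\in\Z,\,\ep,\et\in\{\pm\}} \cK(p,m,\ep,\et), \qquad \cK \,=\, \bigoplus_{p\in q^\Z}\cK_p.
\]
Invariance of $\cK_p$ under $\hat{M}$ is verified on the generators of Theorem \ref{thm:generatorsforhatM}: the operators $K$ and $E$ preserve $p$ by Definition \ref{def:actiongeneratorssu}, while the partial isometries $U_0^{+-}$ and $U_0^{-+}$ only permute the $(\ep,\et)$-labels and keep $p$ fixed, as is visible from the action on the eigenvectors of $\Om$ recorded in Lemma \ref{lem:actionsofgeneratorsoneigvetsOm} and confirmed by the template \eqref{eq:introprincipalseries}. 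Hence $W\big|_{\cK\otimes\cK} = \bigoplus_{p\in q^\Z} W\big|_{\cK\otimes\cK_p}$ and the outer direct sum in the statement is justified.

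Next, for fixed $p$, I would apply the spectral theorem to the self-adjoint operator $\Om_p$ on $\cK_p$ and split $\cK_p = \cK_p^c \oplus \cK_p^d$ into the continuous and discrete spectral subspaces. Both summands are $\hat{M}$-invariant, since the graded commutation relation of Proposition \ref{prop:decompMintoM+andM-andgradedcommutation} sends the spectral projection of $\Om$ on a Borel set $B\subseteq \R$ to that on $-B$ under $\hat{M}_-$ and preserves it under $\hat{M}_+$, so $\cK_p^c$ and $\cK_p^d$ are invariant under $\hat{M}_+\oplus\hat{M}_- = \hat{M}$. For the continuous part Proposition \ref{prop:introdecompWprincipalseries} gives directly $W\big|_{\cK\otimes \cK_p^c} \cong \int_{-1}^1 W_{p,x}\,dx$, and the fibres are irreducible by Proposition \ref{prop:introprincipalseriescorepresentation}. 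For the discrete part I would use that $\Om\vert_{\cK(p,m,\ep,\et)}$ has simple spectrum (Section \ref{sec:Casimiroperator}), so every discrete eigenvector coincides up to scalar with some $e^{\ep,\et}_m(p,x)$; rearranging by the common eigenvalue $\pm x$ gives $\cK_p^d = \bigoplus_{x\in\si_d(\Om_p)} \cL_{p,x}$, and each $\cL_{p,x}$ is an irreducible invariant subspace by Lemma \ref{lem:closedsubspaceinregularrep} together with Proposition \ref{prop:Wpxgivesirreduciblediscreteseries}.

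The main obstacle is essentially hidden upstream: the whole argument is driven by the spectral decomposition of $\Om_p$ and by the fact that the off-diagonal generators $U_0^{\pm\mp}$ of $\hat{M}$ respect the $p$-indexing and intertwine eigenspaces of $\Om$ for eigenvalues of opposite sign. These facts rely on the polar-type decomposition of the operators $Q(p_1,p_2,n)$ in Lemma \ref{lem:polardecompositionQppn} and on the detailed identification of $\hat{M}_{p,x}$ with $\hat{M}$ in Proposition \ref{prop:introprincipalseriescorepresentation}. Once these ingredients are available, the theorem itself amounts only to assembling the pieces: the outer sum over $p$, then, for each $p$, the internal split of $\cK_p$ according to $\si_c(\Om_p) = [-1,1]$ and $\si_d(\Om_p)$, and finally the identification of the resulting summands with the corepresentations $W_{p,x}$ already constructed in Sections \ref{ssec:discreterepsinregularrep} and \ref{ssec:principalrepsinregularrep}.
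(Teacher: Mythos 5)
Your proposal is correct and follows essentially the same route as the paper: the paper's own proof is simply the observation that $\cK=\cK_c\oplus\cK_d$ with respect to the spectrum of the Casimir operator and the combination of Propositions \ref{prop:Wpxgivesirreduciblediscreteseries} and \ref{prop:introdecompWprincipalseries}. You merely spell out the intermediate bookkeeping (invariance of $\cK_p$ under the generators of $\hat{M}$ and of the spectral subspaces under the graded commutation relation), which is consistent with what the paper leaves implicit.
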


It is well-known that in the decomposition of the left regular representation
of $SU(1,1)\cong SL(2,\R)$ the discrete series representation and the principal
unitary series occur, and in this sense Theorem \ref{thm:decompositionofleftregularcorep}
is the appropriate analogue of this result. In case of the group $SU(1,1)\cong SL(2,\R)$
we also have complementary series representations, which do not occur in the
decomposition of the left regular representation, but which can be obtained by
continuation from the principal unitary series representation. For the quantum group
analogue of the normalizer of $SU(1,1)$ in $SL(2,\C)$ we have a similar result. So
we can obtain unitary complementary series corepresentations of $(M,\De)$, and the
approach is sketched in Section \ref{ssec:complementaryseries}.

%

\section{Results for special functions of basic hypergeometric type}\label{sec:BHSresults}

This section is separately readable from the remainder of the paper. This
section is meant to give a couple of examples of rather complicated identities for special
functions of basic hypergeometric type ${}_1\vp_1$ and type ${}_2\vp_1$, see \cite{GaspR}.
We assume that the reader of this section is familiar with the notation for basic
hypergeometric series \cite{GaspR}, but the definition is recalled in
Appendix \ref{app:specialfunctions}. In the first subsection we introduce the notation for
special functions, and we recall some elementary properties. The first subsection
introduces notation and special functions that are used throughout the paper, whereas
the following subsections give explicit highly non-trivial results for these
special functions. These identities follow from the quantum group theoretic interpretation.

\subsection{Definition of some special functions}\label{ssec:defsomespecialf}
The set of natural numbers (without 0) will be denoted by $\N$ and
$\NN = \N \cup \{0\}$. We write, as in Section \ref{sec:quantumSU11description}, $I_q= -q^\N \cup q^\Z$.\index{I@$I_q$}
We use the following functions frequently.

\begin{defn}\label{defn:chikappanuands}
\textrm{(i)} $\chi \colon -q^\Z \cup q^\Z \rightarrow \Z$\index{C@$\chi(x)$} such that
$\chi(x) = \log_q(|x|)$ for all $x \in -q^\Z \cup q^\Z$; \\
\textrm{(ii)} $\kappa \colon \R \rightarrow \R$\index{K@$\kappa(x)$} such that
$\kappa(x) = \sgn(x)\,x^2$ for all $x \in \R$; \\
\textrm{(iii)} $\nu \colon -q^\Z \cup q^\Z  \rightarrow \R^+$\index{N@$\nu(x)$} such that
$\nu(t) = q^{\frac{1}{2}(\chi(t)-1)(\chi(t) -
2)}$ for all $t \in -q^\Z \cup q^\Z$; \\
\textrm{(iv)}  $s \colon \R_0 \times \R_0 \rightarrow \{-1,1\}$\index{S@$s(x,y)$}
is defined such that
$$
s(x,y) = \begin{cases} -1 & \text{if } x > 0 \text{ and } y < 0
\\ 1 & \text{ if } x < 0 \text{ or } y > 0  \end{cases}
$$
for all $x,y \in \R_0=\R\backslash\{0\}$.\\
\textrm{(v)} $\mu : \C\setminus\{0\} \rightarrow \C\setminus\{0\}$\index{M@$\mu$}
such that $\mu(y)=\hf(y+y^{-1})$ for all $y \in \C\setminus\{0\}$.
\end{defn}

For $a,b,z \in \C$, we define
\begin{equation}\label{eq:defPsi}\index{P@$\Psi$}
\Psis{a}{b}{q,\,z}
= \sum_{n=0}^\infty \,\frac{(a;q)_n\,(b\,q^n;q)_\infty}{(q\,;q)_n}\,(-1)^n\,q^{\hf n(n-1)}\, z^n \
= (b;q)_\infty \ \rphis{1}{1}{a}{b}{q,z}.
\end{equation}
This is an entire function in $a,b$ and $z$. Here we have used the standard notation
for basic hypergeometric series \cite{GaspR}, or see Appendix \ref{ssecB:BHS}.

We  use the normalization constant
$c_q = (\sqrt{2}\,q\,(q^2,-q^2;q^2)_\infty)^{-1}$.
Then the following definition is \cite[Def. 3.1]{KoelKCMP},
and the notations as in Definition \ref{defn:chikappanuands}
are used.

\begin{defn} \label{def:functionap} \index{A@$a_p(x,y)$}
If $p \in I_q$, we define the function
$a_p \colon I_q \times I_q \rightarrow \R$
such that $a_p$ is supported on the set
$\{\,(x,y) \in I_q \times I_q \mid \sgn(xy) = \sgn(p)\,\}$
and  is given by
\begin{equation*}
\begin{split}
a_p(x,y)\, =\, & \, c_q\,s(x,y)\,(-1)^{\chi(p)}\,(-\sgn(y))^{\chi(x)}\,|y|\,\,\nu(py/x)\,\,
\,\, \sqrt{\frac{(-\kappa(p),-\kappa(y);q^2)_\infty}{(-\kappa(x);q^2)_\infty}}\,\,
\\ &\times \Psis{-q^2/\kappa(y)}{q^2
\kappa(x/y)}{q^2,\,q^2 \kappa(x/p)}
\end{split}
\end{equation*}
for all $(x,y) \in I_q \times I_q$ satisfying $\sgn(xy) = \sgn(p)$.
\end{defn}

The functions $a_p(x,y)$ for $p,x,y\in I_q$ have been introduced in
\cite[\S 3]{KoelKCMP}, motivated by their occurrence as Clebsch-Gordan coefficients.
Depending on the choices of the sign, these functions can be identified with
well-known special functions of basic-hypergeometric type. In particular, for
$\sgn(x) = \sgn(y)$ the functions $a_p(x,y)$ can be identified with the
$q$-Laguerre polynomials in case $\sgn(x) = \sgn(y)= -$ and with the associated
big $q$-Bessel functions in case $\sgn(x) = \sgn(y)= +$, see \cite{CiccKK}.
The $q$-Laguerre polynomials correspond to an indeterminate moment problem,
and the big $q$-Bessel functions form a complementary orthogonal basis
to the orthogonal polynomials for an explicit solution to the moment
problem corresponding to Ramanujan's ${}_1\psi_1$-summation formula,
see \cite{CiccKK} for details. For
$\sgn(x) = - \sgn(y)$, the functions $a_p(x,y)$ can be matched with
Al-Salam--Carlitz polynomials and $q$-Charlier polynomials, see
\cite{KoekS} for their definition.

For completeness we recall the orthogonality properties of these functions,
see \cite[Prop. 3.2, 3.3]{KoelKCMP}.
For $\theta \in -q^\Z\cup q^\Z$ we define
$\ell_\theta = \{\,(x,y) \in I_q \times I_q \mid y = \theta x \,\}$.\index{L@$\ell_\theta$}

\begin{prop}\label{prop:orthorelapxy}
Consider $\theta \in -q^\Z\cup q^\Z$. Then the family  $\{\,a_p\vert_{\ell_\theta}\, \mid p \in I_q \text{ such that }
\sgn(p) = \sgn(\theta)\,\}$ is an orthonormal basis for $l^2(\ell_\theta)$. In particular,
\[
\sum_{x\in I_q \text{ so that } \theta x\in I_q} a_p(x, \theta x)\, a_r(x, \theta x) = \de_{p,r},
\qquad p,r \in I_q.
\]
\end{prop}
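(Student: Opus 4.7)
The plan is to reduce the claimed orthogonality to known orthogonality relations for classical $q$-special functions by restricting the two-variable functions $a_p$ to the lines $\ell_\theta$. Fix $\theta \in -q^\Z \cup q^\Z$ and parametrize $\ell_\theta$ by $x \in I_q$ with $\theta x \in I_q$; the map $(x, \theta x) \mapsto x$ identifies $\ell^2(\ell_\theta)$ with $\ell^2(J_\theta)$ for this index set $J_\theta \subseteq I_q$. By the support condition in Definition \ref{def:functionap}, $a_p|_{\ell_\theta}$ is identically zero unless $\sgn(p) = \sgn(\theta)$, so we may restrict attention to those $p$.

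First I would substitute $y = \theta x$ into the explicit formula and separate the $x$-dependence into a pure weight factor times a $\Psi$-series in the argument $q^2\kappa(x/p)$. The nontrivial content sits in the series
\[
\Psis{-q^2/\kappa(\theta x)}{q^2\kappa(1/\theta)}{q^2,\,q^2 \kappa(x/p)}\,,
\]
whose second parameter depends only on $\theta$ and is thus constant along $\ell_\theta$. Splitting according to signs I would identify these restrictions with families whose orthogonality is classical: for $\sgn(\theta) = +$ and $x < 0$ (so both entries negative) with the $q$-Laguerre polynomials, for $\sgn(\theta) = +$ and $x > 0$ with the associated big $q$-Bessel functions, and for $\sgn(\theta) = -$ with Al-Salam--Carlitz or $q$-Charlier polynomials; the relevant identifications are spelled out in \cite{CiccKK} and \cite{KoekS}.

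With these identifications the claimed identity
\[
\sum_{x \in I_q,\,\theta x \in I_q} a_p(x,\theta x)\, a_r(x,\theta x) \,=\, \de_{p,r}
\]
reduces in each sign sector to a known orthogonality relation. For the $q$-Laguerre and Al-Salam--Carlitz/$q$-Charlier cases this is the standard discrete orthogonality of the corresponding polynomials against their $q$-exponential weight; for the big $q$-Bessel sector one invokes the ${}_1\psi_1$-based orthogonality that complements the $q$-Laguerre polynomials in the associated indeterminate moment problem. The roles of $p$ and $x$ (as index versus argument of the orthogonal family) interchange according to which sign sector one is in, so one must track dualities carefully, but in every case the precise normalization constants $c_q$, $\nu(py/x)$, the sign factor $s(x,y)$, and the $(-\kappa;q^2)_\infty$ weights conspire to yield the Kronecker delta on the nose, and completeness of each family gives the ``orthonormal basis'' assertion.

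The main technical obstacle is bookkeeping: one must match the sign conventions, the shifts in parameters hidden in $\nu(py/x)$, and the prefactors $(-\sgn(y))^{\chi(x)}$ with the normalizations used in \cite{CiccKK} and \cite{KoekS}, and in the big $q$-Bessel sector one must argue completeness via the solution of the Ramanujan moment problem rather than by a Favard/three-term-recurrence argument. These identifications are carried out in detail in \cite[Prop. 3.2, 3.3]{KoelKCMP}, so the proof essentially consists of recalling that reduction and citing the cited orthogonality identities; no genuinely new analytic input is needed beyond the special-function lemmas gathered in Appendix \ref{app:specialfunctions}.
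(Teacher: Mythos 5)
Your proposal lands in the same place as the paper: the paper gives no proof of this proposition at all, but simply recalls it from \cite[Prop.~3.2, 3.3]{KoelKCMP}, and your sketch of the reduction to $q$-Laguerre, big $q$-Bessel, Al-Salam--Carlitz and $q$-Charlier orthogonality (with the $q$-Laguerre and big $q$-Bessel parts tied together through the indeterminate moment problem of \cite{CiccKK}) matches exactly the identifications the paper itself records immediately after Definition \ref{def:functionap}. So the proposal is correct and takes essentially the same approach as the paper.
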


\begin{prop} \label{prop:dualorthorelapxy}
Consider $\theta  \in -q^\Z\cup q^\Z$ and define $J = q^\Z \subset I_q$ if $\theta > 0$ and $J = -q^\N \subset I_q$ if $\theta < 0$. For every $(x,y) \in
\ell_\theta$ we define the function $e_{(x,y)} : J \rightarrow \R$ such that $e_{(x,y)}(p) = a_p(x,y)$ for all $p \in
J$. Then the family $\{\,e_{(x,y)} \mid (x,y) \in \ell_\theta\,\}$ forms an orthonormal basis for $l^2(J)$.
In particular,
\[
\sum_{p\in J} a_p(x, \theta x)\, a_p(y, \theta y) = \de_{x,y}, \qquad x, y \in I_q.
\]
\end{prop}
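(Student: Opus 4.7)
The plan is to derive the dual orthogonality from the primal orthogonality already recorded in Proposition \ref{prop:orthorelapxy} by a short Hilbert-space duality argument. The point is that Proposition \ref{prop:orthorelapxy} does not merely give orthonormality in one direction, but asserts the stronger fact that $\{a_p|_{\ell_\theta} \mid p \in J\}$ is an orthonormal \emph{basis} of $\ell^2(\ell_\theta)$; this is exactly the statement that an associated linear map between the two relevant $\ell^2$-spaces is unitary, and unitarity is a self-dual property.

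Concretely, I would introduce the linear map $T \colon \ell^2(J) \to \ell^2(\ell_\theta)$ determined on the standard basis by $T\delta_p = a_p|_{\ell_\theta}$ for $p \in J$, where $J$ is the index set defined in the statement. Note that the choice of $J$ in Proposition \ref{prop:dualorthorelapxy} is exactly $\{p \in I_q \mid \sgn(p)=\sgn(\theta)\}$, so it matches the indexing in Proposition \ref{prop:orthorelapxy}. By that proposition, $T$ sends the orthonormal basis $\{\delta_p\}_{p \in J}$ of $\ell^2(J)$ to an orthonormal basis of $\ell^2(\ell_\theta)$, and hence extends to a unitary operator. Its adjoint $T^\ast \colon \ell^2(\ell_\theta) \to \ell^2(J)$ is then also unitary and therefore maps the standard orthonormal basis $\{\delta_{(x,y)}\}_{(x,y) \in \ell_\theta}$ onto an orthonormal basis of $\ell^2(J)$.

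It remains to identify $T^\ast \delta_{(x,y)}$ with $e_{(x,y)}$. Since all values $a_p(x,y)$ are real, for every $p \in J$ one has
\[
(T^\ast \delta_{(x,y)})(p) \,=\, \langle T^\ast \delta_{(x,y)}, \delta_p\rangle \,=\, \langle \delta_{(x,y)}, T\delta_p\rangle \,=\, a_p(x,y) \,=\, e_{(x,y)}(p),
\]
so $T^\ast \delta_{(x,y)} = e_{(x,y)}$, and the family $\{e_{(x,y)} \mid (x,y) \in \ell_\theta\}$ is thereby identified as the image under a unitary of an orthonormal basis, hence is itself an orthonormal basis of $\ell^2(J)$. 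The explicit identity $\sum_{p \in J} a_p(x,\theta x)\,a_p(y,\theta y) = \delta_{x,y}$ then drops out as the inner product $\langle e_{(x,\theta x)}, e_{(y,\theta y)}\rangle_{\ell^2(J)}$, using that on $\ell_\theta$ the first coordinate determines the second.

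There is, in truth, no serious obstacle here: all the work has already been invested in Proposition \ref{prop:orthorelapxy} (and the ${}_1\vp_1$-identities that underlie it). The proof of the present proposition is purely formal duality once the primal basis statement is in hand, and in particular does not require any further special-function manipulation.
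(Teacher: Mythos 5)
Your argument is correct. The one genuine hinge is the step you yourself flag: Proposition \ref{prop:orthorelapxy} must be used in its full strength, namely that $\{a_p\vert_{\ell_\theta}\}_{p\in J}$ is an orthonormal \emph{basis} of $\ell^2(\ell_\theta)$ and not merely an orthonormal system — orthonormality alone would only make $T$ an isometry, whence $T^*$ a co-isometry, and you would get completeness of the $e_{(x,y)}$ but not their orthonormality. Granting that, the map $T$ is unitary, $T^*\de_{(x,y)}=e_{(x,y)}$ by the computation you give (the reality of $a_p(x,y)$ disposing of the complex conjugate), and the displayed sum is the $\ell^2(J)$ inner product, absolutely convergent by Cauchy--Schwarz. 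Note for comparison that this paper does not prove the proposition at all: both Propositions \ref{prop:orthorelapxy} and \ref{prop:dualorthorelapxy} are recalled verbatim from \cite[Prop.~3.2, 3.3]{KoelKCMP}, where the analytic content (the ${}_1\vp_1$-identities behind the primal basis statement) is established; your observation that the dual relations then follow by pure Hilbert-space duality, with no further special-function work, is accurate and is essentially how the cited source deduces the second proposition from the first. One small caveat on the statement rather than your proof: the ``in particular'' identity as written for all $x,y\in I_q$ tacitly assumes $\theta x,\theta y\in I_q$ (so that $(x,\theta x),(y,\theta y)\in\ell_\theta$); your derivation correctly covers exactly that case.
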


For convenience we state the following
symmetry relations
for the functions $a_p(x,y)$, see \cite[Prop.~3.5]{KoelKCMP}:
\begin{equation}\label{eq:symmetryforapxy}
\begin{split}
a_p(x,y) & =  (-1)^{\chi(yp)} \sgn(x)^{\chi(x)}
\left|\frac{y}{p}\right| a_y(x,p); \\
a_p(x,y) & =  \sgn(p)^{\chi(p)} \sgn(x)^{\chi(x)}
\sgn(y)^{\chi(y)} a_p(y,x); \\
a_p(x,y) & =  (-1)^{\chi(xp)} \sgn(y)^{\chi(y)}
\left|\frac{x}{p}\right| a_x(p,y).
\end{split}
\end{equation}


\subsection{Summation and transformation formulas for $a_p(x,y)$}

The functions $a_p(x,y)$, which as noted above are closely related to some
well-known orthogonal polynomials of basic hypergeometric type, are
used in the definition of the so-called multiplicative unitary $W$,
see \eqref{eq:Wexplicitinapxy}. In the general theory of locally compact
groups, the multiplicative unitary $W$ plays an important role. In
particular, it satisfies the pentagonal equation, a relation that
is essential in proving Propositions \ref{prop:structureconstQs} and
\ref{prop:hatDeonQppn}. The result in these propositions lead to
operator identities in suitable Hilbert spaces, and taking
matrix coefficients then essentially lead to Theorems
\ref{thm:symmetryforsinglesumfourapxys} and \ref{thm:doublesumfiveapxysequalsthreeapxys}
in this section. The details of the proofs are given in
Section \ref{ssec:summationformula}.

\subsubsection{Representing the structure of $\hat{M}$}
By taking the non-trivial structure constants of Proposition \ref{prop:structureconstQs}
and considering matrix coefficients at both sides we obtain the following
theorem.

\begin{thm}\label{thm:symmetryforsinglesumfourapxys}
For $p_1,p_2,r_1,r_2 \in I_q$, $l,n,m \in \Z$, $\ep,\et\in \{\pm\}$ and
with $z\in I_q$ so that $\sgn(z)=\ep$ and $\ep\et pq^lz\in I_q$ and
with $w\in I_q$ so that $\sgn(w)=\ep\sgn(r_1p_1)$ and $\ep\et \sgn(r_1p_1r_2p_2)pq^{l+m+n}w\in I_q$
we have
\[
\begin{split}
&\sum_{\scriptstyle{\stackrel{\scriptstyle{x\in I_q\text{ so that } \sgn(x)=\sgn(r_1p_1)}}{\text{and }|x|\sgn(r_2p_2)pq^{2l+m+n}\in I_q}}}
a_z(x,w)\, a_x(r_1,p_1)\, a_{|x|\sgn(r_2p_2)pq^{2l+m+n}}(r_2, p_2) \,
\\ & \qquad\quad  \times\, a_{\ep\et pq^lz}(|x|\sgn(r_2p_2)pq^{2l+m+n}, \sgn(r_1p_1r_2p_2)\ep\et pq^{l+m+n}w) =
\de_{|\frac{r_1}{r_2}|p, q^{-2l-m}}\, \de_{|\frac{p_1}{p_2}|p, q^{-2l-2m-n}}  \\
&\times \sum_{\scriptstyle{\stackrel{\scriptstyle{u\in I_q\text{ so that } \sgn(u)=\sgn(r_1)\ep}}{\text{and }\ep\et \sgn(r_1r_2)pq^{l+m}u\in I_q}}}
a_z(r_1,u)\, a_u(p_1, w)\, a_{\ep\et pq^lz}(r_2, \ep\et \sgn(r_1r_2)pq^{l+m}u) \\
&\qquad\quad \times\, a_{\ep\et \sgn(r_1r_2)pq^{l+m}u}(p_2, \sgn(r_1p_1r_2p_2)\ep\et pq^{l+m+n}w),
\end{split}
\]
where the series on both sides converge absolutely.
\end{thm}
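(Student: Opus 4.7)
The plan is to take matrix coefficients of the operator identity of Proposition \ref{prop:structureconstQs} between carefully chosen basis vectors of $\cK$, computing the same matrix element in two complementary ways. By the definition \eqref{eq:defQppn} and the explicit formula \eqref{eq:Wexplicitinapxy} for the multiplicative unitary $W^\ast$ in terms of the functions $a_p$, the matrix element $\langle Q(a,b,c)\,f_{m_1,x_1,y_1},\,f_{m_2,x_2,y_2}\rangle$ reduces to a product of two values of the function $a_\cdot(\cdot,\cdot)$, accompanied by Kronecker deltas fixing all the $m$-indices and some of the $I_q$-indices. Both groupings on each side of the theorem statement are in fact of this form, which is what makes the approach work.

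First I would choose basis vectors $\phi_1,\phi_2\in \cK$ of the form $f_{\cdot,\cdot,\cdot}$ with parameters tuned so that $\langle Q(p_1,p_2,n)\,Q(r_1,r_2,m)\,\phi_1,\phi_2\rangle$, expanded via Proposition \ref{prop:structureconstQs} as $\sum_{x_1,x_2} a_{x_1}(r_1,p_1)\,a_{x_2}(r_2,p_2)\,\langle Q(x_1,x_2,n+m)\phi_1,\phi_2\rangle$, produces the left-hand side of the theorem. The matrix element $\langle Q(x_1,x_2,n+m)\phi_1,\phi_2\rangle$ is a product of two $a$-functions whose supporting Kronecker deltas force $x_2=|x_1|\sgn(r_2p_2)pq^{2l+m+n}$, collapsing the double sum to a single sum with $x=x_1$; the remaining two $a$-factors are exactly $a_z(x,w)$ and $a_{\ep\et pq^l z}(|x|\sgn(r_2p_2)pq^{2l+m+n},\sgn(r_1p_1r_2p_2)\ep\et pq^{l+m+n}w)$.

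Next I would evaluate the same matrix element by inserting a resolution of the identity between the two operators, giving $\sum_{v}\langle Q(r_1,r_2,m)\phi_1,v\rangle\,\langle Q(p_1,p_2,n)v,\phi_2\rangle$, the sum ranging over the standard orthonormal basis $\{f_{\cdot,\cdot,\cdot}\}$ of $\cK$. Each matrix element is again a product of two $a$-functions with Kronecker deltas; the deltas pin three of the four $I_q$-type parameters of $v$, leaving a single sum over one $I_q$-index, which becomes the variable $u$ on the right-hand side. The support conditions $\sgn(u)=\sgn(r_1)\ep$ and $\ep\et\sgn(r_1r_2)pq^{l+m}u\in I_q$ are dictated by the supports in Definition \ref{def:functionap}. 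The Kronecker deltas $\de_{|r_1/r_2|p,q^{-2l-m}}$ and $\de_{|p_1/p_2|p,q^{-2l-2m-n}}$ arise from compatibility of all the Kronecker deltas produced by the four matrix-element factors; without them, the matrix element vanishes, which is consistent with the vanishing statement of Proposition \ref{prop:structureconstQs}.

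The main obstacle is the sheer bookkeeping of sign factors ($\sgn$, $\chi$, and the auxiliary function $s$ of Definition \ref{defn:chikappanuands}) and absolute-value constraints; aligning the two sides in the precise stated form requires systematic use of the symmetry relations \eqref{eq:symmetryforapxy}. Absolute convergence of both series follows from the $\ell^2$-orthonormality of the $a_p$-families (Propositions \ref{prop:orthorelapxy} and \ref{prop:dualorthorelapxy}) via Cauchy--Schwarz applied to the two factors depending on the summation variable.
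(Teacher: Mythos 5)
Your proposal is correct and follows essentially the same route as the paper: the paper likewise takes the matrix coefficient of the identity in Proposition \ref{prop:structureconstQs} between $f_{-l,\ep\et pq^lz,z}$ and $f_{-l-m-n,\sgn(r_1p_1r_2p_2)\ep\et pq^{l+m+n}w,w}$, obtains the $u$-sum by inserting the orthonormal basis of the intermediate space $\cK(p,l+m,\ep\sgn(r_1),\et\sgn(r_2))$, and obtains the $x$-sum by collapsing the double sum over $(x_1,x_2)$ via the Kronecker deltas in \eqref{eq:matrixelementsofQppnandhatJQppnhatJ}, exactly as you describe.
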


\begin{remark}\label{rmk:thmsymmetryforsinglesumfourapxys}
(i) The formula of Theorem \ref{thm:symmetryforsinglesumfourapxys} contains many
special cases involving $q$-Laguerre polynomials, big $q$-Bessel functions,
Al-Salam--Carlitz polynomials and $q$-Charlier polynomials as special cases by suitable specializing
the signs in the formula. Note moreover that in all cases the sums are essentially sums
over $q^\Z$ or $q^\N$. For each particular choice of the signs the square roots
occurring in Definition \ref{def:functionap}
in Theorem \ref{thm:symmetryforsinglesumfourapxys} will cancel or can be taken together.
It would be of interest to find a direct analytic proof. \par\noindent
(ii) As stated before, the functions $a_p(x,y)$ can be interpreted as Clebsch-Gordan
coefficients related to representations of the quantized function algebra, which
has no classical counterpart. For the case of the quantum $SU(2)$ group the
corresponding Clebsch-Gordan coefficients are Wall polynomials, which are
special cases of little $q$-Jacobi polynomials and also can be interpreted as
$q$-analogues of Laguerre polynomials, see \cite{KoorSIAM91}.
The classical Clebsch-Gordan coefficients
also satisfy summation formulas involving the product of
four Clebsch-Gordan coefficients, see e.g. \cite[Ch.~8.7]{VarsMK}, but the structure
of the summations is quite different.
Relations as in Theorems \ref{thm:symmetryforsinglesumfourapxys} and
\ref{thm:doublesumfiveapxysequalsthreeapxys}, if proved directly, might give
a hint of proving directly that the corresponding $q$-analogues of the
Racah coefficients are zero at the appropriate places, leading to a direct
proof of the coassociativity for $M$, see the discussion \cite[p.~289]{KoelKCMP}.
\end{remark}

Theorem \ref{thm:symmetryforsinglesumfourapxys} can be used to obtain
positivity results for sums where the summands have four of the functions
$a_p(x,y)$. The result is contained in Corollary \ref{cor:thmsymmetryforsinglesumfourapxys}.
We give the case corresponding to the $q$-Laguerre polynomials explicitly, and we refer
to Askey \cite[Lecture 5]{Aske} for more information on the related positivity results for
the Laguerre polynomials. The $q$-Laguerre polynomials are defined by,
\begin{equation}\label{eq:defqLaguerrepols}
L_n^{(\al)}(x;q) = \frac{(q^{\al+1};q)_n}{(q;q)_n}\, \rphis{1}{1}{q^{-n}}{q^{\al+1}}{q,-q^{1+\al}x},
\end{equation}
in this application we only consider the case $\al=0$.

\begin{cor}\label{cor:thmsymmetryforsinglesumfourapxys}
For $r_1,r_2 \in I_q$, $l,m \in \Z$ and
with $z\in I_q$ so that $\sgn(z)=\ep$ and $\ep\et |\frac{r_2}{r_1}|q^{-m-l}z\in I_q$ and
we have
\[
\begin{split}
&\qquad\qquad (-\et)^{l+m} (\et\sgn(r_1))^{\chi(r_1)}\,
(\et\sgn(r_2))^{\chi(r_2)} \, (\ep\et)^{\chi(z)}  \\
&\sum_{x\in q^\Z} x^2\, a_x(r_1,r_1)\, a_x(z,z)
\, a_{xq^{-m}|\frac{r_2}{r_1}|}(r_2,r_2)\, a_{xq^{-m}|\frac{r_2}{r_1}|}(\ep\et |\frac{r_2}{r_1}|q^{-m-l}z,\ep\et |\frac{r_2}{r_1}|q^{-m-l}z) >0
\end{split}
\]
and for $a\in \Z$ and $n_1,n_2,n_3,n_4\in \NN$ we have
\[
\sum_{k\in \Z} \frac{q^{k}}{(-q^{k}, -q^{k+a};q)_\infty}
L^{(0)}_{n_1}(q^{k};q)\, L^{(0)}_{n_2}(q^{k};q)\, L^{(0)}_{n_3}(q^{k+a};q)\,
L^{(0)}_{n_4}(q^{k+a};q)\,  > 0.
\]
\end{cor}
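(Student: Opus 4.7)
Plan. The strategy is to derive the Corollary from Theorem \ref{thm:symmetryforsinglesumfourapxys} by a specialization that collapses the right-hand side to a sum of manifestly non-negative terms, matches the left-hand side to the Corollary's sum, and aligns the resulting sign prefactor with the one displayed.

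Set $p_1 = r_1$, $p_2 = r_2$, $w = z$ in Theorem \ref{thm:symmetryforsinglesumfourapxys}. The two Kronecker deltas $\de_{|r_1/r_2|p,\,q^{-2l-m}}$ and $\de_{|p_1/p_2|p,\,q^{-2l-2m-n}}$ are then jointly consistent iff $n = -m$ and $p = q^{-2l-m}|r_2/r_1|$, and the identity reduces to an equality of single sums. On the right-hand side the typical summand reads $a_z(r_1,u)\,a_u(r_1,z)\,a_Z(r_2,v)\,a_v(r_2,Z)$ with $Z = \ep\et pq^l z$ and $v = \ep\et\sgn(r_1 r_2)\, pq^{l+m} u$. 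Applying the first identity of \eqref{eq:symmetryforapxy} to $a_u(r_1,z)$ and to $a_v(r_2,Z)$ rewrites each summand as $a_z(r_1,u)^2\,a_Z(r_2,v)^2$ multiplied by an explicit sign-and-magnitude factor. A short parity count gives $\chi(zu)+\chi(Zv)\equiv m \pmod{2}$ and $|z/u|\cdot|Z/v| = |z|^2\,q^{-m}\,|u|^{-2}$, so the right-hand side equals $(-1)^m\,\sgn(r_1)^{\chi(r_1)}\,\sgn(r_2)^{\chi(r_2)}\,|z|^2\,q^{-m}\,\sum_u |u|^{-2}\,a_z(r_1,u)^2\,a_Z(r_2,v)^2$; this sum is strictly positive, since Proposition \ref{prop:orthorelapxy} prevents $a_z(r_1,\cdot)$ from vanishing identically on its support.

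Applying the same symmetry to $a_z(x,z)$ and to $a_{\ep\et pq^l z}(x'',\,\ep\et pq^l z)$ on the left-hand side, where $x'' = xpq^{2l}$, puts $x$ into the first slot of each factor and produces the sum $\Sigma = \sum_x x^2\,a_x(r_1,r_1)\,a_x(z,z)\,a_{x'}(r_2,r_2)\,a_{x'}(w,w)$ of the Corollary multiplied by the prefactor $(-1)^l\,\sgn(z)^{\chi(z)}\,\et^{\chi(p)+l+\chi(z)}\,q^l\,|z|^{-2}$. Substituting $\chi(p) = -2l-m-\chi(r_1)+\chi(r_2)$ (forced by the specialization) and $\sgn(z) = \ep$ so that $(\sgn(z)\,\ep)^{\chi(z)} = 1$, and equating the two sides, the combined sign collapses to precisely the Corollary's prefactor $(-\et)^{l+m}\,(\et\sgn(r_1))^{\chi(r_1)}\,(\et\sgn(r_2))^{\chi(r_2)}\,(\ep\et)^{\chi(z)}$, giving the first inequality.

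For the $q$-Laguerre inequality, further specialize by taking $r_1, r_2, z \in -q^{\N}$, which yields $\ep = \et = -$ (hence $w \in -q^{\N}$) and prefactor $+1$. In this regime Definition \ref{def:functionap} identifies $a_p(y,y)$ with a $q^2$-Laguerre polynomial $L^{(0)}_{\chi(p)-1}(y^2/p^2;q^2)$ times an elementary square-root weight built from $(-\kappa;q^2)_\infty$ Pochhammers. Matching $k = \chi(x)$, $a = -m+\chi(r_2)-\chi(r_1)$, $n_i = \chi(\cdot)-1$, and absorbing the weight factors into $1/(-q^k,-q^{k+a};q^2)_\infty$, the first inequality becomes the claimed positivity for the $q$-Laguerre sum (read in base $q^2$; the stated version follows after the standard renaming of the base). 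The main obstacle is the sign bookkeeping: two applications of the first identity in \eqref{eq:symmetryforapxy} on each side of the Theorem produce a cascade of $(-1)^{\chi(\cdot)}$ and $\sgn(\cdot)^{\chi(\cdot)}$ factors, and showing that their product exactly reproduces the Corollary's prefactor requires a careful mod-$2$ parity analysis together with repeated use of the constraint on $\chi(p)$.
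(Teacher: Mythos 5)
Your proposal is correct and follows essentially the same route as the paper: the paper likewise specializes the operator identity underlying Theorem \ref{thm:symmetryforsinglesumfourapxys} to $Q(r_1,r_2,-m)\,Q(r_1,r_2,m)$ acting on $f_{-l,\ep\et pq^l z,z}$, recognizes the right-hand side as $\|Q(r_1,r_2,m)f_{-l,\ep\et pq^l z,z}\|^2$ --- a weighted sum of squares, via the adjoint formula of Lemma \ref{lem:QadjointisQ}, which is the operator form of your symmetry manipulation --- and converts the left-hand side into the Corollary's sum by two applications of a symmetry from \eqref{eq:symmetryforapxy}, before specializing signs to extract the $q$-Laguerre inequality exactly as you describe. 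The only slips are cosmetic: for the left-hand side it is the \emph{third} relation of \eqref{eq:symmetryforapxy} (moving $x$ from the argument slot into the subscript), not the first, that does the job, and your strict-positivity remark should address the simultaneous non-vanishing of both squared factors at a common summation point --- though the paper is equally terse there, simply asserting $\|Q(r_1,r_2,m)f\|^2>0$.
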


Note that the sum is closely related to one of the orthogonality measures for
the $q$-Laguerre polynomials, which correspond to an indeterminate moment
problem. A similar positivity result can be obtained for the $q$-Bessel
functions involved.

\subsubsection{Representing the comultiplication in $\hat{M}$}
The explicit expression for $\hat{\De}$ in the dual quantum group $\hat{M}$
as given in Proposition \ref{prop:hatDeonQppn}, or better the expression  \eqref{eq:generalobsforcomultiplication-2} in the proof of Proposition
\ref{prop:hatDeonQppn}, leads to a formula for its matrix elements.
The result is the following theorem.

\begin{thm}\label{thm:doublesumfiveapxysequalsthreeapxys}
For fixed $r\in q^\Z$, $m_1,m_2,M,n\in \Z$, $p_1,p_2\in I_q$,
$\ep_1,\ep_2, \et_1,\et_2,\si\in \{\pm\}$ and for $z_1,z_2,w_1, w_2 \in I_q$
satisfying
\[
\begin{split}
&\sgn(z_i)=\ep_i, \ (i=1,2), \quad \ep_1\et_1q^{m_1}r z_1\in I_q, \quad
\ep_2\et_2q^{-2m_1-m_2-n}\frac{z_2|p_2|}{r|p|}\in I_q, \\
&\sgn(w_1) =\sgn(p_1)\ep_1, \quad \sgn(w_2)=\si \ep_2, \qquad
\si\sgn(p_1)\ep_1\et_1 q^{m_1+M}r_1w_1\in I_q, \\
&\si\sgn(p_2)\ep_2\et_2 q^{-2m_1-m_2-M}\frac{w_2|p_2|}{r|p|}\in I_q
\end{split}
\]
and such that $a_{z_1}(p_1,w_1)\not= 0$
we have
\[
\begin{split}
&\frac{1}{w_2^2} a_{ep_1\et_1q^{m_1}rz_1} (\si |p_1|rq^{2m_1+M}, \ep_1\et_1\si \sgn(p_1) w_1r q^{m_1+M})
\, a_{z_2}(\si |p_1|rq^{2m_1+M}, w_2) \\
&\qquad\qquad \times \, a_{\ep_2\et_2|\frac{p_2}{p_1}|\frac{z^2}{r} q^{-2m_1-m_2-n}} (p_2, \ep_2\et_2\si \frac{p_2w_2}{|p_1|r}
q^{-2m_1-m_2-M})
= \\
&\sum_{\stackrel{\scriptstyle{y,x\in I_q\text{ so that }\sgn(y)=\ep_2\et_1}\text{ and}}
{\scriptstyle{\sgn(p_1p_2)q^nxw_1/z_1\in I_q, \ \ep_1\ep_2\et_1\et_2q^{-m_1-m_2}yx/rz_1\in I_q}}}
\hskip-.3truecm\frac{1}{y^2} \, a_{z_2}(\ep_1\et_1q^{m_1}rz_1,y) \, a_{w_2}(\sgn(p_1)\si \ep_1\et_1q^{m_1+M}rw_1,y)
\\ &\times  \,
a_{\ep_2\et_2\frac{z_2|p_2|}{r|p_1|}q^{-2m_1-m_2-n}}(x,\ep_1\ep_2\et_1\et_2q^{-m_1-m_2}\frac{yx}{rz_1})
\, a_x(p_2,\sgn(p_1p_2)q^n\frac{xw}{z_1}) \\
&\times \, a_{\si\sgn(p_2)\ep_2\et_2\frac{w_2|p_2|}{r|p_1|}q^{-2m_1-m_2-M}} (\sgn(p_1p_2)q^n\frac{xw_1}{z_1},
\ep_1\ep_2\et_1\et_2q^{-m_1-m_2}\frac{yx}{rz_1})
\end{split}
\]
where the left-hand-side is considered to be zero in case $\si |p_1|rq^{2m_1+M}\notin I_q$.
The series converges absolutely.
\end{thm}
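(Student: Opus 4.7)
The plan is to obtain this identity as the matrix-coefficient form of the operator identity
\[
\hat{\De}\bigl(Q(p_1,p_2,n)\bigr) \,=\, \sum_{m\in\Z,\,p\in I_q} Q(p,p_2,n-m)\,\ot\,Q(p_1,p,m)
\]
from Proposition \ref{prop:hatDeonQppn}, evaluated on carefully chosen pairs of basis vectors $f_{m,p,t}\ot f_{m',p',t'}$ in $\cK\ot\cK$. The first step is to unpack, from the definition $Q(r_1,r_2,k)=(\om_{f_{0,r_1,1},f_{k,r_2,1}}\ot\Id)(W^\ast)$ together with the explicit expression \eqref{eq:Wexplicitinapxy} of $W^\ast$ in the functions $a_p$, a clean matrix-coefficient formula for $\langle Q(r_1,r_2,k)f_{m,p,t},f_{m',p',t'}\rangle$. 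This will be a Kronecker-delta supported product of exactly two $a$-factors, whose signs and $q$-exponents are dictated by Definition \ref{def:functionap} and the symmetries \eqref{eq:symmetryforapxy}.

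Next I would parametrise basis vectors $v_i, u_i$ by the data $r,\,m_1,m_2,M,n,\,p_1,p_2,\,\ep_i,\et_i,\si,\,z_i,w_i$ of the theorem, chosen so that the support conditions listed in the hypotheses are exactly those that ensure all vectors involved lie in the orthonormal basis $\{f_{m,p,t}\}$. The matrix element
\[
\bigl\langle \hat{\De}(Q(p_1,p_2,n))\,(v_1\ot v_2),\,(u_1\ot u_2)\bigr\rangle
\]
is computed in two different ways. Using the implementation $\hat{\De}(x)=\Sigma\,W(x\ot 1)W^\ast\,\Sigma$ and applying the matrix coefficient formula for $Q(p_1,p_2,n)$ one obtains, after collapsing the Kronecker deltas, precisely the three-factor product on the left-hand side of the theorem. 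The weight $1/w_2^2$ arises from the GNS-normalisation $\La(\Phi(m,p,t))=|t|^{-1}f_{m,p,t}$ in Remark \ref{rmk:propertiessiJnabla}, and the assumption $a_{z_1}(p_1,w_1)\ne 0$ is used to divide out a common factor $a_{z_1}(p_1,w_1)$ that appears on both sides of the derived scalar equation.

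On the other hand, inserting the expansion $\hat{\De}(Q)=\sum_{m,p}Q(p,p_2,n-m)\ot Q(p_1,p,m)$ and using the two-factor matrix coefficient formula for each $Q$ yields a sum over $(m,p)$ of products of four $a$'s. The support conditions on $v_i,u_i$ pin $m$ down to a single value, while the remaining summation index $p$ is relabelled as $x$; a second index $y$ appears because one of the intermediate $Q$-matrix-coefficient formulas still contains a free summation over a basis label not fixed by $v_i,u_i$. Combining these with the normalisation weights yields exactly the double sum over $(x,y)$ with five $a$-factors and the factor $1/y^2$. Absolute convergence of both series follows from the $\ell^2$-estimates on the $a_p$ that underpin the orthogonality relations in Propositions \ref{prop:orthorelapxy} and \ref{prop:dualorthorelapxy}.

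The main obstacle will not be any deep analytic input but rather the meticulous bookkeeping. One must verify that every sign factor $s(\cdot,\cdot)$, $(-1)^{\chi(\cdot)}$, $\sgn(\cdot)^{\chi(\cdot)}$ and every $q$-exponent match on the two sides, and that the support conditions listed in the hypotheses correspond precisely to the membership conditions $f_{m,p,t}\in\cK$ for every basis vector that arises. The pentagonal identity $W_{12}W_{13}W_{23}=W_{23}W_{12}$, which underlies the comultiplication formula in Proposition \ref{prop:hatDeonQppn}, is therefore the engine of the identity, while the present theorem is its matrix-coefficient incarnation.
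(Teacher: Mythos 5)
Your overall strategy --- taking matrix coefficients of the comultiplication identity of Proposition \ref{prop:hatDeonQppn}, which is the pentagon equation in disguise --- is exactly the one the paper uses, but you have attached the two sides of that operator identity to the wrong sides of the theorem, and the mechanism you invoke to produce the double sum does not exist. By \eqref{eq:matrixelementsofQppnandhatJQppnhatJ}, the matrix coefficient $\langle Q(r_1,r_2,k)f_{uvw},f_{lrs}\rangle$ between two \emph{fixed} basis vectors is a product of Kronecker deltas and exactly two $a$-factors, with no residual summation. Consequently, when you evaluate $\sum_{m,p}Q(p_1,p,m)\ot Q(p,p_2,n-m)$ between fixed simple tensors of basis vectors, the deltas pin down both $m$ and $p$, and the whole series collapses to a \emph{single} term with four $a$-factors; after dividing out the common factor $a_{z_1}(p_1,w_1)$ (this is where the hypothesis $a_{z_1}(p_1,w_1)\neq 0$ enters) this gives the three-factor \emph{left}-hand side of the theorem, not the double sum. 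Your claim that ``one of the intermediate $Q$-matrix-coefficient formulas still contains a free summation over a basis label not fixed by $v_i,u_i$'' is false, and it is precisely the point at which your bookkeeping would break down.

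The double sum over $(x,y)$ with five $a$-factors comes from the \emph{other} side, $W\bigl(Q(p_1,p_2,n)\ot 1\bigr)W^\ast$: writing its matrix coefficient as $\bigl\langle (Q\ot 1)\,W^\ast(v_1\ot v_2),\, W^\ast(u_1\ot u_2)\bigr\rangle$ and expanding each $W^\ast$ by \eqref{eq:Wexplicitinapxy} produces a four-fold sum (two summation indices per application of $W^\ast$), which the Kronecker deltas coming from the inner product and from the matrix element of $Q$ reduce to a double sum; of the six $a$-factors that arise (two from each $W^\ast$ and two from $Q$), one is again $a_{z_1}(p_1,w_1)$, independent of the summation variables, and cancels against the other side, leaving the five-factor summand. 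So the proposal is salvageable, but only after you interchange the roles of the two sides of Proposition \ref{prop:hatDeonQppn} and identify the correct source of the variables $x$ and $y$. Two minor points: the weights $1/w_2^2$ and $1/y^2$ come from the $|t_2/y|$ factors in \eqref{eq:Wexplicitinapxy} and the $|w/s|$ factors in \eqref{eq:matrixelementsofQppnandhatJQppnhatJ}, not from the GNS normalisation of Remark \ref{rmk:propertiessiJnabla}; and absolute convergence follows from the pointwise estimates of Lemma \ref{lem:estimate2} rather than from the orthogonality relations.
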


\begin{remark}\label{rmk:doublesumfiveapxysequalsthreeapxys}
(i) First note that the largest part of Remark \ref{rmk:thmsymmetryforsinglesumfourapxys}(i)
is also applicable to Theorem \ref{thm:doublesumfiveapxysequalsthreeapxys}, except for the
fact that the summation is more involved. Viewing the summation as a sum over an
area in $I_q\times I_q \subset \R^2$ (with $x$ on the horizontal axis and $y$ on the vertical axis),
we see that the summation area is a subset of $I_q\times I_q$ bounded by a vertical line and
a hyperbola. Depending on the sign choices there are eight possibilities
for the location of the vertical line and the hyperbola.  \par\noindent
(ii) Theorem \ref{thm:doublesumfiveapxysequalsthreeapxys} follows from
the operator identity in Proposition \ref{prop:hatDeonQppn}, but the single term in
the left hand side of Theorem \ref{thm:doublesumfiveapxysequalsthreeapxys} corresponds
to summation on the left hand side of Proposition \ref{prop:hatDeonQppn}, whereas
the double sum on the right hand side of Theorem \ref{thm:doublesumfiveapxysequalsthreeapxys}
corresponds to the single term on the right hand side of Proposition \ref{prop:hatDeonQppn}.
\par\noindent
(iii) Since the results in Theorems \ref{thm:symmetryforsinglesumfourapxys} and
\ref{thm:doublesumfiveapxysequalsthreeapxys} both reflect the pentagonal equation for
the multiplicative unitary, one might expect the resulting identities to be equivalent
by using the orthogonality relations of Propositions \ref{prop:orthorelapxy} and
\ref{prop:dualorthorelapxy}. However, this is not the case as follows by considering
the dependence of both results on the free parameters.
\end{remark}


\subsection{Formulas involving ${}_2\vp_1$-series}\label{ssec:formulas2phi1}

In Section \ref{sec:generatorshatM} we show that with respect to the spectral
decomposition of the Casimir operator $\Om$ the operators $Q(p_1,p_2,n)$
generating $\hat{M}$, see Proposition \ref{prop:QppngeneratedualM},
act by multiplication by a ${}_2\vp_1$-series up to a sign-change in the
argument. Since we also have another explicit expression for the
action of $Q(p_1,p_2,n)$ by Lemma \ref{lem:explicitactionQppn}, we have two different
explicit expressions for the action of $Q(p_1,p_2,n)$.
This leads
to the following theorem, where the functions $\Psi$ are
essentially ${}_1\vp_1$-functions as defined in \eqref{eq:defPsi}. Actually,
we have written out two of several options depending on several sign choices.

\begin{thm} \label{thm:introsummationformula1}
Let $m,n \in \Z$, $p_1,p_2 \in q^\Z$ and $\la \in \T$.
\begin{enumerate}[(i)]
\item For $k \in \Z$,
\[
\begin{split}
&\sum_{l=-\infty}^\infty (-1)^{l+k+n} \Big(p_2^2q^{2n-2k-3}\Big)^l q^{l^2} (-q^{2l-2m}p_2^2/p_1^2;q^2)_\infty \\
& \qquad \times (q^{2-2m-2n};q^2)_\infty \rphis{2}{1}{q^{1-n}p_1\la/p_2, q^{1-n}p_1/p_2\la}{q^{2-2m-2n}}{q^2, -q^{2-2l}} \\
& \qquad\times \Psis{-q^{2-2l}}{q^{2+2k-2l}}{q^2,q^{2+2k}/p_1^2} \Psis{-q^{2-2l+2m}p_1^2/p_2^2}{q^{2+2k-2n-2l}}{q^2, q^{2+2k-2m-2n}/p_1^2}\\
&= p_2^{2k} q^{2n-3k} q^{-k^2} (q^2, -q^2/p_2^2;q^2)_\infty \frac{ (p_1q^{1-n}\la/p_2,p_1q^{1-n}\la/p_2;q^2)_n }{ (p_2q^{1-n}\la/p_1,p_2q^{1-n}\la/p_1;q^2)_n } \\
& \qquad \times (q^{2-2n};q^2)_\infty \rphis{2}{1}{p_2q^{1-n}\la/p_1, p_2q^{1-n}/p_1\la}{q^{2-2n}}{q^2, -q^2/p_2^2} \\
& \qquad\times (q^{2-2m};q^2)_\infty \rphis{2}{1}{p_1q^{1+n}\la/p_2, p_1q^{1+n}/p_2\la}{q^{2-2m}}{q^2, -q^{2-2k}}
\end{split}
\]
where the sum converges absolutely.
\item Assume $q^{-m}p_2/p_1 \leq 1$ and $q^{-m-n}p_2/p_1 \leq 1$, then for $k \in \NN$,
\[
\begin{split}
& \sum_{l=0}^\infty \frac{p_2^{2l}q^{2(k-l)} q^{(l-k)(l-k-1)}}{(q^2;q^2)_l} \rphis{3}{2}{q^{-2l}, q^{1+n}p_2\la/p_1, q^{1+n}p_2/p_1 \la}{q^{2-2m}p_2^2/p_1^2, 0}{q^2,q^2} \\
& \qquad \times \Psis{q^{-2l}}{q^{2+2k-2l}}{q^2, -q^{4+2k}/p_1^2} \Psis{q^{2m-2l}p_1^2/p_2^2}{q^{2+2k-2n-2l}}{q^2, -q^{4+2k-2m-2n}/p_1^2} \\
&= q^{2n(k-m+1)}  q^{-n(n-1)} p_1^{2k-2n}(q^{2m}p_1^2/p_2^2;q^2)_n (q^{2+2k},-q^2/p_2^2;q^2)_\infty \\
& \qquad \times (q^{2+2n};q^2)_\infty \rphis{2}{1}{q^{1-n}p_2\la/p_1, q^{1-n}p_2/p_1 \la}{q^{2+2n}}{q^2, -q^2/p_2^2} \\
& \qquad \times \rphis{3}{2}{q^{-2k}, q^{1-n}p_2 \la/p_1, q^{1-n}p_2/p_1\la}{q^{2-2m-2n}p_2^2/p_1^2,0}{q^2,q^2}
\end{split}
\]
where the sum converges absolutely.
\end{enumerate}
\end{thm}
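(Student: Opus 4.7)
The theorem will follow from computing a single matrix element of the generator $Q(p_1,p_2,n)$ of $\hat{M}$ from Proposition \ref{prop:QppngeneratedualM} in two different ways. The first expression uses the defining formula $Q(p_1,p_2,n) = (\om_{f_{0,p_1,1},f_{n,p_2,1}}\ot\Id)(W^\ast)$ together with the explicit kernel of $W^\ast$ in terms of the functions $a_p(x,y)$ given in \eqref{eq:Wexplicitinapxy}; unwinding this via Lemma \ref{lem:explicitactionQppn} expresses the action of $Q(p_1,p_2,n)$ on the canonical basis $\{f_{mpt}\}$ as a single sum of products of two $a_p(x,y)$-factors. The second expression uses the polar-type decomposition $Q(p_1,p_2,n)= U\,H(\Om)\,P$ of Lemma \ref{lem:polardecompositionQppn} combined with the spectral decomposition of the Casimir $\Om$ on the invariant subspaces $\cK(p,m,\ep,\et)$ described in Section \ref{ssec:spectraldecompCasimir}, whose (generalized) eigenfunctions are explicit ${}_2\vp_1$- and ${}_3\vp_2$-series obtainable from the Jacobi-operator analysis of Appendix \ref{app:jacobi}. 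Equating matrix coefficients in the two pictures will give the two identities of Theorem \ref{thm:introsummationformula1}.

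\textbf{Key steps.} First I would fix a matrix element $\langle Q(p_1,p_2,n)\,\phi,\psi\rangle$ with $\psi=f_{m',p',t'}$ and with $\phi$ a (generalized) eigenvector of $\Om$ inside a suitable $\cK(p,m,\ep,\et)$. For part (i) the vector $\phi$ is taken from the continuous-spectrum part of $\Om$, parametrized by $\la\in\T$ with $\mu(\la)=\cos\te$ running over $[-1,1]$; its coefficients on $\{f_{mpt}\}$ are ${}_2\vp_1$-series, which will produce the ${}_2\vp_1$-factors on both sides. For part (ii) the vector $\phi$ is taken from the discrete spectrum, indexed by $k\in\NN$, and the corresponding coefficients are terminating ${}_3\vp_2$-series, producing the ${}_3\vp_2$-factors. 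Applying $Q(p_1,p_2,n)=U\,H(\Om)\,P$ scales $\phi$ by the explicit scalar $H(\la)$ -- itself a ${}_2\vp_1$-series, which I identify by first computing $\langle Q(p_1,p_2,n)\phi,\phi\rangle$ on a convenient diagonal -- and permutes components via the isometry $U$; taking the inner product with $\psi$ and expanding $\psi$ in the spectral basis yields the right-hand sides. Conversely, the explicit $f_{mpt}$-action from Lemma \ref{lem:explicitactionQppn}, applied to the same $\phi,\psi$ and with the $a_p(x,y)$-coefficients rewritten via Definition \ref{def:functionap} in terms of $\Psi$, produces the left-hand sides. Absolute convergence of the resulting series follows from the Poincar\'e-type asymptotics of the ${}_2\vp_1$-eigenfunctions together with the $q$-factorial decay of the $\Psi$-coefficients; this in particular justifies the interchange of summation and the spectral integral used above.

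\textbf{Main obstacle.} The conceptual structure is essentially forced upon us by the existence of the two expressions for $Q(p_1,p_2,n)$, so the real work is bookkeeping. One must track the many sign factors, parameter shifts and branch choices in Definition \ref{def:functionap}; match the spectral parameter $\la\in\T$ (resp.\ the integer $k$) to the eigenvalue of $\Om$ via $\mu(\la)$; and reconcile the $(\ep,\et)$-signs and parity indicators $\chi(p_i)$ appearing in the two pictures, in particular ensuring that the isometry $U=U^{\ep,\et}_n$ of Lemma \ref{lem:polardecompositionQppn} contributes the correct phases. A subsidiary technical point is the passage from a spectral identity of operators to a pointwise identity in $\la$ (part (i)) or $k$ (part (ii)); this is justified by the absolute bounds above, after which the identity extends by analyticity in the parameters $p_1,p_2,\la$ so that only finitely many sign configurations require separate verification.
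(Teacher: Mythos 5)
Your overall strategy is the same as the paper's: the theorem is obtained by equating the two available expressions for the action of $Q(p_1,p_2,n)$ --- the explicit $a_p$-kernel expression of Lemma \ref{lem:explicitactionQppn} on one side, and the spectral picture $Q(p_1,p_2,n)=U\,H(\Om)\,P$ relative to the diagonalization of $\Om$ on $\cK(p,m,\ep,\et)$ on the other. However, your plan has two concrete defects. First, you misidentify what distinguishes parts (i) and (ii). Both parts of the theorem have $\la\in\T$, i.e.\ both live on the \emph{continuous} spectrum $[-1,1]$ of $\Om$; the dichotomy is not continuous versus discrete spectrum but the choice of subspace: part (i) is the case $\ep=\et=+$, where $J(p,m,+,+)=q^{\Z}$ and the eigenfunctions $g_z$ are little $q$-Jacobi functions (non-terminating ${}_2\vp_1$'s, indices in $\Z$), while part (ii) is the case $\ep=\et=-$, where $J(p,m,-,-)$ is a bounded $q$-halfline labelled by $\NN$ and the eigenfunctions are Al-Salam--Chihara polynomials (terminating ${}_3\vp_2$'s of degree $l$, resp.\ $k$, evaluated at $\mu(\la)$). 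The hypotheses $q^{-m}p_2/p_1\leq 1$, $q^{-m-n}p_2/p_1\leq 1$ in (ii) are exactly the sign conditions of Section \ref{sssec:caseep-andet-}, not a restriction to discrete eigenvalues. If you carried out your plan literally for (ii), taking $\phi$ a discrete-spectrum eigenvector indexed by $k\in\NN$, you would land at spectral points with $|\mu(\la)|>1$ and obtain a different identity (the one alluded to in Remark \ref{rmk:thmintrosummationformula1}(i) as an extension to discrete mass points), not the stated one.

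Second, and more seriously, the right-hand sides of both identities require the \emph{closed form} of the multiplier $H$ (equivalently of the function $C$ of Lemma \ref{lem:actionTC(x)}, i.e.\ of $S(\,\cdot\,;p_1,p_2,n)$) as an explicit ${}_2\vp_1$-series. Your proposal to "identify $H(\la)$ by computing $\langle Q(p_1,p_2,n)\phi,\phi\rangle$ on a convenient diagonal" does not produce this: fixing one basis vector only expresses $C(x)$ as a ratio of two unevaluated sums, and the resulting "identity" for general indices is merely the $z$-independence of that ratio. The paper extracts $C$ by letting $z\to 0$ in the summation formula and matching against the known asymptotics of the eigenfunctions (Lemma \ref{lem:fundlemasymptoticsQ}), which leaves a bilateral sum of products of two ${}_1\vp_1$'s; evaluating \emph{that} as a single ${}_2\vp_1$ is a separate, nontrivial $q$-analogue of Graf's addition formula (Lemmas \ref{lem:2phi1=sum 1phi1 1phi1} and \ref{lem:2phi1=sum 1phi1 1phi1 A}, leading to Proposition \ref{prop:S=2phi1}). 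Without this ingredient the right-hand sides of (i) and (ii) cannot be written in the stated form, so your proof as outlined does not close.
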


\begin{remark}\label{rmk:thmintrosummationformula1}
(i) The $_2\varphi_1$-function inside the sum in Theorem \ref{thm:introsummationformula1}(i) is essentially the little $q$-Jacobi function  $f_l(\mu(\la);q^{2-2m-2n},q^{1-n}p_1/p_2;-q^2|q^2)$, see \eqref{eq:2ndordqdiffeqBHS}, and the summations formula remains valid if $\mu(\la)$ is a discrete mass point of the corresponding orthogonality measure $\nu$,
see Appendix \ref{ssecB:litteqJacobi}.
In Theorem \ref{thm:introsummationformula1}(ii) the ${}_3\vp_2$-series is essentially an Al-Salam--Chihara polynomial, and the same
remark applies using the orthogonality measure described in Appendix \ref{ssecB:AlSCpol}.  Note that
the ${}_3\vp_2$-series can be transformed to a ${}_2\vp_1$-series by \eqref{eq:AlSCpol}.
\par\noindent
(ii) If we multiply the formula (i) by  $f_{l'}(\mu(\la);q^{2-2m-2n},q^{1-n}p_1/p_2;-q^2|q^2)$ and we use the orthogonality relations, see Appendix \ref{ssecB:litteqJacobi}, it follows that the above identity is equivalent to an integral identity of the form $\int \, {}_2\varphi_1\, {}_2\varphi_1 \, {}_2\varphi_1\, d\nu = \Psi \, \Psi$. The integral can be written as an integral over $[-1,1]$ plus an infinite sum. The same remark applies
for (ii) but this time using the orthogonality relations, see Appendix \ref{ssecB:AlSCpol}, for the
Al-Salam--Chihara polynomials. \par\noindent
(iii) Note that we can view the $\Psi$-functions as $q$-analogues of the Bessel function, cf.~the
discussion in Section \ref{ssec:defsomespecialf},
and since we can do the same for the ${}_2\vp_1$-series involved in (i) we
may also consider Theorem \ref{thm:introsummationformula1}(i) as an identity for $q$-Bessel functions.
\end{remark}

The following result follows from the structure constants formula of Proposition
\ref{prop:structureconstQs}. Note that Theorem \ref{thm:symmetryforsinglesumfourapxys} also
follows from Proposition \ref{prop:structureconstQs}, but now we use again the fact that we
can realize $Q(p_1,p_2,n)$ as multiplication operators by a ${}_2\vp_1$-series up to a sign-change in the
argument.

\begin{thm} \label{thm:introsumfromstructureformula}
Let $\la \in \T$, $p_1,p_2,r_1,r_2 \in I_q$, $n,m \in \Z$, and assume that $|\frac{p_2}{p_1}|=q^m$ and $|\frac{r_1}{r_2}|=q^n$. Then
\[
\begin{split}
&\sgn(r_1)^{\hf(1-\sgn(p_1))} \sgn(r_2)^{\hf(1-\sgn(p_2))+n} r_2^m p_2^n |r_1r_2| \nu(r_1)\nu(r_2)\nu(p_1)\nu(p_2) \\
\times &  (q^2, -\sgn(r_1)r_1^2, -\sgn(r_2)r_2^2, -\sgn(r_2)q^2/r_2^2, -\sgn(p_2)q^2/p_2^2;q^2)_\infty \\
\times & \frac{ (-\sgn(r_1p_1) q^{-m-n-1}/\la, -\sgn(r_1p_1) q^{3+m+n}\la, -\sgn(r_1r_2)\la q^{3-n}/p_1p_2;q^2)_\infty } { (-\sgn(r_1r_2p_1p_2) q^{m+n-1}/\la, -\sgn(r_1r_2p_1p_2)q^{1-m-n} \la, -\sgn(r_1r_2)p_1|p_2|q^{-n-1}/\la, ;q^2)_\infty }\\
\times & \frac{ (-\sgn(r_1r_2)p_1p_2 q^{n-1}/\la, -\la q^{3-m}/r_1r_2, -r_1r_2q^{m-1}/\la;q^2)_\infty}{ (-\sgn(r_1r_2)\la q^{3+n}/p_1|p_2|, -r_1|r_2|q^{-m-1}/\la, -\la q^{m+3}/r_1|r_2|;q^2)_\infty} \\
\times & (\sgn(p_1p_2)q^{2+2n};q^2)_\infty \rphis{2}{1}{ \sgn(r_1r_2)p_2 q^{1+n}/ p_1 \la, \sgn(r_1r_2)p_2 q^{1+n} \la/ p_1}{ \sgn(p_1p_2)q^{2+2n} }{q^2, -\sgn(p_2)\frac{q^2}{p_2^2} } \\
\times & (\sgn(r_1r_2)q^{2+2m};q^2)_\infty \rphis{2}{1}{ r_2 q^{1+m}/ r_1 \la,r_2 q^{1+m} \la/ r_1}{ \sgn(r_1r_2)q^{2+2m} }{q^2, -\sgn(r_2)\frac{q^2}{r_2^2} } \\
=&\sum_{(x_1,x_2) \in \mathcal A}
x_2^{m+n} |x_1|^2 \nu(x_1)^2 \nu(x_1p_1/r_1) \nu(x_2p_2/r_2) (\sgn(r_2p_2)q^{-2m-2n})^{\chi(x_1)}\\
\times & (-\sgn(r_1p_1)x_1^2, -\sgn(r_2p_2) x_1^2, -\sgn(r_2p_2)q^2/x_1^2, \sgn(r_1r_2p_1p_2)q^{2+2m+2n};q^2)_\infty \\
\times & \rphis{2}{1}{ \sgn(r_1r_2p_1p_2) q^{1+m+n}/ \la, \sgn(r_1r_2p_1p_2) q^{1+m+n} \la} {\sgn(r_1r_2p_1p_2)q^{2+2m+2n}}{q^2, -\sgn(r_2p_2)\frac{q^2}{x_1^2}} \\
\times & \Psis{-\sgn(p_1)q^2/p_1^2}{ \sgn(r_1p_1)q^2r_1^2/p_1^2 }{q^2, \sgn(p_1)\frac{q^2r_1^2}{x_1^2}}  \Psis{-\sgn(p_2)q^2/p_2^2}{ \sgn(r_2p_2)q^2r_2^2/p_2^2 }{q^2, \sgn(p_2)\frac{q^2r_2^2}{x_1^2}}
\end{split}
\]
where the sum converges absolutely.
Here $\mathcal A \subset I_q \times I_q$ is given by
\[
\mathcal A = \Big\{ (x_1,x_2) \in I_q \times I_q \mid \sgn(x_1) = \sgn(p_1r_1),\ \sgn(x_2) = \sgn(p_2r_2),\  |x_1|=|x_2|\Big\}.
\]
\end{thm}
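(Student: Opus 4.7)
The plan is to extract this identity by taking matrix coefficients, in the spectral decomposition of $\Om$, of the operator identity in Proposition \ref{prop:structureconstQs}. Under the hypotheses $|p_2/p_1|=q^m$ and $|r_1/r_2|=q^n$ that proposition gives
\[
Q(p_1,p_2,n)\,Q(r_1,r_2,m) \,=\, \sum_{x_1,x_2 \in I_q} a_{x_1}(r_1,p_1)\,a_{x_2}(r_2,p_2)\, Q(x_1,x_2,n+m),
\]
and the idea is to apply both sides to a well-chosen generalized eigenvector of $\Om$ and then pair with another.

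By Lemma \ref{lem:polardecompositionQppn} each $Q$-operator factors as $U\,H(\Om)\,P$. Combined with the spectral analysis of $\Om$ on the invariant subspaces $\cK(p,m,\ep,\et)$, where the continuous part of the spectrum is parametrized by $\la \in \T$ via $\mu(\la)=\hf(\la+\la^{-1})$, this yields the realization (established in Section \ref{sec:generatorshatM}) of $Q(p_1,p_2,n)$ as multiplication by an explicit $_2\vp_1$-series in $\la$, sending a generalized eigenvector at $\mu(\la)$ to one at $\mu(\sgn(p_1p_2)\la)$ in accordance with the graded commutation of Proposition \ref{prop:decompMintoM+andM-andgradedcommutation}.

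Applying $Q(r_1,r_2,m)$ followed by $Q(p_1,p_2,n)$ to a generalized eigenvector at $\la$ and pairing with the image eigenvector therefore produces the product of the two $_2\vp_1$-series on the left-hand side, assembled with the $\nu$-factors, sign factors, and infinite Pochhammer products that arise from the scalar function $H$ and the partial isometry $U$ in each polar decomposition. For the right-hand side, the matrix coefficient of $Q(x_1,x_2,n+m)$ contributes the single $_2\vp_1$-series inside the sum (with the spectral argument $\sgn(r_1r_2p_1p_2)q^{1+m+n}/\la$ emerging from the combined sign-change and index-shift), while the coefficients $a_{x_1}(r_1,p_1)$ and $a_{x_2}(r_2,p_2)$, expanded via Definition \ref{def:functionap}, contribute the two $\Psi$-factors together with the remaining sign and Pochhammer prefactors. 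The constraint $|x_1|=|x_2|$ in $\mathcal A$ emerges automatically: $Q(x_1,x_2,n+m)$ has nonzero matrix coefficient between the chosen eigenvectors only when $|x_2/x_1|$ equals the appropriate power of $q$ dictated by the basis choice, and combined with the sign conditions $\sgn(x_i)=\sgn(p_ir_i)$ inherited from the supports of the $a$-functions this cuts out precisely $\mathcal A$.

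The main obstacle is bookkeeping. Every sign $\sgn(r_i)^{(1-\sgn(p_i))/2}$, every $\nu(\cdot)$-factor, every monomial such as $r_2^m p_2^n$, and every infinite $(\cdot;q^2)_\infty$-product on both sides of the stated identity must be tracked carefully through the polar decomposition of $Q$, the explicit form of $H$, and the $\nu$-normalization built into Definition \ref{def:functionap}. The sign choices $\sgn(p_i),\sgn(r_i)$ and their products determine in which subspace $\cK(p,m,\ep,\et)$ the initial and final eigenvectors sit, and this in turn fixes the sign-dependent Pochhammer factors appearing in the final formula. Absolute convergence of the right-hand side follows from that asserted in Proposition \ref{prop:structureconstQs} together with the standard asymptotics of the $_2\vp_1$- and $\Psi$-functions.
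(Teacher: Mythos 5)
Your proposal is correct and follows essentially the same route as the paper: the paper likewise takes the structure formula of Proposition \ref{prop:structureconstQs}, transfers it through the spectral decomposition of $\Om$ to the product formula for the multiplication functions $C$ (Lemma \ref{lem:identitiesC}(i)), cancels the common $A$-factors to obtain the $S$-function identity, and then writes the $S$-functions as ${}_2\vp_1$-series and the $a_x$-functions as $\Psi$-functions. The only minor caveat is that absolute convergence does not come directly from the operator identity but from the explicit asymptotic estimates for $S$ and $a_p$ (Lemmas \ref{lem:asymptoticsforS} and \ref{lem:estimate1}), which is what your appeal to "standard asymptotics" amounts to.
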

From Theorem \ref{thm:introsumfromstructureformula} we obtain another positivity result.
\begin{cor} \label{cor:positivity}
Let $p_1,p_2 \in I_q$ and $\la \in \T$, then
\[
\begin{split}
0< \sum_{x \in q^\Z} &  \nu(x)^2  (-x^2;q^2)_\infty  \rphis{2}{1}{ q/ \la, q\la} {q^{2}}{q^2, -\frac{q^2}{x^2}} \\
& \times  \Psis{-\sgn(p_1)q^2/p_1^2}{ q^2 }{q^2, \sgn(p_1)\frac{q^2p_1^2}{x^2}}  \Psis{-\sgn(p_2)q^2/p_2^2}{ q^2 }{q^2, \sgn(p_2)\frac{q^2p_2^2}{x^2}}.
\end{split}
\]
\end{cor}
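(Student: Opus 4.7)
The strategy is to derive Corollary \ref{cor:positivity} directly from the identity of Theorem \ref{thm:introsumfromstructureformula} by a judicious specialization of the free parameters.

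First I would set $r_1 = p_1$ and $r_2 = p_2$ in Theorem \ref{thm:introsumfromstructureformula}; the constraints $|p_2/p_1|=q^m$ and $|r_1/r_2|=q^n$ then force $n=-m$, and the compound signs $\sgn(r_ip_i)$, $\sgn(r_1r_2p_1p_2)$ all become $+$. Under this choice the summation set $\mathcal A$ collapses to the diagonal $\{(x,x) : x \in q^\Z\}$, because the conditions $\sgn(x_i)=\sgn(p_ir_i)=+$ force $x_1,x_2\in q^\Z$ and $|x_1|=|x_2|$ forces $x_1=x_2$. After simplification the RHS of the theorem becomes a single sum over $x\in q^\Z$ whose summand equals the one in Corollary \ref{cor:positivity} multiplied by the extra $x$-dependent prefactor $x^2\,\nu(x)^2\,(q^2,-x^2,-q^2/x^2;q^2)_\infty$.

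Second, I would invoke the Jacobi triple product identity
\[
(q^2, -x^2, -q^2/x^2; q^2)_\infty \,=\, \sum_{k \in \Z} x^{2k}\, q^{k(k-1)}.
\]
Writing $x=q^j$ with $j=\chi(x)\in\Z$ and completing the square in the exponent, the combination $x^2\nu(x)^2 (q^2,-x^2,-q^2/x^2;q^2)_\infty$ reduces to a $j$-independent positive constant $C = q^2 \sum_{m\in\Z}q^{m^2-m}$. Hence the RHS of Theorem \ref{thm:introsumfromstructureformula} under this specialization equals $C$ times the sum in Corollary \ref{cor:positivity}, and it suffices to prove the LHS of the theorem is positive.

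The sign, monomial, $\nu$- and $\lambda$-independent Pochhammer prefactors of the LHS are manifestly positive. For $\lambda\in\T$ each of the two ${}_2\varphi_1$ factors on the LHS has numerator parameters of the form $(q^{\cdot}/\lambda,\,q^{\cdot}\lambda)$, which are complex-conjugate pairs, so each ${}_2\varphi_1$ is real on $\T$; with the chosen specialization the two ${}_2\varphi_1$ factors are moreover mutually complex conjugate, so their product is the square of a real number and is non-negative. The main technical obstacle will be handling the surviving $\lambda$-dependent compound quotient of $q$-Pochhammers: after cancelling factors common to numerators and denominators, one pairs the remaining Pochhammers with their complex conjugates via
\[
(-q^k\lambda;q^2)_\infty\,(-q^k/\lambda;q^2)_\infty \,=\, |(-q^k\lambda;q^2)_\infty|^2, \qquad \lambda \in \T,
\]
exhibiting the $\lambda$-dependent prefactor as a positive real quantity. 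The LHS is then a product of positive factors with a non-negative square, and is strictly positive for all $\lambda\in\T$ since the sum on the right of Corollary \ref{cor:positivity} can be recognised as $\|v\|^2$ for a nonzero vector $v$ in the GNS-space of $(M,\De)$ (this operator-theoretic interpretation underlies the derivation of Theorem \ref{thm:introsumfromstructureformula} from Proposition \ref{prop:structureconstQs}). Dividing by $C>0$ yields the claimed positivity.
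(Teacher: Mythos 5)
Your reduction step is correct and is essentially the paper's: setting $r_1=p_1$, $r_2=p_2$ (so $n=-m$) in Theorem \ref{thm:introsumfromstructureformula} collapses $\mathcal A$ to the diagonal, and your Jacobi-triple-product computation showing that
\[
x^2\,\nu(x)^2\,(q^2,-x^2,-q^2/x^2;q^2)_\infty \;=\; q^2\sum_{m\in\Z}q^{m^2-m}
\]
is independent of $x\in q^\Z$ is a correct (and welcome) filling-in of a detail the paper leaves implicit when it says "this leads to Corollary \ref{cor:positivity}". So the problem is correctly reduced to showing that the left hand side of the theorem, under this specialization, is strictly positive.

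That is where your argument has a genuine gap. First, the prefactors are \emph{not} manifestly positive: with $r_i=p_i$ the factor $\sgn(r_1)^{\hf(1-\sgn(p_1))}\sgn(r_2)^{\hf(1-\sgn(p_2))+n}$ equals $-1$ for various sign choices of $p_1,p_2$, and for $p_2\in -q^{\N}$ the factor $(-\sgn(p_2)q^2/p_2^2;q^2)_\infty=(q^{2-2\chi(|p_2|)};q^2)_\infty$ vanishes while the argument $-q^2/\ka(p_2)$ of the ${}_2\vp_1$ then has modulus $\geq 1$, so the individual factors are not even separately finite and must be read through analytic continuation; a factor-by-factor inspection cannot work. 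Second, the two ${}_2\vp_1$ factors are \emph{not} mutually complex conjugate: each is separately real on $\T$ (its upper parameters form a conjugate pair), but one carries $m$ and the other $-m$ in both upper and lower parameters, so their product is not a priori a square. The argument the paper actually uses is to recognize the left hand side, via the product formula \eqref{eq:productformulaS}, as an explicit sign times $S(\sgn(p_1p_2)\la;p_1,p_2,n)\,S(\la;p_1,p_2,-n)$, and then to apply the second symmetry relation of Lemma \ref{lem:symmetryS} to rewrite this as a positive multiple of $S(\mu;p_1,p_2,n)S(\mu^{-1};p_1,p_2,n)$ with $\mu\in\T$; this quantity was already shown to be nonnegative in Section \ref{sec:generatorshatM} because it is the multiplier representing $T(p_1,p_2,n)^\ast T(p_1,p_2,n)$ (equivalently, $S$ has real Laurent coefficients, so $S(\mu^{-1})=\overline{S(\mu)}$ on $\T$ and the product is $|S(\mu)|^2$). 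Your closing remark that the sum is "$\|v\|^2$ for a nonzero vector $v$" gestures at this, but for $\la\in\T$ the point $\mu(\la)$ lies in the continuous spectrum of the Casimir, so there is no such vector in the GNS-space; the operator-theoretic content is the pointwise nonnegativity of the spectral multiplier of $T^\ast T$, not a literal norm. Without Lemma \ref{lem:symmetryS} and this identification your proof does not establish positivity of the left hand side, and in particular does not address strictness.
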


\subsection{Biorthogonality relations for $_2\varphi_1$-functions}
We have explicit expressions for the matrix elements of the principal series corepresentations $W_{p,x}$, $p\in q^\Z$, $x=\mu(\la) \in [-1,1]$, in terms of $_2\varphi_1$-functions. Unitarity of $W_{p,x}$ leads to orthogonality relations for the matrix elements. By analytic continuation these orthogonality relations remain valid for other values of $\la$.

Let $m \in \Z$ and $\la \in \C\setminus\{0\}$, and define $s(\,\cdot\,,\,\cdot\,;\la,m):I_q \times I_q \to \C$ by
\[
\begin{split}
s(p_1,p_2;\la,m) =&\, c_q^2 |p_1p_2| p_2^{\chi(p_1p_2)+m}
\nu(p_1p_2q^{m+1}) \nu(p_1) \nu(p_2) \sqrt{
(-\ka(p_2),-\ka(p_2);q^2)_\infty } \\
& \times \frac{(q^2,-q^2/\ka(p_2),-\la q^{3-m}/p_1^2 \ka(p_2),
-p_1^2\ka(p_2) q^{m-1}/\la, q^{1-m}/\ka(p_2)\la;q^2)_\infty
}{(\ka(p_1)q^{1+m}/\la,-q^{-m-1}/\la, -q^{3+m}\la;q^2)_\infty} \\
& \times (\ka(p_1p_2)q^{2+2m};q^2)_\infty \rphis{2}{1}{\ka(p_2)
q^{1+m}\la,  \ka(p_2)
q^{1+m}/\la}{\ka(p_1p_2)q^{2+2m}}{q^2,-\frac{q^2}{\ka(p_2)}},
\end{split}
\]
for $p_1,p_2 \in I_q$. From this expression it is not clear that the function is defined
for all values of $p_2\in I_q$, but an application of Jackson's transformation formula
\cite[(III.4)]{GaspR} shows how to extend to all values of  $p_2\in I_q$.

\begin{thm} \label{thm:biorthogonality}
The following biorthogonality relations hold:
\[
\begin{split}
\sum_{p_1 \in I_q} s(p_1,p_2;\la,m) s(p_1,p_2';\la^{-1},m) &= \de_{p_2,p_2'},\\
\sum_{p_2 \in I_q} s(p_1,p_2;\la,m) s(p_1',p_2;\la^{-1},m) &= \de_{p_1,p_1'}.
\end{split}
\]
\end{thm}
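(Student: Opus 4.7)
The plan is to realize $s(p_1,p_2;\la,m)$ as a matrix coefficient of the unitary principal series corepresentation $W_{p,\mu(\la)}$ when $\la\in\T$, derive the biorthogonality from unitarity in that case, and then extend to $\la\in\C\setminus\{0\}$ by analytic continuation. This is the scheme explicitly announced in the paragraph preceding the theorem, so the real task is to fill in the technical identifications and estimates.

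First I would fit $s(p_1,p_2;\la,m)$ into the explicit realization of $W_{p,\mu(\la)}$ coming from Section \ref{ssec:principalrepsinregularrep} and \eqref{eq:introprincipalseries}. Concretely, the generators $Q(p_1,p_2,n)\in\hat M$ of Proposition \ref{prop:QppngeneratedualM} are realized as multiplication by ${}_2\vp_1$-series on the spectral decomposition of $\Om$ (the fact used in Theorem \ref{thm:introsummationformula1}), so combining Lemma \ref{lem:explicitactionQppn} with that spectral picture would produce an expression for $\langle W_{p,\mu(\la)}\, v_{p_2},v_{p_1}\rangle$ in terms of a suitable ${}_2\vp_1$ multiplied by the Pochhammer prefactors written in the definition of $s$; the parameter $m$ indexes the $K$-eigenvalue on which one projects, and $p$ is absorbed into the choice of the invariant subspace. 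Matching signs and phase factors is a bookkeeping, but non-negligible, step.

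Once $s(\cdot,\cdot;\la,m)$ is identified (up to a $\la$-independent unitary change of basis) as such a matrix coefficient, unitarity of $W_{p,\mu(\la)}$ on $\cL_{p,\mu(\la)}$ immediately gives, for $\la\in\T$,
\[
\sum_{p_1\in I_q} s(p_1,p_2;\la,m)\,\overline{s(p_1,p_2';\la,m)}=\de_{p_2,p_2'},
\]
and the dual sum for row orthogonality. On $\T$ one has $\bar\la=\la^{-1}$; since $s(p_1,p_2;\la,m)$ depends on $\la$ only through the two pairs
\[
\bigl(\ka(p_2)q^{1+m}\la,\ \ka(p_2)q^{1+m}/\la\bigr)\ \text{and}\ \bigl((a\la;q^2)_\infty,(a/\la;q^2)_\infty\bigr),
\]
with real coefficients $a$, a term-by-term inspection of the ${}_2\vp_1$ and the Pochhammer prefactors yields the crucial symmetry $\overline{s(p_1,p_2;\la,m)}=s(p_1,p_2;\la^{-1},m)$, giving the stated biorthogonality for $\la\in\T$.

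For the extension to $\la\in\C\setminus\{0\}$ I would use the Jackson transformation referenced after the definition of $s$ to rewrite the ${}_2\vp_1$-factor in a form that is manifestly meromorphic in $\la$, and then show that each partial sum of the two series in the theorem is locally uniformly bounded in $\la$ on compact subsets avoiding the poles of the Pochhammer denominators. The $\nu(p_1p_2q^{m+1})\nu(p_1)\nu(p_2)$ and $|p_1p_2|$ prefactors, together with the $(-\ka(p_1);q^2)_\infty$-type factors, dominate the ${}_2\vp_1$- and $\Psi$-contributions so that the tails decay super-exponentially in $|\chi(p_1)|+|\chi(p_2)|$. The identity principle then upgrades the orthogonality from $\T$ to all of $\C\setminus\{0\}$. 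The main obstacle is the uniform $\la$-estimate: the ${}_2\vp_1$ inside $s$ can grow simultaneously in $p_1$ and $p_2$, and one must combine the prefactor Pochhammer symbols with the quadratic decay of $\nu$ carefully to extract the bound needed for both convergence and analytic continuation.
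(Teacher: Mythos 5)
Your proposal is correct and follows essentially the same route as the paper: the paper identifies $s(p_1,p_2;\la,m)=S(\sgn(p_1)\la;p_1,p_2,\chi(p_1p_2)+m)$, derives the $\la\in\T$ case from unitarity of $W_{p,x}$ via the matrix coefficients $C$ (using $A(\la)A(\la^{-1})=|A(\la)|^2=1$ to strip the unimodular factors, which is your conjugation symmetry in disguise), and then extends to $\C\setminus\{0\}$ by analytic continuation using the uniform-on-compacts estimates of Lemma \ref{lem:asymptoticsforS}(iii),(iv) together with the symmetries of Lemma \ref{lem:symmetryS}. The only technical point you leave implicit — that the residual phase factors relating $s$ to the true matrix coefficients cancel in the sums over $p_1$ (resp.\ $p_2$) — does check out, exactly as in the paper's proof of Lemma \ref{lem:biorthogonalityS}.
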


\begin{remark}\label{rmk:thmbiorthogonality}
The two biorthogonality relations  Theorem \ref{thm:biorthogonality} are actually equivalent.
Also, for $\la \in \T$ the biorthogonality relations are orthogonality relations.
\end{remark}

\section*{Proofs}

\section{Extensions of the generators of $\su$}\label{sec:extensionsgeneratorssu}

\subsection{Decomposition of the GNS-space} \label{ssec:decompositionofGNSspace}
The operators $K_0$ and $E_0$, and therefore also $\Omega_0$, are defined on the dense subspace $\mathcal K_0$ of the Hilbert space $\mathcal K$ of the GNS-construction for the left-invariant weight $\vp$. In order to obtain the \emph{right} closures of the operators $K_0, E_0$, $\Omega_0$ we first give a convenient decomposition of $\mathcal K_0$.

Let $p \in q^\Z$, $m \in \Z$ and $\ep,\et \in \{-,+\}$, and define
\begin{equation}\label{eq:defKpmepeta}
\begin{split}
J(p,m,\ep,\et) &
= \{\,z\in I_q\mid \ep\et\, q^mpz\in I_q \text{ and } \sgn(z)=\ep\}, \\
\cK_0(p,m,\ep,\et) &
= \mathrm{span} \{\, f_{-m,\ep\et\,q^m p z, z} \mid z \in J(p,m,\ep,\et) \, \}.
\end{split}
\end{equation}\index{J@$J(p,m,\ep,\et)$}\index{K@$\cK_0(p,m,\ep,\et)$}
We denote by $\cK(p,m,\ep,\et)$\index{K@$\cK(p,m,\ep,\et)$} the closure of $\cK_0(p,m,\ep,\et)$ inside $\cK$. Then $\cK(p,m,\ep,\et)\cong \ell^2(J(p,m,\ep,\et))$, and we consider $v \in \cK(p,m,\ep,\et)$ as a function $v \colon J(p,m,\ep,\et) \to \C$ by setting
\begin{equation}\label{eq:conventioninnerproductfunction}
v(z) = \langle v , f_{-m,\ep\et\,q^m pz,z} \rangle,
\qquad z \in J(p,m,\ep,\et).
\end{equation}
By convention, for $z \in \pm q^\Z \setminus
J(p,m,\ep,\et)$ we set $v(z) = 0$. Note that $J(p,m,\ep,\et) = I_q^+=q^\Z$
if $\ep=\et=+$. If $\ep = -$ or $\et = -$, then $J(p,m,\ep,\et)$ is a bounded $q$-halfline with $0$ as only accumulation point. In this case $J(p,m,\ep,\et)$ is of the form $\ep C(p,m) q^\N$ for some
constant $C(p,m)\in q^\N$ depending on $p$ and $m$. Explicitly,
\[
C(p,m)\, =\, \begin{cases} 1, &\text{if } (\ep=-, \et=+) \text{ or }
(\ep=-, \et=- \text{ and }  q^mp\leq 1),  \\
q^{-m}p^{-1} &\text{if } (\ep=+, \et=-) \text{ or } (\ep=-, \et=- \text{ and }
q^mp\geq 1). \end{cases}
\]
In particular, the sign of the bounded $q$-halfline is determined by $\ep$.
Note that for the modular conjugation $J$ we have
\begin{equation}\label{eq:JonKpmepet}
J\colon \cK(p,m,\ep,\et) \to \cK(\frac{1}{p}, -m ,\et,\ep),\qquad
J\, f_{-m,\ep\et q^mpz,z} = f_{m,\et\ep q^{-m}p^{-1} (\ep\et q^mpz), \ep\et q^m p z}.
\end{equation}

We have an algebraic direct sum decomposition
\[
\cK_0 = \bigoplus_{\substack{\ep,\et \in \{-,+\}\\p \in q^\Z,m \in \Z}} \mathcal K_0(p,m,\ep,\et).
\]
By Definition \ref{def:actiongeneratorssu} and \eqref{eq:dualE0forsu} the actions of $K_0$, $E_0$ and $E_0^\dagger$ on the basis elements of $\mathcal K_0(p,m,\ep,\et)$ are given explicitly by
\begin{align}
K_0 f_{-m,\ep,\et q^m p z, z}  = &q^m \sqrt p f_{-m,\ep,\et q^m p z, z}, \nonumber \\
(q-q^{-1})\, E_0 f_{-m,\ep,\et q^m p z, z} = &\,
\ep q^m (pq)^\hf \sqrt{1+\ep z^2 q^{-2}}\, f_{-m-1, \ep\et q^{m+1}p (z/q), z/q} \label{eq:defE0onbasisKpmepet} \\
&\,  - \et q^{-m} (pq)^{-\hf} \sqrt{1+\et q^{2m} p^2 z^2}\,
f_{-m-1, \ep\et q^{m+1}p z, z},\nonumber \\
(q-q^{-1})\, E_0^\dag f_{-m,\ep,\et q^m p z, z} = &\,
\ep q^m (p/q)^\hf \sqrt{1+\ep z^2}\, f_{-m+1, \ep\et q^{m-1}p (zq), zq} \label{eq:defE0dagonbasisKpmepet} \\
&\,  - \et q^{-m} (p/q)^{-\hf} \sqrt{1+\et q^{2m-2} p^2 z^2}\,
f_{-m+1, \ep\et q^{m-1}p z, z}, \nonumber
\end{align}
so that
\begin{equation}\label{eq:E0E0dagKoncK0}
\begin{split}
&K_0= q^m \sqrt p\, \Id  \colon \cK_0(p,m,\ep,\et) \to \cK_0(p,m,\ep,\et),\\
&E_0 \colon \cK_0(p,m,\ep,\et) \to \cK_0(p,m\!+\!1,\ep,\et),\\
&E_0^\dag \colon \cK_0(p,m,\ep,\et) \to \cK_0(p,m\!-\!1,\ep,\et),\\
&\Om_0 \colon \cK_0(p,m,\ep,\et) \to \cK_0(p,m,\ep,\et).
\end{split}
\end{equation}
For the action of $\Om_0$ on the basis elements, see \eqref{eq:actioncasimir}.

\begin{proof}[Proof of Proposition \ref{prop:relationssuaresatisfied}.]
We need to show that the relations
\[
K_0E_0= q E_0K_0, \qquad E_0^\dagger E_0 - E_0 E_0^\dagger = \frac{ K_0^2 - K_0^{-2} }{q-q^{-1}}
\]
are valid when acting on the basis elements $f_{mpt}$ of $\mathcal K_0$. The first relation follows immediately from \eqref{eq:E0E0dagKoncK0}. Using \eqref{eq:defE0forsu} and \eqref{eq:dualE0forsu}, we obtain
\[
\begin{split}
(q-q^{-1})^2\,& \bigl(\, E_0^\dag E_0 - E_0^\dag E_0\,\bigr)\,f_{mpt}
\\  =&\, \Bigl[\ q^m\,|\frac{t}{p}|\,
\bigl(q^{-1}(1+\kappa(p)) - q (1+ \kappa(q^{-1} p))\bigr)\\
&\quad + q^{-m}\,|\frac{p}{t}|\bigl(\,q(1+\kappa(q^{-1}t)) -
 q^{-1} (1+ \kappa(t))\,\bigr)\Bigr] f_{mpt}
\\  =&\, (q - q^{-1}) \,
\Bigl[ q^{-m}\,|\frac{p}{t}| - q^m \, |\frac{t}{p}|\,\Bigr]\,\,f_{mpt},
\end{split}
\]
and then Definition \ref{def:actiongeneratorssu} proves the second relation.

In order to prove the linear independence of the operators $K_0^n E_0^k (E_0^\dagger)^l$, $n \in \mathbb Z$, $k,l \in \mathbb N_0$, we assume that the sum
\[
\sum_{\substack{n \in \mathbb Z \\ k,l \in \mathbb N_0}} c_{nkl} K_0^n E_0^k (E_0^\dag)^l
\]
with only finitely many non-zero coefficients $c_{nkl}$, equals zero as operator on $\cK_0$. By \eqref{eq:E0E0dagKoncK0} we have
\[
K_0^n E_0^k (E_0^\dag)^l = \bigl(q^{m-l+k}p^\hf\bigr)^n
E_0^k (E_0^\dag)^l \colon \cK_0(p,m,\ep,\et)
\to \cK_0(p,m-l+k,\ep,\et).
\]
So for fixed $r \in \Z$, the sum $\sum_{k-l=r} c_{n,k,l}\bigl(q^{m+r}p^\hf\bigr)^n E_0^k (E_0^\dag)^l\colon \cK_0(p,m,\ep,\et)\to \cK_0(p,m+r,\ep,\et)$ equals zero. We fix such an $r$ and we take $\ep=+=\et$. From \eqref{eq:defE0onbasisKpmepet} and \eqref{eq:defE0dagonbasisKpmepet} we see that $E_0^k (E_0^\dag)^{k-r} f_{-m,q^mpz,z}= \sum_{s=-k}^{k-r} a_s^{mpz} f_{-m-r,q^{m+r}pzq^s,zq^s}$ for certain coefficients $a_s^{mpz}$. Let $k_0$ be the maximum of the $k$'s such that $c_{n,k,k-r}\not=0$, then it follows that
\[
\begin{split}
0=\Bigg\langle
&\, \sum_{\substack{n \in \Z\\k,l \in \mathbb N_0, k-l=r}} c_{n,k,l} \bigl(q^{m+r}p^\hf\bigr)^n E_0^k (E_0^\dag)^{l} \,
f_{-m,q^mpz,z}\,,\, f_{-m-r,q^{m+r}pzq^{-k_0},zq^{-k_0}}\Bigg\rangle \\
=&\, \sum_{n \in \Z} c_{n,k_0,k_0-r} \bigl(q^{m+r}p^\hf\bigr)^n \Big\langle E_0^{k_0} (E_0^\dag)^{k_0-r} \, f_{-m,q^mpz,z}, f_{-m-r,q^{m+r}pzq^{-k_0},zq^{-k_0}}\Big\rangle
\end{split}
\]
The coefficient $a_{-k_0}^{mpz}=\langle E_0^{k_0} (E_0^\dag)^{k_0-r}
f_{-m,q^mpz,z}, f_{-m-r,q^{m+r}pzq^{-k_0},zq^{-k_0}}\rangle$ can be
explicitly calculated from \eqref{eq:defE0onbasisKpmepet} and
\eqref{eq:defE0dagonbasisKpmepet} as the product of
$\langle (E_0^\dag)^{k_0-r} f_{-m,q^mpz,z}, f_{-m+k_0-r,q^{m-k_0+r}pzq,z}\rangle$ and
$\langle E_0^{k_0} f_{-m+k_0-r,q^{m-k_0+r}pzq,z}, f_{-m-r,q^{m+r}pzq^{-k_0},zq^{-k_0}}\rangle$.
These matrix coefficients are non-zero for all $p$, $m$, $z$ and can be calculated
explicitly in terms of $q$-shifted factorials. This leaves us with the identity $\sum_{n \in \Z} c_{n,k_0,k_0-r} \bigl(q^{m+r}p^\hf\bigr)^n=0$ for all $m$ and $p$,
from which we conclude that the coefficients $c_{n,k_0,k_0-r}$ are zero.
\end{proof}

After these considerations we can start considering the closures of $E_0$ and $K_0$. From the results in Appendix \ref{ssecA:summationoperators} it follows that the closure of $K_0$ is given by the direct sum of $q^m\sqrt{p}\,\,\text{Id}\big\vert_{\cK_0(p,m,\ep,\et)}$, see also \eqref{eq:defKanddomain}.

Let us now consider the closure of $E_0$. Since
\begin{equation}\label{eq:E0andE0ddagareadjointonK0}
\langle E_0\, v, w\rangle = \langle v, E_0^\dag \, w\rangle , \qquad
\forall\, v,w \in \cK_0,
\end{equation}
we see that $\cK_0 \in D(E_0^\ast)$, so that $E_0^\ast$ is
densely defined. This means that $E_0$ is closable, and
its closure is $E=(E_0^{\ast})^{\ast}$, and similarly for
$E_0^\dag$.
From \eqref{eq:E0andE0ddagareadjointonK0} one obtains
\begin{equation}
E_0 \subset (E_0^\dag)^\ast \ \Longrightarrow\  E\subset (E_0^\dag)^\ast, \qquad
\text{and} \ E_0^\dag \subset E_0^\ast.
\end{equation}
Moreover, putting $E^{\ep,\et}_{p,m} =  E\big\vert_{\cK(p,m,\ep,\et)}$ we see from the explicit action \eqref{eq:defE0onbasisKpmepet} of $E_0$ on the basis elements of $\mathcal K_0$
that the closure of $E_0\vert_{\cK_0(p,m,\ep,\et)}$ gives
$E^{\ep,\et}_{p,m}\colon \cK(p,m,\ep,\et) \to \cK(p,m+1,\ep,\et)$. It follows
that
\begin{equation} \label{eq:decompE}\index{E@$E^{\ep,\et}_{p,m}$}
E = \bigoplus_{\substack{\ep,\et\in \{-,+\}\\ p\in q^\Z,\, m\in\Z}} E^{\ep,\et}_{p,m},
\end{equation}
and so $v\in D(E)$ if and only if $P^{\ep,\et}_{p,m}v\in D(E^{\ep,\et}_{p,m})$
for all  $\ep,\et\in \{\pm\}$, $p\in q^\Z$, $m\in\Z$, where
$P^{\ep,\et}_{p,m}\in B(\cK)$ is the orthogonal projection onto
$\cK(p,m,\ep,\et)$, see Appendix \ref{ssecA:summationoperators}. The operator $E^\ast$, and the closures of $E_0^\dag$ and $(E_0^\dag)^*$ have similar decompositions.

Examining coefficients in \eqref{eq:defE0onbasisKpmepet}, we see that $E_0\big\vert_{\cK_0(p,m,\ep,\et)}$ extends to a bounded operator $E^{\ep,\et}_{p,m} \colon \cK(p,m,\ep,\et)
\to \cK(p,m+1,\ep,\et)$ unless $\ep=+=\et$.
Similarly, from \eqref{eq:defE0dagonbasisKpmepet} it follows that $E_0^\dag\big\vert_{\cK_0(m,p,\ep,\et)}$ extends to a bounded
operator $\cK(p,m,\ep,\et) \to \cK(p,m-1,\ep,\et)$ unless $\ep=+=\et$, and this bounded operator
is indeed equal to the adjoint $(E^{\ep,\et}_{p,m-1})^\ast \colon \cK(p,m,\ep,\et) \to \cK(p,m-1,\ep,\et)$. The case $\ep=+=\et$ is more delicate, and we study this case later on in Section \ref{ssec:AffiliationOfKandE}.

\subsection{The multiplicative unitary and related operators}
\label{ssec:multiplicativeunitary}
Next we study the operators $Q(p_1,p_2,n) \in B(\cK)$, defined by
\eqref{eq:defQppn}, restricted to the subspaces $\mathcal K(p,m,\ep,\et)$. The definition of the operators $Q(p_1,p_2,n)$ involves the multiplicative unitary $W\in B(\cK\ot \cK)$. A for our purposes useful description of $W$ in terms of the functions $a_p(\cdot,\cdot)$ can be found in \cite[Prop.~4.5, 4.10]{KoelKCMP};
\begin{equation}\label{eq:Wexplicitinapxy}
\begin{split}
W^\ast (f_{m_1,p_1,t_1}\ot f_{m_2,p_2,t_2})
= &\sum_{\stackrel{\scriptstyle{y,z\in I_q}}
{\scriptstyle{\sgn(p_2t_2) yzq^{m_2}/p_1\in I_q}}}
\left| \frac{t_2}{y}\right|
a_{t_2}(p_1,y) a_{p_2}(z, \sgn(p_2t_2) yzq^{m_2}/p_1)\, \\
\ &\qquad \times f_{m_1+m_2-\chi(p_1p_2/t_2z),z,t_1} \ot
f_{\chi(p_1p_2/t_2z),  \sgn(p_2t_2) yzq^{m_2}/p_1,y}.
\end{split}
\end{equation}
The functions $a_p(\cdot,\cdot)$ are defined in Definition \ref{def:functionap}.
For convenience we state the corresponding result for $W$ as well, which
follows directly from \eqref{eq:Wexplicitinapxy}:
\begin{equation}\label{eq:Wactionexplicitinapxy}
\begin{split}
W (f_{m_1,p_1,t_1}\ot f_{m_2,p_2,t_2})
= &\sum_{\stackrel{\scriptstyle{r, s\in I_q}}
{\scriptstyle{\sgn(rp_2t_2) sp_1q^{m_2}\in I_q}}}
\left| \frac{s}{t_2}\right|
a_{s}(\sgn(rp_2t_2)sp_1q^{m_2}, t_2)\,  a_{r}(p_1,p_2)\, \\
\ &\qquad \times f_{m_1-\chi(sp_2/t_2),\sgn(rp_2t_2)sp_1q^{m_2},t_1} \ot
f_{m_2+\chi(sp_2/t_2), r,s}.
\end{split}
\end{equation}

\begin{lemma}\label{lem:explicitactionQppn}
Let $p\in q^\Z$, $p_1,p_2 \in I_q$, $n,m \in \Z$, and $\ep,\et \in \{-,+\}$. If $q^{2m}p\not= q^{-n}|p_2/p_1|$, then
\[
Q(p_1,p_2,n) \bigl( \cK(p,m,\ep,\et)\bigr)=\{0\}.
\]
If $q^{2m}p = q^{-n}|p_2/p_1|$, then
\[
Q(p_1,p_2,n) \colon \cK(p,m,\ep,\et) \to \cK(p,m+n,\sgn(p_1)\ep,\sgn(p_2)\et),
\]
and $Q(p_1,p_2,n)$ is given explicitly by
\[
\begin{split}
&Q(p_1,p_2,n)f
=  (-1)^{m'} (\et')^{\chi(p_1p_2)+m} \frac{|p_1p_2|}{q^m p} \\
&\times\sum_{w\in J(p,m',\ep',\et')}
\Bigl(\frac{(\ep'\et')^{\chi(w)}}{|w|}
 \sum_{z \in J(p,m,\ep,\et)}\, \frac{f(z)}{|z|}\,\,
a_{p_1}(z,w)\,a_{p_2}(\ep\,\et\,q^m p\, z ,
\ep' \et' q^{m'} p\, w)\Bigr)
f_{-m',\ep'\et' q^{m'}pw,w},
\end{split}
\]
where $f \in \cK(p,m,\ep,\et)$, $\ep'=\sgn(p_1)\ep$, $\et'=\sgn(p_2)\et$, $m'=m+n$.
\end{lemma}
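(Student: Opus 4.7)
The strategy is direct: compute the matrix elements $\langle Q(p_1,p_2,n) v, u\rangle$ on basis vectors $v$ of $\cK_0(p,m,\ep,\et)$ and basis vectors $u$ of $\cK$ using the explicit expression \eqref{eq:Wexplicitinapxy} for $W^*$, then apply the symmetry relations \eqref{eq:symmetryforapxy} to recast the answer, and finally extend to general $f \in \cK(p,m,\ep,\et)$ using the boundedness of $Q(p_1,p_2,n)$.

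By the defining property of the slice $Q(p_1,p_2,n) = (\om_{f_{0,p_1,1}, f_{n,p_2,1}} \ot \Id)(W^*)$, one has $\langle Q(p_1,p_2,n) v, u\rangle = \langle W^*(f_{0,p_1,1} \ot v), f_{n,p_2,1} \ot u\rangle$. I would take $v = f_{-m, \ep\et q^m p z_0, z_0}$ with $z_0 \in J(p,m,\ep,\et)$ and $u = f_{-m',\tilde p, t}$ arbitrary, and plug these into \eqref{eq:Wexplicitinapxy} with $m_1=0$, $(p_1',t_1)=(p_1,1)$, $m_2=-m$, $(p_2',t_2)=(\ep\et q^m p z_0, z_0)$. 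Matching the labels of the first tensor factor with $(n, p_2, 1)$ forces $z' = p_2$ and $\chi(p_1 p q^m / p_2) = -m-n$, which is precisely $q^{2m} p = q^{-n}|p_2/p_1|$; outside this case every matrix element vanishes, proving the first assertion.

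Assuming the constraint holds, matching the second tensor factor with $(-m', \tilde p, t)$ forces $w := t = y$, $\tilde p = \sgn(\ep\et) w p_2 q^{-m}/p_1$, and $m' = m+n$. Using $p_2/p_1 = \sgn(p_1 p_2) q^{2m+n} p$ (read off from the constraint) one checks $\sgn(\ep\et) w p_2 q^{-m}/p_1 = \ep'\et' q^{m'} p w$ with $\ep' = \sgn(p_1)\ep$ and $\et' = \sgn(p_2)\et$, so indeed $Q(p_1,p_2,n)\cK(p,m,\ep,\et) \subseteq \cK(p,m',\ep',\et')$. This yields the raw matrix coefficient $|z_0/w|\, a_{z_0}(p_1, w)\, a_{\ep\et q^m p z_0}(p_2, \ep'\et' q^{m'} p w)$. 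To convert it into the form stated in the lemma I would apply the third identity of \eqref{eq:symmetryforapxy} to each $a$-factor, rewriting $a_{z_0}(p_1,w)$ as a scalar times $a_{p_1}(z_0,w)$ and $a_{\ep\et q^m p z_0}(p_2, \ep'\et' q^{m'} p w)$ as a scalar times $a_{p_2}(\ep\et q^m p z_0, \ep'\et' q^{m'} p w)$; the relevant sign-evaluations require $\sgn(w)=\ep'$ and $\sgn(\ep'\et' q^{m'} p w) = \et'$. The main step is sign bookkeeping: after using $(-1)^{2\chi(z_0)}=1$, agreement with the prefactor in the lemma reduces to the parity identities $\chi(p_1p_2) + \chi(p) - n \equiv 0$ and $\chi(p_1p_2) - n - \chi(p) \equiv 0 \pmod 2$, both immediate consequences of the constraint in its characteristic form $\chi(p_2) = \chi(p_1) + 2m + \chi(p) + n$.

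Finally, for general $f = \sum_{z \in J(p,m,\ep,\et)} f(z)\, f_{-m,\ep\et q^m p z, z}$, boundedness of $Q(p_1,p_2,n)$ gives norm convergence of $Qf = \sum_z f(z)\, Q f_{-m,\ep\et q^m p z, z}$, and for each fixed $w$ the absolute convergence of the inner sum over $z$ follows from Cauchy--Schwarz together with the $\ell^2$-orthonormality of $\{a_{p_1}(\cdot, w)\}$ guaranteed by Proposition \ref{prop:orthorelapxy} (and likewise for the second $a$-factor). The main obstacle throughout is the sign and parity bookkeeping in the previous paragraph; the extension step is routine, and no further special-function input beyond \eqref{eq:Wexplicitinapxy} and \eqref{eq:symmetryforapxy} is needed.
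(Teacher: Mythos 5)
Your proof is correct and follows essentially the same route as the paper's: compute matrix elements via \eqref{eq:Wexplicitinapxy}, read off the Kronecker-delta constraints giving $q^{2m}p=q^{-n}|p_2/p_1|$ and the mapping into $\cK(p,m+n,\sgn(p_1)\ep,\sgn(p_2)\et)$, and convert to the stated form with the last symmetry of \eqref{eq:symmetryforapxy}. One minor remark: the absolute convergence of the inner sum over $z$ is obtained more directly from the boundedness of $Q(p_1,p_2,n)$ (each row of a bounded operator's matrix is square-summable) than from Proposition \ref{prop:orthorelapxy}, whose summation runs in a different variable than the one you need.
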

Recall here that $f(z) = \langle f, f_{-m,\ep\et q^mpz,z} \rangle$ for
$f \in \cK(p,m,\ep,\et)$ using the convention \eqref{eq:conventioninnerproductfunction}. We prove Lemma \ref{lem:explicitactionQppn} at the end of this subsection. First we look at a few consequences.

By the definition of $\cK_\pm$, see Definition \ref{def:defKpmandMpm}, we have
$\cK_{\pm} = \bigoplus_{p\in q^\Z, m\in\Z, \ep\et=\pm}\cK(p,m,\ep,\et)$,\index{K@$\cK_+$}\index{K@$\cK_-$}
and then Lemma \ref{lem:explicitactionQppn} implies that
\[
Q(p_1,p_2,n) \colon \cK_\ep \to \cK_{\sgn(p_1p_2)\ep}, \qquad \ep\in\{-,+\}.
\]
This proves the last statement of Proposition \ref{prop:QppngeneratedualM} assuming
we know that $Q(p_1,p_2,n)\in\hat{M}$.

Recall the action \eqref{eq:expressionhatJonfmpt} of the dual modular conjugation
$\hat{J}$, so that
\begin{equation}\label{eq:hatJinbasisKmpepet}
\hat{J}\, f_{-m,\ep\et q^mpz,z} =
\et^{m+\chi(pz)}\ep^{\chi(z)} (-1)^m \, f_{-(-m),\ep\et q^{-m}pq^{2m}z,z}
\end{equation}
and thus $\hat{J}\colon \cK(p,m,\ep,\et)\to \cK(q^{2m}p,-m,\ep,\et)$. Now Lemma \ref{lem:explicitactionQppn} implies the following.

\begin{cor}\label{cor:lemexplicitactionQppn2}
Let $p\in q^\Z$, $p_1,p_2 \in I_q$, $m,n \in \Z$ and $\ep,\et \in \{-,+\}$. If $p\not= q^{-n}|p_2/p_1|$, then
\[
\hat{J}Q(p_1,p_2,n)\hat{J} \bigl(\cK(p,m,\ep,\et)\bigr)=\{0\}.
\]
If $p = q^{-n}|p_2/p_1|$, then
\[
\hat{J}Q(p_1,p_2,n)\hat{J} \colon \cK(p,m,\ep,\et) \to
\cK(q^{2n}p,m-n,\sgn(p_1)\ep,\sgn(p_2)\et)
\]
and
\[
\begin{split}
&\hat{J}\, Q(p_1,p_2,n) \, \hat{J} f =
(-1)^{m} \et^{m+\chi(p)} \, \frac{|p_1p_2|}{q^m p}
\sum_{w\in J(q^{2n}p, m-n, \ep',\et' )} \frac{1}{|w|} \\
&\times \Bigl( \sum_{z\in J(p,m,\ep,\et)}
\frac{f(z)}{|z|} (\ep\et)^{\chi(z)} a_{p_1}(z,w)
a_{p_2} (\ep\et q^mpz, \ep'\et' q^{m+n}pw) \Bigr)
\, f_{n-m,\ep'\et' q^{m+n}pw,w},
\end{split}
\]
where $f \in \cK(p,m,\ep,\et)$, $\ep'=\sgn(p_1)\ep$ and $\et'=\sgn(p_2)\et$.
\end{cor}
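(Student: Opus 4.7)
The strategy is a straightforward three-layer composition: apply $\hat{J}$ to $f\in\cK(p,m,\ep,\et)$, apply $Q(p_1,p_2,n)$ via Lemma \ref{lem:explicitactionQppn}, then apply $\hat{J}$ once more, and carefully collect signs. The formula \eqref{eq:hatJinbasisKmpepet} for $\hat{J}$ on basis vectors and the antilinearity of $\hat{J}$ are the only extra ingredients beyond Lemma \ref{lem:explicitactionQppn}.

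First I would expand $\hat{J}f$ as an element of $\cK(q^{2m}p,-m,\ep,\et)$. Since $\hat{J}$ maps $\cK(p,m,\ep,\et)$ to $\cK(q^{2m}p,-m,\ep,\et)$ and both spaces are parametrised by the same index set $J(p,m,\ep,\et)=J(q^{2m}p,-m,\ep,\et)$ (the identity $\ep\et q^{m}pz=\ep\et q^{-(-m)}(q^{2m}p)z$), antilinearity and \eqref{eq:hatJinbasisKmpepet} give the coefficient $(\hat{J}f)(z)=\overline{f(z)}\,\et^{m+\chi(pz)}\ep^{\chi(z)}(-1)^{m}$. Next I would apply Lemma \ref{lem:explicitactionQppn} with $(p,m)\mapsto(q^{2m}p,-m)$. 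The non-vanishing condition $q^{-2m}(q^{2m}p)=q^{-n}|p_2/p_1|$ is exactly $p=q^{-n}|p_2/p_1|$, which explains the support statement. The image then lies in $\cK(q^{2m}p,n-m,\ep',\et')$, with $\ep'=\sgn(p_1)\ep$ and $\et'=\sgn(p_2)\et$, and is expressed as a sum of basis vectors $f_{m-n,\ep'\et' q^{m+n}pw,w}$ over $w\in J(q^{2m}p,n-m,\ep',\et')=J(q^{2n}p,m-n,\ep',\et')$.

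Then applying $\hat{J}$ a second time via \eqref{eq:hatJinbasisKmpepet} transports each $f_{m-n,\ep'\et' q^{m+n}pw,w}$ to $f_{n-m,\ep'\et' q^{m+n}pw,w}\in\cK(q^{2n}p,m-n,\ep',\et')$ with an explicit phase, and antilinearity conjugates the inner coefficients. Because $a_{p_i}(x,y)\in\R$ by Definition \ref{def:functionap}, this conjugation only replaces $\overline{f(z)}$ back by $f(z)$, preserving the product structure.

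The one real obstacle is sign bookkeeping. Accumulating all phase contributions --- namely $(-1)^m$ and $\et^{m+\chi(pz)}\ep^{\chi(z)}$ from the first $\hat{J}$; $(-1)^{n-m}(\et')^{\chi(p_1p_2)-m}(\ep'\et')^{\chi(w)}$ from $Q(p_1,p_2,n)$; and $(\et')^{m+n+\chi(p)+\chi(w)}(\ep')^{\chi(w)}(-1)^{m-n}$ from the second $\hat{J}$ --- the $\ep'$ and $w$-dependent exponents pair up to $1$, leaving $(-1)^{m}(\et')^{n+\chi(p)+\chi(p_1p_2)}\et^{m+\chi(p)+\chi(z)}\ep^{\chi(z)}$. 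Here I would invoke the key arithmetic identity that, under the support constraint $p=q^{-n}|p_2/p_1|$, one has $\chi(p)=-n+\chi(p_2)-\chi(p_1)$ and hence $n+\chi(p)+\chi(p_1p_2)=2\chi(p_2)$, so $(\et')^{n+\chi(p)+\chi(p_1p_2)}=1$. Using $\ep^{\chi(z)}\et^{\chi(z)}=(\ep\et)^{\chi(z)}$, the remaining prefactor simplifies to $(-1)^{m}\et^{m+\chi(p)}$, matching the claim exactly.
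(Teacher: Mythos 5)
Your proof is correct and is essentially the paper's own argument: conjugate by $\hat{J}$ using \eqref{eq:hatJinbasisKmpepet}, apply Lemma \ref{lem:explicitactionQppn} in the form \eqref{eq:defQonbasisvector} with $(p,m)$ replaced by $(q^{2m}p,-m)$, invoke $J(q^{2m}p,n-m,\ep',\et')=J(q^{2n}p,m-n,\ep',\et')$, and collect signs; your bookkeeping, including the reduction $n+\chi(p)+\chi(p_1p_2)=2\chi(p_2)$ under the support condition, checks out. (Only a harmless slip in a parenthetical: the relevant identity is $\ep\et\,q^{-m}(q^{2m}p)z=\ep\et\,q^{m}pz$, i.e.\ the exponent is $q^{-m}$, not $q^{-(-m)}$.)
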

Again we postpone the proof until the end of this subsection.

Let us state the matrix elements of $Q(p_1,p_2,n)$ and
$\hat{J}\, Q(p_1,p_2,n)\, \hat{J}$ explicitly;
\begin{equation}\label{eq:matrixelementsofQppnandhatJQppnhatJ}
\begin{split}
\langle Q(p_1,p_2,n) \, f_{uvw}, f_{lrs} \rangle
& \, =\,  \de_{u-l,n}\de_{l, \chi(p_1v/p_2w)}\de_{r,\sgn(vw)sp_2q^u/p_1}
\left|\frac{w}{s}\right|\, a_w(p_1,s)\, a_v(p_2,r),\\
\langle \hat{J} Q(p_1,p_2,n)\hat{J} \, f_{uvw}, f_{lrs} \rangle
& \, =\,  \de_{l-u,n}\de_{l, \chi(p_2w/p_1v)}\de_{r,\sgn(vw)sp_2q^{-u}/p_1}
\sgn(r)^{\chi(r)}\sgn(s)^{\chi(s)}
\\ &\quad\times \sgn(v)^{\chi(v)}\, \sgn(w)^{\chi(w)}
(-1)^{l+u}
\left|\frac{w}{s}\right|\, a_w(p_1,s)\, a_v(p_2,r),
\end{split}
\end{equation}
to which one may apply the symmetry relations
\eqref{eq:symmetryforapxy}.

The remainder of this subsection is devoted to the proofs of Lemma \ref{lem:explicitactionQppn} and
Corollary \ref{cor:lemexplicitactionQppn2}.

\begin{proof}[Proof of Lemma \ref{lem:explicitactionQppn}.]
We start by considering matrix elements of the more generally defined operator
\[
(\om_{f_{m_1,p_1,t_1},f_{m_2,p_2,t_2}} \ot \Id)(W^*) \in B(\cK),
\]
with $m_1,m_2 \in \Z$ and $p_1,p_2,t_1,t_2 \in I_q$. For $n_1,n_2\in\Z$ and $r_1,r_2,s_1,s_2\in I_q$ we have
\[
\begin{split}
\big\langle (&\om_{f_{m_1,p_1,t_1},f_{m_2,p_2,t_2}} \ot \Id)(W^*)\, f_{n_1,r_1,s_1}, f_{n_2,r_2,s_2} \big\rangle \\
=& \big\langle W^\ast f_{m_1,p_1,t_1} \ot f_{n_1,r_1,s_1},  f_{m_2,p_2,t_2}\ot f_{n_2,r_2,s_2} \big\rangle \\
= &\, \de_{t_1,t_2}\de_{n_1-n_2,m_2-m_1} \de_{n_2,\chi(p_1r_1/s_1p_2)}
\de_{r_2,\sgn(r_1s_1)s_2p_2q^{n_1}/p_1} \\
& \times \left| \frac{s_1}{s_2}\right| a_{s_1}(p_1,s_2) \, a_{r_1}(p_2,\sgn(r_1s_1)s_2p_2q^{n_1}/p_1),
\end{split}
\]
where we used expression \eqref{eq:Wexplicitinapxy} for $W^\ast$. The dependence on $t_1, t_2\in I_q$ and $m_1,m_2\in\Z$ of the right hand side occurs only in the first two Kronecker deltas, so by \eqref{eq:defQppn} we have
\begin{equation}\label{eq:generalcasereducestoQppn}
(\om_{f_{m_1,p_1,t_1},f_{m_2,p_2,t_2}} \ot \Id)(W^*) = \de_{t_1,t_2}
Q(p_1,p_2,m_2-m_1).
\end{equation}
We see that it suffices to restrict to the case $t_1=t_2=1$, $m_1=0$, $m_2=n$, and we switch to
the basis elements of $\cK(p,m,\ep,\et)$, see Section \ref{ssec:decompositionofGNSspace},
i.e., we replace $(n_1,r_1,s_1)$ by $(-m,\ep\et q^mpz,z)$ and $(n_2,r_2,s_2)$ by $(-m',\ep'\et' q^{m'}p'z',z')$, where $p,p'\in q^\Z$ and $\ep,\et,\ep',\et \in \{+,-\}$. Then we find
\[
\begin{split}
\big\langle Q&(p_1,p_2,n) f_{-m,\ep\et q^mpz,z}, f_{-m',\ep'\et' q^{m'}p'z',z'} \big\rangle
= \\
& \de_{n,m'-m} \de_{-m',m+\chi(p_1p/p_2)}
\de_{\ep'\et' q^{m'}p', \ep\et p_2q^{-m}/p_1}
\left| \frac{z}{z'}\right| a_{z}(p_1,z') \, a_{\ep\et q^mpz}(p_2,\ep\et z'p_2q^{-m}/p_1).
\end{split}
\]
The first two Kronecker deltas always give zero unless $m'= n+m =-m-\chi(p_1p/p_2)$, or equivalently $q^{2m}p=q^{-n}|p_2/p_1|$,
which is the first statement of Lemma \ref{lem:explicitactionQppn}. Assuming that this condition is valid we see the third Kronecker delta becomes $\de_{\ep'\et'p',\ep\et\,\sgn(p_1p_2)p}$. Since $p,p'\in q^\Z$, we find that we need $p=p'$ and $\ep'\et'=\sgn(p_1p_2)\ep\et$. Assuming these conditions and using the last symmetry of \eqref{eq:symmetryforapxy} we find that
\[
\begin{split}
&\langle Q(p_1,p_2,n) f_{-m,\ep\et q^mpz,z}, f_{-m',\ep'\et' q^{m'}p'z',z'} \rangle
= \\
& (-1)^{m+n} (\ep'\et')^{\chi(z')} (\et')^{\chi(p)+m+n}
\frac{p_1p_2}{q^mp|zz'|} a_{p_1}(z,z')\, a_{p_2}(\ep\et q^mpz,\ep'\et' q^{m'}p'z').
\end{split}
\]
Now the product of the functions $a_p$ is zero unless $\ep'=\sgn(p_1)\ep$ and $\et'=\sgn(p_2)\et$,
see Definition \ref{def:functionap}. So in case  $q^{2m}p=q^{-n}|p_2/p_1|$
we find $Q(p_1,p_2,n)\colon \cK(p,m,\ep,\et) \to \cK(p,m+n,\ep\sgn(p_1),\et\sgn(p_2))$
and with $m'=m+n$, $\ep'=\ep\sgn(p_1)$, $\et'=\et\sgn(p_2)$ we find
\begin{equation}\label{eq:defQonbasisvector}
\begin{split}
&Q(p_1,p_2,n)\,f_{-m,\ep\et\,q^m p\,z,z} =
(-1)^{m'}\,(\et')^{\chi(p)+m'}\,\frac{|p_1 p_2|}{q^m p}\,\frac{1}{|z|}\,\,
\\ &\qquad
\times \sum_{z' \in J(p,m',\ep',\et')}\,
\frac{(\ep'\et')^{\chi(z')}}{|z'|}\,a_{p_1}(z,z')
\,a_{p_2}(\ep\et\,q^m p\,z,\ep'\et'\,q^{m'}
p\,z')\,\,f_{-m',\ep'\et'\,q^{m'}p\,z',z'}.
\end{split}
\end{equation}
This gives the required expression leading to the last statement of
Lemma \ref{lem:explicitactionQppn} after taking into account
$q^{2m}p=q^{-n}|p_2/p_1|$.
\end{proof}

\begin{proof}[Proof of Corollary \ref{cor:lemexplicitactionQppn2}.]
The first statements are immediate from Lemma \ref{lem:explicitactionQppn} and
\eqref{eq:hatJinbasisKmpepet}, and assuming the condition
$p=q^{-n}|p_2/p_1|$ we get, with $\ep'=\sgn(p_1)\ep$ and $\et'=\sgn(p_2)\et$,
\[
\begin{split}
&\hat{J} Q(p_1,p_2,n)\hat{J}\,f_{-m,\ep\et\,q^m p\,z,z} =
(\ep\et)^{\chi(z)}\,\et^{m+\chi(p)} (-1)^m \frac{|p_1 p_2|}{q^m p}\,\frac{1}{|z|}\,\,
\\ &\qquad
\times \sum_{z' \in J(q^{2n}p,m-n,\ep',\et')}\,
\frac{1}{|z'|}\,a_{p_1}(z,z')
\,a_{p_2}(\ep\et\,q^m p\,z,\ep'\et'\,q^{m+n}
p\,z')\,\,f_{-(m-n),\ep'\et'\,q^{m+n}p\,z',z'}
\end{split}
\]
using \eqref{eq:hatJinbasisKmpepet}, \eqref{eq:defQonbasisvector}
and $J(q^{2m}p,n-m,\ep',\et')=J(q^{2n}p,m-n,\ep',\et')$.
This implies the last statement of
Corollary \ref{cor:lemexplicitactionQppn2}.
\end{proof}

\subsection{A basis for the  dual von Neumann algebra}\label{ssec:commutantofdualvNalg}
In this subsection we give a proof of Proposition \ref{prop:QppngeneratedualM} and
Corollary \ref{cor:propQppngeneratedualM}.
For this we use the description of $\hat M$ as in \eqref{eq:defdualhatM}.

\begin{lemma}\label{lem:QppnareallinhatM}
The operators $Q(p_1,p_2,n)$, $p_1,p_2\in I_q$, $n\in\Z$, are in $\hat{M}$, and the linear
span of the operators  $Q(p_1,p_2,n)$, $p_1,p_2\in I_q$, $n\in\Z$,
is strong-$\ast$ dense in $\hat{M}$. Moreover, for $x\in \hat{M}$ there
exists a net $\{ x_i\}_{i\in I}$ in this linear span such that $x_i\to x$ in the
strong $\ast$-topology with $\|x_i\| \leq \|x\|$.
\end{lemma}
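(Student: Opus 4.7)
The plan is to establish the three assertions of the lemma in sequence: membership $Q(p_1,p_2,n)\in\hat M$, $\sigma$-strong-$\ast$ density of their linear span in $\hat M$, and the existence of norm-bounded approximating nets. For membership, I will use only the standard fact that the multiplicative unitary satisfies $W\in M\ot\hat M$, hence so does $W^\ast$. Since
$Q(p_1,p_2,n)=(\om_{f_{0,p_1,1},\,f_{n,p_2,1}}\ot\Id)(W^\ast)$
is a first-leg slice of $W^\ast$ by the normal functional $\om_{f_{0,p_1,1},\,f_{n,p_2,1}}\in B(\cK)_\ast$, and first-leg slices of elements of $M\ot\hat M$ land in $\hat M$, this part is automatic.

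For the density, the key input is the already-established formula \eqref{eq:generalcasereducestoQppn}, which asserts that for arbitrary basis vectors one has $(\om_{f_{m_1,p_1,t_1},\,f_{m_2,p_2,t_2}}\ot\Id)(W^\ast)=\de_{t_1,t_2}\,Q(p_1,p_2,m_2-m_1)$. Thus the linear span of the $Q$'s coincides with the image, under the slice map $\om\mapsto(\om\ot\Id)(W^\ast)$, of the linear span of matrix coefficients between basis vectors. Since rank-one operators with respect to the fixed orthonormal basis span a norm-dense subspace of the trace class $B(\cK)_\ast$, and the slice map is norm-continuous with range in $\hat M$, the span of the $Q$'s is norm-dense in $\{(\om\ot\Id)(W^\ast)\mid\om\in B(\cK)_\ast\}$. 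The $\sigma$-strong-$\ast$ closure of this latter set is $\hat M$: by \eqref{eq:defdualhatM} the closure of the analogous set built from $W$ equals $\hat M$, and because the $\ast$-operation is a $\sigma$-strong-$\ast$ homeomorphism of $B(\cK)$ and $\hat M$ is $\ast$-closed, taking adjoints on both sides transfers the statement from $W$ to $W^\ast$.

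The norm-bounded approximating nets will be produced by Kaplansky's density theorem, whose hypothesis requires the span to be a $\ast$-subalgebra of $B(\cK)$. Closure under multiplication is precisely Proposition \ref{prop:structureconstQs}. Closure under the adjoint will follow from the scalar identity
\[
Q(p_1,p_2,n)^\ast \;=\; \sgn(p_1)^{\chi(p_1)}\,\sgn(p_2)^{\chi(p_2)}\,(-1)^n\,q^n\,Q(p_1,p_2,-n),
\]
which I plan to derive by computing the matrix coefficients of $Q(p_1,p_2,n)^\ast$ from \eqref{eq:matrixelementsofQppnandhatJQppnhatJ} and applying the first symmetry of \eqref{eq:symmetryforapxy}, namely $a_p(x,y)=(-1)^{\chi(yp)}\sgn(x)^{\chi(x)}|y/p|\,a_y(x,p)$, to both Clebsch--Gordan factors. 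Once this identity is available the span is a $\ast$-subalgebra of $\hat M$ whose $\sigma$-strong-$\ast$ closure is $\hat M$, and Kaplansky then delivers nets $\{x_i\}$ in the span with $\|x_i\|\leq\|x\|$ converging in the strong-$\ast$ topology to $x$.

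The main obstacle I expect is the bookkeeping in this $\ast$-invariance identity. The matrix formula for $Q(p_1,p_2,n)^\ast$ constrains the indices by $u=\chi(p_1r/p_2s)$ and $v=\sgn(rs)wp_2q^l/p_1$, whereas $Q(p_1,p_2,-n)$ constrains them by $l=\chi(p_1v/p_2w)$ and $r=\sgn(vw)sp_2q^u/p_1$; I will have to check that these two Kronecker-delta supports coincide, both reducing to $l-u=n$, $|r/s|=q^u|p_2/p_1|$, $|v/w|=q^l|p_2/p_1|$ and $\sgn(vwrsp_1p_2)=+1$. Then the residual magnitude factor $|vs|/|rw|$ collapses to $q^{l-u}=q^n$, and the sign factor $(-1)^{\chi(ws)+\chi(vr)}$ collapses to $(-1)^{u+l}=(-1)^n$ via the parity identity $2u+n\equiv n\pmod 2$. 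This accounting is routine but delicate, and it is the only place in the argument where a genuine computation is needed.
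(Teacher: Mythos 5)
Your arguments for membership and for $\sigma$-strong-$\ast$ density are correct, and in places cleaner than the paper's. The paper obtains $Q(p_1,p_2,n)\in\hat M$ by writing it, via \eqref{eq:hatJotJWhatJotJisWster}, as $J\,(\om\ot\Id)(W)\,J=\hat{R}\bigl((\om\ot\Id)(W)^\ast\bigr)$ for a matrix-coefficient functional, whereas you slice $W^\ast\in M\ot\hat M$ directly; both are valid. Your adjoint identity is exactly Lemma \ref{lem:QadjointisQ} (note $(-q)^n=(-1)^nq^n$), your analysis of the Kronecker-delta supports and the parity of $\chi(ws)+\chi(vr)$ is the right computation, and the norm-density of the span of the $Q$'s in $\{(\om\ot\Id)(W^\ast)\mid\om\in B(\cK)_\ast\}$, combined with transferring \eqref{eq:defdualhatM} from $W$ to $W^\ast$ by taking adjoints, does establish the density claim.

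The gap is in the Kaplansky step. You assert that closure of the span under multiplication "is precisely Proposition \ref{prop:structureconstQs}", but that proposition expresses $Q(p_1,p_2,n)\,Q(r_1,r_2,m)$ as an \emph{infinite} series $\sum_{x_1,x_2} a_{x_1}(r_1,p_1)\,a_{x_2}(r_2,p_2)\,Q(x_1,x_2,n+m)$ over $x_1,x_2\in I_q$ with $|x_1|=|x_2|$: the coefficients are only square-summable (Propositions \ref{prop:orthorelapxy} and \ref{prop:dualorthorelapxy}), the series is not a finite linear combination, and its convergence is only available in a weak operator topology, not in norm. So the linear span of the $Q$'s is a self-adjoint linear subspace of $\hat M$ but \emph{not} a $\ast$-subalgebra, and the Kaplansky density theorem cannot be applied to it. This is precisely the difficulty the paper's proof is built to avoid: it applies Kaplansky to the genuine $\ast$-subalgebra $\{(\om\ot\Id)(W)\mid\om\in M_\ast^\sharp\}$ supplied by the general theory of \cite{KustVMS}, obtains a net with $\|(\om_i\ot\Id)(W)\|<\|x\|$ converging strong-$\ast$ to $x^\ast$ (hence $(\bar\om_i\ot\Id)(W^\ast)\to x$), and then performs a second, norm-level approximation of each $\bar\om_i$ by matrix-coefficient functionals, using the contractivity of $\om\mapsto(\om\ot\Id)(W^\ast)$ and the strict inequality to keep $\|x_j\|<\|x\|$, so as to land in the span of the $Q$'s. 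Your first two steps can be kept, but the bounded net must be produced by some such two-stage argument (Kaplansky on an honest $\ast$-algebra of slices, followed by a norm approximation into the span), not by Kaplansky applied to the span itself.
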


Lemma \ref{lem:QppnareallinhatM} proves Proposition \ref{prop:QppngeneratedualM} except for the last statement, which was proved in Section \ref{ssec:multiplicativeunitary} after Lemma \ref{lem:explicitactionQppn}. By the general Tomita-Takesaki theory, see \cite[Vol.~II]{Take}, cf. \eqref{eq:JMJisMcommutant}, we have that the commutant satisfies $\hat{M}' = \hat{J}\, \hat{M}\, \hat{J}$, and so Corollary \ref{cor:propQppngeneratedualM} follows.

\begin{proof}
By \eqref{eq:defdualhatM} and Theorem \ref{thm:duallocallycompactquantumgroup}
we have to consider
\[
\begin{split}
&\, (\om_{f_{m_1,p_1,t_1},f_{m_2,p_2,t_2}} \ot \Id)(W)
= (\om_{f_{m_1,p_1,t_1},f_{m_2,p_2,t_2}} \ot \Id) \Bigl(
\bigl( \hat{J}\ot J\bigr)\, W^\ast \, \bigl( \hat{J}\ot J\bigr) \Bigr) \\
= &\, J\,
(\om_{\hat{J}\, f_{m_1,p_1,t_1},\hat{J}\, f_{m_2,p_2,t_2}} \ot \Id) \bigl( W^\ast\bigr)\, J \\
= &\, \sgn(p_1)^{\chi(p_1)} \sgn(p_2)^{\chi(p_2)}\sgn(t_1)^{\chi(t_1)}
\sgn(t_2)^{\chi(t_2)} (-1)^{m_1+m_2}
J\, (\om_{f_{-m_1,p_1,t_1}, f_{-m_2,p_2,t_2}} \ot \Id) \bigl( W^\ast\bigr)\, J
\end{split}
\]
using \eqref{eq:hatJotJWhatJotJisWster}, $\hat{J}^2=\Id$,
$\langle \hat{J}f, \hat{J}g\rangle= \langle g, f\rangle$, $J$ being antilinear,
and \eqref{eq:expressionhatJonfmpt}.
It follows from the proof of Lemma \ref{lem:explicitactionQppn}, in
particular from \eqref{eq:generalcasereducestoQppn}, that we can restrict to
the case $t_1=t_2=1$, $m_1=0$, $m_2=-n$. By \eqref{eq:defdualhatM} and $J^2=\Id$ we
see that, up to a sign, $Q(p_1,p_2,n)$ equals
$J\, (\om \ot \Id)(W) \, J$ for $\om \in B(\cK)_\ast$.
Recall from \eqref{eq:actionunitaryantipode} that the unitary antipode
$\hat{R}$ for the dual quantum group is given by $\hat{R}(x)=Jx^\ast J$, so that for $x\in \hat{M}$ we have $J\, x\, J = \hat{R}(x^\ast)\in \hat{M}$. Now we see that $Q(p_1,p_2,n)\in \hat{M}$.

In order to prove the density statement, we recall that there exists
a dense $\ast$-subalgebra $M_\ast^\sharp$ of the predual $M_\ast$ such
such that $\{ (\om\ot\Id)(W) \mid \om \in M_\ast^\sharp\}$ is $\si$-strong-$\ast$
dense $\ast$-subalgebra of $\hat{M}$, see \cite[p.~79]{KustVMS}. The subspace
$M_\ast^\sharp$ consists of those normal functionals $\om$ such that $\bar\om \circ S$
is again a normal functional, where $\bar\om(x)=\overline{\om(x^\ast)}$, and
the $\ast$-operator for $\om\in M_\ast^\sharp$ defined as $\om^\ast = \bar\om \circ S$.
Now we apply the Kaplansky density theorem, see e.g.~\cite[Vol I, Ch.~II, Thm. 4.8]{Take},
to obtain a net $\{ \om_i\}_{i\in I}$ in $M_\ast^\sharp$ with the properties
$\| (\om_i \ot\Id)(W)\| < \|x^\ast\|=\|x\|$ for all $i\in I$ and such that
$(\om_i \ot\Id)(W) \to x^\ast$ in the strong-$\ast$ topology, so that also
$(\bar\om_i \ot\Id)(W^\ast) \to x$ in the strong-$\ast$ topology.

Let $L$ be the linear span of the normal functionals
$\om_{f_{m_1,p_1,t_1},f_{m_1,p_1,t_1}}$ for $p_1,p_2,t_1,t_2\in I_q$ and
$m_1,m_2\in\Z$, then $L$ is norm dense in $M_\ast$
and $\overline{\om_{f,g}}= \om_{g,f}$ so $L$ is closed under $\om\mapsto \bar\om$.
Now define the index set $I_0=I\times\N$, and make this a directed (or
upward filtering) set by the product order, i.e.~$(i_1,k_1)\leq (i_2,k_2)$ whenever $i_1\leq i_2$ in $I$ and $k_1\leq k_2$. For $j=(i,k)\in I_0$ we can pick $\et_j\in L$ such that $\| (\et_j\ot\Id)(W^\ast) - (\bar \om_i\ot\Id)(W^\ast)\|\leq 1/k$ and $\| (\et_j\ot\Id)(W^\ast)\|<\|x\|$. For such $j\in I_0$ set $x_j = (\et_j\ot\Id)(W^\ast)$ in the linear span of the operators  $Q(p_1,p_2,n)$, $p_1,p_2\in I_q$, $n\in\Z$, and the net $\{ x_j\}_{j\in I_0}$ satisfies all required properties.
\end{proof}

\begin{cor} \label{cor:lemQppnareallinhatM} With $\hat{R}$ the unitary
antipode for the dual locally compact quantum group we have
\[
\begin{split}
\hat{R}\bigl(Q(p_1,p_2,n)\bigr) &\, = (-1)^n \, \sgn(p_1)^{\chi(p_1)} \sgn(p_2)^{\chi(p_2)}
\, Q(p_2,p_1,n), \\
Q(p_1,p_2,n)^\ast &\, = (-1)^n\, \sgn(p_1)^{\chi(p_1)} \sgn(p_2)^{\chi(p_2)}\,
J\, Q(p_2,p_1,n)\, J.
\end{split}
\]
\end{cor}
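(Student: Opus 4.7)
The plan is to first derive the $\ast$-formula by combining the representation $Q(p_1,p_2,n) = (\om_{f,g} \ot \Id)(W^\ast)$, where $f=f_{0,p_1,1}$ and $g=f_{n,p_2,1}$, with the intertwining identity \eqref{eq:hatJotJWhatJotJisWster}, and then to deduce the $\hat{R}$-formula by invoking $\hat{R}(x) = J x^\ast J$ from \eqref{eq:actionunitaryantipode} together with $J^2=\Id$.

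First I would take adjoints. The elementary identity $(\om_{f,g}\ot\Id)(T)^\ast = (\om_{g,f}\ot\Id)(T^\ast)$ applied to $T=W^\ast$ yields
\[
Q(p_1,p_2,n)^\ast = (\om_{g,f}\ot\Id)(W).
\]
Substituting $W = (\hat J\ot J) W^\ast (\hat J\ot J)$ from \eqref{eq:hatJotJWhatJotJisWster} and using the antiunitarity of $\hat{J}$ and $J$ to pull them outside the slice (formally $J(\lambda B)J=\bar\lambda\, JBJ$, together with the analogous conjugation produced by moving $\hat{J}A\hat{J}$ through $\om_{g,f}$) rewrites the right hand side as
\[
Q(p_1,p_2,n)^\ast = J\,(\om_{\hat{J}g,\hat{J}f}\ot\Id)(W^\ast)\,J.
\]

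Next I evaluate $\hat{J}f$ and $\hat{J}g$ using \eqref{eq:expressionhatJonfmpt}; since $\sgn(1)^{\chi(1)}=1$, one obtains $\hat{J} f_{0,p_1,1} = \sgn(p_1)^{\chi(p_1)}\, f_{0,p_1,1}$ and $\hat{J} f_{n,p_2,1} = (-1)^n\,\sgn(p_2)^{\chi(p_2)}\, f_{-n,p_2,1}$. Because these prefactors are real, they pass through the vector functional (the complex conjugate in the second slot being trivial), giving
\[
\om_{\hat{J}g,\hat{J}f} = (-1)^n\, \sgn(p_1)^{\chi(p_1)}\sgn(p_2)^{\chi(p_2)}\, \om_{f_{-n,p_2,1},\,f_{0,p_1,1}}.
\]
The resulting slice $(\om_{f_{-n,p_2,1},\,f_{0,p_1,1}}\ot\Id)(W^\ast)$ is identified via \eqref{eq:generalcasereducestoQppn} (with $m_1=-n$, $m_2=0$, $t_1=t_2=1$, and the roles of $p_1,p_2$ swapped) as $Q(p_2,p_1,n)$. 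This delivers the second formula of the corollary.

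For the first formula, sandwich the second identity with $J\,\cdot\,J$. Since $\hat{R}(Q(p_1,p_2,n))=J\, Q(p_1,p_2,n)^\ast\, J$ by \eqref{eq:actionunitaryantipode}, and the two inner $J$'s flanking $Q(p_2,p_1,n)$ then cancel by $J^2=\Id$, the right hand side reduces to exactly $(-1)^n \sgn(p_1)^{\chi(p_1)}\sgn(p_2)^{\chi(p_2)} Q(p_2,p_1,n)$. No step presents a real obstacle; the only delicate point is careful sign-keeping from \eqref{eq:expressionhatJonfmpt} (with the convention $\sgn(1)=1$, $\chi(1)=0$) combined with the order reversal of $\om_{\cdot,\cdot}$ under adjunction.
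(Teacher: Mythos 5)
Your proof is correct and follows essentially the same route as the paper's: both derive the $\ast$-formula by taking adjoints of the slice, rewriting via \eqref{eq:hatJotJWhatJotJisWster} to get $J\,(\om_{\hat{J}g,\hat{J}f}\ot\Id)(W^\ast)\,J$, evaluating $\hat{J}$ on the basis vectors through \eqref{eq:expressionhatJonfmpt}, and identifying the resulting slice as $Q(p_2,p_1,n)$ by \eqref{eq:generalcasereducestoQppn}, with the $\hat{R}$-formula then following from \eqref{eq:actionunitaryantipode} and $J^2=\Id$. The sign bookkeeping is right, so nothing needs to be changed.
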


\begin{proof}
Note that the statements are equivalent because $Q(p_1,p_2,n)\in \hat{M}$ by
Lemma \ref{lem:QppnareallinhatM} and  $\hat{R} x = J x^\ast J$ for $x\in \hat{M}$, see \eqref{eq:actionunitaryantipode}.

For $f,g \in \cK$ we set $T=(\om_{f,g}\ot\Id)(W^\ast)$. Then $T^\ast = (\om_{g,f}\ot\Id)(W)$, so that
$(\hat{J}\ot J)(W^\ast)(\hat{J}\ot J)= W$ gives
\[
T^\ast = J\, (\om_{\hat{J} g,\hat{J}f}\ot\Id)(W^\ast) \, J
\]
as in the first part of the proof of Lemma \ref{lem:QppnareallinhatM}.
Specializing $f= f_{0,p_1,1}$, $g=f_{n,p_2,1}$ gives $T=Q(p_1,p_2,n)$, and using the
action \eqref{eq:expressionhatJonfmpt} of $\hat J$ on $f_{mpt}$ we obtain
\[
\begin{split}
Q(p_1,p_2,n)^\ast &\, = (-1)^n \, \sgn(p_1)^{\chi(p_1)}\, \sgn(p_2)^{\chi(p_2)}
J\, (\om_{f_{-n,p_2,1}, f_{0,p_1,1}}\ot\Id)(W^\ast) \, J \\ &\, =
(-1)^n \, \sgn(p_1)^{\chi(p_1)}\, \sgn(p_2)^{\chi(p_2)}
J\, Q(p_2,p_1,n)\, J,
\end{split}
\]
where the last equality follows from \eqref{eq:generalcasereducestoQppn}.
\end{proof}

We finish the subsection by establishing the structure constants for
the operators $Q(p_1,p_2,n)$ as a linear basis for $\hat{M}$. First observe
that as elements of $B(\cK)$
\[
\bigl( (\om_{f,g}\ot \Id)(W^\ast)\bigr) \, \bigl( (\om_{\xi,\et}\ot \Id)(W^\ast)\bigr)
\, =\, \bigl( \om_{\xi,\et}\ot\om_{f,g}\ot\Id\bigr) (W^\ast_{23}W^\ast_{13})
\]
for arbitrary vectors $f,g,\xi,\et\in\cK$. Using the pentagonal equation
$W_{12}W_{13}W_{23}=W_{23}W_{12}$ this can be rewritten in the compact form
\begin{equation}\label{eq:productofQs1}
\bigl( (\om_{f,g}\ot \Id)(W^\ast)\bigr) \, \bigl( (\om_{\xi,\et}\ot \Id)(W^\ast)\bigr)
\, =\,
\bigl( \om_{W(\xi\ot f), W(\et\ot g)}\ot \Id\bigr) (\Id\ot W^\ast).
\end{equation}

\begin{proof}[Proof of Proposition \ref{prop:structureconstQs}]
We start with the choice $f=f_{0,p_1,1}$, $g=f_{n,p_2,1}$,
$\xi=f_{0,r_1,1}$, $\et=f_{m,r_2,1}$, so that the left hand side of
\eqref{eq:productofQs1} equals $Q(p_1,p_2,n)\, Q(r_1,r_2,m)$.
In order to evaluate the right hand side of \eqref{eq:productofQs1} we use
\eqref{eq:Wactionexplicitinapxy}, which leads to
\begin{equation}\label{eq:pfpropstructureconstQs1}
\begin{split}
&\sum_{\stackrel{\scriptstyle{x_1,y_1\in I_q}}{\scriptstyle{\text{so that }\sgn(x_1p_1)y_1r_1\in I_q}}}
\sum_{\stackrel{\scriptstyle{x_2,y_2\in I_q}}{\scriptstyle{\text{so that }\sgn(x_2p_2)y_2r_2\in I_q}}}
|y_1y_2| \, a_{x_1}(r_1,p_1)\,  a_{x_2}(r_2,p_2)\\
&\qquad \times
\,  a_{y_1}(\sgn(x_1p_1)y_1r_1, 1) \, a_{y_2}(\sgn(x_2p_2)y_2r_2q^n, 1) \\
&\qquad \times \langle f_{-\chi(y_1p_1),\sgn(x_1p_1)y_1r_1,1}, f_{m-\chi(y_2p_2),\sgn(x_2p_2)y_2r_2q^n,1}\rangle \\
&\qquad \times \bigl( \om_{f_{\chi(y_1p_1),x_1,y_1},f_{n+\chi(y_2p_2),x_2,y_2}}\ot\Id\bigr) (W^\ast).
\end{split}
\end{equation}
The inner product in the summand of \eqref{eq:pfpropstructureconstQs1} leads to
$\de_{-\chi(y_1p_1),m-\chi(y_2p_2)}\de_{\sgn(x_1p_1)y_1r_1,\sgn(x_2p_2)y_2r_2q^n}$, whereas,
by \eqref{eq:generalcasereducestoQppn}, the last term in the summand is
$\de_{y_1,y_2} \, Q(x_1,x_2, n+\chi(p_2)-\chi(p_1))$. Combining this we see that the
last two terms in the summand of \eqref{eq:pfpropstructureconstQs1} equal
\[
\de_{\chi(p_2),m+\chi(p_1)}\, \de_{\sgn(x_1p_1)r_1,\sgn(x_2p_2)r_2q^n}\,
\de_{y_1,y_2} \, Q(x_1,x_2,n+m),
\]
which is zero in case $|\frac{p_2}{p_1}|\not= q^m$ independent of $x_1$, $y_1$, $x_2$, $y_2$.

Assuming $|\frac{p_2}{p_1}|= q^m$
and inserting this into
\eqref{eq:pfpropstructureconstQs1} leads to
\begin{equation}\label{eq:pfpropstructureconstQs2}
\begin{split}
&\sum_{x_1,x_2\in I_q}
 \, a_{x_1}(r_1,p_1)\,  a_{x_2}(r_2,p_2) \, Q(x_1,x_2,n+m) \\
& \times
\Bigl( \sum_{\stackrel{\scriptstyle{y_1\in I_q}\text{ so that }}
{\scriptstyle{\sgn(x_1p_1)y_1r_1=\sgn(x_2p_2)y_1r_2q^n\in I_q}}}
y_1^2\,  a_{y_1}(\sgn(x_1p_1)y_1r_1, 1) \, a_{y_1}(\sgn(x_2p_2)y_2r_2q^n, 1)
\Bigr),
\end{split}
\end{equation}
where empty sums are zero.
For the expression in \eqref{eq:pfpropstructureconstQs2} to be non-zero result we require $\sgn(x_1)=\sgn(r_1p_1)$ and
$\sgn(x_2)=\sgn(r_2p_2)$, see Definition \ref{def:functionap}. Then we see that
$\sgn(x_1p_1)y_1r_1=y_1|r_1|$ and $\sgn(x_2p_2)y_1r_2q^n=y_1|r_2|q^n$, and so the
inner sum is zero unless $|\frac{r_1}{r_2}|= q^n$. In this case the inner sum
equals
\[
 \sum_{\stackrel{\scriptstyle{y_1\in I_q}\text{ so that }}
{\scriptstyle{y_1|r_1|\in I_q}}}
y_1^2\,  \bigl(a_{y_1}(y_1|r_1|, 1)\bigr)^2 =
 \sum_{\stackrel{\scriptstyle{y_1\in I_q}\text{ so that }}
{\scriptstyle{y_1|r_1|\in I_q}}}
 \bigl(a_{1}(y_1, y_1|r_1|)\bigr)^2 = 1,
\]
where the first equality follows from the symmetry relations \eqref{eq:symmetryforapxy},
and the second equality is a special case of Proposition \ref{prop:orthorelapxy} (with
$p=1$ and $\theta=|r_1|$).

Collecting the results finishes the proof of Proposition \ref{prop:structureconstQs}.
\end{proof}

\subsection{Affiliation of $K$ and $E$ to $\hat{M}$} \label{ssec:AffiliationOfKandE}

The purpose of this subsection is to prove Proposition \ref{prop:KEaffiliatedtohatM}. First we focus on the operator $K$.

By Definition \ref{def:closedgenerators} $K$ is the closure
of $(K_0, \cK_0)$, with $K_0$ given by Definition \ref{def:actiongeneratorssu}.
Since $K_0$ acts diagonally on basis elements $f_{mpt}$, $m \in \Z$, $p,t \in I_q$, we find from
Definition \ref{def:actiongeneratorssu}
\begin{equation}\label{eq:defKanddomain}
\begin{split}
&\, D(K) = \Bigl\{ \sum_{m\in \Z, p,t\in I_q} c_{mpt}\, f_{mpt}  \mid
\sum_{m\in \Z, p,t\in I_q} |c_{mpt}|^2 q^{-m} \left|\frac{p}{t}\right| <\infty\Bigr\}, \\
&\, K \Bigl( \sum_{m\in \Z, p,t\in I_q} c_{mpt}\, f_{mpt} \Bigr) =
\sum_{m\in \Z, p,t\in I_q} q^{-\hf m} \left| \frac{p}{t}\right|^{\hf}\, c_{mpt}\, f_{mpt}.
\end{split}
\end{equation}
It is now straightforward from \eqref{eq:defKanddomain} to check
that $K$ is an injective positive self-adjoint operator, establishing
the first statement of Proposition \ref{prop:KEaffiliatedtohatM}. We now prove the
second statement for the operator $K$.

\begin{prop}\label{prop:KaffiliatohatM}
$K$ is affiliated to $\hat{M}$.
\end{prop}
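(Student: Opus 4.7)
The plan is to verify affiliation by showing that every spectral projection of $K$ lies in $\hat M$. From \eqref{eq:defKanddomain} and \eqref{eq:E0E0dagKoncK0} we already see that $K$ is a positive self-adjoint operator with pure point spectrum contained in $q^{\frac{1}{2}\Z}$, acting as the scalar $q^m\sqrt{p}$ on each block $\cK(p,m,\ep,\et)$. Consequently the spectral projection $P_r$ of $K$ for eigenvalue $q^{r/2}$ $(r\in\Z)$ is simply the orthogonal projection onto
\[
\bigoplus_{\substack{p\in q^\Z,\,m\in\Z,\,\ep,\et\in\{-,+\}\\ \chi(p)+2m=r}} \cK(p,m,\ep,\et).
\]

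First I would invoke von Neumann's bicommutant theorem: since $K$ has pure point spectrum, the spectral projections $\{P_r\}_{r\in\Z}$ are bounded Borel functions of $K$, and $K$ is affiliated to $\hat M$ if and only if each $P_r\in\hat M$. By Corollary \ref{cor:propQppngeneratedualM} the operators $\hat J\,Q(p_1,p_2,n)\,\hat J$ span a strong-$\ast$ dense subspace of $\hat M'=\hat J\hat M\hat J$, so it is enough to show each such generator commutes with every $P_r$; then $P_r\in\hat M''=\hat M$.

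For this commutation I would use the explicit block structure obtained in Corollary \ref{cor:lemexplicitactionQppn2}: when $p\neq q^{-n}|p_2/p_1|$ the operator $\hat J\,Q(p_1,p_2,n)\,\hat J$ annihilates $\cK(p,m,\ep,\et)$, while in the remaining case it maps $\cK(p,m,\ep,\et)$ into $\cK(q^{2n}p,\,m-n,\,\sgn(p_1)\ep,\,\sgn(p_2)\et)$. On both blocks $K$ acts as the same scalar, because $q^{m-n}\sqrt{q^{2n}p}=q^m\sqrt{p}$, so each such generator preserves every $K$-eigenspace and hence commutes with every $P_r$. Strong-$\ast$ density then extends the commutation to all of $\hat M'$, giving $P_r\in\hat M$ for every $r$ and hence affiliation of $K$ to $\hat M$.

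The argument is essentially a bookkeeping check and I do not expect a genuine obstacle; the one point needing care is the consistency of the block data under the shifts induced by $Q(p_1,p_2,n)$ and the modular conjugation $\hat J$ as encoded in \eqref{eq:hatJinbasisKmpepet} and Corollary \ref{cor:lemexplicitactionQppn2}, but this reduces to the identity $q^{m-n}\sqrt{q^{2n}p}=q^m\sqrt{p}$ for the $K$-eigenvalue on source and target blocks.
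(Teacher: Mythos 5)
Your proof is correct and rests on exactly the same key computation as the paper's: that $\hat J\,Q(p_1,p_2,n)\,\hat J$ maps the block $\cK(p,m,\ep,\et)$ into $\cK(q^{2n}p,m-n,\sgn(p_1)\ep,\sgn(p_2)\et)$ (or to zero), on which $K$ acts by the same scalar since $q^{m-n}\sqrt{q^{2n}p}=q^m\sqrt p$. The only difference is packaging — you conclude via the spectral projections $P_r$ and the bicommutant, whereas the paper verifies the unbounded commutation $\hat J Q\hat J\,K\subseteq K\,\hat J Q\hat J$ directly on the core $\cK_0$ and extends by closedness of $K$ and strong-$\ast$ density in $\hat M'$; both are valid routes to affiliation.
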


\begin{proof}
Note that $K$ restricted to $\cK_0(p,m,\ep,\et)$ acts as $q^m\sqrt{p}\, \Id$ by Definition \ref{def:actiongeneratorssu} and \eqref{eq:defKpmepeta}. It follows that $\cK(p,m,\ep,\et)\subset D(K)$. So $\hat{J}\,Q(p_1,p_2,n)\hat{J}\,f_{mpt}\in D(K)$ by Corollary \ref{cor:lemexplicitactionQppn2} and
\[
K\bigl(\hat{J}\,Q(p_1,p_2,n)\hat{J}\,f_{mpt}\bigr) =
\hat{J}\,Q(p_1,p_2,n)\hat{J}\,K\,f_{mpt}
\]
since the action of $K$ on $\cK(p,m,\ep,\et)$ is the same as on $\cK(q^{2n}p,m-n,\sgn(p_1)\ep,\sgn(p_2)\et)$ in case $p=q^{-n}|p_2/p_1|$. In case this is not true, both sides equal zero.

Since $\cK_0$ is a core for $K$, we can take for
$f\in D(K)$ a sequence $\cK_0\ni f_i\to f$ and $Kf_i\to g = Kf$.
Then $\hat{J}\,Q(p_1,p_2,n)\hat{J} \, f_i\to \hat{J}\,Q(p_1,p_2,n)\hat{J}\, f$ by
continuity, and $K \hat{J}\,Q(p_1,p_2,n)\hat{J} \, f_i =
\hat{J}\,Q(p_1,p_2,n)\hat{J}\, K f_i \to \hat{J}\,Q(p_1,p_2,n)\hat{J}\, g$.
Since $K$ is closed, we conclude
$\hat{J}\,Q(p_1,p_2,n)\hat{J}\, f \in D(K)$ and
$K\, \hat{J}\,Q(p_1,p_2,n)\hat{J}\, f = \hat{J}\,Q(p_1,p_2,n)\hat{J}\, Kf$.
This means
\[
\hat{J}\,Q(p_1,p_2,n)\hat{J}\, K
\subset K\, \hat{J}\,Q(p_1,p_2,n)\, \hat{J},
\]
so $K$ commutes with the
generators of $\hat{M}'$, see Appendix \ref{ssecA:commutationoperators}.

To see that $K$ commutes with an arbitrary element $T\in \hat{M}'$,
pick $T_i$ from the linear span of $\hat{J}\, Q(p_1,p_2,n)\, \hat{J}$
such that $T_i\to T$ strongly, see Corollary \ref{cor:propQppngeneratedualM}.
Take any $f\in D(K)$, so
that $T_if\to Tf$ and since $T_if\in D(K)$ (by $T_iK\subset KT_i$) we
have $KT_if= T_iKf\to TKf$ by the strong convergence. Again by the
closedness of $K$ we conclude that $Tf\in D(K)$ and $KTf=TKf$, or
$TK\subset KT$. Since $T\in \hat{M}'$ is arbitrary, $K$ is affiliated to $\hat{M}$, see Appendix \ref{ssecA:affiliationandgenerators}.
\end{proof}

In order to show that $E$ is affiliated to $\hat{M}$ we need to work
more carefully.  We start with a useful property of the operator $E_0$.

\begin{lemma} \label{lem:firststepEaffiliatedhatM}
Let $p_1,p_2 \in I_q$ and $n \in \Z$. Then
\[
\langle \, \hat{J}\,Q(p_1,p_2,n) \hat{J} v , E_0^\dag \, w \rangle =
\langle \, \hat{J}\,Q(p_1,p_2,n) \hat{J} E_0 \, v ,  w \rangle, \qquad
\forall\, v,w \in \cK_0.
\]
\end{lemma}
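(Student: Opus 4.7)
The plan is direct computation on the basis. By sesquilinearity and the decomposition $\cK_0 = \bigoplus_{p,m,\ep,\et} \cK_0(p,m,\ep,\et)$ it suffices to take $v = f_{-m,\ep\et q^m p z, z}$ and $w = f_{-m',\ep'\et' q^{m'} p' z', z'}$ as basis vectors. Writing $A = \hat{J}\,Q(p_1,p_2,n)\,\hat{J}$, Corollary \ref{cor:lemexplicitactionQppn2} shows that $Av = 0$ unless $p = q^{-n}|p_2/p_1|$, and in this case $Av \in \cK(q^{2n}p, m-n, \sgn(p_1)\ep, \sgn(p_2)\et)$. From \eqref{eq:defE0onbasisKpmepet}--\eqref{eq:defE0dagonbasisKpmepet} we have $E_0 v \in \cK_0(p,m+1,\ep,\et)$ and $E_0^\dag w \in \cK_0(p',m'-1,\ep',\et')$, so both sides vanish trivially unless in addition $p' = q^{2n}p$, $\ep' = \sgn(p_1)\ep$, $\et' = \sgn(p_2)\et$, and $m' = m+1-n$, which we assume from now on.

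Under these assumptions the right-hand side $\langle AE_0 v, w\rangle$ expands via \eqref{eq:defE0onbasisKpmepet} as a sum of two terms, each a $\sqrt{1+\kappa(\cdot)}$ factor times a single matrix element $\langle Av_i, w\rangle$ which, by Corollary \ref{cor:lemexplicitactionQppn2}, equals an explicit product of the form $a_{p_1}(\cdot,z')\,a_{p_2}(\cdot,\cdot)$ with the first $a_{p_1}$-argument shifted from $z$ by $q^{\pm 1}$. Symmetrically, the left-hand side $\langle Av, E_0^\dag w\rangle$ expands via \eqref{eq:defE0dagonbasisKpmepet} as two terms with $\sqrt{1+\kappa(\cdot)}$ factors multiplying matrix elements $\langle Av, w_j\rangle$, each equal to a product $a_{p_1}(z,\cdot)\,a_{p_2}(\cdot,\cdot)$ with the second $a_{p_1}$-argument now shifted from $z'$ by $q^{\pm 1}$. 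Thus the claimed identity reduces to a concrete four-term linear relation between products of two $a_{p_i}$-values evaluated at neighbouring arguments.

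To close the proof, this four-term relation is derived from the three-term $q$-contiguous relation for $\Psis{a}{b}{q^2,c}$ in $c$ underlying Definition \ref{def:functionap}; conceptually it reflects the fact that the $a_{p_i}$ are overlap coefficients between two eigenbases of a representation on which $E_0$ acts as a raising operator, so that $A$ (which conjugates the second basis back to the first) weakly commutes with $E_0$. The main obstacle is the sign and square-root bookkeeping arising from the four possible $(\ep,\et)\in\{\pm\}^2$ configurations and from the interaction of the factors $\sqrt{1+\kappa(\cdot)}$, $s(\cdot,\cdot)$ and $(-1)^{\chi(\cdot)}$ across both sides; once one sign configuration has been treated, the remaining cases follow from the symmetry relations \eqref{eq:symmetryforapxy} together with the explicit dependence of the coefficients on $\sgn(z)$, $\sgn(z')$ and $\sgn(p)$.
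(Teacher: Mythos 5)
Your proposal is correct and follows essentially the same route as the paper's proof in Appendix \ref{ssecC:pflemfirststepEaffiliatedhatM}: reduce to basis vectors, expand both sides via \eqref{eq:defE0onbasisKpmepet}--\eqref{eq:defE0dagonbasisKpmepet} and the matrix elements of $\hat{J}Q(p_1,p_2,n)\hat{J}$, and close the resulting relation among shifted $a_{p_i}$-values using the $q$-contiguous relations of Lemma \ref{lemB:qcontiguousrelapxy}. The only detail left implicit in your sketch is that after substituting the two contiguous relations the residual term is proportional to $-\sgn(t)(s/p_1)q^{-(m-1)/2}+\sgn(p)(r/p_2)q^{(m-1)/2}$, which vanishes precisely on the support of the Kronecker deltas you have already imposed.
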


We relegate the proof of Lemma \ref{lem:firststepEaffiliatedhatM} to
Appendix \ref{ssecC:pflemfirststepEaffiliatedhatM}, since it is a tedious check.

By Lemma \ref{lem:firststepEaffiliatedhatM} we have for the closure $E$ of $E_0$
the equality
\[
\langle \, \hat{J}\,Q(p_1,p_2,n) \hat{J} v , E_0^\dag \, w \rangle =
\langle \, \hat{J}\,Q(p_1,p_2,n) \hat{J} E \, v ,  w \rangle
\]
for $v,w\in \cK_0$. Now fix $v=f_{-m,\ep\et q^mpz,z}\in \cK_0(p,m,\ep,\et)$
and put $u= \hat{J}\,Q(p_1,p_2,n) \hat{J} v$. It follows that
$u\in D\bigl( (E_0^\dag)^\ast\bigr)$ and
\begin{equation} \label{eq:E0dag*u}
(E_0^\dag)^\ast u = \hat{J}\,Q(p_1,p_2,n) \hat{J} E \, f_{-m,\ep\et q^mpz,z}.
\end{equation}
This equality can be extended in the following way.

\begin{lemma}\label{lem:secondstepEaffiliatedhatM}
Let $u = \hat{J}\,Q(p_1,p_2,n) \hat{J}\, f_{-m,\ep\et q^mpz,z}$, then
$\langle (E_0^\dag)^\ast u, w\rangle = \langle u, E^\ast w\rangle$ for all
$w\in D(E^\ast)$.
\end{lemma}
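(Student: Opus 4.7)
The claim is equivalent to asserting that $u\in D(E)$ and $Eu=(E_0^\dag)^\ast u$, since $E=E^{\ast\ast}$ is closed and its domain consists precisely of those vectors for which $w\mapsto \langle u,E^\ast w\rangle$ extends to a bounded linear functional on $D(E^\ast)$. The equality \eqref{eq:E0dag*u} already gives this on $\cK_0$, because $\cK_0\subset D(E_0^\dag)\subset D(E^\ast)$ and $E^\ast$ agrees with $E_0^\dag$ on $\cK_0$ (this inclusion follows by approximating $v\in D(E)$ by elements of $\cK_0$ in the graph norm of $E$ and using \eqref{eq:E0andE0ddagareadjointonK0}). The natural plan is therefore to prove that $\cK_0$ is a core for $E^\ast$: then every $w\in D(E^\ast)$ admits $w_k\in\cK_0$ with $w_k\to w$ and $E^\ast w_k=E_0^\dag w_k\to E^\ast w$, and passing to the limit in $\langle u,E_0^\dag w_k\rangle=\langle(E_0^\dag)^\ast u,w_k\rangle$ yields the desired equality for every $w\in D(E^\ast)$.

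To show that $\cK_0$ is a core for $E^\ast$, one uses the decomposition \eqref{eq:decompE}, which gives $E^\ast=\bigoplus_{\ep,\et,p,m}(E^{\ep,\et}_{p,m})^\ast$. The core property for $\cK_0=\bigoplus_{\ep,\et,p,m}\cK_0(p,m,\ep,\et)$ reduces to checking that $\cK_0(p,m+1,\ep,\et)$ is a core for $(E^{\ep,\et}_{p,m})^\ast$ in each block. When $(\ep,\et)\neq(+,+)$ the operator $E^{\ep,\et}_{p,m}$ is bounded, as remarked after \eqref{eq:decompE}; its adjoint is then bounded as well, and any dense subspace of $\cK(p,m+1,\ep,\et)$ serves as a core, so the claim is immediate.

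The only real obstacle is the block $(\ep,\et)=(+,+)$, in which $E^{++}_{p,m}$ is genuinely unbounded on $\cK(p,m,+,+)\cong \ell^2(q^\Z)$. Here the required core property is equivalent to essential self-adjointness of the symmetric operator
\[
S_0\colon \cK_0(p,m,+,+)\oplus\cK_0(p,m+1,+,+)\to\cK_0(p,m,+,+)\oplus\cK_0(p,m+1,+,+),
\]
\[
S_0(v\oplus w)=(E_0^\dag w)\oplus(E_0 v),
\]
whose matrix in the orthonormal bases $\{f_{-m,q^mpz,z}\}_{z\in q^\Z}$ and $\{f_{-m-1,q^{m+1}pz,z}\}_{z\in q^\Z}$ is, by \eqref{eq:defE0onbasisKpmepet} and \eqref{eq:defE0dagonbasisKpmepet}, a doubly-infinite Jacobi-type matrix with off-diagonal coefficients growing like $|z|$ as $z\to\infty$ in $q^\Z$ and like $|z|^{-1}$ as $z\to 0$. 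A limit-point analysis of this Jacobi matrix, in the spirit of Appendix \ref{app:jacobi}, shows that both ends are in the limit-point case, so $S_0$ is essentially self-adjoint. This delivers the missing core property at the $(+,+)$ block and, combined with the argument of the first paragraph, completes the proof.
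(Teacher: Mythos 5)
Your reduction is sound and, if completed, would prove more than the lemma asks: the identity $\langle (E_0^\dag)^\ast u,w\rangle=\langle u,E^\ast w\rangle$ for \emph{all} $u\in D((E_0^\dag)^\ast)$ and all $w\in D(E^\ast)$ is equivalent to $E^\ast=\overline{E_0^\dag}$, which is Proposition \ref{prop:charEandrelationEstarwithE0dag}(i). Note that the paper only obtains that statement two sections later, by a route (Lemma \ref{lem:commutationEE*pmepet}) that itself uses the affiliation of $E$ to $\hat{M}$ and hence the present lemma; so you are right that you cannot quote it and must prove the core property independently. Your equivalence between the core property on a $(+,+)$ block and essential self-adjointness of the off-diagonal operator $S_0$ is also correct, as is the dispatch of the bounded blocks. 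The paper's own proof is entirely different: it works only with the special vectors $u=\hat{J}\,Q(p_1,p_2,n)\hat{J}f_{-m,\ep\et q^mpz,z}$, expresses the defect $\langle (E_0^\dag)^\ast u,w\rangle-\langle u,E^\ast w\rangle$ as a single boundary term via the truncated inner products of Lemma \ref{lem:truncatedinnerproductforepet+}, and kills that term using the decay $x\,u(x)=h(x^{-2})$ from Lemma \ref{lem:behaviourhatJQppnhatJincase++}; it never has to decide whether $S_0$ is essentially self-adjoint.

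The gap is in your last step. First, the stated asymptotics are wrong at one end: by \eqref{eq:defE0onbasisKpmepet} and \eqref{eq:defE0dagonbasisKpmepet} the off-diagonal entries of the interleaved Jacobi matrix tend to the constants $q^{\pm m}(pq)^{\pm 1/2}/(q^{-1}-q)$ as $z\to 0$; they do not grow like $|z|^{-1}$. (That end is limit point anyway, for the trivial reason that the coefficients are bounded there.) Second, and more seriously, at the end $z\to\infty$ the limit-point claim is the entire content of your argument, and it cannot be waved through ``in the spirit of Appendix \ref{app:jacobi}'': that appendix treats precisely an operator that is \emph{not} essentially self-adjoint, and the neighbouring diagonal operators $E_0^\dag E_0\vert_{\cK_0(p,0,+,+)}$ and $\Om_0\vert_{\cK_0(p,0,+,+)}$ are in the limit-circle case at this very end. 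Concretely, both off-diagonal entries grow like a constant times $|z|$, with ratio tending to $q^m$ exactly; at $\mu=0$ the recurrence decouples into two sequences with geometric ratios $q^m$ and $q^{-m-1}$ per unit step of $\chi(z)$, so for $m=0$ (respectively $m=-1$) one solution is asymptotically \emph{constant} and fails square-summability only marginally -- one must verify that the corrections, of the form $\sqrt{(1+q^{2-2k})/(1+p^{-2}q^{-2k})}$, form a convergent product before concluding limit point. This does work out, so your route can be completed and yields a cleaner and more general statement than the paper's; but as written the decisive step is asserted rather than proved, and the supporting details given for the Jacobi matrix would not survive scrutiny.
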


Before proving Lemma \ref{lem:secondstepEaffiliatedhatM} we show how
it implies that $E$ is affiliated to $\hat{M}$, which finishes the proof of Proposition \ref{prop:KEaffiliatedtohatM}.

\begin{prop}\label{prop:EaffiliatedtohatM}
$E$ is affiliated to $\hat{M}$.
\end{prop}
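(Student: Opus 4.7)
The plan is to parallel the proof of Proposition \ref{prop:KaffiliatohatM}, with Lemmas \ref{lem:firststepEaffiliatedhatM} and \ref{lem:secondstepEaffiliatedhatM} compensating for the unboundedness of $E$. Fix generators $\hat{J}Q(p_1,p_2,n)\hat{J}$ of $\hat M'$ (see Corollary \ref{cor:propQppngeneratedualM}) and a basis vector $v = f_{-m,\ep\et q^m p z,z}\in \cK_0$. Set $u = \hat{J}Q(p_1,p_2,n)\hat{J}\,v$. Because $E$ is closed and densely defined, $E = E^{\ast\ast}$, so $u\in D(E)$ precisely when there exists $g\in\cK$ with $\langle u, E^\ast w\rangle = \langle g,w\rangle$ for all $w\in D(E^\ast)$, and in that case $Eu = g$. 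Lemma \ref{lem:secondstepEaffiliatedhatM} supplies exactly such a $g$, namely $g = (E_0^\dag)^\ast u$. Combined with \eqref{eq:E0dag*u}, this yields
\[
E\,\hat{J}Q(p_1,p_2,n)\hat{J}\,v \,=\, \hat{J}Q(p_1,p_2,n)\hat{J}\,E\,v
\]
for every $v\in\cK_0$, and hence on all of $\cK_0$ by linearity.

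Next I would promote this intertwining from $\cK_0$ to $D(E)$ using closedness. Since $\cK_0$ is a core for $E$, for any $f\in D(E)$ pick $f_i\in \cK_0$ with $f_i \to f$ and $Ef_i \to Ef$. Then $\hat{J}Q(p_1,p_2,n)\hat{J}\,f_i \to \hat{J}Q(p_1,p_2,n)\hat{J}\,f$ by boundedness of $\hat{J}Q(p_1,p_2,n)\hat{J}$, while
\[
E\,\hat{J}Q(p_1,p_2,n)\hat{J}\,f_i \,=\, \hat{J}Q(p_1,p_2,n)\hat{J}\,E f_i \,\longrightarrow\, \hat{J}Q(p_1,p_2,n)\hat{J}\,Ef.
\]
Closedness of $E$ gives $\hat{J}Q(p_1,p_2,n)\hat{J}\,f\in D(E)$ and the intertwining $\hat{J}Q(p_1,p_2,n)\hat{J}\,E \subset E\,\hat{J}Q(p_1,p_2,n)\hat{J}$.

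Finally, I would extend from the generating set to an arbitrary $T\in \hat M'$. Since $\hat M' = \hat J\,\hat M\,\hat J$ by \eqref{eq:JMJisMcommutant}, the strengthened form of Corollary \ref{cor:propQppngeneratedualM} (transporting Lemma \ref{lem:QppnareallinhatM} across the antilinear isometry $\hat J$) yields a net $T_i$ in the linear span of the operators $\hat{J}Q(p_1,p_2,n)\hat{J}$ with $T_i \to T$ strong-$\ast$ and $\|T_i\|\leq \|T\|$. For $f\in D(E)$ we have $T_i f\to Tf$ and $T_i f\in D(E)$ with $E T_i f = T_i E f \to T Ef$. Closedness of $E$ then gives $Tf\in D(E)$ and $ETf = TEf$, i.e.~$TE\subset ET$. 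As $T\in \hat M'$ was arbitrary, $E$ is affiliated to $\hat M$, see Appendix \ref{ssecA:affiliationandgenerators}.

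The entire content of the proof is packed into Lemma \ref{lem:secondstepEaffiliatedhatM}, which upgrades the identity \eqref{eq:E0dag*u} from test vectors in $\cK_0$ (where it follows from Lemma \ref{lem:firststepEaffiliatedhatM}) to all $w\in D(E^\ast)$; this is the step that places $\hat{J}Q(p_1,p_2,n)\hat{J}\,v$ inside $D(E^{\ast\ast}) = D(E)$ rather than merely inside $D((E_0^\dag)^\ast)$. Once that inclusion of domains is secured, the rest is the standard closed-graph-type argument already used for $K$.
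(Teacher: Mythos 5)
Your proof is correct and follows essentially the same route as the paper: Lemma \ref{lem:secondstepEaffiliatedhatM} together with \eqref{eq:E0dag*u} places $\hat{J}Q(p_1,p_2,n)\hat{J}\,v$ in $D(E^{\ast\ast})=D(E)$ and gives the intertwining on $\cK_0$, after which the extension to $D(E)$ and to all of $\hat{M}'$ is the same closedness-plus-density argument used for $K$ in Proposition \ref{prop:KaffiliatohatM}. The only cosmetic difference is that you invoke the norm-bounded net from Lemma \ref{lem:QppnareallinhatM}, which is harmless but not needed since strong convergence at the fixed vectors $f$ and $Ef$ already suffices.
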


\begin{proof}
Since $E$ is the closure of $E_0$, it follows from Lemma \ref{lem:secondstepEaffiliatedhatM}
that $u \in D(E^{\ast\ast})= D(E)$, and
\[
E\,\hat{J}\,Q(p_1,p_2,n) \hat{J}\, f_{-m,\ep\et q^mpz,z}=E\, u = (E_0^\dag)^\ast u = \hat{J}\,Q(p_1,p_2,n) \hat{J} E \, f_{-m,\ep\et q^mpz,z}
\]
by \eqref{eq:E0dag*u}. This shows that $E \hat{J}\,Q(p_1,p_2,n) \hat{J} = \hat{J}\,Q(p_1,p_2,n) \hat{J} E$ on $\cK_0$.
Now the proof is finished as in the last stage of
Proposition \ref{prop:KaffiliatohatM} using the closedness of $E$, $\cK_0$
being a core for $E$, and the strong-$\ast$ denseness of the operators $\hat{J}\,Q(p_1,p_2,n) \hat{J}$ in $\hat{M}'$ by Corollary \ref{cor:propQppngeneratedualM}.
\end{proof}

Before we turn to the proof of Lemma \ref{lem:secondstepEaffiliatedhatM}, recall the decomposition \eqref{eq:decompE} of $E$ into operators $E^{\ep,\et}_{p,m} \colon \cK(p,m,\ep,\et) \to \cK(p,m+1,\ep,\et)$. The operators $E^{\ep,\et}_{p,m}$ are bounded, unless $\ep=+=\et$. We study the case $\ep=+=\et$ by considering truncated inner products.
Define for $x \in q^\Z$ a truncated inner product by
\begin{equation} \label{eq:TruncatedInnerProduct}
\langle v, w\rangle_x \, = \, \sum_{\substack{z\in J(p,m,+,+)\\z\leq x}}
v(z) \, \overline{w(z)},\qquad v,w\in \cK(p,m,+,+).
\end{equation}
For $x \rightarrow \infty$ this gives back the inner product on $\cK(p,n,+,+)$. Let us remark that all coefficients in \eqref{eq:defE0onbasisKpmepet} and \eqref{eq:defE0dagonbasisKpmepet} remain bounded for $z\to 0$, $z\in q^\Z$, so we do not need to consider a truncated inner product of the form \eqref{eq:TruncatedInnerProduct} with the terms $z \leq y$ cut off, for some $y \in q^\Z$, $y < x$.

\begin{lemma}\label{lem:truncatedinnerproductforepet+}
Let $w\in D(E^\ast)\cap \cK(p,m,+,+)$, $u\in D((E_0^\dag)^\ast)
\cap\cK(p,m-1,+,+)$, then, with $x\in q^\Z$,
\[
\langle (E_0^\dag)^\ast u, w \rangle_x - \langle u, E^\ast w\rangle_x
= \frac{q^{m-1} (pq)^\hf \sqrt{1+x^{-2} q^2}}{q-q^{-1}} \, \frac{x}{q} u(x/q)
\, w(x)
\]
using the convention \eqref{eq:conventioninnerproductfunction}.
\end{lemma}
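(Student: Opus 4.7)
The plan is to convert both truncated inner products to explicit sums over $q^\Z$ and then apply a discrete Abel summation (``integration by parts'') argument. The key preparatory step is to rewrite the actions of $(E_0^\dag)^\ast$ and $E^\ast$ on basis vectors of $\cK(p,m,+,+)$ and $\cK(p,m-1,+,+)$ respectively, using the matrix elements recorded in \eqref{eq:defE0onbasisKpmepet} and \eqref{eq:defE0dagonbasisKpmepet} specialised to $\ep=\et=+$.

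First I would introduce
\[
\alpha(z)=\tfrac{q^{m-1}(pq)^{1/2}}{q-q^{-1}}\sqrt{1+z^2},\qquad \beta(z)=-\tfrac{q^{-(m-1)}(pq)^{-1/2}}{q-q^{-1}}\sqrt{1+q^{2m-2}p^2 z^2},
\]
so that the ``shift'' and ``diagonal'' coefficients of $E_0\colon \cK_0(p,m-1,+,+)\to \cK_0(p,m,+,+)$ read off directly from \eqref{eq:defE0onbasisKpmepet} as $\alpha$ and $\beta$; a short check using $q^{m-1}(pq)^{1/2}=q^{m-1/2}p^{1/2}=q^m(p/q)^{1/2}$ shows that the coefficients of $E_0^\dag\colon \cK_0(p,m,+,+)\to \cK_0(p,m-1,+,+)$ from \eqref{eq:defE0dagonbasisKpmepet} are the same two real functions, shifted appropriately. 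Taking matrix elements against the orthonormal basis vectors $\{f_{-m,q^mpz,z}\}_{z\in q^\Z}$ and $\{f_{-(m-1),q^{m-1}pz,z}\}_{z\in q^\Z}$ and using the defining relation of the Hilbert-space adjoint (together with the reality of $\alpha,\beta$) yields the $q$-tridiagonal formulas
\[
[(E_0^\dag)^\ast u](z)=\alpha(z)\,u(qz)+\beta(z)\,u(z),\qquad [E^\ast w](z)=\alpha(z/q)\,w(z/q)+\beta(z)\,w(z),
\]
valid for $z\in q^\Z$, $u\in D((E_0^\dag)^\ast)\cap\cK(p,m-1,+,+)$ and $w\in D(E^\ast)\cap\cK(p,m,+,+)$.

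Substituting into the truncated inner products with the convention \eqref{eq:conventioninnerproductfunction}, the diagonal $\beta$-contributions are identical in the two inner products and cancel in the difference, reducing matters to
\[
\sum_{z\in q^\Z,\,z\le x}\alpha(z)\,u(qz)\,\overline{w(z)}\;-\;\sum_{z\in q^\Z,\,z\le x}\alpha(z/q)\,u(z)\,\overline{w(z/q)}.
\]
The Abel-summation step is a change of variable $y=qz$ in the first sum, converting it to $\sum_{y\le qx}\alpha(y/q)\,u(y)\,\overline{w(y/q)}$. The two sums now have the same summand but different cutoffs, and since in $q^\Z$ the half-open interval $(qx,x]$ contains exactly the single point $z=x$, the difference telescopes to one boundary term located at the top of the truncation range. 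Assembling the constants via the identity $(x/q)\sqrt{1+q^2/x^2}=\sqrt{1+x^2/q^2}$ for $x>0$, so that
\[
\alpha(x/q)=\frac{q^{m-1}(pq)^{1/2}\sqrt{1+x^{-2}q^2}}{q-q^{-1}}\cdot\frac{x}{q},
\]
produces the pre-factor of the right-hand side of the lemma.

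The real work is in Step one: carefully identifying $[(E_0^\dag)^\ast u](z)$ and $[E^\ast w](z)$ as $q$-tridiagonal operators on $q^\Z$. In particular one must note that although $E^\ast$ is in general a proper extension of $\overline{E_0^\dag}$ (this is the subtlety that will drive the next lemma), on the basis vectors $f_{-m,q^mpz,z}$ it is forced to agree with $E_0^\dag$, so the above formula for $[E^\ast w](z)$ is unambiguous. Once these explicit actions are in hand, the rest of the argument is a routine discrete integration by parts; in particular no convergence issues arise because each truncated inner product is a finite sum over $\{z\in q^\Z:z\le x\}$, and the change of variable $y=qz$ preserves finiteness of the summation index set.
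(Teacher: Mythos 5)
Your overall strategy is exactly the paper's: expand both truncated inner products via the explicit tridiagonal actions read off from \eqref{eq:defE0onbasisKpmepet} and \eqref{eq:defE0dagonbasisKpmepet} for $\ep=\et=+$, observe that the diagonal contributions cancel, shift the summation variable in one of the two remaining sums, and read off the single surviving term at the top of the truncation range. Your preparatory formulas
\[
[(E_0^\dag)^\ast u](z)=\alpha(z)\,u(qz)+\beta(z)\,u(z),\qquad [E^\ast w](z)=\alpha(z/q)\,w(z/q)+\beta(z)\,w(z)
\]
are correct: they follow from pairing against the basis vectors, which lie in $\cK_0=D(E_0^\dag)=D(E_0)$, so no domain subtleties arise, and the identification $q^{m-1}(pq)^{\hf}=q^m(p/q)^{\hf}$ and the cancellation of the $\beta$-terms are fine.

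The gap is in the very last step, which you assert rather than verify. From your own reduction the difference equals
\[
\sum_{z\le x}\alpha(z)\,u(qz)\,\overline{w(z)}\;-\;\sum_{z\le x}\alpha(z/q)\,u(z)\,\overline{w(z/q)},
\]
and the substitution $y=qz$ turns the first sum into $\sum_{y\le qx}\alpha(y/q)\,u(y)\,\overline{w(y/q)}$; the surviving term is therefore the $y=x$ term of the \emph{second} sum, namely
\[
-\,\alpha(x/q)\,u(x)\,\overline{w(x/q)},
\]
whereas the statement asserts $+\,\alpha(x/q)\,u(x/q)\,w(x)$. The prefactor $\alpha(x/q)=\frac{q^{m-1}(pq)^{\hf}\sqrt{1+x^{-2}q^2}}{q-q^{-1}}\,\frac{x}{q}$ matches, but the sign and the points at which $u$ and $w$ are evaluated do not, and your write-up does not reconcile this. (The paper's own displayed computation reaches the stated form because it expands $\langle(E_0^\dag)^\ast u,w\rangle_x$ with summand $\alpha(z/q)u(z/q)\overline{w(z)}$ rather than $\alpha(z)u(qz)\overline{w(z)}$; with your --- correct --- formula for $[(E_0^\dag)^\ast u](z)$ one lands on the other form. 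The two versions differ by a relabelling of a conditionally convergent sum carried out before truncating, which is exactly the manoeuvre this lemma is meant to control.) For the only application, in the proof of Lemma \ref{lem:secondstepEaffiliatedhatM}, the discrepancy is immaterial: either form of the boundary term is a product of a factor that stays bounded by Lemma \ref{lem:behaviourhatJQppnhatJincase++} and a factor that tends to zero as $x\to\infty$ in $q^\Z$. But as a proof of the identity as literally written, your final sentence does not follow from what precedes it; you should either locate an error in your derivation or record the boundary term in the form your computation actually produces and note that the subsequent argument is unaffected.
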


\begin{proof}
By \eqref{eq:defE0onbasisKpmepet} and \eqref{eq:defE0dagonbasisKpmepet} for the case $\ep=+=\et$, using the boundedness of the coefficients as $z\to 0$, $z\in q^\Z$, we obtain
\[
\begin{split}
&\, (q-q^{-1})\, \Bigl(\langle (E_0^\dag)^\ast u, w \rangle_x - \langle u, E^\ast w\rangle_x\Bigr)  \\
=&\,  \sum_{z\in q^\Z, z\leq x} \Bigl( q^{m-1} (pq)^\hf \sqrt{1+z^2 q^{-2}} \, u(\frac{z}{q})
\overline{w(z)} - q^{1-m} (pq)^{-\hf} \sqrt{1+q^{2m-2}p^2 z^2}\, u(z) \overline{w(z)}\Bigr) \\
&\, -  \sum_{z\in q^\Z, z\leq x} \Bigl( q^{m} (p/q)^\hf \sqrt{1+z^2} \, u(z)
\overline{w(qz)} - q^{-m} (p/q)^{-\hf} \sqrt{1+q^{2m-2}p^2 z^2}\, u(z) \overline{w(z)}\Bigr) \\
=&\,  q^{m-1} (pq)^\hf \sqrt{1+x^2 q^{-2}} \, u(\frac{x}{q})
\overline{w(x)}
\end{split}
\]
giving the required expression.
\end{proof}

The following result will be useful when we want to take the limit $x \rightarrow \infty$, $x \in q^\Z$, in the previous lemma. Recall the convention \eqref{eq:conventioninnerproductfunction}.

\begin{lemma}\label{lem:behaviourhatJQppnhatJincase++}
\begin{enumerate}[(i)]
\item Let $v= Q(p_1,p_2,n) f_{-m,\ep\et pq^mz,z}$ and assume $q^{2m}p = q^{-n}|p_2/p_1|$, so
that $v\in\cK(p,m+n,\ep\sgn(p_1),\et\sgn(p_2))$ is non-zero. If $\sgn(p_1)\ep=+=\sgn(p_2)\et$, then there exists a continuous function $h\colon \R_{\geq 0}\to \R$ such that $x\, v(x) = h(x^{-2})$ for $x\in I_q^+$.
In case $m+n=0$, $h\colon \R_{\geq 0}\to \R$ is differentiable, in particular at $0$.
\item Let $u = \hat{J}\, Q(p_1,p_2,n) \, \hat{J} \, f_{-m,\ep\et pq^mz,z}$ and assume $p = q^{-n}|p_2/p_1|$, so that $u\in\cK(q^{2n}p,m-n,\ep,\sgn(p_1),\et\sgn(p_2))$ is non-zero. If $\sgn(p_1)\ep=+=\sgn(p_2)\et$, then there exists a continuous function $h\colon \R_{\geq 0}\to \R$ such that $x\, u(x) = h(x^{-2})$ for $x\in I_q^+$.
In case $m-n=0$, $h\colon \R_{\geq 0}\to \R$ is differentiable, in particular at $0$.
\end{enumerate}
\end{lemma}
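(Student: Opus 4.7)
Both parts follow the same strategy and I focus on (i); (ii) is handled identically via Corollary \ref{cor:lemexplicitactionQppn2} with the same bookkeeping together with the additional sign factor $(\ep\et)^{\chi(z)}$. By Lemma \ref{lem:explicitactionQppn}, the assumption $f=f_{-m,\ep\et pq^mz,z}$ collapses the inner sum and gives
\[
v(x)\,=\,C\,\frac{(\ep'\et')^{\chi(x)}}{|x|}\,a_{p_1}(z,x)\,a_{p_2}(\ep\et q^m p z,\ep'\et' q^{m+n}p x),
\]
for $x\in J(p,m+n,\ep',\et')$, with $C$ independent of $x$. Under the hypothesis $\ep'=\et'=+$, $J(p,m+n,+,+)=q^\Z$ and the second slot of each $a$ is positive, so the formula from Definition \ref{def:functionap} applies cleanly. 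Writing $x=q^k$, the $x$-dependent factors split into four pieces: the two $|y|$-prefactors giving an overall $x^2$ (up to a constant), the product $\nu(p_1x/z)\nu(p_2q^nx/z)$ which is $q$-Gaussian of the form $q^{k^2+O(k)}$, the square-root factor $\sqrt{(-x^2;q^2)_\infty(-q^{2M}x^2;q^2)_\infty}$ with $M=m+n+\chi(p)$, and the two $\Psi$-factors whose first two arguments are affine in $y:=x^{-2}$ and whose third arguments are independent of $x$.

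The crucial step is the theta identity
\[
(-q^{2k};q^2)_\infty\,(-q^{2-2k};q^2)_\infty\,=\,q^{-k(k-1)}\cdot 2(-q^2;q^2)_\infty^2,\qquad k\in\Z,
\]
a consequence of the Jacobi triple product. Applied to both square-root factors it yields
\[
\sqrt{(-x^2;q^2)_\infty(-q^{2M}x^2;q^2)_\infty}\,=\,\frac{q^{-k^2+O(k)}\cdot \mathrm{const}}{\sqrt{(-q^2y;q^2)_\infty(-q^{2-2M}y;q^2)_\infty}},
\]
with the denominator smooth and strictly positive on $y\in[0,\infty)$. The $q^{-k^2}$ here cancels exactly against the $q^{k^2}$ of $\nu(p_1x/z)\nu(p_2q^nx/z)$, leaving only a linear-in-$k$ net exponent $q^{\beta k}=x^\beta$. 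Combining with the $x^2$ from the $|y|$-prefactors, the $1/|x|$ in the formula, and the outer factor $x$ in $xv(x)$, one obtains
\[
x\,v(x)\,=\,\mathrm{const}\cdot x^{\alpha}\cdot \frac{\Psi(-q^2y,\,\cdot\,;q^2,\,\cdot\,)\,\Psi(-q^{2-2M}y,\,\cdot\,;q^2,\,\cdot\,)}{\sqrt{(-q^2y;q^2)_\infty\,(-q^{2-2M}y;q^2)_\infty}},
\]
where $\alpha\in\Z$ is an explicit integer (computable via the parameter constraint as $\alpha=2\chi(p_1)+m+n-2\chi(z)$), and the right-hand side depends on $x$ only through $y$ and through the prefactor $x^\alpha=y^{-\alpha/2}$. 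Since each $\Psi$ is entire in its first two arguments, the numerator is analytic in $y\in[0,\infty)$.

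The remaining task is to resolve the prefactor $x^\alpha=y^{-\alpha/2}$. If $\alpha\leq 0$ this is bounded near $y=0$ and one obtains a continuous $h$ at once. If $\alpha>0$, the plan is to Taylor-expand each $\Psi$-factor about $y=0$ using \eqref{eq:defPsi}: the constant term is $\Psi(0,0;q^2,z)=E_{q^2}(-z)=(z;q^2)_\infty$, which vanishes precisely when $z=q^{-2N}$ for some $N\in\NN$. The defining constraint $q^{2m}p=q^{-n}|p_2/p_1|$ (respectively $p=q^{-n}|p_2/p_1|$ in case (ii)) forces these vanishings in the leading coefficients of the $\Psi$-product, guaranteeing an order of vanishing at $y=0$ of at least $\lceil\alpha/2\rceil$ and producing a continuous $h$. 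This case-by-case verification of the vanishing order across the admissible sign combinations is the main technical obstacle. Finally, when $m+n=0$ (respectively $m-n=0$ in (ii)), the exponent simplifies to $\alpha=2(\chi(p_1)-\chi(z))\in 2\Z$, and a direct computation shows that the third arguments of the two $\Psi$-factors then coincide; an elementary symmetry argument forces the order of vanishing of the $\Psi$-product to equal $\alpha/2$ exactly, so $h(y)$ is a non-vanishing analytic function near $y=0$, in particular differentiable there.
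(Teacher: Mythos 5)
There is a genuine gap. Your setup is correct: the explicit formula for $x\,v(x)$ as a constant times $a_{p_1}(z,x)\,a_{p_2}(\theta z,\theta'x)$, the reduction via the $\te$-product identity to a power $x^{\alpha}$ times a function of $y=x^{-2}$, and even the value $\alpha=2\chi(p_1)+m+n-2\chi(z)$ all check out. But the case $\alpha>0$ --- which genuinely occurs, since $\alpha$ depends on $z$ and is unbounded --- is carried entirely by the unproven assertion that the product of the two $\Psi$-factors vanishes at $y=0$ to order at least $\lceil\alpha/2\rceil$. Knowing that $\Psi(0;0;q^2,w)=(w;q^2)_\infty$ vanishes at the relevant points gives you a zero of order one per factor at best; establishing the full order of vanishing from the Taylor expansion of the $\Psi$-series is essentially equivalent to proving the third symmetry relation in \eqref{eq:symmetryforapxy} analytically, and you flag it as ``the main technical obstacle'' without carrying it out. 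As written, the argument is a plan whose hardest step is missing. (The claim at the end that for $m+n=0$ the vanishing order is ``exactly'' $\alpha/2$ is also unjustified, though it is more than you need: order at least $\alpha/2$ already gives differentiability of $h$ there.)

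The paper's proof avoids the vanishing analysis entirely, and this is the idea you are missing: Lemma \ref{lemB:apxyforyinqZ} can be applied to $a_{p_1}(z,x)$ either directly, giving $x^{\chi(p_1/z)}f_1(x^{-2})$, or after first using the last relation of \eqref{eq:symmetryforapxy} to trade $a_{p_1}(z,x)$ for a multiple of $a_z(p_1,x)$, giving $x^{\chi(z/p_1)}\tilde f_1(x^{-2})$ with the \emph{opposite} exponent --- and likewise for the second factor. Choosing the representations so that the $z$-dependent parts of the exponents cancel, the total exponent becomes $n-m$ for one choice and $m-n$ for the other; one of these is always $\le 0$, so $x\,v(x)=C\,t^{|m-n|/2}f_1(t)f_2(ct)$ with $f_1,f_2$ differentiable, and no vanishing order ever needs to be computed. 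I suggest you replace your case analysis for $\alpha>0$ by this symmetry trick.
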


\begin{proof}
We prove the second statement; the first statement is proved in the same way.
It follows from Corollary \ref{cor:lemexplicitactionQppn2} or \eqref{eq:matrixelementsofQppnandhatJQppnhatJ} that $u\in\cK(q^{2n}p,m-n,\ep,+,+)$ and for $x\in I_q^+$
\[
x\, u(x) =
(-1)^m \et^{m+\chi(p)} (\ep\et)^{\chi(z)}
\frac{|p_1p_2|}{q^m p |z|}\, a_{p_1}(z,x) \,
a_{p_2}(\theta z, \theta' x)
\]
where $\theta = \ep\et q^m p$, $\theta' = q^{m+n} p$.
Lemma \ref{lemB:apxyforyinqZ} gives
$a_p(z,x) = x^{\chi(p/z)} f_1(x^{-2})$ as well as $a_p(z,x) = x^{\chi(z/p)} f_2(x^{-2})$ for certain
differentiable functions $f_1, f_2\colon \R_{\geq 0}\to\R$ using the last equation
of the symmetry relations \eqref{eq:symmetryforapxy} and then Lemma \ref{lemB:apxyforyinqZ}. Now we find, with
$C$ a generic non-zero constant not depending on $x$,
\[
\begin{split}
x\, u(x) &\, =\,  C\, x^{\chi(z/p_1)} f_1(x^{-2})\,
(\theta' x)^{\chi(p_2/\theta z)} f_2((\theta')^{-2}x^{-2})
= C\, x^{\chi(p_2/p_1\theta)} f_1(x^{-2})\,
f_2((\theta')^{-2}x^{-2}) \\
&\, =\,
C\, x^{n-m} f_1(x^{-2})\,
f_2((\theta')^{-2}x^{-2})
\end{split}
\]
using $|p_2/\theta p_1|=q^{n-m}$ as follows from the assumption
$q^np =|p_2/p_1|$. This proves the statement in case $m-n\geq 0$,
since we can take $h(t) = C\, t^{\hf(m-n)} f_1(t)\,
f_2((\theta')^{-2}t)$. In case $n=m$ the statement on the
differentiability of $h$ follows immediately.

Similarly, we find, for other functions $f_1$, $f_2$,
\[
\begin{split}
x\, u(x) &\, =\,  C\, w^{\chi(p_1/z)} f_1(x^{-2})\,
(\theta' x)^{\chi(\theta z/p_2)} f_2((\theta')^{-2}x^{-2})
= C\, x^{\chi(p_1\theta/p_2)} f_1(x^{-2})\,
f_2((\theta')^{-2}x^{-2}) \\
&\, =\,
C\, x^{m-n} f_1(x^{-2})\,
f_2((\theta')^{-2}x^{-2})
\end{split}
\]
using $|p_2/\theta p_1|=q^{n-m}$ again.
This proves the statement in case $m-n\leq 0$,
since we can take $h(t) = C\, t^{\hf(n-m)} f_1(t)\,
f_2((\theta')^{-2}t)$.
\end{proof}

We are now ready to prove Lemma \ref{lem:secondstepEaffiliatedhatM}.

\begin{proof}[Proof of Lemma \ref{lem:secondstepEaffiliatedhatM}.]
We set $p'=q^{2n}p$, $m'=m-n$, $\ep'=\sgn(p_1)\ep$, $\et'=\sgn(p_2)\et$, then $u=\hat{J}\,Q(p_1,p_2,n) \hat{J}\, f_{-m,\ep\et q^mpz,z}\in \cK(p',m',\ep',\et')$ by Lemma \ref{cor:lemexplicitactionQppn2}. Using the decomposition of $(E_0^\dag)^\ast$, cf.~\eqref{eq:decompE},
\[
(E_0^\dag)^\ast = \bigoplus_{\substack{\al,\be\in\{-,+\}\\r\in q^\Z,\,l\in\Z}}
\bigl( E_0^\dag\big\vert_{\cK_0(r,l,\al,\be)}\bigr)^\ast,
\]
we find $u \in D\bigl( (E_0^\dag\big\vert_{\cK_0(p',m'+1,\ep',\et')})^\ast\bigr)$.
Using the similar decomposition for $E^\ast$ we find that
$w'=P^{\ep',\et'}_{p',m'+1}w\in D\bigl( (E^{\ep',\et'}_{p',m'})^\ast\bigr)$, where
$P^{\ep,\et}_{p,m}\in B(\cK)$ is the orthogonal projection onto
$\cK(p,m,\ep,\et)$ as in Section \ref{ssec:decompositionofGNSspace}. This gives
\[
\big\langle (E_0^\dag)^\ast u, w\big\rangle - \big\langle u, E^\ast w\big\rangle =
\big\langle (E_0^\dag\big\vert_{\cK_0(p',m'+1,\ep',\et')})^\ast u, w'\big\rangle
- \big\langle u, (E^{\ep',\et'}_{p',m'})^\ast w'\big\rangle.
\]

In case $\ep'=-$ or $\et'=-$, $E^{\ep',\et'}_{p',m'}$ is bounded. Therefore
$\bigl( E^{\ep',\et'}_{p',m'}\bigr)^\ast$ is the unique continuous extension
of $E_0^\dag\big\vert_{\cK_0(p',m'+1,\ep',\et')}$, so
$(E_0^\dag\big\vert_{\cK_0(p',m'+1,\ep',\et')})^\ast = E^{\ep',\et'}_{p',m'}$
and hence the right hand side is zero, as required.

It remains to consider the case $\ep'=+=\et'$. In this case we consider the truncated inner product. Using Lemma \ref{lem:truncatedinnerproductforepet+} we find for $x\in q^\Z=I_q^+$,
\[
\langle (E_0^\dag\big\vert_{\cK_0(p',m'+1,+,+)})^\ast u, w'\rangle_x
- \langle u, (E^{+,+}_{p',m'})^\ast w'\rangle_x
= \frac{q^{m'-1} (p'q)^\hf \sqrt{1+x^{-2}q^2}}{q-q^{-1}}
\, \frac{x}{q} \, u(x/q)\, w'(x),
\]
and we need to show that the right hand side tends to zero as $x\to \infty$
through $I_q^+$. Since $w'\in \cK(p',m'+1,+,+) \cong \ell^2(I_q^+)$ it follows
that $w'(x)\to 0$ as $x \rightarrow \infty$, so the required result follows from
Lemma \ref{lem:behaviourhatJQppnhatJincase++} which implies
$\frac{x}{q} \, u(x/q)$ is bounded as $x\to\infty$ in $I_q^+$.
\end{proof}

\subsection{The comultiplication on $\hat{M}$}\label{ssec:com,ultiplicationhatM}

In order to calculate the action of the comultiplication of the dual
quantum group  on the elements $Q(p_1,p_2,n)$, we note that this can
be done in greater generality. First observe
\begin{equation}\label{eq:generalobsforcomultiplication}
\begin{split}
W\, \bigl( (\om_{f,g}\ot \Id)(W^\ast)\ot\Id\bigr) W^\ast\, &=\,
(\om_{f,g}\ot \Id\ot \Id)(W_{23}W^\ast_{12}W^\ast_{23})\\  \, &= \,
(\om_{f,g}\ot \Id\ot \Id)(W_{13}^\ast W_{12}^\ast).
\end{split}
\end{equation}
The first equality is straightforward, and the second follows
from the pentagonal equation for the multiplicative unitary,
see Section \ref{sec:vNalgquantumgroups}.  Using an orthonormal basis $\{ e_k\}$ for the
Hilbert space $\cK$, so that we have
$\langle x,y\rangle = \sum_k \langle x, e_k\rangle\langle e_k,y\rangle$
we get
\begin{equation}\label{eq:generalobsforcomultiplication-2}
\begin{split}
\Sigma\, \hat{\De}\bigl( (\om_{f,g}\ot\Id)(W^\ast)\bigr) \Sigma\, &=\,
(\om_{f,g}\ot \Id\ot \Id)(W_{13}^\ast W_{12}^\ast) \\ \, &=\,
 \sum_k (\om_{f,e_k}\ot \Id)(W^\ast) \, \ot\, (\om_{e_k,g}\ot \Id)(W^\ast).
\end{split}
\end{equation}
using the definition of $\hat{\De}$ and notation as in
Theorem  \ref{thm:duallocallycompactquantumgroup}.

\begin{proof}[Proof of Proposition \ref{prop:hatDeonQppn}]
We use the general formula \eqref{eq:generalobsforcomultiplication-2} with
$f=f_{0,p_1,1}$, $g=f_{n,p_2,1}$, the orthonormal basis
$f_{m,p,t}$ ($m,\in\Z$, $p,t\in I_q$) and next
\eqref{eq:generalcasereducestoQppn} to rewrite the right hand side
in terms of the operators $Q(p_1,p_2,n)$. The series converges
in the von Neumann algebra $\hat{M}\ot \hat{M}$, so that we find convergence
in the $\si$-weak topology.
\end{proof}

Next we prove the link between the comultiplication $\hat{\De}$ of the dual
quantum group $\hat{M}$ and the comultiplication \eqref{eq:Hopfalgebracomultiplication}
$\bDe$ of the Hopf $\ast$-algebra $\su$ as given in
Proposition \ref{prop:dualcomultiplicationandcomultiplicationsu}.

\begin{proof}[Proof of Proposition \ref{prop:dualcomultiplicationandcomultiplicationsu}.]
The comultiplication for the dual locally compact quantum group is given by
$\hat{\De}(x) = \Sigma \, W  (x\ot 1) W^\ast\, \Sigma$, $x\in\hat{M}$, see
Theorem \ref{thm:duallocallycompactquantumgroup}. We use the same formula
for the elements $K$ and $E$ affiliated to $\hat{M}$, see Proposition
\ref{prop:KEaffiliatedtohatM}. In order to prove that $\hat{\De}(K)\, =\, K\ot K$ we
need to show
$\Sigma \, W  (K\ot 1) W^\ast\, \Sigma\, =\, K\ot K$, or
$W  (K\ot 1) W^\ast\, =\, \Sigma\,K\ot K\, \Sigma\,=\, K\ot K$. So it
suffices to check that $(K\ot 1) W^\ast\, =\, W^\ast (K\ot K)$.

Now by \eqref{eq:Wexplicitinapxy} and \eqref{eq:defKanddomain}
we check this formula first by evaluating
it on the orthonormal basis of $\cK\ot\cK$. So  for
arbitrary $m_1, m'_1, m_2, m'_2\in\Z$, $p_1, p'_1, p_2, p'_2\in I_q$, $m_1, m'_1, m_2, m'_2\in I_q$
\[
\begin{split}
&\qquad\, \langle W^* (f_{m_1,p_1,t_1} \ot f_{m_1',p_1',t_1'}) , (K \ot 1)(f_{m_2,p_2,t_2} \ot f_{m_2',p_2',t_2'}) \rangle\,  =\, \, \sqrt{|\frac{p_2}{t_2}|}\,q^{-\hf m_2} \\
&\times \,\,\de_{t_1,t_2}\,\de_{|p_1 p_1'/p_2 t_1'|,q^{m_2'}}
 \, \de_{m_1+m_1',m_2+m_2'}\,\de_{\sgn(p_1' t_1') q^{m_1'}
p_2/p_1,p_2'/t_2'}\,\,|\frac{t_1'}{t_2'}|\,a_{t_1'}(p_1,t_2')\,a_{p_1'}(p_2,p_2')
\end{split}
\]
and similarly
\[
\begin{split}
&\qquad \langle W^* (K \ot K)(f_{m_1,p_1,t_1} \ot f_{m_1',p_1',t_1'}) , f_{m_2,p_2,t_2} \ot f_{m_2',p_2',t_2'} \rangle \, =\,   \sqrt{|\frac{p_1 p_1'}{t_1 t_1'}|}\,q^{-\hf(m_1+m_1')}
\, \\ &\times \,\,\de_{t_1,t_2}\,\de_{|p_1 p_1'/p_2 t_1'|,q^{m_2'}}
 \, \de_{m_1+m_1',m_2+m_2'}\,\de_{\sgn(p_1' t_1') q^{m_1'}
p_2/p_1,p_2'/t_2'}\,\,|\frac{t_1'}{t_2'}|\,a_{t_1'}(p_1,t_2')\,a_{p_1'}(p_2,p_2').
\end{split}
\]
These expressions are equal by inspection using the Kronecker deltas.
The linear span of elements $f_{m,p,t}$ forms a core for the operator $K$.
So it follows that
\[
\langle W^* (f_{m_1,p_1,t_1} \ot f_{m_1',p_1',t_1'}) , (K \ot 1) w
\rangle = \langle W^* (K \ot  K) (f_{m_1,p_1,t_1} \ot f_{m_1',p_1',t_1'}) , w \rangle
 \]
for all $w \in D(K \ot 1)$. Hence, $W^*(K_0 \odot K_0)\, \subset\,
(K \ot 1)W^*$, thus $K \ot K \, \subset\, W(K \ot 1)W^*$ using that $K$ is self-adjoint and
that the closure of $K_0\odot K_0$ equals $K\ot K$.
Since both operators are self-adjoint, this inclusion is an
equality. This proves the statement for $\hat{\De}(K)$.

Let us now prove the more complicated second statement. Choose $p_1,p_1',t_1,t_1' \in I_q$ and $m_1,m_1' \in \Z$.
Take also $p_2,p_2',t_2,t_2' \in I_q$ and $m_2,m_2' \in \Z$. Since $E_0 \subseteq E$ and $E_0^\dag \subseteq E^*$,
\eqref{eq:dualE0forsu} and \eqref{eq:Wexplicitinapxy} imply that
\begin{equation}\label{eq:comultiE2}
\begin{split}
&  (q-q^{1})\,\,\langle W^* (E_0 \odot K_0 + K_0^{-1} \odot E_0) (f_{m_1,p_1,t_1} \ot f_{m_1',p_1',t_1'}) ,
f_{m_2,p_2,t_2} \ot f_{m_2',p_2',t_2'} \rangle
\\ & = \de_{t_1,q t_2}\,\de_{|p_1 p_1'/p_2
t_1'|,q^{m_2'}}\,\,\de_{m_1+m_1'-1,m_2+m_2'}\,\,\de_{\sgn(p_1' t_1') q^{m_1'} p_2/p_1, p_2'/t_2'}
\\ & \qquad\times \ \sgn(t_1)\,q^{-\hf(m_1+m_1'-1)}\,
\sqrt{|\frac{p_1 p_1'}{t_1t_1'}|}\,\,\sqrt{1+\kappa(q^{-1} t_1)}\,\,|\frac{t_1'}{t_2'}|\,\,a_{t_1'}(p_1,t_2')\,a_{p_1'}(p_2,p_2')
\\ &  \ \ \ - \,\de_{t_1,t_2}\,\de_{|p_1 p_1'/p_2
t_1'|,q^{m_2'-1}}\,\,\de_{m_1+m_1'-1,m_2+m_2'}\,\,\de_{\sgn(p_1' t_1') q^{m_1'-1} p_2/p_1, p_2'/t_2'}
\\ & \qquad \times \ \sgn(p_1)\,q^{\hf(m_1-m_1'-1)}\,\sqrt{|\frac{t_1 p_1'}{p_1 t_1'}|}\,\,
\sqrt{1+\kappa(p_1)}\,\,|\frac{t_1'}{t_2'}|\,\,a_{t_1'}(q p_1,t_2')\,a_{p_1'}(p_2,p_2')
\\ &  \ \ \ + \,\de_{t_1,t_2}\,\de_{|p_1 p_1'/p_2
t_1'|,q^{m_2'-1}}\,\,\de_{m_1+m_1'-1,m_2+m_2'}\,\,\de_{\sgn(p_1' t_1') q^{m_1'-1} p_2/p_1, p_2'/t_2'}
\\ & \qquad \times \ \sgn(t_1')\,q^{\hf(m_1-m_1'-1)}\,\sqrt{|\frac{t_1 p_1'}{p_1 t_1'}|}\,\,
\sqrt{1+\kappa(q^{-1}t_1')}\,\,|\frac{t_1'}{t_2'}|\,\,a_{q^{-1}t_1'}(p_1,t_2')\,a_{p_1'}(p_2,p_2')
\\ &  \ \ \ - \,\de_{t_1,t_2}\,\de_{|p_1 p_1'/p_2
t_1'|,q^{m_2'-1}}\,\,\de_{m_1+m_1'-1,m_2+m_2'}\,\,\de_{\sgn(p_1' t_1') q^{m_1'-1} p_2/p_1, p_2'/t_2'}
\\ & \qquad \times \ \sgn(p_1')\,q^{\hf(m_1+m_1'-1)}\,\sqrt{|\frac{t_1 t_1'}{p_1 p_1'}|}\,\,
\sqrt{1+\kappa(p_1')}\,\,|\frac{t_1'}{t_2'}|\,\,a_{t_1'}(p_1,t_2')\,a_{q p_1'}(p_2,p_2')
\end{split}
\end{equation}
and
\begin{equation}\label{eq:comultiE1}
 \begin{split}
&  (q-q^{-1})\,\,\langle W^* (f_{m_1,p_1,t_1} \ot f_{m_1',p_1',t_1'}) , (E^* \ot 1) (f_{m_2,p_2,t_2} \ot
f_{m_2',p_2',t_2'}) \rangle
\\  &  = \de_{t_1,q t_2}\,\de_{|p_1 p_1'/p_2
t_1'|,q^{m_2'}}\,\de_{m_1+m_1'-1,m_2+m_2'}\,\de_{\sgn(p_1' t_1') q^{m_1'} p_2/p_1, p_2'/t_2'}
\\  & \qquad\qquad  \times\ \sgn(t_1)\, q^{-\hf(m_2+1)}\,\sqrt{|\frac{p_2}{t_2}|}\,\sqrt{1+\kappa(t_2)}\,\, |\frac{t_1'}{t_2'}|\,
a_{t_1'}(p_1,t_2')\,\,a_{p_1'}(p_2,p_2')
\\ & \ \ \ -\,\de_{t_1, t_2}\,\de_{|p_1 p_1'/p_2
t_1'|,q^{m_2'-1}}\,\,\de_{m_1+m_1'-1,m_2+m_2'}\,\,\de_{\sgn(p_1' t_1') q^{m_1'-1} p_2/p_1, p_2'/t_2'}
\\ & \qquad\qquad \times\ \sgn(p_2)\, q^{\hf(m_2+1)}\,\sqrt{|\frac{t_2}{p_2}|}\,\sqrt{1+\kappa(q^{-1} p_2)}\,\, |\frac{t_1'}{t_2'}|\, a_{t_1'}(p_1,t_2')\,\,a_{p_1'}(q^{-1}
p_2,p_2')\ .
\end{split}
\end{equation}
One sees immediately that the right hand side of \eqref{eq:comultiE1} and agrees with the first
two terms on the right hand side of \eqref{eq:comultiE2} agree.
Thus in order to prove that the left hand sides of \eqref{eq:comultiE1} and \eqref{eq:comultiE2} agree
it suffices to show that, under the conditions $t_1 = t_2$, $m_1+m_1' = m_2+m_2'+1$, $|p_1 p_1'/p_2 t_1'| =
q^{m_2'-1}$ and $\sgn(p_1' t_1')\,q^{m_1'-1}\, p_2/p_1 = p_2'/t_2'$,
\begin{equation}\label{eq:comultiE3}
 \begin{split}
 0 & =   \sgn(p_2)\, q^{\frac{m_2+1}{2}}\,|t_2/p_2|^{\frac{1}{2}}\,\sqrt{1+\kappa(q^{-1} p_2)}\,\,
a_{t_1'}(p_1,t_2')\,\,a_{p_1'}(q^{-1} p_2,p_2')
\\ &    -\, \sgn(p_1)\,q^{\frac{m_1-m_1'-1}{2}}\,|t_1 p_1'/p_1 t_1'|^{\frac{1}{2}}\,\,
\sqrt{1+\kappa(p_1)}\,\,a_{t_1'}(q p_1,t_2')\,a_{p_1'}(p_2,p_2')
\\ &   +\, \sgn(t_1')\,q^{\frac{m_1-m_1'-1}{2}}\,|t_1 p_1'/p_1 t_1'|^{\frac{1}{2}}\,\,
\sqrt{1+\kappa(q^{-1}t_1')}\,\,a_{q^{-1}t_1'}(p_1,t_2')\,a_{p_1'}(p_2,p_2')
\\ &   - \sgn(p_1')\,q^{\frac{m_1+m_1'-1}{2}}\,|t_1 t_1'/p_1 p_1'|^{\frac{1}{2}}\,\,
\sqrt{1+\kappa(p_1')}\,\,a_{t_1'}(p_1,t_2')\,a_{q p_1'}(p_2,p_2')
\end{split}
\end{equation}
For this purpose we can use the $q$-contiguous relations
$$\sgn(p)\,\sqrt{1+\kappa(p)}\,\,a_{qp}(x,y) = \sgn(x)\,\sqrt{1+\kappa(q^{-1} x)}\,\,a_p(q^{-1} x,y) -
\frac{xp}{qy}\,\,a_p(x,y)$$ and
$$\sgn(p)\,\sqrt{1+\kappa(q^{-1}p)}\,\,a_{q^{-1}p}(x,y) = \sgn(x)\,\sqrt{1+\kappa(x)}\,\,a_p(qx,y) -
\frac{xp}{y}\,\,a_p(x,y)$$ for all $x,y,p \in I_q$ which follow from
Lemma \ref{lemB:qcontiguousrelapxy} and the symmetry relations \eqref{eq:symmetryforapxy}.
If one uses the first equality to replace $a_{q p_1'}(p_2,p_2')$ and the second one to replace
$a_{q^{-1}t_1'}(p_1,t_2')$ one checks that the above equality holds. Thus, we see that
 \eqref{eq:comultiE3} holds.

The linear span of elements $f_{m,p,t}$ forms a core for $E^*$.
So it follows that
\[
\langle W^* (f_{m_1,p_1,t_1} \ot f_{m_1',p_1',t_1'}) , (E^* \ot 1) v
\rangle = \langle W^* (E_0 \odot K_0 + K_0^{-1} \odot E_0)
(f_{m_1,p_1,t_1} \ot f_{m_1',p_1',t_1'}) , v \rangle
\]
for all $v \in D(E^* \ot 1)$. Hence, $W^* (E_0 \odot K_0 + K_0^{-1} \odot E_0) \subset (E \ot
1)W^*$, or
\[
E_0 \odot K_0 + K_0^{-1} \odot E_0 \subset W(E \ot
1)W^*\ \Longrightarrow\ K_0 \odot E_0 + E_0 \odot K_0^{-1} \subset \Sigma\, W(E \ot
1)W^*\,\Sigma\, =\, \hat{\De}(E).
\]

The last statement of Proposition \ref{prop:dualcomultiplicationandcomultiplicationsu}
is proved in the same way.
\end{proof}

\section{The Casimir operator}\label{sec:Casimiroperator}

\subsection{Definition of the Casimir operator}\label{ssec:Casimiroperator}
In this section we prove Theorem \ref{thm:Casimirwelldefinedandchar}. In order to show that the Casimir operator $\Omega$ as defined in Definition \ref{def:closedCasimir} is well-defined, we need to study the commutation relation between $K$ and $E$.

\begin{lemma} \label{lem:KandEstarEstronglycommute}
If $s \in \R$, then $K^{is} E = q^{is}\, E\,K^{is}$. Consequently,
$K$ and $E^\ast E$ strongly commute.
\end{lemma}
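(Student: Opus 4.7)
The plan is to leverage the direct sum decomposition $\cK = \bigoplus \cK(p,m,\ep,\et)$ from Section \ref{ssec:decompositionofGNSspace}, on which both $K$ and $E$ have particularly transparent actions: by \eqref{eq:E0E0dagKoncK0} the operator $K_0$ acts as the scalar $q^{m}\sqrt{p}$ on $\cK_0(p,m,\ep,\et)$, while $E_0$ sends $\cK_0(p,m,\ep,\et)$ into $\cK_0(p,m+1,\ep,\et)$. Consequently, for the bounded unitary $K^{is}$ (which acts as the scalar $q^{ism}p^{is/2}$ on $\cK_0(p,m,\ep,\et)$) one directly computes
\[
K^{is}\, E_0\, v \,=\, q^{is(m+1)}\,p^{is/2}\, E_0\, v \,=\, q^{is}\, E_0\, K^{is}\, v
\]
for every $v\in\cK_0(p,m,\ep,\et)$, so $K^{is}E_0 = q^{is}E_0 K^{is}$ on the dense subspace $\cK_0$, which is invariant under $K^{is}$.

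Next I would promote this identity to the closure $E$. Given $v\in D(E)$, pick a sequence $v_n \in \cK_0$ with $v_n\to v$ and $E_0 v_n\to Ev$. Since $K^{is}$ is bounded, $K^{is}v_n \to K^{is}v$, and since $K^{is}$ preserves $\cK_0$, we may apply the algebraic identity to obtain $E_0(K^{is} v_n) = q^{-is}K^{is} E_0 v_n \to q^{-is}K^{is} E v$. Closedness of $E$ yields $K^{is}v\in D(E)$ and $EK^{is}v = q^{-is}K^{is}Ev$. Running the argument with $-s$ shows the reverse inclusion of domains, giving the operator identity $K^{is}E = q^{is}EK^{is}$. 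Taking adjoints (using that $K^{is}$ is unitary, $K^{-is}=(K^{is})^\ast$, and $q^{is}$ is a unimodular scalar) yields the companion relation $K^{is}E^\ast = q^{-is}E^\ast K^{is}$.

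For the final statement, consider $v\in D(E^\ast E)$. Then $v\in D(E)$ and $Ev\in D(E^\ast)$. By the first commutation relation $K^{is}v\in D(E)$ with $EK^{is}v = q^{-is}K^{is}Ev$; then by the adjoint relation $K^{is}(Ev)\in D(E^\ast)$, so $EK^{is}v \in D(E^\ast)$, i.e. $K^{is}v\in D(E^\ast E)$, and chaining the two relations gives
\[
E^\ast E\, K^{is}v \,=\, q^{-is}E^\ast K^{is} E v \,=\, q^{-is}q^{is}K^{is} E^\ast E v \,=\, K^{is}\,E^\ast E\, v.
\]
Symmetry in $s$ gives $K^{is}(D(E^\ast E)) = D(E^\ast E)$, so the bounded unitary $K^{is}$ commutes with the self-adjoint operator $E^\ast E$. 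Hence $K^{is}$ lies in the commutant $\{E^\ast E\}'$ for every $s\in\R$; equivalently, all spectral projections of $K$ and of $E^\ast E$ commute, which is precisely strong commutation.

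The only real subtlety is the domain bookkeeping for the unbounded operators $E$ and $E^\ast$; the algebraic step is forced by the grading, and the closure step is routine once one notices that $\cK_0$ is a common core stable under $K^{is}$.
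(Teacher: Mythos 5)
Your proof is correct and follows essentially the same route as the paper: establish the relation for $E_0$ on the dense invariant core (the paper does this directly on the basis vectors $f_{mpt}$, you on the equivalent decomposition $\cK_0(p,m,\ep,\et)$), pass to the closure $E$ using boundedness of $K^{is}$ and closedness of $E$, upgrade the inclusion to an equality by conjugating with $K^{-is}$, take adjoints, and conclude that the unitary group $K^{is}$ commutes with $E^\ast E$ and hence with its spectral projections. The domain bookkeeping you carry out explicitly for $E^\ast E$ is left implicit in the paper but is exactly what is needed.
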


\begin{proof} By Definitions
\ref{def:actiongeneratorssu}, \ref{def:closedgenerators} and
Proposition \ref{prop:KEaffiliatedtohatM}, we find
$K^{is}\, f_{mpt} = q^{- \hf ism} |p/t|^{\hf is}\, f_{mpt}$
for $m\in \Z$ and $p,t \in I_q$. So the vector
$K^{is} f_{mpt} \in D(E)$ and
$K^{is} E \,f_{mpt}= q^{is}\, E\,K^{is}\,f_{mpt}$
by Definition \ref{def:actiongeneratorssu}. Since $E$ is the closure of $E_0$, with domain $D(E_0)$ the finite linear span of the $f_{mpt}$, and $K^{is}$ is bounded, this implies $K^{is} E \subseteq q^{is}\,E\,K^{is}$. Using Proposition \ref{prop:KEaffiliatedtohatM} we multiply this result
with the bounded operator $K^{-is}$ from the left and from the right to find $E\, K^{-is} \subseteq q^{is}\,K^{-is}\, E$, and since $s\in\R$ is arbitrary we have $K^{is} E = q^{is}\, E\,K^{is}$.

Taking adjoints we get $K^{is} E^\ast = q^{-is}\,E^\ast\,K^{is}$, and consequently $K^{is} E^\ast E = E^\ast E\,K^{is}$, so $K^{is}$ commutes with all spectral projections of the self-adjoint operator $E^\ast E$. In particular, $K$ and $E^\ast E$ are resolvent commuting, hence they strongly commute, see Appendix \ref{ssecA:commutationoperators}.
\end{proof}

Lemma \ref{lem:KandEstarEstronglycommute} leads to a proof of a part of Theorem \ref{thm:Casimirwelldefinedandchar}.

\begin{prop}\label{prop:OmwelldefinedaffiliatedKEstarEcommstrongly}
The Casimir operator $\Om$ as defined in Definition \ref{def:closedCasimir} is a well-defined
self-adjoint operator. Moreover, $\Om$ is affiliated to $\hat M$ and commutes strongly with $K$ and $E^*E$.
\end{prop}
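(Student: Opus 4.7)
The plan is to construct $\Om$ via joint functional calculus for the strongly commuting pair $(K, E^*E)$ and then identify this operator with the closure given in Definition \ref{def:closedCasimir}. By Lemma \ref{lem:KandEstarEstronglycommute} the self-adjoint operators $K$ and $E^*E$ strongly commute and so admit a joint spectral measure $P$ supported on $\si(K) \times \si(E^*E) \subseteq [0,\infty)^2$. Since $K$ is injective, the spectral mass at $\{0\}$ in the $K$-coordinate vanishes, so the real Borel function
\[
 f(k,a) = \tfrac12\bigl((q-q^{-1})^2 a - qk^2 - q^{-1}k^{-2}\bigr)
\]
is defined $P$-almost everywhere. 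I set $\Om_{\text{fc}} := f(K, E^*E)$, which is self-adjoint because $f$ is real-valued.

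The next step is to verify that $\Om_{\text{fc}}$ coincides with the closure of $\tilde\Om := \tfrac12\bigl((q-q^{-1})^2 E^*E - qK^2 - q^{-1}K^{-2}\bigr)$ on the natural domain $D(E^*E)\cap D(K^2)\cap D(K^{-2})$. The inclusion $\tilde\Om \subseteq \Om_{\text{fc}}$ is built into the joint functional calculus. For the reverse, I would consider the joint spectral projections $P_n := P([n^{-1},n]\times [0,n])$. Then $P_n \to \Id$ strongly, each $P_n\cK$ is invariant under every bounded Borel function of $(K,E^*E)$, and on $P_n\cK$ all three operators $E^*E$, $K^2$, $K^{-2}$ are bounded; in particular $P_n\cK \subseteq D(\tilde\Om)$, where $\tilde\Om$ and $\Om_{\text{fc}}$ agree. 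Since $\bigcup_n P_n\cK$ is a core for $\Om_{\text{fc}}$, this forces $\Om_{\text{fc}} \subseteq \overline{\tilde\Om}$, and hence $\Om = \overline{\tilde\Om} = \Om_{\text{fc}}$ is a well-defined self-adjoint operator.

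Affiliation and strong commutation are then immediate consequences of the functional-calculus picture. By Proposition \ref{prop:KEaffiliatedtohatM}, $K$ and $E$ are affiliated to $\hat M$; since the polar parts of an operator affiliated to a von Neumann algebra are themselves affiliated, $E^*E = |E|^2$ is as well. Therefore every joint spectral projection of $(K,E^*E)$ lies in $\hat M$, and every Borel function of $(K,E^*E)$ is affiliated to $\hat M$; in particular $\Om$ is. Strong commutation of $\Om$ with both $K$ and $E^*E$ is automatic because all three operators arise as Borel functions of the same joint spectral measure.

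The hard part will be the core argument in the second paragraph, and specifically the verification that $P_n\cK \subseteq D(K^{-2})$: here one uses injectivity of $K$, so that the cutoff $\chi_{[n^{-1},n]}$ makes $K^{-2}$ bounded on the range of $P_n$. Once this is in place, the rest of the proof is a mechanical application of standard functional calculus and of general facts about affiliation to von Neumann algebras.
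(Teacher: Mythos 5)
Your proof is correct and follows essentially the same route as the paper: everything rests on Lemma \ref{lem:KandEstarEstronglycommute} (strong commutation of $K$ and $E^*E$) together with standard spectral theory for strongly commuting self-adjoint operators, plus the affiliation facts for $K$ and $E$ from Proposition \ref{prop:KEaffiliatedtohatM}. The only difference is that the paper cites these general facts from its appendices, whereas you prove them inline via the joint spectral measure and the cutoff projections $P_n$ — a correct and complete unpacking of the same argument.
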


\begin{proof}
Since $K$ and $E^\ast E$ are strongly commuting self-adjoint operators, see Appendix \ref{ssecA:commutationoperators}, we see that the closure $\Om$ of
\[
\frac{1}{2}\,\, \bigl(\,(q-q^{-1})^2\,E^\ast  E -
q\,K^2 - q^{-1}\,K^{-2}\,\bigr) \, .
\]
is a well-defined self-adjoint operator. Moreover, by Appendix \ref{ssecA:affiliationandgenerators} and Proposition  \ref{prop:KEaffiliatedtohatM}, the operators $K^2$, $K^{-2}$ and $E^\ast E$ are affiliated to $\hat{M}$. It follows that $\Om$ is affiliated to $\hat{M}$ and that $\Om$
commutes strongly with $K$ and $E^*E$.
\end{proof}

In order to show that $\Om$ also strongly commutes with $E$ we first need some preliminary results. Along the way we also prove the last statement of Theorem \ref{thm:Casimirwelldefinedandchar}.

Recall the decomposition of the Hilbert space $\cK$ into components $\cK(p,m,\ep,\et)$ with $p \in q^\Z$, $m \in \Z$ and $\ep,\et \in \{-,+\}$, and the corresponding decomposition \eqref{eq:decompE} of the operator $E$ into operators $E_{p,m}^{\ep,\et}$.

\begin{lemma} \label{lem:commutationEE*pmepet}
Let $p \in q^\Z$, $m \in \Z$ and $\ep,\et \in \{-,+\}$. Then
\[
(E_{p,m}^{\ep,\et})^*\,E_{p,m}^{\ep,\et} =
E_{p,m-1}^{\ep,\et}\, (E_{p,m-1}^{\ep,\et})^* + \, \frac{q^{2m}\,p - q^{-2m}
p^{-1} }{q-q^{-1}}\Id.
\]
\end{lemma}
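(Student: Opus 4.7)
My plan is to reduce the identity to the algebraic commutation relation
$E_0^\dag E_0 - E_0 E_0^\dag = (K_0^2 - K_0^{-2})/(q-q^{-1})$ from Proposition \ref{prop:relationssuaresatisfied}, restricted to the invariant subspace $\cK_0(p,m,\ep,\et)$. By \eqref{eq:E0E0dagKoncK0} the operator $K_0$ acts on this subspace as the scalar $q^m\sqrt{p}\,\Id$, so the relation becomes $E_0^\dag E_0 - E_0 E_0^\dag = c\,\Id$ on $\cK_0(p,m,\ep,\et)$, with $c := (q^{2m}p - q^{-2m}p^{-1})/(q-q^{-1})$.

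The first step is to identify the action of $(E_{p,m}^{\ep,\et})^*$ on $\cK_0(p,m+1,\ep,\et)$ with that of $E_0^\dag$. Since $\cK_0(p,m,\ep,\et)$ is a core for $E_{p,m}^{\ep,\et}$ and \eqref{eq:E0andE0ddagareadjointonK0} gives $\langle E_0 v, w\rangle = \langle v, E_0^\dag w\rangle$ for all $v \in \cK_0(p,m,\ep,\et)$ and $w \in \cK_0(p,m+1,\ep,\et)$, the functional $v \mapsto \langle E_{p,m}^{\ep,\et}v,w\rangle$ is bounded, hence $\cK_0(p,m+1,\ep,\et) \subset D((E_{p,m}^{\ep,\et})^*)$ with $(E_{p,m}^{\ep,\et})^*w = E_0^\dag w$. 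Applying this observation twice, and using that $E_0^\dag E_0$ and $E_0 E_0^\dag$ preserve $\cK_0(p,m,\ep,\et)$, I conclude that $\cK_0(p,m,\ep,\et)$ is contained in the domains of both $(E_{p,m}^{\ep,\et})^*E_{p,m}^{\ep,\et}$ and $E_{p,m-1}^{\ep,\et}(E_{p,m-1}^{\ep,\et})^*$, on which they restrict to $E_0^\dag E_0$ and $E_0 E_0^\dag$, respectively. The algebraic identity then yields the claim on this dense invariant subspace.

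In the bounded case ($\ep=-$ or $\et=-$), both operators are bounded by the observation following \eqref{eq:decompE}, and the identity extends by continuity. The main obstacle is the unbounded case $\ep=+=\et$: here both $A := (E_{p,m}^{+,+})^*E_{p,m}^{+,+}$ and $B := E_{p,m-1}^{+,+}(E_{p,m-1}^{+,+})^*$ are self-adjoint by von Neumann's theorem, and we have shown $A = B + c\,\Id$ on the common dense invariant subspace $\cK_0(p,m,+,+)$. To promote this to an equality of self-adjoint operators, I would exploit that $\cK(p,m,+,+) \cong \ell^2(q^\Z)$ and that, by a direct calculation from \eqref{eq:defE0onbasisKpmepet} and \eqref{eq:defE0dagonbasisKpmepet}, both $E_0^\dag E_0$ and $E_0 E_0^\dag$ act tridiagonally on the basis $\{f_{-m,q^mpz,z}\}_{z\in q^\Z}$ with \emph{identical} off-diagonal coefficients $A_kB_{k-1}$ and diagonals whose difference is precisely the scalar $c$. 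The final technical step is then to show that the specific self-adjoint extensions $A$ and $B+c\,\Id$, both arising from the von Neumann construction applied to closures of the same underlying symmetric Jacobi-type operator, must coincide; I would argue this by comparing the resolvents $(A+\la)^{-1}$ and $(B+c+\la)^{-1}$ on the dense subspace $(A+\la)(\cK_0(p,m,+,+))$ for $\la \in \C\setminus \R$, using the identical tridiagonal action of $A$ and $B+c\,\Id$ on $\cK_0(p,m,+,+)$ to conclude that the two bounded resolvents agree everywhere, and hence $A = B + c\,\Id$.
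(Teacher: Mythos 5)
Your reduction to the algebraic relation of Proposition \ref{prop:relationssuaresatisfied}, the treatment of the bounded cases $\ep=-$ or $\et=-$, and the identification of both sides as self-adjoint extensions of the symmetric operator $S=E_0^\dag\vert_{\cK_0(p,m+1,+,+)}E_0\vert_{\cK_0(p,m,+,+)}$ all match the paper. The gap is in your final step for $\ep=\et=+$. Your resolvent argument uses only the fact that $A$ and $B+c\,\Id$ agree on $\cK_0(p,m,+,+)$ — but that is precisely the statement that both are self-adjoint extensions of $S$, and it cannot by itself force them to coincide. After conjugating to $\ell^2(\Z)$, $S$ is (up to an affine change) the Jacobi operator $L(q^{2+2|m|},\theta^{-1},-q^2\mid q^2)$ of Appendix \ref{ssecB:litteqJacobi}. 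For $m\neq 0$ this operator is essentially self-adjoint (Theorem \ref{thm:spectraldecompL}), and there your argument works and is essentially the paper's. But for $m=0$ the parameter equals the base, the deficiency indices are $(1,1)$, and there is a genuine one-parameter family of distinct self-adjoint extensions all agreeing on $\cK_0(p,0,+,+)$. Concretely, $(A+\la)\bigl(\cK_0(p,0,+,+)\bigr)=(S+\la)\bigl(\cK_0(p,0,+,+)\bigr)$ is \emph{not} dense for $\la\in\C\setminus\R$ — its closure is the orthogonal complement of the one-dimensional deficiency space $\ker(S^*+\bar\la)$ — so the resolvents are only compared on a proper closed subspace and no conclusion follows. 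Your parenthetical appeal to the fact that $A$ and $B$ arise from the von Neumann construction $T^*T$ is not used anywhere in the argument and does not rescue it.

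The paper closes this case with an input that is external to the tridiagonal data: since $E$ is affiliated to $\hat{M}$ (Proposition \ref{prop:EaffiliatedtohatM}), so are $E^*E$ and $EE^*$, hence the vector $w=\hat{J}Q(1,p,0)\hat{J}\,f_{0,p,1}$ lies in $D(S_1)\cap D(S_2)$; by Lemma \ref{lemB:apxyforyinqZ} it satisfies $x\,w(x)=h(x^{-2})$ with $h$ differentiable at $0$ and $h(0)\neq 0$, and Theorem \ref{thm:Jacobioperatorforcasec=q} says there is a \emph{unique} self-adjoint extension of the $c=q$ Jacobi operator whose domain contains such a vector. That uniqueness statement, applied to $U^*w$, is what identifies $S_1$ with $S_2$. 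Some argument of this kind — distinguishing the two extensions by exhibiting a common domain vector with controlled boundary behaviour at $k\to-\infty$, or otherwise comparing domains beyond the core — is unavoidable, and it is missing from your proposal.
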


\begin{proof}
Proposition \ref{prop:relationssuaresatisfied} implies that
\begin{equation} \label{eq:commutationEE*decomp}
E_0^\dag\vert_{\cK_0(p,m+1,\ep,\et)}\,
E_0\vert_{\cK_0(p,m,\ep,\et)} =
E_0\vert_{\cK_0(p,m-1,\ep,\et)}
\,E_0^\dag\vert_{\cK_0(p,m,\ep,\et)} + \, \frac{q^{2m}\,p -
q^{-2m} \,p^{-1}}{q-q^{-1}}\Id.
\end{equation}
If $\ep = -$ or $\et = -$, the lemma follows from this equality by the continuity of the operators involved.

It remains to deal with the case $\ep = \et = +$. From \eqref{eq:commutationEE*decomp} we see that the operators
\[
S_1 = (E_{p,m}^{++})^*\,E_{p,m}^{++} \qquad \text{and} \qquad
S_2 = E_{p,m-1}^{++}\, (E_{p,m-1}^{++})^* + \,
\frac{q^{2m}\,p - q^{-2m} p^{-1} }{q-q^{-1}}\Id
\]
are both self-adjoint extensions of $S := E_0^\dag\vert_{\cK_0(p,m+1,+,+)} \,E_0\vert_{\cK_0(p,m,+,+)}$. We will prove that they are the same by linking $S$ to a Jacobi operator, which is studied in Appendix \ref{app:jacobi}.

Set $\theta = q^m p$. By \eqref{eq:defE0forsu} and \eqref{eq:dualE0forsu}, we get for
$v \in \cK_0(p,m,+,+)$ and $x \in I_q^+$,
\[
\begin{split}
(q-q^{-1})^2&\,(S v)(x) =
[\,q^{m+1}\,\theta\,(1+q^{-2}x^2) + q^{-(m+1)}\,\theta^{-1}\, (1+\theta^2 x^2)\,] \, v(x) \\
& -  \sqrt{(1+q^{-2}x^2)\, (1+\theta^2 q^{-2} x^2)}\,\, v(q^{-1} x)  \,- \, \sqrt{(1+x^2)\,(1+\theta^2 x^2)}\,\, v(q x)
\end{split}
\]
Let $\{e_k\}_{k \in \Z}$ be the standard orthonormal basis of $\ell^2(\Z)$, and let $\cK(\Z)$ be the dense subspace consisting of finite linear combinations of the $e_k$'s. For $k \in \mathbb Z$ we denote $f_k = \langle f,e_k\rangle_{\ell^2(\Z)}$ for any $f \in \ell^2(\Z)$. We define the unitary transformation $U : \ell^2(\Z) \rightarrow \cK(p,m,+,+)$ so that $(U f)(q^k) = f_k$ for all $f \in \cK(\Z)$ and $k \in \Z$. So $U^* S U \in \mathrm{End}(\cK(\Z))$ is given by
\[
\begin{split}
(q-q^{-1})^2\,(\,U^* S U f)_k  = [\,q^{m+1}\,\theta\,(1+q^{2(k-1)}) + q^{-(m+1)}\,\theta^{-1}\,(1+\theta^2
q^{2k})\,] \, f_k \\
 - \, \sqrt{(1+q^{2(k-1)})\,(1+\theta^2 q^{2(k-1)})}\,\, f_{k-1}
\,  - \, \sqrt{(1+q^{2k})\,(1+\theta^2 q^{2k})}\,\, f_{k+1}
\end{split}
\]
for all $f \in \cK(\Z)$, $k \in \Z$. After a close inspection, one sees that
\[
U^* S U = (q-q^{-1})^{-2}\,\left(\, (q^{m+1} \theta +
q^{-m-1}\theta^{-1}) \,\,\text{Id} \, - \, 2\, L \,\right)
\]
where $L=L(q^{2+2|m|}, \theta^{-1},-q^2\mid q^2)$ is the Jacobi operator of Appendix \ref{ssecB:litteqJacobi},
see \eqref{eq:JacobioperatorreclqJacf}.

If $m \not= 0$, then $c=q^{2+2|m|} \leq q^4$ which by
Theorem \ref{thm:spectraldecompL} implies that $L$ and thus $S$
is essentially self-adjoint. Therefore $S_1 = S_2$ in this case.

Now assume that $m = 0$, so $c = q^2$. In this case $L$ is not essentially self-adjoint,
but we can use Theorem \ref{thm:Jacobioperatorforcasec=q} to prove that $S_1$ and $S_2$ are equal. From Proposition \ref{prop:EaffiliatedtohatM} or Proposition \ref{prop:KEaffiliatedtohatM}
we know that $E$ is affiliated to $\hat{M}$, implying that $E^* E$ and $E E^*$ are also affiliated to $\hat{M}$. This guarantees that
\[
\hat{J}\,Q(1,p,0)\hat{J} \, X \subseteq X\,\hat{J}\,Q(1,p,0)\hat{J}
\]
for $X = E^* E$ and $X = E E^*$, see Appendix \ref{ssecA:affiliationandgenerators}. Since $f_{0,p,1}$
belongs to $D(E^*E)$ and $D(E E^*)$, it
follows that the vector $w:=\hat{J}\,Q(1,p,0) \hat{J} f_{0,p,1}$
belongs to $D(E^* E)$ and $D(E E^*)$. As a
consequence, $w$ belongs to
$D((E_{p,0}^{++})^* E_{p,0}^{++}) = D(S_1)$ and
to $D(E_{p,-1}^{++}\,(E_{p,-1}^{++})^*) = D(S_2)$.

As in the proof of Lemma \ref{lem:behaviourhatJQppnhatJincase++}(ii) we see that
Corollary \ref{cor:lemexplicitactionQppn2} implies
\[
w \, = \, \hat{J}\, Q(p_1,p_2,n)\, \hat{J} \,
= \,  \sum_{x\in J(p,0,+,+)} \frac{1}{x}\, a_1(1,x)\, a_p(p,x)\, f_{0,px,x}
\]
so that $x\, w(x)\, = \, a_1(1,x)\, a_p(p,x) \, = \, h(x^{-2})$. By Lemma
\ref{lemB:apxyforyinqZ} the function $h\colon \R_{\geq 0}\to \R$ is differentiable
and $h(0)\not=0$.

So  $U^* w$ belongs to $D(U^* S_1 U)$ and
$D(U^* S_2 U)$  and
$(U^* w)_{-k} = (q^2)^{\frac{k}{2}}\, h(q^{2k})$
for all $k \in \Z$.
Since $U^* S_1 U$ and $U^* S_2 U$ are
both self-adjoint extensions of the operator
$$
(q-q^{-1})^{-2}\,\left(\,
(q^{m+1} \theta + q^{-m-1}\theta^{-1}) \,\,\text{Id} \, - \, 2\,
L \,\right) \, .
$$
Theorem \ref{thm:Jacobioperatorforcasec=q} now guarantees that
$U^* S_1 U = U^* S_2 U$ and we are done.
\end{proof}
Using Lemma \ref{lem:commutationEE*pmepet} we can describe the relation between $E^*$ and $E_0^\dagger$, and we can give a characterization of the operator $E$.

\begin{prop} \label{prop:charEandrelationEstarwithE0dag}
\begin{enumerate}[(i)]
\item $E^\ast$ is the closure of $E_0^\dag$.
\item $E$ is the unique closed,
linear operator in $\cK$ so that $E_0 \subseteq E$ and
$E_0^\dag \subseteq E^\ast$.
\end{enumerate}
\end{prop}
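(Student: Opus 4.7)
The plan is to derive (ii) from (i), then establish (i) block by block in the decomposition from Section \ref{ssec:decompositionofGNSspace}.

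For (ii), suppose $F$ is any closed operator with $E_0 \subseteq F$ and $E_0^\dag \subseteq F^*$. Closedness of $F$ and minimality of $E=\overline{E_0}$ give $E \subseteq F$, so $F^* \subseteq E^*$ by taking adjoints. On the other hand, $F^*$ is closed and extends $E_0^\dag$, so $\overline{E_0^\dag} \subseteq F^*$; by (i) this closure equals $E^*$, giving $E^* \subseteq F^*$. Hence $F^* = E^*$ and $F = F^{**} = E^{**} = E$.

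For (i), note first that $E_0^\dag \subseteq E_0^\ast = E^\ast$ is immediate from $\langle E_0 v, w \rangle = \langle v, E_0^\dag w\rangle$ on $\cK_0$, so the closure $F$ of $E_0^\dag$ satisfies $F \subseteq E^*$. Both $F$ and $E^*$ respect the decomposition \eqref{eq:decompE}, splitting as $F=\bigoplus F_{p,m}^{\ep,\et}$ and $E^*=\bigoplus (E_{p,m}^{\ep,\et})^*$, where $F_{p,m}^{\ep,\et}$ is the closure of $E_0^\dag|_{\cK_0(p,m+1,\ep,\et)}$. It therefore suffices to prove $F_{p,m}^{\ep,\et} = (E_{p,m}^{\ep,\et})^*$ for every quadruple $(p,m,\ep,\et)$. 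When $(\ep,\et)\neq(+,+)$ the coefficients in \eqref{eq:defE0onbasisKpmepet}, \eqref{eq:defE0dagonbasisKpmepet} are uniformly bounded, so $E_{p,m}^{\ep,\et}$ and $E_0^\dag|_{\cK_0(p,m+1,\ep,\et)}$ extend to bounded operators defined everywhere on the block, forcing equality of the two closed extensions.

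The remaining case $(\ep,\et)=(+,+)$ is the technical heart, and I plan to exploit Lemma \ref{lem:commutationEE*pmepet}, which gives $(E_{p,m}^{++})^* E_{p,m}^{++} = E_{p,m-1}^{++}(E_{p,m-1}^{++})^* + c_m I$ as an equality of self-adjoint operators. Writing $T_m := E_{p,m}^{++}$ and applying the shifted version yields $T_m T_m^* = T_{m+1}^* T_{m+1} - c_{m+1}I$ on a common domain containing $\cK_0(p,m+1,+,+)$, where the operator acts as $E_0 E_0^\dag = E_0^\dag E_0 - (K_0^2-K_0^{-2})/(q-q^{-1})$. This differs by a diagonal scalar from the Jacobi operator $L$ analyzed in the proof of Lemma \ref{lem:commutationEE*pmepet}. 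Whenever that Jacobi operator is essentially self-adjoint, $\cK_0(p,m+1,+,+)$ is a core for $T_m T_m^*$, and the standard estimate $\|T_m^*(v_n-v)\|^2 = \langle T_m T_m^*(v_n-v), v_n-v\rangle$ promotes it to a core for $T_m^*$, giving $F_{p,m}^{++}=(E_{p,m}^{++})^*$.

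The main obstacle is the single exceptional block in which the Jacobi operator fails to be essentially self-adjoint. Here the plan is to argue via affiliation of $E$ and $E^*$ to $\hat{M}$, mirroring the strategy used in Lemma \ref{lem:commutationEE*pmepet} itself for the case $m=0$. Concretely, for each vector $u$ of the form $\hat{J}Q(p_1,p_2,n)\hat{J}f_{-m,\ldots}$, Lemma \ref{lem:secondstepEaffiliatedhatM} shows $u\in D(E)$ with $Eu=(E_0^\dag)^* u$; the decay control from Lemma \ref{lem:behaviourhatJQppnhatJincase++} is what makes the boundary term in the truncated-inner-product identity of Lemma \ref{lem:truncatedinnerproductforepet+} vanish. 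Since such $u$'s, combined with the action of linear combinations of $\hat{J}Q\hat{J}$ on $\cK_0$, furnish a subspace of $D((E_0^\dag)^*)$ which is dense in the graph norm (using Corollary \ref{cor:propQppngeneratedualM} together with strong commutation of $E$ and $E^*$ with elements of $\hat{M}'$), closedness of $F_{p,m}^{++}$ and of $(E_{p,m}^{++})^*$ then forces their equality on the exceptional block as well, completing the proof.
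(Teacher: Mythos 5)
Your treatment of (ii) and of the blocks with $(\ep,\et)\neq(+,+)$ matches the paper, and your reduction of the remaining $(+,+)$ blocks to Lemma \ref{lem:commutationEE*pmepet} is also the paper's route. The genuine gap is in your handling of the exceptional block. You propose to show that the vectors $u=\hat J\,Q(p_1,p_2,n)\hat J f_{-m,\cdot,\cdot}$, together with images of $\cK_0$ under linear combinations of the $\hat J Q\hat J$, form a subspace of $D((E_0^\dag)^\ast)$ that is dense in the graph norm of $(E_0^\dag)^\ast$, and then conclude $F^{++}_{p,m}=(E^{++}_{p,m})^\ast$ by closedness. But Lemma \ref{lem:secondstepEaffiliatedhatM} only puts those particular vectors into $D(E)$ with $Eu=(E_0^\dag)^\ast u$; it does not give graph-norm density in $D((E_0^\dag)^\ast)$, and the justification you offer (Corollary \ref{cor:propQppngeneratedualM} plus strong commutation of $E$ and $E^\ast$ with elements of $\hat M'$) presupposes affiliation-type control over $(E_0^\dag)^\ast$ that is essentially what is being proved. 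Since the relevant restriction of $E_0$ sits over a Jacobi operator with deficiency indices $(1,1)$, exhibiting particular vectors of $D((E_0^\dag)^\ast)$ that lie in $D(E)$ does not rule out vectors of $D((E_0^\dag)^\ast)$ outside $D(E)$; a genuine density argument is required and is missing.

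The fix also shows the case split is unnecessary: Lemma \ref{lem:commutationEE*pmepet} is stated, and proved (via exactly the affiliation and Jacobi-operator analysis you allude to), for \emph{all} $(p,m,\ep,\et)$, including the exceptional block. From the resulting identity $(E^{\ep,\et}_{p,m})^\ast E^{\ep,\et}_{p,m}=E^{\ep,\et}_{p,m-1}(E^{\ep,\et}_{p,m-1})^\ast+c\,\Id$ of self-adjoint operators one gets $D\bigl((E^{\ep,\et}_{p,m})^\ast E^{\ep,\et}_{p,m}\bigr)=D\bigl(E^{\ep,\et}_{p,m-1}(E^{\ep,\et}_{p,m-1})^\ast\bigr)$; since these domains are cores for $E^{\ep,\et}_{p,m}$ and $(E^{\ep,\et}_{p,m-1})^\ast$ respectively, it follows that $D(E^{\ep,\et}_{p,m})=D((E^{\ep,\et}_{p,m-1})^\ast)$ with $\|E^{\ep,\et}_{p,m}v\|^2=\|(E^{\ep,\et}_{p,m-1})^\ast v\|^2+c\|v\|^2$, so any core for $E^{\ep,\et}_{p,m}$ --- in particular $\cK_0(p,m,\ep,\et)$ --- is a core for $(E^{\ep,\et}_{p,m-1})^\ast$. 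This argument needs no essential self-adjointness of the underlying Jacobi operator and applies verbatim to the exceptional block, which is how the paper proceeds.
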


\begin{proof}
To prove the first statement, choose $p \in q^\Z$, $m \in \Z$ and
$\ep,\et \in \{-,+\}$. By Lemma \ref{lem:commutationEE*pmepet} there exists a constant $c \in \R$ such that
\begin{equation} \label{eq:E*E=EE^*+cId}
(E_{p,m}^{\ep,\et})^*\,E_{p,m}^{\ep,\et} =
E_{p,m-1}^{\ep,\et}\, (E_{p,m-1}^{\ep,\et})^* + c\,\Id.
\end{equation}
Thus, $D(\,(E_{p,m}^{\ep,\et})^*\,E_{p,m}^{\ep,\et})= D(E_{p,m-1}^{\ep,\et}\, (E_{p,m-1}^{\ep,\et})^*)$ and since these sets form a core for $E_{p,m}^{\ep,\et}$ and
$(E_{p,m-1}^{\ep,\et})^*$ respectively, \eqref{eq:E*E=EE^*+cId} implies that $D(E_{p,m}^{\ep,\et}) = D((E_{p,m-1}^{\ep,\et})^*)$ and $\|E_{p,m}^{\ep,\et}\,v\|^2 =\|(E_{p,m-1}^{\ep,\et})^*\,v\|^2 + c\,\|v\|^2$ for all $v \in D(E_{p,m}^{\ep,\et})$. Because $\cK_0(p,m,\ep,\et)$ is a
core for $E_{p,m}^{\ep,\et}$, this in turn guarantees that $\cK_0(p,m,\ep,\et)$ is a core for
$(E_{p,m-1}^{\ep,\et})^*$. In other words, $(E_{p,m-1}^{\ep,\et})^*$ is the closure of
$E_0^\dag\vert_{\cK_0(p,m,\ep,\et)}$. Thus,
\begin{eqnarray*}
E^* & = & \bigoplus_{\substack{p \in q^\Z, m \in \Z \\ \ep,\et \in\{-,+\}}}
\,(E_{p,m-1}^{\ep,\et})^*
= \bigoplus_{\substack{p \in q^\Z, m \in \Z \\ \ep,\et \in\{-,+\}}}\,
\overline{E_0^\dag\vert_{\cK_0(p,m,\ep,\et)}} \\
&  = & \Big(\,\sum_{\substack{p \in q^\Z, m \in \Z \\ \ep,\et \in\{-,+\}}}\,E_0^\dag\vert_{\cK_0(p,m,\ep,\et)}\,
\Big)^{\overline{\ \ }} =\overline{E_0^\dag}\ .
\end{eqnarray*}

For the second statement, we take a closed linear operator $F$ in
$\cK$ such that $E_0 \subseteq F$ and $E_0^\dag \subseteq F^*$.  Since, by
definition, $E$ is the closure of $E_0$ and $F$ is a
closed extension of $E_0$, we must have that $E \subseteq F$. By part (i)
we know that $E^*$ is the closure of
$E_0^\dag$. Since $F^*$ is a closed extension of
$E_0^\dag$, this implies that $E^* \subseteq F^*$ and
by taking the adjoint of this inclusion, we see that $F \subseteq
E$. Thus, $F = E$.
\end{proof}

We define, for $p\in q^\Z$, $m \in \Z$, $\ep,\et \in \{-,+\}$, self-adjoint operators in $\cK(p,m,\ep,\et)$ by
\begin{equation} \label{eq:defOmegapmepet}\index{O@$\Om_{p,m}^{\ep,\et}$}
\Om_{p,m}^{\ep,\et} = \frac12\Big( (q-q^{-1})^2 (E_{p,m}^{\ep,\et})^* E_{p,m}^{\ep,\et} - (q^{2m+1}p + q^{-2m-1}p^{-1})\Id \Big).
\end{equation}
Now we have the following decomposition of the Casimir operator;
\begin{equation} \label{eq:decompCasimir}
\Om = \bigoplus_{\substack{\ep,\et \in \{-,+\}\\p\in q^\Z, m \in \Z}} \Om_{p,m}^{\ep,\et},
\end{equation}
see Appendix \ref{ssecA:summationoperators}.

\begin{lemma} \label{lem:extensionsofOmega0}
Let $p\in q^\Z, m \in \Z, \ep,\et \in \{-,+\}$.
\begin{enumerate}[(i)]
\item If ($\ep = -$ or $\et=-$) or ($\ep=\et=+$ and $m \neq 0$), then $\Om^{\ep,\et}_{p,m}$ is the closure of the essentially self-adjoint operator $\Om_0\vert_{\cK_0(p,m,\ep,\et)}$.
\item $\Om^{+,+}_{p,0}$ is a self-adjoint extension of
$\Om_0\vert_{\cK_0(p,0,+,+)}$.
\end{enumerate}
\end{lemma}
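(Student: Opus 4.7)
My plan is first to identify $\Om_0\vert_{\cK_0(p,m,\ep,\et)}$ as the restriction of $\Om^{\ep,\et}_{p,m}$ to $\cK_0(p,m,\ep,\et)$, which will yield (ii) directly, and then to prove essential self-adjointness of that restriction in the two subcases covered by (i).

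For the identification, recall from \eqref{eq:E0E0dagKoncK0} that $K_0$ acts as the scalar $q^m\sqrt{p}\,\Id$ on $\cK_0(p,m,\ep,\et)$, so that
\[
\Om_0\vert_{\cK_0(p,m,\ep,\et)} \,=\, \tfrac{1}{2}\bigl((q-q^{-1})^2\,E_0^\dag E_0 - (q^{2m+1}p + q^{-2m-1}p^{-1})\,\Id\bigr)\big\vert_{\cK_0(p,m,\ep,\et)}.
\]
For $v\in\cK_0(p,m,\ep,\et)$ I have $E_{p,m}^{\ep,\et}\,v = E_0 v\in\cK_0(p,m+1,\ep,\et)$ by \eqref{eq:E0E0dagKoncK0}, and Proposition \ref{prop:charEandrelationEstarwithE0dag}(i) gives $\cK_0(p,m+1,\ep,\et)\subset D((E_{p,m}^{\ep,\et})^*)$ with $(E_{p,m}^{\ep,\et})^*$ coinciding with $E_0^\dag$ there. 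Hence $\cK_0(p,m,\ep,\et)\subset D(\Om_{p,m}^{\ep,\et})$ and the two operators agree on this subspace; this gives (ii) at once.

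For (i), when $\ep=-$ or $\et=-$ the component $E_{p,m}^{\ep,\et}$ is bounded, as noted at the end of Section \ref{ssec:decompositionofGNSspace}, so by \eqref{eq:defOmegapmepet} the operator $\Om_{p,m}^{\ep,\et}$ is a bounded self-adjoint operator defined on all of $\cK(p,m,\ep,\et)$. Its restriction to the dense subspace $\cK_0(p,m,\ep,\et)$ therefore has closure equal to $\Om_{p,m}^{\ep,\et}$ itself by continuity, so $\Om_0\vert_{\cK_0(p,m,\ep,\et)}$ is essentially self-adjoint with the claimed closure.

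When $\ep=\et=+$ and $m\neq 0$ I will reuse the unitary $U\colon \ell^2(\Z)\to\cK(p,m,+,+)$ and the Jacobi operator $L=L(q^{2+2|m|},(q^m p)^{-1},-q^2\mid q^2)$ from the proof of Lemma \ref{lem:commutationEE*pmepet}. There it is shown that $(q-q^{-1})^2\, U^* S\, U = (q^{2m+1}p+q^{-2m-1}p^{-1})\,\Id - 2L$ on $\cK(\Z)$, where $S=E_0^\dag\vert_{\cK_0(p,m+1,+,+)}\,E_0\vert_{\cK_0(p,m,+,+)}$; subtracting the scalar term and halving converts this to $U^*\bigl(\Om_0\vert_{\cK_0(p,m,+,+)}\bigr)U = -L\vert_{\cK(\Z)}$. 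Since $|m|\geq 1$ forces $c=q^{2+2|m|}\leq q^4$, Theorem \ref{thm:spectraldecompL} guarantees that $L$ is essentially self-adjoint, so $\Om_0\vert_{\cK_0(p,m,+,+)}$ is essentially self-adjoint as well, and its unique self-adjoint extension must coincide with the self-adjoint extension $\Om^{+,+}_{p,m}$ furnished by the identification above. The argument fails precisely at $m=0$, where $c=q^2$ and $L$ admits non-trivial self-adjoint extensions; this is exactly why (ii) must be stated separately and why the deeper affiliation analysis of Section \ref{ssec:AffiliationOfKandE} is needed to single out $\Om^{+,+}_{p,0}$ among the self-adjoint extensions.
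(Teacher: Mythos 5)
Your proof is correct and follows essentially the same route as the paper: boundedness of the restricted operators when $\ep=-$ or $\et=-$, and the unitary equivalence with the Jacobi operator $-L$ of Appendix \ref{app:jacobi} together with Theorem \ref{thm:spectraldecompL} when $\ep=\et=+$ and $m\neq 0$. The only differences are cosmetic: you make explicit (via Proposition \ref{prop:charEandrelationEstarwithE0dag}) the containment $\Om_0\vert_{\cK_0(p,m,\ep,\et)}\subseteq\Om_{p,m}^{\ep,\et}$ that the paper leaves implicit in the phrase ``must be the closure,'' and you omit the paper's further identification of $\Om^{+,+}_{p,0}$ with the specific extension of Theorem \ref{thm:Jacobioperatorforcasec=q}, which is indeed not required by the statement of part (ii) as given.
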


Let us remark that $\Om^{+,+}_{p,0}$ is not the closure of $\Om_0\vert_{\cK_0(p,0,+,+)}$.

\begin{proof}
Take $p \in q^\Z$, $m \in \Z$ and $\ep,\et \in \{-,+\}$. If $\ep = -$ or $\et = -$, then \eqref{eq:actioncasimir}
and \eqref{eq:defKpmepeta} imply that $\Om_0\vert_{\cK_0(p,m,\ep,\et)}$ is bounded, hence essentially self-adjoint, and
$\Om_{p,m}^{\ep,\et}$ must be the closure of $\Om_0\vert_{\cK_0(p,m,\ep,\et)}$.

Now assume that $\ep = \et = +$ and set $\theta = q^m p$. As in the second half of the proof of Lemma \ref{lem:commutationEE*pmepet}, one sees that $\Om_0\vert_{\cK_0(p,m,\ep,\et)}$ is unitarily equivalent to $-L$, where $L=L(c,d,z\mid q)$ is the Jacobi operator
of Appendix \ref{app:jacobi} in base $q^2$ and with
parameters $c = q^{2+2|m|}$, $d = \theta^{-1}\,q^{|m|+1}$ and $r = -q^2$. If $m \not=0$, this implies, see \cite[Prop.~4.5.3]{KoelLaredo} and Appendix \ref{app:jacobi}, that $\Om_0\vert_{\cK_0(p,m,\ep,\et)}$ is essentially self-adjoint and $\Om_{p,m}^{\ep,\et}$ must be the closure of $\Om_0\vert_{\cK_0(p,m,\ep,\et)}$.

If $m=0$ the reasoning of the last part of the proof of Proposition \ref{lem:commutationEE*pmepet} shows,
since $\Om$ is affiliated to $\hat{M}$, that $\Om^{++}_{p,0}$ must be unitarily equivalent to the self-adjoint
extension of $-L_q$ described in Theorem \ref{thm:Jacobioperatorforcasec=q}.
\end{proof}
The proof of Lemma \ref{lem:extensionsofOmega0} and the last statement of Theorem \ref{thm:Jacobioperatorforcasec=q} lead to the following result, which will be useful later on and
for this reason it is stated separately.  Again we use the convention
\eqref{eq:conventioninnerproductfunction}.

\begin{lemma} \label{lem:vinD(Om)}
Consider $p \in q^\Z$, $m \in \Z$, $\ep,\et \in \{-,+\}$ and
$v \in D(\Om_0^*) \cap \cK(p,m,\ep,\et)$. Assume moreover
that if $m=0$ and $\ep= \et = +$, there exists a function
$h : \R_{\geq 0}  \to \R$ that is differentiable at $0$ and
satisfies $v(x) = x^{-1}\,h(x^{-2})$ for all $x \in I_q^+$.
Then $v$ belongs to $D(\Om)$ and $\Om\, v = \Om_0^*\, v$.
\end{lemma}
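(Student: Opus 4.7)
The plan is to reduce the claim to a statement about each summand in the decomposition \eqref{eq:decompCasimir}. Since $\Om_0$ preserves the algebraic decomposition $\cK_0 = \bigoplus_{p,m,\ep,\et} \cK_0(p,m,\ep,\et)$ by \eqref{eq:E0E0dagKoncK0} and \eqref{eq:actioncasimir}, its adjoint $\Om_0^*$ decomposes as a direct sum of the operators $(\Om_0\vert_{\cK_0(p,m,\ep,\et)})^*$ acting on the corresponding summands. Hence the hypothesis $v \in D(\Om_0^*) \cap \cK(p,m,\ep,\et)$ means precisely that $v \in D\bigl((\Om_0\vert_{\cK_0(p,m,\ep,\et)})^*\bigr)$ and $\Om_0^* v = (\Om_0\vert_{\cK_0(p,m,\ep,\et)})^* v$. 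Because $\Om^{\ep,\et}_{p,m}$ is always a self-adjoint extension of $\Om_0\vert_{\cK_0(p,m,\ep,\et)}$, one has $\Om^{\ep,\et}_{p,m} \subseteq (\Om_0\vert_{\cK_0(p,m,\ep,\et)})^*$. It therefore suffices to prove $v \in D(\Om^{\ep,\et}_{p,m})$, for then $\Om v = \Om^{\ep,\et}_{p,m} v = \Om_0^* v$.

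In every configuration covered by Lemma \ref{lem:extensionsofOmega0}(i) the operator $\Om_0\vert_{\cK_0(p,m,\ep,\et)}$ is essentially self-adjoint, so its unique self-adjoint extension $\Om^{\ep,\et}_{p,m}$ coincides with $(\Om_0\vert_{\cK_0(p,m,\ep,\et)})^*$; the inclusion $v \in D(\Om^{\ep,\et}_{p,m})$ is then automatic and no further assumption on $v$ is needed. This leaves only the critical case $m = 0$, $\ep = \et = +$, where $\Om^{++}_{p,0}$ is a self-adjoint extension of $\Om_0\vert_{\cK_0(p,0,+,+)}$ that is a strict restriction of $(\Om_0\vert_{\cK_0(p,0,+,+)})^*$, determined by a boundary condition at the indeterminate endpoint.

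For the critical case, I would transport the problem to $\ell^2(\Z)$ via the unitary $U : \ell^2(\Z) \to \cK(p,0,+,+)$ with $(Uf)(q^k) = f_k$ from the proof of Lemma \ref{lem:commutationEE*pmepet}. By the identification carried out there and in the proof of Lemma \ref{lem:extensionsofOmega0}(ii), $U^* \Om^{++}_{p,0} U$ is the specific self-adjoint extension of the Jacobi operator with $c = q^2$ singled out in Theorem \ref{thm:Jacobioperatorforcasec=q} (namely, the extension forced by affiliation of $\Om$ to $\hat{M}$). Under $U^{-1}$ the hypothesis $v(x) = x^{-1} h(x^{-2})$ for $x \in I_q^+$ becomes $(U^{-1} v)_k = q^{-k}\, h(q^{-2k})$, which as $k \to -\infty$ (so that $q^{-2k} \to 0$) has the asymptotic expansion $q^{-k}\bigl(h(0) + q^{-2k}h'(0) + o(q^{-2k})\bigr)$ governed by the value and derivative of $h$ at $0$. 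The last statement of Theorem \ref{thm:Jacobioperatorforcasec=q} characterizes the domain of the preferred self-adjoint extension in exactly this form, so that $v \in D(\Om^{++}_{p,0})$, completing the reduction.

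The main obstacle is the last paragraph: matching the differentiability of $h$ at $0$ to the precise boundary condition at the indeterminate endpoint that singles out $\Om^{++}_{p,0}$ among all self-adjoint extensions. Once this matching is done via the characterization in Theorem \ref{thm:Jacobioperatorforcasec=q}, the remainder of the argument is the routine decomposition bookkeeping of the first two paragraphs.
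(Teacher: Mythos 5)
Your proposal is correct and follows essentially the same route the paper intends: decompose $\Om_0^*$ over the summands $\cK_0(p,m,\ep,\et)$, dispose of the essentially self-adjoint cases of Lemma \ref{lem:extensionsofOmega0}(i) immediately, and in the remaining case $m=0$, $\ep=\et=+$ transport via the unitary $U$ to the Jacobi operator in base $q^2$ and invoke the last statement of Theorem \ref{thm:Jacobioperatorforcasec=q}, with $\Om^{++}_{p,0}$ identified as the distinguished extension by the proof of Lemma \ref{lem:extensionsofOmega0}(ii). This is exactly the combination of ingredients the paper cites for this lemma.
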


We are now ready to prove the last statement of Theorem \ref{thm:Casimirwelldefinedandchar}.

\begin{prop} \label{prop:OmuniqueextensionofOm0}
The Casimir operator $\Om$ is the unique self-adjoint extension of $\Om_0$ that is affiliated to $\hat M$.
\end{prop}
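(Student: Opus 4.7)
The plan is to reduce uniqueness to the claim that the common domain
\[
D_0 \;=\; \mathrm{span}\bigl\{\, x\,f \,\big|\, x\in\hat M',\ f\in\cK_0\,\bigr\}
\]
is a core for $\Om$. By Proposition~\ref{prop:OmwelldefinedaffiliatedKEstarEcommstrongly} and Lemma~\ref{lem:extensionsofOmega0}, $\Om$ itself is one self-adjoint extension of $\Om_0$ affiliated with $\hat M$, so only uniqueness remains. Let $T$ be any such self-adjoint extension.

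Since $T$ and $\Om$ are both affiliated with $\hat M$, every $x\in\hat M'$ satisfies $x\,D(T)\subseteq D(T)$ and $Tx=xT$ on $D(T)$, and likewise for $\Om$. Taking $f\in\cK_0\subseteq D(\Om_0)\subseteq D(T)\cap D(\Om)$ one obtains
\[
T(xf)\;=\;x\,Tf\;=\;x\,\Om_0 f\;=\;x\,\Om f\;=\;\Om(xf),\qquad x\in\hat M',\ f\in\cK_0,
\]
so $T$ and $\Om$ agree on $D_0$. If $D_0$ is a core for $\Om$, then $\Om=\overline{\Om|_{D_0}}=\overline{T|_{D_0}}\subseteq T$, and equality of the two self-adjoint operators follows. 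In particular, by Corollary~\ref{cor:propQppngeneratedualM}, $D_0$ contains all vectors $\hat J\,Q(p_1,p_2,n)\,\hat J\,f_{m',p',t'}$.

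To show $D_0$ is a core for $\Om$ I would use the orthogonal decomposition $\Om=\bigoplus_{p,m,\ep,\et}\Om^{\ep,\et}_{p,m}$ from \eqref{eq:decompCasimir}. A standard truncation argument reduces the task to showing that the image of $D_0$ under each orthogonal projection $P^{\ep,\et}_{p,m}$ is a core for $\Om^{\ep,\et}_{p,m}$. For $(m,\ep,\et)\neq(0,+,+)$ this is automatic because, by Lemma~\ref{lem:extensionsofOmega0}(i), $\Om^{\ep,\et}_{p,m}$ is the closure of $\Om_0|_{\cK_0(p,m,\ep,\et)}$, and $\cK_0(p,m,\ep,\et)\subseteq D_0$ is already a core.

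The main and only obstacle is the indeterminate case $(m,\ep,\et)=(0,+,+)$. Here Theorem~\ref{thm:Jacobioperatorforcasec=q} tells us that $\overline{\Om_0|_{\cK_0(p,0,+,+)}}$ has deficiency indices $(1,1)$, so $\Om^{+,+}_{p,0}$ is obtained from this closure by adjoining a single vector. I would supply that vector explicitly as
\[
w \;=\; \hat J\,Q(1,p,0)\,\hat J\,f_{0,p,1}\;\in\;D_0\cap\cK(p,0,+,+),
\]
and invoke Lemma~\ref{lem:behaviourhatJQppnhatJincase++}(ii), together with the computation at the end of the proof of Lemma~\ref{lem:commutationEE*pmepet}, to write $x\,w(x)=h(x^{-2})$ with $h\colon\R_{\geq 0}\to\R$ differentiable at $0$ and $h(0)\neq 0$. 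Lemma~\ref{lem:vinD(Om)} then places $w$ in $D(\Om^{+,+}_{p,0})$, while Theorem~\ref{thm:Jacobioperatorforcasec=q} characterises $\overline{\Om_0|_{\cK_0(p,0,+,+)}}$ by the opposite boundary condition $h(0)=0$, so $w\notin D(\overline{\Om_0|_{\cK_0(p,0,+,+)}})$. Consequently $\cK_0(p,0,+,+)+\C\,w\subseteq D_0$ is a core for $\Om^{+,+}_{p,0}$, completing the proof. The delicate step is precisely the identification of the non-vanishing boundary value $h(0)\neq 0$ as the condition selecting the extension $\Om^{+,+}_{p,0}$, but all the analytic input needed for it is already packaged into Lemmas~\ref{lem:behaviourhatJQppnhatJincase++} and~\ref{lem:vinD(Om)}.
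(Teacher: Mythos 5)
Your argument is correct and rests on exactly the same hard ingredients as the paper's proof --- the vector $w_p=\hat{J}\,Q(1,p,0)\,\hat{J}\,f_{0,p,1}$, its boundary behaviour from Lemma \ref{lem:behaviourhatJQppnhatJincase++} and the end of the proof of Lemma \ref{lem:commutationEE*pmepet}, and the uniqueness clause of Theorem \ref{thm:Jacobioperatorforcasec=q} --- but the surrounding logic is organized differently. The paper never introduces a core: it takes the arbitrary affiliated self-adjoint extension $C\supseteq\Om_0$ and decomposes \emph{it}, first writing $C=\Om^{(1)}\oplus D$ via the deficiency-space theory (since $\Om_0^{(1)}$ is essentially self-adjoint, all the deficiency vectors of $\Om_0^*$ live over the $(p,0,+,+)$ components), and then splitting $D=\bigoplus_p D_p$ using the spectral projections $P_p$ of $\hat{J}K\hat{J}$, which lie in $\hat{M}'$ and hence commute with $C$; only then does it apply Theorem \ref{thm:Jacobioperatorforcasec=q} to each $D_p$. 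You instead decompose only $\Om$ (whose block structure \eqref{eq:decompCasimir} is already established) and show that $\cK_0+\mathrm{span}\{w_p\}\subseteq D_0$ is a core for it, so that $T$ and $\Om$ agreeing on $D_0$ forces $\Om\subseteq T$ and hence equality. This buys you something real: you never have to decompose $T$ at all, so the projections $P_p$ and the explicit deficiency-space splitting of $C$ disappear from the argument. The price is that you must know $D(\Om^{++}_{p,0})=D(\overline{\Om_0\vert_{\cK_0(p,0,+,+)}})+\C\,w_p$, which requires the deficiency indices $(1,1)$ (available from Appendix \ref{app:jacobi}, where $L_q$ is shown not to be essentially self-adjoint and all extensions are parametrized by $\la\in\T$) together with $w_p\notin D(\overline{\Om_0\vert_{\cK_0(p,0,+,+)}})$.

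On that last point your write-up is slightly loose: Theorem \ref{thm:Jacobioperatorforcasec=q} does not characterize the closure of $\Om_0\vert_{\cK_0(p,0,+,+)}$ by a boundary condition $h(0)=0$ --- indeed its second clause puts \emph{every} vector with $v_{-k}=q^{k/2}g(q^k)$, $g$ differentiable at $0$, into $D(T)$ regardless of the value $g(0)$. The correct deduction is indirect: if $w_p$ lay in the domain of the closure it would lie in the domain of every self-adjoint extension, contradicting the uniqueness asserted in Theorem \ref{thm:Jacobioperatorforcasec=q} once one knows there is more than one such extension. This is a one-line repair, not a gap.
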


\begin{proof}
Choose a self-adjoint operator $C$ in $\cK$ so that $C$ is affiliated to $\hat{M}$ and $\Om_0 \subseteq C$.
We have to show that $C=\Om$.

We divide $\Om_0$ into two parts, For this purpose define
\[
\begin{split}
L = \Big\{(p,m,\ep,\et) \mid&\ p \in q^\Z,\ m \in \Z,\ \ep,\et \in \{-,+\} \\
&\text{ s.t. } (\ep = - \text{ or } \et = -)
\text{ or } (\ep = \et = + \text{ and } m \not= 0 ) \Big\}.
\end{split}
\]
Now set
\[
\Om_0^{(1)} = \sum_{(p,m,\ep,\et) \in L}\,
\Om_0\vert_{\cK_0(p,m,\ep,\et)}
\hspace{10ex} \text{and} \hspace{10ex}
\Om_0^{(2)} = \sum_{p \in q^\Z}\,
\Om_0\vert_{\cK_0(p,0,+,+)}\
\]
and define respective self-adjoint extensions
\[
\Om^{(1)} = \bigoplus_{(p,m,\ep,\et) \in L}\,
\Om^{\ep,\et}_{p,m} \hspace{10ex} \text{and} \hspace{10ex}
\Om^{(2)} = \bigoplus_{p \in q^\Z}\, \Om^{++}_{p,0}\ .
\]
By Lemma \ref{lem:extensionsofOmega0} we know that  $\Om_0^{(1)}$ is
essentially self-adjoint with closure $\Om^{(1)}$. Since
\[
\begin{split}
\text{Ker}(\Om_0^* \pm i\,\Id) &=
\text{Ker}( (\Om^{(1)} \oplus (\Om_0^{(2)})^*) \pm i\,\Id )\\
& = \text{Ker}\bigl( (\Om^{(1)} \pm i\,\Id) \oplus
((\Om_0^{(2)})^* \pm i\,\Id)\bigr)\\
& = \{0\} \oplus \text{Ker}\bigl((\Om^{(2)}_0)^* \pm i\,\Id\bigr) \ ,
\end{split}
\]
the theory of self-adjoint extensions via the deficiency spaces, see
\cite[\S XII.4]{DunfS},  implies the existence of a self-adjoint extension
$D$ of $\Om_0^{(2)}$ so that $C = \Om^{(1)} \oplus D$.

We have seen in Proposition \ref{prop:KaffiliatohatM} that $K$ is affiliated to $\hat{M}$ implying that $\hat{J} K \hat{J}$ is affiliated to $\hat{M}'$. By Definitions \ref{def:actiongeneratorssu}, \ref{def:closedgenerators} and \eqref{eq:expressionhatJonfmpt}, we know that $\cK_0$ is a core for $\hat{J} K \hat{J}$ and $\hat{J} K \hat{J} f_{-m,\ep\et\,q^m p t,t} = \sqrt{p}\,f_{-m,\ep\et\,q^m p t,t}$
for all $p \in q^\Z$, $m \in \Z$ and $\ep,\et \in \{-,+\}$. Thus, for each $p \in q^\Z$, the orthogonal projection $P_p$ of $\cK$ onto $\bigoplus_{m \in \Z,\ep,\et\in\{-,+\}}\,\,\cK(p,m,\ep,\et)$ belongs to $\hat{M}'$, since it is the spectral projection of $\hat{J} K \hat{J}$ with respect to the eigenvalue $\sqrt{p}$. Because $C$ is affiliated to $\hat{M}$, the operator $C$ commutes with each projection $P_p$. As a consequence, there exists for every $p \in q^\Z$ a self-adjoint extension $D_p$ of $\Om_0\vert_{\cK_0(p,0,+,+)}$
so that $D = \bigoplus_{p \in q^\Z}\,\,D_p$. As in the proof of Lemma \ref{lem:extensionsofOmega0}, the fact that $C$ is affiliated to $\hat{M}$ implies for every $p \in q^\Z$ that $D_p$ is unitarily equivalent to the self-adjoint extension described in Theorem \ref{thm:Jacobioperatorforcasec=q} and hence, $D_p = \Om^{++}_{p,0}$. Thus, we conclude that $\Om = C$.
\end{proof}

To finish the proof of Theorem \ref{thm:Casimirwelldefinedandchar} we need to prove the following result.

\begin{prop}\label{prop:EandOmstronglycommute}
The operators $E$ and $\Om$ strongly commute.
\end{prop}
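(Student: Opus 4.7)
The plan is to obtain strong commutativity from the polar decomposition $E = V\,|E|$ combined with the two equivalent expressions for $\Om$ coming from the algebraic identity \eqref{eq:Casimir}. Let $|E| = (E^*E)^{1/2}$ and let $V$ be the partial isometry with initial space $(\ker E)^\perp$ and final space $\overline{\text{ran}\,E}$. Under the decomposition \eqref{eq:decompE} this splits as $V = \bigoplus V^{\ep,\et}_{p,m}$, with each $V^{\ep,\et}_{p,m}\colon \cK(p,m,\ep,\et) \to \cK(p,m+1,\ep,\et)$ a partial isometry whose initial projection $P_i = (V^{\ep,\et}_{p,m})^* V^{\ep,\et}_{p,m}$ is that onto $(\ker |E^{\ep,\et}_{p,m}|)^\perp$. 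Since $|E|$ is a Borel function of $E^*E$, Proposition \ref{prop:OmwelldefinedaffiliatedKEstarEcommstrongly} immediately gives that $|E|$ strongly commutes with $\Om$; it therefore suffices to prove that $V$ commutes with every bounded Borel function of $\Om$.

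Setting $a = (q-q^{-1})^2/2$ and $B_m = \hf(q^{2m+1}p + q^{-2m-1}p^{-1})$, Definition \ref{def:closedCasimir} directly yields
\[
\Om^{\ep,\et}_{p,m} = a\,|E^{\ep,\et}_{p,m}|^2 - B_m\,\Id
\]
on $\cK(p,m,\ep,\et)$, while an algebraic computation using Lemma \ref{lem:commutationEE*pmepet} at level $m+1$ and the identity $E^{\ep,\et}_{p,m}(E^{\ep,\et}_{p,m})^* = V^{\ep,\et}_{p,m}\,|E^{\ep,\et}_{p,m}|^2\,(V^{\ep,\et}_{p,m})^*$ gives the equivalent expression
\[
\Om^{\ep,\et}_{p,m+1} = a\,V^{\ep,\et}_{p,m}\,|E^{\ep,\et}_{p,m}|^2\,(V^{\ep,\et}_{p,m})^* - B_m\,\Id
\]
on $\cK(p,m+1,\ep,\et)$ with the \emph{same} constant $B_m$ (the required matching of constants is $(q-q^{-1})(q^{2m+2}p - q^{-2m-2}p^{-1})/2 = B_{m+1} - B_m$). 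For $v \in D(\Om^{\ep,\et}_{p,m}) = D(|E^{\ep,\et}_{p,m}|^2)$ one checks that $V^{\ep,\et}_{p,m} v \in D(\Om^{\ep,\et}_{p,m+1})$ and, using that $P_i$ commutes with $|E^{\ep,\et}_{p,m}|^2$ and satisfies $|E^{\ep,\et}_{p,m}|^2(\Id - P_i) = 0$, that $\Om^{\ep,\et}_{p,m+1}\,V^{\ep,\et}_{p,m} v = V^{\ep,\et}_{p,m}\,\Om^{\ep,\et}_{p,m} v$. Hence $V^{\ep,\et}_{p,m}\,\Om^{\ep,\et}_{p,m} \subseteq \Om^{\ep,\et}_{p,m+1}\,V^{\ep,\et}_{p,m}$; since $V^{\ep,\et}_{p,m}$ is bounded and both Casimirs are self-adjoint, this inclusion transfers to the resolvents via $V^{\ep,\et}_{p,m}(\Om^{\ep,\et}_{p,m}\pm i)^{-1} = (\Om^{\ep,\et}_{p,m+1}\pm i)^{-1}V^{\ep,\et}_{p,m}$ and then, by the bounded Borel functional calculus, to $V^{\ep,\et}_{p,m}\,f(\Om^{\ep,\et}_{p,m}) = f(\Om^{\ep,\et}_{p,m+1})\,V^{\ep,\et}_{p,m}$ for every bounded Borel $f$. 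Summing over all $(p,m,\ep,\et)$ yields $Vf(\Om) = f(\Om)V$, and taking adjoints gives $V^* f(\Om) = f(\Om) V^*$ as well.

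To conclude, fix a bounded Borel $f$. For $v \in D(E) = D(|E|)$, strong commutativity of $|E|$ and $\Om$ gives $f(\Om)v \in D(|E|) = D(E)$ and $|E|f(\Om)v = f(\Om)|E|v$, whence
\[
E f(\Om) v = V\,|E|\,f(\Om)\,v = V\,f(\Om)\,|E|\,v = f(\Om)\,V\,|E|\,v = f(\Om)\,E\,v,
\]
i.e.~$Ef(\Om) \subseteq f(\Om)E$. Applying the same argument to the factorisation $E^* = |E|V^*$ yields $E^* f(\Om) \subseteq f(\Om) E^*$, and together these inclusions constitute the strong commutativity of $E$ and $\Om$. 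The main technical obstacle is the second displayed equation in the middle paragraph: identifying the self-adjoint operator $\Om^{\ep,\et}_{p,m+1}$ defined directly by Definition \ref{def:closedCasimir} at level $m+1$ with the one obtained from $EE^*$ on $\cK(p,m+1,\ep,\et)$ rests essentially on Lemma \ref{lem:commutationEE*pmepet} together with the precise telescoping of the spectral constants $B_m$.
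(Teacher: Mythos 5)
Your proof is correct and follows essentially the same route as the paper's: both rest on Lemma \ref{lem:commutationEE*pmepet}, the polar decomposition of $E$, and the strong commutation of $E^\ast E$ with $\Om$, with your explicit affine formulas $\Om^{\ep,\et}_{p,m}=a\,|E^{\ep,\et}_{p,m}|^2-B_m$ and $\Om^{\ep,\et}_{p,m+1}=a\,V^{\ep,\et}_{p,m}|E^{\ep,\et}_{p,m}|^2(V^{\ep,\et}_{p,m})^*-B_m$ giving a slightly cleaner derivation of the intertwining $V^{\ep,\et}_{p,m}\,\Om^{\ep,\et}_{p,m}\subseteq\Om^{\ep,\et}_{p,m+1}\,V^{\ep,\et}_{p,m}$ on all of $D(\Om^{\ep,\et}_{p,m})$ than the paper's core argument on $D(|E^{\ep,\et}_{p,m}|^3)$ with its separate treatment of $\mathrm{Ker}\,|E^{\ep,\et}_{p,m}|$. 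One cosmetic point: what your final computation actually establishes is $f(\Om)E\subseteq Ef(\Om)$ (equivalently $E_\Om(B)\,E\subseteq E\,E_\Om(B)$, which is precisely the paper's definition of strong commutation), so the inclusion in ``$Ef(\Om)\subseteq f(\Om)E$'' is written the wrong way round, though the content is right.
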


Before embarking on the proof of Proposition \ref{prop:EandOmstronglycommute}, we first
collect all the elements for the proof of Theorem \ref{thm:Casimirwelldefinedandchar}.

\begin{proof}[Proof of Theorem \ref{thm:Casimirwelldefinedandchar}.]
By Proposition \ref{prop:OmwelldefinedaffiliatedKEstarEcommstrongly} the Casimir
operator is a well-defined self-adjoint operator affiliated to $\hat{M}$, and
by Proposition \ref{prop:OmuniqueextensionofOm0} the Casimir operator is the unique
self-adjoint extension of $\Om_0$ affiliated to $\hat{M}$. By
Proposition \ref{prop:OmwelldefinedaffiliatedKEstarEcommstrongly} the Casimir operator
commutes strongly with $K$, and by Proposition \ref{prop:EandOmstronglycommute} it
also commutes strongly with $E$.
\end{proof}

\begin{proof}[Proof of Proposition \ref{prop:EandOmstronglycommute}.]
By Proposition \ref{prop:OmwelldefinedaffiliatedKEstarEcommstrongly} the Casimir operator
$\Om$ is self-adjoint, and
we have to prove that
\[
E_\Om(B)\, E\subset E\, E_\Om(B)
\]
for all Borel sets $B\subset \R$, where
$E_\Om$ is the spectral decomposition of $\Om$, see Appendix \ref{ssecA:commutationoperators}.
Using the decompositions \eqref{eq:decompE}, \eqref{eq:defOmegapmepet}, \eqref{eq:decompCasimir}
and Lemma \ref{lem:extensionsofOmega0} it suffices to show
\[
E_{\Om^{\ep,\et}_{p,m+1}(B)}\, E^{\ep,\et}_{p,m}\, \subset\,  E^{\ep,\et}_{p,m}\, E_{\Om^{\ep,\et}_{p,m}}(B).
\]
for $p\in q^\Z$, $m\in\Z$, $\ep,\et\in \{-,+\}$.
Then, by \eqref{eq:defOmegapmepet} and Lemma \ref{lem:commutationEE*pmepet},
we get ---being careful regarding the domains involved---
\begin{equation} \label{eq:commutation}
\begin{split}
2\,\Om_{p,m+1}^{\ep,\et}\,E^{\ep,\et}_{p,m}
= & \, \bigl[\,(q-q^{-1})^2\,(E^{\ep,\et}_{p,m+1})^* E^{\ep,\et}_{p,m+1}
 - (q^{2m+3} p + q^{-2m-3} p^{-1})\, \Id\,\bigr]\,  \,E^{\ep,\et}_{p,m}
\\  = &\, \bigl[\,(q-q^{-1})^2\, E^{\ep,\et}_{p,m}
\,(E^{\ep,\et}_{p,m})^* + (q-q^{-1})\,(q^{2m+2} p - q^{-2m-2}
p^{-1})\,\Id
\\  & \qquad \qquad \ - (q^{2m+3} p + q^{-2m-3} p^{-1})
\,\text{Id}\,\bigl] \,E^{\ep,\et}_{p,m}
\\  = &\, \bigl[\,(q-q^{-1})^2\,
E^{\ep,\et}_{p,m}\,(E^{\ep,\et}_{p,m})^* -
(q^{2m+1} p + q^{-2m-1} p^{-1})\,\Id\,\bigl]
\,E^{\ep,\et}_{p,m} \\
= &\, E^{\ep,\et}_{p,m}\,\bigl[\,(q-q^{-1})^2\,
(E^{\ep,\et}_{p,m})^* E^{\ep,\et}_{p,m} - (q^{2m+1} p + q^{-2m-1}
p^{-1})\,\Id\,\bigl]\ \\
=&\,  2\,E^{\ep,\et}_{p,m}\,\Om^{\ep,\et}_{p,m} .
\end{split}
\end{equation}
Take the polar decomposition $E^{\ep,\et}_{p,m} = \,\tilde{U}^{\ep,\et}_{p,m}\,|E^{\ep,\et}_{p,m}|$. Since
\[
\begin{split}
|E^{\ep,\et}_{p,m}| = \bigl((E^{\ep,\et}_{p,m})^* E^{\ep,\et}_{p,m} \bigr)^{\frac{1}{2}}&\colon \cK(p,m,\ep,\et) \to \cK(p,m,\ep,\et), \\
\tilde{U}^{\ep,\et}_{p,m}&\colon \cK(p,m,\ep,\et) \to \cK(p,m+1,\ep,\et),
\end{split}
\]
\eqref{eq:defOmegapmepet} implies that $|E^{\ep,\et}_{p,m}|$ and $\Om^{\ep,\et}_{p,m}$ strongly commute. Choose $v \in D(|E^{\ep,\et}_{p,m}|^3\,)$. So $v \in D(\Om^{\ep,\et}_{p,m}\,|E^{\ep,\et}_{p,m}|) \,\cap\, D(|E^{\ep,\et}_{p,m}|\, \Om^{\ep,\et}_{p,m})$ by \eqref{eq:defOmegapmepet}, implying that $\Om^{\ep,\et}_{p,m}(|E^{\ep,\et}_{p,m}|\,v) = |E^{\ep,\et}_{p,m}|(\Om^{\ep,\et}_{p,m}\,v)$. Since $v \in D(E^{\ep,\et}_{p,m}\, \Om^{\ep,\et}_{p,m})$, \eqref{eq:commutation} implies that
$\tilde{U}^{\ep,\et}_{p,m}(|E^{\ep,\et}_{p,m}|\,v) \in D(\Om_{p,m+1}^{\ep,\et})$ and
\[
\Om_{p,m+1}^{\ep,\et}\,\tilde{U}^{\ep,\et}_{p,m}(|E^{\ep,\et}_{p,m}|\,v)
= \tilde{U}^{\ep,\et}_{p,m}\,|E^{\ep,\et}_{p,m}|\,\Om^{\ep,\et}_{p,m}\,v
= \tilde{U}^{\ep,\et}_{p,m}\,\Om^{\ep,\et}_{p,m}\,(|E^{\ep,\et}_{p,m}|\,v) \ .
\]
If $w \in \text{Ker} |E^{\ep,\et}_{p,m}|$, then by \eqref{eq:defOmegapmepet} $\Om^{\ep,\et}_{p,m}\, w = - (q^{2m+1} p + q^{-2m-1} p^{-1})\, w$, thus
$\tilde{U}^{\ep,\et}_{p,m}\,\Om^{\ep,\et}_{p,m}\, w = 0 = \Om^{\ep,\et}_{p,m+1}\,\tilde{U}^{\ep,\et}_{p,m}\,w$.

Now $\text{Ker} |E^{\ep,\et}_{p,m}| +
[\,\text{Im}|E^{\ep,\et}_{p,m}|\,\cap\, D\bigl((E^{\ep,\et}_{p,m})^*
E^{\ep,\et}_{p,m}\bigr)\,]$ is a core for
$(E^{\ep,\et}_{p,m})^* E^{\ep,\et}_{p,m}$ and thus for
$\Om^{\ep,\et}_{p,m}$, as follows by using the spectral decomposition of $|E^{\ep,\et}_{p,m}|$.
Consequently the above results and the
closedness of $\Om^{\ep,\et}_{p,m+1}$ imply that
$\tilde{U}^{\ep,\et}_{p,m}\,\Om^{\ep,\et}_{p,m} \subseteq \Om^{\ep,\et}_{p,m+1}\,\tilde{U}^{\ep,\et}_{p,m}$.
Now $|E|= \bigoplus |E^{\ep,\et}_{p,m}|$, and $\tilde{U}= \bigoplus \tilde{U}^{\ep,\et}_{p,m}$
give the polar decomposition $E\, =\, \tilde{U}\, |E|$, see Appendix \ref{ssecA:summationoperators}, and
we get $\tilde{U}\, \Om \, \subset\,  \Om\, \tilde{U}$, hence $\tilde{U} \, E_\Om(B)\, = \, E_\Om(B)\, \tilde{U}$
for any Borel set $B\subset \R$ by the spectral theorem.
It follows that
$E$ and $\Om$  strongly commute.
\end{proof}

\subsection{Graded commutation relations for the Casimir operator} \label{ssec:gradedcommutation}
This subsection is devoted to the proof of Proposition \ref{prop:decompMintoM+andM-andgradedcommutation}. The first statement of this proposition is an immediate consequence of Proposition \ref{prop:QppngeneratedualM}, which we already proved in Section \ref{ssec:commutantofdualvNalg}. Recall the subspaces $\hat M_+, \hat M_- \subset \hat M$ defined in Definition \ref{def:defKpmandMpm}.
Note that Proposition \ref{prop:QppngeneratedualM} implies that $\hat M_\pm$ is the strong-$\ast$ closure of
\begin{equation} \label{eq:hatM+-closureof}
\Span\{ Q(p_1,p_2,n) \ \mid \ p_1,p_2 \in I_q, n \in \Z \text{ so that } \sgn(p_1p_2)= \pm \}
\end{equation}
Next we investigate the graded commutation relations of the Casimir operator $\Om$ with the elements $Q(p_1,p_2,n)$ generating the von Neumann algebra $\hat{M}$, see Lemma \ref{lem:QppnareallinhatM}, as stated in Proposition \ref{prop:decompMintoM+andM-andgradedcommutation}. The hard computations
are contained in the following lemma, whose proof is postponed to Appendix \ref{app:proofofcommutionQwithCasimir}.

\begin{lemma}\label{lem:sgncommutationQppnwithCasimir0}
For $u,v\in\cK_0$, $p_1,p_2\in I_q$ and $n\in\Z$, we have
\[
\langle Q(p_1,p_2,n)\, u, \Om_0\, v\rangle
= \sgn(p_1p_2)\, \langle Q(p_1,p_2,n)\,\Om_0\, u, v\rangle.
\]
\end{lemma}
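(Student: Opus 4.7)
The plan is to reduce the identity to basis vectors and then to an algebraic identity for the coefficients $a_p(\cdot,\cdot)$. By sesquilinearity and the density of finite linear combinations of basis vectors in $\cK_0$, it suffices to prove the identity when $u = f_{m_1, v_1, w_1}$ and $v = f_{m_2, v_2, w_2}$ for arbitrary $m_i \in \Z$ and $v_i, w_i \in I_q$. Using \eqref{eq:matrixelementsofQppnandhatJQppnhatJ} for $Q(p_1,p_2,n)$ together with the three-term action \eqref{eq:actioncasimir} of $\Om_0$, both inner products expand as finite sums of products of shifted coefficients $a_p(\cdot,\cdot)$. The Kronecker deltas impose the relations $m_1 - l = n$, $l = \chi(p_1 v_1 / p_2 w_1)$ and $r = \sgn(v_1 w_1)\, s p_2 q^{m_1}/p_1$, and these constraints are preserved by the simultaneous shifts $(v,w) \mapsto (q^{\pm 1}v, q^{\pm 1}w)$ appearing in \eqref{eq:actioncasimir}.

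The appearance of the factor $\sgn(p_1 p_2)$ is a direct consequence of the $-\sgn(pt)$ multiplying the off-diagonal terms of $\Om_0$ in \eqref{eq:actioncasimir}. Indeed, by Lemma \ref{lem:explicitactionQppn} the only basis vectors on which the relevant matrix elements are non-zero satisfy $\sgn(v_2 w_2) = \sgn(p_1 p_2)\,\sgn(v_1 w_1)$, so on the left-hand side the off-diagonal sign is $-\sgn(v_2 w_2)$ while on the right-hand side it is $-\sgn(v_1 w_1)$, and these differ precisely by a factor $\sgn(p_1 p_2)$. The diagonal term $q^{m-1}p|t| + q^{-m-1}t|p|$ in \eqref{eq:actioncasimir} matches on both sides by inspection once the Kronecker constraints are used. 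Pulling out this overall sign, the claim reduces to a system of three-term identities for the $a_p(\cdot,\cdot)$.

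These three-term identities are exactly the $q$-contiguous relations recalled in Lemma \ref{lemB:qcontiguousrelapxy}, after applying the symmetry relations \eqref{eq:symmetryforapxy} to transfer the shift to the appropriate argument. Conceptually, $a_p(x,y)$ is (up to normalisation) a $\rphis{1}{1}{\,}{\,}{\,}{\,}$-series whose second-order $q$-difference equation has eigenvalue parameter matching the diagonal coefficient of $\Om_0$; this is what is needed to balance the two sides of the identity. Thus the lemma expresses the intertwining property that, on $\cK_0$, the operator $Q(p_1,p_2,n)$ is a graded intertwiner for $\Om_0$, commuting with it if $\sgn(p_1 p_2) = +$ and anticommuting with it if $\sgn(p_1 p_2) = -$.

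The main difficulty is bookkeeping rather than conceptual: one has to track which shifted points lie in $I_q$ and thereby contribute to the relevant sums, match the square-root factors in \eqref{eq:actioncasimir} with the coefficients appearing in the contiguous relations, and split according to whether $v_i, w_i$ lie in $q^\Z$ or in $-q^\N$. Each subcase, however, reduces to an instance of the same $q$-contiguous relation combined with \eqref{eq:symmetryforapxy}, and the sign-transport observation above ensures that the overall factor $\sgn(p_1 p_2)$ is the only effect of the $\Z_2$-grading introduced by $Q(p_1,p_2,n)$.
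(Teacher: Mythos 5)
Your overall strategy coincides with the paper's: reduce to matrix elements between basis vectors, expand both sides via \eqref{eq:matrixelementsofQppnandhatJQppnhatJ} and \eqref{eq:actioncasimir}, and use the $q$-contiguous relations of Lemma \ref{lemB:qcontiguousrelapxy} (together with the symmetries \eqref{eq:symmetryforapxy}) to transport the shifts $(r,s)\mapsto(q^{\pm1}r,q^{\pm1}s)$ on one side into shifts $(p,t)\mapsto(q^{\pm1}p,q^{\pm1}t)$ on the other. Your identification of the source of the factor $\sgn(p_1p_2)$ in the off-diagonal terms --- namely $-\sgn(rs)$ versus $-\sgn(pt)$ combined with the constraint $\sgn(rs)=\sgn(p_1p_2)\sgn(pt)$ forced by the Kronecker deltas --- is also correct.

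There is, however, a genuine gap: the claim that the diagonal term of $\Om_0$ ``matches on both sides by inspection once the Kronecker constraints are used'' is false. Writing $u=f_{m,p,t}$ and $v=f_{l,r,s}$, the diagonal coefficient arising on the left is $q^{l-1}r|s|+q^{-l-1}s|r|$ and on the right it is $q^{m-1}p|t|+q^{-m-1}t|p|$; the deltas relate $r$ to $s$ and $p$ to $t$, but impose no relation between $|s|$ and $|t|$, so the first expression is proportional to $s^2$ and the second to $t^2$, and they cannot agree termwise (with or without the sign). What actually happens, and what the paper's proof carries out, is that the contiguous relations applied to $a_t(p_1,qs)\,a_p(p_2,qr)$ and $a_t(p_1,s/q)\,a_p(p_2,r/q)$ generate additional cross terms of the form $\frac{pr}{qp_2}A_t(p_1,s)\,a_p(p_2,r)+\frac{ts}{qp_1}A_p(p_2,r)\,a_t(p_1,s)$, where $A_x(y,z)$ is a three-term combination of $a_{x/q}$, $a_x$, $a_{qx}$. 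These are collapsed by the second-order $q$-difference equation of Lemma \ref{lem:2ndorderqdifferenceq} into multiples $(\kappa(s)-\kappa(t))\,a_t(p_1,s)$ and $(\kappa(r)-\kappa(p))\,a_p(p_2,r)$, and it is precisely these contributions, added to $q^{l-1}r|s|+q^{-l-1}s|r|$, that produce $\sgn(p_1p_2)\,(q^{m-1}p|t|+q^{-m-1}t|p|)$ in \eqref{eq:lefthandside}. You mention the second-order $q$-difference equation only as a conceptual aside; in fact it is an indispensable step, without which the two expansions do not balance. The rest of your sketch (reduction to basis vectors, the sign analysis, matching the square-root factors against the contiguous relations) is sound bookkeeping, but as written the argument does not close at the diagonal term.
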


\begin{lemma} \label{lem:xOm0subset+-Omx}
Let $x \in \hat M_+$ and $y \in \hat M_-$, then $x\, \Om_0\,  \subset \, \Om\,  x$ and
$y\, \Om_0\,  \subset - \, \Om\, y$.
\end{lemma}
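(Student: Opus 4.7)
The plan is to reduce to the generators $Q=Q(p_1,p_2,n)\in\hat M_\si$ (where $\si=\sgn(p_1p_2)$), prove $Q\Om_0\subset \si\,\Om\, Q$ at that level, and then lift to arbitrary $x\in\hat M_\si$ by strong density and the closedness of $\Om$. For the lifting step: by Lemma \ref{lem:QppnareallinhatM} pick a net $\{x_i\}$ of such linear combinations with $x_i\to x$ strongly and $\|x_i\|\le\|x\|$; for any $u\in\cK_0=D(\Om_0)$ one then has $x_iu\to xu$ and $\Om\,x_iu=\si\,x_i\Om_0u\to \si\,x\Om_0u$, so closedness of $\Om$ yields $xu\in D(\Om)$ and $\Om\,xu=\si\,x\Om_0u$.

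The substance of the proof is therefore to establish $Q\Om_0\subset\si\,\Om\,Q$ for one generator. Lemma \ref{lem:sgncommutationQppnwithCasimir0} reads, for every $u,v\in\cK_0$,
\[
\langle Qu,\Om_0 v\rangle \,=\, \si\,\langle Q\Om_0 u, v\rangle,
\]
which, since $\cK_0=D(\Om_0)$, exactly says that $Qu\in D(\Om_0^\ast)$ with $\Om_0^\ast Qu=\si\,Q\Om_0 u$. The remaining task is to upgrade $\Om_0^\ast$ to the self-adjoint extension $\Om$, i.e.\ to show $Qu\in D(\Om)$.

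To do this, write $u=\sum u_{p,m,\ep,\et}$ as a finite sum with $u_{p,m,\ep,\et}\in\cK_0(p,m,\ep,\et)$. By Lemma \ref{lem:explicitactionQppn} each $Q u_{p,m,\ep,\et}$ lives in a single block $\cK(p,m',\ep',\et')$ with $m'=m+n$, $\ep'=\sgn(p_1)\ep$, $\et'=\sgn(p_2)\et$, and different summands land in different blocks; in view of the block decomposition \eqref{eq:decompCasimir} it suffices to verify the membership block by block. If $(\ep',\et')\ne(+,+)$ or $m'\ne 0$, Lemma \ref{lem:extensionsofOmega0}(i) tells us that $\Om_{p,m'}^{\ep',\et'}$ is the closure of the essentially self-adjoint operator $\Om_0\vert_{\cK_0(p,m',\ep',\et')}$, so its domain coincides with the domain of its adjoint on this block, and hence $Qu_{p,m,\ep,\et}\in D(\Om_0^\ast)\cap\cK(p,m',\ep',\et')\subset D(\Om_{p,m'}^{\ep',\et'})\subset D(\Om)$ with $\Om Qu_{p,m,\ep,\et}=\Om_0^\ast Qu_{p,m,\ep,\et}$.

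The genuinely hard case is the block $(p,0,+,+)$, where $\Om_{p,0}^{++}$ is only a self-adjoint extension of the closure of $\Om_0\vert_{\cK_0(p,0,+,+)}$, so $D(\Om_0^\ast)$ is strictly larger than $D(\Om)$ on this block and the previous argument fails. Here the delicate asymptotic information of Lemma \ref{lem:behaviourhatJQppnhatJincase++}(i) is used: applied to each basis vector $f_{-m,\ep\et pq^m z,z}$ appearing in the finite combination $u_{p,m,\ep,\et}$ and summed linearly, it yields the representation $(Qu_{p,m,\ep,\et})(x)=x^{-1}h(x^{-2})$ for $x\in I_q^+$ with $h$ differentiable at $0$. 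Lemma \ref{lem:vinD(Om)} then exactly provides what is needed: $Qu_{p,m,\ep,\et}\in D(\Om)$ and $\Om Qu_{p,m,\ep,\et}=\Om_0^\ast Qu_{p,m,\ep,\et}=\si\,Q\Om_0u_{p,m,\ep,\et}$. Summing the block contributions gives $Qu\in D(\Om)$ and $\Om Qu=\si\,Q\Om_0 u$, and hence $Q\Om_0\subset\si\,\Om\,Q$. The case $y\in\hat M_-$ is identical with $\si=-1$. The main obstacle is precisely this non-essentially-self-adjoint block: without the fine asymptotic from Lemma \ref{lem:behaviourhatJQppnhatJincase++}(i), one cannot distinguish the ``correct'' self-adjoint extension $\Om_{p,0}^{++}$ inside $\Om_0^\ast$, and the whole argument collapses at that point.
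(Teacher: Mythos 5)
Your proof is correct and follows essentially the same route as the paper's: Lemma \ref{lem:sgncommutationQppnwithCasimir0} to place $Q(p_1,p_2,n)u$ in $D(\Om_0^\ast)$, Lemma \ref{lem:explicitactionQppn} to locate the target block, Lemma \ref{lem:behaviourhatJQppnhatJincase++} together with Lemma \ref{lem:vinD(Om)} to handle the non-essentially-self-adjoint block $(p,0,+,+)$, and then closedness of $\Om$ plus the strong-$\ast$ density \eqref{eq:hatM+-closureof} to pass to general $x\in\hat M_\pm$. The only cosmetic difference is that you spell out the easy blocks via Lemma \ref{lem:extensionsofOmega0}(i) where the paper simply invokes Lemma \ref{lem:vinD(Om)} uniformly.
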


\begin{proof}
Consider $p_1,p_2,p,t \in I_q$, $n,m \in \Z$. From Lemma \ref{lem:sgncommutationQppnwithCasimir0} it follows that the vector $v\, =\, Q(p_1,p_2,n)\, f_{m,p,t}$ belongs to $D(\Om_0^*)$ and
\[
\Om_0^*\, v\, = \, \sgn(p_1p_2)\, Q(p_1,p_2,n)\, \Om_0\, f_{m,p,t}.
\]
By Lemma \ref{lem:explicitactionQppn} the vector $v \in \cK(p,m+n,\ep\sgn(p_1),\et\sgn(p_2))$,
and if $m+n=0$, $\ep\sgn(p_1)=+$, $\et\sgn(p_2)=+$,
there exists by Lemma \ref{lem:behaviourhatJQppnhatJincase++} a function $h:\R_{\geq0} \rightarrow \C$ that is differentiable in $0$ and satisfies $v(x)\, =x^{-1}\, h(x^{-2})$ for all $x \in I_q^+$. From Lemma \ref{lem:vinD(Om)} we now conclude that $v \in D(\Om)$ and that $\Om\, v = \Om_0^*\, v$, hence
\[
\sgn(p_1p_2)\, Q(p_1,p_2,n)\, \Om_0\, \subset\, \Om\, Q(p_1,p_2,n).
\]
Now for $x \in \hat M_+$ and $y \in \hat M_-$ the lemma follows from the closedness of $\Om$ and \eqref{eq:hatM+-closureof}
\end{proof}

We need to improve the commutation relations from Lemma \ref{lem:xOm0subset+-Omx} to come to the second statement of Proposition \ref{prop:decompMintoM+andM-andgradedcommutation}. To do this we need the following lemma.

\begin{lemma}\label{lem:UsterAUisselfadjoint}
Consider a Hilbert space $H$, a self-adjoint operator $A$ in $H$ and a partial isometry $U$ on $H$ for which the final projection  $U U^*$ commutes with $A$. Then $U^* A U$ is self-adjoint.
\end{lemma}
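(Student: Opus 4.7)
The plan is to use the two canonical projections associated with the partial isometry, together with the hypothesis that $A$ reduces via the final projection, to exhibit $U^*AU$ as an orthogonal direct sum of two manifestly self-adjoint pieces.

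First I would set up notation: let $P = UU^*$ be the final projection and $Q = U^*U$ the initial projection of $U$. Recall the standard identities $U = PU = UQ$, so in particular $U(I-Q) = 0$ and $(I-P)U = 0$. The restriction $V := U|_{QH}\colon QH \to PH$ is then a unitary between Hilbert spaces, with inverse $V^* = U^*|_{PH}$, while $U^*|_{(I-P)H} = 0$. Since by hypothesis $P$ commutes with the self-adjoint operator $A$, the subspace $PH$ reduces $A$, giving a decomposition $A = A_1 \oplus A_2$ along $H = PH \oplus (I-P)H$ with $A_1$ self-adjoint on $PH$ and $D(A) = D(A_1) \oplus D(A_2)$; in particular $D(A_1) = D(A) \cap PH$.

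Next I would analyse the operator $U^*AU$ with its natural domain $D(U^*AU) = \{\,v \in H \mid Uv \in D(A)\,\}$. Writing $v = Qv + (I-Q)v$, one has $Uv = V(Qv) \in PH$, so $Uv \in D(A)$ if and only if $V(Qv) \in D(A_1)$, i.e.\ if and only if $Qv \in V^* D(A_1)$. Thus $D(U^*AU) = V^* D(A_1) \oplus (I-Q)H$ with respect to $H = QH \oplus (I-Q)H$. For such $v$, applying $A$ to $V(Qv) \in PH \cap D(A_1)$ keeps us in $PH$ (since $PH$ reduces $A$), and then $U^* = V^*$ on $PH$, giving
\[
U^*AU \,v \;=\; V^* A_1 V (Qv) \;\oplus\; 0\cdot(I-Q)v.
\]
In other words, $U^*AU = (V^*A_1 V) \oplus 0_{(I-Q)H}$ as an orthogonal direct sum of operators on $QH$ and $(I-Q)H$.

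Finally, $V^*A_1 V$ is self-adjoint on $QH$, since it is the conjugate of the self-adjoint operator $A_1$ by the unitary $V$, and the zero operator on $(I-Q)H$ is trivially self-adjoint. An orthogonal direct sum of self-adjoint operators on complementary subspaces is self-adjoint, which yields the claim. The main technical point to verify carefully is the domain identification $D(U^*AU) = V^*D(A_1) \oplus (I-Q)H$; the rest is routine once the reduction $A = A_1 \oplus A_2$ is in place, which in turn is the only place the commutation hypothesis $[P,A]=0$ enters.
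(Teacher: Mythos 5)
Your proof is correct, but it takes a genuinely different route from the paper's. The paper argues directly with the graph of the operator: it first shows $U^*AU$ is densely defined (its domain contains $U^*D(A)$ plus the kernel of the initial projection), notes symmetry is immediate, and then, for $v\in D\bigl((U^*AU)^*\bigr)$, runs an inner-product computation --- inserting the final projection via $UU^*Aw = AUU^*w$ --- to conclude $Uv\in D(A^*)=D(A)$ and hence $v\in D(U^*AU)$. You instead exploit the structure: since the final projection $P=UU^*$ commutes with $A$, the subspace $PH$ reduces $A$, so $A=A_1\oplus A_2$ with $A_1$ self-adjoint on $PH$; the partial isometry restricts to a unitary $V$ from the initial space onto $PH$; and $U^*AU$ is then exhibited as $V^*A_1V\oplus 0$, an orthogonal direct sum of self-adjoint operators (the paper's Appendix on summation of operators already records that such a sum satisfies $T^*=\oplus T_i^*$). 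Your domain identification $D(U^*AU)=V^*D(A_1)\oplus (I-Q)H$ is the one point that needs care, and you handle it correctly. What your approach buys is a more transparent structural statement --- $U^*AU$ is unitarily equivalent to $A_1\oplus 0$, which is strictly more information than self-adjointness; what the paper's approach buys is a self-contained argument using only the definition of the adjoint, in the same style as the surrounding domain manipulations. Both proofs use the commutation hypothesis in exactly the same place.
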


\begin{proof}
First we show that $U^*AU$ is densely defined. Set $P = U^* U$ and $Q = UU^*$. Since $Q A \subset A Q$, we have that $U(U^* D(A)) = Q D(A) \subset D(A)$ implying that $U^* D(A) \subset D(U^* A U)$. Clearly, $(1-P)H \subset D(U^* A U)$ thus $U^* D(A) + (1-P)H \subset D(U^* A U)$ from which it follows that $U^* A U$ is densely defined.

Next we need to verify the self-adjointness. Let $v,w \in D(U^* A U)$, then, since $A$ is self-adjoint,
$$
\langle U^* A U v , w \rangle = \langle A U v , U w \rangle =
\langle U v , A U w \rangle = \langle v , U^* A U w
\rangle\, .
$$
Thus, $U^* A U$ is symmetric. To prove that $U^* A U$ is
self-adjoint, choose $v \in D((U^* A U)^*)$. If $w \in D(A)$, then
$Qw \in D(A)$ and $A(Q w) = Q ( A w)$. Thus,
\begin{equation*}
\begin{split}
\langle U v , A w \rangle & =  \langle v , U^* A w \rangle = \langle v , U^* Q A w \rangle
= \langle v, U^* A Q w \rangle
\\ = & \langle v , (U^* A U) U^* w \rangle =
\langle (U^* A U)^* v , U^* w \rangle = \langle U\,(U^* A U)^* v, w \rangle \, .
\end{split}
\end{equation*}
This implies $U v \in D(A^*)=D(A)$, so that $v \in D(U^* A U)$.
{}From this we conclude that $(U^* A U)^* = U^* A U$.
\end{proof}

We are now in a position to prove the graded commutation relations of the Casimir.

\begin{prop} \label{prop:(anti)commute}
Let $x \in \hat{M}_+$ and $y \in \hat{M}_-$, then $x\,\Om \subset \Om\,x$ and $y\,\Om \subset -\,\Om\,y$.
\end{prop}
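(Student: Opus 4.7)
The plan is to bootstrap the relation from $\Om_0$ to $\Om$ using the uniqueness of $\Om$ as the self-adjoint extension of $\Om_0$ affiliated to $\hat{M}$ (Proposition \ref{prop:OmuniqueextensionofOm0}), combined with Lemma \ref{lem:UsterAUisselfadjoint} to produce self-adjoint conjugates of $\Om$. The template is: construct a candidate operator $T$ of the form $\pm U^* \Om U$ for a suitable partial isometry $U \in \hat{M}$, verify via Lemma \ref{lem:UsterAUisselfadjoint} that $T$ is self-adjoint, show $T$ extends $\Om_0$ using Lemma \ref{lem:xOm0subset+-Omx}, and observe that $T$ is affiliated to $\hat{M}$; uniqueness then forces $T=\Om$, delivering the desired (anti-)commutation.

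For $x \in \hat{M}_+$: First treat a unitary $U \in \hat{M}_+$. Since $UU^* = 1$ commutes trivially with $\Om$, Lemma \ref{lem:UsterAUisselfadjoint} gives that $T := U^* \Om U$ is self-adjoint. From Lemma \ref{lem:xOm0subset+-Omx} we have $U\Om_0 \subset \Om U$, hence $\Om_0 \subset U^*\Om U = T$. That $T$ is affiliated to $\hat{M}$ is immediate: for $y' \in \hat{M}'$ we compute $y'T = U^* y' \Om U \subset U^* \Om y' U = U^* \Om U y' = T y'$, using that $U, U^* \in \hat{M}$ commute with $y'$ and that $\Om$ is affiliated to $\hat{M}$. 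By Proposition \ref{prop:OmuniqueextensionofOm0}, $T = \Om$, i.e.\ $U\Om = \Om U$. To extend to general $x \in \hat{M}_+$: every self-adjoint $A \in \hat{M}_+$ with $\|A\| \leq 1$ satisfies $A = \tfrac{1}{2}(U + U^*)$ for the unitary $U = A + i\sqrt{1 - A^2} \in \hat{M}_+$ (functional calculus preserves $\hat{M}_+$ because $\hat{M}_+ \hat{M}_+ \subset \hat{M}_+$), and arbitrary $x \in \hat{M}_+$ decomposes into a scaled complex combination of such self-adjoint elements.

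For $y \in \hat{M}_-$: Use the polar decomposition $y = u|y|$. Since $y^*y \in \hat{M}_- \hat{M}_- \subset \hat{M}_+$, the positive part $|y| = (y^*y)^{1/2}$ lies in $\hat{M}_+$, and consequently $u$ is a partial isometry in $\hat{M}_-$. The preceding case yields $|y|\Om = \Om|y|$, reducing the claim to showing $u\Om \subset -\Om u$ for partial isometries $u \in \hat{M}_-$. For such $u$, the projections $u^*u$ and $uu^*$ lie in $\hat{M}_+$ and therefore commute with $\Om$, so the subspaces they project onto are $\Om$-invariant and $u$ restricts to a unitary between them. Applying Lemma \ref{lem:UsterAUisselfadjoint} with $U = u$ shows $u^*\Om u$ is self-adjoint, whence $T := -u^*\Om u$ is self-adjoint; the base identity $u\Om_0 \subset -\Om u$ identifies $T|_{u^*u \cK}$ with a self-adjoint extension of $\Om_0$ restricted to that reduced subspace, and the uniqueness argument, applied now inside the reduced von Neumann algebra $u^*u\,\hat{M}\,u^*u$, forces this restricted $T$ to coincide with $\Om|_{u^*u \cK}$. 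Combined with $T = 0$ on the complementary kernel, this yields $u\Om \subset -\Om u$ globally.

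The main obstacle is the $\hat{M}_-$ case: since $\hat{M}_-$ is not a subalgebra and admits no unit, there are in general no unitary elements within $\hat{M}_-$ on which to run the clean $U^*\Om U$ argument, forcing the polar-decomposition reduction to partial isometries and the transfer of the uniqueness statement to reduced von Neumann algebras. A subsidiary difficulty is that $\bar\Om_0 \subsetneq \Om$ --- the defect residing in the subspaces $\cK(p,0,+,+)$ --- so the commutation cannot be obtained from $x\Om_0 \subset \pm\Om x$ by merely closing; the characterization of $\Om$ as the unique self-adjoint extension of $\Om_0$ affiliated to $\hat{M}$ is the essential tool that bridges $\bar\Om_0$ to $\Om$.
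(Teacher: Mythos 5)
Your architecture is the paper's: for $\hat{M}_+$ you run exactly the same argument (unitary $u$, $\Om_0\subseteq u^*\Om u$, affiliation, uniqueness from Proposition \ref{prop:OmuniqueextensionofOm0}, then linear combinations of unitaries), and for $\hat{M}_-$ you use the same polar-decomposition reduction and Lemma \ref{lem:UsterAUisselfadjoint}. However, there is a genuine gap at the decisive step of the $\hat{M}_-$ case. You invoke ``the uniqueness argument, applied now inside the reduced von Neumann algebra $u^*u\,\hat{M}\,u^*u$'' to identify $-u^*\Om u$ with $\Om$ on $u^*u\,\cK$. No such reduced version of Proposition \ref{prop:OmuniqueextensionofOm0} is available: that proposition is a statement about self-adjoint extensions of the \emph{global} operator $\Om_0$ (defined on all of $\cK_0$) affiliated to the \emph{global} algebra $\hat{M}$, and its proof rests on the specific deficiency analysis of $\Om_0$ over the subspaces $\cK_0(p,0,+,+)$ together with the projections $P_p\in\hat{M}'$. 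A reduced analogue would have to be proved separately, and it is not even clear how to formulate it, since the projection $p=u^*u$ need not preserve $\cK_0$, so ``$\Om_0$ restricted to $p\cK$'' is not a well-defined symmetric operator with a distinguished core to which a uniqueness statement could refer.

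The repair is precisely the paper's device: do not localize, but globalize. Set $p=v^*v$ and form the single operator $T=-v^*\,\Om\,v+\Om\,(1-p)$ on all of $\cK$. One checks $p\,\Om_0\subseteq -v^*\Om v$ and $(1-p)\,\Om_0\subseteq\Om(1-p)$ from Lemma \ref{lem:xOm0subset+-Omx} and the already-proved $\hat{M}_+$ case, so $T$ extends $\Om_0$; it is self-adjoint as an orthogonal sum of the two self-adjoint pieces supplied by Lemma \ref{lem:UsterAUisselfadjoint}; and it is affiliated to $\hat{M}$. Now the \emph{global} Proposition \ref{prop:OmuniqueextensionofOm0} applies and gives $\Om=T$, from which a short domain computation (using $vp=v$, $qv=v$ with $q=vv^*$) yields $v\,\Om\subseteq-\Om\,v$. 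Two smaller points: the claim that the partial isometry $v$ in the polar decomposition of $y\in\hat{M}_-$ again lies in $\hat{M}_-$ is not automatic (as $\hat{M}_-$ is not an algebra) and requires the short argument decomposing $e\in\cK_+$ as $e_1+e_2$ with $e_1\in\overline{|y|\cK_+}$ and $e_2\perp|y|\cK$; and your final sentence ``combined with $T=0$ on the complementary kernel, this yields $u\Om\subset-\Om u$ globally'' glosses over exactly the domain bookkeeping that the global identity $\Om=-v^*\Om v+\Om(1-p)$ makes routine.
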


We have now collected all the necessary ingredients for the proof of
Proposition \ref{prop:decompMintoM+andM-andgradedcommutation}.

\begin{proof}[Proof of Proposition \ref{prop:decompMintoM+andM-andgradedcommutation}.]
By Proposition \ref{prop:QppngeneratedualM}, already established in
Section \ref{ssec:commutantofdualvNalg}, we obtain the decomposition
$\hat{M}\, =\, \hat{M}_+\oplus \hat{M}_-$. The final statement of
Proposition \ref{prop:decompMintoM+andM-andgradedcommutation} is Proposition
\ref{prop:(anti)commute}.
\end{proof}

\begin{proof}[Proof of Proposition \ref{prop:(anti)commute}.]
First we deal with $\hat{M}_+$. Choose a unitary $u \in \hat{M}_+$. From Lemma \ref{lem:xOm0subset+-Omx} we know that $u\,\Om_0 \,\subseteq \,\Om\, u$, thus $\Om_0\, \subseteq\, u^*\, \Om\, u$. Since $u^*\, \Om\, u$ is a self-adjoint extension of $\Om_0$ that is affiliated with $\hat{M}$, Proposition \ref{prop:OmuniqueextensionofOm0} guarantees that $\Om = u^*\, \Om\, u$, or in other words,
$u\, \Om \, =\, \Om\, u$. Since each element in $\hat{M}_+$
is a linear combination of such unitary elements, we get that $x\,\Om\, \subseteq\, \Om\,x$ for all $x \in \hat{M}_+$, proving the first statement.

Next choose $y \in \hat{M}_-$ and consider the polar decomposition $y = v\,|y|$ of $y$. We are going to show that $v \in \hat M_-$. Since $y^* \in \hat M_-$, the operator $y^* y$ is in the von Neumann algebra $\hat{M}_+$, hence $|y|\, =\, (y^* y)^{\frac{1}{2}} \in \hat{M}_+$. Take $e \in \cK_+$. Since
$|y|\cK_+\subseteq \cK_+$, there exists $e_1 \in \overline{|y| \cK_+}$ and $e_2 \in \cK_+$ with $e_2 \perp |y| \cK_+$ so that $e = e_1 + e_2$. Since also $|y| \cK_-\subseteq \cK_-$, we see that $e_2 \perp |y| \cK$, implying that $v e = v e_1 + v e_2 = v e_1$, since $v$ acts as zero on $(\text{Im}|y|)^\perp$.
Because $y = v\,|y|$ and $y \cK_+ \subseteq \cK_-$, it follows that $v e \in \cK_-$. Similarly, $v \cK_- \subseteq \cK_+$. Hence, $v \in \hat{M}_-$.

It follows that the initial projection $p = v^* v$ and final projection $q = v v^*$ belong to $\hat{M}_+$. This implies that $p\,\Om \subseteq \Om\,p$ and $q\,\Om \subseteq \Om\,q$ by the first part of this proposition.

Because $v \in \hat{M}_-$, we have that $v\, \Om_0 \, \subseteq \, - \Om\, v$ by Lemma \ref{lem:xOm0subset+-Omx}, implying that $p \,\Om_0\, \subseteq\, - v^*\, \Om\, v$. We also have that
$(1-p) \,\Om_0 \, \subseteq \, \Om\, (1-p)$. Thus, $\Om_0 \subseteq - v^* \Om v + \Om (1-p)$.

Since the final projection of $v$ commutes with the self-adjoint operator $\Om$,
the operator $v^*\, \Om\, v$ is also self-adjoint by Lemma \ref{lem:UsterAUisselfadjoint}. Because of the same reason, $\Om (1-p)$ is self-adjoint. Therefore, as the orthogonal sum of self-adjoint operators,
the operator $- v^*\, \Om\, v + \Om\, (1-p)$ is a self-adjoint extension of $\Om_0$.

Since $\Om$ is affiliated to $\hat{M}$ and $v, p \in \hat{M}$, one sees that $- v^* \, \Om\, v + \Om \,(1-p)$ is affiliated to $\hat{M}$. Hence, $\Om = - v^*\, \Om\, v + \Om\, (1-p)$ by Proposition \ref{prop:OmuniqueextensionofOm0}. If $e \in D(\Om)$, this equality
implies that $v e \in D(\Om)$, $(1-p) e \in D(\Om)$
and $\Om\, e = - v^*\, \Om\, v\, e + \Om \,(1-p)\, e$. Thus,
using $v\,p\,=\, v$, $q\,v\,=\, v$,
$$
v\,\Om\, e\, =\, - q\,\Om\, v\, e + v\, p\, \Om \,(1-p)\, e
\,=\, - \Om\, q\, v\, e + v\, \Om\, p\, (1-p)\, e \,=\, - \Om\, q\, v\, e\, =\, - \Om \,v\, e.
$$
Thus, we have proved that $v\, \Om\, \subseteq\, - \Om\, v$.
Since $y = v |y|$ we conclude that $y\, \Om\, \subseteq - \,\Om\, y$.
\end{proof}

\subsection{Spectral decomposition of the Casimir operator}\label{ssec:spectraldecompCasimir}
From (the proof of) Lemma \ref{lem:xOm0subset+-Omx} it follows that $Q(p_1,p_2,n)$ maps eigenvectors for eigenvalue $x$ of $\Om$ in $\cK(p,m,\ep,\et)$ to multiples of eigenvectors of $\Om$
in $\cK(p,m+n,\sgn(p_1)\ep,\sgn(p_2)\et)$ for the eigenvalue $\sgn(p_1p_2)x$ or to zero.
So, it will be convenient to have an alternative description of the GNS-space $\cK$ corresponding to the spectral decomposition of $\Om$. This alternative description has the advantage that the action of the operators $E$ and, of course, $\Om$, is far more transparent. Moreover, it leads to the direct integral decomposition of the left regular corepresentation of $(M,\De)$ into irreducible unitary representations,
see Section \ref{sec:decompleftregularcorep}.

The description of the spectral decomposition of $\Om$ relies on certain special functions which can be written in terms of basic hypergeometric series: the Al-Salam--Chihara polynomials and the little $q$-Jacobi functions. The main properties of these special functions needed in this subsection are given in Appendices \ref{ssecB:AlSCpol} and \ref{ssecB:litteqJacobi}. The spectral decomposition of the Casimir immediately leads to the decomposition of the GNS-space $\cK$ as a $\su$-module. This is done in Section \ref{ssec:decompUq(su11)}.\\

The Casimir operator $\Om$ is a self-adjoint extension of $\Om_0 \in \cL^+(\cK_0)$.
Let $p \in q^\Z$, $m \in \Z$, $\ep,\et \in \{-,+\}$. It follows from
\eqref{eq:actioncasimir} that $\Om_0\vert_{\cK_0(p,m,\ep,\et)}$ is
basically a Jacobi operator, i.e., a tridiagonal operator on $\ell^2(\NN)$ or $\ell^2(\Z)$. The spectral decomposition of these specific Jacobi operators can be described in terms of Al-Salam--Chihara polynomials in case of $\ell^2(\N_0)$, and in terms of little $q$-Jacobi functions in case of $\ell^2(\Z)$. Whether $\cK_0(p,m,\ep,\et)$ can be identified with $\ell^2(\N_0)$ or $\ell^2(\Z)$ depends on the sign of the parameters $\ep$ and $\et$, see the beginning of Section \ref{ssec:decompositionofGNSspace}. We need to distinguish between four different cases.

Let us recall from \eqref{eq:JE0JisminE0dag} that the modular conjugation $J:\cK\to\cK$, defined
by $J\colon f_{m,p,t}\mapsto f_{-m,t,p}$, satisfies that $E_0^\dag J=-E_0J$ and $JK_0=K_0^{-1}J$, and consequently $J \Om_0 = \Om_0 J$. Note that $J\colon \cK(p,m,\ep,\et)\to \cK(p^{-1},-m,\et,\ep)$,
since $Jf_{-m,\ep\et q^mpz,z}=f_{m,\et\ep q^{-m}p^{-1}y,y}$ with $y=\ep\et q^m pz$ and $\sgn(y)=\et$. We will use this to reduce the number of cases that we need to consider.

\subsubsection{The case $\ep=+$ and $\et=-$} \label{sssec:caseep+andet-}
Recall that $\cK(p,m,\ep,\et)\cong \ell^2(J(p,m,\ep,\et))$. In the case under consideration,
\[
J(p,m,+,-)=\{ z\in I_q\mid -q^mpz\in I_q,\ \sgn(z)=+\}
\]
can be labeled by $\NN$ using $n=m+\chi(p)+\chi(z)-1$. Now put $e_n = f_{-m,\ep\et q^mpz,z}$
using this identification, then \eqref{eq:actioncasimir} leads to
\begin{equation}\label{eq:modCasonKpm+-}
\begin{split}
2\Om_0 \, e_n &= \sqrt{(1-q^{2n+2})(1+ p^{-2}q^{2n+2-2m})}
\, e_{n+1} \\
&\qquad + p^{-1}q^{2n+1-2m}\bigl(q^{2m}-1\bigr) \, e_n +
\sqrt{(1-q^{2n})(1+ p^{-2}q^{2n-2m})} \, e_{n-1}.
\end{split}
\end{equation}
Comparing this with the Jacobi operator $J(a,b\mid q)$ for the Al-Salam--Chihara polynomials, \eqref{eq:JacobioperAlSCpol}, see also \eqref{eq:recAlSCpol}, we see that $2\Om_0 = J(q/p,-q^{1-2m}/p\mid q^2)$. By Theorem \ref{thm:spectraldecompASC} and \eqref{eq:measureASC} $2\Om_0$ extends uniquely to a bounded self-adjoint operator on $\cK(p,m,+,-)$, and it has continuous spectrum $[-1,1]$ and discrete spectrum $\si_d(p,m,+,-)=\mu(D(p,m,+,-))$ where
$D(p,m,+,-) = D(q/p,-q^{1-2m}/p|q^2)$, using the notation of \eqref{eq:defIabqforASCpol}.
The multiplicity of the (generalized) eigenspaces is one.

Let $I(p,m,+,-)= I(q/p,-q^{1-2m}/p|q^2)$, see \eqref{eq:defIabqforASCpol}. We define the operator
\begin{equation}\label{eq:defUpsilonpm+-}
\begin{split}
&\Up^{+,-}_{p,m} \colon \cK(p,m,+,-) \to
L^2(I(p,m,+,-)), \\
f_{-m,-q^m pz,z}\mapsto &g_z(\,\cdot\, ;p,m,+,-) =
(-1)^m \, h_{m+\chi(p)+\chi(z)-1}(\,\cdot\,;\frac{q}{p}, -\frac{q^{1-2m}}{p}\mid q^2)
\end{split}
\end{equation}
in terms of Al-Salam--Chihara polynomials using the notation as in
\eqref{eq:deffnabqforASCpol}. Then $\Up^{+,-}_{p,m}$ gives the spectral decomposition of the action
of the Casimir operator on $\cK(p,m,+,-)$, so $\Up^{+,-}_{p,m}$ is a unitary intertwiner of the Casimir operator with the multiplication operator $M(x)$\index{M@$M(x)$} on $L^2(I(p,m,+,-))$. Here, and elsewhere, $M(g)$ denotes the operator of multiplication by the function $g$. The factor $(-1)^m$ in \eqref{eq:defUpsilonpm+-} is not of importance for the spectral decomposition of the Casimir operator, but is inserted in order to avoid signs later on when we decompose $\cK$ as a $\su$-module.

\subsubsection{The case $\ep=-$ and $\et=+$} \label{sssec:caseep-andet+}
Using the modular conjugation $J$, the case $\ep=-$ and $\et=+$ can be reduced to the case
$\ep=+$ and $\et=-$. Define $I(p,m,-,+)=I(p^{-1},-m,+,-)=I(pq,-pq^{1+2m}|q^2)$ using the
notation \eqref{eq:defIabqforASCpol}, then
\begin{equation}\label{eq:defUpsilonpm-+}
\begin{split}
&\Up^{-,+}_{p,m} = (-1)^m\, \Up^{+,-}_{p^{-1},-m}\circ J
\colon \cK(p,m,-,+) \to L^2(I(p,m,-,+)), \\
&f_{-m,-pq^mz,z}\, \mapsto\,  g_z(\,\cdot\, ;p,m,-,+) =
h_{\chi(z)-1}(\,\cdot\, ;pq,-pq^{1+2m}|q^2)
\end{split}
\end{equation}
gives the intertwiner of the action of the Casimir operator
$\Om_0\colon \cK(p,m,-,+)\to\cK(p,m,-,+)$ with $M(x)$. As
before $\Om_0$ has a unique extension to a bounded
self-adjoint operator with multiplicity one for the (generalized)
eigenspaces.

Combining Sections \ref{sssec:caseep+andet-} and \ref{sssec:caseep-andet+}
we see that for $\ep,\et \in \{-,+\}$, $\ep\neq \et$, we have the following description of the discrete spectrum $\mu(D(p,m,\ep,\et))$:
\begin{equation} \label{eq:spectrum+-}
\begin{split}
D(p,m,\ep,\et) =&  \{\, q^{1+2r} p^{-\ep} \mid r \in \N_0, \,q^{1+2r} p^{-\ep} > 1\,
  \} \\
  &\cup\, \{\, - q^{1+2r}
p^{-\ep} \mid r \in \Z,\, r \geq - \ep\,m,\,
\,q^{1+2r} p^{-\ep} > 1\,\} .
\end{split}
\end{equation}

\subsubsection{The case $\ep=-$ and $\et=-$} \label{sssec:caseep-andet-}
In this case the $q$-interval $J(p,m,-,-)=\{ z\in I_q \mid q^mpz\in I_q,\, \sgn(z)=-\}$ can be labeled by $\NN$. If we put $z=-q^{n+1}$, $n\in\NN$, then we need $m+\chi(p)+n\in\NN$ in order to have $q^mpz\in I_q$.
So we have to consider two cases; $m+\chi(p)\geq 0$ and $m+\chi(p)\leq 0$. Since the modular conjugation $J$ changes the sign of $m+\chi(p)$ we can restrict to $m+\chi(p)\geq 0$, and
obtain the other case using $J$.

We assume $m+\chi(p)\geq 0$ and put $z=-q^{n+1}$, $n\in\NN$, so that $n$ labels $J(p,m,-,-)$. Put
$e_n=f_{-m,-pq^{n+m+1},-q^{n+1}}$, then the expression \eqref{eq:actioncasimir} for $\Om_0f_{mpt}$ gives
\begin{equation}\label{eq:modCasonKpm--}
\begin{split}
2(-\Om_0)\, e_n = &\sqrt{(1-q^{2n+2})(1-p^2q^{2m+2n+2})}\, e_{n+1} +
pq^{2n+1}(1+q^{2m})\, e_n \\
&+ \sqrt{(1-q^{2n})(1-p^2q^{2m+2n})}\, e_{n-1},
\end{split}
\end{equation}
which we recognize using \eqref{eq:JacobioperAlSCpol} and
\eqref{eq:recAlSCpol} as the Jacobi operator $J(pq,pq^{1+2m} \mid q^2)$ for the Al-Salam--Chihara polynomials. So $\Om_0$ uniquely extends to a bounded
self-adjoint operator. Put $I(p,m,-,-)= - I(pq,pq^{1+2m}|q^2)$, see
\eqref{eq:defIabqforASCpol}, then
\begin{equation}\label{eq:defUpsilonpm--m+chi(p)geq0}
\begin{split}
&\Up^{-,-}_{p,m} \colon \cK(p,m,-,-) \to
L^2(I(p,m,-,-)), \\
f_{-m,q^m pz,z}\mapsto &g_z(\,\cdot\, ;p,m,-,-) =
(-1)^m \, h_{\chi(z)-1}(-\,\cdot\,;pq, pq^{1+2m}|q^2)
\end{split}
\end{equation}
intertwines the action of the Casimir operator with the
multiplication operator $M(x)$ on $L^2(I(p,m,-,-))$ for
$m+\chi(p)\geq 0$. Note that we take the normalized
Al-Salam--Chihara polynomials with a minus sign in front of the
argument because of the minus sign in front of $\Om_0$ in \eqref{eq:modCasonKpm--}.

In case $m+\chi(p)\leq 0$ we define $I(p,m,-,-)= I(p^{-1},-m,-,-)$ and
\begin{equation}\label{eq:defUpsilonpm--m+chi(p)leq0}
\begin{split}
\Up^{-,-}_{p,m} &= (-1)^{m+\chi(p)} \Up^{-,-}_{p^{-1},-m}\circ J
\colon \cK(p,m,-,-) \to L^2(I(p,m,-,-)), \\
&f_{-m,pq^mz,z} \mapsto g_z(\,\cdot\, ;p,m,-,-) = (-1)^{\chi(p)}
h_{m+\chi(p)+\chi(z)-1}(-\,\cdot\, ;q/p, q^{1-2m}/p\mid q^2).
\end{split}
\end{equation}
This gives two definitions in case $m+\chi(p)=0$ or $q^mp=1$,
and it is straightforward to check that they coincide. Now we have the intertwiner for the action of the Casimir operator with the multiplication operator $M(x)$ on $L^2(I(p,m,-,-))$ for all $m\in \Z$ and $p\in q^\Z$. Let us remark that the discrete spectrum $\mu(D(p,m,-,-))$ is given explicitly by
\begin{equation}\label{eq:spectrum--}
D(p,m,--)  =
\begin{cases}
 \{\, - q^{1+2r} p \mid r \in \N_0, \,q^{1+2r} p > 1\,\} \\
\ \cup\, \{\,  - q^{1+2(r+m)} p \mid r \in \N_0,
\,q^{1+2(r+m)} p > 1\,\}, & p q^m \leq 1,\\
\{\, - q^{1+2r} p^{-1} \mid r \in \N_0,
  \,q^{1+2r} p^{-1} > 1\,\} \\
\  \cup\, \{\,  - q^{1+2(r-m)} p^{-1}
\mid r \in \N_0, \,q^{1+2(r-m)} p^{-1} > 1 \,\}, & p q^m \geq 1.
\end{cases}
\end{equation}
In both cases at most one of the two sets is non-empty.

\subsubsection{The case $\ep=+$ and $\et=+$} \label{sssec:caseep+andet+}
In this case $J(p,m,+,+)$ can be labeled by $\Z$. We put $z=q^n$, $n\in\Z$, and $e_n=f_{-m, pq^{n+m},q^n}$, then \eqref{eq:actioncasimir} gives
\begin{equation}\label{eq:modCasonKpm++}
\begin{split}
2(-\Om_0)\, e_n =& \sqrt{(1+q^{2n})(1+p^2q^{2m+2n})}\, e_{n+1} -
pq^{2n-1}(1+q^{2m})\, e_n \\
&+ \sqrt{(1+q^{2n-2})(1-p^2q^{2m+2n-2})}\, e_{n-1}.
\end{split}
\end{equation}
Comparing this with \eqref{eq:JacobioperatorreclqJacf} we recognize $-2\Om_0$ as the (doubly infinity) Jacobi operator $L(q^{2-2m},q^{1-2m}/p,-q^2 \mid q^2)$ for the little $q$-Jacobi functions. Let us remark that there are other choices for the parameters which, of course, all lead to the same result; we can identify $-2\Om_0$ also with $L(q^{2m+2}, q/p, -q^2\mid q^2)$, $L(q^{2m+2},pq^{2m+1}, -q^{2-2m}/p^2\mid q^2)$ or $L(q^{2-2m},pq, -q^{2-2m}/p^2\mid q^2)$. Because of these symmetries we can obtain the spectral decomposition of a self-adjoint extension of $\Om_0$ in the case $m>0$ from the case $m<0$.

Let us first assume that $m \leq 0$. By Theorem \ref{thm:spectraldecompL} the unbounded operator $\Om_0$ is essentially self-adjoint for $m < 0$, so in this case $\Om_0$ has a unique self-adjoint extension $C$. The spectral decomposition of $C$ is described in Theorem \ref{thm:spectraldecompL}.
For $m=0$ we choose the self-adjoint extension $C$ of
$\Om_0\vert_{\cK(p,0,+,+)}$ with spectral decomposition as
described in Theorem \ref{thm:Jacobioperatorforcasec=q}. The multiplicity of the (generalized) eigenspaces is one. Put $I(p,m,+,+)= -I(q^{2-2m}, p^{-1}q^{1-2m};-q^2|q^2)$ using the
notation as in \eqref{eq:defIabqforlqJacfun}, then for $m\leq 0$
\begin{equation}\label{eq:defUpsilonpm++mleq0}
\begin{split}
&\Up^{+,+}_{p,m} \colon \cK(p,m,+,+) \to
L^2(I(p,m,+,+)), \\
f_{-m,q^m pz,z}\mapsto &g_z(\,\cdot\, ;p,m,+,+) =
(-1)^m \, j_{\chi(z)}(-\,\cdot\,; q^{2-2m}, p^{-1}q^{1-2m};-q^2\mid q^2)
\end{split}
\end{equation}
intertwines the action of the Casimir operator with the multiplication operator $M(x)$ on $L^2(I(p,m,+,+))$, using the notation \eqref{eq:orthonorlqJacfun}. To this end we need to
argue that $C$ agrees with $\Om^{+,+}_{p,m}$, which is clear in the case $m \not=0$. For $m=0$ we recall from the proof of Lemma \ref{lem:extensionsofOmega0} that $\Om^{+,+}_{p,0}$ is the self-adjoint extension of $\Om_0\vert_{\cK(p,0,+,+)}$ described in Theorem \ref{thm:Jacobioperatorforcasec=q}, as is $C$.

Note that we take in \eqref{eq:defUpsilonpm++mleq0} the normalized little $q$-Jacobi
functions with a minus sign in front of the argument because of the minus sign in front of $\Om_0$ in \eqref{eq:modCasonKpm++}.

For $m\geq 0$ define $I(p,m,+,+)= I(p^{-1},-m,+,+)$ and
\begin{equation}\label{eq:defUpsilonpm++mgeq0}
\begin{split}
\Up^{+,+}_{p,m} &= (-1)^m\, \Up^{+,+}_{p^{-1},-m}\circ U
\colon \cK(p,m,+,+) \to L^2(I(p,m,+,+)), \\
f_{m,pq^mz,z} &\mapsto g_z(\,\cdot\, ; p,m,+,+) =
j_{m+\chi(p)+\chi(z)} (-\,\cdot\, ;q^{2+2m},pq^{1+2m};-q^2|q^2).
\end{split}
\end{equation}
Note that this corresponds to the symmetry of the corresponding Jacobi operator,
\[
L(q^{2-2m},q^{1-2m}/p,-q^2 \mid q^2)=L(q^{2+2m},pq^{1+2m},-q^2 \mid q^2),
\]
as we observed earlier. As before, for $m \geq 0$ the operators $\Up_{p,m}^{+,+}$ intertwine the action of the Casimir operator with the multiplication operator $M(x)$ on $L^2(I(p,m,+,+))$.

It may seem that we now have two definitions for $\Up_{p,0}^{+,+}$, but it follows from \eqref{eq:specialsymmetryjk} that they coincide.

Finally, let us give an explicit description of the discrete spectrum $\mu(D(p,m,+,+))$:
\begin{equation}\label{eq:spectrum++}
\begin{split}
D(p,m,+,+)  = &\,\{\, q^{1+2k} p \mid k \in \Z, \,q^{1+2k} p > 1 \,\}\\
&\,\cup\,
 \{\,- q^{1+2r} p \mid r \in \Z, \, r \geq \max\{0,m\}, \,q^{1+2r} p
> 1\,\}
\\ &  \, \cup\, \{\, - q^{1+2r} p^{-1} \mid r \in \Z, \, r
  \geq \max\{0,-m\}, \,q^{1+2r} p^{-1} > 1\,\}.
\end{split}
\end{equation}
The last two sets are finite and at most one of them is non-empty,
while the first set is infinite.

\subsubsection{The spectral decomposition of $\Om$}
Gathering the results from the four different cases $\ep=\pm$ and $\et=\pm$, we obtain the spectral decomposition of the Casimir operator $\Om$.

\begin{thm} \label{thm:spectraldecompositionCasimir}
There exists a unique  unitary operator
\begin{equation} \label{eq:unitaryUpsilon}\index{Y@$\Up^{\ep,\et}_{p,m}$}\index{Y@$\Upsilon$}\index{L@$L^2(I(p,m,\ep,\et)$}
\begin{split}
\Up : \cK &\to \bigoplus_
{\substack{p \in q^\Z, m \in \Z \\ \ep,\et
\in \{-,+\}}} \, L^2\big(I(p,m,\ep,\et)\big), \\
\Up\big(\,f_{-m,\ep\,\et\,q^m\,p\,z,z}\,\big) &=
g_z(\,\cdot\,;p,m,\ep,\et) ,\ \ \forall\,  z \in J(p,m,\ep,\et),
\end{split}
\end{equation}
so that for $p \in q^\Z$, $m \in \Z$ and $\ep,\et \in \{-,+\}$, we have $\Up\big(\cK(p,m,\ep,\et)\big) = L^2\big(I(p,m,\ep,\et)\big)$.
Let $\Up^{\ep,\et}_{p,m} \colon \cK(p,m,\ep,\et) \to L^2(I(p,m,\ep,\et))$ be the restriction of $\Up$ to $\cK(p,m,\ep,\et)$. Then, for $p \in q^\Z$, $m \in \Z$ and $\ep,\et \in \{-,+\}$,
\[
\Up^{\ep,\et}_{p,m} \,\,\Om^{\ep,\et}_{p,m}\,
\bigl(\Up^{\ep,\et}_{p,m}\bigr)^* = M(x)
\qquad \text{on } L^2(I(p,m,\ep,\et))\ .
\]
Here $I(p,m,\ep,\et)\,=\, [-1,1]\,\cup\, \si_d(p,m,\ep,\et)$\index{S@$\si_d(p,m,\ep,\et)$} with
$\si_d(p,m,\ep,\et)\,=\, \mu\bigl(D(p,m,\ep,\et)\bigr)$\index{D@$D(p,m,\ep,\et)$} and with
$D(p,m,\ep,\et)$ given in Section \ref{ssec:spectraldecompCasimir}
for the various choices of $p$, $m$, $\ep$ and $\et$.
\end{thm}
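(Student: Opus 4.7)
The plan is to assemble the theorem from the four case-by-case spectral decompositions already carried out in Sections \ref{sssec:caseep+andet-}--\ref{sssec:caseep+andet+}. Starting from the orthogonal decomposition
\[
\Om = \bigoplus_{\substack{\ep,\et\in\{-,+\}\\ p\in q^\Z,\, m\in\Z}} \Om^{\ep,\et}_{p,m}
\]
established in \eqref{eq:decompCasimir}, it suffices to exhibit, for each quadruple $(p,m,\ep,\et)$, a unitary
\[
\Up^{\ep,\et}_{p,m}\colon \cK(p,m,\ep,\et)\to L^2\bigl(I(p,m,\ep,\et)\bigr)
\]
satisfying $\Up^{\ep,\et}_{p,m}\,\Om^{\ep,\et}_{p,m}\,(\Up^{\ep,\et}_{p,m})^\ast = M(x)$ and sending each basis vector $f_{-m,\ep\et q^m p z,z}$ to $g_z(\,\cdot\,;p,m,\ep,\et)$. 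The overall unitary $\Up$ is then the orthogonal sum, and its uniqueness follows from the fact that the basis vectors are prescribed.

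For $(\ep,\et)=(+,-)$ and $(-,+)$ the operator $\Om_0\vert_{\cK_0(p,m,\ep,\et)}$ is bounded (by the explicit form \eqref{eq:actioncasimir} together with the description of $J(p,m,\ep,\et)$ as a bounded $q$-half-line), and under the identification labeling the basis by $\NN$, expressions \eqref{eq:modCasonKpm+-} and \eqref{eq:modCasonKpm--} identify $\pm 2\Om_0$ with the Jacobi operator of the Al-Salam--Chihara polynomials. The required unitary is then read off from the orthonormal family $h_n(\,\cdot\,;a,b\mid q^2)$ via \eqref{eq:defUpsilonpm+-}, \eqref{eq:defUpsilonpm-+}, \eqref{eq:defUpsilonpm--m+chi(p)geq0}; for $(-,-)$ with $m+\chi(p)\leq 0$ one reduces to the case $m+\chi(p)\geq 0$ through the modular conjugation $J$, using $J\Om_0=\Om_0 J$ and $J\colon \cK(p,m,\ep,\et)\to\cK(p^{-1},-m,\et,\ep)$ from \eqref{eq:JonKpmepet}; the overlap at $m+\chi(p)=0$ is a direct check. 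The discrete spectra \eqref{eq:spectrum+-} and \eqref{eq:spectrum--} are then read off from the support \eqref{eq:defIabqforASCpol} of the orthogonality measure for $h_n$.

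The case $(+,+)$ is the delicate one and is where I expect the main obstacle to lie. Here $J(p,m,+,+)$ is a two-sided $q$-line labeled by $\Z$, so the Jacobi matrix in \eqref{eq:modCasonKpm++} is doubly infinite; by \eqref{eq:JacobioperatorreclqJacf} it matches the little $q$-Jacobi function operator $L(q^{2-2m},q^{1-2m}/p,-q^2\mid q^2)$. For $m<0$ Theorem \ref{thm:spectraldecompL} gives essential self-adjointness and the spectral decomposition in terms of $j_k$, yielding $\Up^{+,+}_{p,m}$ via \eqref{eq:defUpsilonpm++mleq0}; for $m>0$ the symmetry of $L(c,d,r\mid q^2)$ under $(c,d)\mapsto (q^{4m}c,pq^{1+2m})$ lets us reduce to $-m$ through $J$, and \eqref{eq:specialsymmetryjk} guarantees consistency at $m=0$. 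The obstacle is precisely $m=0$: $\Om_0\vert_{\cK_0(p,0,+,+)}$ is not essentially self-adjoint, so one must check that the particular self-adjoint extension whose spectral decomposition is described by Theorem \ref{thm:Jacobioperatorforcasec=q} is the same as $\Om^{+,+}_{p,0}$ from \eqref{eq:defOmegapmepet}. This identification has in fact been pinned down already in the proof of Lemma \ref{lem:extensionsofOmega0}: the affiliation of $\Om$ to $\hat M$ forces $\Om^{+,+}_{p,0}$ to coincide with the extension singled out in Theorem \ref{thm:Jacobioperatorforcasec=q}, so no further work is needed beyond invoking that lemma.

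Finally, the discrete parts of the spectra in each case are read directly from the known support \eqref{eq:defIabqforASCpol} and \eqref{eq:defIabqforlqJacfun} of the orthogonality measures, yielding \eqref{eq:spectrum+-}, \eqref{eq:spectrum--} and \eqref{eq:spectrum++}. Assembling the four families $\Up^{\ep,\et}_{p,m}$ into a single unitary $\Up$ produces the statement; uniqueness is automatic from the prescribed action on the orthonormal basis.
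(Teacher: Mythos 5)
Your proposal is correct and follows essentially the same route as the paper: the theorem is obtained by gathering the four case-by-case diagonalizations of $\Om^{\ep,\et}_{p,m}$ via Al-Salam--Chihara polynomials and little $q$-Jacobi functions, with the $(-,+)$ and the remaining $(-,-)$, $(+,+)$ sub-cases reduced through the modular conjugation $J$, and with the only delicate point --- the non-essentially-self-adjoint case $m=0$, $\ep=\et=+$ --- settled by identifying $\Om^{+,+}_{p,0}$ with the extension of Theorem \ref{thm:Jacobioperatorforcasec=q} via Lemma \ref{lem:extensionsofOmega0}, exactly as the paper does. The only blemish is a misattributed citation (\eqref{eq:modCasonKpm--} belongs to the $(-,-)$ case, not to $(+,-)$/$(-,+)$), which does not affect the argument.
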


\subsection{The decomposition of the GNS-space $\cK$ as a $\su$-module} \label{ssec:decompUq(su11)}
The (generalized) eigenspaces of the Casimir operator correspond to invariant subspaces under the action of $\su$. In this way, the spectral decomposition of $\Om$ from Section \ref{ssec:spectraldecompCasimir} leads to the decomposition of the GNS-space $\cK$ into irreducible $\ast$-representations of $\su$. Let us first recall these representations of $\su$.

The $\ast$-representations of $\su$ require unbounded operators, and for this we use the theory as developed in \cite[Ch. 8]{Schm}. In particular this means that for such a representation $\pi$ in a Hilbert space $V$ there exists a common dense domain ${\mathcal D}\subset V$,
which is invariant for $\pi(X)$ for all $X\in\su$, such that the relations
of \eqref{eq:defrelUqsu11} remain valid when acting on $v\in {\mathcal D}$.
Moreover, we require $\langle \pi(X)v,w\rangle_V=\langle v, \pi(X^\ast)w\rangle_V$
for all $v,w\in{\mathcal D}$. It follows that each $\pi(X)$, $X\in\su$, is closable.

Admissible representations of $\su$ are $\ast$-representations in a Hilbert space $V$ acting by unbounded operators, such that $V$ decomposes into finite-dimensional eigenspaces for the action of $\KA$, and such that the eigenvalues of $\KA$ are of the form $q^k$, $k\in \hf\Z$. Then the following irreducible admissible representations exhaust the list, see e.g. \cite{BurbK}, \cite{MasuMNNSU}, \cite{VaksK}. In each of these cases the common invariant dense domain is the subspace of finite linear combinations of the basis vectors $e_n$.

Note that each of these admissible irreducible representations
is completely determined by the eigenvalue of the
Casimir operator $\OmA$ on $V$ and the spectrum of $\KA$.

\noindent
\textbf{Positive discrete series.}\index{U@$\su$-representations!positive discrete series}
\quad The representation space is $\ell^2(\NN)$ with orthonormal basis $\{ e_n\}_{n\in\NN}$.
Let $k\in\hf\N$, define the action of the generators by
\begin{equation}\label{eq:posdiscrserrep}
\begin{split}
\KA\cdot e_n&=q^{k+n}\, e_n,\qquad
\KA^{-1}\cdot e_n=q^{-k-n}\, e_n,\\
(q^{-1}-q)\,\,\EA\cdot e_n&=  q^{-\hf-k-n}
\sqrt{(1-q^{2n+2})(1-q^{4k+2n})}\,e_{n+1},\\
(q^{-1}-q)\,\,\FA\cdot e_n&=
q^{\hf-k-n} \sqrt{(1-q^{2n})(1-q^{4k+2n-2})}\,e_{n-1},
\end{split}
\end{equation}
with the convention $e_{-1}=0$. This representation
is denoted by $D_k^+$\index{D@$D_k^+$} and $D_k^+(\OmA)=-\mu(q^{1-2k})$.

\noindent
\textbf{Negative discrete series.}\index{U@$\su$-representations!negative discrete series}
\quad The representation space is
$\ell^2(\NN)$ with orthonormal basis $\{ e_n\}_{n\in\NN}$.
Let $k\in\hf\N$, and define the action of the generators by
\begin{equation}\label{eq:negdiscrserrep}
\begin{split}
\KA\cdot e_n&=q^{-k-n}e_n,\qquad
\KA^{-1}\cdot e_n= q^{k+n}\, e_n,\\
(q^{-1}-q)\,\,\EA\cdot e_n&= q^{\hf-k-n}
\sqrt{(1-q^{2n})(1-q^{4k+2n-2})}\,e_{n-1}, \\
(q^{-1}-q)\,\,\FA\cdot e_n&=
q^{-\hf-k-n} \sqrt{(1-q^{2n+2})(1-q^{4k+2n})}\,e_{n+1},
\end{split}
\end{equation}
with the convention $e_{-1}=0$. This representation is denoted by
$D_k^-$\index{D@$D_k^-$} and $D_k^-(\OmA)=-\mu(q^{1-2k})$.

\noindent
\textbf{Principal series.}\index{U@$\su$-representations!principal series}
\quad The representation space is
${\ell^2}(\Z)$ with orthonormal basis $\{ e_n\}_{n\in\Z}$. Let
$0\leq b\leq-\frac{\pi}{2\ln q}$ and $\ep\in \{0,\hf\}$
and assume  $(b,\ep)\not=(0,\hf)$. The action
of the generators is defined by
\begin{equation}\label{eq:princunitaryserrep}
\begin{split}
\KA\cdot e_n&= q^{n+\ep}\,e_n,\qquad
\KA^{-1}\cdot e_n= q^{-n-\ep}\, e_n,\\
(q^{-1}-q)\,\,\EA\cdot e_n&=
q^{-\hf-n-\ep} \sqrt{(1-q^{2n+1+2\ep+2ib})(1-q^{2n+1+2\ep-2ib})}
\,e_{n+1}, \\
(q^{-1}-q)\,\,\FA\cdot e_n&=
q^{\hf-n-\ep} \sqrt{(1-q^{2n-1+2\ep+2ib})(1-q^{2n-1+2\ep-2ib})} \,e_{n-1}.
\end{split}
\end{equation}
We denote the representation by $\pi_{b,\ep}$.\index{P@$\pi_{b,\ep}$}
In case $(b,\ep)=(0,\hf)$ this still defines an
admissible unitary representation.
It splits as the direct sum $\pi_{-\frac{\pi}{2\ln q},\hf}\cong D_{\hf}^+\oplus
D_{\hf}^-$ of a positive and negative
discrete series representation by restricting to the invariant subspaces
$\text{span}\{ e_n\mid n\geq 0\}$ and
to $\text{span}\{ e_n\mid n <0\}$. We keep this convention for
$\pi_{-\frac{\pi}{2\ln q},\hf}$. Note that
$\pi_{b,\ep}(\OmA)=\mu(q^{2ib}) = \cos(-2b\ln q)$.

\noindent
\textbf{Strange series.}\index{U@$\su$-representations!strange series}
\quad The representation space is
${\ell^2}(\Z)$ with orthonormal basis $\{ e_n\}_{n\in\Z}$. Let $\ep\in
\{0,\hf\}$, and $a>0$. The action of the generators is defined by
\begin{equation}\label{eq:strangeserrep}
\begin{split}
\KA\cdot e_n&= q^{n+\ep}\, e_n,\qquad
\KA^{-1}\cdot e_n= q^{-n-\ep}\, e_n,\\
(q^{-1}-q)\,\,\EA\cdot e_n&= q^{-n-\ep-\hf} \sqrt{(1+q^{2n+2\ep+1+2a})
(1+q^{2n+2\ep-2a+1})}\,e_{n+1},\\
(q^{-1}-q)\,\,\FA\cdot e_n&=
q^{-n-\ep+\hf} \sqrt{(1+q^{2n+2\ep-1+2a}) (1+q^{2n+2\ep-2a-1})}\,e_{n-1}.
\end{split}
\end{equation}
We denote this representation by  $\pi_{a,\ep}^{S}$.\index{P@$\pi_{a,\ep}^{S}$}
Note that $\pi_{a,\ep}^S(\OmA)=\mu(q^{2a})$.

\noindent
\textbf{Complementary series.}\index{U@$\su$-representations!complementary series}
\quad This series of representations
acts in ${\ell^2}(\Z)$. The actions of the generators follow
from the action \eqref{eq:princunitaryserrep} by putting $\ep=0$ and
formally replacing $-\hf+ib$ by $\la$ and taking
$-\hf<\la<0$. This series of representations does not play a role in this paper.

\medskip

We define $\tau(\KA)=\KA^{-1}$, $\tau(\KA^{-1})=\KA$, $\tau(\EA)=-\FA$, $\tau(\FA)=-\EA$. From \eqref{eq:defrelUqsu11} we check that $\tau$ extends to an involutive algebra homomorphism $\tau\colon\su\to\su$. From \eqref{eq:Casimir} it is clear that $\tau(\OmA)=\OmA$. Composing an irreducible admissible representation with the involutive algebra automorphism $\tau\colon\su\to\su$ gives an admissible irreducible representation of $\su$. This easily gives
\begin{equation}\label{eq:tauopirreps}
D^+_k\circ\tau \cong D^-_k, \quad \pi_{b,\ep}\circ\tau \cong \pi_{b,\ep},
\quad \pi^S_{a,\ep}\circ\tau \cong \pi^S_{a,\ep}.
\end{equation}
Denoting the orthonormal bases in the representations on the left hand side of
\eqref{eq:tauopirreps} by $\{e_n^\tau\}$ we can describe the unitary
intertwiners as $e_n^\tau\mapsto (-1)^n e_n$ in the first case and as
$e_n^\tau\mapsto (-1)^n e_{-n-2\ep}$ for the last two cases.

Recall that the modular conjugation $J:\cK\to\cK$ satisfies that $E_0^\dag J=-E_0J$ and $JK_0=K_0^{-1}J$, and consequently $J \Om_0 = \Om_0 J$. This implies that $J$ implements the involutive algebra automorphism $\tau\colon\su\to\su$.

The spectral decomposition of the Casimir operator $\Om$ from Section \ref{ssec:spectraldecompCasimir} gives a decomposition of $\cK$ into invariant subspaces for the action of $\Om$. Let $p \in q^\Z$ and $\ep,\et \in \{-,+\}$. It follows from
\eqref{eq:E0E0dagKoncK0} that the space $\cK_0(p,\ep,\et)=\bigoplus_{m\in\Z} \cK_0(p,m,\ep,\et)$ is invariant for the action of $\su$. We denote by $\pi_\cK(p,\ep,\et)$\index{P@$\pi_\cK(p,\ep,\et)$}
\index{U@$\su$-representations!P@$\pi_\cK(p,\ep,\et)$}
the representation of $\su$ on $\cK(p,\ep,\et)$.\index{K@$\cK(p,\ep,\et)$}
In the following we decompose $\pi_\cK(p,\ep,\et)$ in terms of irreducible admissible $\ast$-representations of $\su$ using the spectral decomposition of  $\Om \colon \cK(p,m,\ep,\et) \to \cK(p,m,\ep,\et)$ from Section \ref{ssec:spectraldecompCasimir}. As before, we have to distinguish four cases depending on the signs of $\ep$ and $\et$. It turns out that the representation label $\ep$ for the principal and strange series representations occurring in the decomposition of $\pi_\cK(p,\ep,\et)$, depends on the parameter $p$. For this reason we define $\epsilon : q^\Z \to \{0,\hf\}$ by
\begin{equation} \label{eq:epsilon(p)}\index{E@$\epsilon(p)$}
\epsilon(p) = \hf \chi(p) \mod 1.
\end{equation}

\subsubsection{The case $\ep=+$ and $\et=-$}
In this case the spectral decomposition of the Casimir operator acting on $\cK(p,m,+,-)$ is determined by \eqref{eq:defUpsilonpm+-}. From the explicit action of $E_0^\dag$ \eqref{eq:dualE0forsu}, \eqref{eq:defUpsilonpm+-} and Lemma \ref{lem:contiguousASC} we obtain
\begin{equation}\label{eq:actionE0+inUpsilonpm+-}
\begin{split}
(q^{-1}-q) \,\,\Up^{+,-}_{p,m-1} \, (E^{+-}_{p,m-1})^*\, (\Up^{+,-}_{p,m})^\ast
&= \, q^{\hf-m}p^{-\hf}
M\big(\sqrt{1+2xq^{2m-1}p+q^{4m-2}p^2}\big)\\
&\, \colon L^2(I(p,m,+,-)) \to L^2(I(p,m-1,+,-)).
\end{split}
\end{equation}
Note that $L^2(I(p,m,+,-)) = L^2(I(p,m-1,+,-))$, unless $q^{1-2m}/p>1$. In this case $I(p,m-1,+,-)=I(p,m,+,-)\backslash\{\mu(-q^{1-2m}/p)\}$, and the multiplication
operator is zero for the point $\mu(-q^{1-2m}/p)$. So the multiplication operator in \eqref{eq:actionE0+inUpsilonpm+-} is well-defined.

{}From \eqref{eq:E0E0dagKoncK0} we have $\Up^{+,-}_{p,m} \, K\,
(\Up^{+,-}_{p,m})^\ast = q^m p^{\hf}\, \Id$, so
\eqref{eq:actionE0+inUpsilonpm+-} and \eqref{eq:Casimir} give
\begin{equation}\label{eq:actionE0inUpsilonpm+-}
\begin{split}
(q^{-1}-q)\,\,\Up^{+,-}_{p,m+1} \,E^{+-}_{p,m}\, (\Up^{+,-}_{p,m})^\ast
&= \, q^{-\hf-m} p^{-\hf} M\big(\sqrt{1+2xq^{2m+1}p+q^{4m+2}p^{2}}\big)\\
&\, \colon L^2(I(p,m,+,-)) \to L^2(I(p,m+1,+,-)).
\end{split}
\end{equation}
This can also be derived directly from a similar identity for the
Al-Salam--Chihara polynomials.

$\cK(p,+,-)$ is not an admissible representation of $\su$, since the
$K$-eigenspaces are not finite dimensional. However,
since the actions of $E$ and $E^\ast$ in \eqref{eq:actionE0+inUpsilonpm+-},
\eqref{eq:actionE0inUpsilonpm+-} match the actions given in the list of irreducible $\ast$-representations for $\su$, we can still determine the decomposition explicitly. The
possible eigenvalues of the Casimir and the eigenvalues of $K$ then
determine the decomposition. In Theorem \ref{thm:decompKp+-asUqmod} we deal with the positive discrete series representations, since $I(p,m-1,+,-) \subset I(p,m,+,-)$ for $m$ large enough implying that $E$ acts as the creation operator. The direct integral and direct sums
of representations of $\su$ by unbounded operators in Theorem \ref{thm:decompKp+-asUqmod} uses the construction of \cite[Ch. 8]{Schm}.

\begin{thm} \label{thm:decompKp+-asUqmod}
The decomposition of $\pi_\cK(p,+,-)$ into irreducible admissible $\ast$-re\-pre\-sen\-ta\-tions is given by
\[
\pi_\cK(p,+,-) \cong \int_0^{-\pi/2\ln q} \pi_{b,\epsilon(p)} \, db
\oplus \bigoplus_{\substack{l\in\Z\\
2l+\chi(p)>1}} D^+_{l+\hf\chi(p)} \oplus
\bigoplus_{\substack{l\in\NN\\
2l-\chi(p)<-1}} \pi^S_{q^{1+2l}/p,\epsilon(p)}.
\]
\end{thm}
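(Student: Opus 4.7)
The plan is to exploit the centrality of the Casimir: by Theorem~\ref{thm:Casimirwelldefinedandchar}, $\Om$ strongly commutes with $E$ and $K$, so its spectral projections produce $\su$-invariant closed subspaces of $\cK(p,+,-)$, and fibering over $\si(\Om\vert_{\cK(p,+,-)})$ will yield the claimed decomposition into irreducibles.

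First I would transport the $\su$-action through the unitary $\Up = \bigoplus_m \Up^{+,-}_{p,m}$ from Theorem~\ref{thm:spectraldecompositionCasimir}, giving $\cK(p,+,-) \cong \bigoplus_{m\in\Z} L^2(I(p,m,+,-))$. Under this isomorphism $K$ acts as $q^m p^{\hf}\,\Id$ on the $m$-th summand (by~\eqref{eq:E0E0dagKoncK0}), and $E$ sends the $m$-th summand to the $(m{+}1)$-st by multiplication with $\frac{q^{-\hf-m}p^{-\hf}}{q^{-1}-q}\sqrt{1+2xq^{2m+1}p+q^{4m+2}p^2}$, by~\eqref{eq:actionE0inUpsilonpm+-}. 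For each point $x \in I_\infty := \bigcup_m I(p,m,+,-)$ I introduce the fiber
\[
V_x = \overline{\Span}\{\,e_m(x) : m \in M(x)\,\}, \qquad M(x) = \{\,m\in\Z : x \in I(p,m,+,-)\,\},
\]
where $e_m(x)$ is the (normalised, generalised) eigenvector of $\Om$ in $\cK(p,m,+,-)$ at eigenvalue $x$. Each $V_x$ is $\su$-stable with $\Om$ acting by the scalar $x$; the $K$-spectrum on $V_x$ is $\{q^{m+\chi(p)/2} : m \in M(x)\}$, so $\KA$ carries shift $\epsilon(p) = \hf\chi(p)$ mod $1$, as required by the representation labels that appear in the theorem.

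The identification of each $V_x$ then splits into three cases determined by~\eqref{eq:spectrum+-}. For the continuous part $x \in [-1,1]$ one has $M(x) = \Z$, so $V_x \cong \ell^2(\Z)$; writing $x = \mu(q^{2ib})$ with $b \in [0,-\pi/(2\ln q)]$ the square root factorises as $\bigl|1 \pm q^{2m+1+2ib}p\bigr|$, matching the $E$-coefficient of $\pi_{b,\epsilon(p)}$ in~\eqref{eq:princunitaryserrep} after the shift $n = m + \chi(p)/2 - \epsilon(p)$. For a negative discrete eigenvalue $x = -\mu(q^{1+2r}/p) < -1$ (with $r\in\Z$, $q^{1+2r}/p>1$), \eqref{eq:spectrum+-} gives $M(x) = \{m : m \geq -r\}$, and the $E^\ast$-coefficient $\sqrt{1+2xq^{2m-1}p+q^{4m-2}p^2}$ factors as $\sqrt{(1-q^{2m+2r})(1-q^{2m-2r-2}p^2)}$, which vanishes at $m=-r$ (and only there within the allowed range, by the inequality $2r<\chi(p)-1$); hence $e_{-r}(x)$ is a lowest weight vector and $V_x \cong D^+_{-r+\chi(p)/2} = D^+_{l+\chi(p)/2}$ for $l=-r$, giving exactly the range $l \in \Z$ with $2l+\chi(p)>1$. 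For a positive discrete eigenvalue $x = \mu(q^{1+2r}/p) > 1$ with $r\in\NN$ and $q^{1+2r}/p>1$, \eqref{eq:spectrum+-} gives $M(x) = \Z$, and the $E$-coefficient factors as $\sqrt{(1+q^{2m+2+2r})(1+p^2 q^{2m-2r})}$ with neither factor vanishing, so $V_x \cong \ell^2(\Z)$ and matches~\eqref{eq:strangeserrep} for the strange series $\pi^S_{q^{1+2r}/p,\epsilon(p)}$ in the author's labelling.

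The main technical obstacle is the direct-integral bookkeeping in the continuous case: the orthogonality measures $d\mu_m$ on $[-1,1]$ built into $\Up^{+,-}_{p,m}$ depend on $m$ through the Al-Salam--Chihara parameter $-q^{1-2m}/p$, so expressing $\bigoplus_m L^2([-1,1],d\mu_m)$ as a genuine direct integral $\int_0^{-\pi/(2\ln q)} \ell^2(\Z)\,db$ requires rescaling each fibre basis by $\sqrt{d\mu_m/db}$ and verifying that after these Radon--Nikodym factors are absorbed into the vectors $e_m(x)$, the $E$- and $K$-actions sharpen precisely to~\eqref{eq:princunitaryserrep}. The discrete pieces are comparatively straightforward, since an admissible $\su$-representation is determined by its Casimir eigenvalue together with its $K$-spectrum, both of which can be read off directly from~\eqref{eq:spectrum+-}.
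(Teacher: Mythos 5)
Your proposal is correct and follows essentially the same route as the paper: transport the $\su$-action through $\Up^{+,-}_{p,m}$, use that an admissible irreducible representation is determined by its Casimir eigenvalue together with its $\KA$-spectrum, and read the multiplicities off \eqref{eq:spectrum+-}; your explicit factorizations of the $E$- and $E^\ast$-coefficients at the discrete eigenvalues are precisely the verifications the paper leaves implicit. The only loose end is the ``$\pm$'' in the continuous case: with $x=\mu(q^{2ib})$ the coefficient is $|1+pq^{2m+1+2ib}|$ whereas $\pi_{b,\epsilon(p)}$ carries $|1-q^{2n+1+2\epsilon(p)+2ib}|$, so the fibre at Casimir eigenvalue $x$ is $\pi_{b(-x),\epsilon(p)}$ with $\mu(q^{2ib(-x)})=-x$; since $x\mapsto -x$ preserves $[-1,1]$ and Lebesgue measure, the stated direct integral over $b\in[0,-\pi/2\ln q]$ is unaffected.
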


\subsubsection{The case $\ep=-$ and $\et=+$}
In Section \ref{sssec:caseep-andet+} we obtained the spectral decomposition of $\Om$ in this case from the case $\ep=+$ and $\et=-$ using the modular conjugation $J$. For the actions of $E$ and $E^\ast$ we can do the same. Using $JE_0=-E_0^\dag J$ we obtain from \eqref{eq:actionE0+inUpsilonpm+-} and
\eqref{eq:actionE0inUpsilonpm+-}
\begin{equation}\label{eq:actionE0E0+inUpsilonpm-+}
\begin{split}
(q^{-1}-q)\,\,\Up^{-,+}_{p,m+1} \, E^{-+}_{p,m}\, (\Up^{-,+}_{p,m})^\ast
&= \, q^{-\hf-m} p^{-\hf}
M(\sqrt{1+2xq^{2m+1}p+q^{4m+2}p^{2}})\\
&\, \colon L^2(I(p,m,-,+)) \to L^2(I(p,m+1,-,+)), \\
(q^{-1}-q)\,\,\Up^{-,+}_{p,m-1} \,  (E^{-+}_{p,m-1})^*\, (\Up^{-,+}_{p,m})^\ast
&= \, q^{\hf-m}p^{-\hf}
M(\sqrt{1+2xq^{2m-1}p+q^{4m-2}p^2})\\
&\, \colon L^2(I(p,m,-,+)) \to L^2(I(p,m-1,-,+)).
\end{split}
\end{equation}
This can also be proved in the same way as \eqref{eq:actionE0+inUpsilonpm+-} and \eqref{eq:actionE0inUpsilonpm+-}.

{}From $\epsilon(p^{-1})=\epsilon(p)$ and \eqref{eq:tauopirreps} we obtain from Theorem
\ref{thm:decompKp+-asUqmod} the following decomposition of $\cK(p,-,+)$ as $\su$-module.

\begin{thm} \label{thm:decompKp-+asUqmod}
Let $\epsilon(p) =\hf\chi(p)\mod 1$. The decomposition of $\pi_\cK(p,-,+)$ into irreducible admissible
$\ast$-representations is given by
\[
\pi_\cK(p,-,+) \cong \int_0^{-\pi/2\ln q} \pi_{b,\epsilon(p)} \,
db \oplus \bigoplus_{\substack{l\in\Z\\
2l>1+\chi(p)}} D^-_{l-\hf\chi(p)} \oplus
\bigoplus_{\substack{l\in\NN \\
2l+\chi(p)<-1}} \pi^S_{pq^{1+2l},\epsilon(p)}.
\]
\end{thm}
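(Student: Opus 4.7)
The plan is to deduce Theorem \ref{thm:decompKp-+asUqmod} from the already-established Theorem \ref{thm:decompKp+-asUqmod} by exploiting the modular conjugation $J$. Recall from \eqref{eq:JonKpmepet} that $J\colon\cK(p,m,-,+)\to \cK(p^{-1},-m,+,-)$, and that $J\Om_0 = \Om_0 J$, $JE_0 = -E_0^\dagger J$, $JK_0 = K_0^{-1}J$. Consequently, $J$ implements on the $\ast$-representation level the involutive algebra automorphism $\tau\colon\su\to\su$ with $\tau(\KA)=\KA^{-1}$, $\tau(\EA)=-\FA$, $\tau(\FA)=-\EA$. Gathering over $m\in\Z$, the restriction of $J$ to $\cK(p,-,+)$ yields a unitary intertwiner between $\pi_\cK(p,-,+)$ and $\pi_\cK(p^{-1},+,-)\circ\tau$.

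Next, I would apply Theorem \ref{thm:decompKp+-asUqmod} with $p$ replaced by $p^{-1}$. Using $\chi(p^{-1}) = -\chi(p)$ and $\epsilon(p^{-1}) = \hf\chi(p^{-1})\!\!\mod 1 = \epsilon(p)$, this gives
\[
\pi_\cK(p^{-1},+,-) \cong \int_0^{-\pi/2\ln q} \pi_{b,\epsilon(p)}\, db
\oplus \bigoplus_{\substack{l\in\Z\\ 2l>1+\chi(p)}} D^+_{l-\hf\chi(p)} \oplus
\bigoplus_{\substack{l\in\NN\\ 2l+\chi(p)<-1}} \pi^S_{pq^{1+2l},\epsilon(p)}.
\]
To conclude the proof it then remains to post-compose with $\tau$ and to invoke the isomorphisms \eqref{eq:tauopirreps}, which transform each irreducible summand in the prescribed manner: $\pi_{b,\epsilon(p)}\circ\tau\cong\pi_{b,\epsilon(p)}$ leaves the continuous part untouched, $D^+_k\circ\tau\cong D^-_k$ converts the positive discrete series summands into negative discrete series, and $\pi^S_{a,\epsilon(p)}\circ\tau\cong\pi^S_{a,\epsilon(p)}$ fixes the strange series summands. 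This yields precisely the decomposition claimed in Theorem \ref{thm:decompKp-+asUqmod}.

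There is essentially no obstacle, since all of the nontrivial analysis (spectral decomposition of $\Om$, matching of the actions of $E$ and $K$ with the representation list, and identification of the discrete/continuous spectra via the Al-Salam--Chihara polynomials and little $q$-Jacobi functions) was carried out in the course of proving the $(+,-)$ case and is transported verbatim by the unitary $J$. Alternatively one could verify the decomposition directly from \eqref{eq:actionE0E0+inUpsilonpm-+} together with the description of $I(p,m,-,+)$ in Section \ref{sssec:caseep-andet+}, matching eigenvalues of $\Om$ and $K$ with those in the representations \eqref{eq:posdiscrserrep}--\eqref{eq:strangeserrep}; however, routing through $\tau$ and the $(+,-)$ case is strictly shorter and requires no further bookkeeping of parameters.
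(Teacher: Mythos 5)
Your proposal is correct and follows exactly the paper's own route: the paper likewise deduces this theorem from Theorem \ref{thm:decompKp+-asUqmod} by using the modular conjugation $J$ (which implements $\tau$ and maps $\cK(p,-,+)$ to $\cK(p^{-1},+,-)$), the identity $\epsilon(p^{-1})=\epsilon(p)$, and the isomorphisms \eqref{eq:tauopirreps}. Your bookkeeping of the substitution $\chi(p^{-1})=-\chi(p)$ in the index sets and parameters also matches the stated result.
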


\subsubsection{The case $\ep=-$ and $\et=-$}
Similar as in the case $\ep=+$ and $\et=-$ we find
\begin{equation}\label{eq:actionE0E0+inUpsilonpm--}
\begin{split}
(q^{-1}-q)\,\,\Up^{-,-}_{p,m+1} \,  E^{--}_{p,m}\, (\Up^{-,-}_{p,m})^*
= \, q^{-\hf-m} p^{-\hf}& M(\sqrt{1+2xq^{2m+1}p+q^{4m+2}p^{2}})\\
&\, \colon L^2(I(p,m,-,-)) \to L^2(I(p,m+1,-,-)), \\
(q^{-1}-q)\,\,\Up^{-,-}_{p,m-1} \,  (E^{--}_{p,m-1})^*\,
(\Up^{-,-}_{p,m})^* = \, q^{\hf-m}p^{-\hf} &M(\sqrt{1+2xq^{2m-1}p+q^{4m-2}p^2})\\
&\, \colon L^2(I(p,m,-,-)) \to L^2(I(p,m-1,-,-)).
\end{split}
\end{equation}
In the first equation we assume $m+\chi(p)\geq 0$, and in
the second we require $m+\chi(p)>0$.

It is now a matter of bookkeeping to keep track of the discrete
spectrum of $\Om$ in $\cK(p,-,-)$ in order to find the
discrete summands in the decomposition of $\cK(p,-,-)$ as
$\su$-module. Note that for $pq>1$ there is always discrete
spectrum for $m$ large, so that $E$ acts as the creation
operator and hence we have positive discrete series
representations in the decomposition. Similarly, $q>p$ leads
to the occurrence of negative discrete series
representations in the decomposition.

\begin{thm} \label{thm:decompKp--asUqmod}
The decomposition of $\pi_\cK(p,-,-)$
into irreducible admissible representations is given by
\[
\pi_\cK(p,-,-) \cong \int_0^{-\pi/2\ln q} \pi_{b,\epsilon(p)} \, db
\oplus \bigoplus_{\substack{l\in\NN \\
2l+\chi(p)<-1}} D^+_{-\hf\chi(p)-l}
\oplus \bigoplus_{\substack{ l\in\NN\\
2l-\chi(p)<-1}} D^-_{\hf\chi(p)-l}.
\]
\end{thm}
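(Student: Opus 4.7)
The strategy mirrors the proofs of Theorems \ref{thm:decompKp+-asUqmod} and \ref{thm:decompKp-+asUqmod}. I would first assemble the family $\Up^{-,-}_{p,m}$ of Section \ref{sssec:caseep-andet-} into a unitary isomorphism
\[
\Up^{-,-}_p = \bigoplus_{m \in \Z} \Up^{-,-}_{p,m} \, \colon \, \cK(p,-,-) \to \bigoplus_{m \in \Z} L^2\big(I(p,m,-,-)\big)
\]
which conjugates the Casimir operator to multiplication by $x$ on each summand. In the resulting model the operator $K$ is the scalar $q^{m+\chi(p)/2}$ on the $m$-th summand, while $E$ and $E^{*}$ act as shifts $m \mapsto m\pm 1$ composed with the explicit multiplication operators $q^{\mp m -\hf} p^{-\hf}\,M(\sqrt{1+2xq^{\pm(2m\pm 1)}p+q^{\pm(4m\pm 2)}p^2})$ of \eqref{eq:actionE0E0+inUpsilonpm--}. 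The decomposition into irreducible admissible $\su$-representations is then obtained by splitting the spectrum of $\Om$ into its continuous part $[-1,1]$ and its discrete part $\si_d(\Om|_{\cK(p,-,-)}) = \mu\big(\bigcup_m D(p,m,-,-)\big)$.

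For the continuous part, every $x\in[-1,1]$ occurs with multiplicity one in each $L^2(I(p,m,-,-))$, so collecting these generalized eigenspaces yields a fiber isomorphic to $\ell^2(\Z)$ indexed by $m$. Writing $2x = q^{2ib}+q^{-2ib}$ with $b\in[0,-\pi/(2\ln q)]$ and setting $\epsilon(p) = \hf\chi(p)\bmod 1$, a direct comparison between \eqref{eq:actionE0E0+inUpsilonpm--} and the formulas \eqref{eq:princunitaryserrep} (after an integer shift of the basis index absorbing $\hf\chi(p)-\epsilon(p)\in\Z$) identifies the fiber with $\pi_{b,\epsilon(p)}$. This produces the direct integral $\int_0^{-\pi/2\ln q}\pi_{b,\epsilon(p)}\,db$ as in the previous two decomposition theorems.

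For the discrete part, I would fix a discrete Casimir eigenvalue $\la = -\mu(q^{1-2k})$ and determine the set $A(\la) = \{m\in\Z \mid \la \in \si_d(\Om^{-,-}_{p,m})\}$ from a direct case analysis of \eqref{eq:spectrum--}. The outcome is that $A(\la)$ is a one-sided half-line, bounded below by some $m_0$ when $\chi(p)\leq -2$ and bounded above by some $m_0$ when $\chi(p)\geq 2$. Because $\la$ lies outside $[-1,1]$, the $\la$-eigenvectors across $m\in A(\la)$ span a closed $\su$-invariant subspace $V_\la$ on which $K$ has eigenvalues $q^{m+\chi(p)/2}$ and on which \eqref{eq:actionE0E0+inUpsilonpm--} shows that $E$ and $E^{*}$ connect consecutive $\la$-eigenvectors; at the extremity $m_0$ one of the two operators annihilates the eigenvector because the target space contains no $\la$-eigenvector at all. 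Comparing with Section \ref{ssec:decompUq(su11)} this forces $V_\la \cong D^+_{m_0 + \hf\chi(p)}$ in the bounded-below case and $V_\la \cong D^-_{-m_0 - \hf\chi(p)}$ in the bounded-above case. Reparametrizing $m_0$ by the nonnegative integer $l$ via $k = -\hf\chi(p)-l$ or $k = \hf\chi(p)-l$, and noting that the strict inequality $|q^{1-2k}|>1$ (needed for $\la$ to be discrete rather than lying in $[-1,1]$) translates into $2l\pm\chi(p)<-1$, produces exactly the two discrete series sums in the statement.

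The main technical obstacle is the case split in \eqref{eq:spectrum--} between $pq^m\leq 1$ and $pq^m\geq 1$, which a priori forces a four-fold bookkeeping. This is cut in half by the following symmetry: the modular conjugation $J$ maps $\cK(p,-,-)$ onto $\cK(p^{-1},-,-)$ and, since $JE_0 = -E_0^\dag J$ and $JK_0 = K_0^{-1}J$, implements the algebra automorphism $\tau$ of $\su$; by \eqref{eq:tauopirreps} this exchanges $D^+_k \leftrightarrow D^-_k$ and fixes the principal series. Since $\chi(p^{-1}) = -\chi(p)$ while $\epsilon(p^{-1}) = \epsilon(p)$, the case $\chi(p)\geq 2$ is reduced to $\chi(p)\leq -2$ via $J$, and it therefore suffices to carry out the discrete-part analysis for $\chi(p)\leq -2$, where only the branch $pq^m\leq 1$ together with the second bookkeeping term in the branch $pq^m\geq 1$ contribute nontrivially.
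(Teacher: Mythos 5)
Your proposal is correct and follows essentially the same route as the paper, which itself only records this step as ``a matter of bookkeeping'': conjugate by the unitaries $\Up^{-,-}_{p,m}$ of Section \ref{sssec:caseep-andet-}, read off the direct integral of principal series from the continuous spectrum $[-1,1]$, and match each discrete Casimir eigenvalue with the half-line of $m$'s on which it occurs in \eqref{eq:spectrum--} (bounded below for $\chi(p)\leq -2$, giving $D^+$, and reduced to that case via the $J$-symmetry for $\chi(p)\geq 2$, giving $D^-$), the boundary point $m_0$ supplying the lowest- or highest-weight vector. The only slip is the identification $\mu(q^{2ib})=x$ for the continuous fibers, which should be $\mu(q^{2ib})=-x$ for $\ep=\et=-$ (cf.\ the factor $-\ep\et x$ in Remark \ref{rmk:propintroprincipalseriescorepresentation} and a direct comparison of \eqref{eq:actionE0E0+inUpsilonpm--} with \eqref{eq:princunitaryserrep}); since $x\mapsto -x$ preserves $[-1,1]$, this does not affect the resulting direct integral.
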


Note that at least one of the direct sums in the
decomposition is empty.

\subsubsection{The case $\ep=+$ and $\et=+$.} In this case the spectral decomposition of the Casimir operator restricted to $\cK(p,m,+,+)$ is described in Section \ref{sssec:caseep+andet+}.
From Lemma \ref{lem:contiguouslittleqJac} we obtain
\begin{equation}\label{eq:actionE0E0+inUpsilonpm++mleq0}
\begin{split}
(q^{-1}-q)\,\,\Up^{+,+}_{p,m+1} \, E^{++}_{p,m}\, (\Up^{+,+}_{p,m})^\ast
= \, q^{-\hf-m}& p^{-\hf} M(\sqrt{1+2xq^{2m+1}p+q^{4m+2}p^{2}})\\
&\, \colon L^2(I(p,m,+,+)) \to L^2(I(p,m+1,+,+)), \\
(q^{-1}-q)\,\,\Up^{+,+}_{p,m-1} \,  (E^{++}_{p,m-1})^*\, (\Up^{+,+}_{p,m})^\ast
= \, q^{\hf-m}
&p^{-\hf} M(\sqrt{1+2xq^{2m-1}p+q^{4m-2}p^2})\\
&\, \colon L^2(I(p,m,+,+)) \to L^2(I(p,m-1,+,+)).
\end{split}
\end{equation}

We have to be a careful in establishing the equality in \eqref{eq:actionE0E0+inUpsilonpm++mleq0} because of the unboundedness of the operators involved. From the way we defined $\Up_{p,m}^{+,+}$ in Section \ref{sssec:caseep+andet+} we conclude that the operators on the left hand side of \eqref{eq:actionE0E0+inUpsilonpm++mleq0} are restrictions of the ones on the right hand side. Let us denote the operator on the left hand side of the first equality in \eqref{eq:actionE0E0+inUpsilonpm++mleq0} by $S$ and the operator on the right hand side of this equality by $T$. So $S \subseteq T$. Then, by \eqref{eq:defOmegapmepet} and the result from Section \ref{sssec:caseep+andet+},
\[
\begin{split}
S^* S &=  2\,\Up^{+,+}_{p,m}\,\Om^{++}_{p,m}\,(\Up^{+,+}_{p,m})^*
+ (q^{2m+1}p + q^{-2m-1}p^{-1})\Id \\
&= 2\,M(x) + (q^{2m+1}p + q^{-2m-1}p^{-1})\Id = T^* T\, ,
\end{split}
\]
implying that $|S| = |T|$, and as a consequence, $D(S) = D(T)$.

It is now a matter of bookkeeping to keep track of
the discrete spectrum of $\Om$ in $\cK(p,+,+)$ in order to find the
discrete summands in the decomposition of
$\cK(p,+,+)$ as $\su$-module. Note that for
$pq>1$ there is always a discrete
spectrum for $m$ large, so that $E_0$ acts as the
creation operator and hence we have positive discrete series
representations in the decomposition.
Similarly, $q>p$ leads to the occurrence of negative discrete series
representations in the decomposition.
These two cases correspond to the (possibly empty) finite sequence of discrete mass
points in the spectral measure of the Casimir operator
\eqref{eq:defUpsilonpm++mleq0}, \eqref{eq:defUpsilonpm++mgeq0}.
The infinite sequence of discrete mass points that
is always present in the spectral decomposition of the Casimir
operator on $\cK(p,m,+,+)$ for all $m\in\Z$
corresponds to strange series representations.

\begin{thm} \label{thm:decompKp++asUqmod}
The decomposition of $\pi_\cK(p,+,+)$ into irreducible admissible representations is given by
\[
\begin{split}
\pi_\cK(p,+,+) \cong &\int_0^{-\pi/2\ln q} \pi_{b,\epsilon(p)} \, db \,
\oplus \bigoplus_{\stackrel{\scriptstyle{l\in\Z}}
{\scriptstyle{2l+\chi(p)<-1}}} \pi^S_{pq^{1+2l},\epsilon(p)} \\
&\, \oplus \bigoplus_{\stackrel{\scriptstyle{l\in\NN}} {\scriptstyle{2l+\chi(p)<-1}}}
D^-_{-\hf\chi(p)-l} \, \oplus
\bigoplus_{\stackrel{\scriptstyle{l\in\NN}} {\scriptstyle{2l-\chi(p)<-1}}}
D^+_{\hf\chi(p)-l}.
\end{split}
\]
\end{thm}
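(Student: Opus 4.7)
The plan is to transport the representation $\pi_\cK(p,+,+)$ through the unitary $\Up$ of Theorem \ref{thm:spectraldecompositionCasimir} to a representation on $\bigoplus_{m \in \Z} L^2(I(p,m,+,+))$, under which $K$ acts as $q^m\sqrt{p}\,\Id$ on the $m$-th summand, $\Om$ as multiplication by $x$, and $E$, $E^*$ by the explicit multiplication operators in \eqref{eq:actionE0E0+inUpsilonpm++mleq0}. Since $\Om$ commutes strongly with both $K$ and $E$ by Theorem \ref{thm:Casimirwelldefinedandchar}, the spectral decomposition of $\Om\vert_{\cK(p,+,+)}$ automatically decomposes the $\su$-representation: the fibre over a point $x$ in the joint spectrum is spanned by the (at most one-dimensional) eigenspaces of $\Om^{++}_{p,m}$ at $x$, as $m$ ranges over those integers for which $x \in I(p,m,+,+)$. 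The theorem will follow by identifying each such fibre, as a concrete $\su$-module with known $K$- and $E$-actions, against the list of irreducible admissible representations in Section \ref{ssec:decompUq(su11)}.

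First I would dispose of the continuous part $[-1,1] \subset I(p,m,+,+)$, which is common to every $m$. Writing $x = \mu(\la)$ with $\la$ on the unit circle, the radicand $1 + 2xq^{2m+1}p + q^{4m+2}p^2$ in \eqref{eq:actionE0E0+inUpsilonpm++mleq0} factors as $(1+q^{2m+1+\chi(p)}\la)(1+q^{2m+1+\chi(p)}\la^{-1})$; the substitution $\la = -q^{2ib}$ with $b \in [0,-\pi/(2\ln q)]$ converts these into $(1-q^{2m+1+\chi(p)+2ib})(1-q^{2m+1+\chi(p)-2ib})$, which, after the reindexing $n+\ep = m + \hf\chi(p)$ with $\ep = \epsilon(p)$, is exactly the radicand in \eqref{eq:princunitaryserrep} for $\pi_{b,\epsilon(p)}$. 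Checking the $K$-action and the prefactors confirms the identification, and integrating over $b$ yields the direct integral summand.

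Next I would treat the infinite discrete tower $\{\mu(q^{1+2l}p) : l \in \Z,\ 2l+\chi(p)<-1\}$ from the first set in \eqref{eq:spectrum++}, which is $m$-independent, so for each such $l$ the one-dimensional eigenspaces across $m \in \Z$ assemble into a two-sided tower. Substituting $2x = q^{1+2l+\chi(p)} + q^{-1-2l-\chi(p)}$ factors the radicand as $(1+q^{2m-2l})(1+q^{2m+2+2l+2\chi(p)})$; this matches the strange-series radicand in \eqref{eq:strangeserrep} with $q^{2a} = pq^{1+2l}$ after the same reindexing, identifying the summand as $\pi^S_{pq^{1+2l},\epsilon(p)}$. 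Finally, the two finite discrete families in \eqref{eq:spectrum++} are bounded in $m$ by the condition $r \geq \max\{0,\pm m\}$: for fixed $l \in \NN$ the eigenvalue $\mu(-q^{1+2l}p)$ (resp.\ $\mu(-q^{1+2l}p^{-1})$) lies in $I(p,m,+,+)$ only for $m \leq l$ (resp.\ $m \geq -l$), so after reindexing $n = l-m$ (resp.\ $n = l+m$) we get a one-sided tower indexed by $\NN$; the factorisation of the radicand becomes $(1-q^{2n})(1-q^{2n+2+2l+2\chi(p)})$ up to an overall power of $q$, and the $K$-eigenvalues take the form $q^{-k-n}$ with $k = -\hf\chi(p)-l$ (resp.\ $q^{k+n}$ with $k = \hf\chi(p)-l$), matching \eqref{eq:negdiscrserrep} (resp.\ \eqref{eq:posdiscrserrep}).

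The main obstacle will be the $m=0$ case, where $\Om_0\vert_{\cK_0(p,0,+,+)}$ fails to be essentially self-adjoint, and hence the identification of $\Om^{+,+}_{p,0}$ as the \emph{correct} self-adjoint extension (namely the one whose spectral resolution underlies the normalisations \eqref{eq:defUpsilonpm++mleq0}, \eqref{eq:defUpsilonpm++mgeq0}) must be secured by invoking Lemma \ref{lem:extensionsofOmega0} together with Theorem \ref{thm:Jacobioperatorforcasec=q}. Only with that choice do the matrix element formulas \eqref{eq:actionE0E0+inUpsilonpm++mleq0} fit together coherently across all $m \in \Z$, ensuring that the $\su$-submodule identifications above are valid on the full domains of $E$ and $E^*$ rather than merely on the algebraic core $\cK_0$.
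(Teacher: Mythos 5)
Your proposal is correct and follows essentially the same route as the paper: transport everything through the unitary $\Up$ of Theorem \ref{thm:spectraldecompositionCasimir} so that $E$, $E^*$ become the explicit multiplication operators \eqref{eq:actionE0E0+inUpsilonpm++mleq0}, then identify each fibre of the spectral decomposition of $\Om$ (continuous part, the infinite tower $\mu(q^{1+2l}p)$, and the two finite towers with their $m$-ranges from \eqref{eq:spectrum++}) against the list in Section \ref{ssec:decompUq(su11)}, with the $m=0$ extension pinned down by Lemma \ref{lem:extensionsofOmega0} and Theorem \ref{thm:Jacobioperatorforcasec=q}. The radicand factorizations and reindexings you carry out explicitly are exactly the "bookkeeping" the paper leaves implicit, and your parameter identifications ($k=-\hf\chi(p)-l$, $k=\hf\chi(p)-l$, $q^{2a}$ matching $pq^{1+2l}$) agree with the statement.
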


Note that at least one of the finite direct sums in the decomposition is
empty.

\section{Generators of the dual von Neumann algebra $\hat M$}\label{sec:generatorshatM}
By Theorem \ref{thm:Casimirwelldefinedandchar}, $E$ and $K$ strongly commute with the Casimir $\Om$.
Since there are elements in $\hat{M}$ that anti-commute with $\Om$, see Proposition \ref{prop:decompMintoM+andM-andgradedcommutation},
$\hat{M}$ cannot be generated by $E$ and $K$ alone. So we need to find extra operators that, together with $E$ and $K$, generate  the dual von Neumann algebra $\hat{M}$. It is the purpose of this section to describe a generating set for $\hat{M}$, i.e., to prove Theorem \ref{thm:generatorsforhatM}. We do so by establishing a generator $Q(p_1,p_2,n)$ of $\hat{M}$, see \eqref{eq:defQppn} and
Proposition \ref{prop:QppngeneratedualM}, as the composition of a partial isometry and an operator expressed in terms of the Casimir operator. The partial isometries occurring in this way give us the required additional generators for the dual von Neumann algebra $\hat{M}$.

Throughout this section we fix $p_1,p_2 \in I_q$, $p \in q^\Z$, $m,n \in \Z$, $\ep,\et \in \{-,+\}$. Furthermore, we set $m'=m+n$, $\ep'=\ep\,\sgn(p_1)$ and $\et'=\et\,\sgn(p_2)$, and assume $q^{2m}p=q^{-n}|p_2/p_1|$, unless explicitly stated otherwise. In this case the operator $Q(p_1,p_2,n) \colon \cK(p,m,\ep,\et) \to \cK(p,m',\ep',\et')$ is non-zero by Lemma \ref{lem:explicitactionQppn}.

\subsection{A polar-type decomposition for $Q(p_1,p_2,n)$}
In this subsection we establish a polar-type decomposition for the element $Q(p_1,p_2,n)$.
Since operators of this form span $\hat M$ by Proposition \ref{prop:QppngeneratedualM}, we
can obtain the generators of $\hat{M}$.
By Proposition \ref{prop:decompMintoM+andM-andgradedcommutation}
the operator $Q(p_1,p_2,n) \in \hat M$ commutes or anti-commutes with the Casimir operator $\Om$, hence $Q(p_1,p_2,n)$ sends a (generalized) eigenvectors of $\Om$ to another (generalized) eigenvector. In order to avoid working with generalized eigenvectors, we consider an operator $T$, acting on $L^2$-functions on the spectrum of $\Om$, that is unitarily equivalent to $Q(p_1,p_2,n)$. We determine the explicit action of $T$ by investigating how $T$ affects the asymptotic behaviour of certain functions. Having explicitly the action of $T$, we can compute explicitly how $T^*T$ acts, and this leads to the polar decomposition of $T$. This in turn leads to a polar-type decomposition for $Q(p_1,p_2,n)$.

In order the find explicitly the action of the operator $T$ as described above (and defined later on by \eqref{eq:defTp1p2n}), we need a result on the asymptotic behaviour of certain functions.
In order to formulate the result we define the following function:
\[\index{S@$S(t;p_1,p_2,n)$}
\begin{split}
S(t&;p_1,p_2,n) =\\
 & \, \big(\sgn(p_2)\big)^n \, |p_1p_2|\,
c_q^2\, q^n \,
\sqrt{(-\ka(p_1), -\ka(p_2);q^2)_\infty} \\
& \times
\sum_{z \in \sgn(p_1)q^\Z} \big(\sgn(p_1p_2)  t\big)^{\chi(z)}
\frac{1 }{|z|}\, \nu(\frac{p_1}{z}) \, \nu(\frac{p_2 q^n}{z})
\rphis{1}{1}{-q^2/\ka(p_1)}{0}{q^2,q^2\ka(z)} \\
& \qquad \qquad \times   \rphis{1}{1}{-q^2/\ka(p_2)}{0}{q^2,q^2\ka\big(\sgn(p_1p_2) q^{-n}z\big)},
\end{split}
\]
where the sum is absolutely convergent. Clearly, $S(\,\cdot\,;p_1,p_2,n)$ is analytic on $\C\setminus\{0\}$. This function is studied in Appendix \ref{appB:functionS} in some more detail. Two properties of $S$ that we need here are given in the following lemma.

\begin{lemma}\label{lem:propertiesofS}
The analytic function $S(\,\cdot\,;p_1,p_2,n)\colon \C\setminus\{0\} \to \C$
satisfies the following properties:
\begin{enumerate}[(i)]
\item $S(t;p_1,p_2,n) = (-q)^n\, \sgn(p_1)^{\chi(p_1)}
\, \sgn(p_2)^{\chi(p_2)+n}\, \sgn(p_1p_2)\, S(\sgn(p_1p_2)t^{-1};p_1,p_2,-n)$
\item $S(t;p_1,p_2,n)$ is a multiple of a $_2\varphi_1$-function:
\[
\begin{split}
S(t;p_1,p_2,n) =&\, p_2^n q^{\hf n (n-1)} |p_1p_2|\, \nu(p_1)\nu(p_2) c_q^2 \sqrt{ (-\kappa(p_1),-\kappa(p_2);q^2)_\infty }\\
& \times\frac{ (q^2, -q^2/\kappa(p_2), -tq^{3-n}/p_1p_2, -p_1p_2q^{n-1}/t, p_1q^{1-n}/p_2t;q^2)_\infty }{ (-p_1|p_2|q^{-n-1}/t, -tq^{n+3}/p_1|p_2|, |p_1|q^{1+n}/|p_2|t ;q^2)_\infty }\\
& \times (\sgn(p_1p_2)q^{2+2n};q^2)_\infty \rphis{2}{1}{p_2q^{1+n}/p_1t, p_2 t q^{1+n}/p_1}{ \sgn(p_1 p_2) q^{2+2n} }{q^2,-q^2/\kappa(p_2)}.
\end{split}
\]
\end{enumerate}
\end{lemma}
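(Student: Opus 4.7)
\medskip

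The plan is to prove (ii) first from the series definition, and then deduce (i) from the closed form. The strategy for (ii) is standard in disguise: the bilateral sum defining $S$ is a product of two $_1\varphi_1$-series (which are $q$-analogues of Bessel functions) evaluated at shifted arguments, and such sums can be collapsed to a $_2\varphi_1$ by a combination of Jacobi's triple product and the $q$-binomial theorem.

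I would first substitute $|z|=q^k$, $k\in\Z$, so the sum becomes bilateral in $k$. Expanding the two $_1\varphi_1$-series as absolutely convergent power series in their arguments $q^{2+2k}\sgn(p_1)$ and $q^{2+2(k-n)}\sgn(p_2)$ introduces two non-negative summation indices $j_1,j_2$. After interchanging the order of summation (justified by absolute convergence), the inner $k$-sum has the form $\sum_{k\in\Z} v^k q^{k^2+k\alpha}$ with $v=\sgn(p_1p_2)t$ and $\alpha$ linear in $j_1+j_2$, where the quadratic dependence on $k$ comes entirely from $\nu(p_1/z)\nu(p_2q^n/z)$. By Jacobi's triple product in base $q^2$, this bilateral sum equals $(q^2;q^2)_\infty(-vq^{\alpha+1};q^2)_\infty(-q^{1-\alpha}/v;q^2)_\infty$. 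Writing each of the two shifted infinite products as the infinite product at $j_1=j_2=0$ times a finite ratio of Pochhammer symbols (using $(yq^{2N};q^2)_\infty=(y;q^2)_\infty/(y;q^2)_N$ and its companion for negative shifts), the Pochhammer factors involving $j_1+j_2$ cancel, leaving only a cross-term $q^{-2j_1j_2}$ and an overall geometric weight $X^{j_1+j_2}$. The inner $l$-sum (say $j_2$) is then a $_1\varphi_0$ and is summable by the $q$-binomial theorem, producing a ratio of two infinite products whose dependence on $j_1$ is exactly a Pochhammer ratio of the form $(\cdot;q^2)_{j_1}/(\cdot;q^2)_{j_1}$. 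The remaining single sum over $j_1$ is then a $_2\varphi_1$-series, and one application of the Heine transformation \cite[(III.3)]{GaspR} adjusts its parameters so that $abz/c=-q^2/\kappa(p_1)$, yielding precisely the $_2\varphi_1$ claimed in (ii).

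For (i), I would use the closed form (ii). Substituting $t\mapsto\sgn(p_1p_2)t^{-1}$ and $n\mapsto -n$ in (ii) produces a new $_2\varphi_1$ whose numerator parameters are $\sgn(p_1p_2)p_2tq^{1-n}/p_1$ and $\sgn(p_1p_2)p_2q^{1-n}/(p_1t)$, with denominator parameter $\sgn(p_1p_2)q^{2-2n}$ and unchanged $z=-q^2/\kappa(p_2)$. Applying Heine's symmetric transformation \cite[(III.3)]{GaspR} to the original $_2\varphi_1$ in (ii) (observing that $abz/c=-q^2/\kappa(p_1)$ in both cases) maps it into the $_2\varphi_1$ with parameters $\sgn(p_1p_2)p_1tq^{1+n}/p_2$ and $\sgn(p_1p_2)p_1q^{1+n}/(p_2t)$ and $z=-q^2/\kappa(p_1)$; a second Heine transformation then rewrites this in the form matching the transformed side, up to the explicit ratio of $(-q^2/\kappa(p_1);q^2)_\infty/(-q^2/\kappa(p_2);q^2)_\infty$ that Heine introduces. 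Combined with the corresponding transformation of the $q$-shifted factorials in the prefactor of (ii) under $(t,n)\mapsto(\sgn(p_1p_2)/t,-n)$, the sign and $q$-power discrepancies collapse to exactly $(-q)^n\sgn(p_1)^{\chi(p_1)}\sgn(p_2)^{\chi(p_2)+n}\sgn(p_1p_2)$, establishing (i).

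The main obstacle will be the bookkeeping in part (ii): one must track carefully the powers of $q$ arising from three different sources—$\nu(p_1/z)\nu(p_2q^n/z)$ (contributing a quadratic in $k$), the factors $(-1)^{j_i}q^{j_i(j_i-1)}$ in the power-series expansion of $_1\varphi_1(\cdot;0;q^2,\cdot)$, and the triple-product evaluation—and verify that they combine to reproduce the exact prefactor $p_2^nq^{\frac{1}{2}n(n-1)}|p_1p_2|\nu(p_1)\nu(p_2)c_q^2$ together with the four infinite products in the numerator and three in the denominator stated in (ii). A secondary technical point is to verify that the specific Heine transformation used preserves the argument $-q^2/\kappa(p_2)$ (which is forced by $abz/c=-q^2/\kappa(p_1)$ being sign-symmetric in $p_1\leftrightarrow p_2$), as otherwise one is led to a different but equivalent $_2\varphi_1$-expression that requires an additional three-term transformation to bring into the stated normal form.
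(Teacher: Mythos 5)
Your route for part (ii) is, in structure, the paper's proof of its Lemma \ref{lem:2phi1=sum 1phi1 1phi1 A} (expand both inner series, interchange the order of summation, evaluate the bilateral $k$-sum by Jacobi's triple product, finish with the $q$-binomial theorem), but you apply it uniformly, and the step ``interchanging the order of summation (justified by absolute convergence)'' fails in the hardest case. The series $\rphis{1}{1}{-q^2/\ka(p_i)}{0}{q^2,\cdot}$ terminates if and only if $p_i<0$; when $\sgn(p_1)=\sgn(p_2)=+$ neither terminates, and the exponent of $q$ in the general term of your triple sum is
$j_1^2+j_2^2+k^2+2k(j_1+j_2)+O(j_1+j_2+|k|)=(k+j_1+j_2)^2-2j_1j_2+O(\cdot)$,
an indefinite quadratic form: along $j_1=j_2=j$, $k=-2j$ the terms grow like $q^{-2j^2}$, so the sum of absolute values diverges and Fubini is not available. (The absolute convergence of the defining sum \eqref{eq:S=sum1phi1 1phi1} rests on cancellation \emph{inside} each ${}_1\vp_1$ --- on the lattice $q^{2+2\Z}$ the Gaussian growth collapses to geometric growth, cf.~\eqref{eq:1phi1transformation} --- and that cancellation is destroyed by taking absolute values termwise.) The paper splits into two cases: when one of the two series terminates it argues essentially as you do (Lemma \ref{lem:2phi1=sum 1phi1 1phi1 A}); in the doubly non-terminating case it instead expands the product of a ${}_1\psi_1$ and a ${}_1\vp_0$ in an auxiliary variable, sums by Ramanujan's ${}_1\psi_1$-formula and the $q$-binomial theorem, and compares Laurent coefficients (Lemma \ref{lem:2phi1=sum 1phi1 1phi1}). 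You need an argument of this kind, or some other regrouping or continuation device, to close the gap; the bookkeeping you flag as the main obstacle is not the real difficulty.

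For part (i) your plan of deducing the symmetry from the closed form is workable in principle but is substantially harder than you indicate: the paper's Remark \ref{rem:symmetry} shows that matching the two ${}_2\vp_1$'s requires the three-term transformation \cite[(III.31)]{GaspR} together with the observation that one of the three terms vanishes under the termination/lattice conditions on the parameters, plus repeated use of the $\te$-product identity; two Heine transformations alone do not bring the two sides into the same normal form. The paper instead proves (i) directly from the series \eqref{eq:S=sum1phi1 1phi1}, using the elementary inversions $\rphis{1}{1}{a}{0}{q,q^{1+n}}=a^{-n}\rphis{1}{1}{a}{0}{q,q^{1-n}}$ and $\rphis{1}{1}{q^{-n}}{0}{q,qy}=y^{n}\rphis{1}{1}{q^{-n}}{0}{q,q/y}$ on each factor and then reindexing $z\mapsto z^{-1}$; this is shorter, independent of (ii), and avoids the case analysis entirely. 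I recommend you adopt that order: prove (i) from the series first, and reserve the heavy machinery for (ii).
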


See Appendix \ref{appB:functionS}, and in particular Proposition \ref{prop:S=2phi1} and
Lemma \ref{lem:symmetryS}, for a proof of Lemma \ref{lem:propertiesofS}.

It turns out to be useful to split the function $S$ in a part that is symmetric in $t$ and $t^{-1}$, and a part that is not.
\begin{lemma} \label{lem:S=BN}
For $x=\mu(t)$, define
\[
\begin{split}
B(t;p_1,p_2,n) & =
\begin{cases}
t^n\dfrac{ (-|p_1|q^{1-n}/|p_2|t;q^2)_\infty }{ (-p_1q^{1+n}/p_2t;q^2)_\infty }, & \chi(p_1p_2)+n \text{ even},\\ \\
t^n\dfrac{ (-|p_1|q^{1-n}/|p_2|t;q^2)_\infty }{ (-p_1q^{1+n}/p_2t;q^2)_\infty } \dfrac{ 1-\sgn(p_2)t^{-1} }{1-t^{-1}}, & \chi(p_1p_2)+n \text{ odd},
\end{cases} \\
h(x) & =
\begin{cases}
\sgn(p_2)^{\hf(\chi(p_1p_2)-n+2)}\dfrac{ (qt,q/t;q^2)_\infty }{ (\sgn(p_2)qt,\sgn(p_2)q/t;q^2)_\infty }, &  \chi(p_1p_2)+n \text{ even},\\ \\
\sgn(p_2)^{\hf(\chi(p_1p_2)-n+3)}\dfrac{ (t,t^{-1};q^2)_\infty }{ (\sgn(p_2)t,\sgn(p_2)/t;q^2)_\infty }, &  \chi(p_1p_2)+n \text{ odd},
\end{cases}
\end{split}
\]
and
\[\index{N@$N(x;p_1,p_2,n)$}
\begin{split}
N(x;p_1,p_2,n) =&\,h(x)\ q^{2n} p_2^n q^{\hf n (n-1)} |p_1p_2|^{1-n}\, \nu(p_1)\nu(p_2) c_q^2 \\
& \times (q^2, -q^2/\kappa(p_2),\sgn(p_1p_2)q^{2+2n};q^2)_\infty \sqrt{ (-\kappa(p_1),-\kappa(p_2);q^2)_\infty }\\
& \times \rphis{2}{1}{-|p_2|q^{1+n}/|p_1|t, -|p_2|t q^{1+n}/|p_1|}{ \sgn(p_1 p_2) q^{2+2n} }{q^2,-q^2/\kappa(p_2)},\\
\end{split}
\]
then $S(-\sgn(p_1p_2)t;p_1,p_2,n) = B(t;p_1,p_2,n) N(x;p_1,p_2,n)$.
\end{lemma}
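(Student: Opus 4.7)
The proof is a direct manipulation of the explicit ${}_2\varphi_1$-expression for $S(t;p_1,p_2,n)$ provided by Lemma~\ref{lem:propertiesofS}(ii). The first step is to substitute $t \mapsto -\sgn(p_1p_2)\,t$ everywhere in that formula and collect the signs that appear inside the parameters of the various $q$-shifted factorials, using the elementary identities $\sgn(p_i)\,p_i = |p_i|$ and $\sgn(p_2)/p_2 = 1/|p_2|$. The two upper parameters of the ${}_2\varphi_1$ then collapse from $p_2q^{1+n}/p_1t$ and $p_2tq^{1+n}/p_1$ into $-|p_2|q^{1+n}/|p_1|t$ and $-|p_2|tq^{1+n}/|p_1|$. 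Since these new parameters form a pair $(at, a/t)$, the series $\sum_k (at;q^2)_k(a/t;q^2)_k z^k/[(c;q^2)_k(q^2;q^2)_k]$ is a genuine function of $x = \mu(t)$, and it matches exactly the ${}_2\varphi_1$ that appears in the definition of $N(x;p_1,p_2,n)$.

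The bulk of the proof consists in reorganising the remaining $t$-dependent prefactor, a product and quotient of six infinite $q$-Pochhammer symbols, into a function symmetric in $t \leftrightarrow t^{-1}$ (which, together with the ${}_2\varphi_1$ and the $t$-independent prefactors, assembles into $N(x;p_1,p_2,n)$) and a non-symmetric remainder which must be identified with $B(t;p_1,p_2,n)$. Using the shift identity $(a;q^2)_\infty = (a;q^2)_n(aq^{2n};q^2)_\infty$ together with the finite reflection $(a;q^2)_n = (-a)^n q^{n(n-1)}(q^{2-2n}/a;q^2)_n$, the pair $(tq^{3-n}/|p_1p_2|, |p_1p_2|q^{n-1}/t;q^2)_\infty$ telescopes into a power of $t$ times a symmetric block of the form $(\alpha t, \alpha/t;q^2)_\infty$; an analogous manipulation handles the denominator pair $(\sgn(p_2)tq^{n+3}/|p_1p_2|, \sgn(p_2)|p_1p_2|q^{-n-1}/t;q^2)_\infty$. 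What is left over, namely the ratio $(-|p_1|q^{1-n}/|p_2|t;q^2)_\infty/(-\sgn(p_1p_2)|p_1|q^{1+n}/|p_2|t;q^2)_\infty$, is exactly the ratio appearing in $B$, since $-\sgn(p_1p_2)|p_1|/|p_2| = -p_1/p_2$.

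The main obstacle is the sign bookkeeping that produces the parity case split in the statement. Combining $(\sgn(p_2)a;q^2)_\infty$ with $(a;q^2)_\infty$ does not yield a rational function of $t$ in general, and the residual sign $\sgn(p_2)$ from the denominator pair must be combined with the sign $-\sgn(p_1p_2)$ from the last Pochhammer symbol. If $\chi(p_1p_2)+n$ is even, these signs rearrange cleanly and produce $h(x) = \sgn(p_2)^{(\chi(p_1p_2)-n+2)/2}(qt,q/t;q^2)_\infty/(\sgn(p_2)qt,\sgn(p_2)q/t;q^2)_\infty$ with no extra factor being pulled into $B$; if the parity is odd, a single residual factor $(1-\sgn(p_2)t^{-1})/(1-t^{-1})$ cannot be absorbed symmetrically and is instead peeled off into $B$, while the remaining quotient becomes $h(x) = \sgn(p_2)^{(\chi(p_1p_2)-n+3)/2}(t,t^{-1};q^2)_\infty/(\sgn(p_2)t,\sgn(p_2)/t;q^2)_\infty$. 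Once the parity is fixed, verifying that the remaining identity holds is a routine rearrangement of $q$-Pochhammer symbols, and it is in the careful tracking of these sign conventions that the technical heart of the lemma lies.
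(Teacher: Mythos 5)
Your proposal is correct and follows essentially the same route as the paper: both start from the ${}_2\varphi_1$-expression of Lemma \ref{lem:propertiesofS}(ii), substitute $t\mapsto-\sgn(p_1p_2)t$, and apply the $\te$-product identity \eqref{eq:thetaprodid} (which is exactly what your combination of the shift and finite-reflection identities amounts to) to the pair $(tq^{3-n}/|p_1p_2|,\,|p_1p_2|q^{n-1}/t;q^2)_\infty$ over $(\sgn(p_2)tq^{3+n}/|p_1p_2|,\,\sgn(p_2)|p_1p_2|q^{-n-1}/t;q^2)_\infty$, with the same parity split on $\chi(p_1p_2)+n$ producing the two forms of $h$ and the residual factor $(1-\sgn(p_2)t^{-1})/(1-t^{-1})$ in the odd case. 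Your identification of the leftover ratio with the Pochhammer quotient in $B$ via $-\sgn(p_1p_2)|p_1|/|p_2|=-p_1/p_2$ is also exactly what the paper's argument uses.
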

\begin{proof}
According to $\chi(p_1p_2)+n$ being even or odd, we set
\[
\chi(p_1p_2)+n =
\begin{cases}
2-2k, \\
3-2l
\end{cases}
\]
for $k,l \in \Z$. Using the $\te$-product identity \eqref{eq:thetaprodid} we find
\[
\begin{split}
&\frac{ (tq^{3-n}/|p_1p_2|, |p_1p_2|q^{n-1}/t;q^2)_\infty }{ (tq^{3+n}/|p_1|p_2, |p_1|p_2q^{-n-1}/t;q^2)_\infty}  =
\begin{cases}
\sgn(p_2)^{k+n} t^n q^{n(n-1)} q^{2nk} \dfrac{ (qt, q/t;q^2)_\infty }{ (\sgn(p_2)qt, \sgn(p_2)q/t;q^2)_\infty },\\ \\
\sgn(p_2)^{l+n} t^n q^{n(n-1)} q^{2nl} \dfrac{ (t, q^2/t;q^2)_\infty }{ (\sgn(p_2)t, \sgn(p_2)q^2/t;q^2)_\infty },
\end{cases}
\end{split}
\]
then the result follows from Lemma \ref{lem:propertiesofS}(ii). The expression for $N$ is manifestly symmetric in $t$ and $t^{-1}$, so $N$ is indeed a function of $x=\mu(t)$.
\end{proof}
In the following lemmas, and in the rest of this section, we use the notation $f(z)\sim g(z)$ as $z\to 0$, for $\lim_{z\to 0} \bigl( f(z)-g(z)\bigr) = 0$. We are now ready to formulate the asymptotic behaviour we need later on.

\begin{lemma}\label{lem:fundlemasymptoticsQ}
Let $f\colon J(p,m,\ep,\et)\to \C$ be bounded, and consider the function
\begin{equation*}
\begin{split}
g(w) = &(-1)^{m'} (\et')^{\chi(p_1p_2)+m} \,  q^{m'}p_1^2\,
\frac{(\ep'\et')^{\chi(w)}}{|w|} \\& \times
 \sum_{z \in J(p,m,\ep,\et)}\, \frac{f(z)}{|z|}\,\,
a_{p_1}(z,w)\,a_{p_2}(\ep\,\et\,q^m p\, z ,
\ep' \et' q^{m'} p\, w),
\end{split}
\end{equation*}
for $w\in J(p,m',\ep',\et')$.
\begin{enumerate}
\item If $f(z) \sim A  \, t^{-\chi(z)}$ as $z\to 0$
for some $A \in \C$ and $t\in \C$, $|t|>1$, then
\[
g(w)  \sim  A\, t^{-\chi(w)}\, \et^n \, s(\ep,\ep')\,
s(\et,\et')\ S(\ep\et/ t;p_1,p_2,n), \qquad \text{as}\ w\to 0.
\]
\item If $f(z) \sim  \Re ( A e^{-i\psi\chi(z)})$ as $z\to 0$ for some $A \in \C$ and $|\psi| \in (0,\pi)$, then
$$
g(w) \sim \, \et^n \, s(\ep,\ep')\,
s(\et,\et')\ \Re\bigl( A e^{-i\psi\chi(w)} S(\ep\et e^{-i\psi};p_1,p_2,n)\bigr),
\qquad \text{as $w\to 0$.}
$$
\end{enumerate}
\end{lemma}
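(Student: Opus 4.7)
\noindent\textit{Plan.}
The approach is to analyse the summand $a_{p_1}(z,w)\,a_{p_2}(\ep\et q^m p z,\ep'\et' q^{m'} p w)$ asymptotically as $w\to 0$ for each fixed $z$, and to recognise the resulting sum over $z$ as the defining series of $S(t;p_1,p_2,n)$. For any bounded $f$ and any $w$, the sum defining $g(w)$ converges very rapidly since the two $\nu$-factors in the summand decay super-exponentially in $|\chi(z)|$, so dominated convergence reduces matters to the pointwise $z$-asymptotic. For part (1), decompose $f(z) = A\,t^{-\chi(z)} + r(z)$ with $r(z)\to 0$ as $z\to 0$; the dominated-convergence estimate against the super-exponentially decaying kernel shows that the $r$-contribution to $g(w)$ vanishes as $w\to 0$ faster than $t^{-\chi(w)}$, so it suffices to handle the main term with $f(z) = A\,t^{-\chi(z)}$.

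The heart of the proof is the $w\to 0$ asymptotic of $a_{p_1}(z,w)$ for each fixed $z$. By Definition \ref{def:functionap}, the $w$-dependence is concentrated in $|w|\,\nu(p_1 w/z)\,\sqrt{(-\ka(w);q^2)_\infty}\,\Psis{-q^2/\ka(w)}{q^2\ka(z/w)}{q^2,\,q^2\ka(z/p_1)}$, where the two parameters of the $\Psi$-function diverge with fixed ratio $-1/\ka(z)$. The cleanest route is to invoke Lemma \ref{lemB:apxyforyinqZ} (together with the symmetry relations \eqref{eq:symmetryforapxy}) which represents $a_{p_1}(z,w) = w^{\chi(p_1/z)}\,\tilde h_1(z;w^{-2})$ for some function $\tilde h_1(z;\cdot)$ continuous at $0$; the explicit form of $\tilde h_1(z;0)$ is computed from the defining formula and identified, up to universal constants and sign factors, with $\nu(p_1/z)\,\sqrt{(-\ka(p_1);q^2)_\infty}\,\rphis{1}{1}{-q^2/\ka(p_1)}{0}{q^2,q^2\ka(z)}$, i.e.\ exactly the first ${}_1\vp_1$-factor appearing in the defining series of $S$. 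An identical analysis of $a_{p_2}(\ep\et q^m p z,\ep'\et' q^{m'} p w)$ yields the second ${}_1\vp_1$-factor.

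Assembling the two asymptotic expansions, multiplying by the prefactor $(\ep'\et')^{\chi(w)}/|w|$ and summing against $A\,t^{-\chi(z)}/|z|$, the constraint $q^{2m}p = q^{-n}|p_2/p_1|$ causes the $w$-dependent powers arising from $w^{\chi(p_1/z)}$ and $(q^{m'}pw)^{\chi(p_2/(\ep\et q^m p z))}$ to collapse into the single factor $t^{-\chi(w)}$. The remaining sum over $z$ is then precisely $S(\ep\et\,t^{-1};p_1,p_2,n)$, while the sign factor $\et^n s(\ep,\ep')\,s(\et,\et')$ emerges from tracking the $s(\cdot,\cdot)$, $(-\sgn(\cdot))^{\chi(\cdot)}$ and $\ep,\et$-dependent pieces of the two $a_{p_i}$-factors against the corresponding signs in the definition of $S$.

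Part (2) then follows from (1) by linearity: write $\Re(A e^{-i\psi\chi(z)}) = \tfrac{1}{2}A e^{-i\psi\chi(z)} + \tfrac{1}{2}\bar A e^{i\psi\chi(z)}$, apply (1) at the unimodular complex values $t = e^{i\psi}$ and $t = e^{-i\psi}$, and take real parts. This extension is legitimate since by Lemma \ref{lem:propertiesofS}(ii) the function $S(\,\cdot\,;p_1,p_2,n)$ is entire on $\C\setminus\{0\}$, and the dominated-convergence control of the remainder used only the super-exponential decay of the kernel in $\chi(z)$, not the magnitude of $t$. \emph{The main obstacle} is the second paragraph: rigorously extracting the asymptotic of $a_{p_1}(z,w)$ whose $\Psi$-function has two simultaneously diverging parameters. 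A direct series approach demands careful cancellation between the super-exponentially small $\nu(p_1 w/z)$, the divergent Pochhammer factors $(-q^2/\ka(w);q^2)_n$, and the factors $(q^{2+2n}\ka(z/w);q^2)_\infty$ that vanish for small $n$. Invoking Lemma \ref{lemB:apxyforyinqZ} bypasses this delicate bookkeeping but makes the explicit identification of $\tilde h_1(z;0)$ the key computation.
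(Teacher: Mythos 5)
Your overall strategy --- fix $z$, extract the $w\to 0$ asymptotics of the summand, and then sum the termwise limits over $z$ --- fails for a structural reason. For each \emph{fixed} $z$, Lemma \ref{lem:estimate1} gives $|a_{p_1}(z,w)|\le D\,\nu(p_1/w)\,|z|^{\chi(p_1/w)}$, and as $w\to0$ the factor $\nu(p_1/w)=q^{\frac12(\chi(p_1/w)-1)(\chi(p_1/w)-2)}$ decays super-exponentially in $\chi(w)$ while $|z|^{-\chi(w)}$ grows only exponentially; the same holds for the second $a_{p_2}$-factor. Hence each individual term of the sum, divided by $t^{-\chi(w)}$ (or by $1$ in case (2)), tends to $0$. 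If your dominated-convergence reduction were legitimate it would therefore give $g(w)=o(t^{-\chi(w)})$, contradicting the generically non-zero limit $A\,t^{-\chi(w)}\cdots S(\cdots)$. The point your own plan overlooks is that the kernel $\nu(p_1w/z)\,\nu(\cdots)$ is concentrated around $z\asymp w$, so the mass of the sum escapes to $z\to0$ together with $w$, and no termwise limit in the original variable can capture it. Two concrete symptoms: (a) the defining series \eqref{eq:S=sum1phi1 1phi1} of $S$ is a \emph{bilateral} sum over $\sgn(p_1)q^\Z$, whereas your summation set $J(p,m,\ep,\et)$ is a bounded half-line whenever $\ep=-$ or $\et=-$, so the two sums cannot match term by term; (b) the paper's proof substitutes $x=z/w$, and it is the rescaled set $\{x:|x|\le r/|w|\}$ that expands to all of $\sgn(p_1)q^\Z$ as $w\to0$, with Tannery's theorem applied in $x$, each fixed-$x$ term converging to exactly one summand of $S$.

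Your key computation is also aimed at the wrong limit. Lemma \ref{lemB:apxyforyinqZ} writes $a_p(x,y)=y^{\chi(p/x)}f(y^{-2})$ with $f$ differentiable at $0$; the value $f(0)$ governs the behaviour as $y\to\infty$ (where $y^{-2}\to0$), not as $y\to0$ (where $y^{-2}\to\infty$ and the lemma is silent). So the quantity $\tilde h_1(z;0)$ you propose to compute is irrelevant to the $w\to0$ regime, and the ``delicate bookkeeping'' with two diverging $\Psi$-parameters is not bypassed. In the correct argument the ${}_1\vp_1$-factors of $S$ arise only after rescaling: using \eqref{eq:symmetryforapxy} one rewrites $a_{p_1}(xw,w)$ as a multiple of $a_w(xw,p_1)$, whose $\Psi$-function is $\Psi(-q^2/\ka(p_1);q^2\ka(xw/p_1);q^2,q^2\ka(x))$; as $w\to0$ with $x$ fixed the middle parameter tends to $0$ and this degenerates to ${}_1\vp_1(-q^2/\ka(p_1);0;q^2,q^2\ka(x))$ --- with the ratio $x=z/w$, not $z$, as the argument, which is what appears in $S$. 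Finally, deducing (2) from (1) by setting $t=e^{\pm i\psi}$ is not available as stated, since (1) is asserted only for $|t|>1$; the paper treats both cases in parallel within the same Tannery argument rather than deriving one from the other.
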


The proof of Lemma \ref{lem:fundlemasymptoticsQ} is given in Appendix \ref{app:fundlemasymptoticsQ}.

Using the unitary operators $\Up_{p,m}^{\ep,\et}:\cK(p,m,\ep,\et) \to L^2(I(p,m,\ep,\et))$ from Section \ref{ssec:spectraldecompCasimir} we define an action of the generators $Q(p_1,p_2,n)$ of $\hat M$ on the space $L^2(I(p,m,\ep,\et))$ by
\begin{equation} \label{eq:defTp1p2n}\index{T@$T(p_1,p_2,n)$}
T(p_1,p_2,n) = \Up_{p,m'}^{\ep',\et'}\, Q(p_1,p_2,n) (\Up_{p,m}^{\ep,\et})^*:L^2(I(p,m,\ep,\et)) \to L^2(I(p,m',\ep',\et')),
\end{equation}
where $m'=m+n$, $\ep'=\sgn(p_1)\ep$ and $\et'=\sgn(p_2)\et$. Recall that we assume $q^{2m} p = q^{-n}|p_2/p_1|$. If this condition is not satisfied, we see from Lemma \ref{lem:explicitactionQppn} that the operator $T(p_1,p_2,n)$ is trivially zero. Since $Q(p_1,p_2,n)\,  \Om \subseteq \sgn(p_1p_2)\, \Om \,Q(p_1,p_2,n)$,
we have
\[
T(p_1,p_2,n)\, M(x) \subseteq \sgn(p_1p_2)\, M(x)\, T(p_1,p_2,n)
\]
for any $x$ in the spectrum of $\Om$. For $g \in L^2(I(p,m,\ep,\et))$ this implies
\[
\big(T(p_1,p_2,n) g\big)(x) = C(x)\, g\big(\sgn(p_1p_2)x\big),
\]
for a certain bounded measurable function $C:I(p,m',\ep',\et')\to \C$. It follows immediately that $C(x)=0$ if $\sgn(p_1p_2)x \not\in I(p,m,\ep,\et)$, which can only happen in case $x \in \si_d(p,m',\ep',\et')$.

The set
\[
\{ g_z(\cdot;p,m,\ep,\et) \mid z \in J(p,m,\ep,\et)\}
\]
is an orthonormal basis for $L^2(I(p,m,\ep,\et))$. Recall that the functions $g_z(x;p,m,\ep,\et)$ are defined in terms of Al-Salam--Chihara polynomials or little $q$-Jacobi functions, see Section \ref{ssec:spectraldecompCasimir}. From the asymptotic behaviour of these special functions, see \eqref{eq:asymptoticnormalAlSCfcont}, \eqref{eq:asymptoticnormalAlSCfdisc}, \eqref{eq:asymplqJacfunktoinftycont} and \eqref{eq:asymplqJacfunktoinftydisc}, it follows that the functions $g_z(x;p,m,\ep,\et)$ satisfy
\begin{equation} \label{eq:asymptoticsgz}
g_z(\mu(\la);p,m,\ep,\et) \sim
\begin{cases}
A(\la) (-\ep\et \la)^{-\chi(z)}, & \la \in D(p,m,\ep,\et),\\
\Re\big(A(\la)(-\ep\et \la)^{-\chi(z)}\big), & \la \in \T,
\end{cases}
\end{equation}
as $z \to 0$, for a certain $A(\la)=A(\la;p,m,\ep,\et) \in \C$.\index{A@$A(\la;p,m,\ep,\et)$}
In general the functions $A$ are only defined on $\T_0=\T\setminus\{-1,1\}$.\index{T@$\T_0$}
The function $A(\la)$ has an explicit expression in terms of the $c$-functions for the corresponding special functions, for instance
\[
A(e^{i\psi};p,m,+,-) =(-1)^m e^{-i\psi(m+\chi(p)-1)}\sqrt{\frac{2}{\pi|\sin \psi|}}\, \frac{ c(e^{-i\psi};q/p,-q^{1-2m}/p\mid q^2) }{ |c(e^{-i\psi};q/p,-q^{1-2m}/p\mid q^2)|},
\]
for $0<|\psi|<\pi$, which follows from \eqref{eq:defUpsilonpm+-} and \eqref{eq:asymptoticnormalAlSCfcont}, and we have similar expressions in the other cases. For convenience we have written down the explicit formulas for $A$ in Appendix \ref{ssecB:formulasforA}. With the help of the explicit action of $Q(p_1,p_2,n)$ on the basis elements of $\cK(p,m,\ep,\et)$ given in Lemma \ref{lem:explicitactionQppn}, and with Lemma \ref{lem:fundlemasymptoticsQ}, we can now compute explicitly the function $C$. The following notation will be useful: for $n,m \in \Z$, $p \in q^\Z$, $\ep,\et,\si,\tau \in \{-,+\}$, we set
\begin{equation} \label{eq:defsetX}\index{X@$X_n^{\si,\tau}(p,m,\ep,\et)$}
X_n^{\si,\tau}(p,m,\ep,\et) = \T \cup \Big( D(p,m+n,\si\ep,\tau\et) \cap \si\tau D(p,m,\ep,\et)\Big).
\end{equation}

\begin{lemma}\label{lem:actionTC(x)}
Let $g \in L^2(I(p,m,\ep,\et))$, $X = X_n^{\sgn(p_1),\sgn(p_2)}(p,m,\ep,\et)$, then for almost all $x = \mu(\la) \in I(p,m',\ep',\et')$
\begin{equation} \label{eq:actionT=C(x)}
\big(T(p_1,p_2,n) g\big)(x) = C(x) g\big(\sgn(p_1p_2)x\big),
\end{equation}
where $C = C(\,\cdot\,;m,\ep,\et;p_1,p_2,n)$\index{C@$C(x;m,\ep,\et;p_1,p_2,n)$} is given by
\[
C(\mu(\la)) =
 \begin{cases}
     \stackrel{\displaystyle{\ep^{\hf(1-\sgn(p_1))} \et^{\hf(1-\sgn(p_2))+n}\, S(-\sgn(p_1p_2)\la;p_1,p_2,n)}}
              {\displaystyle{\times \ \frac{ A(\la;p,m',\ep',\et')}{A(\sgn(p_1p_2)\la;p,m,\ep,\et)}}}
   & \la \in X ,\\
   \,0, & \text{otherwise}.
 \end{cases}
\]
\end{lemma}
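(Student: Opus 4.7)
The plan is to split the proof into two stages: derive the multiplicative form of $T(p_1,p_2,n)$, then identify the coefficient $C$ by asymptotic matching.

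For the multiplicative form, I would start from the graded commutation relation $Q(p_1,p_2,n)\,\Om \subseteq \sgn(p_1p_2)\,\Om\,Q(p_1,p_2,n)$ of Proposition \ref{prop:decompMintoM+andM-andgradedcommutation}. Conjugating by the spectral unitary $\Up$ from Theorem \ref{thm:spectraldecompositionCasimir}, which intertwines $\Om\vert_{\cK(p,m,\ep,\et)}$ with multiplication by $x$ on $L^2(I(p,m,\ep,\et))$, yields $T(p_1,p_2,n)\,M(x) \subseteq \sgn(p_1p_2)\,M(x)\,T(p_1,p_2,n)$. Because $\Om$ has simple spectrum on each $\cK(p,m,\ep,\et)$, the operator $M(x)$ generates a maximal abelian von Neumann algebra on $L^2(I(p,m,\ep,\et))$, and the intertwining property of the bounded operator $T(p_1,p_2,n)$ forces the form $(Tg)(x) = C(x)\,g(\sgn(p_1p_2)x)$ for a unique bounded measurable $C$ on $I(p,m',\ep',\et')$, with the convention $g(y)=0$ whenever $y \notin I(p,m,\ep,\et)$. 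The set $X = X_n^{\sgn(p_1),\sgn(p_2)}(p,m,\ep,\et)$ of \eqref{eq:defsetX} encodes exactly those $\la$ for which both $\mu(\la) \in I(p,m',\ep',\et')$ and $\sgn(p_1p_2)\mu(\la) = \mu(\sgn(p_1p_2)\la) \in I(p,m,\ep,\et)$, using that $\mu$ is odd; consequently $C$ vanishes automatically off $X$.

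To identify $C$ on $X$, I would test on the coordinate representation $\phi_\la(z) = g_z(\mu(\la);p,m,\ep,\et)$ of the (possibly generalized) $\Om$-eigenvector at eigenvalue $\mu(\la)$. Its $z \to 0$ asymptotic from \eqref{eq:asymptoticsgz} is $A(\la;p,m,\ep,\et)\,(-\ep\et\la)^{-\chi(z)}$ (or its real part for $\la \in \T_0$). Applying $Q(p_1,p_2,n)$ coordinate-wise through Lemma \ref{lem:explicitactionQppn} and then invoking Lemma \ref{lem:fundlemasymptoticsQ} with $t = -\ep\et\la$ produces the $w \to 0$ asymptotic of the image, with multiplier $\et^n\,s(\ep,\ep')\,s(\et,\et')\,S(\ep\et/t;p_1,p_2,n)$. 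The graded commutation forces $Q\phi_\la$ to be a scalar multiple of the eigenvector $w \mapsto g_w(\sgn(p_1p_2)\mu(\la);p,m',\ep',\et')$ of $\Om\vert_{\cK(p,m',\ep',\et')}$, whose asymptotic from \eqref{eq:asymptoticsgz} is $A(\sgn(p_1p_2)\la;p,m',\ep',\et')(-\ep'\et'\sgn(p_1p_2)\la)^{-\chi(w)}$; the exponential factors match thanks to the key identity $-\ep'\et'\sgn(p_1p_2) = -\ep\et$. Reading off the scalar and translating via the spectral correspondence, then relabelling $\la \leftrightarrow \sgn(p_1p_2)\la$ and invoking the symmetries of $S$ from Lemma \ref{lem:propertiesofS}, combined with the case checks $s(\ep,\ep') = \ep^{\hf(1-\sgn(p_1))}$ and $s(\et,\et') = \et^{\hf(1-\sgn(p_2))}$ that follow directly from Definition \ref{defn:chikappanuands}, brings the answer to the stated shape.

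The main technical obstacle will be making this asymptotic matching rigorous across the whole spectrum, since for $\la \in \T_0$ the ``eigenvector'' $\phi_\la$ is only distributional. For $\la \in D(p,m,\ep,\et)$ the vector $\phi_\la$ is an honest $\ell^2$-element so the asymptotic upgrades to an exact identity of Hilbert-space vectors after normalization. For continuous $\la$, the cleanest rigorous route is to avoid distributions entirely by verifying the stated identity on the orthonormal basis $\{g_{z_0}\}_{z_0 \in J(p,m,\ep,\et)}$: I would compute $(Tg_{z_0})(\mu(\la))$ explicitly using Lemma \ref{lem:explicitactionQppn} applied to $f_{z_0} = f_{-m,\ep\et q^m p z_0,z_0}$, getting a sum over $w$ of $(Qf_{z_0})(w)\,g_w(\mu(\la);p,m',\ep',\et')$; then I would compare this with the multiplicative ansatz $C(\mu(\la))\,g_{z_0}(\sgn(p_1p_2)\mu(\la);p,m,\ep,\et)$ by extracting the leading $(-\ep\et\la)^{-\chi(z_0)}$-coefficient as $z_0 \to 0$, using Lemma \ref{lem:fundlemasymptoticsQ}(ii) on one side and \eqref{eq:asymptoticsgz} on the other. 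Once the identity is established on this dense family, boundedness of $T(p_1,p_2,n)$ extends it to all of $L^2(I(p,m,\ep,\et))$ and completes the proof.
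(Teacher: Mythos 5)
Your proposal is correct and follows essentially the same route as the paper: the multiplicative form of $T(p_1,p_2,n)$ comes from the graded commutation relation together with simplicity of the spectrum of $\Om$ on each $\cK(p,m,\ep,\et)$, and $C$ is then identified by testing on the basis functions $g_z$, expanding $(Tg_z)(\mu(\la))$ as a sum over $w$ via Lemma \ref{lem:explicitactionQppn}, and extracting the leading coefficient as $z\to 0$ using Lemma \ref{lem:fundlemasymptoticsQ} and \eqref{eq:asymptoticsgz}, exactly as in the paper. The only details you leave implicit are that the sum over $w$ must first be rewritten with the symmetry relations \eqref{eq:symmetryforapxy} so that the limit variable sits in the slot Lemma \ref{lem:fundlemasymptoticsQ} expects (which is why $S(-\la^{-1};p_1,p_2,-n)$ appears first and the symmetry of Lemma \ref{lem:propertiesofS} is then invoked), that part (i) of that lemma handles the discrete spectrum, and that the real-valuedness of $C$ is used when matching real parts on $\T_0$.
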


Note that the expression on the right hand side is not obviously symmetric with respect
to interchanging $\la$ and $\la^{-1}$, but it is since the function $C$ only depends
on $x=\mu(\la)$.

\begin{proof}
We assume $\la \in X$. We know that \eqref{eq:actionT=C(x)} is valid for some function $C$ and for all $g \in L^2(I(p,m,\ep,\et))$. We choose $g=g_z=g_z(\cdot;p,m,\ep,\et)$. Since the function $C$ is independent of $z$, we can determine the function $C$ by letting $z \rightarrow 0$.

Using $\Up_{p,m}^{\ep,\et} f_{-m,\ep\et q^m pz,z} = g_z(\cdot;p,m,\ep,\et)$, it follows immediately from Lemma \ref{lem:explicitactionQppn} and \eqref{eq:defTp1p2n} that
\[
\begin{split}
C(\cdot)&g_z(\sgn(p_1p_2)\,\cdot\,) = T(p_1,p_2,n)g_z\big(\sgn(p_1p_2)\,\cdot\,;p,m,\ep,\et\big) \\
=\,& (-1)^{m'} (\et')^{\chi(p_1p_2)+m} q^{n+m}p_1^2/|z| \\
 &\times \sum_{w \in J(p,m',\ep',\et')} \frac{(\ep'\et')^{\chi(w)}}{|w|} a_{p_1}(z,w) a_{p_2}(\ep\et q^m p z, \ep'\et' q^{m'}p w) g_w(\cdot;p,m',\ep',\et'),
\end{split}
\]
as an identity in $L^2(I(p,m',\ep',\et'))$. Since $g_z$ is a real-valued function, we see that the function $C$ is real-valued almost everywhere. From \eqref{eq:asymptoticsgz} it follows that $w \mapsto g_w(x;p,m',\ep',\et')$ is bounded for all $x \in I(p,m',\ep',\et')\setminus\{\pm 1\}$. This implies that the sum
\[
\sum_{w \in J(p,m',\ep',\et')} \frac{(\ep'\et')^{\chi(w)}}{|w|} a_{p_1}(z,w) a_{p_2}(\ep\et q^m p z, \ep'\et' q^{m'}p w) g_w(x;p,m',\ep',\et')
\]
converges for all $x \in I(p,m',\ep',\et')\setminus\{\pm 1\}$. Using symmetry relations \eqref{eq:symmetryforapxy} for the functions $a_p$, we have
\begin{multline*}
C(x)g_z\big(\sgn(p_1p_2)x\big) =
 (-1)^{m'} \sgn(p_1)^{\chi(p_1)} \sgn(p_2)^{\chi(p_2)} \et^{m'+\chi(p_1p_2)} (\ep\et)^{\chi(z)} q^{m'}p_1^2/|z| \\
 \times \sum_{w \in J(p,m',\ep',\et')} \frac{1}{|w|} a_{p_1}(w,z) a_{p_2}(\ep'\et' q^{m'}p w,\ep\et q^m p z) g_w(x;p,m',\ep',\et').
\end{multline*}
Let $z \rightarrow 0$ in this expression using Lemma \ref{lem:fundlemasymptoticsQ} and the asymptotic behaviour \eqref{eq:asymptoticsgz} of $g_z$, then for $\la \in \T_0$,
\[
\begin{split}
C(\mu(\la))\, \Re\Big(A\big(\sgn(p_1p_2)\la\big)\, &\big(-\sgn(p_1p_2)\ep \et \la\big)^{-\chi(z)}\Big) \sim \\
& (-1)^n q^n (\et')^n \sgn(p_1)^{\chi(p_1)} \sgn(p_2)^{\chi(p_2)}s(\ep,\ep') s(\et,\et') \\
& \times \Re\Big((A'(\la) (-\ep'\et'\la)^{-\chi(z)}S(- \la^{-1};p_1,p_2,-n)\Big),
\end{split}
\]
and for $\la \in X_d(p,m',\ep',\et';n)$,
\[
\begin{split}
C(\mu(\la))\, A\big(\sgn(p_1p_2)\la\big)\,& \big(-\sgn(p_1p_2)\ep \et \la\big)^{-\chi(z)} \sim \\
&(-1)^n q^n (\et')^n \sgn(p_1)^{\chi(p_1)} \sgn(p_2)^{\chi(p_2)}s(\ep,\ep') s(\et,\et') \\
& \times A'(\la) (-\ep'\et'\la)^{-\chi(z)} S(-\la^{-1};p_1,p_2,-n),
\end{split}
\]
where we use the shorthand notation $A'(\la)= A(\la;p,m',\ep',\et')$. Applying the first symmetry for the function $S(\cdot;p_1,p_2,n)$ from Lemma \ref{lem:propertiesofS}, using $s(\ep,\ep')=\ep^{\hf(1-\sgn(p_1))}$ and similarly for $s(\et,\et')$, and using the fact that $C$ is real-valued, the result follows.
\end{proof}

\begin{remark}
Lemma \ref{lem:actionTC(x)} immediately gives nontrivial summation formulas for special functions. We work this out in Section \ref{ssec:summationformula}.
\end{remark}

Next we consider the polar decomposition for $Q(p_1,p_2,n)$. We need the following lemma.

\begin{lemma}\label{lem:QadjointisQ}
For $p_1,p_2\in I_q$,
$n\in \Z$, we have
\[
Q(p_1,p_2,n)^\ast = (-q)^n \, \sgn(p_1)^{\chi(p_1)}
\, \sgn(p_2)^{\chi(p_2)}\, Q(p_1,p_2,-n).
\]
\end{lemma}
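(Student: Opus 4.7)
\medskip

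\noindent\textbf{Proof proposal.}
The plan is to bootstrap off of Corollary~\ref{cor:lemQppnareallinhatM}, which already gives
\[
Q(p_1,p_2,n)^\ast \,=\, (-1)^n\,\sgn(p_1)^{\chi(p_1)}\sgn(p_2)^{\chi(p_2)}\,J\,Q(p_2,p_1,n)\,J.
\]
In view of this, proving Lemma~\ref{lem:QadjointisQ} reduces to establishing the identity
\begin{equation*}
J\,Q(p_2,p_1,n)\,J \,=\, q^n\,Q(p_1,p_2,-n).
\end{equation*}
Since $\{f_{u,v,w}\mid u\in\Z,\,v,w\in I_q\}$ is an orthonormal basis of $\cK$, it suffices to compare matrix elements of both sides on this basis.

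The computation proceeds by direct substitution. Using $J\,f_{u,v,w} = f_{-u,w,v}$ (Remark~\ref{rmk:propertiessiJnabla}) one gets
\[
\langle J\,Q(p_2,p_1,n)\,J\,f_{u,v,w},\,f_{l,r,s}\rangle \,=\, \langle Q(p_2,p_1,n)\,f_{-u,w,v},\,f_{-l,s,r}\rangle,
\]
which I will expand by plugging into the explicit matrix element formula~\eqref{eq:matrixelementsofQppnandhatJQppnhatJ} for $Q(p_2,p_1,n)$, swapping the roles of $p_1$ and $p_2$ and feeding in the primed arguments. The output is a product of Kronecker deltas times $|v/r|\,a_v(p_2,r)\,a_w(p_1,s)$. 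The corresponding expansion of $\langle Q(p_1,p_2,-n)\,f_{u,v,w},\,f_{l,r,s}\rangle$ from~\eqref{eq:matrixelementsofQppnandhatJQppnhatJ} produces the analogous product with coefficient $|w/s|\,a_w(p_1,s)\,a_v(p_2,r)$. A straightforward inspection shows that the three Kronecker deltas produced on the two sides encode exactly the same constraints (up to harmless sign bookkeeping that I can verify by solving one delta for the other's variable).

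The crux of the proof is then the scalar ratio $|v/r|\,/\,|w/s|$. On the support of the common deltas one has $|v|=q^{l}\,|w\,p_2/p_1|$ and $|r|=q^{u}\,|s\,p_2/p_1|$, and since the common condition $l-u=n$ holds, one gets $|v/r|=q^{n}\,|w/s|$. This yields precisely the factor $q^n$ needed, and combining with the outer factor $(-1)^n\,\sgn(p_1)^{\chi(p_1)}\sgn(p_2)^{\chi(p_2)}$ from Corollary~\ref{cor:lemQppnareallinhatM} produces $(-q)^n\,\sgn(p_1)^{\chi(p_1)}\sgn(p_2)^{\chi(p_2)}$, as claimed.

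I do not anticipate any serious obstacle: all ingredients are already in place, and the only real work is the bookkeeping that the deltas on the two sides are equivalent (in particular that after the $p_1\leftrightarrow p_2$ swap forced by the conjugation by $J$, the sign condition $s=\sgn(wv)\,r\,p_1\,q^{-u}/p_2$ is literally the inverse of $r=\sgn(vw)\,s\,p_2\,q^{u}/p_1$). An alternative, more pedestrian route would bypass Corollary~\ref{cor:lemQppnareallinhatM} altogether and compute $\langle Q(p_1,p_2,n)^\ast f_{l,r,s},f_{u,v,w}\rangle=\langle Q(p_1,p_2,n)f_{u,v,w},f_{l,r,s}\rangle$ directly, then convert $a_w(p_1,s)\mapsto a_s(p_1,w)$ and $a_v(p_2,r)\mapsto a_r(p_2,v)$ using the first symmetry in~\eqref{eq:symmetryforapxy} (which produces the sign $(-1)^{\chi(ws)+\chi(vr)}$) and check using the deltas that this sign equals $(-1)^n$; this is essentially the same calculation repackaged, and is worth noting as a consistency check.
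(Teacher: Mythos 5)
Your proposal is correct, and the computation checks out: on the support of the common Kronecker deltas one indeed has $|v|=q^{l}|wp_2/p_1|$ and $|r|=q^{u}|sp_2/p_1|$, hence $|v/r|=q^{l-u}|w/s|=q^{n}|w/s|$, which is exactly the extra factor needed, and there is no circularity since Corollary~\ref{cor:lemQppnareallinhatM} is established earlier and independently. The interesting point is that you have inverted the paper's presentation: the paper's primary proof is precisely your ``more pedestrian'' closing remark, namely a direct comparison of $\langle f_{mpt},Q(p_1,p_2,n)f_{lrs}\rangle$ with $\langle Q(p_1,p_2,-n)f_{mpt},f_{lrs}\rangle$ using the matrix elements \eqref{eq:matrixelementsofQppnandhatJQppnhatJ} and the symmetries \eqref{eq:symmetryforapxy}, while the route you develop in detail (Corollary~\ref{cor:lemQppnareallinhatM} plus the identity $J\,Q(p_2,p_1,n)\,J=q^{n}\,Q(p_1,p_2,-n)$) is exactly the paper's one-line ``alternatively'' remark. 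Your version has the merit of making the quantum-group structure visible ($Q^\ast$ is governed by the unitary antipode $\hat R(x)=Jx^\ast J$), at the cost of routing through the multiplicative-unitary identities behind the Corollary; the paper's version is self-contained at the level of the $a_p$-functions. Two small points worth making explicit in your main route: since $J$ is antilinear, $\langle JQJf,g\rangle=\overline{\langle QJf,Jg\rangle}$, and dropping the conjugation requires observing that the matrix elements are real (the $a_p$ are real-valued by Definition~\ref{def:functionap}); and in your consistency-check sketch the first symmetry of \eqref{eq:symmetryforapxy} contributes not only the sign $(-1)^{\chi(ws)+\chi(vr)}$ but also the factors $\sgn(p_1)^{\chi(p_1)}\sgn(p_2)^{\chi(p_2)}$ and $|s/w|\,|r/v|$ --- these are precisely what produce the sign prefactor and the $q^{n}$ in the final formula, so they cannot be waved away as ``harmless bookkeeping.''
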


\begin{proof}
Using the matrix elements \eqref{eq:matrixelementsofQppnandhatJQppnhatJ} and their symmetries following from \eqref{eq:symmetryforapxy},  it is straightforward to check that the matrix elements
\[
\langle  f_{mpt}, Q(p_1,p_2,n)f_{lrs}\rangle \qquad \text{and} \qquad \langle Q(p_1,p_2,-n)\, f_{mpt}, f_{lrs}\rangle
\]
agree up to the factor $(-q)^n\sgn(p_1)^{\chi(p_1)} \, \sgn(p_2)^{\chi(p_2)}$ for all $m,p,t,l,r,s$.
\end{proof}

Alternatively, one can also use Corollary \ref{cor:lemQppnareallinhatM} and
$J\, f_{mpt} \, = \, f_{-m,t,p}$, see Section \ref{sec:quantumSU11description}, to prove Lemma \ref{lem:QadjointisQ} using \eqref{eq:actionunitaryantipode}.

From Lemma \ref{lem:QadjointisQ} it follows that
\begin{equation} \label{eq:TadjointisT}
T(p_1,p_2,n)^* = (-q)^n \sgn(p_1)^{\chi(p_1)} \sgn(p_2)^{\chi(p_2)} T(p_1,p_2,-n).
\end{equation}
Combining this with Lemma \ref{lem:actionTC(x)} we find for $g \in L^2(I(p,m,\ep,\et))$,
\[
\begin{split}
\big(T(p_1,&p_2,n)^* T(p_1,p_2,n) g \big)(\mu(\la))  \\
=\,& (-q)^n \sgn(p_1)^{\chi(p_1)+1} \sgn(p_2)^{\chi(p_2)+n+1} S(-\la;p_1,p_2,n) S(-\sgn(p_1p_2)\la;p_1,p_2,-n)\ g(\mu(\la)), \\
=\,&S(-\la;p_1,p_2,n)S(-\la^{-1};p_1,p_2,n) \ g\big(\mu(\la)\big),
\end{split}
\]
where $\la \in X=X_n^{\sgn(p_1),\sgn(p_2)}(p,m,\ep,\et)$. The last equality follows from a symmetry relation from Lemma \ref{lem:propertiesofS}.  Note that this implies $S(-\la;p_1,p_2,n)S(-\la^{-1};p_1,p_2,n) \geq 0$. Furthermore, for $\la \not\in X$ we have
\[
\big(T(p_1,p_2,n)^* T(p_1,p_2,n) g \big)(\mu(\la))=0.
\]
Now we define for $x =\mu(\la) \in I(p,m,\ep,\et)$,
\[
L\big(x;p_1,p_2,n\big) =
\begin{cases}
\sqrt{S(-\la;p_1,p_2,n)S(-\la^{-1};p_1,p_2,n)}, & \la \in X,\\
\, 0, & \text{otherwise},
\end{cases}
\]
and we define a partial isometry $V(p_1,p_2,n):L^2(I(p,m,\ep,\et)) \to L^2(I(p,m',\ep',\et'))$ by
\begin{equation*}\index{V@$V(p_1,p_2,n)$}
\big(V(p_1,p_2,n)\,g \big)(x) =
\begin{cases}
\dfrac{ C(x)g\big(\sgn(p_1p_2)x\big) }{L(\sgn(p_1 p_2)x)} , & x \in \mu(X),\\[5mm]
\, 0, & \text{otherwise},\end{cases}
\end{equation*}
where $C$ is given in Lemma \ref{lem:actionTC(x)}. We remark that for $\la \in X$ it follows from Lemma \ref{lem:propertiesofS} that $L$ is a multiple of the absolute value of a $_2\varphi_1$-function. Now from Lemma \ref{lem:actionTC(x)} we find the polar decomposition of $T(p_1,p_2,n):L^2(I(p,m,\ep,\et)) \rightarrow L^2(I(p,m',\ep',\et'))$:
\[
T(p_1,p_2,n) = V(p_1,p_2,n) |T(p_1,p_2,n)|,
\]
where
\[
\big(|T(p_1,p_2,n)|\ g\big)(x) =
\begin{cases}
L(x) g(x), & x\in \mu(X),\\
0, & \text{otherwise},
\end{cases}
\]
for $g \in L^2(I(p,m,\ep,\et))$, $x \in I(p,m,\ep,\et)$ and the set $X$ is given by \eqref{eq:defsetX}. Note that $|T(p_1,p_2,n)|=0$ on $L^2(I(p,m,\ep,\et))$ if $p\neq q^{-n-2m}|p_2/p_1|$. We can now describe explicitly the polar decomposition for $Q(p_1,p_2,n)$.

\begin{prop}\label{prop:polardecompQp1p2n}
The operators $U(p_1,p_2,n)$ and $|Q(p_1,p_2,n)|$ in the polar decomposition $Q(p_1,p_2,n)=U(p_1,p_2,n)|Q(p_1,p_2,n)|$ are given by
\[
|Q(p_1,p_2,n)| = L(\Om), \quad \text{and} \quad U(p_1,p_2,n) =\Up^* V(p_1,p_2,n) \Up.
\]
\end{prop}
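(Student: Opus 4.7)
The heavy lifting for this proposition has already been carried out: we have established the polar decomposition of the intermediate operator $T(p_1,p_2,n) = V(p_1,p_2,n)\,|T(p_1,p_2,n)|$ acting between the $L^2$-spaces appearing in Theorem \ref{thm:spectraldecompositionCasimir}. My plan is simply to transport this decomposition back to the Hilbert space $\cK$ via the unitary $\Up$.

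First, by the very definition \eqref{eq:defTp1p2n} of $T(p_1,p_2,n)$, and since $\Up = \bigoplus \Up^{\ep,\et}_{p,m}$ is unitary, we have the blockwise operator identity $\Up\,Q(p_1,p_2,n)\,\Up^* = T(p_1,p_2,n)$. Conjugating the polar decomposition $T = V\,|T|$ by this unitary yields
\[
Q(p_1,p_2,n) \,=\, \bigl(\Up^* V(p_1,p_2,n) \Up\bigr)\, \bigl(\Up^* |T(p_1,p_2,n)|\,\Up\bigr).
\]
Conjugation by a unitary preserves the defining properties of a polar decomposition: $\Up^* V(p_1,p_2,n) \Up$ is a partial isometry (with initial and final spaces pulled back through $\Up$) and $\Up^* |T(p_1,p_2,n)| \Up$ is positive and self-adjoint with the same closed range (pulled back) as $|T(p_1,p_2,n)|$. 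By uniqueness of the polar decomposition, it therefore suffices to identify the two factors with the claimed expressions.

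The first factor is, by definition, the operator $U(p_1,p_2,n)$ of the statement. For the second factor, I will use that, by Theorem \ref{thm:spectraldecompositionCasimir}, each $\Up^{\ep,\et}_{p,m}$ intertwines $\Om|_{\cK(p,m,\ep,\et)}$ with multiplication by $x$ on $L^2(I(p,m,\ep,\et))$ and has multiplicity one. Since $|T(p_1,p_2,n)|$ acts on $L^2(I(p,m,\ep,\et))$ as multiplication by $L(\,\cdot\,;p_1,p_2,n)$, the Borel functional calculus directly yields $(\Up^{\ep,\et}_{p,m})^*\,|T(p_1,p_2,n)|\,\Up^{\ep,\et}_{p,m} = L(\Om;p_1,p_2,n)$ on this block, and summing over blocks gives $\Up^* |T(p_1,p_2,n)| \Up = L(\Om)$.

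The only delicate point — and the step that requires the most care — concerns the interpretation of $L(\Om)$, since the function $L(\,\cdot\,;p_1,p_2,n)$ has a block-dependent vanishing set (through the set $X$ in its definition). On any block $\cK(p,m,\ep,\et)$ for which $q^{2m}p \neq q^{-n}|p_2/p_1|$, the operator $Q(p_1,p_2,n)$ vanishes by Lemma \ref{lem:explicitactionQppn}, and on such blocks $L$ is to be interpreted as zero, so both sides of the claimed identity vanish. On the remaining blocks, the multiplicity-one spectral decomposition of $\Om|_{\cK(p,m,\ep,\et)}$ makes the blockwise functional calculus $L(\Om|_{\cK(p,m,\ep,\et)})$ unambiguous, and the identity $|Q(p_1,p_2,n)| = L(\Om)$ is understood in this blockwise sense. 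Once this accounting is set up properly, the proposition follows immediately from the blockwise polar decomposition established earlier.
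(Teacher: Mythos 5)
Your proof is correct and takes essentially the same route as the paper: there the proposition is presented as an immediate consequence of the polar decomposition $T(p_1,p_2,n)=V(p_1,p_2,n)\,|T(p_1,p_2,n)|$ established just beforehand, and your argument — conjugating by the unitary $\Up$, invoking uniqueness of the polar decomposition, and identifying $\Up^*|T(p_1,p_2,n)|\Up$ with $L(\Om)$ via the spectral theorem — supplies exactly the details the paper leaves implicit. Your care with the blockwise interpretation of $L(\Om)$ (in particular the vanishing on blocks with $q^{2m}p\neq q^{-n}|p_2/p_1|$) matches the paper's remark that $|T(p_1,p_2,n)|=0$ on such $L^2(I(p,m,\ep,\et))$.
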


We are going to define a partial isometry closely related to $V(p_1,p_2,n)$ which is more convenient for us. Let us first have a closer look at the function $\frac{C(x)}{L(\sgn(p_1p_2)x)}$ appearing in the definition of $V(p_1,p_2,n)$. Using Lemma \ref{lem:S=BN} we find (omitting dependence on certain parameters in the notation)
\[
L(x) = \sqrt{ B(\si\tau\la)B(\si\tau\la^{-1}) } |N(\si\tau x)|, \qquad x = \mu(\la),
\]
where $\si=\sgn(p_1)$ and $\tau=\sgn(p_2)$. Here we use that $N(x)$ is symmetric in $\la$ and $\la^{-1}$, hence real-valued, and consequently $B(\la) B(\la^{-1})$ is positive. This shows that $\frac{C(x)}{L(\sgn(p_1p_2)x)}$ can be written as
\[
\ep^{\frac12(1-\si)} \et^{\frac12(1-\tau)+n} \frac{ A'(\la)} {A(\si\tau\la)} \frac{ B(\la) }{ \sqrt{ B(\la) B(\la^{-1})}}\ \sgn\big( N(x) \big).
\]
This expression, in particular the factor $\sgn\big( N(x) \big)$, is not very convenient for us, therefore we are going to consider the partial isometry $\sgn\big( N(\,\cdot\,) \big) V(p_1,p_2,n)$.
Let us introduce the following functions:
\begin{equation} \label{eq:defEGnu}\index{E@$E(\la;p,m)$}\index{G@$G(\la;p,m,\ep,\et)$}\index{N@$\nu_n^\tau(\la;p)$}
\begin{split}
E(\la;p,m) & = \frac{(-q^{1-2m}/p\la;q^2)_\infty }{ \sqrt{(-q^{1-2m}/p\la, -q^{1-2m}\la/p;q^2)_\infty }}, \\
G(\la;p,m,\ep,\et) & = A(\la;p,m,\ep,\et) E(\la;p,m), \\
\nu_n^\tau(\la;p) & = \frac{ (\tau \la)^{\epsilon(p)} }{\la^{\epsilon(p)-n}},
\end{split}
\end{equation}
where $\epsilon(p)$ is defined by \eqref{eq:epsilon(p)}. In particular, for $\epsilon(p)=\hf$ we have $\nu_0^-(\la;p)=\mp i$ for $\la \in \T^{\pm}$. With these functions, we define for $n \in \Z$, $\si,\tau \in \{-,+\}$, a partial isometry
 $V_n^{\si,\tau}\colon L^2(I(p,m,\ep,\et)) \rightarrow L^2(I(p,m+n,\si\et,\tau\et))$ closely related to $V(p_1,p_2,n)$ by
\begin{equation} \label{eq:defVnsitau}\index{V@$V_n^{\si,\tau}$}
\big(V_n^{\si,\tau} g\big)(x) =
\begin{cases}
\ep^{\hf(1-\si)} \et^{\hf(1-\tau)+n} \nu_n^\tau(\la;p)\dfrac{ G(\la;p,m+n,\si\ep,\tau\et)}{G(\si\tau\la;p,m,\ep,\et)} g(\si\tau x), & \la \in X,\\
0, & \text{otherwise}.
\end{cases}
\end{equation}
where $g \in L^2(I(p,m,\ep,\et))$, and $X=X_n^{\si,\tau}(p,m,\ep,\et)$. Let us remark that
\[
\nu_n^{\sgn(p_2)}(\la;p) \frac{ E(\la;p,m+n) }{ E(\sgn(p_1p_2)\la;p,m) }  = \frac{ B(\la;p_1,p_2,n) }{ \sqrt{ B(\la;p_1,p_2,n) B(\la^{-1};p_1,p_2,n) }},
\]
for $p = q^{-n-2m}|p_2/p_1|$, so that $V_n^{\sgn(p_1),\sgn(p_2)} = \sgn\big(N(\,\cdot\,;p_1,p_2,n)\big)V(p_1,p_2,n)$. We also denote
\[
V_n^{\sgn(p_1),\sgn(p_2)} \colon \bigoplus_{\stackrel{\scriptstyle{\ep,\et\in\{\pm\}}}
{\scriptstyle{p\in q^\Z, m\in \Z}}}  L^2(I(p,m,\ep,\et)) \to
\bigoplus_{\stackrel{\scriptstyle{\ep,\et\in\{\pm\}}}
{\scriptstyle{p\in q^\Z, m\in \Z}}}  L^2(I(p,m,\ep,\et))
\]
by summing
$V_n^{\si,\tau}\colon L^2(I(p,m,\ep,\et)) \rightarrow L^2(I(p,m+n,\si\et,\tau\et))$.

We now arrive at the following polar-type decomposition for the operators $Q(p_1,p_2,n)$.

\begin{prop} \label{prop:polardecompQ}
Let $m,n \in \Z$, $p_1,p_2\in I_q$, $\ep,\et \in \{-,+\}$, and assume $p=q^{-n-2m}|p_2/p_1|$. For $\si,\tau \in \{-,+\}$ we define a partial isometry $U_n^{\si,\tau}= \Up^* V_n^{\si,\tau}\Up$, so that
\begin{equation} \label{eq:defUnsitau}
U_n^{\si,\tau}\vert_{\cK(p,m,\ep,\et)}\, =\, (\Up_{p,m+n}^{\si\ep,\tau\et})^* V_n^{\si,\tau} (\Up_{p,m}^{\ep,\et})
\colon \cK(p,m,\ep,\et) \rightarrow \cK(p,m+n,\si\ep,\tau\et)
\end{equation}
Furthermore, we define a continuous function $H=H(\,\cdot\,;p_1,p_2,n)$\index{H@$H(x;p_1,p_2,n)$} by
\[
H(x;p_1,p_2,n) = \frac{1}{\nu_n^{\sgn(p_2)}(\sgn(p_1p_2)\la;p)}\frac{E(\la;p,m) }{ E(\sgn(p_1p_2)\la;p,m+n) } S(-\la;p_1,p_2,n), \quad x=\mu(\la),
\]
and we denote by $P=P(p_1,p_2,n)\in B(\cK)$ the spectral projection of $K$ corresponding to the eigenvalue $\sqrt{q^{-n}\,|p_2/p_1|}$. Then
\[
Q(p_1,p_2,n) = U_n^{\sgn(p_1),\sgn(p_2)}\, H(\Om)\, P.
\]
\end{prop}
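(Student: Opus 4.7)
The plan is to build on the polar decomposition from Proposition \ref{prop:polardecompQp1p2n}, absorbing the sign factor $\sgn(N(\,\cdot\,;p_1,p_2,n))$ present in $V(p_1,p_2,n)$ into a cleaner partial isometry. The operator $K$ acts as $q^m\sqrt{p}\,\Id$ on each $\cK(p,m,\ep,\et)$ by \eqref{eq:E0E0dagKoncK0}, so $P$, the spectral projection of $K$ for the eigenvalue $\sqrt{q^{-n}|p_2/p_1|}$, is the orthogonal projection onto the direct sum of the subspaces $\cK(p,m,\ep,\et)$ satisfying $q^{2m}p = q^{-n}|p_2/p_1|$. By Lemma \ref{lem:explicitactionQppn} the operator $Q(p_1,p_2,n)$ vanishes on the orthogonal complement, so $Q(p_1,p_2,n) = Q(p_1,p_2,n)\,P$, and I may restrict to a single such summand.

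Combining Proposition \ref{prop:polardecompQp1p2n} with the observation $V(p_1,p_2,n) = \sgn(N(\,\cdot\,;p_1,p_2,n))\,V_n^{\sgn(p_1),\sgn(p_2)}$ (which follows from the defining relation of $V_n^{\sgn(p_1),\sgn(p_2)}$ just after \eqref{eq:defVnsitau} together with $\sgn(N)^2 = 1$ on the support of $L$) gives
\[
Q(p_1,p_2,n) = \Up^{\ast}\,\sgn(N)\,V_n^{\sgn(p_1),\sgn(p_2)}\,M(L(\,\cdot\,;p_1,p_2,n))\,\Up\,P,
\]
where $M(f)$ denotes multiplication by $f$ in the spectral picture of $\Om$. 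Since $U_n^{\sgn(p_1),\sgn(p_2)}\,H(\Om)\,P = \Up^{\ast}\,V_n^{\sgn(p_1),\sgn(p_2)}\,M(H)\,\Up\,P$, and $V_n^{\sgn(p_1),\sgn(p_2)}$ acts by the sign flip $\la \leftrightarrow \sgn(p_1p_2)\la$ combined with a multiplicative factor, the conclusion reduces to the pointwise identity
\[
\sgn(N(\si\tau u;p_1,p_2,n))\,L(u;p_1,p_2,n) = H(u;p_1,p_2,n),
\]
writing $\si = \sgn(p_1)$, $\tau = \sgn(p_2)$ and $u = \mu(\la)$ on the relevant spectrum.

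To verify this identity I would use Lemma \ref{lem:S=BN}, which gives $S(-\la;p_1,p_2,n) = B(\si\tau\la;p_1,p_2,n)\,N(\si\tau u;p_1,p_2,n)$ (and similarly with $\la^{-1}$). Hence $L(u;p_1,p_2,n) = \sqrt{B(\si\tau\la)\,B(\si\tau\la^{-1})}\,|N(\si\tau u;p_1,p_2,n)|$, so multiplying by $\sgn(N(\si\tau u))$ turns $|N|$ into $N$. The remark just before the statement, applied with $\la$ replaced by $\si\tau\la$, rewrites $\sqrt{B(\si\tau\la)B(\si\tau\la^{-1})}$ as $B(\si\tau\la)\,E(\la;p,m)/(\nu_n^{\tau}(\si\tau\la;p)\,E(\si\tau\la;p,m+n))$. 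Inserting this and using $B(\si\tau\la)N(\si\tau u) = S(-\la;p_1,p_2,n)$ reassembles the right-hand side into precisely the formula for $H(u;p_1,p_2,n)$ stated in the proposition.

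The main technical obstacle is the bookkeeping in this final step: one must carefully track the direction of the sign substitution $\la \leftrightarrow \si\tau\la$ induced by $V_n^{\sgn(p_1),\sgn(p_2)}$, reconcile the two instances of the factorization in Lemma \ref{lem:S=BN} (one producing $N$, one producing $|N|$), and keep the numerical prefactor $\ep^{(1-\si)/2}\et^{(1-\tau)/2+n}$ in $V_n^{\sgn(p_1),\sgn(p_2)}$ compatible with the corresponding factor in $C(\,\cdot\,;m,\ep,\et;p_1,p_2,n)$ from Lemma \ref{lem:actionTC(x)}. Continuity of the scalar function $H$ on the spectrum of $\Om$ then comes for free, since any zero of its denominator $\nu_n^{\tau}(\si\tau\la;p)\,E(\si\tau\la;p,m+n)$ is forced to be cancelled by a zero of $S(-\la;p_1,p_2,n)$, as $L = \sqrt{S(-\la)S(-\la^{-1})}$ is the spectral multiplier of the positive self-adjoint operator $|Q(p_1,p_2,n)|$.
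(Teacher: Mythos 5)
Your proof is correct, and it reaches the conclusion by a slightly different organization than the paper. The paper's own proof is a one-step verification: using \eqref{eq:defEGnu} and Lemma \ref{lem:actionTC(x)} it checks directly that the multiplier $C(x)$ of $T(p_1,p_2,n)$ factors as $\ep^{\hf(1-\si)}\et^{\hf(1-\tau)+n}\nu_n^\tau(\la;p)\,G(\la;p,m',\ep',\et')G(\si\tau\la;p,m,\ep,\et)^{-1}\,H(\si\tau x)$, which is exactly the action of $V_n^{\si,\tau}M(H)$, and then observes that $P$ projects onto the sum of the $\cK(p,m,\ep,\et)$ on which $Q(p_1,p_2,n)$ is nonzero. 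You instead route through the genuine polar decomposition $Q=U(p_1,p_2,n)\,|Q(p_1,p_2,n)|$ of Proposition \ref{prop:polardecompQp1p2n}, absorb the factor $\sgn(N)$ into the partial isometry via $V_n^{\si,\tau}=\sgn(N(\cdot))V(p_1,p_2,n)$, pull $\sgn(N(\cdot))$ through the flip to the input side, and reduce everything to the scalar identity $\sgn\bigl(N(\si\tau u)\bigr)L(u)=H(u)$, which you verify from Lemma \ref{lem:S=BN} and the displayed relation for $B/\sqrt{BB^{-1}}$ preceding the proposition; I checked this identity and it holds (with $S(-\la)=B(\si\tau\la)N(\si\tau x)$ and the $E/\nu$ rewriting applied at $\si\tau\la$). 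Both arguments rest on the same underlying facts (Lemma \ref{lem:actionTC(x)} and the definitions of $G$, $E$, $\nu_n^\tau$, $N$, $B$); what your version buys is that it makes transparent why $H(\Om)$ differs from $|Q(p_1,p_2,n)|=L(\Om)$ only by the sign of $N$, at the cost of the extra bookkeeping with the flip that you correctly flag. The only soft spot is your closing remark on continuity of $H$, which is asserted rather than derived from the explicit formula, but the paper does not dwell on this either.
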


Again the right hand side defining $H$ is not obviously symmetric with respect to
interchanging $\la$ and $\la^{-1}$, but it is as can be observed either from the
proof of Proposition \ref{prop:polardecompQ} or by observing that the
$\la$-dependent part in \eqref{eq:CintermsofH} is indeed symmetric with
respect to $\la\,  \leftrightarrow\, \la^{-1}$. Observe that $E(\la;p,m)=E(\la;pq^{2m},0)$ and $pq^{2m}=q^{-n}|\frac{p_2}{p_1}|$. Also, $\nu_n^{\sgn(p_2)}(\sgn(p_1p_2)\la;p)$ 
does not depend on $p$ and $m$ since $\epsilon(p)=\epsilon(q^{-n-2m}|\frac{p_2}{p_1}|)=\epsilon(q^{-n}|\frac{p_2}{p_1}|)$ 
by \eqref{eq:epsilon(p)}, so $H$, as a function of $x$, only depends on the parameters $p_1,p_2$ and $n$.

Note that Proposition \ref{prop:polardecompQ} proves Lemma \ref{lem:polardecompositionQppn}.

\begin{proof}
From \eqref{eq:defEGnu}, Lemma \ref{lem:actionTC(x)} and Theorem \ref{thm:spectraldecompositionCasimir} we find
\[
\begin{split}
\Big(\Up_{p,m'}^{\ep',\et'}&\, Q(p_1,p_2,n)\, (\Up_{p,m}^{\ep,\et})^* g \Big)(x) \\
&= \ep^{\hf(1-\si)} \et^{\hf(1-\tau)+n} \nu_n^\tau(\la;p)\dfrac{G(\la;p,m',\ep',\et') }{ G(\si\tau\la;p,m,\ep,\et)}H(\si\tau x)\, g(\si\tau x) \\
&= \Big((\Up_{p,m'}^{\ep',\et'})^* \, U_n^{\si,\tau} \, H(\Om) \, \Up_{p,m}^{\ep,\et}\,g\Big)(x),
\end{split}
\]
hence $Q(p_1,p_2,n) = U_n^{\si,\tau} \, H(\Om)$ on $\cK(p,m,\ep,\et)$. Now observe that $P$ is the orthogonal projection onto
\[
\bigoplus_{\substack{m \in \Z\\ \ep,\et \in \{-,+\}}} \
\cK(p,m,\ep,\et),\qquad p =q^{-2m} q^{-n}\,|p_2/p_1|,
 \]
then the result follows.
\end{proof}

From this proposition it follows that the function $C(x)$ from Lemma \ref{lem:actionTC(x)} can be written as
\begin{equation} \label{eq:CintermsofH}
C(x) = \ep^{\hf(1-\si)} \et^{\hf(1-\tau)+n}\nu_n^\tau(\la;p)\,\dfrac{  G(\la;p,m',\ep',\et') }{ G(\si\tau \la;p,m,\ep,\et)}H(\si\tau x;p_1,p_2,n),
\end{equation}
for $\si=\sgn(p_1)$ and $\tau = \sgn(p_2)$. Let us give two identities for the function $C$ that will be useful later on. The first identity follows from the structure formula in Proposition \ref{prop:structureconstQs} for the linear basis $\{Q(p_1,p_2,n) \mid p_1,p_2 \in I_q,\ n \in \Z\}$ for the von Neumann algebra $\hat M$. This formula implies a product formula for the function $C$ that is useful later on. The second identity is a consequence of Lemma \ref{lem:QadjointisQ}.

\begin{lemma} \label{lem:identitiesC}
Let $p_1,p_2,r_1,r_2 \in I_q$, $k,m,n \in \Z$, $\ep,\et \in \{-,+\}$ and $y \in [-1,1]$.
\begin{enumerate}[(i)]
\item Assume $|\frac{p_2}{p_1}|=q^m$ and $|\frac{r_1}{r_2}| = q^n$, then the following product formula holds:
\[
\begin{split}
C(y;k+m,\sgn(r_1)\ep,\,&\sgn(r_2)\et;p_1,p_2,n)C(\sgn(p_1p_2)y;k,\ep,\et;r_1,r_2,m) = \\
 &\sum_{\substack{x_1,x_2\in I_q\\ \sgn(x_1)=\sgn(p_1r_1)\\ \sgn(x_2)=\sgn(p_2r_2) \\ |x_1|=|x_2|}} a_{x_1}(r_1,p_1) a_{x_2}(r_2,p_2)C(y;k,\ep,\et;x_1,x_2,m+n).
\end{split}
\]
\item The following symmetry relation holds:
\[
\begin{split}
C(y;&m,\ep,\et;p_1,p_2,n) = \\
&(-q)^n \, \sgn(p_1)^{\chi(p_1)}
\, \sgn(p_2)^{\chi(p_2)}\,  C(\sgn(p_1p_2) y;m+n,\sgn(p_1)\ep,\sgn(p_2)\et;p_1,p_2,-n).
\end{split}
\]
\end{enumerate}
\end{lemma}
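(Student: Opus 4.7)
The plan for both parts is to lift already-established operator identities on $\hat{M}$ to identities for the scalar multiplier function $C$, using that by Proposition \ref{prop:polardecompQ} and Lemma \ref{lem:actionTC(x)} the operator $Q(r_1,r_2,m)$, transported to the spectral picture via $\Up$, acts on $L^2(I(p,k,\ep,\et))$ as multiplication by $C(\,\cdot\,;k,\ep,\et;r_1,r_2,m)$ followed by the argument flip $g \mapsto g(\sgn(r_1r_2)\,\cdot\,)$. Hence any composition or adjoint identity among the $Q$'s descends to a corresponding identity for the $C$'s.

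For (i), I will restrict the structure-constant identity of Proposition \ref{prop:structureconstQs} to $\cK(p,k,\ep,\et)$, where $p \in q^\Z$ and $k \in \Z$ are constrained by $q^{2k}p = q^{-n-m}$; outside this locus both sides vanish by Lemma \ref{lem:explicitactionQppn}, so nothing is claimed there. On the left, two successive applications of Lemma \ref{lem:actionTC(x)} express $Q(p_1,p_2,n)Q(r_1,r_2,m)$ as multiplication by the product $C(y;k+m,\sgn(r_1)\ep,\sgn(r_2)\et;p_1,p_2,n)\,C(\sgn(p_1p_2)y;k,\ep,\et;r_1,r_2,m)$, followed by the combined argument flip $g \mapsto g(\sgn(p_1p_2r_1r_2)\,\cdot\,)$. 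On the right, each operator $Q(x_1,x_2,n+m)$ acts nontrivially on $\cK(p,k,\ep,\et)$ precisely when $|x_2/x_1|=q^{n+m}$, which together with $q^{2k}p=q^{-n-m}$ forces $|x_1|=|x_2|$; the sign constraints in Definition \ref{def:functionap} further force $\sgn(x_i)=\sgn(p_ir_i)$, so $\sgn(x_1x_2)=\sgn(p_1p_2r_1r_2)$ and the same flip appears in every non-vanishing summand. Factoring out this common argument flip and equating multipliers gives the product formula of (i).

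For (ii), I will invoke the spectral reformulation \eqref{eq:TadjointisT} of Lemma \ref{lem:QadjointisQ} at the level of $T(p_1,p_2,n) = \Up\,Q(p_1,p_2,n)\,\Up^*$. Writing $\si = \sgn(p_1p_2)$ and using $(T(p_1,p_2,n)g)(x) = C(x;m,\ep,\et;p_1,p_2,n)\,g(\si x)$ from Lemma \ref{lem:actionTC(x)}, a short change of variables $y = \si x$ in the defining inner-product identity yields $(T(p_1,p_2,n)^*h)(y) = \overline{C(\si y;m,\ep,\et;p_1,p_2,n)}\,h(\si y)$; the function $C$ is real almost everywhere, as already noted in the proof of Lemma \ref{lem:actionTC(x)}, so the complex conjugate can be dropped. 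Equating this with the right hand side of \eqref{eq:TadjointisT}, itself described via Lemma \ref{lem:actionTC(x)} applied to $T(p_1,p_2,-n)$, and then relabeling $y \mapsto \si y$, produces the symmetry (ii).

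The only nontrivial aspect is the bookkeeping in (i): one must carefully check that the three relevant subspaces $\cK(p,k,\ep,\et)$, $\cK(p,k+m,\sgn(r_1)\ep,\sgn(r_2)\et)$ and $\cK(p,k+m+n,\sgn(p_1r_1)\ep,\sgn(p_2r_2)\et)$ are mutually compatible under the $K$-eigenvalue constraint $q^{2k}p=q^{-n-m}$, and that the argument flips on the two sides of the structure identity coincide so that cancellation of $g$ is legitimate. Once this bookkeeping is in place, both (i) and (ii) follow by extracting multiplier coefficients from the operator identities of Proposition \ref{prop:structureconstQs} and Lemma \ref{lem:QadjointisQ} respectively.
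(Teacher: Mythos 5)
Your proof follows the paper's own argument in both parts: you descend the operator identities (Proposition \ref{prop:structureconstQs} for (i), Lemma \ref{lem:QadjointisQ} via \eqref{eq:TadjointisT} for (ii)) to the multiplier function $C$ through the spectral transform, with the $K$-eigenvalue constraint forcing $|x_1|=|x_2|$ and the sign support of $a_{x_i}$ forcing $\sgn(x_i)=\sgn(p_ir_i)$, exactly as in the paper (which also notes the alternative derivation of (ii) from Lemma \ref{lem:propertiesofS}(i)). One small slip in your bookkeeping for (i): the nonvanishing condition for $Q(x_1,x_2,n+m)$ on $\cK(p,k,\ep,\et)$ is $q^{2k}p=q^{-(n+m)}|x_2/x_1|$, which with $q^{2k}p=q^{-n-m}$ gives $|x_2/x_1|=1$ directly rather than ``$|x_2/x_1|=q^{n+m}$''; your conclusion $|x_1|=|x_2|$ is nevertheless correct.
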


\begin{proof}
(i) From \eqref{eq:defTp1p2n} it follows that the operators $T(p_1,p_2,n)$ satisfy the same structure formula as the operators $Q(p_1,p_2,n)$, see Proposition \ref{prop:structureconstQs}. Let $p=q^{-2k-m-n}$, then $p=q^{-2(k+m)-n}|\frac{p_2}{p_1}|= q^{-2k-m}|\frac{r_2}{r_1}|$. Applying the structure formula to a function $g \in L^2(I(p,k,\ep,\et))$ and using the action of $T(p_1,p_2,n)$ as multiplication by the function $C$ from Lemma \ref{lem:actionTC(x)}, we obtain
\[
\begin{split}
C(y;&k+m,\sgn(r_1)\ep,\sgn(r_2)\et;p_1,p_2,n)C(\sgn(p_1p_2)y;k,\ep,\et;r_1,r_2,m) g\big(\sgn(p_1p_2r_1r_2)y\big)\\
& = \sum_{x_1,x_2 \in I_q} a_{x_1}(r_1,p_1) a_{x_2}(r_2,p_2)C(y;k,\ep,\et;x_1,x_2,m+n) g\big(\sgn(x_1x_2) y\big)
\end{split}
\]
Observe that $T(x_1,x_2,n+m)=0$ on $L^2(I(p,k,\ep,\et))$ unless $p=q^{-2k-n-m}|\frac{x_1}{x_2}|$, which implies that the sum is only over $x_1,x_2 \in I_q$ satisfying $|x_1|=|x_2|$. Furthermore, by Definition \ref{def:functionap} we have $a_x(p,r)=0$ if $\sgn(x) \neq \sgn(pr)$, so we may write $\sgn(x_1x_2) = \sgn(p_1p_2r_1r_2)$ in the above sum, since the terms where this is not true do no contribute to the sum. Finally, since $g$ was chosen arbitrarily, the result follows.

(ii) Write out $\langle T(p_1,p_2,n) f, g \rangle = \langle  f,T(p_1,p_2,n)^* g \rangle$ for suitable functions $f$ and $g$, using Lemma \ref{lem:actionTC(x)} and \eqref{eq:TadjointisT}. Using the fact that $f$ and $g$ are chosen arbitrarily and continuity in $y$ of the function $C(y)$, the identity follows. Alternatively, the second identity can also be derived from Lemma \ref{lem:propertiesofS}(i).
\end{proof}

\subsection{Generators of $\hat{M}$}\label{ssec:generatorsofhatM}

The main step towards finding a generating set for $\hat M$ is the polar-type decomposition for $Q(p_1,p_2,n)$ from  Proposition \ref{prop:polardecompQ}. The partial isometries $U^{\si,\tau}_n$, $\si,\tau \in \{-,+\}$, $n \in \Z$, from Proposition \ref{prop:polardecompQ} give us the required extra generators for the dual von Neumann algebra $\hat M$. First we show that the operators $U^{\si,\tau}_n$ belong to the von Neumann algebra $\hat M$.
\begin{prop} \label{prop:UlsitauinhatM}
For $l \in \Z$ and $\si,\tau \in \{-,+\}$, the operator $U_l^{\si,\tau}$ belongs to $\hat M$.
\end{prop}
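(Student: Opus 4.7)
The plan is to use Proposition \ref{prop:polardecompQ} to extract $U_l^{\si,\tau}$ from the standard polar decomposition of $Q(p_1,p_2,l)$ inside $\hat{M}$, piece by piece over the spectral projections of $K$, and then to reassemble the global operator via strong closedness of $\hat{M}$.

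Fix $l \in \Z$ and $\si,\tau \in \{-,+\}$. For any pair $p_1,p_2 \in I_q$ with $\sgn(p_1)=\si$, $\sgn(p_2)=\tau$, Proposition \ref{prop:polardecompQ} writes $Q(p_1,p_2,l) = U_l^{\si,\tau}\,H(\Om)\,P$, where $H=H(\,\cdot\,;p_1,p_2,l)$ and $P=P(p_1,p_2,l)$ is the spectral projection of $K$ for the eigenvalue $\sqrt{q^{-l}\,|p_2/p_1|}$. Both $P$ and $H(\Om)$ lie in $\hat{M}$, since $K$ is affiliated to $\hat{M}$ by Proposition \ref{prop:KEaffiliatedtohatM} and $\Om$ is affiliated to $\hat{M}$ by Theorem \ref{thm:Casimirwelldefinedandchar}. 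Applying the standard polar decomposition inside the von Neumann algebra $\hat{M}$ to $Q(p_1,p_2,l) \in \hat{M}$ (Proposition \ref{prop:QppngeneratedualM}) produces a partial isometry $U(p_1,p_2,l)\in\hat{M}$ together with $|Q(p_1,p_2,l)| = L(\Om) P$, as recorded in Proposition \ref{prop:polardecompQp1p2n}. The relation $V_l^{\sgn(p_1),\sgn(p_2)} = \sgn(N(\,\cdot\,;p_1,p_2,l))\,V(p_1,p_2,l)$ noted just before \eqref{eq:defVnsitau}, when conjugated by $\Up$, translates to
\[
U_l^{\si,\tau}\,P \,=\, \sgn\bigl(N(\Om;p_1,p_2,l)\bigr)\,U(p_1,p_2,l)
\]
on the range of $P$. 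Since $\sgn(N(\,\cdot\,;p_1,p_2,l))$ is a bounded Borel function and $\Om$ is affiliated to $\hat{M}$, Borel functional calculus yields $\sgn(N(\Om;p_1,p_2,l)) \in \hat{M}$, and hence $U_l^{\si,\tau}\,P \in \hat{M}$.

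To recover the global operator I would let the ratio $|p_2/p_1|$ range over all of $q^\Z$ while keeping $\sgn(p_1)=\si$, $\sgn(p_2)=\tau$ fixed. The corresponding projections $P(p_1,p_2,l)$ are the spectral projections of $K$ for the distinct eigenvalues in $q^{\frac12 \Z}$; since $K$ has pure point spectrum $q^{\frac12 \Z}\cup\{0\}$ with trivial kernel, these projections are pairwise orthogonal and sum strongly to the identity. Because $U_l^{\si,\tau}$ is a contraction, the series $\sum_{c \in q^\Z} U_l^{\si,\tau}\,P_c$ converges in the strong operator topology to $U_l^{\si,\tau}$, and strong closedness of $\hat{M}$ then completes the argument.

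The main technical obstacle is the identification of $U_l^{\si,\tau}\,P$ with $\sgn(N(\Om))\,U(p_1,p_2,l)$ at possible isolated zeros of $H(\,\cdot\,;p_1,p_2,l)$ in the discrete spectrum of $\Om\vert_{\mathrm{Im}\,P}$: at such points the polar decomposition forces $U(p_1,p_2,l)$ to vanish, whereas $U_l^{\si,\tau}$ as defined in \eqref{eq:defUnsitau} need not. I would bridge this gap by choosing an auxiliary pair $(p_1',p_2')$ with the same signs and the same ratio $|p_2'/p_1'|=|p_2/p_1|$ for which $H(\,\cdot\,;p_1',p_2',l)$ is non-zero at the affected spectral points, and combining the two polar decompositions; alternatively, one may pass to a strong limit of operators $Q(p_1,p_2,l)\,f_\de(\Om) \in \hat{M}$ for a sequence of bounded Borel functions $f_\de$ regularizing the pointwise inverse of $H$ on its support.
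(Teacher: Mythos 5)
Your proof is correct and follows the same core route as the paper: extract $U(p_1,p_2,l)$ from the polar decomposition of $Q(p_1,p_2,l)\in\hat M$ (Proposition \ref{prop:polardecompQp1p2n}), and correct by the sign of $N$ using spectral projections of the Casimir operator, which lie in $\hat M$ because $\Om$ is affiliated to $\hat M$. The difference is that you are more careful than the paper on two points. First, the paper simply writes $U^{\si,\tau}_n = E_\Om(A)\,U(p_1,p_2,n)-E_\Om(B)\,U(p_1,p_2,n)$, which as an identity of operators on $\cK$ only recovers $U^{\si,\tau}_n$ on the range of $P(p_1,p_2,n)$; your additional step of letting $|p_2/p_1|$ range over $q^\Z$ and summing the pieces $U_l^{\si,\tau}P_c$ strongly is genuinely needed to obtain the global operator, and it works exactly as you say since the $P_c$ are pairwise orthogonal spectral projections of $K$ summing to the identity. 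Second, you correctly flag the possible mismatch at isolated zeros of $H$ in the discrete spectrum, where the polar-decomposition isometry vanishes but $U_l^{\si,\tau}$ as defined by \eqref{eq:defVnsitau} need not; the paper ignores this entirely. Of your two proposed repairs, only the first is viable: the regularization $Q(p_1,p_2,l)f_\de(\Om)=U_l^{\si,\tau}(Hf_\de)(\Om)P$ still yields $0$ at any discrete point where $H$ vanishes, so it cannot recover $U_l^{\si,\tau}$ there, whereas the auxiliary-pair argument (choosing $(p_1',p_2')$ with the same signs and ratio for which $H(\,\cdot\,;p_1',p_2',l)$ does not vanish at the affected points of $D(p,m+n,\si\ep,\tau\et)\cap\si\tau D(p,m,\ep,\et)$) is the right idea, though it still requires verifying that such a choice exists for each affected spectral point.
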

\begin{proof}
Since $Q(p_1,p_2,n) \in \hat{M}$ by Proposition \ref{prop:QppngeneratedualM}, the polar decomposition $Q(p_1,p_2,n)\, =\ U(p_1,p_2,n) \, |Q(p_1,p_2,n)|$ of
Proposition \ref{prop:polardecompQp1p2n} gives that $U(p_1,p_2,n)\in\hat{M}$, $|Q(p_1,p_2,n)|\in \hat{M}$.
Recall that $U(p_1,p_2,n)\, =\, \Up^\ast\, V(p_1,p_2,n)\, \Up$, and that
\[
 V^{\sgn(p_1),\sgn(p_2)}_n\, =\, \sgn\bigl( N(\cdot;p_1,p_2,n)\bigr)\, V(p_1,p_2,n).
\]
Define the Borel sets $A=\{ x\in \R\mid N(x;p_1,p_2,n) >0\}$,
$B=\{ x\in \R\mid N(x;p_1,p_2,n) < 0\}$, so that
\[
V^{\sgn(p_1),\sgn(p_2)}_n\, =\, M(\chi_A(\cdot))\, V(p_1,p_2,n) \, -\,
M(\chi_B(\cdot))\, V(p_1,p_2,n)
\]
and
\[
\begin{split}
U^{\sgn(p_1),\sgn(p_2)}_n\, &=\, \Up^\ast\, V^{\sgn(p_1),\sgn(p_2)}_n\,\, \Up \, \\ &= \,
\,\Up^\ast\, M(\chi_A)\, \Up\, \Up^\ast\,  V(p_1,p_2,n) \Up\, -\,
\Up^\ast\, M(\chi_B)\, \Up\, \Up^\ast\, V(p_1,p_2,n)\, \Up
\,\\ &=\, E_\Om(A)\, U(p_1,p_2,n) \, - \, E_\Om(B)\, U(p_1,p_2,n)
\end{split}
\]
where $\chi_A$ is the indicator function of the set $A$ and
$E_\Om$ is the spectral decomposition of the Casimir operator using
Theorem \ref{thm:spectraldecompositionCasimir}. Since the Casimir operator $\Om$ is
affiliated to $\hat{M}$ by Theorem \ref{thm:Casimirwelldefinedandchar}, it follows
that the spectral projections $E_\Om(A), E_\Om(B)\in \hat{M}$. Since we already
noted that $U(p_1,p_2,n)\in\hat{M}$, we see that
$U^{\sgn(p_1),\sgn(p_2)}_n\in \hat{M}$.
\end{proof}

We can now show that the partial isometries $U_n^{\si,\tau}$ provide the extra generators for $\hat M$ that we need. The following properties are useful.

\begin{lemma} \label{lem:propertiesofU}
Let $m,n,n'\in \Z$, $p \in q^\Z$ and $\ep,\et,\si,\tau \in \{-,+\}$, then the partial isometries $U_n^{\si,\tau}:\cK(p,m,\ep,\et)\to \cK(p,m+n,\si\ep,\tau\et)$ satisfy the following properties:
\begin{enumerate}[(i)]
\item $U_{n+n'}^{++} = U_n^{++} U_{n'}^{++}$,
\item $U_n^{--} = U_n^{+-} U_0^{-+}$,
\item $U_n^{+-} = U_0^{+-} U_n^{++}$,
\item $U_n^{-+} = U_n^{++} U_0^{-+}$,
\item $(U_n^{\si,\tau})^* = \si^{n+1}\tau^{\epsilon(p)(1-\si) +1 } U_{-n}^{\si,\tau}$.
\end{enumerate}
\end{lemma}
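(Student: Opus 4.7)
The plan is as follows. Since $\Up$ is unitary and $U_n^{\si,\tau}=\Up^{\ast}V_n^{\si,\tau}\Up$, each of the five statements reduces to the corresponding statement for the partial isometries $V_n^{\si,\tau}$ acting on the model space $\bigoplus_{p,m,\ep,\et}L^2(I(p,m,\ep,\et))$. I would therefore verify every identity directly on the explicit multiplication–reflection expression \eqref{eq:defVnsitau}. The general composition rule is this: if on $L^2(I(p,m,\ep,\et))$ we have $(Vg)(x)=c(\la)g(\si\tau x)$ and on $L^2(I(p,m+n,\si\ep,\tau\et))$ we have $(V'h)(x)=c'(\la)h(\si'\tau' x)$, then $(V'Vg)(x)=c'(\la)\,c(\si'\tau'\la)\,g(\si\tau\si'\tau' x)$. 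Thus each of (i)–(iv) becomes a routine algebraic identity between scalar prefactors, $\nu$-factors, and telescoping products of ratios of $G$-functions.

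For (i) the two $G$-ratios $G(\la;p,m+n+n',\ep,\et)/G(\la;p,m+n',\ep,\et)$ and $G(\la;p,m+n',\ep,\et)/G(\la;p,m,\ep,\et)$ collapse to $G(\la;p,m+n+n',\ep,\et)/G(\la;p,m,\ep,\et)$, and the $\nu$-factors multiply via the direct computation $\nu_n^{+}(\la;p)\nu_{n'}^{+}(\la;p)=\la^{n+n'}=\nu_{n+n'}^{+}(\la;p)$, giving (i) with no further signs. For (ii)–(iv) the reflections $\la\mapsto -\la$ combine (two of them squaring to the identity), and one telescopes the $G$-ratios through the intermediate space $L^2(I(p,m,-\ep,\et))$ or $L^2(I(p,m+n,\ep,\et))$, as appropriate. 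The prefactors $\ep^{\hf(1-\si)}\et^{\hf(1-\tau)+n}$ must be carried carefully: in each composition the $\ep,\et$ appearing in the prefactor of the outer operator refer to the signs of its \emph{source} space, which are $\si\ep$ and $\tau\et$ after the inner operator has acted. Combined with the identities $\nu_0^{-}(\la;p)=(-1)^{\epsilon(p)}$ and $\nu_n^{-}(\la;p)=(-1)^{\epsilon(p)}\la^n$ and the domain identifications $X_{n+n'}^{\si\si',\tau\tau'}(p,m,\ep,\et)\supseteq X_{n'}^{\si',\tau'}(p,m,\ep,\et)\cap(\si'\tau')^{-1}X_n^{\si,\tau}(p,m+n',\si'\ep,\tau'\et)$ which follow from Section \ref{ssec:spectraldecompCasimir}, this yields (ii)–(iv) on the appropriate domain.

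For (v) I would use that for any multiplication–reflection operator $(Vg)(x)=c(\la)g(\si\tau x)$ on the model space, the adjoint is $(V^{\ast}g)(x)=\overline{c(\si\tau\la)}\,g(\si\tau x)$ (using $(\si\tau)^2=1$ and that $g\mapsto g(\si\tau\cdot)$ is unitary between the relevant $L^2$-spaces). Two symmetries are then needed to recognize the right-hand side as a multiple of $V_{-n}^{\si,\tau}$: first, that the multiplier of $V_n^{\si,\tau}$ is a function of $x=\mu(\la)$ only, hence invariant under $\la\leftrightarrow\la^{-1}$, so that numerator and denominator $G$-factors swap correctly under complex conjugation on $\T$; second, that complex conjugation on $\T$ sends $\nu_n^{\tau}(\la;p)$ to $\tau^{2\epsilon(p)}\nu_{-n}^{\tau}(\la;p)$. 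Matching the two scalar prefactors against each other (using again that $\ep,\et$ in the prefactor must be replaced by the signs of the source space of $V_{-n}^{\si,\tau}$, which are $\si\ep,\tau\et$) produces precisely the sign $\si^{n+1}\tau^{\epsilon(p)(1-\si)+1}$.

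The main obstacle will be the bookkeeping of the sign factors in (ii)--(v) and the verification that ratios of $G$-functions behave correctly under the reflections $\la\mapsto -\la$ and $\la\mapsto \la^{-1}$; this reduces to the explicit formulas for $A(\la;p,m,\ep,\et)$ collected in Appendix \ref{ssecB:formulasforA} and to the definition of $E(\la;p,m)$ in \eqref{eq:defEGnu}. Once these invariances are in place, each identity (i)--(v) is a short, direct computation.
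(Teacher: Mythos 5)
Your proposal is correct and is essentially the paper's own argument: the paper also proves the lemma by direct verification from \eqref{eq:defVnsitau} and \eqref{eq:defUnsitau}, merely recording for part (v) the observation that $\nu_{-n}^\tau(\la;p)\,\nu_n^\tau(\si\tau\la;p)$ equals $-1$ when $\si=\tau=-$ and $\epsilon(p)=\hf$, and $1$ otherwise. Your composition rule for multiplication--reflection operators, the telescoping of the $G$-ratios, and the conjugation/inversion symmetries of $\nu_n^\tau$ are exactly the bookkeeping the paper leaves implicit.
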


\begin{proof}
This follows directly from the definition of $U_n^{\si,\tau}$, see \eqref{eq:defVnsitau} and \eqref{eq:defUnsitau}. For the computation of $(U_n^{\si,\tau})^*$ it is useful to observe that
$\nu_{-n}^\tau(\la;p)\nu_n^\tau(\si\tau\la;p)$ is equal to $-1$ for $\si=\tau=-$ and $\epsilon(p)=\hf$, and it is equal to $1$ in all other cases.
\end{proof}

Now we can finally show that
the von Neumann algebra $\hat{M}$ is generated by
$K$, $E$, $U^{+-}_0$ and $U^{-+}_0$.

\begin{proof}[Proof of Theorem \ref{thm:generatorsforhatM}.]
From Propositions \ref{prop:polardecompQ}, \ref{prop:UlsitauinhatM} and Lemma \ref{lem:propertiesofU} it follows that $\hat M$ is generated by $K$, $\Om$, $U_1^{++}$, $U_0^{+-}$ and $U_0^{-+}$. Using \eqref{eq:defEGnu} and writing $A(\la)$ explicitly, using the appropriate $c$-functions,  we find for $x=\mu(\la) \in I(p,m,\ep,\et) \cap I(p,m+1,\ep,\et)$
\[
G(\la;p,m+1,\ep,\et) = \eta G(\la;p,m,\ep,\et),
\]
hence $(V_1^{++} g)(x) = g(x)$, so we see from \eqref{eq:actionE0inUpsilonpm+-}, \eqref{eq:actionE0E0+inUpsilonpm-+}, \eqref{eq:actionE0E0+inUpsilonpm--} and \eqref{eq:actionE0E0+inUpsilonpm++mleq0} that $U_1^{++}=\Up^* V_1^{++}\Up$ is the partial isometry in the polar decomposition of $E$. Then, using Definition \ref{def:closedCasimir} for $\Om$, it follows that $\hat M$ is generated by $K$, $E$, $U_0^{+-}$ and $U_0^{-+}$.
\end{proof}

\section{Unitary corepresentations}\label{sec:unitarycoreps}
In this section we need the function $\upsilon:q^\Z \to \Z$ defined by
\begin{equation}\label{eq:defupsilon}\index{U@$\upsilon(t)$}
\begin{split}
\upsilon(t) = \hf\chi(t)+\epsilon(t),\qquad t \in q^\Z.
\end{split}
\end{equation}
So if $t=q^{2k}$ or $t=q^{2k-1}$ for some $k \in \Z$, then $\upsilon(t)=k$.

Recall from Section \ref{sec:decompleftregularcorep} that we assume $p\in q^\Z$, $m\in \Z$ and $\ep,\et \in \{-,+\}$. Let $\cK_d(p,m,\ep,\et)$ denote the closed subspaces of $\cK(p,m,\ep,\et)$ spanned by all the eigenvectors of $\Om$ in $\cK(p,m,\ep,\et)$, and denote its orthogonal complement by $\cK_c(p,m,\ep,\et)$,  so that we have a decomposition $\cK = \cK_c \oplus \cK_d$ corresponding to the continuous and discrete spectrum of $\Om$. The unitary operator $\Up_{p,m}^{\ep,\et}$ restricted to $\cK_d(p,m,\ep,\et)$ or $\cK_c(p,m,\ep,\et)$ is again a unitary operator mapping into $\ell^2(\si_d(p,m,\ep,\et))$, respectively $L^2([-1,1])$.

\subsection{Discrete series}\label{ssec:discreteseries}\index{corepresentation!discrete series}
In this subsection we assume that $x \in \si_d(p,m,\ep,\et)$. For $\ep,\et \in \{-,+\}$, $p \in q^\Z$ and $m \in \Z$, we define an element $e_m^{\ep,\et}(p,x) \in \cK_d(p,m,\ep,\et)$ by
\[
e_m^{\ep,\et}(p,x) = (\Up_{p,m}^{\ep,\et})^* \delta_{\ep\et x} =
 \sum_{z \in J(p,m,\ep,\et)} g_z(\ep\et x;p,m,\ep,\et) f_{-m,\ep\et pq^mz,z}\ .
\]
Since $\{\delta_x \mid x \in \si_d(p,m,\ep,\et)\}$ is an orthonormal basis for $\ell^2(\si_d(p,m,\ep,\et))$, it follows from unitarity of $\Up_{p,m}^{\ep,\et}$ that the set $\{ e_m^{\ep,\et}(p,x) \mid \ep\et x \in \si_d(p,m,\ep,\et)\}$ is an orthonormal basis for $\cK_d(p,m,\ep,\et)$. This can also be seen directly from \eqref{eq:orthonAlScrelationsdisc} and \eqref{eq:orthorefororthonorlqJacfun}. Moreover, from Theorem \ref{thm:spectraldecompositionCasimir} we see that $e_m^{\ep,\et}(p,x)$ is an eigenvector of $\Om$ for eigenvalue $\ep\et x$.

\begin{lemma}\label{lem:actionsofgeneratorsoneigvetsOm}
The actions of the generators of $\hat M$ on $e_m^{\ep,\et}(p,x)$ are given by
\[
\begin{split}
K \, e_m^{\ep,\et}(p,x) &= p^\hf q^m \, e_m^{\ep,\et}(p,x),\\
(q^{-1}-q) E\, e_m^{\ep,\et}(p,x) & = q^{-m-\hf}p^{-\hf} \sqrt{ 1+2\ep\et xq^{2m+1}p + q^{4m+2}p^2} \, e_{m+1}^{\ep,\et}(p,x),\\
U_{0}^{+-}\, e_m^{\ep,\et}(p,x) & = \et\, (-1)^{\upsilon(p)} \, e_m^{\ep,-\et}(p,x),\\
U_{0}^{-+}\, e_m^{\ep,\et}(p,x) & = \ep \et^{\chi(p)}(-1)^m \, e_m^{-\ep,\et}(p,x).
\end{split}
\]
\end{lemma}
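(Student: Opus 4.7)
The plan is to read off each action from the spectral picture provided by the unitary $\Up_{p,m}^{\ep,\et}\colon \cK(p,m,\ep,\et) \to L^2(I(p,m,\ep,\et))$ of Theorem \ref{thm:spectraldecompositionCasimir}. By construction $e_m^{\ep,\et}(p,x) = (\Up_{p,m}^{\ep,\et})^\ast\,\delta_{\ep\et x}$, a unit point mass at the discrete eigenvalue $\ep\et x \in \si_d(p,m,\ep,\et)$.

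The action of $K$ is immediate: by \eqref{eq:defKanddomain} the operator $K$ restricted to $\cK(p,m,\ep,\et)$ is the scalar $q^m p^{1/2}$. For $E$, the four intertwining identities \eqref{eq:actionE0inUpsilonpm+-}, \eqref{eq:actionE0E0+inUpsilonpm-+}, \eqref{eq:actionE0E0+inUpsilonpm--} and \eqref{eq:actionE0E0+inUpsilonpm++mleq0} together assert, uniformly in $\ep,\et$, that $\Up_{p,m+1}^{\ep,\et}\,E_{p,m}^{\ep,\et}\,(\Up_{p,m}^{\ep,\et})^\ast$ is multiplication by the continuous function $y \mapsto (q^{-1}-q)^{-1}q^{-m-1/2}p^{-1/2}\sqrt{1+2yq^{2m+1}p+q^{4m+2}p^2}$. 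Multiplication by this function sends $\delta_{\ep\et x}$ to its value at $y=\ep\et x$ times $\delta_{\ep\et x}$; pulling back through $(\Up_{p,m+1}^{\ep,\et})^\ast$ yields the stated formula for $E$.

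For $U_0^{+-}$ and $U_0^{-+}$, the definition \eqref{eq:defUnsitau} combined with \eqref{eq:defVnsitau} gives
\[
U_0^{\si,\tau}\,e_m^{\ep,\et}(p,x) \,=\, \ep^{(1-\si)/2}\,\et^{(1-\tau)/2}\,\nu_0^\tau(\la;p)\,\frac{G(\la;p,m,\si\ep,\tau\et)}{G(\si\tau\la;p,m,\ep,\et)}\,e_m^{\si\ep,\tau\et}(p,x),
\]
whenever $\la$ (with $\mu(\la)=x$, $|\la|>1$) lies in the set $X_0^{\si,\tau}(p,m,\ep,\et)$, because the argument-flip $g \mapsto g(\si\tau\,\cdot\,)$ sends $\delta_{\ep\et x}$ to $\delta_{\si\ep\tau\et x}$, which is the spectral coordinate of $e_m^{\si\ep,\tau\et}(p,x)$. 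The main obstacle is to prove that the scalar prefactor reduces to $\et(-1)^{\upsilon(p)}$ when $(\si,\tau)=(+,-)$ and to $\ep\et^{\chi(p)}(-1)^m$ when $(\si,\tau)=(-,+)$.

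For this final simplification I would substitute the explicit formulas for $A(\la;p,m,\ep,\et)$ collected in Appendix \ref{ssecB:formulasforA}; at discrete spectral points these reduce, via the residues of the $c$-functions of the Al-Salam--Chihara polynomials and little $q$-Jacobi functions, to closed products of $q$-shifted factorials. Combined with the explicit form of $E(\,\cdot\,;p,m)$ in \eqref{eq:defEGnu}, the $\theta$-product identity \eqref{eq:thetaprodid} collapses the ratio of $G$-factors to a pure sign. The factor $\nu_0^\tau$ absorbs the sign determined by $\epsilon(p)$, while the sign $(-1)^m$ appearing in the $U_0^{-+}$-formula ultimately originates from the normalization $(-1)^m$ inserted into the definitions \eqref{eq:defUpsilonpm+-}, \eqref{eq:defUpsilonpm--m+chi(p)geq0} and \eqref{eq:defUpsilonpm++mleq0} of $\Up_{p,m}^{\ep,\et}$. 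The computation must be performed separately for each of the four sign choices $(\ep,\et)\in\{\pm\}^2$, and within each case one must keep track of the two branches in the definition of $\Up_{p,m}^{\ep,\et}$ that arise when $m$ or $m+\chi(p)$ changes sign; in every instance the special-function identities conspire to yield the uniform answers recorded in the lemma.
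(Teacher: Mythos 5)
Your proposal is correct and follows the paper's proof essentially verbatim: $K$ from \eqref{eq:defKanddomain}, $E$ from the four intertwining identities, and the partial isometries from \eqref{eq:defVnsitau} applied to $\delta_{\ep\et x}$, with the scalar prefactor collapsing to a sign. The only cosmetic difference is that the paper isolates the sign computation you sketch (explicit $A$-formulas, the $\theta$-product identity \eqref{eq:thetaprodid}, and the case split on $\epsilon(p)\in\{0,\hf\}$) into the separate Lemma \ref{lem:G(la)/G(-la)}, which records exactly the two ratios $\nu_0^-(\la;p)G(\la;p,m,\ep,-\et)/G(-\la;p,m,\ep,\et)=(-1)^{\upsilon(p)}$ and $\nu_0^+(\la;p)G(\la;p,m,-\ep,\et)/G(-\la;p,m,\ep,\et)=(-1)^m\et^{\chi(p)}$ that you would need.
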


\begin{proof} The action of $K$ follows from \eqref{eq:defKanddomain}; the action of $E$ follows from \eqref{eq:actionE0inUpsilonpm+-}, \eqref{eq:actionE0E0+inUpsilonpm-+}, \eqref{eq:actionE0E0+inUpsilonpm--} and \eqref{eq:actionE0E0+inUpsilonpm++mleq0}. To determine the action of $U_0^{+-}$ we observe that
\[
\begin{split}
V_0^{+-} \de_{\ep\et x}(\mu(\ga)) &= \et\, \nu_0^-(\ga;p)\frac{ G(\ga;p,m,\ep,-\et) }{G(-\ga;p,m,\ep,\et)} \, \de_{\ep\et x}(-\mu(\ga)) \\
&=\et\, (-1)^{\upsilon(p)} \, \de_{-\ep\et x}(\mu(\ga)),
\end{split}
\]
by Lemma \ref{lem:G(la)/G(-la)}, see below. Applying $\Up^*$ gives us
\[
U_0^{+-} e_m^{\ep,\et}(p,x) = \et\, (-1)^{\upsilon(p)} \,  e_m^{\ep,-\et}(p,x).
\]
The action of $U_0^{-+}$ is calculated in the same way.
\end{proof}

\begin{lemma} \label{lem:G(la)/G(-la)}
We have
\[
\nu_0^-(\la;p)\frac{ G(\la;p,m,\ep,-\et) }{ G(-\la;p,m,\ep,\et) } = (-1)^{\upsilon(p)}, \qquad
\nu_0^+(\la;p)\frac{ G(\la;p,m,-\ep,\et) }{ G(-\la;p,m,\ep,\et) } = (-1)^m\, \et^{\chi(p)}.
\]
\end{lemma}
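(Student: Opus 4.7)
The plan is to verify the two identities directly, case by case, using the explicit description of $G(\la;p,m,\ep,\et)=A(\la;p,m,\ep,\et)\,E(\la;p,m)$ from \eqref{eq:defEGnu}. The quantities $\nu_0^\pm(\la;p)$ are elementary: by definition, $\nu_0^-(\la;p)=(-1)^{\epsilon(p)}$ (interpreted as $\mp i$ for $\epsilon(p)=\hf$ and $\la\in\T^\pm$) and $\nu_0^+(\la;p)=1$. The factor $E(\la;p,m)$ is built out of $q$-shifted factorials, and by construction the square-root denominator in its definition is invariant under $\la\leftrightarrow -\la$ up to interchanging the two factors, so
\[
\frac{E(\la;p,m)}{E(-\la;p,m)}\,=\,\frac{(-q^{1-2m}/p\la;q^2)_\infty}{(q^{1-2m}/p\la;q^2)_\infty}
\]
(up to an overall sign inherited from the square root that will be absorbed consistently). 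The heart of the argument is therefore to compute $A(\la;p,m,\ep,-\et)/A(-\la;p,m,\ep,\et)$ and $A(\la;p,m,-\ep,\et)/A(-\la;p,m,\ep,\et)$.

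For this I would use the explicit expressions for $A(\la;p,m,\ep,\et)$ recorded in Appendix \ref{ssecB:formulasforA}, which come from the asymptotic normalizations \eqref{eq:asymptoticnormalAlSCfcont}, \eqref{eq:asymptoticnormalAlSCfdisc}, \eqref{eq:asymplqJacfunktoinftycont}, \eqref{eq:asymplqJacfunktoinftydisc} and in each case reduce to an explicit phase times the ratio $c(\la;\cdot)/|c(\la;\cdot)|$ of the appropriate $c$-function (Al-Salam--Chihara for cases with $\ep\ne\et$ or $\ep=\et=-$, little $q$-Jacobi for $\ep=\et=+$). In all four sign combinations the $c$-function is a product of a few $q$-shifted factorials in $\la^{\pm1}$, together with explicit prefactors $(-1)^m$, $(\ep\et)^{\chi(\cdot)}$, $e^{-i\psi(m+\chi(p)-1)}$, and so on. The reflection $\la\mapsto -\la$ acts on each $(\pm q^a\la^{\pm1};q^2)_\infty$ in an elementary way, and the resulting $\la$-dependent $q$-shifted factorial ratio in $A(\la)/A(-\la)$ cancels exactly the one coming from $E(\la;p,m)/E(-\la;p,m)$ computed above, once the factor $\nu_0^\pm$ is taken into account.

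What remains after the $\la$-dependence cancels is a constant sign that must be computed from the prefactors. For the first identity, combining $\nu_0^-(\la;p)=(-1)^{\epsilon(p)}$ with the parity of $\chi(p)$ produced by the ratio of $A$'s yields $(-1)^{\epsilon(p)+\hf\chi(p)}=(-1)^{\upsilon(p)}$ by the definition \eqref{eq:defupsilon}; for the second identity, the prefactors contribute $(-1)^m$ from the explicit $m$-dependence of the normalizations and $\et^{\chi(p)}$ from the sign $(\ep\et)^{\chi(\cdot)}$ in the $A$-formulas. Each of the four sign combinations must be checked individually because the $c$-function has a different form in each, but in every case the computation reduces to the same bookkeeping with $q$-shifted factorials and signs.

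The main obstacle is precisely this case-by-case bookkeeping: the formulas for $A(\la;p,m,\ep,\et)$ differ according to whether one is in an Al-Salam--Chihara regime or a little $q$-Jacobi regime, and further according to the sign of $m+\chi(p)$ in the definitions \eqref{eq:defUpsilonpm--m+chi(p)geq0}--\eqref{eq:defUpsilonpm--m+chi(p)leq0} and \eqref{eq:defUpsilonpm++mleq0}--\eqref{eq:defUpsilonpm++mgeq0}. One has to check that the various choices made in those definitions are compatible and yield the stated constant signs uniformly in $m$, $p$ and $\la$. No new analytic input is required beyond the standard reflection properties of $q$-shifted factorials, such as the $\theta$-product identity \eqref{eq:thetaprodid}, and the explicit $c$-functions from Appendix \ref{ssecB:formulasforA}.
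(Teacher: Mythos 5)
Your overall strategy is exactly the paper's: the proof proceeds by brute-force case analysis, writing $G=A\cdot E$ out in $q$-shifted factorials using the explicit formulas for $A(\la;p,m,\ep,\et)$ from Appendix \ref{ssecB:formulasforA}, applying the $\te$-product identity \eqref{eq:thetaprodid} to kill the $\la$-dependence, splitting into the cases $\epsilon(p)=0$ and $\epsilon(p)=\hf$, and matching the residual sign against $\nu_0^{\pm}$ and the definition $\upsilon(p)=\hf\chi(p)+\epsilon(p)$. The paper likewise only writes out one representative case ($\la\in\T$, $\ep=\et=+$, $m\le 0$) in full and declares the rest analogous, so your emphasis on the case-by-case bookkeeping as the real content is accurate.

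One intermediate claim is wrong as stated, and it sits exactly where you locate the main difficulty. You assert that the square-root denominator of $E(\la;p,m)$ is invariant under $\la\leftrightarrow-\la$ ``up to interchanging the two factors,'' and hence that
\[
\frac{E(\la;p,m)}{E(-\la;p,m)}\;=\;\frac{(-q^{1-2m}/p\la;q^2)_\infty}{(q^{1-2m}/p\la;q^2)_\infty}
\]
up to a sign. But $(-q^{1-2m}\la^{\pm1}/p;q^2)_\infty$ is symmetric under $\la\leftrightarrow\la^{-1}$ (that is what ``interchanging the two factors'' buys you), not under $\la\leftrightarrow-\la$, which flips the sign of the argument. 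The true ratio carries an extra factor $\sqrt{(q^{1-2m}\la^{\pm1}/p;q^2)_\infty/(-q^{1-2m}\la^{\pm1}/p;q^2)_\infty}$, a positive real that is not $1$; it is cancelled only after being combined with the matching square roots inside $A(\la)/A(-\la)$ (each of $E$ and $A$ is separately unimodular on $\T$, which is why the final answer is still a constant phase). If you carry out the computation the way the paper does --- keeping $A$ and $E$ together as $G$ and manipulating the full expression, rather than simplifying $E(\la)/E(-\la)$ in isolation --- this issue disappears and the stated constants $(-1)^{\upsilon(p)}$ and $(-1)^m\et^{\chi(p)}$ come out as you describe.
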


\begin{proof}
We treat here the formula for $\la \in \T$, $\ep=\et=+$, and $m \leq 0$ in detail. The formulas corresponding to the other cases are obtained from similar computations. Note that, by construction, all formulas are equal to $\pm 1$

Assume $\la \in \T$, $\ep=\et=+$, and $m \leq 0$. From writing $A(\la;p,m,+,\pm)$ and $E(\la;p,m)$ in terms of $q$-shifted factorials and canceling common factors, we obtain (see \eqref{eq:defEGnu})
\begin{multline*}
\frac{G(\la;p,m,+,-) }{ G(-\la;p,m,+,+) } = \\
 \frac{\la^{1-m-\chi(p)} (q\la/p;q^2)_\infty }{ (pq\la, -q^{3-2m}/p\la, -q^{2m-1}p\la;q^2)_\infty } \sqrt{ \frac{(qp\la^{\pm1}, -q^{3-2m}\la^{\pm1}/p, -q^{2m-1}p\la^{\pm1};q^2)_\infty  }{(q\la^{\pm1}/p;q^2)_\infty }}.
\end{multline*}
Recall here that $(a\la^{\pm 1};q)_\infty = (a\la,a/\la;q)_\infty$, which is strictly positive for $0\neq a \in \R$ and $\la \in \T$. Now assume $\epsilon(p)=\hf$. Using the $\te$-product identity \eqref{eq:thetaprodid} we may write
\[
\begin{split}
(q\la/p;q^2)_\infty &= (-1)^{\upsilon(p)} \la^{\upsilon(p)} q^{-\upsilon(p)(\upsilon(p)-1)} \frac{ (1/\la,q^2\la;q^2)_\infty }{ (pq/\la;q^2)_\infty}, \\
(-q^{3-2m}/p\la, -q^{2m-1}p\la;q^2)_\infty &= \la^{1-m-\upsilon(p)}q^{-(m+\upsilon(p)-1)(m+\upsilon(p)-2)} (-\la,-q^2/\la;q^2)_\infty,
\end{split}
\]
from which it follows that
\[
\frac{G(\la;p,m,+,-) }{ G(-\la;p,m,+,+) } = (-1)^{\upsilon(p)} \frac{ \la\, (1/\la,q^2\la;q^2)_\infty }{ (-\la,-q^2/\la;q^2)_\infty } \sqrt{ \frac{ (-\la^{\pm 1},-q^2\la^{\pm 1};q^2)_\infty }{ (\la^{\pm 1},q^2\la^{\pm 1};q^2)_\infty}}\,.
\]
Using the identity $(aq;q)_\infty = (a;q)_\infty/(1-a)$, and using $(a^{\pm 1};q)>0$ for $a \in \T$, the above expression reduces to
\[
(-1)^{\upsilon(p)} \frac{ 1+\la }{ 1-\la} \sqrt{ \frac{(1-\la)(1-\la^{-1}) }{ (1+\la)(1+\la^{-1}) }}.
\]
From this expression we finally obtain
\[
\frac{G(\la;p,m,+,-) }{ G(-\la;p,m,+,+) }  =
\begin{cases}
i\,(-1)^{\upsilon(p)} & \la \in \T^+,\\
i\,(-1)^{\upsilon(p)+1} & \la \in \T^-.
\end{cases}
\]
Using $\nu_0^-(\la;p) = \mp i$ for $\la \in \T^{\pm}$, the result follows for the case $\epsilon(p)=\hf$.

Next we assume $\epsilon(p)=0$. The $\te$-product identity \eqref{eq:thetaprodid} gives in this case
\[
\begin{split}
(q\la/p;q^2)_\infty &= (-1)^{\upsilon(p)} (\la/q)^{\upsilon(p)} q^{-\upsilon(p)(\upsilon(p)-1)} \frac{ (q\la^{\pm 1};q^2)_\infty }{ (pq/\la;q^2)_\infty}, \\
(-q^{3-2m}/p\la, -q^{2m-1}p\la;q^2)_\infty &= (q\la)^{1-m-\upsilon(p)}q^{-(m+\upsilon(p)-1)(m+\upsilon(p)-2)} (-q\la^{\pm 1};q^2)_\infty.
\end{split}
\]
Now all $q$-shifted factorials become symmetric in $\la$ and $\la^{-1}$, hence positive, and this leads to
\[
\frac{G(\la;p,m,+,-) }{ G(-\la;p,m,+,+) }  = (-1)^{\upsilon(p)} \frac{ (q\la^{\pm 1};q^2)_\infty }{(pq\la^{\pm 1}, -q\la^{\pm 1};q^2)_\infty } \sqrt{ \frac{(pq\la^{\pm 1}, -q\la^{\pm 1};q^2)_\infty^2 }{ (q\la^{\pm 1};q^2)_\infty^2 } } = (-1)^{\upsilon(p)}.
\]
This proves the result in case $\epsilon(p)=0$.
\end{proof}

Notice that the invariance of $\cL_{p,x}$ as defined in
Lemma \ref{lem:closedsubspaceinregularrep} follows from the
fact that for $p_1,p_2 \in I_q$ and $n \in \Z$, the operator
\[
Q(p_1,p_2,n)\colon \cK(p,m,\ep,\et) \to  \cK(p,m+n,\sgn(p_1)\,\ep,\sgn(p_2)\,\et)
\]
and
$\sgn(p_1 p_2)\,\,Q(p_1,p_2,n)\,\Om \subseteq \Om \,Q(p_1,p_2,n)$
for all $p_1,p_2 \in I_q$ and $n \in \Z$, see
Lemma \ref{lem:explicitactionQppn} and
Proposition \ref{prop:QppngeneratedualM}. This proves that
$\cL_{p,x}$ is an invariant subspace for $\hat{M}$, hence it gives
rise to a corepresentation of $(M,\De)$. Since $W$ is the multiplicative
unitary, its restriction $W_{p,x}$ is also unitary. This proves
Lemma \ref{lem:closedsubspaceinregularrep}.

In order to prove Proposition \ref{prop:discretesubrepresinregrepr}
we have to do some bookkeeping, based on the discrete spectrum
of the Casimir operator $\Om$ acting on $\cK(p,m,\ep,\et)$
given as $\si_d(p,m,\ep,\et) = \{ \mu(\la)\mid \la \in D(p,m,\ep,\et)\}$, where the
set  $D(p,m,\ep,\et)$ is given explicitly in \eqref{eq:spectrum+-},
\eqref{eq:spectrum--}, \eqref{eq:spectrum++}. So we have to keep
track which of the eigenvectors $e^{\ep,\et}_m(p,x)$ in $\cL_{p,x}$
correspond to eigenvalues in the spectrum of $\Om$ in $\cK(p,m,\ep,\et)$.

\begin{proof}[Proof of Proposition \ref{prop:discretesubrepresinregrepr}]
Note that $p = q^{-l-j}$ and $|\la| = q^{1+l-j}$.
Since $|\la| > 1$, it follows that $l < j$.
In order to see that $|\la|\in q^{2\Z+1} p$, consider $x = \mu(\la)$ where
$\la \in -q^{-\N} \cup q^{-\N}$. It follows from
\eqref{eq:spectrum+-}, \eqref{eq:spectrum--} and \eqref{eq:spectrum++}
that if there exist $m \in \Z$ and $\ep,\et \in \{-,+\}$
such that $\cK(p,m,\ep,\et)\ni e^{\ep,\et}_m(p,x) \not= 0$, then
$|\lambda|\in q^{2\Z+1} p$.

Assume first that $x > 0$. It follows from \eqref{eq:spectrum++}
that an eigenvector $e^{++}_m(p,x) \in \cK(p,m,+,+)$ is non-zero
if and only if $x=\mu(q^{1+2k}p)$ such that $q^{1+2k}p>1$. So such
an eigenvector exists for all $m\in \Z$. It follows from \eqref{eq:spectrum--}
that such an eigenvector $e^{--}_m(p,x) \in \cK(p,m,-,-)$ does not exist, since
the discrete spectrum of $\Om$ on $\cK(p,m,-,-)$ is always negative.
A check shows that eigenvectors $e^{+-}_m(p,x) \in \cK(p,m,+,-)$
satisfying $\Om\, e^{+-}_m(p,x) = -\mu(q^{1+2k}p)\, e^{+-}_m(p,x)$ exist
precisely when $k\geq -m$. Similarly, eigenvectors $e^{-+}_m(p,x) \in \cK(p,m,-,+)$
satisfying $\Om\, e^{-+}_m(p,x) = -\mu(q^{1-2j}p^{-1})\, e^{-+}_m(p,x)$ exist
precisely when $j\leq m$. This covers the case (i) of Proposition
\ref{prop:discretesubrepresinregrepr}.

For the remainder of the proof we assume $x < 0$.
Since $l < j$, it cannot happen that $j \leq 0$ and $l \geq 0$.
We start by looking at eigenvectors for positive eigenvalues of $\Om$
in $\cK(p,m,\ep,\et)$ for $\ep\not=\et$. From \eqref{eq:spectrum+-}
it follows that such eigenvectors occur for the eigenvalue
$-x=\mu(q^{1+2l}p)$ in $\cK(p,m,-,+)$ precisely when $l\geq 0$ and
that such eigenvectors occur for the eigenvalue
$-x=\mu(q^{1-2j}p^{-1})$ in $\cK(p,m,+,-)$ precisely when $j\leq 0$.
So these cases cannot occur simultaneously, and we consider them
separately.

From \eqref{eq:spectrum++} we find eigenvectors $e^{++}_m(p,x)$ in case $l\geq \max(0,m)$
or $j\leq 0$.
Using \eqref{eq:spectrum--} we see that $e^{--}_m(p,x)$ is an
eigenvector for the eigenvalue $x$ precisely when
$l\geq 0$ or $l \geq m$ for the case $pq^m\leq 1$, i.e. $l+j\leq m$.
In case $pq^m\geq 1$, or $l+j\geq m$, we see that $e^{--}_m(p,x)$ is an
eigenvector for the eigenvalue $x$ precisely when
$j\leq 0$ or $j \leq m$.

Assume $l\geq 0$, and hence $j>0$. Then we find no eigenvectors
of type $e^{+-}_m(p,x)$ and
$e^{-+}_m(p,x)$ for all $m\in \Z$ by \eqref{eq:spectrum+-}. We find
$e^{++}_m(p,x)$ for all $m\in\Z$ with $m\leq l$ by considering
the case $m\leq 0$ and $m\geq 0$ separately in \eqref{eq:spectrum++}.
Consider now \eqref{eq:spectrum--}. In case
$m\leq l+j$ (or $q^mp\geq 1$) we find
eigenvectors $e^{--}_m(p,x)$ for $j\leq 0$, which is excluded
in this case, or $j\leq m$. So in total we get $e^{--}_m(p,x)$
for $j\leq m\leq l+j$. In case $m\geq l+j$ (or $q^mp\leq 1$) we find
eigenvectors $e^{--}_m(p,x)$ for $l\geq m$, which is excluded since
it implies $j\leq 0$, and for $l\geq 0$. So we find $e^{--}_m(p,x)$
for $m\geq l+j$. Combining we find $e^{--}_m(p,x)$ for all $m\in\Z$
with $m\geq j$. This gives case (ii) of Proposition
\ref{prop:discretesubrepresinregrepr}.
Case (iii) is obtained similarly by analyzing $j\leq 0$ and hence
$l<0$.
\end{proof}

Next we show that the corresponding unitary corepresentations
of $(M,\De)$ are irreducible.

\begin{proof}[Proof of Proposition \ref{prop:Wpxgivesirreduciblediscreteseries}]
We have already observed that $\cL_{p,x}$ is invariant.
Consider $\cL_{p,x}$ with the convention $p=q^{-l-j}$ with $l<j$ as
in Proposition \ref{prop:discretesubrepresinregrepr}, and assume for the moment that
$l+1\not=j$, or $l+1<j$. We claim that is possible to choose
$\ep,\et\in \{\pm\}$, $m\in\Z$ so that
\begin{enumerate}
\item  $0\not= e^{\ep,\et}_m(p,x) \in \cL_{p,x}$
\smallskip
\item $e^{s\ep,t\et}_m(p,x)\notin \cL_{p,x}$  for $(s,t) =(-,+),(+,-),(-,-)$.
\end{enumerate}
Take $m \in\Z$ such that $l < m < j$, which is possible by the assumption $l+1<j$.
By Proposition \ref{prop:discretesubrepresinregrepr} we find by inspection that
$e^{++}_m(p,x)$ in case (i), $e^{-+}_m(p,x)$ in case (ii) and
$e^{+-}_m(p,x)$ in case (iii) gives the required choice.

Recall that $K$ is affiliated to $\hat{M}$, see
Proposition \ref{prop:KEaffiliatedtohatM}.
Thus, if $P$ denotes the spectral
projection of $K$ with respect to the eigenvalue
$p^{\frac{1}{2}}\,q^m$, we find $P \in \hat{M}$.
So  $P\vert_{\cL_{p,x}}$ is the orthogonal
projection onto the closed subspace spanned by
$\{ e^{s,t}_m(p,x) \mid s,t \in \{-,+\}\,\}$.
But by our choice of $m$,$\ep$ and $\et$, this implies that
$P\vert_{\cL_{p,x}}$ is the orthogonal
projection onto $e^{\ep,\et}_m(p,x)$.  So we can
look at the invariant subspace of $\cL_{p,x}$
generated by the vector $e^{\ep,\et}_m(p,x)$.

Consider the closure $L$ of
$\{ T\vert_{\cL_{p,x}}\, e^{\ep,\et}_m(p,x) \mid T\in \hat{M}\}$,
so that $L$ is an invariant subspace of $\cL_{p,x}$ and
$L\not= \{0\}$.
Using Lemma \ref{lem:actionsofgeneratorsoneigvetsOm}
the partial isometry $V$ in the polar decomposition
of $E$ maps $e^{\ep,\et}_m(p,x)$ to $e^{\ep,\et}_{m+1}(p,x)$ if this corresponds
to an eigenvector of $\Om$ in $\cK(p,m+1,\ep,\et)$ and to zero if
this is not the case.
Using Lemma \ref{lem:actionsofgeneratorsoneigvetsOm} and
the fact that the partial isometry $V\in \hat{M}$ by Proposition
\ref{prop:EaffiliatedtohatM}, we see that all
other vectors in the three lists for $\cL_{p,x}$
in Proposition \ref{prop:discretesubrepresinregrepr} can be reached
by repeated application of $V$, $V^\ast$, $U^{+-}_0$ and $U^{-+}_0$.
Hence $L=\cL_{p,x}$, and irreducibility follows.

In case $l+1=j$ we cannot establish that $P\vert_{\cL_{p,x}}$ is
the orthogonal projection on a single vector in the lists as
in Proposition \ref{prop:discretesubrepresinregrepr}, but we
can view it, by taking $m=l$, as an orthogonal projection on the subspace
$\C\, e^{++}_l(p,x)\oplus \C e^{-+}_l(p,x)$ in case (i)
of Proposition \ref{prop:discretesubrepresinregrepr}, on
$\C\, e^{-+}_l(p,x)\oplus \C e^{++}_l(p,x)$ in case (ii) and on
$\C\, e^{+-}_l(p,x)\oplus \C e^{--}_l(p,x)$ in case (iii). Now use
the fact that the partial isometry $V\in \hat{M}$ of $E$ kills
the second vector in each of these spaces to see that the range
of the composition $V\vert_{\cL_{p,x}}P\vert_{\cL_{p,x}}$ has dimension $1$ spanned
by $e^{++}_{l+1}(p,x)$ in case (i), by $e^{-+}_{l+1}(p,x)$ in case (ii)
and by $e^{+-}_{l+1}(p,x)$ in case (iii). Now we can argue as in the
case $l+1>j$ above to find that $\cL_{p,x}$ is irreducible.
\end{proof}

\subsection{Principal series}\label{ssec:principalseries}\index{corepresentation!principal series}
We start by recalling the definition of Section \ref{sec:decompleftregularcorep}.
Let $x=\cos\te\in [-1,1]$ and $p \in q^\Z$. We define a Hilbert space $\cL_{p,x}$ by
\[
\cL_{p,x} = \bigoplus_{\ep,\et \in \{-,+\}} \ell^2_{\ep,\et}(p,x),
\]
where each space $\ell^2_{\ep,\et}(p,x)$ denotes a copy of $\ell^2(\Z)$ with standard orthonormal basis $\{ e_m^{\ep,\et}(p,x) \mid m\in \Z\}$. For convenience we recall the definition of the operators $K, E, U_0^{+-}, U_0^{-+}$ on  $\cL_{p,x}$ as given in \eqref{eq:introprincipalseries};
\begin{equation} \label{eq:principalseries}
\begin{split}
K \, e_m^{\ep,\et}(p,x) &= p^\hf q^m \, e_m^{\ep,\et}(p,x),\\
(q^{-1}-q) E\, e_m^{\ep,\et}(p,x) & = q^{-m-\hf}p^{-\hf} |1+\ep\et p q^{2m+1} e^{i\te}| \, e_{m+1}^{\ep,\et}(p,x),\\
U_{0}^{+-}\, e_m^{\ep,\et}(p,x) & = \et\, (-1)^{\upsilon(p)} \, e_m^{\ep,-\et}(p,x),\\
U_{0}^{-+}\, e_m^{\ep,\et}(p,x) & = \ep \et^{\chi(p)}(-1)^m \, e_m^{-\ep,\et}(p,x).
\end{split}
\end{equation}
The operators $E$ and $K$ are unbounded closable operators with dense domain the finite linear combinations of the orthonormal basis vectors $e_m^{\ep,\et}(p,x)$, $m \in \Z$, $\ep,\et \in \{-,+\}$. The operators $U_0^{+-}$ and $U_0^{-+}$ are bounded; they are isometries.

\begin{remark} \label{rem:ell2epetassumodule}
It is useful to observe that each subspace $\ell^2_{\ep,\et}(p,x)$ of $\cL_{p,x}$ is a principal series $\su$-module as defined by \eqref{eq:princunitaryserrep}. The above defined actions of $K$ and $E$ on $e_m^{\ep,\et}(p,x)$ coincide with the actions of $\mathbf K$ and $\mathbf E$ in the principal series representation $\pi_{b,\epsilon(p)}$ on the standard basis vector $e_{m+k}$, where $\mu(q^{2ib}) = -\ep\et x$ and $p=q^{2k+2\epsilon(p)}$. Using $\Om = \hf\big((q^{-1}-q)^2 E^*E-qK^2-q^{-1}K^{-2}\big)$ it can be verified that $\Om\,e_m^{\ep,\et}(p,x) = \ep\et x \, e_m^{\ep,\et}(p,x)$. Furthermore, the discrete series corepresentations from Lemma \ref{lem:actionsofgeneratorsoneigvetsOm} can (formally) be obtained from \eqref{eq:principalseries} by taking $\ep\et x$ in the discrete spectrum of $\Om$.
\end{remark}

The operators \eqref{eq:principalseries} generate a von Neumann algebra $\hat M_{p,x}$. We can construct
the elements $Q_{p,x}(p_1,p_2,n)$,  $p_1,p_2 \in I_q$, $n \in \Z\}$ for $\hat M_{p,x}$, basically by reversing the arguments that led to the proof of Theorem \ref{thm:generatorsforhatM}.
Let us first define operators $U_n^{\si,\tau}:\cL_{p,x} \to \cL_{p,x}$ for $n \in \Z$ and $\si,\tau \in \{-,+\}$ as follows. We set $U_0^{++}=\Id$, and we define $U_1^{++}$ as the partial isometry in the polar decomposition of $E$, i.e., $U_1^{++} e_m^{\ep,\et}(p,x) = e_{m+1}^{\ep,\et}(p,x)$. Now we define $U_n^{\si,\tau}$, $n \in \Z$, $\si,\tau \in \{-,+\}$, recursively by
\begin{align*}
U_n^{++} &= U_{n-1}^{++} U_1^{++}, && n \in \N,\\
U_n^{+-} & = U_0^{+-} U_n^{++}, && n \in \N,\\
U_n^{-+} &= U_n^{++} U_0^{-+}, && n \in \N,\\
U_n^{--} &= U_n^{+-} U_0^{-+}, && n \in \N_0,\\
U_{-n}^{\si\tau} &= \si^{n+1} \tau^{\epsilon(p)(1-\si)+1}(U_n^{\si\tau})^*, && n \in \N.
\end{align*}
From \eqref{eq:principalseries}, Lemma \ref{lem:G(la)/G(-la)} and the identity $G(\la;p,m+1,\ep,\et) = \et G(\la;p,m,\ep,\et)$, see the proof of Theorem \ref{thm:generatorsforhatM}
at the end of Section \ref{ssec:generatorsofhatM}, we find
\begin{equation}\label{eq:Unsitaupx}
U_n^{\si\tau} e_m^{\ep\et}(p,x) = \ep^{\hf(1-\si)} \et^{\hf(1-\tau)+n} \nu_n^\tau(\la;p)\dfrac{ G(\la;p,m+n,\si\ep,\tau\et)}{G(\si\tau\la;p,m,\ep,\et)} \ e_{m+n}^{\si\ep,\tau\et}(p,x),
\end{equation}
where $\mu(\la)=x$. Now for $p_1,p_2 \in I_q$ we set $\si=\sgn(p_1)$, $\tau=\sgn(p_2)$, and we define
\[
Q_{p,x}(p_1,p_2,n) = U_n^{\si,\tau} H(\Om;p_1,p_2,n) P(p_1,p_2,n),
\]
where $P(p_1,p_2,n)$ is the spectral projection of $K$ defined in \eqref{eq:principalseries} corresponding to the eigenvalue $\sqrt{q^{-n} |p_2/p_1|}$, and $H$ is the function defined in Proposition \ref{prop:polardecompQ}.

\begin{lemma} \label{lem:propertiesQpx}
The operators $Q_{p,x}(p_1,p_2,n)$ have the following properties:
\begin{enumerate}[(i)]
\item $Q_{p,x}(p_1,p_2,n)$ acts on the standard basisvectors of $\cL_{p,x}$ by
\[
Q_{p,x}(p_1,p_2,n) e_m^{\ep,\et}(p,x) =
\begin{cases}
0, & q^{2m} \neq q^{-n}\big|\tfrac{p_2}{p_1p}\big|,\\
C(\si\tau \ep\et\, x;m,\ep,\et;p_1,p_2,n)\, e_{m+n}^{\si\ep,\tau\et}(p,x), & q^{2m} = q^{-n}\big|\tfrac{p_2}{p_1p}\big|,
\end{cases}
\]
where $C$ is the function given by \eqref{eq:CintermsofH}.
\item In $\hat M_{p,x}$ we have
\[
Q_{p,x}(p_1,p_2,n) Q_{p,x}(r_1,r_2,m) = 0,
\]
if $|\frac{p_2}{p_1}|\neq q^m$ or $|\frac{r_1}{r_2}|\neq q^n$, and
\[
\begin{split}
Q_{p,x}(p_1,p_2,n)& Q_{p,x}(r_1,r_2,m)  = \sum_{\substack{x_1,x_2\in I_q\\ \sgn(x_1)=\sgn(p_1r_1)\\ \sgn(x_2)=\sgn(p_2r_2)}} a_{x_1}(r_1,p_1) a_{x_2}(r_2,p_2) Q_{p,x}(x_1,x_2,n+m).
\end{split}
\]
if $|\frac{p_2}{p_1}|=q^m$ and $|\frac{r_1}{r_2}|= q^n$.
\item The adjoint of $Q_{p,x}(p_1,p_2,n)$ in $\hat M_{p,x}$ is given by
\[
Q_{p,x}(p_1,p_2,n)^* = (-q)^n \, \sgn(p_1)^{\chi(p_1)}
\, \sgn(p_2)^{\chi(p_2)}\, Q_{p,x}(p_1,p_2,-n).
\]
\end{enumerate}
\end{lemma}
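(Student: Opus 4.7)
The plan is to prove (i) by unwinding the definition $Q_{p,x}(p_1,p_2,n)=U_n^{\sgn(p_1),\sgn(p_2)}\,H(\Om;p_1,p_2,n)\,P(p_1,p_2,n)$, and then to deduce (ii) and (iii) by evaluating both sides on the standard basis vectors $e_k^{\ep,\et}(p,x)$ of $\cL_{p,x}$, matching scalar coefficients via Lemma \ref{lem:identitiesC}. For (i): from \eqref{eq:principalseries} the operator $K$ acts on $e_m^{\ep,\et}(p,x)$ by $p^{1/2}q^m$, so the spectral projection $P(p_1,p_2,n)$ restricts to the identity precisely when $q^{2m}=q^{-n}|p_2/(p_1 p)|$ and to zero otherwise; on that support Remark \ref{rem:ell2epetassumodule} gives $\Om\,e_m^{\ep,\et}(p,x)=\ep\et x\,e_m^{\ep,\et}(p,x)$, so that $H(\Om;p_1,p_2,n)$ multiplies by the scalar $H(\ep\et x;p_1,p_2,n)$. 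Combining this with the explicit action of $U_n^{\sgn(p_1),\sgn(p_2)}$ from \eqref{eq:Unsitaupx} and comparing with the formula \eqref{eq:CintermsofH} for $C$ evaluated at the output $\Om$-eigenvalue $\sgn(p_1p_2)\ep\et x$ produces exactly $C(\sgn(p_1p_2)\ep\et x;m,\ep,\et;p_1,p_2,n)\,e_{m+n}^{\sgn(p_1)\ep,\sgn(p_2)\et}(p,x)$.

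For (ii), apply both sides to $e_k^{\ep,\et}(p,x)$. By (i) the left-hand side equals the product of two $C$-values times $e_{k+m+n}^{\sgn(p_1r_1)\ep,\sgn(p_2r_2)\et}(p,x)$, non-zero only when the two projection conditions $p\,q^{2k+m}=|r_2/r_1|$ and $p\,q^{2k+2m+n}=|p_2/p_1|$ both hold; the right-hand side is a sum over $(x_1,x_2)$ with $|x_1|=|x_2|$ of $C$-values times the same basis vector, non-zero only when $p\,q^{2k+m+n}=1$. In the generic case $|p_2/p_1|=q^m$ and $|r_1/r_2|=q^n$, both non-vanishing conditions reduce to $p\,q^{2k+m+n}=1$, and equality of coefficients is precisely Lemma \ref{lem:identitiesC}(i) applied with $y=\sgn(p_1p_2r_1r_2)\ep\et x$. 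The hard part will be the vanishing case: when, say, $|p_2/p_1|\neq q^m$ but the consistency relation $|p_2r_1/(p_1r_2)|=q^{m+n}$ still holds, the two LHS projection conditions can be simultaneously met and the product of two $C$-values is a priori non-trivial, while the RHS is zero since $p\,q^{2k+m+n}\neq 1$. The plan to handle this is to invoke Proposition \ref{prop:structureconstQs}: by Proposition \ref{prop:polardecompQ} the generator $Q(p_1,p_2,n)\in\hat M$ is built by the very same formula $U_n^{\sgn(p_1),\sgn(p_2)}H(\Om)P$ as $Q_{p,x}(p_1,p_2,n)$, and via the unitary $\Up$ of Theorem \ref{thm:spectraldecompositionCasimir} the operator $Q_{p,x}(p_1,p_2,n)$ is realised as the fibre at $x$ of $Q(p_1,p_2,n)$ in the direct integral decomposition $\cK_c(p)\cong\int_{-1}^1\cL_{p,x}\,dx$ from Proposition \ref{prop:introdecompWprincipalseries}. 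The operator identity $Q(p_1,p_2,n)\,Q(r_1,r_2,m)=0$ in $B(\cK)$ coming from Proposition \ref{prop:structureconstQs} then forces the product of $C$-values to vanish for almost every $x\in[-1,1]$, and continuity of $x\mapsto C(x;\cdots)$ promotes this to every $x$.

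For (iii), apply both sides to $e_{m+n}^{\sgn(p_1)\ep,\sgn(p_2)\et}(p,x)$; both operators are non-vanishing under the same condition $p\,q^{2m+n}=|p_2/p_1|$. The left-hand side gives $\overline{C(\sgn(p_1p_2)\ep\et x;m,\ep,\et;p_1,p_2,n)}\,e_m^{\ep,\et}(p,x)$ by adjointing the scalar from (i), which equals $C(\sgn(p_1p_2)\ep\et x;m,\ep,\et;p_1,p_2,n)\,e_m^{\ep,\et}(p,x)$ since $C$ is real-valued (see the proof of Lemma \ref{lem:actionTC(x)}). The right-hand side gives $(-q)^n\sgn(p_1)^{\chi(p_1)}\sgn(p_2)^{\chi(p_2)}\,C(\ep\et x;m+n,\sgn(p_1)\ep,\sgn(p_2)\et;p_1,p_2,-n)\,e_m^{\ep,\et}(p,x)$ by directly applying (i) to $Q_{p,x}(p_1,p_2,-n)$. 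Equality is exactly Lemma \ref{lem:identitiesC}(ii) applied with $y=\sgn(p_1p_2)\ep\et x$, so that $\sgn(p_1p_2)y=\ep\et x$.
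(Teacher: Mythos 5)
Your proof is correct and, for all three parts, follows essentially the same route as the paper: unwind $Q_{p,x}(p_1,p_2,n)=U_n^{\si,\tau}H(\Om)P$ against the basis using \eqref{eq:Unsitaupx} and \eqref{eq:CintermsofH} for (i), and reduce (ii) and (iii) to the product formula and the symmetry relation of Lemma \ref{lem:identitiesC} by evaluating on basis vectors. The one point where you genuinely go beyond the paper is the vanishing case of (ii): the paper simply writes ``and the product is zero otherwise,'' whereas you correctly isolate the nontrivial sub-case in which both spectral-projection conditions are simultaneously satisfiable (equivalently $|p_2r_1/(p_1r_2)|=q^{m+n}$) while $|p_2/p_1|\neq q^m$, so that the product of the two $C$-values is not obviously zero, and you supply an argument. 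That extra care is welcome. One caution about how you justify it: citing Proposition \ref{prop:introdecompWprincipalseries} risks circularity, since that decomposition is obtained from the unitarity of $W_{p,x}$, which in turn rests on the present lemma. The clean, non-circular version of your argument uses only material already established: Lemma \ref{lem:actionTC(x)} realizes $Q(p_1,p_2,n)$ restricted to $\cK(p,k,\ep,\et)$ as $g\mapsto C(\cdot)g(\sgn(p_1p_2)\,\cdot)$ on $L^2(I(p,k,\ep,\et))$, Proposition \ref{prop:structureconstQs} forces the composite multiplication operator to vanish, hence $C_1(\cdot)C_2(\sgn(p_1p_2)\,\cdot)=0$ almost everywhere on $[-1,1]$, and continuity of $C$ there gives vanishing at the particular $x$. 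With that substitution your argument is complete and in fact patches a gap the paper leaves implicit.
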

\begin{proof}
(i) First note that $P(p_1,p_2,n)$ is the orthogonal projection onto
\[
\mathrm{Span}\Big\{ e_m^{\ep,\et}(p,x) \mid q^{2m} = q^{-n}\big|\tfrac{p_2}{p_1p}\big|, \ \ep,\et \in \{-,+\}\Big\}.
\]
The explicit action of $Q_{p,x}(p_1,p_2,n)$ on an orthonormal basisvector $e_m^{\ep,\et}(p,x)$ now follows from \eqref{eq:Unsitaupx} and \eqref{eq:CintermsofH}.

(ii) By (i) the product of two $Q_{p,x}$ operators is given by
\[
\begin{split}
Q_{p,x}&(p_1,p_2,n) Q_{p,x}(r_1,r_2,m) e_k^{\ep,\et}(p,x)= \\
&C(y;k+m,\sgn(r_1)\ep,\sgn(r_2)\et;p_1,p_2,n)
C(\sgn(p_1p_2)\, y;k,\ep,\et;r_1,r_2,m)\, e_{k+m+n}^{\ep',\et'}(p,x)
\end{split}
\]
if $q^{2k} = q^{-m}|\frac{r_2}{r_1p}|$ and $q^{2k+2m}=q^{-n}|\frac{p_1}{p_2p}|$, and it is zero otherwise. Here $y =\sgn(p_1p_2r_1r_2)\, \ep\et x$, $\ep'=\sgn(r_1p_1)\ep$ and $\et'=\sgn(r_2p_2)\et$. Now we use the product formula for the function $C$ from Lemma \ref{lem:identitiesC}, then it follows that
\[
\begin{split}
Q_{p,x}(p_1,p_2,n)& Q_{p,x}(r_1,r_2,m) e_k^{\ep,\et}(p,x) =\\
&\sum_{\substack{x_1,x_2\in I_q\\ \sgn(x_1)=\sgn(p_1r_1)\\ \sgn(x_2)=\sgn(p_2r_2) \\ |x_1|=|x_2|}} a_{x_1}(r_1,p_1) a_{x_2}(r_2,p_2)C(y;k,\ep,\et;x_1,x_2,m+n)e_{k}^{\ep,\et}(p,x).
\end{split}
\]
if $|\frac{p_2}{p_1}|=q^m$ and $|\frac{r_1}{r_2}|=q^n$, and the product is zero otherwise. Observe that inside the sum on the right hand side the condition $q^{2k} = q^{-n-m}|\frac{x_1}{x_2p}|$ is satisfied because $|x_1|=|x_2|$, and since $Q_{p,x}(x_1,x_2,n+m)=0$ otherwise, the product formula for two $Q_{p,x}$ operator follows.

(iii) The adjoint of $Q_{p,x}(p_1,p_2,n)$ follows from (i) and the symmetry property for $C$ from Lemma \ref{lem:identitiesC}.
\end{proof}

\begin{prop} \label{prop:principalseriescorepresentation}
Let $p_1,p_2 \in I_q$, $n \in \Z$, and let $\om_{f,g} \in B(\cK)_*$ be the normal functional given by $\om_{f,g}(y) = \langle yf, g\rangle$ for $y \in B(\cK)$, where $f=f_{0,p_1,1}$ and $g=f_{n,p_2,1}$. Then there exists a unique unitary corepresentation $W_{p,x} \in M \otimes B(\cL_{p,x})$ such that
\[
(\om_{f,g} \otimes \Id)(W_{p,x}^*) = Q_{p,x}(p_1,p_2,n).
\]
\end{prop}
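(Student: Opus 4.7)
The plan is to produce $W_{p,x}$ as the image of the multiplicative unitary $W$ under $\mathrm{Id}\otimes\pi$, where $\pi\colon\hat{M}\to B(\cL_{p,x})$ is a normal $\ast$-homomorphism sending $Q(p_1,p_2,n)$ to $Q_{p,x}(p_1,p_2,n)$. First I would define $\pi_0$ on the linear span $\mathcal{Q}\subset\hat{M}$ of the operators $Q(p_1,p_2,n)$ by $\pi_0(Q(p_1,p_2,n))=Q_{p,x}(p_1,p_2,n)$. This is well-defined and a $\ast$-algebra homomorphism because, on the one hand, the matrix coefficient formula \eqref{eq:matrixelementsofQppnandhatJQppnhatJ} makes the operators $Q(p_1,p_2,n)$ linearly independent, and on the other hand Proposition \ref{prop:structureconstQs} together with Lemma \ref{lem:propertiesQpx}(ii),(iii) shows that both families satisfy the same multiplication and adjoint relations.

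The hard step is to extend $\pi_0$ to a normal $\ast$-homomorphism $\pi$ defined on all of $\hat{M}$. For this I would realize $\cL_{p,x}$ as an evaluation/fiber of the spectral decomposition of the Casimir $\Om$ inside $\cK$. By Theorem \ref{thm:spectraldecompositionCasimir}, the unitary $\Up$ diagonalizes $\Om$, and by \eqref{eq:actionE0inUpsilonpm+-}, \eqref{eq:actionE0E0+inUpsilonpm-+}, \eqref{eq:actionE0E0+inUpsilonpm--}, \eqref{eq:actionE0E0+inUpsilonpm++mleq0} together with the proof of Theorem \ref{thm:generatorsforhatM}, each of the four generators $K,E,U_0^{+-},U_0^{-+}$ of $\hat{M}$ is carried by $\Up$ to a fiberwise operator on $\bigoplus L^2(I(p,m,\ep,\et))$. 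Comparing these fiberwise actions at $x\in[-1,1]$ with \eqref{eq:principalseries} (via Lemma \ref{lem:actionsofgeneratorsoneigvetsOm} and Remark \ref{rem:ell2epetassumodule}) shows that the four assignments $K\mapsto K$, $E\mapsto E$, $U_0^{+-}\mapsto U_0^{+-}$, $U_0^{-+}\mapsto U_0^{-+}$ (with right-hand sides as in \eqref{eq:principalseries}) are compatible with the defining relations, so the universal property of the von Neumann algebra generated by the original four operators yields a normal $\ast$-homomorphism $\pi\colon\hat{M}\to B(\cL_{p,x})$. Using Proposition \ref{prop:polardecompQ} and the parallel construction in Section \ref{ssec:principalseries}, this $\pi$ extends $\pi_0$, and its image is exactly $\hat{M}_{p,x}$.

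Having $\pi$, I set $W_{p,x}=(\mathrm{Id}\otimes\pi)(W)\in M\otimes\hat{M}_{p,x}\subset M\otimes B(\cL_{p,x})$. Unitarity is automatic from normality of $\pi$, and the corepresentation identity follows by applying $\mathrm{Id}\otimes\mathrm{Id}\otimes\pi$ to $(\De\otimes\mathrm{Id})(W)=W_{13}W_{23}$ in $M\otimes M\otimes\hat{M}$. The matrix-coefficient identity is then immediate:
\[
(\om_{f,g}\otimes\mathrm{Id})(W_{p,x}^\ast)=\pi\bigl((\om_{f,g}\otimes\mathrm{Id})(W^\ast)\bigr)=\pi\bigl(Q(p_1,p_2,n)\bigr)=Q_{p,x}(p_1,p_2,n),
\]
using the definition \eqref{eq:defQppn} and the normality of $\pi$.

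For uniqueness, I would argue as in the proof of Lemma \ref{lem:QppnareallinhatM}: the linear span of the normal functionals $\om_{f_{0,p_1,1},f_{n,p_2,1}}$ is $\si$-weakly dense in $M_\ast$, so any $W_{p,x}'\in M\otimes B(\cL_{p,x})$ whose slices by these functionals agree with those of $W_{p,x}$ must coincide with $W_{p,x}$. The principal obstacle is the second step, extending $\pi_0$ normally; everything else is then a clean transport-of-structure argument. The delicate point is to make sure that the fiberwise matching coming from the diagonalization of the Casimir genuinely produces a $\ast$-homomorphism of von Neumann algebras (not just an algebraic map), which is what ties the construction to the detailed analysis of Sections \ref{ssec:spectraldecompCasimir} and \ref{sec:generatorshatM}.
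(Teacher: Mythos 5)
Your construction breaks down at the step you yourself identify as the hard one: the normal $\ast$-homomorphism $\pi\colon\hat{M}\to B(\cL_{p,x})$ with $\pi(Q(p_1,p_2,n))=Q_{p,x}(p_1,p_2,n)$ does not exist. The point $x$ lies in the \emph{continuous} part $[-1,1]$ of the spectrum of the Casimir operator $\Om$, so (for almost every $x$) the spectral projection $E_\Om(\{x,-x\})$ is zero in $\hat{M}$. Any normal unital $\ast$-homomorphism intertwining the generators $K,E,U_0^{+-},U_0^{-+}$ would carry $\Om$ to the Casimir of $\hat M_{p,x}$, whose spectral projection at $\{x,-x\}$ is the identity on $\cL_{p,x}$; it would therefore have to send $0$ to a nonzero projection, which is absurd. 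This is the same obstruction as trying to evaluate an $L^\infty$-class at a point of a non-atomic measure space: $\cL_{p,x}$ is a fiber of a direct integral, not the range of a normal quotient map. Relatedly, the ``universal property of the von Neumann algebra generated by the four operators'' that you invoke does not exist: von Neumann algebras generated by operators satisfying prescribed relations have no universal property (in contrast to universal C$^\ast$-algebras), so fiberwise agreement of the generators cannot by itself produce a normal homomorphism. Your algebraic step (that $\pi_0$ on the span of the $Q$'s is a well-defined $\ast$-algebra map, via Proposition \ref{prop:structureconstQs} and Lemma \ref{lem:propertiesQpx}) and your uniqueness argument are both fine, but they cannot be promoted to the von Neumann level.

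The paper avoids this by never passing through a homomorphism out of $\hat{M}$: it \emph{defines} $W_{p,x}^\ast$ by the explicit matrix formula \eqref{eq:principalseriesWpx} in terms of the continuous functions $C(\,\cdot\,;m,\ep,\et;p_1,p_2,n)$, and proves it is an isometry by inserting an approximate Dirac sequence $\de_n(\cdot-x)$ into the unitarity relation for $W_\Up=(\Id\otimes\Up)W(\Id\otimes\Up^\ast)$ and letting $n\to\infty$; the point evaluation is performed on the \emph{orthogonality relations} for $C$, where it is legitimate because $C$ is continuous in the spectral parameter, rather than on operators in $\hat{M}$. Membership in $M\otimes B(\cL_{p,x})$ is then checked by a commutant computation, the slice identity $(\om_{f,g}\otimes\Id)(W_{p,x}^\ast)=Q_{p,x}(p_1,p_2,n)$ is read off from the formula, the corepresentation identity is deduced from the structure constants of the $Q_{p,x}$ (Lemma \ref{lem:propertiesQpx}(ii)) exactly as in the proof of Proposition \ref{prop:structureconstQs}, and unitarity of $W_{p,x}$ (as opposed to co-isometry) is proved separately in Lemma \ref{lem:Wpxunitary}. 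If you want to keep the spirit of your argument, you must replace ``apply $\pi$ to $W$'' by this direct construction at the level of matrix coefficients.
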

The proof of this proposition follows from Lemmas \ref{lem:explicitWpx} - \ref{lem:Wpxunitary}.
\begin{lemma} \label{lem:explicitWpx}
Assume $p_1,t_1 \in I_q$, $m_1,m \in \Z$ and $\ep,\et \in \{-,+\}$. There exists a unique co-isometry $W_{p,x} \in M \otimes B(\cL_{p,x})$ such that
\begin{equation} \label{eq:principalseriesWpx}
\begin{split}
W_{p,x}^*\big(f_{m_1p_1t_1}& \otimes e_m^{\ep,\et}(p,x)\big) =\\
  &\sum_{p_2 \in I_q}  C(\sgn(p_1p_2)\ep\et x;m,\ep,\et;p_1,p_2,\chi(p_2/p_1p)-2m)\\
 & \qquad\times\, f_{\chi(p_2/p_1p)+m_1-2m,p_2,t_1} \otimes e_{\chi(p_2/p_1p)-m}^{\sgn(p_1)\ep, \sgn(p_2)\et}(p,x).
\end{split}
\end{equation}
\end{lemma}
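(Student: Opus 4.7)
The plan is to take \eqref{eq:principalseriesWpx} as a definition of $W_{p,x}^*$ on the orthonormal basis of $\cK \otimes \cL_{p,x}$ and verify, in turn, well-definedness, the co-isometry property, membership in $M \otimes B(\cL_{p,x})$, and uniqueness.

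First, the shape of \eqref{eq:principalseriesWpx} is forced by the prescription $(\om_{f,g} \otimes \Id)(W_{p,x}^*) = Q_{p,x}(p_1,p_2,n)$ with $f = f_{0,p_1,1}$, $g = f_{n,p_2,1}$: matching matrix coefficients against the explicit action of Lemma \ref{lem:propertiesQpx}(i) compels the shift $m_1 \mapsto m_1 + \chi(p_2/p_1p) - 2m$, the change $p_1 \mapsto p_2$ in the middle $\cK$-slot, preservation of $t_1$, the corresponding shift in $\cL_{p,x}$, and the coupling $n = \chi(p_2/p_1p) - 2m$. This derivation also settles the uniqueness of $W_{p,x}$.

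Since distinct $p_2 \in I_q$ appear as distinct middle $\cK$-indices on the right-hand side of \eqref{eq:principalseriesWpx}, the summands are pairwise orthogonal in $\cK \otimes \cL_{p,x}$, so that
\[
\Big\|W_{p,x}^*\bigl(f_{m_1,p_1,t_1} \otimes e_m^{\ep,\et}(p,x)\bigr)\Big\|^2 = \sum_{p_2 \in I_q} \bigl|C\bigl(\sgn(p_1p_2)\ep\et x;m,\ep,\et;p_1,p_2,\chi(p_2/p_1p)-2m\bigr)\bigr|^2,
\]
and both convergence of the formal sum and the co-isometry identity $W_{p,x} W_{p,x}^* = \Id$ reduce to the Plancherel-type identity that this quantity equals $1$ for every admissible choice of the remaining parameters.

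Establishing this Plancherel identity is the main obstacle. The plan is to insert the explicit expression \eqref{eq:CintermsofH} for $C$, use $|\nu_n^\tau(\la;p)| = 1$ on $\T$, and then recognise the sum (after cancellation of the $G$-ratios against unimodular phases) as $\sum_{p_2 \in I_q} |s(p_1,p_2;\la,m)|^2$, with $\la = e^{i\te}$, $x = \cos\te$, and the integer $m$ taken from Section \ref{ssec:formulas2phi1}. The value $1$ then follows from the $\la \in \T$ specialisation of the biorthogonality relations of Theorem \ref{thm:biorthogonality}, as remarked in Remark \ref{rmk:thmbiorthogonality}. Morally this is the principal-series shadow of the identity $WW^* = \Id$ for the multiplicative unitary $W$, transported along the algebraic isomorphism $\hat M \to \hat M_{p,x}$ recorded in Lemma \ref{lem:propertiesQpx}(ii)--(iii); one could equally derive it algebraically by combining Lemma \ref{lem:identitiesC}(i)--(ii) with the orthogonality of the Clebsch--Gordan kernels $a_p(\cdot,\cdot)$ from Propositions \ref{prop:orthorelapxy}--\ref{prop:dualorthorelapxy}, but the analytic route through Theorem \ref{thm:biorthogonality} is the most direct.

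Finally, membership in $M \otimes B(\cL_{p,x})$ is visible from \eqref{eq:principalseriesWpx}: the $t_1$-index is untouched, so $W_{p,x}^*$ acts as $\Id$ on the third $L^2(I_q)$-factor of $\cK$, and the integer shift $m_1 \mapsto m_1 + \chi(p_2/p_1p) - 2m$ corresponds to multiplication by a character of $\T$, which is compatible with $M = L^\infty(\T) \otimes B(L^2(I_q))$ on the remaining two slots of $\cK$.
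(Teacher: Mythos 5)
There is a genuine gap, in two places. First, your reduction of the co-isometry property to the single identity $\sum_{p_2}|C(\cdot)|^2=1$ is not enough: $W_{p,x}^*$ is an isometry only if it preserves \emph{all} inner products of basis vectors, and distinct inputs $f_{m_1,p_1,t_1}\otimes e_m^{\ep,\et}(p,x)$ and $f_{m_1',p_1',t_1}\otimes e_{m'}^{\ep',\et}(p,x)$ can have overlapping images under \eqref{eq:principalseriesWpx} whenever $\chi(p_1)+m=\chi(p_1')+m'$, $m_1-m=m_1'-m'$ and $\sgn(p_1)\ep=\sgn(p_1')\ep'$ (e.g.\ $p_1=q,\,m=0$ versus $p_1'=q^2,\,m'=-1$). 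So one needs the full off-diagonal orthogonality relation $\sum_{p_2}C(\cdot;p_1,p_2,\cdot)C(\cdot;p_1',p_2,\cdot)=\de_{p_1p_1'}$, not just its diagonal.

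Second, and more seriously, your source for that Plancherel identity is circular. Theorem \ref{thm:biorthogonality} is proved in the paper via Lemma \ref{lem:biorthogonalityS}, whose proof for $\la\in\T$ begins precisely ``from unitarity of $W_{p,x}$'' and writes out $W_{p,x}W_{p,x}^*$ --- i.e.\ it presupposes the co-isometry statement you are trying to establish. The paper's actual proof of Lemma \ref{lem:explicitWpx} obtains the orthogonality relation for $C$ from an independent analytic input: it conjugates the \emph{genuine} multiplicative unitary $W$ (already known to be unitary from Theorem \ref{thm:vNalgqgroupforSU11}) by the spectral transform $\Up$, computes the matrix elements of $W_\Up^*$ via \eqref{eq:generalcasereducestoQppn} and Lemma \ref{lem:actionTC(x)}, and then localizes the identity $\langle W_\Up^*(f\otimes g_1),W_\Up^*(f'\otimes g_2)\rangle=\langle f\otimes g_1,f'\otimes g_2\rangle$ at the spectral point $x$ by choosing $g_i=\sqrt{\de_n(\cdot)}$ for an approximate $\de$-sequence and letting $n\to\infty$ (with absolute convergence supplied by Lemma \ref{lem:asymptoticsforS}). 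That localization step is the essential content of the lemma and is entirely absent from your proposal; your alternative ``algebraic route'' via Lemma \ref{lem:identitiesC} and Propositions \ref{prop:orthorelapxy}--\ref{prop:dualorthorelapxy} would still have to produce the resolution of the identity $\sum_{p_2}Q_{p,x}(p_1,p_2,\cdot)\,Q_{p,x}(p_1',p_2,\cdot)^*=\de_{p_1p_1'}\cdot(\text{projection})$, which is not a formal consequence of the structure constants. Your remaining points (uniqueness from totality of the basis, and membership in $M\otimes B(\cL_{p,x})$ via the commutant of $L^\infty(\T)\otimes\C\,\Id\otimes B(L^2(I_q))\otimes\C\,\Id$) do match the paper.
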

\begin{proof}
We set $W_\Up = (\Id \otimes \Up) W (\Id \otimes \Up^*)$. For $i=1,2$, assume $m_i,n_i \in \Z$, $p_i,t_i \in I_q$, $\ep_i,\et_i, \in \{-,+\}$, $p \in q^\Z$, $g_i \in L^2(I(p,m_i,\ep_i,\et_i))$. Recall from \eqref{eq:generalcasereducestoQppn} that $\bigl(\om_{f_{n_1p_1t_1}, f_{n_2p_2t_2}}\ot\Id\bigr)(W^*) = \de_{t_1t_2}Q(p_1,p_2,n_2-n_1)$, then by Lemma \ref{lem:actionTC(x)}
and \eqref{eq:defTp1p2n} we have
\[
\begin{split}
\big\langle& W_\Up^* (f_{n_1p_1t_1} \otimes g_1),  f_{n_2p_2t_2} \otimes g_2 \big\rangle =
\big\langle \bigl(\om_{f_{n_1p_1t_1}, f_{n_2p_2t_2}}\ot\Id\bigr)(W_{\Up}^*)g_1,g_2\big\rangle \\
&= \de_{t_1t_2} \de_{\sgn(p_1)\ep_1,\ep_2} \de_{\sgn(p_2)\et_1,\et_2} \de_{m_1+n_2-n_1,m_2}\big\langle C(\,\cdot\,;m_1,\ep_1,\et_1;p_1,p_2,n_2-n_1)g_1\big(\sgn(p_1p_2\,\cdot\,\big), g_2 \big\rangle,
\end{split}
\]
if $p=q^{n_1-n_2-2m_1}|\frac{p_2}{p_1}|$, and the expression is equal to zero otherwise. Now it follows that
\[
W_\Up^*(f_{n_1p_1t_1} \otimes g_1) = \sum_{p_2\in I_q} f_{k_1,p_2,t_1} \otimes C(\,\cdot\,;m_1,\ep_1,\et_1;p_1,p_2,k_1-n_1)g_1(\sgn(p_1p_2)\,\cdot\,),
\]
where $k_i=k_i(p_2)\in \Z$, $i=1,2$, is determined by $p=q^{n_i-k_i-2m_i}|\frac{p_2}{p_1}|$.

Let $(\de_n)_{n \in \N}$ be a sequence of nonnegative real-valued continuous functions on $[-1,1]$ that approximate the Dirac $\de$-distribution $\de(\cdot-x)$. In particular, the functions $\de_n$ have the following property:
\[
\lim_{n \to \infty} \int_{-1}^1 \delta_n(u) f(u) du =f(x),
\]
for a continuous function $f$ on $[-1,1]$. We write $\de_n(\,\cdot\,;p,m_i,\ep_i,\et_i)$ for the function $\de_n(\ep_i\et_i\,\cdot\,)$ considered as a function in $L^1(I(p,m_i,\ep_i,\et_i))$. In particular, $\de_n(x;p,m_i,\ep_i,\et_i)=0$ for $x \not\in [-1,1]$. We set $g_i=\sqrt{\de(\,\cdot\,;p,m_i,\ep_i,\et_i)}$, then by unitarity of $W_\Up$,
\[
\begin{split}
\de&_{n_1n_2} \de_{p_1p_2}\de_{t_1t_2} \de_{m_1m_2}\de_{\ep_1\ep_2}\de_{\et_1\et_2} = \big\langle W_\Up^*(f_{n_1p_1t_1}\otimes g_1), W_\Up^*(f_{n_2p_2t_2}\otimes g_2) \big\rangle \\
= &\,\de_{t_1t_2} \de_{n_1-2m_1-\chi(p_1),n_2-2m_2-\chi(p_2)} \de_{m_1-n_1,m_2-n_2} \de_{\sgn(p_1)\ep_1,\sgn(p_2)\ep_2} \de_{\et_1 \et_2} \\
& \times \sum_{p_3\in I_q} \int_{-1}^1  g_1(\sgn(p_1p_3)x)g_2(\sgn(p_2p_3)x)\\
& \qquad \times C(x;m_1,\ep_1,\et_1;p_1,p_3,k_1-n_1)C(x;m_2,\ep_2,\et_2;p_2,p_3,k_2-n_2) dx \\
=&\,\de_{t_1t_2} \de_{n_1-2m_1-\chi(p_1),n_2-2m_2-\chi(p_2)} \de_{m_1-n_1,m_2-n_2} \de_{\sgn(p_1)\ep_1,\sgn(p_2)\ep_2} \de_{\et_1 \et_2} \\
 \times& \sum_{p_3\in I_q} \int_{-1}^1\de_n(\sgn(p_1p_3)\ep_1\et_1 x) C(x;m_1,\ep_1,\et_1;p_1,p_3,k_1-n_1)C(x;m_2,\ep_1,\et_1;p_1,p_3,k_2-n_2) dx.
\end{split}
\]
Here $k_i=k_i(p_3)$. Note that $C$ is real-valued on $[-1,1]$. Since the function $C$ is continuous on $[-1,1]$, we find from letting $n \to \infty$,
\[
\begin{split}
\de_{p_1p_2}  =
\sum_{p_3\in I_q} & C(\sgn(p_3)y;m_1,\ep_1,\et_1;p_1,p_3,\chi(p_3/pp_1)-2m_1)\\
\times\,&  C(\sgn(p_3)y;m_1+\chi(p_1/p_2),\sgn(p_1p_2)\ep_1,\et_1;p_2,p_3,\chi(p_3/pp_1)-2m_1+\chi(p_2/p_1)),
\end{split}
\]
where $y=\sgn(p_1)\ep_1\et_1 x$. Absolute convergence of this sum is obtained from Lemma \ref{lem:asymptoticsforS}, see also the proof of Lemma \ref{lem:biorthogonalityS}. Now it follows that $W_{p,x}^*$ defined by \eqref{eq:principalseriesWpx} is an isometry.

Furthermore, from the explicitly formula for $W_{p,x}^*$ we see that $W_{p,x}^*$ commutes with
$M(\zeta^n) \otimes \Id_{L^2(I_q)}\otimes y \otimes \Id_{\cL_{p,x}}$ for all $n \in \Z$ and $y \in B(L^2(I_q))$. Therefore
\[
W_{p,x}^* \in \Big(L^\infty(\T)\otimes \C\, \Id_{L^2(I_q)} \otimes B(L^2(I_q)) \otimes \C\,\Id_{\cL_{p,x}}\Big)' = L^\infty(\T)\otimes B(L^2(I_q)) \otimes \C\,\Id_{L^2(I_q)} \otimes B(\cL_{p,x}),
\]
so we have indeed $W_{p,x} \in M \otimes B(\cL_{p,x})$, by Remark
\ref{rmk:identifyMinGNSspacevp} and the observation recalled
after Definition \ref{def:vNalgMinBH}.
\end{proof}

\begin{lemma} \label{lem:omWpx*=Qpx}
Let $m_1,m_2 \in \Z$, $p_1,p_2,t_1,t_2 \in I_q$, then
\[
(\om_{f_{m_1p_1t_1},f_{m_2p_2t_2}} \otimes \Id)(W_{p,x}^*) = \de_{t_1t_2} Q_{p,x}(p_1,p_2,m_2-m_1).
\]
\end{lemma}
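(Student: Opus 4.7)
The strategy is a direct computation of matrix coefficients, comparing the explicit formula for $W_{p,x}^*$ from Lemma~\ref{lem:explicitWpx} with the explicit action of $Q_{p,x}(p_1,p_2,n)$ on basis vectors from Lemma~\ref{lem:propertiesQpx}(i). Since both sides of the claimed identity are bounded operators on $\cL_{p,x}$, it suffices to verify their matrix coefficients agree on the orthonormal basis $\{e_m^{\ep,\et}(p,x) \mid m\in\Z,\ \ep,\et\in\{-,+\}\}$.

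First I would fix a basis vector $e_m^{\ep,\et}(p,x) \in \cL_{p,x}$ and apply $W_{p,x}^\ast$ to $f_{m_1,p_1,t_1}\ot e_m^{\ep,\et}(p,x)$ using formula \eqref{eq:principalseriesWpx}. Pairing the result with $f_{m_2,p_2,t_2}\ot w$ for arbitrary $w\in\cL_{p,x}$, orthonormality of the basis $\{f_{n,p,t}\}$ of $\cK$ kills every term in the sum over $p_2'$ except possibly one. The surviving Kronecker deltas in the $\cK$-factor are
\[
\de_{t_1,t_2},\qquad \de_{p_2',p_2},\qquad \de_{\chi(p_2/p_1 p)+m_1-2m,\, m_2}.
\]
Setting $n:=m_2-m_1$, the last delta becomes the condition $q^{2m}=q^{-n}|p_2/(p_1p)|$, which is precisely the non-vanishing condition for $Q_{p,x}(p_1,p_2,n)$ acting on $e_m^{\ep,\et}(p,x)$ in Lemma~\ref{lem:propertiesQpx}(i).

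Under this condition, the matrix coefficient reduces to
\[
\de_{t_1,t_2}\, C\bigl(\sgn(p_1p_2)\ep\et\, x;\,m,\ep,\et;\,p_1,p_2,n\bigr)\,
\bigl\langle e_{m+n}^{\sgn(p_1)\ep,\,\sgn(p_2)\et}(p,x),\, w\bigr\rangle,
\]
which is exactly $\de_{t_1,t_2}\langle Q_{p,x}(p_1,p_2,n)\, e_m^{\ep,\et}(p,x),\, w\rangle$ by Lemma~\ref{lem:propertiesQpx}(i). Conversely, if either $t_1\ne t_2$ or $q^{2m}\ne q^{-n}|p_2/(p_1p)|$, both sides vanish. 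Since $w\in\cL_{p,x}$ was arbitrary, the operators agree on every basis vector, hence as bounded operators on $\cL_{p,x}$.

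No serious obstacle is expected here: the hard work has already been done in constructing $W_{p,x}^\ast$ in Lemma~\ref{lem:explicitWpx} (where the coefficients were precisely engineered from the function $C$) and in establishing the explicit action of $Q_{p,x}(p_1,p_2,n)$ in Lemma~\ref{lem:propertiesQpx}(i). The only point requiring a little care is bookkeeping the index-shift $n=m_2-m_1$ and verifying that the constraint imposed by the Kronecker delta in the $\cK$-slot matches the support condition $q^{2m}=q^{-n}|p_2/(p_1p)|$ built into $Q_{p,x}(p_1,p_2,n)$ via the spectral projection $P(p_1,p_2,n)$.
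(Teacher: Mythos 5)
Your proposal is correct and follows essentially the same route as the paper: apply the explicit formula for $W_{p,x}^*$ from Lemma \ref{lem:explicitWpx} to $f_{m_1,p_1,t_1}\ot e_m^{\ep,\et}(p,x)$, let orthonormality collapse the sum to the single term carrying $\de_{t_1t_2}\de_{\chi(p_2/p_1p)+m_1-2m,m_2}$, and match the surviving coefficient $C(\sgn(p_1p_2)\ep\et x;m,\ep,\et;p_1,p_2,m_2-m_1)$ against Lemma \ref{lem:propertiesQpx}(i). The index bookkeeping you flag (the delta condition becoming $q^{2m}=q^{-n}|p_2/(p_1p)|$ with $n=m_2-m_1$) is exactly the comparison the paper makes.
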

\begin{proof}
Let $m \in \Z$ and $\ep,\et \in \{-,+\}$. From Lemma \ref{lem:explicitWpx} we find
\[
\begin{split}
&(\om_{f_{m_1p_1t_1},f_{m_2p_2t_2}} \otimes \Id)(W_{p,x}^*) e_m^{\ep,\et}(p,x)  = \\
&\quad \de_{t_1t_2} \de_{\chi(p_2/p_1p)+m_1-2m,m_2} C(\sgn(p_1p_2)\ep\et x;m,\ep,\et;p_1,p_2,m_2-m_1)\, e_{m+m_2-m_1}^{\sgn(p_1)\ep, \sgn(p_2)\et}(p,x).
\end{split}
\]
Compare this with Lemma \ref{lem:propertiesQpx}(i), then the result follows.
\end{proof}

\begin{lemma}
$W_{p,x}$ is a corepresentation of $(M,\De)$, i.e.,
\[
(\De \otimes \Id) (W_{p,x}) = (W_{p,x})_{13} (W_{p,x})_{23}.
\]
\end{lemma}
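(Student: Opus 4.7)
My plan is to verify $(\De \ot \Id)(W_{p,x}) = (W_{p,x})_{13}(W_{p,x})_{23}$ by slicing both sides against elementary tensors $\om_1 \ot \om_2 \ot \Id$ on the first three tensor factors. Since tensor products of matrix functionals $\om_{f_{m,p,t},\, f_{m',p',t'}}$ span a $\si$-weakly dense subspace of $(M \ot M)_*$, after taking adjoints it suffices to verify, for all such $\om_1, \om_2$,
\[
(\om_1 \star \om_2 \ot \Id)(W_{p,x}^*) \, = \, (\om_2 \ot \Id)(W_{p,x}^*) \cdot (\om_1 \ot \Id)(W_{p,x}^*),
\]
where $\om_1 \star \om_2 = (\om_1 \ot \om_2) \circ \De$.

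For the right-hand side I would apply Lemma~\ref{lem:omWpx*=Qpx} to each factor to obtain $(\om_i \ot \Id)(W_{p,x}^*) = \de_{t_i t_i'} Q_{p,x}(p_i, p_i', m_i' - m_i)$, and then use the structure formula of Lemma~\ref{lem:propertiesQpx}(ii) to expand the product as a single sum of $Q_{p,x}$-operators with coefficients of the form $a_{x_1}(\cdot,\cdot)\,a_{x_2}(\cdot,\cdot)$. For the left-hand side I would use $\De(x) = W^*(1 \ot x)W$ to write $(\om_1 \star \om_2)(x) = \langle(1 \ot x)W(f_1 \ot f_2),\, W(g_1 \ot g_2)\rangle$ with $f_i = f_{m_i,p_i,t_i}$, $g_i = f_{m_i',p_i',t_i'}$, and expand $W(f_1 \ot f_2)$ and $W(g_1 \ot g_2)$ via~\eqref{eq:Wactionexplicitinapxy}. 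The orthonormality of the basis collapses the resulting quadruple sum to a $\si$-weakly convergent double sum that expresses $\om_1 \star \om_2$ as a combination of matrix functionals weighted by products of two $a_p$-functions. Slicing $W_{p,x}^*$ against this sum and applying Lemma~\ref{lem:omWpx*=Qpx} term by term produces the left-hand side as a sum of $Q_{p,x}$-operators with the same $a_p \cdot a_p$ coefficients.

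The two resulting expressions are the verbatim analogue, with $Q$ replaced by $Q_{p,x}$, of the identity that arises in the proof of Proposition~\ref{prop:structureconstQs}: there the pentagonal equation $W_{12}W_{13}W_{23}=W_{23}W_{12}$ (via~\eqref{eq:productofQs1}) together with the orthogonality relations of Proposition~\ref{prop:orthorelapxy} produce the structure constants of $Q$. Because $Q_{p,x}$ satisfies precisely these structure constants by Lemma~\ref{lem:propertiesQpx}(ii), and because the $a_p$-coefficients on either side come from the same expansion of $W$, both sides simplify identically and the sliced identity holds. The main obstacle is the bookkeeping of indices, signs and Kronecker deltas through~\eqref{eq:Wactionexplicitinapxy}; once that is carried out the calculation is a replay of the proof of Proposition~\ref{prop:structureconstQs}, with Lemma~\ref{lem:propertiesQpx} providing the crucial input that $Q$ and $Q_{p,x}$ satisfy identical multiplicative and adjoint relations.
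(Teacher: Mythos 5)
Your proposal is correct and follows essentially the same route as the paper: the paper also reduces the corepresentation identity to the sliced statement, identifies the slices of $W_{p,x}^*$ with the operators $Q_{p,x}(p_1,p_2,n)$ via Lemma \ref{lem:omWpx*=Qpx}, invokes the structure formula of Lemma \ref{lem:propertiesQpx}(ii), and then recognizes the resulting sum as the slice of $W_{12}^*(W_{p,x}^*)_{23}W_{12}$ by replaying the computation from the proof of Proposition \ref{prop:structureconstQs}. Your explicit expansion of $\om_1\star\om_2$ through \eqref{eq:Wactionexplicitinapxy} is just the unpacked form of the paper's functional $\om_{W(g_1\otimes f_1),W(g_2\otimes f_2)}$, so the two arguments coincide.
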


\begin{proof}
We use the structure formula for the $Q_{p,x}$ operators from Lemma \ref{lem:propertiesQpx}.
For $i=1,2$ let $m_i,n_i \in \Z$ and $p_i,r_i,s_i,t_i \in I_q$. Define for $i=1,2$ the elements $f_i,g_i \in \cK$ by $f_i=f_{n_i,p_i,t_i}$ and $g_i=f_{m_i,r_i,s_i}$. By Lemmas \ref{lem:propertiesQpx} and \ref{lem:omWpx*=Qpx} we have
\[
\begin{split}
\big((\om_{f_1,f_2}& \otimes \Id)(W_{p,x}^*)\big)\big((\om_{g_1,g_2} \otimes \Id)(W_{p,x}^*)\big)\\
 &= \de_{t_1t_2}\de_{s_1s_2} Q_{p,x}(p_1,p_2,n)  Q_{p,x}(r_1,r_2,m) \\
&= \de_{t_1t_2}\de_{s_1s_2} \de_{\chi(p_2/p_1),m} \de_{\chi(r_1/r_2),n}\sum_{\substack{x_1,x_2\in I_q\\ \sgn(x_1)=\sgn(p_1r_1)\\ \sgn(x_2)=\sgn(p_2r_2)}} a_{x_1}(r_1,p_1) a_{x_2}(r_2,p_2) Q_{p,x}(x_1,x_2,n+m),
\end{split}
\]
where $n=n_2-n_1$ and $m=m_2-m_1$. Similar as in the proof of Proposition \ref{prop:structureconstQs}, see \S\ref{ssec:commutantofdualvNalg}, it now follows that
\[
\big((\om_{f_1,f_2} \otimes \Id)(W_{p,x}^*)\big)\big((\om_{g_1,g_2} \otimes \Id)(W_{p,x}^*)\big) = \big(\om_{W(g_1\otimes f_1),W(g_2\otimes f_2)}\otimes \Id\big)(1 \otimes W_{p,x}^*),
\]
where $W \in B(\cK \otimes \cK)$ denotes the multiplicative unitary. We rewrite the right hand side as
\[
(\om_{g_1,g_2} \otimes \om_{f_1,f_2} \otimes \Id)\big((W^* \otimes 1)(1 \otimes W_{p,x}^*)(W\otimes 1)\big),
\]
then we conclude that $W^*_{12} (W_{p,x}^*)_{23} W_{12} = (W_{p,x}^*)_{23} (W_{p,x}^*)_{13}$. Using $\De(y) = W^*(1 \otimes y) W$ for $y \in M$, it follows that $(\De \otimes \Id)(W_{p,x}) = (W_{p,x})_{13} (W_{p,x})_{23}$.
\end{proof}

\begin{lemma} \label{lem:Wpxunitary}
$W_{p,x}$ is unitary.
\end{lemma}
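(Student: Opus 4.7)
The plan is to establish $W_{p,x}^{\ast}W_{p,x}=1$; together with $W_{p,x}W_{p,x}^{\ast}=1$ from Lemma \ref{lem:explicitWpx}, this will give unitarity. My approach is to realize $W_{p,x}$ as $(\Id_{M}\ot\pi)(W)$ for a suitable normal $\ast$-homomorphism $\pi\colon\hat{M}\to B(\cL_{p,x})$. Since $W\in M\ot\hat{M}$ is unitary and $\pi$ is a $\ast$-homomorphism, unitarity of $W_{p,x}$ then follows immediately.

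The key observation for constructing $\pi$ is that Lemma \ref{lem:propertiesQpx}(ii)--(iii) exhibits exactly the same product formula and adjoint rule for the operators $Q_{p,x}(p_{1},p_{2},n)$ on $\cL_{p,x}$ as hold in $\hat{M}$ for the operators $Q(p_{1},p_{2},n)$ (Proposition \ref{prop:structureconstQs} and Lemma \ref{lem:QadjointisQ}). Hence the rule $Q(p_{1},p_{2},n)\mapsto Q_{p,x}(p_{1},p_{2},n)$ defines a $\ast$-algebra homomorphism $\pi_{0}$ on the $\ast$-subalgebra $A=\Span\{Q(p_{1},p_{2},n)\}\subset\hat{M}$, which is $\si$-strong-$\ast$ dense in $\hat{M}$ by Lemma \ref{lem:QppnareallinhatM}. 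Granted a normal contractive extension of $\pi_{0}$ to all of $\hat{M}$, the identification $W_{p,x}=(\Id_{M}\ot\pi)(W)$ follows by matching matrix coefficients: \eqref{eq:generalcasereducestoQppn} gives $(\om_{f_{0,p_{1},1},f_{n,p_{2},1}}\ot\Id)(W^{\ast})=Q(p_{1},p_{2},n)$, so
\[
(\om_{f_{0,p_{1},1},f_{n,p_{2},1}}\ot\Id)\bigl((\Id\ot\pi)(W^{\ast})\bigr)=\pi(Q(p_{1},p_{2},n))=Q_{p,x}(p_{1},p_{2},n),
\]
which equals the corresponding matrix coefficient of $W_{p,x}^{\ast}$ by Lemma \ref{lem:omWpx*=Qpx}; since such matrix coefficients uniquely determine an element of $M\ot B(\cL_{p,x})$, we obtain $W_{p,x}=(\Id_{M}\ot\pi)(W)$, which is unitary.

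The main obstacle is the normal-extension step: securing a contractive bound $\|\pi_{0}(T)\|\le\|T\|_{\hat{M}}$ for $T\in A$, without circular appeal to Theorem \ref{thm:biorthogonality} whose proof itself uses the unitarity we wish to establish. My expected resolution exploits the co-isometry property $\|W_{p,x}^{\ast}\|=1$: for $T=(\om\ot\Id)(W^{\ast})\in A$ one has $\|\pi_{0}(T)\|=\|(\om\ot\Id)(W_{p,x}^{\ast})\|\le\|\om\|$, and combining this with the norm-controlled approximation of Lemma \ref{lem:QppnareallinhatM}---an arbitrary $y\in\hat{M}$ admits strong-$\ast$ approximation by $T_{i}=(\om_{i}\ot\Id)(W^{\ast})\in A$ with $\|T_{i}\|\le\|y\|$---aims to yield $\|\pi_{0}(T_{i})\|$ uniformly controlled in terms of $\|y\|$, permitting the extension via Kaplansky density.
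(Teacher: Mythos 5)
Your strategy is genuinely different from the paper's, but as it stands it has a gap at exactly the point you flag as the main obstacle, and the resolution you sketch does not close it. The estimate $\|\pi_0(T)\|=\|(\om\ot\Id)(W_{p,x}^{*})\|\le\|\om\|$ bounds $\pi_0(T)$ by the norm of the \emph{functional}, not by $\|T\|=\|(\om\ot\Id)(W^{*})\|$, and the slice map $\om\mapsto(\om\ot\Id)(W^{*})$ is far from isometric. The Kaplansky-type net of Lemma \ref{lem:QppnareallinhatM} controls $\|T_i\|\le\|y\|$ but says nothing about $\|\om_i\|$, which may blow up; so no uniform bound on $\|\pi_0(T_i)\|$ results. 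Moreover, even a contractive $\ast$-homomorphism on a $\si$-strong-$\ast$ dense $\ast$-subalgebra need not extend to a \emph{normal} $\ast$-homomorphism of the von Neumann algebra (compare evaluation at a point on $C[0,1]\subset L^\infty[0,1]$): what you actually need is $\si$-weak continuity of $\pi_0$, which for the reduced dual $\hat M$ is equivalent to $W_{p,x}$ being quasi-contained in the regular corepresentation $W$. That containment is Proposition \ref{prop:introdecompWprincipalseries}, and it is obtained in the paper \emph{after} unitarity of $W_{p,x}$, via an intertwiner built from the very orthogonality relations whose proof uses unitarity — so this route is in real danger of circularity. A smaller point: well-definedness of $\pi_0$ on $\Span\{Q(p_1,p_2,n)\}$ requires the linear independence of the $Q(p_1,p_2,n)$, which you use tacitly.

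For contrast, the paper avoids all of this by proving directly that $W_{p,x}$ is an isometry. Using the adjoint symmetry $Q_{p,x}(p_1,p_2,n)^{*}=(-q)^{n}\sgn(p_1)^{\chi(p_1)}\sgn(p_2)^{\chi(p_2)}Q_{p,x}(p_1,p_2,-n)$ of Lemma \ref{lem:propertiesQpx}(iii) together with the parallel relation \eqref{eq:TadjointisT} for $W_\Up$, one sees that the matrix coefficients of $W_{p,x}$ are obtained from those of $W_{p,x}^{*}$ by exactly the same substitution that produces the matrix coefficients of $W_\Up$ from those of $W_\Up^{*}$. Running the Dirac-delta approximation argument of Lemma \ref{lem:explicitWpx} on the identity $W_\Up W_\Up^{*}=1$ (rather than $W_\Up^{*}W_\Up=1$) then shows $W_{p,x}^{*}W_{p,x}=1$; combined with the co-isometry property already established, this gives unitarity with no extension problem to solve. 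If you want to salvage your approach, you would need an independent proof that $\pi_0$ is $\si$-weakly continuous, and that is essentially as hard as the direct computation.
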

\begin{proof}
For $i=1,2$ let $m_i,n_i \in \Z$, $p_i,t_i \in I_q$, $\ep_i,\et_i \in \{-,+\}$.
Using Lemma \ref{lem:explicitWpx} we find
\[
\begin{split}
W_{p,x}&(f_{m_2,p_2,t_2} \otimes e_{n_2}^{\ep_2,\et_2}(p,x)) = \\
&\sum_{p_1 \in I_q} C(\ep\et x;\chi(p_2/p_1p)-n_2,\sgn(p_1)\ep_2,\sgn(p_2)\et_2;p_1,p_2,2n_2-\chi(p_2/p_1p)) \\
& \quad\times f_{\chi(p_2/p_1p)+m_2-2n_2,p_1,t_2} \otimes e_{\chi(p_2/p_1p)-n_2}^{\sgn(p_1)\ep_2,\sgn(p_2)\et_2}(p,x).
\end{split}
\]
Let $W_\Up$ be defined as in the proof of Lemma \ref{lem:explicitWpx}, and for $i=1,2$ let $g_i \in L^2(I(p,n_i,\ep_i,\et_i))$. Using \eqref{eq:TadjointisT} it follows that
\[
\begin{split}
\big\langle f_{m_1p_1t_1} &\otimes g_1,  W_\Up( f_{m_2p_2t_2} \otimes g_2) \big\rangle = \\
 &(-q)^{m_2-m_1} \sgn(p_1)^{\chi(p_1)} \sgn(p_2)^{\chi(p_2)} \Big\langle  f_{-m_2,p_2,t_2}\otimes g_1,W_\Up^*( f_{-m_1,p_1,t_1}\otimes g_2 )\Big\rangle.
\end{split}
\]
In the same way we find from Lemmas \ref{lem:omWpx*=Qpx} and \ref{lem:propertiesQpx}(iii)
\[
\begin{split}
\Big\langle & f_{m_1,p_1,t_1}\otimes e_{n_1}^{\ep_1,\et_1}(p,x), W_{p,x}\big(f_{m_2,p_2,t_2}\otimes e_{n_2}^{\ep_2,\et_2}(p,x)\big) \Big\rangle = \\
& (-q)^{m_2-m_1} \sgn(p_1)^{\chi(p_1)} \sgn(p_2)^{\chi(p_2)} \Big\langle  f_{-m_2,p_2,t_2}\otimes e_{n_1}^{\ep_1,\et_1}(p,x),W_{p,x}^*\big( f_{-m_1,p_1,t_1}\otimes e_{n_2}^{\ep_2,\et_2}(p,x) \big)\Big\rangle.
\end{split}
\]
It is now straightforward to check that $W_{p,x}$ is obtained from $W_\Up$ in the same as $W_{p,x}^*$ from $W_\Up^*$ in the proof of Lemma \ref{lem:explicitWpx}. Then it follows that $W_{p,x}$ is an isometry, as is $W_{p,x}^*$, hence $W_{p,x}$ is unitary.
\end{proof}

It is a direct consequence of the proof of Lemma \ref{lem:explicitWpx} that the corepresentations $W_{p,x}$ occur as principal series in the left regular corepresentation $W$ of $(M,\De)$ as in Proposition \ref{prop:introdecompWprincipalseries}. Let us give the intertwiner explicitly.

First observe that we have
\[
\cK_c(p) = \bigoplus_{\ep,\et \in \{-,+\}} \cK(p,\ep,\et) \cap \cK_c,
\]
where
\[
\cK(p,\ep,\et) = \bigoplus_{m \in \Z} \cK(p,m,\ep,\et).
\]
We define
\[
\begin{split}
I_p^{\ep,\et} : \cK_c(p,\ep,\et) & \to \int_{-1}^1 \ell^2_{\ep,\et}(p,x)\, dx \\
f_m &\mapsto \int_{-1}^1 \big(\Up_{p,m}^{\ep,\et}f_m\big)(\ep\et x)\, e_m^{\ep,\et}(p,x)\, dx,
\end{split}
\]
where $f_m=f_m(p,\ep,\et) \in \cK_c(p,m,\ep,\et)$. The intertwiner $I_p:\cK(p) \to \int_{-1}^1 \cL_{p,x}\, dx$ which implements the equivalence in Proposition \ref{prop:introdecompWprincipalseries} is given by
\[
I_p=\bigoplus_{\ep,\et \in \{-,+\}} I_p^{\ep,\et}.
\]
\begin{remark}
In Section \ref{ssec:decompUq(su11)} the $\su$-representations $\pi_{\cK}(p,\ep,\et)$ on $\cK(p,\ep,\et)$ are decomposed into irreducible $\ast$-representations. Let $\pi_{\cK_c}(p,\ep,\et)$ be the $\su$-representation $\pi_{\cK}(p,\ep,\et)$ restricted to $\cK_c(p,\ep,\et)$. Using the decompositions from Section \ref{ssec:decompUq(su11)}, see Theorems \ref{thm:decompKp+-asUqmod}, \ref{thm:decompKp-+asUqmod}, \ref{thm:decompKp--asUqmod} and \ref{thm:decompKp++asUqmod}, we see that
\[
\pi_{\cK_c}(p,\ep,\et) \cong \int_{-1}^1 \pi_{b(-\ep\et x),\epsilon(p)}\,dx,
\]
where $b(y)$, $y \in [-1,1]$, is the unique number in $[0,-\frac{\pi}{2\ln q}]$ determined by $\mu(q^{2ib(y)}) = y$.  Here we regard $\ell^2_{\ep,\et}(p,x)$ as a $\su$-module as explained in Remark \ref{rem:ell2epetassumodule}. The operators $I_p^{\ep,\et}$ are the precisely the intertwiners for the above equivalence. Here we regard $\ell^2_{\ep,\et}(p,x)$ as $\su$-modules corresponding to the principal series $\pi_{b(-\ep\et x),\epsilon(p)}$ as explained in Remark \ref{rem:ell2epetassumodule}.
\end{remark}

Next we decompose the principal series corepresentations into irreducible corepresentations.
We need the following closed subspaces of $\cL_{p,x}$:
\begin{equation} \label{eq:subspacesL12}
\begin{split}
\cL_{p,x}^1 &= \overline{\mathrm{Span}}\Big\{ e_m^{++}(p,x) + i^{\chi(p)}e_m^{--}(p,x), e_m^{+-}(p,x) - i^{\chi(p)}e_m^{-+}(p,x) \mid m \in \Z \Big\}, \\
\cL_{p,x}^2 & = \overline{\mathrm{Span}}\Big\{ e_m^{++}(p,x) - i^{\chi(p)}e_m^{--}(p,x), e_m^{+-}(p,x) + i^{\chi(p)}e_m^{-+}(p,x) \mid m \in \Z \Big\}.
\end{split}
\end{equation}

\begin{lemma}
The spaces $\cL_{p,x}^j$, $j=1,2$, are orthogonal $W_{p,x}$-invariant subspaces of $\cL_{p,x}$.
\end{lemma}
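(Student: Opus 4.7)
The plan is to address orthogonality and invariance separately. Orthogonality of $\cL_{p,x}^1$ and $\cL_{p,x}^2$ is a direct calculation: mixed inner products between vectors supported on different sign blocks $\{++,--\}$ and $\{+-,-+\}$ vanish trivially because the basis $\{e_m^{\ep,\et}(p,x)\}$ is orthonormal. Within each block one uses $|i^{\chi(p)}|=1$ to compute, for example,
\[
\langle e_m^{++}(p,x)+i^{\chi(p)}e_m^{--}(p,x),\,e_{m'}^{++}(p,x)-i^{\chi(p)}e_{m'}^{--}(p,x)\rangle =\de_{m,m'}\bigl(1-\overline{i^{\chi(p)}}\,i^{\chi(p)}\bigr)=0,
\]
and analogously for the $\{+-,-+\}$ block.

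For $W_{p,x}$-invariance I would use the same characterization of invariant subspaces via the dual von Neumann algebra as in Section \ref{sec:vNalgquantumgroups}: a closed subspace $L\subseteq \cL_{p,x}$ is $W_{p,x}$-invariant if and only if it is invariant under every operator $(\om\ot\Id)(W_{p,x})$ with $\om\in M_\ast$. By Proposition \ref{prop:principalseriescorepresentation} such operators lie in the von Neumann algebra $\hat{M}_{p,x}$, which by Proposition \ref{prop:introprincipalseriescorepresentation} is isomorphic to $\hat{M}$. Under this isomorphism the generators $K$, $E$, $U_0^{+-}$, $U_0^{-+}$ of $\hat{M}$ from Theorem \ref{thm:generatorsforhatM} correspond to the operators of the same name given by \eqref{eq:principalseries}. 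Hence it is enough to check that each of these four operators sends each of the spanning vectors of $\cL_{p,x}^1$ (respectively $\cL_{p,x}^2$) back into $\cL_{p,x}^1$ (respectively $\cL_{p,x}^2$).

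The verifications from \eqref{eq:principalseries} are then mechanical. The operator $K$ is diagonal with eigenvalue $p^{1/2}q^m$ independent of $(\ep,\et)$, so it preserves every defining vector. The operator $E$ sends $e_m^{\ep,\et}(p,x)$ to a scalar multiple of $e_{m+1}^{\ep,\et}(p,x)$ whose coefficient depends on $(\ep,\et)$ only through the product $\ep\et$; thus $E$ preserves the combinations $e_m^{++}\pm i^{\chi(p)}e_m^{--}$ (both with $\ep\et=+$) and $e_m^{+-}\pm i^{\chi(p)}e_m^{-+}$ (both with $\ep\et=-$), up to the shift $m\mapsto m+1$. The partial isometries $U_0^{+-}$ and $U_0^{-+}$ permute the four sign patterns with specific phase factors; here I expect the main obstacle, a short phase bookkeeping, to appear. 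The key identity is $(-i)^{\chi(p)}(-1)^{\chi(p)}=i^{\chi(p)}$, which shows that after applying $U_0^{+-}$ or $U_0^{-+}$ to a defining vector of $\cL_{p,x}^1$ and factoring out an appropriate power of $i^{\chi(p)}$, the result is again a defining vector of $\cL_{p,x}^1$ up to a nonzero scalar, and analogously for $\cL_{p,x}^2$. The phase $i^{\chi(p)}$ chosen in \eqref{eq:subspacesL12} is precisely what makes every one of these four computations balance.
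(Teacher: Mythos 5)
Your proposal is correct and follows exactly the route the paper takes: orthogonality is read off from the definitions via the orthonormality of the $e_m^{\ep,\et}(p,x)$, and invariance is the ``straightforward exercise'' of checking the actions \eqref{eq:principalseries} of $K$, $E$, $U_0^{+-}$, $U_0^{-+}$ on the defining vectors, with the phase bookkeeping (in particular $(-i)^{\chi(p)}(-1)^{\chi(p)}=i^{\chi(p)}$) coming out as you describe. You have simply written out the details the paper leaves implicit.
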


\begin{proof}
The orthogonality of $\cL_{p,x}^1$ and $\cL_{p,x}^2$ is immediate from their definitions. Using the actions \eqref{eq:principalseries} of the generators of $\hat M$ it is a straightforward exercise to check that $\cL_{p,x}^j$, $j=1,2$, are $W_{p,x}$-invariant.
\end{proof}

For $j=1,2,$ we denote by $W_{p,x}^j$ the restriction of $W_{p,x}$ to the subspace $\cL^j_{p,x}$.

\begin{prop} \label{prop:Wpxforxneq0}
For $x\neq0$ the corepresentations $W_{p,x}^j$, $j=1,2$, are irreducible.
\end{prop}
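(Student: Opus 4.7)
The plan is to show that every nonzero closed $\hat M_{p,x}$-invariant subspace $L$ of $\cL_{p,x}^j$ coincides with $\cL_{p,x}^j$. The hypothesis $x\neq 0$ enters decisively through the operator $E^\ast E$, which will separate the $\ep\et=+$ basis vectors of $\cL_{p,x}^j$ from those with $\ep\et=-$.

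First, from \eqref{eq:principalseries} a direct computation gives
\[
(q^{-1}-q)^2\,E^\ast E\,e_m^{\ep,\et}(p,x)\,=\,q^{-2m-1}p^{-1}\bigl|1+\ep\et\,pq^{2m+1}e^{i\te}\bigr|^2\,e_m^{\ep,\et}(p,x),
\]
with $x=\cos\te$. Hence $K$ and $E^\ast E$ are simultaneously diagonal on the standard basis, and on the $K$-eigenspace for the eigenvalue $p^{\frac12}q^m$ the operator $E^\ast E$ takes exactly two values $\la_\pm(m)$ corresponding to $\ep\et=\pm$. Their difference is a nonzero scalar multiple of $x$, so $\la_+(m)\neq\la_-(m)$ for $x\neq 0$. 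By Lemma \ref{lem:KandEstarEstronglycommute} the operators $K$ and $E^\ast E$ strongly commute, and both are affiliated to $\hat M_{p,x}$, so their joint spectral projection onto the pair $(p^{\frac12}q^m,\la_\pm(m))$ belongs to $\hat M_{p,x}$. Restricted to $\cL_{p,x}^j$, this projection has rank one: it is the orthogonal projection onto the line spanned by a single basis vector of $\cL_{p,x}^j$ from \eqref{eq:subspacesL12}.

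Given a nonzero $v\in L$, I would expand $v$ in the orthonormal basis of $\cL_{p,x}^j$, pick $m$ and a sign so that the corresponding coefficient is nonzero, and apply the appropriate rank-one joint spectral projection to land in the one-dimensional subspace spanned by a single basis vector of $\cL_{p,x}^j$; call this vector $u_m$. To reach the remaining basis vectors I use the partial isometry $U_1^{++}$ from the polar decomposition of $E$ (cf.\ the paragraph preceding \eqref{eq:Unsitaupx}), which belongs to $\hat M_{p,x}$ by the affiliation of $E$. By \eqref{eq:principalseries} it shifts each $e_m^{\ep,\et}(p,x)$ with nonvanishing creation coefficient to $e_{m+1}^{\ep,\et}(p,x)$, and the same shift therefore holds on $\cL_{p,x}^j$-combinations, giving $u_m\mapsto u_{m+1}$ in $L$. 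Iterating $U_1^{++}$ and its adjoint yields $u_n\in L$ for all $n\in\Z$; applying $U_0^{+-}\in\hat M_{p,x}$ then maps each $u_n$ to a nonzero scalar multiple of the complementary basis vector of $\cL_{p,x}^j$ with the opposite value of $\ep\et$. The orthonormal basis of $\cL_{p,x}^j$ thus sits inside $L$, and $L=\cL_{p,x}^j$.

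The main technical obstacle is the boundary situation $x=\pm 1$: there the creation coefficient $|1+\ep\et\,pq^{2m+1}e^{i\te}|$ can vanish for a single value of $m$ (when $\ep\et\,pq^{2m+1}=-1$), and at that $m$ the partial isometry $U_1^{++}$ has a one-dimensional kernel rather than being a two-sided shift. I would handle this case directly by noticing that at the same $m$ the companion creation coefficient with the opposite value of $\ep\et$ is always nonzero, so by first crossing sectors with $U_0^{+-}$, shifting with $U_1^{++}$ on the other sector, and crossing back, one can bridge the degenerate level and recover every basis vector; the irreducibility argument is otherwise unchanged.
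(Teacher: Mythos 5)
Your argument is correct and is essentially the paper's proof: the paper separates the two $\ep\et$-sectors at a fixed $K$-level using the spectral projections $C_{\pm x}$ of the Casimir $\Om$, which on that level differs from your $E^\ast E$ only by the scalar $-\hf\bigl(qK^2+q^{-1}K^{-2}\bigr)$, so your joint $(K,E^\ast E)$ spectral projections are exactly the same rank-one projections, and the remaining steps ($U_0^{+-}$ together with the shift coming from the polar decomposition of $E$) coincide with the paper's. Your explicit handling of the boundary case $x=\pm1$ — where the creation coefficient $|1+\ep\et pq^{2m+1}e^{i\te}|$ can vanish at a single level when $\epsilon(p)=\hf$, so that the partial isometry of $E$ is not a two-sided shift there — addresses a point the paper's proof passes over in silence, and your bridge via $U_0^{+-}$ does repair it.
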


\begin{proof}
We prove the irreducibility of $W_{p,x}^1$ in case $x \neq 0$, for $W_{p,x}^2$ the proof is similar. Let $L$ be a nonzero closed $W_{p,x}^1$-invariant subspace of $\cL^1_{p,x}$. We choose a nonzero vector $v \in L$. For $k \in \Z$, let $P_k$ denote spectral projection of $K$ onto the eigenspace corresponding to the eigenvalue $q^kp^\hf$, i.e., $P_k$ is the orthogonal projection onto $\mathrm{Span}\{e_k^{\ep,\et}(p,x) \mid \ep,\et \in \{-,+\}\, \}$, see \eqref{eq:principalseries}. We have $v = \sum_{k \in \Z} P_k v$, and since $v \neq 0$, there exists an $m \in \Z$ such that $P_mv \neq 0$. Since $K$ is affiliated to $\hat M$, the projection $P_m$ belongs to $\hat M$, implying $P_mv \in L$. Now let $C_{\pm x}$ denote the spectral projections of the Casimir $\Om$ onto the eigenspaces corresponding to the eigenvalues $\pm x$, then $P_mv = C_xP_mv + C_{-x}P_m v$, so one of the vectors $C_{\pm x}P_mv$ is nonzero. Let us assume $C_xP_mv$ is nonzero, then it is a nonzero multiple of $e_m^{++}(x,p) + i^{\chi(p)}e_m^{--}(x,p)$, and it belongs to $L$ since $C_x \in \hat M_{p,x}$. Applying $U_0^{+-}$ shows that $e_m^{+-}(x,p) - i^{\chi(p)} e_m^{-+}(x,p) \in L$. Finally, applying the isometries in the polar decompositions of $E$ and $E^*$ repeatedly, we find that the vectors $e_k^{++}(x,p) + i^{\chi(p)}e_k^{--}(x,p)$ and $e_k^{+-}(x,p) - i^{\chi(p)} e_k^{-+}(x,p)$ belong to $L$ for any $k \in \Z$, hence $L = \cL_{p,x}^1$. If $C_xP_mv =0$, then $C_{-x}P_mv \neq 0$, and similar arguments show again that $L = \cL_{p,x}^1$.
\end{proof}

In the proof of Proposition \ref{prop:Wpxforxneq0} we used the Casimir operator $\Om$ to distinguish between the spaces $\overline{\mathrm{Span}}\{e_m^{++}(p,x), e_m^{--}(p,x) \mid m \in \Z\}$ and $\overline{\mathrm{Span}}\{e_m^{+-}(p,x), e_m^{-+}(p,x) \mid m \in \Z\}$. For $x=0$ we can no longer do this, because now the restriction of $\Om$ is the zero operator, so it is possible that there are nontrivial irreducible subspaces inside $\cL_{p,0}^1$ and $\cL_{p,0}^2$. We define the following closed subspaces of $\cL_{p,0}$:
\[
\begin{split}
\cL_{p,0}^{1,1} &= \overline{\mathrm{Span}}\Big\{ e_m^{++}(p,0) + i^{\chi(p)}e_m^{--}(p,0) +i^{\chi(p)+1} e_m^{+-}(p,0) + i (-1)^{\chi(p)+1} e_m^{-+}(p,0) \mid m \in \Z \Big\}, \\
\cL_{p,0}^{1,2} &= \overline{\mathrm{Span}}\Big\{ e_m^{++}(p,0) + i^{\chi(p)}e_m^{--}(p,0) -i^{\chi(p)+1} e_m^{+-}(p,0) + i (-1)^{\chi(p)} e_m^{-+}(p,0) \mid m \in \Z \Big\}, \\
\cL_{p,0}^{2,1} & = \overline{\mathrm{Span}}\Big\{ e_m^{++}(p,0) - i^{\chi(p)}e_m^{--}(p,0) +i^{\chi(p)+1} e_m^{+-}(p,0) + i(-1)^{\chi(p)}e_m^{-+}(p,0) \mid m \in \Z \Big\},\\
\cL_{p,0}^{2,2} & = \overline{\mathrm{Span}}\Big\{ e_m^{++}(p,0) - i^{\chi(p)}e_m^{--}(p,0) -i^{\chi(p)+1} e_m^{+-}(p,0) + i(-1)^{\chi(p)+1}e_m^{-+}(p,0) \mid m \in \Z \Big\}.
\end{split}
\]
Observe that $\cL_{p,0}^j = \cL_{p,0}^{j,1} \oplus \cL_{p,0}^{j,2}$, for $j=1,2$.

\begin{prop}\label{prop:decompprincserforx=0}
Assume $x=0$.
\begin{enumerate}[(i)]
\item For $\chi(p)$ odd, i.e., $\epsilon(p)=\hf$, the corepresentations $W_{p,0}^j$, with $j=1,2$, are irreducible.
\item For $\chi(p)$ even, i.e., $\epsilon(p)=0$, the corepresentations $W_{p,0}^{j,k}  = W_{p,0}\vert_{\cL_{p,0}^{j,k}}$, with $j,k \in \{1,2\}$, are irreducible.
\end{enumerate}
\end{prop}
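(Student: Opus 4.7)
My approach follows the strategy of Proposition~\ref{prop:Wpxforxneq0}, but since $\Om$ vanishes identically on $\cL_{p,0}$ the Casimir can no longer split the representation. The argument rests instead on two preliminary observations at $x=0$. Evaluating the coefficient of $E$ in \eqref{eq:principalseries} at $\te = \pi/2$ gives $|1 + \ep\et pq^{2m+1}i| = \sqrt{1 + p^2q^{4m+2}}$, independent of $\ep\et$, so $E$ scales all four spaces $\ell^2_{\ep,\et}(p,0)$ by the same positive scalar $C_m$. A direct computation from \eqref{eq:principalseries} further yields the commutation relation
\[
U_0^{-+}\,U_0^{+-} = (-1)^{\chi(p)}\,U_0^{+-}\,U_0^{-+}
\]
on $\cL_{p,0}$: these isometries commute for $\chi(p)$ even and anticommute for $\chi(p)$ odd, and this parity dichotomy is what separates cases (i) and (ii).

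For case (i), let $L \subseteq \cL_{p,0}^j$ be a nonzero closed $\hat M_{p,0}$-invariant subspace. Since $K$ is affiliated to $\hat M$ by Proposition~\ref{prop:KaffiliatohatM}, its spectral projection $P_m$ onto the eigenvalue $q^m p^{1/2}$ lies in $\hat M_{p,0}$, and I may assume $v = \alpha v_1^{(m)} + \beta v_2^{(m)} \in L$ is a nonzero vector in $P_m\cL_{p,0}^j$, where $v_1^{(m)}, v_2^{(m)}$ denote the two spanning vectors of this $2$-dimensional subspace displayed in \eqref{eq:subspacesL12}. A computation using \eqref{eq:principalseries} shows that $U_0^{+-}v$ and $U_0^{-+}v$ are proportional to $\alpha v_2^{(m)} - \beta v_1^{(m)}$ and $\alpha v_2^{(m)} + \beta v_1^{(m)}$ respectively (the second relation using $(-1)^{\chi(p)+1} = 1$ for $\chi(p)$ odd); their determinant in the basis $(v_1^{(m)}, v_2^{(m)})$ is $-2\alpha\beta$, so whenever $\alpha\beta \neq 0$ they already span $P_m\cL_{p,0}^j$. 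The degenerate cases $\alpha = 0$ or $\beta = 0$ are handled by applying $U_0^{+-}$ to $v$ alone. Either way $v_1^{(m)}, v_2^{(m)} \in L$, and the partial isometries $U_1^{++}$ and $(U_1^{++})^*$ (both in $\hat M$ by Proposition~\ref{prop:UlsitauinhatM}) shift $m$ and propagate these to every level, yielding $L = \cL_{p,0}^j$.

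For case (ii), the same computation gives $U_0^{-+}v = -(-1)^{m+\upsilon(p)} i^{\chi(p)}\,U_0^{+-}v$, so $U_0^{-+}$ supplies no information beyond $U_0^{+-}$. Starting from $v = \alpha v_1^{(m)} + \beta v_2^{(m)}$, the pair $\{v, U_0^{+-}v\}$ fails to span $P_m\cL_{p,0}^j$ precisely when $\alpha^2 + \beta^2 = 0$, i.e.\ when $v$ is (up to a scalar) a joint eigenvector of $U_0^{+-}$ and $U_0^{-+}$; the joint eigenvectors on this $K$-eigenspace are exactly $v_1^{(m)} \pm i v_2^{(m)}$. The identical $E$-scaling from the first observation gives $E(v_1^{(m)} \pm i v_2^{(m)}) = C_m(v_1^{(m+1)} \pm i v_2^{(m+1)})$, so $U_1^{++}$ also preserves each of these two directions. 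The two resulting $\hat M_{p,0}$-invariant closed subspaces of $\cL_{p,0}^j$ can be identified with $\cL_{p,0}^{j,1}$ and $\cL_{p,0}^{j,2}$ by matching coefficients (the labelling depends on $\chi(p) \bmod 4$), and a direct inner-product computation confirms the orthogonal decomposition $\cL_{p,0}^j = \cL_{p,0}^{j,1} \oplus \cL_{p,0}^{j,2}$. Irreducibility of each $\cL_{p,0}^{j,k}$ then follows as in case (i): a $K$-projection of any nonzero element of an invariant subspace produces a nonzero multiple of the unique spanning vector at level $m$, and $U_1^{++}, (U_1^{++})^*$ extend it to the whole subspace.

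The main obstacle will be the sign-and-phase bookkeeping required in case (ii) to match the naturally arising joint eigenvectors $v_1^{(m)} \pm i v_2^{(m)}$ with the explicit linear combinations defining $\cL_{p,0}^{j,k}$, whose coefficients involve the factors $i^{\chi(p)}$, $i^{\chi(p)+1}$, and $(-1)^{\chi(p)+1}$. A short case analysis on the residue of $\chi(p)$ modulo $4$ and the parity of $\upsilon(p)$ is needed to verify that the invariant subspaces produced by the argument really coincide with those listed in the statement.
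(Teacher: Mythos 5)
Your proof is correct and follows essentially the same route as the paper's: localize to a two-dimensional $K$-eigenspace via the spectral projections of $K$, analyze the $2\times 2$ action of $U_0^{+-}$ and $U_0^{-+}$ there, and propagate across levels with $U_1^{++}$ and its adjoint — the paper merely packages the even/odd dichotomy as $U_0^{+-}f_m^1$ being proportional to $f_m^1$ or to $f_m^2$, where $f_m^{1,2}=v_1^{(m)}\pm i^{\chi(p)+1}v_2^{(m)}$ are exactly your joint eigenvectors, whereas you phrase it via the (anti)commutation of the two isometries. The sign bookkeeping you flag as the main obstacle does check out: for $\chi(p)$ even one has $i^{\chi(p)+1}=\pm i$, so the spanning vectors of $\cL_{p,0}^{j,k}$ are precisely your $v_1^{(m)}\pm i\,v_2^{(m)}$.
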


Before proving Proposition \ref{prop:decompprincserforx=0}, we note that
Propositions \ref{prop:principalseriescorepresentation},  \ref{prop:Wpxforxneq0},   \ref{prop:decompprincserforx=0} prove Proposition \ref{prop:introprincipalseriescorepresentation}.

\begin{proof}
We prove the proposition for $j=1$. The case $j=2$ is proved in the same way.

Let $L$ be a nonzero closed $W_{p,0}^1$-invariant subspace of $\cL^1_{p,0}$. In the same way as in the proof of Proposition \ref{prop:Wpxforxneq0} it follows that the vectors
\[
f_m(c_m) = e_{m}^{++} +i^{\chi(p)}e_m^{--} + c_m e_m^{+-} - c_m i^{\chi(p)} e_m^{-+}, \qquad m \in \Z,
\]
are in $L$ for some (yet to be determined) constant $c_m$, and every vector in $L$ can be expanded in terms of the vectors $f_m(c_m)$. Here we use the shorthand notation $e_m^{\ep\et} = e_m^{\ep\et}(p,0)$, $\ep,\et \in \{-,+\}$. Applying $U_0^{-+}$ we find
\[
U_0^{-+}f_m(c_m) = c_m i^{\chi(p)} (-1)^m\Big[ e_m^{++} + i^{\chi(p)} e_m^{--} + (-1)^{\chi(p)+1} c_m^{-1} e_m^{+-} + (-i)^{\chi(p)} c_m^{-1} e_m^{-+}\Big].
\]
Since $U_0^{-+}f_m(c_m)$ must be in $L$, we see that $c_m$ satisfies $c_m = (-1)^{\chi(p)+1} c_m^{-1}$, so that
\[
c_m = \pm i^{\chi(p)+1}.
\]
Let us write $f_m^1 = f_m(i^{\chi(p)+1})$ and $f_m^2 = f_m(-i^{\chi(p)+1})$, then $\cL_{p,0}^{1,j} = \overline{\mathrm{Span}}\{ f_m^j \mid m \in \Z\}$ for $j=1,2$. Observe that $E f_m^j = d_m f_{m+1}^j$ for some constant $d_m$. Let us assume that $f^1_m \in L$. Applying $U_0^{+-}$ to $f_m^1$ gives us
\[
\begin{split}
U_0^{+-} f_m^1 &= (-1)^{\upsilon(p)} \Big[ e_m^{+-} - i^{\chi(p)} e_m^{-+} - i^{\chi(p)+1} e_m^{++} + (-1)^{\chi(p)+1}i e_m^{--}\Big]\\
& =
\begin{cases}
(-1)^{\upsilon(p)+1} i^{\chi(p)+1}f_m^1, & \chi(p)\ \text{even},\\
(-1)^{\upsilon(p)+1} i^{\chi(p)+1}f_m^2, & \chi(p)\ \text{odd},
\end{cases}
\end{split}
\]
so for $\chi(p)$ even we have $L= \cL_{p,0}^{1,1}$, and for $\chi(p)$ odd we have $L= \cL_{p,0}^{1,1} \oplus \cL_{p,0}^{1,2} = \cL^1_{p,0}$. If we assume $f_m^2 \in L$, we find in the same way that $L= \cL_{p,0}^{1,2}$ for $\chi(p)$ even.
\end{proof}

\subsection{Complementary series} \label{ssec:complementaryseries} \index{corepresentation!complementary series}
Let $p \in q^{2\Z}$, i.e., $\epsilon(p)=0$, and let $x=\pm \mu(\la)$ with $\la \in (q,1)$. Note that $x$ is \emph{not} in the spectrum of the Casimir operator in the left regular corepresentation, which is described in Section \ref{ssec:Casimiroperator}. Let $\cL_{p,x} = \bigoplus_{\ep,\et \in \{-,+\}} \ell^2_{\ep,\et}(p,x)$ with orthonormal basis $\{e_n^{\ep,\et}(p,x) \mid n \in \Z,\, \ep,\et \in \{-,+\} \}$, similar as for the principal series corepresentations. We define a unitary corepresentation $W_{p,x}\in M \otimes B(\cL_{p,x})$ by
\[
\begin{split}
W_{p,x}&(f_{m_2,p_2,t_2} \otimes e_{n_2}^{\ep_2,\et_2}(p,x)) = \\
&\sum_{p_1 \in I_q} C(\ep\et x;\chi(p_2/p_1p)-n_2,\sgn(p_1)\ep_2,\sgn(p_2)\et_2;p_1,p_2,2n_2-\chi(p_2/p_1p)) \\
& \quad\times f_{\chi(p_2/p_1p)+m_2-2n_2,p_1,t_2} \otimes e_{\chi(p_2/p_1p)-n_2}^{\sgn(p_1)\ep_2,\sgn(p_2)\et_2}(p,x).
\end{split}
\]
with the function $C$ from Proposition \ref{lem:actionTC(x)}. Initially, the function $C$, as a function of $x$, is only defined on the spectrum of $\Om$, but using the explicit expressions for $A$ and $S$ (see the definition of $C$ in Proposition \ref{lem:actionTC(x)}), we can also define $C$ for $x=\pm \mu(\la)$ with $\la \in (q,1)$. Observe that the denominator of $A(\la;p,m,\ep,\et)$, contains factors with the square root of $(-q^{1-2n}\la/p, -q^{1-2n}/p\la;q^2)_\infty$ for a certain $n\in \Z$. Assume $-\la \in (q,1)$, then this infinite product is positive for $p \in q^{2\Z}$ , but for $p\in q^{2\Z+1}$ it is not. For this reason we require that $p \in q^{2\Z}$ or $\epsilon(p)=0$, see \eqref{eq:epsilon(p)}. This corresponds nicely with the situation for the
principal unitary and complementary series representations of $SU(1,1)$ and $\su$, see 
Section \ref{ssec:decompUq(su11)}. 
Formally the above defined corepresentation corresponds to the definition of the principal series corepresentation $W_{p,x}$ from Lemma \ref{lem:explicitWpx}. In particular, the actions of the generators of $\hat M$ on the basisvectors $e_m^{\ep,\et}(p,x)$ are given by
\[
\begin{split}
K \, e_m^{\ep,\et}(p,x) &= p^\hf q^m \, e_m^{\ep,\et}(p,x),\\
(q^{-1}-q) E\, e_m^{\ep,\et}(p,x) & = q^{-m-\hf}p^{-\hf} \sqrt{1+2\ep\et x p q^{2m+1} + q^{4m+2}p^2} \, e_{m+1}^{\ep,\et}(p,x),\\
U_{0}^{+-}\, e_m^{\ep,\et}(p,x) & = \et\, (-1)^{\upsilon(p)} \, e_m^{\ep,-\et}(p,x),\\
U_{0}^{-+}\, e_m^{\ep,\et}(p,x) & = \ep \et^{\chi(p)}(-1)^m \, e_m^{-\ep,\et}(p,x).
\end{split}
\]

We call $W_{p,x}$ the complementary series corepresentation of $(M,\De)$. In order to show that this is indeed a unitary operator, we need to find orthogonality relations and dual orthogonality relations for the functions $C$ in case $x=\pm \mu(\la)$ with $\la \in (q,1)$. These relations are obtained in Corollary \ref{cor:orthogonalityC} from the orthogonality relations for the function $C$ by analytic continuation. The fact that $W_{p,x}$ is indeed a corepresentation is proved along the same lines as for the principal series corepresentations. Here we need to show that the product identity from Lemma \ref{lem:identitiesC} remains valid for $x=\pm \mu(\la)$ with $\la \in (q,1)$. This is done in Lemma \ref{lem:analyticcontinuation}.

Finally, in the same way as Proposition \ref{prop:Wpxforxneq0} it can be proved that the subcorepresentations $W_{p,x}^j = W_{p,x}|_{\cL_{p,x}^j}$, with the subspaces $\cL_{p,x}^j$, $j=1,2$, defined as in \eqref{eq:subspacesL12}, are irreducible.


\section{Identities for special functions} \label{sec:specialfunctions}

\subsection{Summation formulas from the action of $Q(p_1,p_2,n)$} \label{ssec:summationformula}
We start by proving the summation formulas in Section \ref{ssec:formulas2phi1}, which
essentially follow by the action of $Q(p_1,p_2,n)$ with respect to the spectral decomposition
of the Casimir operator.

\begin{proof}[Proof of Theorem \ref{thm:introsummationformula1}]
In Lemma \ref{lem:actionTC(x)} we computed how the operator $T(p_1,p_2,n)$ defined by \eqref{eq:defTp1p2n} acts on functions in $L^2(I(p,m,\ep,\et))$. In this computation we actually proved a summation formula involving the functions $a_p(z,w)$ and the orthonormal functions $g_z(x;p,m,\ep,\et)$, which are essentially Al-Salam--Chihara polynomials and little $q^2$-Jacobi functions. Here we write out explicitly, i.e., in terms of basic hypergeometric functions, the summation formula corresponding to the case $\ep=\et=+$; in this case both $g_z$-functions appearing in the formula are little $q$-Jacobi functions, i.e., non-terminating $_2\varphi_1$-functions. This is a rather tedious, but straightforward computation. The second
case follows similarly using $\ep=\et=-$.
\end{proof}

The product formula from Lemma \ref{lem:identitiesC} leads to the summation formula
in Theorem \ref{thm:introsumfromstructureformula} with the same structure as the formula from Theorem \ref{thm:introsummationformula1}.

\begin{proof}[Proof of Theorem \ref{thm:introsumfromstructureformula}]
We first write the formula from Lemma \ref{lem:identitiesC} in terms of the $S$-functions:
\begin{equation} \label{eq:productformulaS}
\begin{split}
\sgn(r_1)^{\hf(1-\sgn(p_1))} \sgn(r_2)^{\hf(1-\sgn(p_2))+n} S(\sgn(r_1r_2)\la;p_1,p_2,n) S(\la;r_1,r_2,m) = \\
\sum_{(x_1,x_2) \in \mathcal A} a_{x_1}(r_1,p_1) a_{x_2}(r_2,p_2) S(\la;x_1,x_2,m+n),
\end{split}
\end{equation}
where we denoted $y=-\sgn(p_1p_2r_1r_2)\mu(\la)$ and canceled common factors. Now the result follows from expressing the $S$-functions as $_2\varphi_1$-functions and the $a_x$-functions as $\Psi$-functions, see Proposition \ref{prop:S=2phi1} and Definition \ref{def:functionap}.
\end{proof}

Observe that the sum in Theorem \ref{thm:introsumfromstructureformula} is actually a single sum. If we denote $x_1=\sgn(p_1r_1)q^k$, then $x_2=\sgn(p_2r_2)q^k$, and we can write the above sum as a sum over $k$ where $k \in \N$ if $\sgn(p_1r_1)=-$ or $\sgn(p_2r_2)=-$, and $k \in \Z$ if $\sgn(p_1r_1)=\sgn(p_2r_2)=+$. Furthermore, if we set $r_1=p_1$, $r_2=p_2$ (this implies $m=-n$), and we use the second symmetry relation from Lemma \ref{lem:symmetryS} for the function $S$, the left hand side in \eqref{eq:productformulaS} contains the product $S(\la^{-1};p_1,p_2,n) S(\la;p_1,p_2,n)$, which is positive (this corresponds to the operator $Q(p_1,p_2,n)^* Q(p_1,p_2,n)$). So we find
\[
(-1)^{\chi(p_2/p_1)} \sgn(p_1)^{\chi(p_1)} \sgn(p_2)^{\chi(p_2)} \sum_{(x_1,x_2) \in \mathcal A} a_{x_1}(p_1,p_1) a_{x_2}(p_2,p_2) S(\la;x_1,x_2,0) > 0.
\]
This leads to Corollary \ref{cor:positivity}.

For the definition of the complementary series corepresentations of $(M,\De)$ in Section \ref{ssec:complementaryseries}, the following lemma is crucial for showing that it is indeed a corepresentation.

\begin{lemma} \label{lem:analyticcontinuation}
The identity from Theorem \ref{thm:introsumfromstructureformula} is also valid for $\la \in (q,1)$. Consequently, if $n+m \in 2\Z$, the product formula for the function $C$ in Lemma \ref{lem:identitiesC}(i) also holds for $y=\mu(\la)$ with $\la \in (q,1)$,
\end{lemma}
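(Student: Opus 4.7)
The plan is to establish the extension of the identity in Theorem \ref{thm:introsumfromstructureformula} by analytic continuation in $\la$, and then to deduce the extension of the product formula for $C$ from the $S$-version of the identity.

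First I would reduce to the product formula \eqref{eq:productformulaS} for the auxiliary function $S$. Indeed, the derivation of Theorem \ref{thm:introsumfromstructureformula} from Proposition \ref{prop:structureconstQs} passes through \eqref{eq:productformulaS}, and writing out the $S$-functions as $_2\vp_1$-series via Lemma \ref{lem:propertiesofS}(ii) one recovers Theorem \ref{thm:introsumfromstructureformula}. Thus it suffices to extend \eqref{eq:productformulaS} from $\la\in\T$ to $\la\in(q,1)$. By Lemma \ref{lem:propertiesofS}(ii), the function $\la\mapsto S(\la;p_1,p_2,n)$ is, apart from a meromorphic $q$-shifted-factorial prefactor, a $_2\vp_1$-series in $\la$, which is meromorphic on $\C\setminus\{0\}$. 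In particular, the left-hand side of \eqref{eq:productformulaS} extends meromorphically to $\la\in\C\setminus\{0\}$, and the prescription $p\in q^{2\Z}$ (equivalently $\epsilon(p)=0$) that is relevant to the complementary series ensures, through the explicit denominator $q$-shifted factorials, that the domain of analyticity contains both the unit circle $\T$ and the interval $(q,1)$.

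Next, I would verify that the right-hand series converges absolutely and uniformly on compact subsets of an annulus $A_r=\{\la\in\C\mid r<|\la|<r^{-1}\}$ for a suitable $r\in(q,1)$ with $A_r\supset \T\cup(q,1)$. Parametrize $\mathcal{A}$ by $|x_1|=|x_2|=q^k$, with $k$ running over $\Z$ or $\NN$ according to the signs. The $\la$-dependence of the summand sits entirely in the single $_2\vp_1$-factor of $S(\la;x_1,x_2,m+n)$ from Lemma \ref{lem:propertiesofS}(ii); the remaining $\la$-independent factors $a_{x_1}(r_1,p_1)\,a_{x_2}(r_2,p_2)$ and the explicit prefactors produce the dominating series that was used in the proof of convergence on $\T$ (recall this absolute convergence is already asserted in the statement of Theorem \ref{thm:introsumfromstructureformula}). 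Using the explicit $_2\vp_1$ from Lemma \ref{lem:propertiesofS}(ii), combined with Heine's transformation formulas to re-express any potentially large argument, one bounds the $\la$-dependent factor uniformly in $k$ by a constant depending only on a compact subset of $A_r$ avoiding the discrete polar set. Combining these bounds yields a $\la$-independent dominating series on compacta of $A_r$ minus poles, which gives uniform convergence; the limit is therefore analytic in $\la$ on that open set.

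The main obstacle is producing this uniform dominating bound, in particular tracking the $_2\vp_1$-factor of $S(\la;x_1,x_2,m+n)$ as $k\to-\infty$, since the naive series representation has an argument that grows with $|x_1|^{-2}=q^{-2k}$. This is the same difficulty that already had to be resolved to establish absolute convergence on $\T$; the observation needed is that after applying Lemma \ref{lem:propertiesofS}(ii) and a Heine transformation, this $_2\vp_1$ can be rewritten so that its argument is a bounded function of $k$ (times an explicit meromorphic prefactor in $\la$), which reduces the $k$-asymptotics to polynomial growth absorbed by the geometric decay coming from $a_{x_1}(r_1,p_1)\,a_{x_2}(r_2,p_2)$.

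Once uniform convergence on $A_r\setminus\{\text{poles}\}$ is established, both sides of \eqref{eq:productformulaS} are meromorphic on an open connected set containing $\T$, and they agree on $\T$ by Theorem \ref{thm:introsumfromstructureformula} (equivalently Lemma \ref{lem:identitiesC}(i)). The identity theorem then extends the identity to the full domain, and in particular to $\la\in(q,1)$, proving the first assertion. For the second assertion, one translates \eqref{eq:productformulaS} back into the product formula for $C$ via \eqref{eq:CintermsofH} together with \eqref{eq:defEGnu}: the hypothesis $n+m\in 2\Z$, combined with $p\in q^{2\Z}$, ensures that the ratio $G(\la;\,\cdot\,)/G(\sgn(p_1p_2)\la;\,\cdot\,)$ and the factor $\nu_n^\tau(\la;p)$ occurring in \eqref{eq:CintermsofH} extend analytically across $(q,1)$, so the $S$-identity transfers to the corresponding product identity for $C$ at $y=\mu(\la)$ with $\la\in(q,1)$.
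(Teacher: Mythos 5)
Your overall strategy is the same as the paper's: reduce to the product formula \eqref{eq:productformulaS} for the functions $S$, show the right-hand series converges locally uniformly in $\la$ so that both sides are analytic, invoke the identity theorem, and then multiply back by the $A$- (equivalently $G$-) ratio from \eqref{eq:CintermsofH} to recover the product formula for $C$. Two points of comparison are worth noting. First, the paper proves the stronger statement that \eqref{eq:productformulaS} holds on all of $\C\setminus\{0\}$; since $S(\cdot;p_1,p_2,n)$ is \emph{analytic} there (it is defined by the absolutely convergent sum \eqref{eq:S=sum1phi1 1phi1}, and the apparent poles of the ${}_2\vp_1$-representation cancel), there is no need to restrict to an annulus, to track a polar set, or to invoke $p\in q^{2\Z}$ for the first assertion — that parity condition only enters the complementary-series discussion, not this lemma.

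Second, and more importantly, your description of the key estimate is not correct as stated: for $\la$ off the unit circle the $\la$-dependent factor of the summand is \emph{not} bounded uniformly in $k$, and the growth is exponential rather than polynomial. By Lemma \ref{lem:asymptoticsforS}(ii), for $|x_1|=|x_2|=q^k$ one has $S(\la;x_1,x_2,n+m)=(\si\tau q^3)^k(C_1\la^{-k}+C_2\la^k)(1+\mathcal{O}(q^{2k}))$ as $k\to\infty$, which grows like $B^{k}$ on a compact set with $|\la^{\pm1}|<B$. What saves the argument — and this is the mechanism the paper actually uses — is not geometric decay of $a_{x_1}(r_1,p_1)a_{x_2}(r_2,p_2)$ but the Gaussian factor $q^{\chi(x_1)^2}$ coming from Lemma \ref{lem:estimate1}, which dominates any exponential growth in $k$ and yields a $\la$-independent summable majorant on every compact subset of $\C\setminus\{0\}$ (together with the $k\to-\infty$ bound from Lemma \ref{lem:asymptoticsforS}(i)). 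On the restricted annulus $q<|\la|<q^{-1}$ your weaker bounds would in fact still suffice because $q^{3k}|\la|^{-k}$ already decays there, but the justification you give (uniform boundedness in $k$, polynomial versus geometric) would not survive scrutiny and should be replaced by the explicit asymptotics of Lemmas \ref{lem:asymptoticsforS} and \ref{lem:estimate1}.
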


\begin{proof}
First we prove that the identity from Theorem \ref{thm:introsumfromstructureformula}, or equivalently \eqref{eq:productformulaS}, is also valid for $\la \in (q,1)$. Actually, we prove a stronger result: the identity \eqref{eq:productformulaS} holds for all $\la \in \C \setminus\{0\}$. Recall that, for $p_1,p_2 \in I_q$ and $n \in \Z$, the function $S(\cdot;p_1,p_2,n)$ is analytic on $\C\setminus \{0\}$. So clearly the left hand side of \eqref{eq:productformulaS} is analytic in $\la$ on $\C\setminus \{0\}$. We show that the right hand side of \eqref{eq:productformulaS} also defines an analytic function, then the result follows from analytic continuation.

Let $r_1,r_2,p_1,p_2 \in I_q$ and $n,m \in \Z$. Assume $\la \in \mathrm{K} \subset \C \setminus\{0\}$ where $\mathrm K$ is a compact set, then there exists a constant $B>0$ such that $|\la| < B$ and $|\la^{-1}| < B$. We define $g:q^\Z\to \mathbb R$ by
\[
g(x) =
\begin{cases}
q^{3\chi(x)}B^{\chi(x)}, & x \leq q,\\
q^{-(n+m)\chi(x)}, & x \geq 1.
\end{cases}
\]
By Lemma \ref{lem:asymptoticsforS} there exists a constant $C>0$ such that
\[
|S(\la;x_1,x_2,n+m)| < C g(|x_1|),
\]
for $(x_1,x_2) \in \mathcal A$. Furthermore, by Lemma \ref{lem:estimate1} there exists a constant $D'>0$ such that
\[
|a_x(r,p)| \leq D' q^{\chi(x)[\chi(r/p)-\frac32]} q^{\hf \chi(x)^2}, \qquad x,r,p \in I_q,
\]
so that
\[
|a_{x_1}(r_1,p_1)a_{\sgn(r_2p_2)|x_1|}(r_2,p_2)|  \leq D q^{\chi(x_1)[\chi(r_1r_2/p_1p_2)-3]}q^{\chi(x_1)^2},
\]
for some constant $D>0$. Because of the factor $q^{\chi(x)^2}$ the sum
\[
\sum_{x} q^{\chi(x)[\chi(r_1r_2/p_1p_2)-3]}q^{\chi(x)^2} g(x)
\]
converges absolutely. Here the sum is over $x\in q^\N$ in case $\sgn(r_1p_1)=-$ or $\sgn(r_2p_2)=-$, and over $x\in q^\Z$ in case $\sgn(r_1p_1)=\sgn(r_2p_2)=+$. It follows that the right hand side of \eqref{eq:productformulaS} converges uniformly on $\mathrm K$, hence it is analytic on $\mathrm K$.

Finally, let  $\pm \la \in (q,1)$. We multiply \eqref{eq:productformulaS} by
\[
\ep^{\hf(1-\sgn(p_1r_1))} \et^{\hf(1-\sgn(p_2r_2))}\frac{ A(-\sgn(p_1r_1p_2r_2)\la;p,k+m+n, \sgn(p_1r_1)\ep, \sgn(p_2r_2)\et) }{ A(-\la;p,k,\ep,\et) }
\]
then we obtain the desired product formula for the function $C$ as in Lemma \ref{lem:identitiesC}(i), with $y=-\sgn(p_1r_1p_2r_2)\mu(\la)$.
\end{proof}
Let us remark that with the same arguments as in the proof of Lemma \ref{lem:analyticcontinuation} it follows that the product formula for the function $C$ holds for all $\la \in (0,1)\setminus q^{\NN}$ if $n+m \in 2\Z$. Note that the points $\la \in q^{\NN}$ correspond to discrete series corepresentations, and at these points the product formula is of course also valid. In this case the functions $A$ are essentially square roots of residues of $c$-functions.

\subsection{Biorthogonality relations}
In the proof of Lemma \ref{lem:explicitWpx} we obtained orthogonality relations for the function $C$ for the case $x=\mu(\la)$ with $\la \in \T$. These relations lead to biorthogonality relations for the $S$ functions, which by analytic continuation hold for all $\la \in \C\setminus\{0\}$. We need these biorthogonality relations for $\pm \la \in (q,1)$ in order to show that the complementary series corepresentations are unitary.
\begin{lemma} \label{lem:biorthogonalityS}
Let $\la \in \C\setminus\{0\}$ and $m \in \Z$. The set
\[
\{p_2\mapsto S(\sgn(p_1)\la;p_1,p_2,\chi(p_1p_2)+m) \mid p_1 \in I_q\}
\]
is basis for $\ell^2(I_q)$ with dual basis
\[
\{p_2\mapsto S(\sgn(p_1)\overline{\la}^{-1};p_1,p_2,\chi(p_1p_2)+m) \mid p_1 \in I_q\}.
\]
Similarly, the set
\[
\{p_1\mapsto S(\sgn(p_1)\la;p_1,p_2,\chi(p_1p_2)+m) \mid p_2 \in I_q\}
\]
is a basis for $\ell^2(I_q)$ with dual basis
\[
\{p_1\mapsto S(\sgn(p_1)\overline{\la}^{-1};p_1,p_2,\chi(p_1p_2)+m) \mid p_2 \in I_q\}.
\]
\end{lemma}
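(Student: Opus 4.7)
The plan is to deduce the biorthogonality from the unitarity of the principal series corepresentation $W_{p,x}$, specifically from the isometry argument in the proof of Lemma \ref{lem:explicitWpx}, combined with analytic continuation in $\la$.

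First I would extract from the proof of Lemma \ref{lem:explicitWpx} the orthogonality relation
\[
\de_{p_1,p_2} = \sum_{p_3 \in I_q} C(\sgn(p_3)y;m_1,\ep_1,\et_1;p_1,p_3,k_1-n_1)\, C(\sgn(p_3)y;m_2,\ep_2,\et_2;p_2,p_3,k_2-n_2),
\]
valid for $\la \in \T$ with $y = \sgn(p_1)\ep_1\et_1 \mu(\la)$. Using \eqref{eq:CintermsofH}, each $C$ factors as a sign, a ratio of $A$-functions, a ratio of $E$-functions, a factor $\nu_n^\tau(\la;p)$, and the function $S(-\sgn(p_1p_3)\la;p_1,p_3,\cdot)$. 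The $A$-, $E$- and $\nu$-factors attached to the summation variable $p_3$ collapse to the Plancherel density absorbed into the normalization of $S$, while those depending only on $(p_i, m_i, \ep_i, \et_i)$ are row-indexed and come outside the sum. Combined with the identity $\overline{S(\la;p_1,p_2,n)} = S(\bar\la;p_1,p_2,n)$ (immediate from Lemma \ref{lem:propertiesofS}(ii), since all coefficients in the expansion are real and each factor $(a\la, b/\la;q^2)_\infty$ is invariant under the simultaneous operations of complex conjugation and $\la \mapsto \la^{-1}$), and using $\bar\la = \la^{-1}$ on $\T$, one obtains
\[
\sum_{p_1 \in I_q} S(\sgn(p_1)\la;p_1,p_2,\chi(p_1p_2)+m)\, S(\sgn(p_1)\la^{-1};p_1,p_2',\chi(p_1p_2')+m) = \de_{p_2,p_2'}
\]
for $\la \in \T$, where one relabels the summation index and uses Lemma \ref{lem:propertiesofS}(i) to put the identity in the stated form.

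Next I would extend by analytic continuation in $\la$. Both sides are analytic on $\C \setminus \{0\}$ (as functions of $\la$ and $\la^{-1}$); uniform absolute convergence on compact subsets of $\C \setminus \{0\}$ follows from the asymptotic bounds of Lemma \ref{lem:asymptoticsforS}, exactly as in the proof of Lemma \ref{lem:analyticcontinuation}, since $|S(\la;p_1,p_2,n)|$ decays Gaussianly in $\chi(p_1)^2$, dominating any power-type growth in $\la^{\pm 1}$. By analytic continuation the identity thus holds for all $\la \in \C\setminus\{0\}$. Replacing $\la^{-1}$ in the second factor by $\bar\la^{-1}$ and taking the complex conjugate (legitimate because $\overline{S(\sgn(p_1)\bar\la^{-1};\cdot)} = S(\sgn(p_1)\la^{-1};\cdot)$ by reality of the coefficients) yields the biorthogonality in the form of the lemma, so that $\{p_2 \mapsto S(\sgn(p_1)\bar\la^{-1};p_1,p_2,\chi(p_1p_2)+m)\}_{p_1}$ is indeed the dual of $\{p_2 \mapsto S(\sgn(p_1)\la;p_1,p_2,\chi(p_1p_2)+m)\}_{p_1}$.

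The second biorthogonality (summation over $p_2$ instead of $p_1$) is obtained either by running the same argument starting from the coisometry property of $W_\Up$ established in Lemma \ref{lem:Wpxunitary}, or equivalently by observing that if $T_\la(p_1,p_2) = S(\sgn(p_1)\la;p_1,p_2,\chi(p_1p_2)+m)$ defines a bounded operator on $\ell^2(I_q)$, then the first biorthogonality identifies $T_{\la^{-1}}$ as a one-sided inverse of $T_\la^t$; by analyticity in $\la$ this inverse must be two-sided, so summing over rows also produces the Kronecker delta. The main obstacle I anticipate is the bookkeeping in the first step: carefully disentangling the $A$-, $E$- and $\nu$-ratios inside $C$ so as to isolate a clean identity for $S$ with the correct numerical constant (namely $1$). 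Once that is in place for $\la \in \T$, the analytic continuation and the deduction of the second biorthogonality are essentially automatic.
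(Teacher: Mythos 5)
Your proposal is correct and follows essentially the same route as the paper: orthogonality of the matrix coefficients $C$ from unitarity of $W_{p,x}$ for $\la\in\T$, cancellation of the $A$-factors via $A(\la)A(\la^{-1})=|A(\la)|^2=1$ to isolate a clean identity for $S$, and analytic continuation in $\la$ using the bounds of Lemma \ref{lem:asymptoticsforS} (with the symmetry of Lemma \ref{lem:symmetryS} needed for uniform convergence of the second sum). The only cosmetic difference is your optional transpose-inverse argument for the second biorthogonality, which the paper replaces by running the same computation on $W_{p,x}^*W_{p,x}$.
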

\begin{proof}
First assume $x=\mu(\la)$ with $\la \in \T_0$. From unitarity of $W_{p,x}$ and the explicit expressions for $W_{p,x}^*$ and $W_{p,x}$, we obtain orthogonality and dual orthogonality relations for the matrix elements $C$. Indeed, from writing out $W_{p,x} W_{p,x}^*[f_{n_1p_1t_1} \otimes e_{m-\chi(p_1)}^{\sgn(p_1)\ep,\et}(p,x)]=f_{n_1p_1t_1} \otimes e_{m-\chi(p_1)}^{\sgn(p_1)\ep,\et}(p,x)$ we find, for $p_1'\in I_q$ and $y = \ep\et\,x$,
\[
\begin{split}
\de_{p_1p_1'}  =
\sum_{p_2\in I_q}  & C(\sgn(p_2)y;m-\chi(p_1),\sgn(p_1)\ep,\et;p_1,p_2,\chi(p_2p_1/p)-2m)\\
& \times C(\sgn(p_2)y;m-\chi(p_1'),\sgn(p_1')\ep,\et;p_1',p_2,\chi(p_2p_1'/p)-2m),
\end{split}
\]
and from writing out $W_{p,x}^* W_{p,x}[f_{m_2p_2t_2} \otimes e_{\chi(p_2/p)-m}^{\ep,\sgn(p_2)\et}(p,x)]=f_{m_2p_2t_2} \otimes e_{\chi(p_2/p)-m}^{\ep,\sgn(p_2)\et}(p,x)$ we find, for $p_2' \in I_q$ and $y=\ep\et x$,
\[
\begin{split}
\de_{p_2p_2'}  =
\sum_{p_1\in I_q} &C(\sgn(p_2)y;m-\chi(p_1),\sgn(p_1)\ep,\et;p_1,p_2,\chi(p_1p_2/p)-2m)\\
 & \times C(\sgn(p_2')y;m-\chi(p_1),\sgn(p_1)\ep,\et;p_1,p_2',\chi(p_1p_2'/p)-2m).
\end{split}
\]
Expressing the functions $C$ in terms of the functions $S$, see Lemma \ref{lem:actionTC(x)}, the first orthogonality relation gives, for $\la \in \T$,
\[
\begin{split}
\de_{p_1p_1'} = \sum_{p_2 \in I_q} &\frac{ A(\sgn(p_2)\la;p, \chi(p_2/p)-m, \ep,\sgn(p_2)\et) A(\sgn(p_2)\la^{-1};p, \chi(p_2/p)-m, \ep,\sgn(p_2)\et)}{ A(\sgn(p_1)\la;p, m-\chi(p_1), \sgn(p_1)\ep,\et)A(\sgn(p_1')\la^{-1};p, m-\chi(p_1'), \sgn(p_1')\ep,\et)} \\
& \times S(-\sgn(p_1)\la;p_1,p_2,\chi(p_1p_2/p)-2m) S(-\sgn(p_1')\la^{-1};p_1',p_2,\chi(p_1p_2/p)-2m).
\end{split}
\]
We use $A(\la)A(\la^{-1})=|A(\la)|^2=1$, then we obtain
\[
\begin{split}
\de_{p_1p_1'} = \sum_{p_2 \in I_q}  S(-\sgn(p_1)\la;p_1,p_2,\chi(p_1p_2/p)-2m) S(-\sgn(p_1')\la^{-1};p_1',p_2,\chi(p_1'p_2/p)-2m).
\end{split}
\]
From Lemma \ref{lem:asymptoticsforS}(iii) and (iv) it follows this sum converges uniformly in $\la$ on any compact set of $\C\setminus\{0\}$. Since the function $S$ is analytic for $\la \in \C\setminus\{0\}$, by analytic continuation the orthogonality relations are valid for all $\la \in \C\setminus\{0\}$. In the same way we find from the second orthogonality relations for the functions $C$, for $\la \in \C\setminus \{0\}$,
\[
\de_{p_2p_2'} = \sum_{p_1 \in I_q}  S(-\sgn(p_1)\la;p_1,p_2,\chi(p_1p_2/p)-2m) S(-\sgn(p_1)\la^{-1};p_1,p_2',\chi(p_1p_2'/p)-2m)
\]
In order to show uniform convergence here, we also need the third symmetry relation for $S$ from Lemma \ref{lem:symmetryS}. Now replace $-\la$ by $\la$, and $-\chi(p)-2m$ by $m$, then we have biorthogonality relations in $\ell^2(I_q)$ for the functions $S(\sgn(p_1)\la;p_1,p_2,\chi(p_1p_2)+m)$ with respect to $p_1$ and $p_2$,  which implies that they form a basis for $\ell^2(I_q)$.
\end{proof}
The biorthogonality relations in Theorem \ref{thm:biorthogonality} follow from Lemma \ref{lem:biorthogonalityS} using
\[
s(p_1,p_2;\la,m) = S(\sgn(p_1)\la;p_1,p_2,\chi(p_1p_2)+m).
\]

\begin{remark}
By the third symmetry relation for $S$ from Lemma \ref{lem:symmetryS} the two biorthogonality relations for $S$ from Lemma \ref{lem:biorthogonalityS} are actually equivalent. It is also useful to observe that for $\la \in \T$ the biorthogonality relations are orthogonality relations.
\end{remark}

To prove unitarity for the complementary series corepresentations we need to write the biorthogonality relations from Lemma \ref{lem:biorthogonalityS} in case $\pm \la \in (q,1)$, as orthogonality relations for the functions $C$.
\begin{cor} \label{cor:orthogonalityC}
For $m\in\Z$, $p \in q^{2\Z}$, $\ep,\et\in \{-,+\}$, and $y=\pm \mu(\la)$ with $\la \in (q,1)$, the following orthogonality relations hold:
\[
\begin{split}
\de_{p_1p_1'}  =
\sum_{p_2\in I_q}  & C(\sgn(p_2)y;m-\chi(p_1),\sgn(p_1)\ep,\et;p_1,p_2,\chi(p_2p_1/p)-2m)\\
& \times C(\sgn(p_2)y;m-\chi(p_1'),\sgn(p_1')\ep,\et;p_1',p_2,\chi(p_2p_1'/p)-2m),
\end{split}
\]
\[
\begin{split}
\de_{p_2p_2'}  =
\sum_{p_1\in I_q} &C(\sgn(p_2)y;m-\chi(p_1),\sgn(p_1)\ep,\et;p_1,p_2,\chi(p_1p_2/p)-2m)\\
 & \times C(\sgn(p_2')y;m-\chi(p_1),\sgn(p_1)\ep,\et;p_1,p_2',\chi(p_1p_2'/p)-2m).
\end{split}
\]
\end{cor}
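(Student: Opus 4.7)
The plan is to deduce the two orthogonality relations from the biorthogonality relations for the $S$-functions established in Lemma \ref{lem:biorthogonalityS}, which by analytic continuation hold for all $\la \in \C\setminus\{0\}$, and in particular for the real values $\pm\la \in (q,1)$ relevant to the corollary. The bridge between the two statements is the explicit expression \eqref{eq:CintermsofH}, which writes $C(x;m,\ep,\et;p_1,p_2,n)$ as the product of $S(-\si\tau\la;p_1,p_2,n)$ (where $\si=\sgn(p_1)$, $\tau=\sgn(p_2)$, $x=\mu(\la)$) with a quotient of the functions $G$, and, equivalently, with a quotient of the $A$-functions (together with the elementary factor $\nu_n^\tau(\la;p)$).

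First I would substitute the formula \eqref{eq:CintermsofH} for each factor $C$ in the two sums of the corollary. This produces a sum of the form
\[
\sum_{p_2\in I_q} \Bigl[\text{ratio of $A$'s involving $\la$}\Bigr]\,\Bigl[\text{ratio of $A$'s involving $\la^{-1}$}\Bigr]\, S(-\si\tau\la;p_1,p_2,\cdots)\, S(-\si'\tau\la^{-1};p_1',p_2,\cdots),
\]
up to the elementary sign and $\nu$-factors. The $\la \leftrightarrow \la^{-1}$ symmetry of $C$ (noted after Proposition \ref{prop:polardecompQ}) allows one of the two $C$-factors to be written using $\la^{-1}$ instead of $\la$ without changing its value, so the resulting sum matches exactly one side of the biorthogonality relations from Lemma \ref{lem:biorthogonalityS} after one checks that all the $A$-quotients multiply together to $1$. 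This cancellation is the key point: on $\T$ it reduces to $|A(\la)|^2 = 1$, which was the ingredient used in the proof of Lemma \ref{lem:biorthogonalityS} itself, and here one has to verify that the analogous cancellation $A(\la)A(\la^{-1})/\bigl(A(\si\tau\la)A(\si\tau\la^{-1})\bigr) = 1$ continues to hold for real $\la \in (q,1)$.

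The main obstacle will be this last verification of the $A$-cancellation for real $\la$. Unlike on the unit circle, for real $\la \in (q,1)$ the function $A(\la;p,m,\ep,\et)$ is no longer of absolute value one, so the identity $A(\la)\overline{A(\la)}=1$ is unavailable and one must argue directly using the explicit formulae for $A$ collected in Appendix \ref{ssecB:formulasforA}. The hypothesis $p\in q^{2\Z}$ (equivalently $\epsilon(p)=0$) is precisely what guarantees that the square-root factors in $A$ (of the form $(-q^{1-2n}\la^{\pm 1}/p;q^2)_\infty$, as in the definition of $E(\la;p,m)$) remain of positive argument for $\pm \la\in(q,1)$, so that $A$ extends to a single-valued function on this range and the product $A(\la)A(\la^{-1})$ is well-defined and positive. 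A case-by-case computation for the four choices of $(\ep,\et)$, using the $\te$-product identity \eqref{eq:thetaprodid} to match the factors coming from $A(\la)/A(\si\tau\la)$ with those coming from $A(\la^{-1})/A(\si\tau\la^{-1})$, then shows the desired cancellation and hence the orthogonality relations for $C$. The second orthogonality relation follows from the first by the third symmetry relation for $S$ in Lemma \ref{lem:symmetryS}, exactly as in Lemma \ref{lem:biorthogonalityS}.
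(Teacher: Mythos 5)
Your proposal is correct and follows essentially the same route as the paper: the paper's proof also writes each factor $C$ as $g(\la)\,S(-\la;p_1,p_2,\chi(p_1p_2)+m)$ with $g$ a ratio of $A$-functions times elementary signs, invokes the identity $A(\la)A(\la^{-1})=1$ (valid off the unit circle by the explicit formulas of Appendix \ref{ssecB:formulasforA}, with $p\in q^{2\Z}$ ensuring the square roots stay well-defined) to cancel the $A$-quotients, and then applies the biorthogonality of $S$ from Lemma \ref{lem:biorthogonalityS}. Your additional observations on the role of $\epsilon(p)=0$ and the $\la\leftrightarrow\la^{-1}$ symmetry are consistent with the paper's discussion in Section \ref{ssec:complementaryseries}.
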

\begin{proof}
This follows from Lemma \ref{lem:biorthogonalityS} and the observations that 
\[
C(\sgn(p_2)y;m-\chi(p_1),\sgn(p_1)\ep,\et;p_1,p_2,\chi(p_2p_1/p)-2m) = g(\la) S(-\la;p_1,p_2,\chi(p_1p_2)+m),
\]
where $g(\la)$ is given by
\[
g(\la) = \sgn(p_1)\ep^{\hf(1-\sgn(p_1)}\et^{\hf(1-\sgn(p_2)+\chi(p_2p_1)}\frac{ A(\sgn(p_2)\la;p, \chi(p_2/p)-m, \ep,\sgn(p_2)\et)}{ A(\sgn(p_1)\la;p, m-\chi(p_1), \sgn(p_1)\ep,\et)}
\]
and from $A(\la)A(\la^{-1})=1$, which follows from the definitions of $A$, see \S\ref{ssecB:formulasforA}.
\end{proof}

\subsection{Proof of the summation and transformation theorems}
In this subsection we prove Theorems \ref{thm:symmetryforsinglesumfourapxys} and  \ref{thm:doublesumfiveapxysequalsthreeapxys}. The theorems are reflections of
the structure constants for the product in $\hat{M}$, see Proposition
\ref{prop:structureconstQs}, and of the coproduct $\hat{\De}$ of the dual
quantum group acting on $Q(p_1,p_2,n)$, see Proposition \ref{prop:hatDeonQppn}.
Inspection of the proofs, see Section \ref{ssec:commutantofdualvNalg}, shows that both results follow from the
pentagonal equation $W_{12}W_{13}W_{23}=W_{23}W_{12}$ for the multiplicative unitary $W$.
However, as remarked in Remark \ref{rmk:doublesumfiveapxysequalsthreeapxys}, the results in
Theorems \ref{thm:symmetryforsinglesumfourapxys} and  \ref{thm:doublesumfiveapxysequalsthreeapxys}
cannot be obtained from each other.

\begin{proof}[Proof of Theorem \ref{thm:symmetryforsinglesumfourapxys}]
We start with the result of Proposition \ref{prop:structureconstQs}
and we next let the corresponding operator identity act on
$f_{-l, \ep\et pq^lz,z}\in \cK(p,l,\ep,\et)$. Lemma \ref{lem:explicitactionQppn}
shows that
\[
Q(p_1,p_2,n)\, Q(r_1,r_2,m) \colon \cK(p,l,\ep,\et)
\to \cK(p,l+m+n, \sgn(r_1p_1)\ep, \sgn(r_2p_2)\et)
\]
is non-zero precisely if $q^{2l}p=q^{-m}|\frac{r_2}{r_1}|$ and
$q^{2l+2m}p=q^{-n}|\frac{p_2}{p_1}|$. In particular,
in case $q^{-n}|\frac{p_2}{p_1}| \not= q^{m}|\frac{r_2}{r_1}|$
we find
$Q(p_1,p_2,n)\, Q(r_1,r_2,m)=0$.

In order to calculate the appropriate matrix coefficient we proceed
for \par\noindent $f_{-l-m-n, \sgn(r_1p_1r_2p_2)\ep\et pq^{l+m+n}w,w}
\in \cK(p,l+m+n, \sgn(r_1p_1)\ep, \sgn(r_2p_2)\et)$ as
\[
\begin{split}
&\langle Q(p_1,p_2,n)\, Q(r_1,r_2,m)\,f_{-l, \ep\et pq^lz,z},
f_{-l-m-n, \sgn(r_1p_1r_2p_2)\ep\et pq^{l+m+n}w,w}\rangle = \\
&\sum_{u\in J(p,l+m,\ep\sgn(r_1),\et\sgn(r_2))}
\langle Q(r_1,r_2,m)\,f_{-l, \ep\et pq^lz,z},
f_{-l-m, \sgn(r_1r_2)\ep\et pq^{l+m}u,u}\rangle \,  \\
&\qquad\qquad
\times \langle Q(p_1,p_2,n)\,f_{-l-m, \sgn(r_1r_2)\ep\et pq^{l+m}u,u},
f_{-l-m-n, \sgn(r_1p_1r_2p_2)\ep\et pq^{l+m+n}w,w}\rangle
\end{split}
\]
using the orthogonal basis for the intermediate space
$\cK(p,l+m,\ep\sgn(r_1),\et\sgn(r_2))$.
In this sum we can use \eqref{eq:matrixelementsofQppnandhatJQppnhatJ} twice,
and using \eqref{eq:defKpmepeta} we find that this equals
\begin{equation}\label{eq:pfthmsymmetryforsinglesumfourapxys1}
\begin{split}
&\de_{|\frac{r_1}{r_2}|p, q^{-2l-m}} \, \de_{|\frac{p_1}{p_2}|p, q^{-2l-2m-n}}
\sum_{\stackrel{\scriptscriptstyle{u\in I_q\text{ so that } \sgn(u)=\sgn(r_1)\ep}}
{\scriptscriptstyle{\text{and }\ep\et \sgn(r_1r_2)pq^{l+m}u\in I_q}}}
\left| \frac{z}{w}\right|\, a_z(r_1,u)\, a_u(p_1,w)\, \\
& \qquad \times \, a_{\ep\et pq^lz}(r_2, \ep\et \sgn(r_1r_2)pq^{l+m}u)
\, a_{\ep\et \sgn(r_1r_2)pq^{l+m}u}(p_2, \sgn(r_1p_1r_2p_2)\ep\et pq^{l+m+n}w)
\end{split}
\end{equation}

Next observe
\[
Q(x_1,x_2,m+n) \colon \cK(p,l,\ep,\et) \to \cK(p,l+m+n, \sgn(x_1)\ep, \sgn(x_2)\et)
\]
is non-zero only if $q^{2l}p = q^{-m-n}|\frac{x_2}{x_1}|$, so that the double sum
in Proposition \ref{prop:structureconstQs} reduces to a single sum. Moreover,
by Definition \ref{def:functionap} shows that in the sum the functions $a_{x_i}(r_i,p_i)$
for $i=1,2$ are non-zero only if $\sgn(x_i)=\sgn(r_ip_i)$ for $i=1,2$. So the
matrix element for the expression on the right hand side is
\[
\sum_{x_1,x_2 \in I_q} a_{x_1}(r_1,p_1) \, a_{x_2}(r_2,p_2)\,
\langle Q(x_1,x_2,m+n)\,f_{-l, \ep\et pq^lz,z},
f_{-l-m-n, \sgn(r_1p_1r_2p_2)\ep\et pq^{l+m+n}w,w}\rangle
\]
and this reduces to a single sum and the summand is evaluated by
\eqref{eq:matrixelementsofQppnandhatJQppnhatJ}. By eliminating $x_2$ and
renaming $x_1$ by $x$ we see that this equals
\begin{equation}\label{eq:pfthmsymmetryforsinglesumfourapxys2}
\begin{split}
&\sum_{\stackrel{\scriptscriptstyle{x\in I_q\text{ so that } \sgn(x)=\sgn(r_1p_1)}}
{\scriptscriptstyle{\text{and }|x|\sgn(r_2p_2)pq^{2l+m+n}\in I_q}}}
\left|\frac{z}{w}\right| a_x(r_1,p_1)\, a_z(x,w)\,
a_{|x|\sgn(r_2p_2)pq^{2l+m+n}}(r_2,p_2)\\
&\qquad\qquad \times\, a_{\ep\et pq^lz}(|x|\sgn(r_2p_2)pq^{2l+m+n},
\sgn(r_1p_1r_2p_2)\ep\et pq^{l+m+n}w)
\end{split}
\end{equation}

Finally, equating \eqref{eq:pfthmsymmetryforsinglesumfourapxys1}
and \eqref{eq:pfthmsymmetryforsinglesumfourapxys2} gives the result, where
the conditions on the parameters in Theorem \ref{thm:symmetryforsinglesumfourapxys}
follows from the fact that the matrix elements are taken with respect to
vectors in the GNS Hilbert space.
\end{proof}

\begin{proof}[Proof of Corollary \ref{cor:thmsymmetryforsinglesumfourapxys}.]
Observe that by Lemma
\ref{lem:QadjointisQ}
\[
Q(r_1,r_2,-m)\, Q(r_1,r_2,m) = (-q)^{-m} \sgn(r_1)^{\chi(r_1)}\,
\sgn(r_2)^{\chi(r_2)}\, Q(r_1,r_2,m)^\ast\, Q(r_1,r_2,m)
\]
so that
\begin{equation}\label{eq:corthmsymmetryforsinglesumfourapxys1}
\begin{split}
& (-q)^{m} \sgn(r_1)^{\chi(r_1)}\,
\sgn(r_2)^{\chi(r_2)}\,  \langle Q(r_1,r_2,-m)\, Q(r_1,r_2,m)\, f_{-l,\ep\et pq^lz,z}, f_{-l,\ep\et pq^lz,z}\rangle\\
=\, &\, \|Q(r_1,r_2,m)\, f_{-l,\ep\et pq^lz,z}\|^2
 =\, \frac{(r_1 r_2)^2}{q^{2l}z^2 p^2}\ \\
&\,  \sum_{w \in J(p,m+l,\ep\sgn(r_1),\et\sgn(r_2))}\,
\frac{1}{|w|^2}\,\bigl(a_{r_1}(z,w)\bigr)^2
\,\bigl(a_{r_2}(\ep\et\,q^l p\,z,\ep\et\sgn(r_1r_2)\,q^{m+l}
p\,w)\bigr)^2
\end{split}
\end{equation}
by Lemma \ref{lem:explicitactionQppn}, see in particular \eqref{eq:defQonbasisvector}.
We are interested in the case $Q(r_1,r_2,m)\, f_{-l,\ep\et pq^lz,z}\not= 0$, so we
assume $q^{2l}p = q^{-m}|\frac{r_2}{r_1}|$. The case that this sum can equal zero,
is already covered by Theorem \ref{thm:symmetryforsinglesumfourapxys}.
Since the right hand side is obviously positive, and the left hand side is (up to
the factor in front) equal to \eqref{eq:pfthmsymmetryforsinglesumfourapxys2}
with $p_1,p_2,n,w$ replaced by $r_1,r_2,-m,z$. Since we assume
$q^{2l}p = q^{-m}|\frac{r_2}{r_1}|$ we replace $p$ by $q^{-m-2l}|\frac{r_2}{r_1}|$, and
moreover, we use the third symmetry of \eqref{eq:symmetryforapxy} twice, to find
\begin{equation}
\begin{split}
&\qquad\qquad (-q)^{m} \sgn(r_1)^{\chi(r_1)}\,
\sgn(r_2)^{\chi(r_2)}\, (-1)^l \, \ep^{\chi(z)} \et^{l+m+\chi(zr_2/r_1)} \frac{q^l}{z^2} \\
&\sum_{\stackrel{\scriptscriptstyle{x\in q^\Z\text{ so that }}}
{\scriptscriptstyle{xq^{-m}|\frac{r_2}{r_1}|\in q^\Z}}} x^2\, a_x(r_1,r_1)\, a_x(z,z)
\, a_{xq^{-m}|\frac{r_2}{r_1}|}(r_2,r_2)\, a_{xq^{-m}|\frac{r_2}{r_1}|}(\ep\et |\frac{r_2}{r_1}|q^{-m-l}z,\ep\et |\frac{r_2}{r_1}|q^{-m-l}z)\\ &\qquad\quad =\, \|Q(r_1,r_2,m)\, f_{-l,\ep\et |\frac{r_2}{r_1}|q^{-m-l}z,z}\|^2 >0
\end{split}
\end{equation}
where the right hand side can be evaluated explicitly as a sum of squares
by \eqref{eq:corthmsymmetryforsinglesumfourapxys1} with $p$ replaced by
$q^{-m-2l}|\frac{r_2}{r_1}|$. This proves the general statement of
Corollary \ref{cor:thmsymmetryforsinglesumfourapxys} since the condition on the summation
parameter $x$ is always satisfied.

For the final statement on $q$-Laguerre polynomials we observe that for $\sgn(p)=+$ and
$\sgn(y)=-$, or $y=-q^{1+k}$, $k\in\NN$, we have from Definition
\ref{def:functionap} and \eqref{eq:defqLaguerrepols}
\[
a_p(-q^{1+k},-q^{1+k}) = c_q\, (-1)^{\chi(p)}\, q^{1+k} \nu(p)\, \sqrt{(-p^2;q^2)_\infty}
\, L^{(0)}_k(q^2p^{-2};q^2).
\]
So we choose $r_1=-q^{1+a}$, $r_2=-q^{1+b}$, $z=-q^{1+c}$,
$\ep\et |\frac{r_2}{r_1}|q^{-m-l}z=-q^{1+d}$
with $a,b,c,d\in\NN$, so we replace $l$ by $c+b-a-m-d$ and take $\ep=-$, $\et=-$.
We replace $m$ by $b-a-e$ with $e\in\Z$, discard the positive $x$-independent terms and find
\[
\begin{split}
\sum_{x\in q^\Z} &x^2\, \nu(x)^2\, \nu(xq^e)^2\, (-x^2, -x^2q^{2e};q^2)_\infty
\, \\& L^{(0)}_a(q^2x^{-2};q^2)\, L^{(0)}_c(q^2x^{-2};q^2)\, L^{(0)}_b(x^{-2}q^{2-2e};q^2)\,
L^{(0)}_d(x^{-2}q^{2-2e};q^2)\,  > 0.
\end{split}
\]
Now putting $x=q^{1-k}$, $k\in\Z$, and using the theta-product identity \eqref{eq:thetaprodid} twice
and not taking into account the $k$-independent positive terms we find
\[
\sum_{k\in \Z} \frac{q^{2k}}{(-q^{2k}, -q^{2k-2e};q^2)_\infty}
L^{(0)}_a(q^{2k};q^2)\, L^{(0)}_c(q^{2k};q^2)\, L^{(0)}_b(q^{2k-2e};q^2)\,
L^{(0)}_d(q^{2k-2e};q^2)\,  > 0.
\]
Relabeling and switching to base $q$ proves the required statement.
\end{proof}

\begin{proof}[Proof of Theorem \ref{thm:doublesumfiveapxysequalsthreeapxys}]
For the proof it is easier to start by conjugating the result of
Proposition \ref{prop:hatDeonQppn} with the flip operator
to obtain
\begin{equation}\label{eq:pfthmdoublesumfiveapxysequalsthreeapxys1}
\sum_{p\in I_q,\, m\in \Z} Q(p_1,p,m)\, \otimes\, Q(p,p_2,n-m) \, =\,
W\, \bigl( Q(p_1,p_2,n)\ot \Id)\, W^\ast,
\end{equation}
which is a consequence of the proof of Proposition \ref{prop:hatDeonQppn}.
We let both sides act on
\[
f_{-m_1,\ep_1\et_1q^{m_1}r_1z_1,z_1}\ot f_{-m_2,\ep_2\et_2q^{m_2}r_2z_2,z_2}
\in \cK(r_1,m_1,\ep_1\et_1)\ot \cK(r_2,m_2,\ep_2\et_2)
\]
and we take inner products with
\[
\begin{split}
& f_{-m_1-M,\si \sgn(p_1)\ep_1\et_1q^{m_1+M}r_1w_1,w_1} \ot
f_{-m_2-n+M,\si \sgn(p_2)\ep_2\et_2q^{m_2+n-M}r_2w_2,w_2} \\ &\qquad\qquad
\in \cK(r_1,m_1+M,\sgn(p_1)\ep_1,\si\et_1)\ot
\cK(r_2,m_2+n-M,\si\ep_2,\sgn(p_2)\et_2).
\end{split}
\]
Then the sum over $I_q$ and $\Z$ reduces to a single term by
a double application of \eqref{eq:matrixelementsofQppnandhatJQppnhatJ}. Indeed, we
find that we need $m=M$ and $\sgn(p)=\si$ for a non-zero contribution, but also
both the conditions $q^{2m_1+M}=|\frac{p}{p_1r_1}|$ and $q^{2m_2+n-M}=|\frac{p_2}{pr_2}|$
need to be satisfied. So for the matrix element of the left hand side of
\eqref{eq:pfthmdoublesumfiveapxysequalsthreeapxys1} to have a single non-zero
term we require $r_1r_2q^{2m_1+2m_2}=q^{-n}|\frac{p_2}{p_1}|$, and in this
case the left hand side equals
\begin{equation}\label{eq:pfthmdoublesumfiveapxysequalsthreeapxys2}
\begin{split}
&\left| \frac{z_1z_2}{w_1w_2}\right| a_{z_1}(p_1,w_1)
\, a_{\ep_1\et_1q^{m_1}r_1z_1}(\si|p_1|r_1q^{2m_1+M},\ep_1\et_1w_1\si\sgn(p_1)r_1q^{m_1+M})
\\ & \times\, a_{z_2}(\si |p_1|r_1q^{2m_1+M},w_2)
\, a_{\ep_2\et_2q^{-2m_1-m_2-n}\frac{|p_2|z_2}{|p_1|r_1}}
(p_2, \ep_2\et_2\si q^{-2m_1-m_2-M}\frac{w_2p_2}{|p_1|r_1})
\end{split}
\end{equation}
where we have chosen to eliminate $r_2$. Here all arguments of the function
$a_p(x,y)$ are indeed elements of $I_q$, except possible $\si |p_1|r_1q^{2m_1+M}$ and
in case $\si |p_1|r_1q^{2m_1+M}\notin I_q$ the expression has to be read as zero.

In order to calculate the same matrix element for the right hand side of
\eqref{eq:pfthmdoublesumfiveapxysequalsthreeapxys1} we rewrite this matrix element as
\begin{equation}\label{eq:pfthmdoublesumfiveapxysequalsthreeapxys3}
\begin{split}
&\Bigl\langle \bigl(Q(p_1,p_2,n)\ot\Id\bigr)\, W^\ast
(f_{-m_1,\ep_1\et_1q^{m_1}r_1z_1,z_1}\ot f_{-m_2,\ep_2\et_2q^{m_2}r_2z_2,z_2}),  \\
&\qquad\qquad \, W^\ast (f_{-m_1-M,\si \sgn(p_1)\ep_1\et_1q^{m_1+M}r_1w_1,w_1} \ot
f_{-m_2-n+M,\si \sgn(p_2)\ep_2\et_2q^{m_2+n-M}r_2w_2,w_2}) \Bigr\rangle.
\end{split}
\end{equation}
In this expression we use \eqref{eq:Wexplicitinapxy} twice, with parameters
$y_1,x_1$ (instead of $y,z$ as in \eqref{eq:Wexplicitinapxy})
for the action of $W^\ast$ in the left leg of the inner product and
with parameters
$y_2,x_2$ for the action of $W^\ast$ in the left leg of the inner product.
The resulting four-fold sum has the advantage that the inner product factorizes, and
we obtain
\begin{equation}\label{eq:pfthmdoublesumfiveapxysequalsthreeapxys4}
\begin{split}
&\sum \left| \frac{z_2w_2}{y_1y_2}\right| \, a_{z_2}(\ep_1\et_1q^{m_1}r_1z_1,y_1)
\, a_{\ep_2\et_2q^{m_2}r_2z_2}(x_1,\ep_1\ep_2\et_1\et_2y_1x_1q^{-m_1-m_2}/r_1z_1) \\
&\qquad\times \, a_{w_2}(\si\sgn(p_1)\ep_1\et_1q^{m_1+M}r_1w_1,y_2) \\
&\qquad\times\, a_{\si \sgn(p_2)\ep_2\et_2q^{m_2+n-M}r_2w_2}(x_2,\ep_1\ep_2\et_1\et_2\sgn(p_1p_2)y_2x_2
q^{-m_1-m_2-n}/r_1w_1) \\
&\qquad \times
\bigl\langle Q(p_1,p_2,n)\, f_{-2m_1-2m_2-\chi(r_1r_2z_1/x_1), x_1,z_1},
f_{-2m_1-2m_2-2n-\chi(r_1r_2w_1/x_2), x_2,w_1}\bigr\rangle \\
&\qquad \times
\bigl\langle f_{m_1+m_2+\chi(r_1r_2z_1/x_1), \ep_1\ep_2\et_1\et_2q^{-m_1-m_2}y_1x_1/r_1z_1,y_1}, \\
&\qquad\qquad \qquad\qquad\qquad f_{m_1+m_2+n+\chi(r_1r_2w_1/x_2), \ep_1\ep_2\et_1\et_2\sgn(p_1p_2)q^{-m_1-m_2-n}y_2x_2/r_1w_1, y_2}
\bigr\rangle
\end{split}
\end{equation}
where the sum is four-fold; $y_1,x_1,y_2,x_2\in I_q$ so that
$\ep_1\ep_2\et_1\et_2q^{-m_1-m_2}y_1x_1/r_1z_1\in I_q$ and
$\ep_1\ep_2\et_1\et_2\sgn(p_1p_2)y_2x_2
q^{-m_1-m_2-n}/r_1w_1\in I_q$.

The final term in the summand \eqref{eq:pfthmdoublesumfiveapxysequalsthreeapxys4} gives three
Kronecker delta's, which lead to the reduction of the four-fold sum to a double(!) sum
since $y_2=y_1$ and $x_2= \sgn(p_1p_2)q^n x_1w_1/z_1$ are required. Substituting this in the
matrix element of $Q(p_1,p_2,n)$ in the summand in
\eqref{eq:pfthmdoublesumfiveapxysequalsthreeapxys4} gives
\[
\begin{split}
\bigl\langle Q(p_1,p_2,n)\, f_{-2m_1-2m_2-\chi(r_1r_2z_1/x_1), x_1,z_1},
f_{-2m_1-2m_2-n-\chi(r_1r_2z_1/x_1), \sgn(p_1p_2)q^nx_1w_1/z_1,w_1}\bigr\rangle
\end{split}
\]
and by \eqref{eq:matrixelementsofQppnandhatJQppnhatJ} this equals zero unless
$r_1r_2 = |\frac{p_2}{p_1}| q^{-2m_1-2m_2-n}$. In case this condition holds we
see that the matrix coefficient of $Q(p_1,p_2,n)$ equals
\[
 \left| \frac{z_1}{w_1}\right| \, a_{z_1}(p_1,w_1)\, a_{x_1}(p_2, \sgn(p_1p_2)q^n \frac{x_1w_1}{z_1}).
\]
Eliminating again $r_2$ and
using this we find that \eqref{eq:pfthmdoublesumfiveapxysequalsthreeapxys4}
equals
\begin{equation}\label{eq:pfthmdoublesumfiveapxysequalsthreeapxys5}
\begin{split}
&\sum_{\stackrel{\scriptscriptstyle{y_1,x_1\in I_q\text{ so that }\sgn(p_1p_2q^nx_1w_1/z_1\in I_q }}
{\scriptscriptstyle{\text{and } \ep_1\ep_2\et_1\et_2q^{-m_1-m_2}y_1x_1/r_1z_1\in I_q}}}
\left| \frac{z_2w_2}{y_1^2}\right|
\, a_{z_2}(\ep_1\et_1q^{m_1}r_1z_1,y_1)
\\ &\qquad\times
\, a_{\ep_2\et_2q^{-2m_1-m_2-n}\frac{z_2|p_2|}{r_1|p_1|}}
(x_1,\ep_1\ep_2\et_1\et_2q^{-m_1-m_2}\frac{y_1x_1}{r_1z_1})
\, a_{w_2}(\si\sgn(p_1)\ep_1\et_1q^{m_1+M}r_1w_1,y_1) \\
&\qquad\times\, a_{\si \sgn(p_2)\ep_2\et_2q^{-2m_1-m_2-M}\frac{w_2|p_2|}{r_1|p_1|}}
(\sgn(p_1p_2)q^n x_1w_1/z_1,\ep_1\ep_2\et_1\et_2q^{-m_1-m_2}\frac{y_1x_1}{r_1z_1}) \\
&\qquad\times\,
 \left| \frac{z_1}{w_1}\right| \, a_{z_1}(p_1,w_1)\, a_{x_1}(p_2, \sgn(p_1p_2)q^n \frac{x_1w_1}{z_1}).
\end{split}
\end{equation}

Equating \eqref{eq:pfthmdoublesumfiveapxysequalsthreeapxys2} and
\eqref{eq:pfthmdoublesumfiveapxysequalsthreeapxys5} and canceling common factors
and relabeling $r_1,x_1,y_1$ by $r,x,y$ then proves Theorem \ref{thm:doublesumfiveapxysequalsthreeapxys}
except for the sign constraint on $y$ in the sum. This follows from
Definition \ref{def:functionap}.
\end{proof}

\appendix

\section{Operators and von Neumann algebras}\label{app:notationterminology}

\subsection{von Neumann algebras}\label{ssecA:vNeumannalgebras}

Let $H$ be a Hilbert space, and $B(H)$ the space of bounded
linear operators equipped with the operator norm
$\| T \| = \sup \{ \|Tx\| \mid \|x\|=1\}$.  Apart from the
topology induced  by the operator norm, there are various
other topologies on $B(H)$.
A
net $\{ T_i\}_{i\in I}$ converges strongly to $T$ if
$\{ T_ix\}_{i\in I}$ converges to $Tx$ for all $x\in H$.\index{strong topology}
A
net $\{ T_i\}_{i\in I}$ converges weakly to $T$ if
$\{ \langle T_ix,y\rangle \}_{i\in I}$ converges to $\langle Tx, y\rangle$
for all $x, y\in H$.\index{weak topology}
A
net $\{ T_i\}_{i\in I}$ converges strongly-$\ast$ to $T$ if
$\{ T_i\}_{i\in I}$ converges strongly to $T$ and
$\{ T_i^\ast\}_{i\in I}$ converges strongly to $T^\ast$.\index{strong $\ast$-topology}

A von Neumann algebra\index{von Neumann algebra} is a unital $\ast$-subalgebra $M$ of $B(H)$
which is closed for the weak topology. A fundamental property
is that $M$ equals its bicommutant $M''$. The elements
of the form $T^\ast T$ form the cone of positive elements,
denoted by $M_+$. A $\ast$-homomorphism is unital when it maps unit to unit.

A linear functional $\om \colon M\to \C$  is normal\index{normal functional} if
$\om\colon M_1 \to \C$ is continuous with respect to the
weak topology, where $M_1$ is the closed unit ball
with respect to the operator norm. The space
of normal functionals form the predual\index{predual $M_\ast$} $M_\ast$ which
is a norm-closed subspace of the dual $M^\ast$. The cone
of positive normal functionals is denoted $M_\ast^+$.
Then $M=(M_\ast)^\ast$ and the $\si$-weak topology\index{sigma-weak@$\si$-weak topology}
on $M$ is the $\si(M,M_\ast)$-topology.
The $\si$-strong-$\ast$ topology\index{sigma-strong@$\si$-strong-$\ast$ topology} is the
locally convex vector topology  induced by the seminorms
$p_\om(T) = \sqrt{\om(T^\ast T)}$,
$p_\om^\ast(T) = \sqrt{\om(TT^\ast)}$ for all $\om\in M_\ast^+$.
A unital $\ast$-homomorphism $\pi \colon M\to N$, $M$ and $N$
von Neumann algebras is normal\index{normal homomorphism} if $\om \pi \in M_\ast$ for all
$\om \in N_\ast$.

The tensor product\index{tensor product} of the von Neumann algebras
$M\subset B(H)$ and $N\subset B(K)$ is the weak
closure $M\ot N$ of the algebraic
tensor product $M\odot N\subset B(H\ot K)$.
For $\om\in M_\ast$, $\et\in N_\ast$ we
have $\om\ot\et\in (M\ot N)_\ast$ as the unique
element extending the algebraic tensor
product $\om\odot\et$.

\subsection{Summation of operators}\label{ssecA:summationoperators}
If we use the symbol $\oplus$ without further mention we mean the completed version. Let $(H_i)_{i \in I}$ be a family
of Hilbert spaces and define the Hilbert space  $H = \oplus_{i \in I} H_i$. Suppose that a permutation $\si : I \to I$
and  for every $i \in I$ a closed, densely defined, linear operator $T_i$ from $H_i$ into $H_{\si(i)}$ is given. Then
$\oplus_{i \in I}\, T_i$ denotes the closed, densely defined, linear operator in $H$ with domain
$$\{\,v \in H \mid v_i \in D(T_i) \text{ for each }  i \in I \text{ and } \sum_{i \in I} \|T_i(v_i)\|^2 < \infty\,\}$$
and so that $(\oplus_{i \in I}\, T_i)(v) = \sum_{i \in I}\,T_i(v_i)$ for all $v \in D(\oplus_{i \in I}\, T_i)$. Also
recall that $T^* = \oplus_{i \in I} T_i^*$. It is also worthwhile to remember that $T^* T = \oplus_{i \in I}\, T_i^*
T_i$ and $|T| = \oplus_{i \in I}\,|T_i|$.

\subsection{Commutation}\label{ssecA:commutationoperators}
Let $H$ be a Hilbert space. Consider two linear operators $S$, $T$ acting in a Hilbert space $H$. We say that $S
\subseteq T$ if $D(S) \subseteq D(T)$ and $Sv = Tv$ for all $v \in D(S)$.

Let $T$ a densely defined, closed, linear (possibly unbounded) operator in $H$. If $S \in B(H)$, we say that $S$ and
$T$ commute\index{commutation of operators} if  $S\,T \subseteq T\,S$. If $N$ is a (possibly unbounded) self-adjoint operator in $H$, we say
that $T$ and $N$ strongly commute\index{strong commutation of operators} if $T$ commutes
with every spectral projection of $N$. If $T$ and $N$ are both (possibly unbounded)
self-adjoint operators, then $T$ and $N$ commute strongly if and
only if their spectral projections commute. This is also known as
\emph{resolvent commuting} self-adjoint operators. In this case
$T+N$ is a closable operator and its closure $\overline{T+N}$ is
self-adjoint.

\subsection{Affiliation and unbounded generators}
\label{ssecA:affiliationandgenerators}
If $M$ is a von Neumann algebra on $H$, then
a densely defined closed linear operator $T$ is affiliated\index{affiliation} to $M$
(in the von Neumann algebraic sense) if
if $TU=UT$ for each unitary $U$ in the commutant $M'$. Then
$T$ is affiliated with $M$ if and only if $T$ commutes with every
element of $M'$. Moreover, if $T$ is affiliated with $M$, then so
are $T^\ast$ and $T^\ast T$. If $T$ is a positive invertible operator
affiliated to $M$, then so is $T^{-1}$.
Also, if $T$ and $N$ are self-adjoint
operators that are affiliated with $M$ and $T$ and $N$
commute strongly, then $\overline{T+N}$ is affiliated with $M$.

For $T_1,\ldots, T_n$ closed, densely defined (possibly unbounded)
linear operators acting on a Hilbert space $H$ we define
the von Neumann algebra
\[
N = \{ x\in B(H) \mid x T_i \subseteq T_i x,\text{ and }
xT_i^\ast \subseteq T_i^\ast x \ \forall \ i\}'.
\]
Then $N$ is the smallest von Neumann algebra so that $T_1, \ldots,T_n$
are affiliated to $N$, and we call $N$ the von Neumann algebra generated
by $T_1,\ldots, T_n$.\index{von Neumann algebra generated by operators}

\section{Special functions}\label{app:specialfunctions}

\subsection{Basic hypergeometric functions}\label{ssecB:BHS}
Here we recall standard notations from the theory of basic hypergeometric functions, see for instance \cite{GaspR}.

We fix a parameter $q \in (0,1)$. The $q$-shifted factorials\index{q-shifted@$q$-shifted factorials} are defined by
\[
(x;q)_\infty = \prod_{k=0}^\infty (1-xq^k), \qquad (x;q)_n = \frac{ (x;q)_\infty }{(xq^n;q)_\infty}, \qquad x \in \mathbb C, \ n \in \Z.
\]
In particular, for $n \in \N$ we have $(x;q)_n=(1-x)(1-qx)\cdots(1-q^{n-1}x)$. Considered as a function of $x$, the $q$-shifted factorial $(x;q)_\infty$ is an entire function. Moreover, $(x;q)_\infty=0$ if and only if $x \in q^{-\N_0}$. For products of $q$-shifted factorials we use the shorthand notation
\[
(x_1, x_2, \ldots, x_k;q)_n = (x_1;q)_n (x_2;q)_n \cdots (x_k;q)_n, \qquad n \in \Z \cup \{\infty\}.
\]
A formula that we frequently use is the $\te$-product identity:
\begin{equation}\label{eq:thetaprodid}
(q^k x, q^{1-k}/x;q)_\infty = (-x)^{-k} q^{-k(k-1)/2}\, (x,q/x;q)_\infty, \qquad x \in \C\setminus\{0\}, \ k \in \Z.
\end{equation}
For $r,s \in \N_0$ the basic hypergeometric series is defined by
\[\index{basic hypergeometric series}\index{F@${}_r\vp_s$-series}
\rphis{r}{s}{x_1,x_2,\ldots, x_r}{y_1,y_2,\ldots,y_s}{q,z} = \sum_{k=0}^\infty \frac{ (x_1,x_2,\ldots,x_r;q)_k }{ (q,y_1,y_2,\ldots,y_s;q)_k } \Big((-1)^k q^{k(k-1)/2} \Big)^{1+s-r} z^k.
\]
Here we assume $x_i \in \C$ for $i=1,\ldots,r$, $y_i \in \C \setminus q^{-\N_0}$ for $i=1,2,\ldots,s$, and $z \in \C$. If $r \leq s$, the series converges absolutely for all $z \in \C$. If $r=s+1$, the series converges absolutely for $|z|<1$. In case $r>s+1$, the definition of the basic hypergeometric series only makes sense if $x_i \in q^{-\N_0}$ for some $i\in \{1,2,\ldots,r\}$, i.e., if the series terminates.

\subsection{The functions $a_p$}\label{ssecB:functionsap}
The functions $a_p(x,y)$ for $x,y,p\in I_q$ have been introduced in
Definition \ref{def:functionap}, and these functions play a crucial
role in the whole construction. We need some more properties of these
functions which are described in this subsection.

We need to study the case $a_p(x,y)$ for $y\in I_q^+=q^\Z$. This is
contained in the following lemma.

\begin{lemma}\label{lemB:apxyforyinqZ}
For $y\in I_q^+$ there exists a differentiable function
$f\colon \R_{\geq 0} \to \R$ such that
$a_p(x,y) = y^{\chi(p/x)}\, f(y^{-2})$.
Moreover, $f(0)=0$ unless $0< x/p\leq 1$, and in that
case $f(0)\not=  0$.
\end{lemma}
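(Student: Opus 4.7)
The approach is to use Definition~\ref{def:functionap} directly and track the $y$-dependence of each factor. Writing $y=q^k$ with $k\in\Z$, so that $u:=y^{-2}=q^{-2k}$, I would first dispose of the trivial case $\sgn(x)\neq\sgn(p)$, where $a_p(x,y)\equiv 0$ by the support condition in Definition~\ref{def:functionap}, so one takes $f\equiv 0$. Assuming $\sgn(x)=\sgn(p)$, set $j:=\chi(p/x)=\chi(p)-\chi(x)$; note that $\kappa(x/p)=(x/p)^2>0$. The crucial starting observation is that the $\Psi$-factor
\[
\Psis{-q^2 u}{q^2\sgn(x)x^2 u}{q^2,\,q^2\kappa(x/p)}
\]
has first two arguments linear in $u$ and third argument independent of $y$; since $\Psi$ is entire in its first two arguments and in $z$, this is an entire function of $u$.

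To handle the remaining $y$-dependent factors $q^k\cdot\nu(pq^k/x)\cdot\sqrt{(-q^{2k};q^2)_\infty}$, I would apply the $\theta$-product identity \eqref{eq:thetaprodid} in base $q^2$ with argument $-1$:
\[
(-q^{2k},-q^{2-2k};q^2)_\infty \,=\, 2\,q^{-k(k-1)}\,(-q^2;q^2)_\infty^2,
\]
rewriting $\sqrt{(-q^{2k};q^2)_\infty} = q^{-k(k-1)/2}\,\sqrt{2}\,(-q^2;q^2)_\infty/\sqrt{(-q^2u;q^2)_\infty}$; the denominator $\sqrt{(-q^2u;q^2)_\infty}$ is smooth and strictly positive for $u\ge 0$. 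The main technical obstacle is the polynomial identity
\[
k + \tfrac12(j+k-1)(j+k-2) - \tfrac12 k(k-1) \,=\, jk + \tfrac12(j-1)(j-2),
\]
which shows that the combined $q$-power factors collapse to $y^j\cdot\nu(p/x)$, with the $k$-dependence cancelling except for the advertised $y^j$. This exponent-bookkeeping step is the only genuinely delicate piece of the argument.

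Combining these simplifications, I expect to obtain $a_p(x,y)=y^j\,f(u)$ with
\[
f(u)\,=\,C\cdot\frac{\Psis{-q^2 u}{q^2\sgn(x)x^2 u}{q^2,\,q^2\kappa(x/p)}}{\sqrt{(-q^2u;q^2)_\infty}},
\]
where $C\ne 0$ is an explicit constant independent of $k$ (collecting $c_q$, the fixed signs, the constant $y$-independent square-root factor, and $\nu(p/x)\sqrt{2}(-q^2;q^2)_\infty$). Since $\Psi$ is entire in $u$ and the denominator is smooth and nonvanishing on $\R_{\ge 0}$, $f$ is differentiable on $\R_{\ge 0}$. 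To identify $f(0)$ I would use $(0;q^2)_n=(0;q^2)_\infty=1$ together with the $q$-exponential identity $(z;q^2)_\infty=\sum_{n\ge 0}(-z)^n q^{n(n-1)}/(q^2;q^2)_n$ to get $\Psi(0,0;q^2,z)=(z;q^2)_\infty$, whence $f(0)=C\,(q^2(x/p)^2;q^2)_\infty$. Since $x/p\in q^{\Z}$ and $x/p>0$, this product vanishes iff $x/p=q^{-m}$ for some $m\ge 1$, i.e.\ iff $x/p>1$. Thus $f(0)\ne 0$ precisely when $0<x/p\le 1$, as claimed.
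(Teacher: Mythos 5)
Your proof is correct and follows essentially the same route as the paper's: isolate the $\Psi$-factor (whose first two arguments are linear in $u=y^{-2}$), use the $\theta$-product identity on $\sqrt{(-\kappa(y);q^2)_\infty}$ to produce the denominator $\sqrt{(-q^2u;q^2)_\infty}$, check that the residual $q$-powers collapse to $y^{\chi(p/x)}\nu(p/x)$, and evaluate $f(0)=C\,(q^2x^2/p^2;q^2)_\infty$ via Euler's identity. Your exponent computation $k+\tfrac12(j+k-1)(j+k-2)-\tfrac12 k(k-1)=jk+\tfrac12(j-1)(j-2)$ and the vanishing criterion for $f(0)$ both check out, so nothing is missing.
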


\begin{proof}
Assume $y\in I_q^+$, so that $\sgn(y)=+$. So in particular,
$a_p(x,y)=0$ for $\sgn(x)\not=\sgn(p)$ by
Definition \ref{def:functionap} and in this case we can
take $f$ identically equal to zero.

In case $\sgn(x)=\sgn(p)$ we rewrite the $y$-dependent
part in Definition \ref{def:functionap} before the $\Psi$-function,
\[
y\, \nu(py/x)\sqrt{(-y^2;q^2)_\infty} =
\nu(\frac{p}{x})\sqrt{(-1,-q^2;q^2)_\infty} \, \frac{y^{\chi(p/x)}}{\sqrt{(-q^2/y^2;q^2)_\infty}}
\]
using the
theta-product identity \eqref{eq:thetaprodid}. Now using
$s(x,y)=1$ we find
\[
\begin{split}
a_p(x,y)\, &=\, y^{\chi(p/x)} \, f(y^{-2}), \\
f(z)\, &=\, C(p,x)\, \frac{1}{\sqrt{(-q^2z;q^2)_\infty}}
\, \Psis{-q^2z}{q^2\ka(x)z}{q^2, \frac{q^2x^2}{p^2}}, \\
C(p,x)\, &=\, c_q (-1)^{\chi(p)+\chi(x)} \nu(\frac{p}{x})
\sqrt{\frac{(-1,-q^2,-\ka(p);q^2)_\infty}{(-\ka(x);q^2)_\infty}}.
\end{split}
\]
This gives the required differentiable function $f$, which is well-defined on $(-q^{-2},\infty)$ and
even real-analytic.
The value of $f(0)$ is
\[
f(0) = C(p,x)\, \sum_{n=0}^\infty \frac{q^{n(n+1)} }{(q^2;q^2)_n} \left(-\frac{q^2x^2}{p^2}\right)^n =  C(p,x)\,  (q^2x^2/p^2;q^2)_\infty
\]
by \cite[(II.2)]{GaspR}, and this is zero if $x/p>1$ since $x/p\in q^\Z$ and non-zero
otherwise.
\end{proof}

The following contiguous relations are useful.

\begin{lemma}\label{lemB:qcontiguousrelapxy}
Consider $x,y,p \in I_q$. Then
\begin{equation*}
\sqrt{1 + \kappa(q^{-1} x)} \ a_p(q^{-1} x, y) =  (x y/qp)\,\, a_p(x,y) -
\sgn(y)\,q^{-1}\,\sqrt{1+\kappa(y)}\,a_p(x,qy)
\end{equation*}
and
\begin{equation*}
\sqrt{1 + \kappa(x)} \ a_p(q x, y)  =  (x y/ p)\,\, a_p(x,y) -
\sgn(y)\,q\,\sqrt{1+\kappa(q^{-1}y)}\,a_p(x,q^{-1} y)\, .
\end{equation*}
\end{lemma}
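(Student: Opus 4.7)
The approach is a direct reduction to a three-term contiguous relation for the ${}_1\vp_1$-series inside the $\Psi$-function of \eqref{eq:defPsi}. By the second symmetry in \eqref{eq:symmetryforapxy}, the two displayed identities are interchanged (up to overall signs) under $x \leftrightarrow y$, so it suffices to prove one of them, say the first, and the other follows by applying the symmetry to both sides and relabeling.

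Substituting Definition \ref{def:functionap} into all three terms of the first identity, the supports coincide (all three vanish off $\{\sgn(xy)=\sgn(p)\}$), so no case split on $\sgn(p)$ is needed here. After dividing by a common overall prefactor $c_q s(x,y)(-1)^{\chi(p)}$ and using the $\te$-product identity \eqref{eq:thetaprodid} to simplify the ratios between the $(-\ka(y);q^2)_\infty$-type factors at $y$ and at $qy$, the identity collapses to a scalar relation of the form
\[
\alpha(x,y,p)\,\Psi_1 \;=\; \beta(x,y,p)\,\Psi_2 \;+\; \gamma(x,y,p)\,\Psi_3,
\]
where $\Psi_1, \Psi_2, \Psi_3$ are the $\Psi$-functions appearing in $a_p(q^{-1}x,y)$, $a_p(x,y)$, and $a_p(x,qy)$ respectively, with parameter triples
$(-q^2/\ka(y),\ka(x/y),\ka(x/p))$, $(-q^2/\ka(y),q^2\ka(x/y),q^2\ka(x/p))$, and $(-1/\ka(y),\ka(x/y),q^2\ka(x/p))$, and with explicit rational coefficients $\alpha,\beta,\gamma$ in $x,y,p,q$.

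I would then verify this ${}_1\vp_1$ contiguous relation directly from \eqref{eq:defPsi} by equating the coefficient of the power $(q^2\ka(x/p))^n$ on each side. The resulting scalar identity at each $n$ is a short algebraic relation among products of $q$-shifted factorials, which reduces by the elementary manipulations $(bq^n;q)_\infty = (b;q)_\infty/(b;q)_n$ and $(b;q)_{n+1}=(1-b)(bq;q)_n$ to an identity equivalent to one of the standard contiguous relations for ${}_1\vp_1$-series in \cite[Appendix III]{GaspR}.

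The main obstacle is bookkeeping rather than analytic depth: the sign factors $s(x,y)$ and $(-\sgn(y))^{\chi(x)}$ together with the square-root prefactors $\sqrt{1+\ka(q^{-1}x)}$ and $\sqrt{1+\ka(y)}$ must be tracked carefully across the three terms so that the $\sgn(y)$ on the coefficient of $a_p(x,qy)$ emerges precisely from the change $(-\sgn(qy))^{\chi(x)}$ versus $(-\sgn(y))^{\chi(x)}$ together with the $\sqrt{1+\ka(y)} = |y|\sqrt{1+\ka(y^{-1})}$-type manipulations. Should the direct accounting become unwieldy, an alternative route is to derive the relations representation-theoretically from the action of $E_0$ and $E_0^\dag$ via the Clebsch-Gordan interpretation of $a_p(x,y)$ underlying \eqref{eq:Wexplicitinapxy}; this sidesteps the sign bookkeeping at the cost of extra representation-theoretic setup.
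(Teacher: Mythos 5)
Your proposal is correct and its skeleton coincides with the paper's: the paper also obtains one contiguous relation from the other by interchanging $x$ and $y$ and invoking the second symmetry of \eqref{eq:symmetryforapxy}. The only real difference is in the base identity: the paper simply cites \cite[Prop.~3.9, (6.3)]{KoelKCMP} for the second relation, whereas you prove it from scratch by substituting Definition \ref{def:functionap} and reducing to a contiguous relation for $\Psi$. That reduction does work: after dividing out the common prefactor, the first identity collapses (with $Q=q^2$, $a=-q^2/\ka(y)$, $b=q^2\ka(x/y)$, $z=q^2\ka(x/p)$) to
\[
\Psi(a/Q;b/Q;Q,z)-\Psi(a;b/Q;Q,z/Q)=(z/Q)\,\Psi(a;b;Q,z),
\]
and the key point — which your write-up glosses over — is that the right-hand coefficient $xy/qp$ equals $q^{-2}Z$ times the common factor $\sgn(y)q^{\chi(py/x)-1}$ multiplying the other two terms, so that the extra power of $z$ shifts the index by one; only then does the coefficient-of-$z^n$ comparison close up via $(a/Q;Q)_n-Q^{-n}(a;Q)_n=(1-Q^{-n})(a;Q)_{n-1}$. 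Without noticing this proportionality one would (wrongly) conclude the termwise identity fails. Two minor blemishes: the parenthetical identity $\sqrt{1+\ka(y)}=|y|\sqrt{1+\ka(y^{-1})}$ is false for $y<0$ (the two sides differ by a sign), though it is not actually needed — the $\sgn(y)$ in front of $a_p(x,qy)$ comes from matching $(-\sgn(y))^{\chi(q^{-1}x)}=-\sgn(y)(-\sgn(y))^{\chi(x)}$ on the left with the explicit $-\sgn(y)$ on the right; and \cite[Appendix III]{GaspR} lists transformation formulas rather than the ${}_1\vp_1$ contiguous relation you need, so the final step should be stated as the elementary $q$-shifted-factorial computation above rather than a citation.
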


\begin{proof}
A proof of the second equality can be found in the second half of
the proof of \cite[Prop.~3.9]{KoelKCMP}, see also
\cite[(6.3)]{KoelKCMP}. If we apply the second contiguous
relation with $x$ and $y$ interchanged, we get
$$
\sqrt{1 + \kappa(y)} \ a_p(q y, x)  =  (x y/ p)\,\, a_p(y,x) -
\sgn(x)\,q\,\sqrt{1+\kappa(q^{-1}x)}\,a_p(y,q^{-1} x)
$$
and the first contiguous relation follows from
the second equality in \eqref{eq:symmetryforapxy}.
\end{proof}

The following identity is essentially the second-order $q$-difference equation for $_1\varphi_1$-functions.

\begin{lemma} \label{lem:2ndorderqdifferenceq}
Consider $x,y,p \in I_q$. Then
\[
(\kappa(p) - \kappa(y) + \frac{y^2 p^2}{x^2})\,a_p(x,y) +
\frac{yp}{x}\,\sqrt{1+ \kappa(q^{-1} p)}\,\,a_{q^{-1} p}(x,y)
+ q\,\frac{yp}{x}\,\sqrt{1+ \kappa(p)}\,a_{qp}(x,y) = 0 \ .
\]
\end{lemma}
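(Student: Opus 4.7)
The plan is to reduce this three-term relation in $p$ (with $x,y$ fixed) to the standard second-order $q^2$-difference equation satisfied by the $_1\varphi_1$-series implicit in Definition \ref{def:functionap}. Writing $a_p(x,y) = K(x,y)\,G_p\,\Psi_p$, with $K(x,y)$ collecting all $p$-independent factors and
\[
G_p = (-1)^{\chi(p)}\,\nu(py/x)\,\sqrt{(-\kappa(p);q^2)_\infty},\qquad
\Psi_p = \Psis{-q^2/\kappa(y)}{q^2\kappa(x/y)}{q^2,\,z_p},
\]
where $z_p = q^2\kappa(x/p)$, the shifts $p\mapsto q^{\pm 1}p$ only affect $G_p$ and the argument $z_p$: the parameters of $\Psi$ are $p$-independent, and $z_{qp} = z_p/q^2$, $z_{q^{-1}p} = q^2 z_p$. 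Dividing the identity through by $K(x,y)\,G_p$ thus turns it into a three-term relation among $\Psi(z_p/q^2)$, $\Psi(z_p)$, $\Psi(q^2 z_p)$.

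The key computation is the ratios $G_{q^{\pm 1}p}/G_p$, obtained from the elementary identities $\nu(qt)/\nu(t) = |t|/q$, $\nu(q^{-1}t)/\nu(t) = q^2/|t|$, the $q$-shifted factorial identities $(-\kappa(qp);q^2)_\infty = (-\kappa(p);q^2)_\infty/(1+\kappa(p))$ and $(-\kappa(q^{-1}p);q^2)_\infty = (1+\kappa(q^{-1}p))(-\kappa(p);q^2)_\infty$, together with $(-1)^{\chi(q^{\pm 1}p)} = -(-1)^{\chi(p)}$. A short calculation gives
\[
\frac{G_{qp}}{G_p} = -\frac{|py/x|}{q\sqrt{1+\kappa(p)}},\qquad
\frac{G_{q^{-1}p}}{G_p} = -\frac{q^2\,|x|\,\sqrt{1+\kappa(q^{-1}p)}}{|py|}.
\]
Inserting these into the claimed identity and using that $\sgn(xy)=\sgn(p)$ on the support of $a_p(x,y)$ (Definition \ref{def:functionap}), so that $\sgn(xyp)=+$ wherever any of $a_p$, $a_{qp}$, $a_{q^{-1}p}$ is nonzero, the two $\sqrt{1+\kappa(\cdot)}$-factors cancel cleanly and the statement collapses to
\[
(\kappa(p)-\kappa(y)+y^2p^2/x^2)\,\Psi(z_p) - (q^2+\kappa(p))\,\Psi(q^2 z_p) - \tfrac{y^2p^2}{x^2}\,\Psi(z_p/q^2) = 0.
\]

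Passing to the variables $z = z_p$, $a = -q^2/\kappa(y)$, $b = q^2\kappa(x/y)$ (and using $\kappa(x/p) = \kappa(x)/\kappa(p)$ freely), this identity is recognized as a nonzero scalar multiple of the standard contiguous relation
\[
q^2\,\psi(z/q^2) + (z-q^2-b)\,\psi(z) + (b-az)\,\psi(q^2 z) = 0
\]
for $\psi = \rphis{1}{1}{a}{b}{q^2,\cdot}$, which I would derive in one line by summing the one-step recursion $(1-q^{2n+2})(1-bq^{2n})\,c_{n+1} = -zq^{2n}(1-aq^{2n})\,c_n$ for the series coefficients $c_n = \frac{(a;q^2)_n}{(q^2,b;q^2)_n}(-z)^n q^{n(n-1)}$ of $\psi(z)$, shifting the summation index on the left, and recognizing $\sum_n q^{2kn}c_n = \psi(q^{2k}z)$ for $k=0,1,2$ (then replacing $z$ by $z/q^2$). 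Points outside the support condition $\sgn(xy)=\sgn(p)$ make all three $a$'s vanish, so the identity is trivial there. The only real obstacle is the sign bookkeeping needed to match the coefficients $\kappa(p)-\kappa(y)+y^2p^2/x^2$ etc.\ with $z-q^2-b$ etc.; here the support identity $\sgn(xyp)=+$ is exactly what makes each pairing line up without spurious sign factors.
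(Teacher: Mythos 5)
Your proof is correct and follows essentially the same route as the paper's: both reduce the three-term relation in $p$ to the second-order $q^2$-difference (contiguous) relation for the underlying ${}_1\varphi_1$, using the shift behaviour of $\nu$, $(-\kappa(p);q^2)_\infty$ and $(-1)^{\chi(p)}$ under $p\mapsto q^{\pm1}p$. The only differences are cosmetic — you divide out the $p$-dependent prefactor and work backwards to the difference equation (which you rederive from the coefficient recursion), whereas the paper starts from the difference equation, cited from the literature, and multiplies up to reassemble the $a_p$'s.
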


\begin{proof}
This equation holds trivially if $p y/x < 0$. From now on we assume that $p y/x > 0$. We know that the $\Psi$-functions satisfy the following $q$-difference equation for $a,b,c,z \in \C$ (see the proof of Lemma 2.1 of \cite{CiccKK}, or take a limit in \cite[Ex.1.13]{GaspR})
\[
(c-az)\,\,\Psi(a;c;q^2,\,q^2 z) + (\,z - (c+q^2)\,)\,\,
\Psi(a;c;q^2,\,z) + q^2\,\,\Psi(a;c;q^2,\,z/q^2) = 0\ .
\]
Hence,
\[
\begin{split}
 (q^2& \kappa(x/y) + q^4 x^2/y^2p^2))\,\, \Psi(-q^2/\kappa(y);\,q^2
\kappa(x/y);q^2,\,q^2 \kappa(x/q^{-1}p))
\\ &   + (-q^2 \kappa(x/y) - q^2 + q^2 \kappa(x/p)\,)\,\,
\Psi(-q^2/\kappa(y);q^2 \kappa(x/y);q^2,\,q^2 \kappa(x/p))
\\ &   + q^2\,\Psi(-q^2/\kappa(y);q^2 \kappa(x/y);q^2,\,q^2 \kappa(x/qp)) = 0
\end{split}
\]
Multiplying this equation with
$y^2 p^2/q^2 x^2\, (-1)^{\chi(p)+1} \,\nu(p y/x)$ and using the fact that
\newline
$\nu(py/x) = q^{-2}\,(py/x)\,\nu(q^{-1} p y/x) = q\,(x/py)\,\nu(q p y/x)$,
we get that
\[
\begin{split}
& (\kappa(p) - \kappa(y) + p^2 y^2/x^2)\, (-1)^{\chi(p)}\,\nu(p y/x)\,\,
\Psi(-q^2/\kappa(y);q^2 \kappa(x/y);q^2,\,q^2 \kappa(x/p))
\\ + &  \, (py/x)\, (1+ \kappa(q^{-1} p))\, (-1)^{\chi(q^{-1} p)}\,
\nu(q^{-1} p y/x)\,\, \Psi(-q^2/\kappa(y);q^2 \kappa(x/y);q^2,\,q^2
\kappa(x/q^{-1}p))
\\ + &  \, q\,(py/x)\, (-1)^{\chi(qp)}\,\nu(q p y/x)\,\,
\Psi(-q^2/\kappa(y);q^2 \kappa(x/y);q^2,\,q^2 \kappa(x/qp)) = 0
\end{split}
\]
Multiplying this with $\sqrt{(\kappa(p);q^2)_\infty}$, it follows that
\begin{equation*}
\begin{split}
 0=&\, \big(\kappa(p) - \kappa(y) + p^2 y^2/x^2\big)\, (-1)^{\chi(p)}\,\nu(p y/x) \\
& \qquad \times \sqrt{(\kappa(p);q^2)_\infty}\,\, \Psi\big(-q^2/\kappa(y);q^2 \kappa(x/y);q^2,\,q^2 \kappa(x/p)\big) \\
+& \, \frac{py}{x}\, \sqrt{1+ \kappa(q^{-1} p)}\, (-1)^{\chi(q^{-1} p)}\, \nu(q^{-1} p y/x) \\
& \qquad \times \sqrt{(\kappa(q^{-1}p);q^2)_\infty}\,\, \Psi\big(-q^2/\kappa(y);q^2 \kappa(x/y);q^2,\,q^2 \kappa(x/q^{-1}p)\big) \\
+&  \, \frac{qpy}{x}\,\sqrt{1+ \kappa( p)}\, (-1)^{\chi(qp)}\,\nu(q p y/x)\\
& \qquad \times \sqrt{(\kappa(q p);q^2)_\infty}\,\, \Psi\big(-q^2/\kappa(y);q^2 \kappa(x/y);q^2,\,q^2 \kappa(x/qp)\big).
\end{split}
\end{equation*}
Now the lemma follows from Definition \ref{def:functionap}.
\end{proof}

We also need a few estimates involving the functions $a_p(x,y)$.

\begin{lemma} \label{lem:estimate1}
Consider $p \in I_q$ and $r,s \in q^\Z$.
Then, there exists a constant $D > 0$ so that
\[
|a_p(x,y)| \leq D\,\nu(p/y)\,|x|^{\chi(p/y)}
\]
for all $x,y \in I_q$ satisfying $|x| \geq r$ and $|y| \leq s$.
\end{lemma}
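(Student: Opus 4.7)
The plan is to reduce to a form in which the cancellations between the Gaussian-type factors become transparent. First I would dispose of the case $\sgn(xy)\neq\sgn(p)$, in which $a_p(x,y)=0$ by Definition \ref{def:functionap}. In the remaining case, apply the second symmetry relation in \eqref{eq:symmetryforapxy} to swap the arguments, so that $|a_p(x,y)|=|a_p(y,x)|$. This swap is the key trick: in $a_p(y,x)$ the large variable $x$ sits in the second slot, so by Definition \ref{def:functionap} it enters only through the prefactor $|x|\,\nu(px/y)\sqrt{(-\kappa(x);q^2)_\infty}$ and through the arguments of the $\Psi$-function, both of which are accessible.

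The $\Psi$-factor in $a_p(y,x)$ has arguments $-q^2/\kappa(x)$, $q^2\kappa(y/x)$ and $q^2\kappa(y/p)$. For $|x|\geq r$, $|y|\leq s$ and $p$ fixed these three arguments lie in a bounded subset of $\C$; since $\Psi$ is entire in each of its arguments jointly, $|\Psi|$ is uniformly bounded on this region. Likewise the factors $\sqrt{(-\kappa(p);q^2)_\infty}$ and $1/\sqrt{(-\kappa(y);q^2)_\infty}$ are uniformly bounded, the latter because $|y|\leq s$ and $y\in I_q$ forces $-\kappa(y)$ into a compact region on which $(-\kappa(y);q^2)_\infty$ stays bounded away from $0$.

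The core of the argument is the algebraic identity
\[
|x|\,\nu(px/y)\,\sqrt{(-\kappa(x);q^2)_\infty}\ =\ C(x)\,\nu(p/y)\,|x|^{\chi(p/y)},
\]
with $C(x)$ uniformly bounded for $|x|\geq r$. For $\sgn(x)=+$ the theta-product identity \eqref{eq:thetaprodid}, applied with base $q^2$, base point $-1$, and $k=\chi(x)$, gives
\[
(-\kappa(x);q^2)_\infty \ =\ q^{-\chi(x)(\chi(x)-1)}\,\frac{(-1,-q^2;q^2)_\infty}{(-q^{2-2\chi(x)};q^2)_\infty},
\]
and the last $q$-shifted factorial is uniformly bounded on $\chi(x)\leq\log_q r$, since $q^{2-2\chi(x)}\to 0$ as $\chi(x)\to-\infty$. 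For $\sgn(x)=-$ only finitely many $x\in-q^{\mathbb N}$ satisfy $|x|\geq r$, so the factor is trivially bounded. Setting $M=\chi(x)$ and $N=\chi(p/y)=\chi(p)-\chi(y)$ and using $\chi(px/y)=M+N$, a direct expansion shows
\[
M + \tfrac{1}{2}(M+N-1)(M+N-2) - \tfrac{1}{2}M(M-1) \ =\ MN + \tfrac{1}{2}(N-1)(N-2),
\]
which is precisely the equality of $q$-exponents needed for the displayed identity.

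Combining the boundedness of the $\Psi$-factor, the $\kappa(p)$ and $\kappa(y)$ factors, and the constant $C(x)$ yields $|a_p(y,x)|\leq D\,\nu(p/y)\,|x|^{\chi(p/y)}$ with $D$ the product of the relevant suprema and the constant $c_q$. The main obstacle is purely bookkeeping: one has to keep track of the four quadratic exponents of $q$ (from $\nu$, the prefactor $|x|$, $(-\kappa(x);q^2)_\infty$, and $\nu(p/y)|x|^{\chi(p/y)}$) and verify that they cancel exactly. Once the theta-product identity has been used to pull out the Gaussian factor $q^{-\chi(x)(\chi(x)-1)}$ from $(-\kappa(x);q^2)_\infty$, the cancellation is a one-line expansion in $M$ and $N$.
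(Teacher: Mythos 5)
Your proof is correct and follows essentially the same route as the paper's: reduce via the symmetry $|a_p(x,y)|=|a_p(y,x)|$, bound the $\Psi$-factor and the $y$-dependent $q$-shifted factorials uniformly on the region $|x|\ge r$, $|y|\le s$, and use the theta-product identity to absorb the Gaussian growth of $\sqrt{(-\kappa(x);q^2)_\infty}$ into $\nu(px/y)$, with the finitely many negative $x$ satisfying $|x|\ge r$ handled separately. The only cosmetic difference is that the paper first factors $|x|\,\nu(px/y)=\nu(qx)\,\nu(p/y)\,|x|^{\chi(p/y)}$ and then applies the theta-product identity to $\nu(qx)\sqrt{(-\kappa(x);q^2)_\infty}$, whereas you verify the equivalent quadratic exponent identity in $M=\chi(x)$, $N=\chi(p/y)$ directly.
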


\begin{proof}
If  $\sgn(x y) = \sgn(p)$ (otherwise $a_p(x,y)=0$), then the symmetry relation \eqref{eq:symmetryforapxy}
and Definition \ref{def:functionap} imply that
\[
\begin{split}
|a_p(x,y)|  & = |a_p(y,x)| \\
  =& \, c_q\,\sqrt{\frac{(-\kappa(p);q^2)_\infty}
{(-\kappa(y);q^2)_\infty}}\ \
\left|\,\Psis{-q^2/\kappa(p)}{q^2 \kappa(y/p)}
{q^2,\,q^2 \kappa(y/x)}\,\right|
 \ |x|\,\nu(p x/y)\,\sqrt{(-\kappa(x);q^2)_\infty},
\end{split}
\]
and $|x|\,\nu(p x/y) = \nu(q x) \,\, \nu(p/y)\,\, |x|^{\chi(p/y)}$ by Definition \ref{defn:chikappanuands}. Now observe that for $x > 0$,
\[
\sqrt{(-\kappa(x);q^2)_\infty} \,\,\nu(q x) =
\frac{\sqrt{2}\,(-q^2;q^2)_\infty}{\sqrt{(-q^2/x^2;q^2)_\infty}}
\]
by the $\theta$-product identity \eqref{eq:thetaprodid}. Furthermore, for $x<0$, the set $\{\,x \in I_q^- \mid |x| \geq r\,\}$ is finite.
Hence, it is clear that there exists a constant $D > 0$ so that $|a_p(x,y)| \leq D \,\, \nu(p/y)\,\, |x|^{\chi(p/y)}$
for all $x,y \in I_q$ satisfying $|x| \geq r$ and $|y| \leq s$.
\end{proof}

\begin{lemma} \label{lem:estimate2}
Consider $p,y \in I_q$, $\al > 0$ and $r \in [1,\infty)$. Then, the family $\bigl(\,|x|^{-\al}\,a_p(x,y)\,\bigr)_{x \in
I_q}$ belongs to $\ell^r(I_q)$.
\end{lemma}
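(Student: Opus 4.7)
The plan is to bound $\sum_{x\in I_q}|x|^{-\alpha r}|a_p(x,y)|^r$ by splitting $I_q$ into $I^{(1)}:=\{x\in I_q:|x|\leq 1\}$ (which contains all of $-q^\N$ and the $|x|\to 0$ tail of $q^\Z$) and $I^{(2)}:=\{x\in q^\Z:|x|>1\}$, and showing that $|a_p(x,y)|$ decays strongly enough in each regime.

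On $I^{(1)}$ I would apply Definition~\ref{def:functionap} directly. As $|x|\to 0$, all the $x$-dependent factors except $\nu(py/x)$ remain bounded: $|y|$ is fixed, the $\Psi$-factor is continuous in $x$ with its second and third arguments $q^2\ka(x/y),\,q^2\ka(x/p)$ both tending to $0$ (so $\Psi\to 1$), and $(-\ka(x);q^2)_\infty$ stays bounded above and below. The decay comes from $\nu(py/x)=q^{\frac12(\chi(py/x)-1)(\chi(py/x)-2)}$, whose exponent is quadratic in $\chi(x)$. Hence $|a_p(x,y)|\leq C\,q^{c\,\chi(x)^2}$ for some $c>0$ on $I^{(1)}$, and multiplying by $|x|^{-\alpha}=q^{-\alpha\chi(x)}$ preserves super-exponential decay, so $\sum_{x\in I^{(1)}}|x|^{-\alpha r}|a_p(x,y)|^r<\infty$.

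On $I^{(2)}$, Lemma~\ref{lem:estimate1} applied with $s=|y|$ gives $|a_p(x,y)|\leq D|x|^{\chi(p/y)}$. If $\chi(p/y)\leq 0$, this immediately yields summability against $|x|^{-\alpha}$ for every $\alpha>0$. The nontrivial case is $\chi(p/y)=j\geq 1$, where the bound must be sharpened. Exploiting $|a_p(x,y)|=|a_p(y,x)|$ from \eqref{eq:symmetryforapxy} and writing out $a_p(y,x)$ via Definition~\ref{def:functionap}, the $\Psi$-factor becomes $\Psi(-q^2/\ka(x); q^2\ka(y/x); q^2, q^2\ka(y/p))$, whose third argument is the fixed constant $z:=q^2\ka(y/p)=q^{2-2j}$ while $a:=-q^2/\ka(x)$ and $b:=q^2\ka(y/x)$ each decay like $|x|^{-2}$. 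Writing $\Psi$ as a power series in $(a,b)$ around $(0,0)$ via \eqref{eq:defPsi}, the constant term is $(z;q^2)_\infty$, and (by computing successive partial derivatives of the defining series) the function vanishes to order $j$ at $(0,0)$ precisely because $z=q^{-2(j-1)}\in q^{-2\NN}$. Thus $|\Psi|\leq C(|a|+|b|)^j\leq C'|x|^{-2j}$ uniformly for $|x|$ large. Combining with the polynomial prefactor $|x|\nu(px/y)\sqrt{(-\ka(x);q^2)_\infty}\leq C|x|^{\chi(p/y)}=C|x|^j$ (the same factor already analysed in the proof of Lemma~\ref{lem:estimate1}), one obtains the refined bound $|a_p(x,y)|\leq C|x|^{-j}$ on $I^{(2)}$, whence $\sum_{|x|>1}|x|^{-r(\alpha+j)}<\infty$ for every $\alpha>0$ and $r\in[1,\infty)$.

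The main obstacle will be verifying the order-$j$ vanishing of $\Psi$ at $(a,b)=(0,0)$ when $z=q^{2-2j}$, with a uniform remainder estimate. Each successive partial derivative of the defining series in $a$ and $b$ produces linear combinations of values $(zq^{2k};q^2)_\infty$, which vanish precisely for $k\leq j-1$; once all combined-order derivatives $\leq j-1$ are checked to vanish and the $j$-th order remainder is shown to have uniformly bounded Pochhammer coefficients (using the uniform estimate $(-q^2/\ka(x);q^2)_n\leq\prod_{k\geq 0}(1+q^{2+2k})$ valid for all $|x|\geq 1$), the two regimes combine to give the claimed $\ell^r$-summability.
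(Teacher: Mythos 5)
Your decomposition of $I_q$ and your treatment of the regime $|x|\leq 1$ (Gaussian decay from $\nu(py/x)$ beating the exponential factor $|x|^{-\al}$) match the paper's argument in substance, as does the easy large-$|x|$ case $\chi(p/y)\leq 0$. Where you diverge is the case $|x|\geq 1$ with $\chi(p/y)=j\geq 1$, and there your route is both harder than necessary and not actually completed: you yourself flag the order-$j$ vanishing of $\Psi$ at $(a,b)=(0,0)$ as the main obstacle and only sketch it. The paper dispatches exactly this case in one line using the \emph{first} symmetry relation of \eqref{eq:symmetryforapxy}, namely $|a_p(x,y)|=|y/p|\,|a_y(x,p)|$, which swaps the roles of $p$ and $y$; applying Lemma \ref{lem:estimate1} to $a_y(x,p)$ then gives $|a_p(x,y)|\leq |y/p|\,D\,\nu(y/p)\,|x|^{\chi(y/p)}$ with $\chi(y/p)=-j<0$, hence boundedness on $\{|x|\geq 1\}$. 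Note also that boundedness is all you need there: since $\{x\in I_q:|x|\geq 1\}=q^{-\NN}$ and $\al>0$, the factor $|x|^{-\al r}=q^{\al r k}$ already sums by itself, so your refined bound $|a_p(x,y)|\leq C|x|^{-j}$ is stronger than required (it does follow from the same symmetry relation, so your claim is true, but you should not need to prove it from the series expansion).

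Two further remarks on your $\Psi$-argument, should you wish to salvage it. First, the identification $z=q^{2\ka(y/p)+\dots}=q^{2-2j}\in q^{-2\NN_0}$ presupposes $\sgn(y/p)=+$; this is in fact automatic on the support of $a_p(y,x)$ when $x>0$ (since $\sgn(xy)=\sgn(p)$ forces $\sgn(y)=\sgn(p)$), but you should say so, as otherwise $z=-q^{2-2j}$ and the vanishing fails. Second, the verification that all mixed partials of order $<j$ vanish, with a uniform remainder, is a genuine computation (the $k$-th order Taylor coefficients are combinations of $(zq^{2m};q^2)_\infty$ with $m\leq k$, which vanish precisely for $m\leq j-1$); as written this step is asserted rather than proved, so the proposal has a gap exactly at the point where the symmetry relations of \eqref{eq:symmetryforapxy} would have made the problem disappear.
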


\begin{proof}
Since $|a_p(x,y)| = |a_p(y,x)|$ by \eqref{eq:symmetryforapxy}, Lemma \ref{lem:estimate1}
implies the existence of  a constant $D > 0$ so that
$|x^{-\al}\,a_p(x,y)| \leq D\,\nu(p/x)\,|y|^{\chi(p/x)-\al}$
for all $x \in I_q$ satisfying $|x| \leq q$.

Next we need an estimate for $|x| \geq 1$. If $p/y \geq 1$, Lemma \ref{lem:estimate1} assures the existence of
$E > 0$ so that $|a_p(x,y)| \leq E$ for all $x \in I_q$
satisfying $x \geq 1$. If on the other hand, $p/y < 1$,
Lemma \ref{lem:estimate1} and the fact that
$|a_p(x,y)| = |y/p|\,|a_y(x,p)|$ by \eqref{eq:symmetryforapxy},
guarantee also in this case the existence of  $E > 0$ so that
$|a_p(x,y)| \leq E$ for all $x \in I_q$ satisfying $x \geq 1$.
Hence, the lemma follows.
\end{proof}

\subsection{The function $S(t;p_1,p_2,n)$} \label{appB:functionS}
The following function is defined as an infinite sum of certain limits of the functions $a_p$. Let $p_1,p_2 \in I_q$, $n \in\Z$. The function $S(\,\cdot\,;p_1,p_2,n)\colon\C\setminus\{0\} \rightarrow \C$ is defined by
\begin{equation} \label{eq:S=sum1phi1 1phi1}\index{S@$S(t;p_1,p_2,n)$}
\begin{split}
S(t&;p_1,p_2,n) = \\
 &C\, \sum_{z \in \sgn(p_1)q^\Z} \big(\sgn(p_1p_2)\,  t\big)^{\chi(z)}
\frac{1 }{|z|}\, \nu(\frac{p_1}{z}) \, \nu(\frac{p_2 q^n}{z})
\rphis{1}{1}{-q^2/\ka(p_1)}{0}{q^2,q^2\ka(z)} \\
& \qquad \qquad \times   \rphis{1}{1}{-q^2/\ka(p_2)}{0}{q^2,q^2\ka\big(\sgn(p_1p_2) q^{-n}z\big)},
\end{split}
\end{equation}
where
\[
C=C(p_1,p_2,n) = \big(\sgn(p_2)\big)^n \, |p_1p_2|\, c_q^2\, q^n \, \sqrt{(-\ka(p_1), -\ka(p_2);q^2)_\infty}\,.
\]
The sum is absolutely convergent, so $S(\,\cdot\,;p_1,p_2,n)$ is an analytic function on $\C\setminus\{0\}$. The function $S(t;p_1,p_2,n)$ can be written as a $_2\varphi_1$-function. To see this we need a few lemmas.

In the following lemma the special case $b=q$ is obtained by Koornwinder and Swarttouw as a $q$-analogue of Graf's addition formula for Bessel functions \cite[(4.10)]{KooSwa}. The proof of Lemma \ref{lem:2phi1=sum 1phi1 1phi1} runs along the same lines as the proof used in \cite{KooSwa}.
\begin{lemma} \label{lem:2phi1=sum 1phi1 1phi1}
For $c \in q^\Z$, $|u|<1$, and $|bu/w|<1$,
\[
\begin{split}
\sum_{n=-\infty}^\infty &w^n q^{\hf n (n-1)} \rphis{1}{1}{u}{0}{q;cq^{n}}\rphis{1}{1}{v}{0}{q,bq^n}=\\
&\frac{ (q,u,-w,-q/w,-cu/w,bq/c;q)_\infty }{ (-bu/w,-c/w,-wq/c;q)_\infty }  \rphis{2}{1}{-bv/w, -wq/cu}{bq/c}{q,u}.
\end{split}
\]
\end{lemma}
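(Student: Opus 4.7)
The plan is to reduce the bilateral sum on the left-hand side to a single ${}_2\vp_1$-series by using an intermediate generating-function identity, and then to match the result to the right-hand side via Heine's transformations. Concretely, I first establish the auxiliary identity
\[
\sum_{n\in\Z} z^n q^{n(n-1)/2}\,\rphis{1}{1}{a}{0}{q,xq^n} \,=\, \frac{(q,-z,-q/z,-ax/z;q)_\infty}{(-x/z;q)_\infty},
\]
valid for all $z\in\C\setminus\{0\}$ where the right-hand side is finite. To prove this, one expands the ${}_1\vp_1$-series, interchanges the order of summation (justified by absolute convergence when $|x/z|<1$), evaluates the inner $n$-sum by the Jacobi triple product identity together with the $\te$-product identity \eqref{eq:thetaprodid}, and sums the remaining ${}_1\vp_0$ by the $q$-binomial theorem. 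The left-hand side is an entire function of $z\in\C\setminus\{0\}$ because $q^{n(n-1)/2}$ decays faster than any exponential as $|n|\to\infty$ and the ${}_1\vp_1$ is of order zero in $z$; hence the identity extends by analytic continuation to all $z$ in the domain of the right-hand side.

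Next I plan to substitute this identity into the left-hand side of the lemma. Expanding the second ${}_1\vp_1$-factor as $\sum_{l\geq 0}\frac{(v;q)_l}{(q;q)_l}(-b)^l q^{l(l-1)/2}q^{nl}$ and interchanging the $n$- and $l$-summations (legitimate because the $n$-sum converges uniformly on compact sets in $l$), the inner $n$-sum becomes an instance of the auxiliary identity with $z=wq^l$, $x=c$ and $a=u$. Applying the shift relation $(yq^{-l};q)_\infty=(y;q)_\infty(-y)^l q^{-l(l+1)/2}(q/y;q)_l$ to extract the $l$-dependence of the resulting infinite products, the exponential-in-$l$ and Gaussian-in-$l$ factors collapse, and one arrives at
\[
\frac{(q,-w,-q/w,-uc/w;q)_\infty}{(-c/w;q)_\infty}\,\rphis{2}{1}{v,-qw/(uc)}{-qw/c}{q,-bu/w},
\]
the ${}_2\vp_1$ converging absolutely thanks to the hypothesis $|bu/w|<1$.

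The final step is to bring this ${}_2\vp_1$ to the form appearing on the right-hand side of the lemma. I plan to apply Heine's second transformation,
\[
\rphis{2}{1}{\al,\be}{\ga}{q,y}=\frac{(\be,\al y;q)_\infty}{(\ga,y;q)_\infty}\rphis{2}{1}{\ga/\be,y}{\al y}{q,\be},
\]
with $\al=-qw/(uc)$, $\be=v$, $\ga=-qw/c$ and $y=-bu/w$, obtaining $\rphis{2}{1}{-qw/(cv),-bu/w}{bq/c}{q,v}$ up to a prefactor $(v,bq/c;q)_\infty/(-qw/c,-bu/w;q)_\infty$. Then Heine's third transformation,
\[
\rphis{2}{1}{\al,\be}{\ga}{q,y}=\frac{(\al\be y/\ga;q)_\infty}{(y;q)_\infty}\rphis{2}{1}{\ga/\al,\ga/\be}{\ga}{q,\al\be y/\ga},
\]
applied with $\al=-qw/(cv)$, $\be=-bu/w$, $\ga=bq/c$ and $y=v$, yields $\rphis{2}{1}{-bv/w,-qw/(cu)}{bq/c}{q,u}$ since the direct computation $\al\be y/\ga=u$ holds. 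Accumulating the infinite-product prefactors from the two Heine steps together with the prefactor from the preceding paragraph produces precisely the expression on the right-hand side of the lemma.

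The main obstacle will be the bookkeeping: a cascade of $q$-shifted factorial identities must be tracked through the shift relation for $(yq^{-l};q)_\infty$ and through the two Heine transformations, and one needs to observe that intermediate factors such as $(-w;q)_l$ and $(v;q)_\infty$ cancel in pairs so that the final infinite-product prefactor collapses to $(q,u,-w,-q/w,-cu/w,bq/c;q)_\infty/(-bu/w,-c/w,-wq/c;q)_\infty$. By comparison, the analytic-continuation argument for the auxiliary identity is routine once the growth bounds on the ${}_1\vp_1$ are in place.
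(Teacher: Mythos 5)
Your strategy is sound and genuinely different from the paper's. The paper proves this lemma by expanding the product $\rpsis{1}{1}{x/sy}{b}{q,yt}\,\rphis{1}{0}{xs/y}{-}{q,-y/t}$ via Ramanujan's summation and the $q$-binomial theorem to obtain the master identity \eqref{eq:sum2phi1}, and then compares Laurent coefficients in $t$ of two expansions of the same infinite product (one from $y\to0$, one from $b=q$, $x=0$); the hypothesis $c\in q^\Z$ arises there as $c=q^{1-m}$ with $m$ the Laurent index. You instead work on the bilateral sum directly: your auxiliary identity is precisely the paper's $y\to0$ specialization (derived more directly from the Jacobi triple product and the $q$-binomial theorem), your resummation over $l$ lands on the right-hand side of the paper's Lemma \ref{lem:2phi1=sum 1phi1 1phi1 A} (for $|u|<1$, $c\in q^\Z$ rather than terminating $u$), and your Heine steps carry out the transformation the paper only alludes to. The bookkeeping I checked (the $\te$-product and $(yq^{-l};q)_\infty$ manipulations, the two Heine applications, the cancellation of $(v;q)_\infty$) is correct. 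One small improvement: a single application of \cite[(III.2)]{GaspR} to $\rphis{2}{1}{v,\,-qw/uc}{-qw/c}{q,-bu/w}$ reaches the target directly under exactly the stated hypotheses $|u|<1$, $|bu/w|<1$, whereas your intermediate $_2\vp_1$ with argument $v$ need not converge unless you also assume $|v|<1$ and continue analytically.

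The genuine gap is the justification of the auxiliary identity for $|x/z|\geq 1$. The claim that the bilateral sum is analytic on all of $\C\setminus\{0\}$ because the $_1\vp_1$ is of order zero is false in general: as $n\to-\infty$ the argument $xq^n$ blows up, and the order-zero growth of $\rphis{1}{1}{a}{0}{q,Y}$ is of theta type, roughly $q^{-(\log_q|Y|)^2/2}$, which exactly cancels the Gaussian factor $q^{n(n-1)/2}$ and leaves a geometric tail of ratio $|x/z|$; the sum diverges for $|z|<|x|$ for generic $x$. Since you substitute $z=wq^l$ with $l\to\infty$, you are forced into exactly this regime. The identity is rescued only because $x=c\in q^\Z$ and $|a|=|u|<1$: then $(cq^{n+k};q)_\infty$ vanishes for $k\leq -n-\chi(c)$, so $\rphis{1}{1}{u}{0}{q,cq^n}=O(|u|^{-n})$ as $n\to-\infty$, the bilateral sum converges for every $z\neq0$, and the apparent poles of $(-c/z;q)_\infty^{-1}$ on the right-hand side are cancelled by zeros of $(-z,-q/z;q)_\infty$. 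The same estimate is what justifies interchanging the $n$- and $l$-sums. As written your argument never invokes $c\in q^\Z$, which is a sure sign a step is missing; once this estimate is inserted the proof closes.
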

Other expressions for the sum in the above lemma, for values of $u,w,b$ not satisfying the above conditions, can be obtained using transformation formulas for $_2\varphi_1$-series.
\begin{proof}
Assume $|y|<1$, $|sb/x|<|t|<|y^{-1}|$ and $|y|<|t|$. We write the product of the following $_1\psi_1$-function and $_1\varphi_0$-function as a double series;
\[
\begin{split}
\rpsis{1}{1}{x/sy}{b}{q,yt} \rphis{1}{0}{xs/y}{-}{q,-\frac{y}{t}}
= &\,\sum_{n=-\infty}^\infty \sum_{k=0}^\infty \frac{ (x/sy;q)_n (xs/y;q)_k }{ (b;q)_n (q;q)_k } (-1)^k y^{n+k} t^{n-k}.
\end{split}
\]
Renaming $n=m+k$, the sum over $k$ can be written as a $_2\varphi_1$-series. Using Ramanujan's $_1\psi_1$-summation formula \cite[(II.29)]{GaspR} and the $q$-binomial formula \cite[(II.3)]{GaspR}, we obtain
\begin{equation} \label{eq:sum2phi1}
\begin{split}
\frac{ (q,bsy/x, xt/s, qs/xt;q)_\infty }{ (b, qsy/x, yt, bs/xt;q)_\infty}& \frac{ (-xs/t;q)_\infty}{ (-y/t;q)_\infty} = \\ &\sum_{m=-\infty}^\infty \frac{ (x/sy;q)_m }{ (b;q)_m } (yt)^m \rphis{2}{1}{xs/y, xq^m/sy}{bq^m}{q,-y^2}.
\end{split}
\end{equation}
We consider this formula as the Laurent expansion of the left hand side considered as a function of $t$.

Let us consider two special cases of \eqref{eq:sum2phi1}. Letting $y \rightarrow 0$, we obtain
\[
\begin{split}
\frac{ (q,xt/s, qs/xt, -xs/t;q)_\infty }{ (b, bs/xt;q)_\infty }& = \sum_{m=-\infty}^\infty \left( - \frac{xt}{s} \right)^m q^{\hf m(m-1) } \frac{1}{(b;q)_m} \rphis{0}{1}{-}{bq^m}{q,-q^m x^2} \\
&= \frac{1}{(b;q)_\infty}\sum_{m=-\infty}^\infty \left( - \frac{xt}{s} \right)^m q^{\hf m(m-1) } \rphis{1}{1}{-x^2/b}{0}{q,bq^m}.
\end{split}
\]
In the last line we used the transformations
\begin{equation} \label{eq:0phi1-transform}
\rphis{1}{1}{z}{0}{q,c} =  (c,z;q)_\infty \rphis{2}{1}{0,0}{c}{q,z} = (c;q)_\infty \rphis{0}{1}{-}{c}{q,cz},
\end{equation}
which follow from Heine's $_2\varphi_1$-transformations \cite[(III.1), (III.3)]{GaspR} by letting $a,b\to 0$.

For the second special case we observe that in the above calculations the assumption $|sb/x|<|t|$ was needed for absolute convergence of the bilateral $_1\psi_1$-series. In case $b=q$ this series can be written as a unilateral series, a $_1\varphi_0$-series, and then the assumption $|sb/x|<|t|$ is no longer needed. Now setting $b=q$ and $x=0$, we find
\begin{equation} \label{eq:1phi1transformation}
\begin{split}
\frac{1}{(yt,-y/t;q)_\infty } =& \sum_{m=-\infty}^\infty \frac{ (yt)^{m} }{(q;q)_m} \rphis{2}{1}{0,0}{q^{1+m}}{q,-y^2}\\
=&\frac{1}{(q,-y^2;q)_\infty}\sum_{m=-\infty}^\infty  (yt)^{m} \rphis{1}{1}{-y^2}{0}{q,q^{1+m}}\\
=&\frac{1}{(q,-y^2;q)_\infty}\sum_{m=-\infty}^\infty  \left(-\frac{t}{y}\right)^{m} \rphis{1}{1}{-y^2}{0}{q,q^{1-m}}
\end{split}
\end{equation}
where we used \eqref{eq:0phi1-transform}, and for the last equality we used the $t \leftrightarrow -t^{-1}$ invariance and reversed the sum.

Multiplying our two special cases of \eqref{eq:sum2phi1}, we obtain a second expression for the Laurent expansion of the left hand side of \eqref{eq:sum2phi1} considered as a functions of $t$;
\[
\begin{split}
&\frac{ (q,bsy/x, xt/s, qs/xt,-xs/t;q)_\infty }{ (b, qsy/x, yt, bs/xt,-y/t;q)_\infty}\\
&= \frac{ (bsy/x;q)_\infty }{ (q,-y^2,b,sqy/x;q)_\infty } \sum_{k=-\infty}^\infty  \left(-\frac{t}{y}\right)^{k} \rphis{1}{1}{-y^2}{0}{q;q^{1-k}}\\
 & \phantom{\frac{ (bsy/x;q)_\infty }{ (sqy/x,-y^2;q)_\infty }}\qquad\times \sum_{n=-\infty}^\infty\left(-\frac{xt}{s}\right)^n q^{\hf n (n-1)} \rphis{1}{1}{-x^2/b}{0}{q,bq^n}\\
&= \frac{ (bsy/x;q)_\infty }{ (q,-y^2,b,sqy/x;q)_\infty } \\
&\times\sum_{m=-\infty}^\infty \left(-\frac{t}{y}\right)^{m}
 \sum_{n=-\infty}^\infty \left(\frac{xy}{s}\right)^n q^{\hf n (n-1)} \rphis{1}{1}{-y^2}{0}{q;q^{1-m+n}}\rphis{1}{1}{-x^2/b}{0}{q,bq^n}.
\end{split}
\]
Here we used $n+k=m$. Comparing coefficients of $t$ in \eqref{eq:sum2phi1} and the above formula, and, to get rid of the squares, replacing $(-y^2,-x^2/b, xy/s)$ by $(u,v,w)$, we obtain
\[
\begin{split}
\sum_{n=-\infty}^\infty &w^n q^{\hf n (n-1)} \rphis{1}{1}{u}{0}{q;q^{1+n-m}}\rphis{1}{1}{v}{0}{q,bq^n}=\\
&u^m\frac{ (q,u,-qu/w,-w/u,bq^m;q)_\infty }{ (-bu/w,-wq^m/u;q)_\infty }  \rphis{2}{1}{-bv/w, -wq^m/u}{bq^m}{q,u}.
\end{split}
\]
Observe that by the $\te$-product identity \eqref{eq:thetaprodid},
\[
u^m \frac{ (-qu/w,-w/u;q)_\infty }{(-wq^m/u;q)_\infty } = w^m q^{\hf m(m-1)} (-uq^{1-m}/w;q)_\infty = \frac{ (-w,-q/w,-uq^{1-m}/w;q)_\infty}{(-q^{1-m}/w,-wq^m;q)_\infty},
\]
then the result follows from writing $q^{1-m}=c$.
\end{proof}

\begin{remark}
We can prove a slightly more general result along the same lines as the proof of Lemma \ref{lem:2phi1=sum 1phi1 1phi1}, starting with the product
\[
\rpsis{1}{1}{x/sy}{b}{q,yt}\rpsis{1}{1}{xs/y}{d}{q,-\frac{y}{t}}.
\]
This leads to the identity
\[
\begin{split}
\sum_{k=-\infty}^\infty& \left( \frac{xy}{s} \right)^k q^{\hf k(k-1)} \rphis{1}{1}{-x^2/b}{0}{q,bq^k} \rphis{2}{2}{-y^2,dy/xs}{0,qy/xs}{q,q^{1-m+k}} =\\
&(-y^2)^m \frac{(x/sy;q)_m (d,-y^2,syq/x;q)_\infty}{(b;q)_m (bsy/x;q)_\infty} \rpsis{2}{2}{xq^m/sy, xs/y}{bq^m,d}{q,-y^2}.
\end{split}
\]
For $d=q$ this is equivalent to the result from Lemma \ref{lem:2phi1=sum 1phi1 1phi1}.
\end{remark}

The following lemma shows that the result of Lemma \ref{lem:2phi1=sum 1phi1 1phi1} remains valid for $c\not\in q^{-\Z}$, if we assume $u \in q^{-\N_0}$. The $_2\varphi_1$-series in Lemma \ref{lem:2phi1=sum 1phi1 1phi1} does not converge in this case, but it can be obtained from the $_2\varphi_1$-series in the following Lemma by an application of Heine's transformation \cite[(III.2)]{GaspR}.
\begin{lemma} \label{lem:2phi1=sum 1phi1 1phi1 A}
For $u=q^{-\N_0}$ and $|bu/w|<1$,
\[
\begin{split}
\sum_{n=-\infty}^\infty &w^n q^{\hf n (n-1)} \rphis{1}{1}{u}{0}{q;cq^n}\rphis{1}{1}{v}{0}{q,bq^n}=\\
&\frac{ (q,-w,-q/w, -cu/w;q)_\infty }{(-c/w;q)_\infty} \rphis{2}{1}{-wq/cu, v}{-wq/c}{q,-bu/w}.
\end{split}
\]
\end{lemma}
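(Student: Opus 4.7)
The plan is to derive Lemma \ref{lem:2phi1=sum 1phi1 1phi1 A} from Lemma \ref{lem:2phi1=sum 1phi1 1phi1} by applying Heine's transformation \cite[(III.2)]{GaspR} to the $_2\vp_1$-series on the right-hand side, and then extend the resulting identity to the stated parameter range by analytic continuation. Explicitly, I would apply the transformation
\[
\rphis{2}{1}{A,B}{C}{q,z} = \frac{(C/B,\,Bz;q)_\infty}{(C,\,z;q)_\infty}\,\rphis{2}{1}{ABz/C,\,B}{Bz}{q,\,C/B}
\]
with $A=-bv/w$, $B=-wq/cu$, $C=bq/c$ and $z=u$, so that $C/B=-bu/w$, $Bz=-wq/c$ and $ABz/C=v$. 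Substituting this into the right-hand side of Lemma \ref{lem:2phi1=sum 1phi1 1phi1} and cancelling the matching $(bq/c;q)_\infty$, $(u;q)_\infty$, $(-bu/w;q)_\infty$ and $(-wq/c;q)_\infty$ factors gives exactly the right-hand side of Lemma \ref{lem:2phi1=sum 1phi1 1phi1 A}. At this stage the identity holds for $c\in q^\Z$, $|u|<1$ and $|bu/w|<1$.

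To extend the range of validity, I would argue that both sides of Lemma \ref{lem:2phi1=sum 1phi1 1phi1 A} are meromorphic functions of $(c,u)$ on suitable open sets. The right-hand side is manifestly meromorphic in $c\in\C\setminus\{0\}$, with poles arising from the zeros of $(-c/w;q)_\infty$ and from $-wq/c\in q^{-\N_0}$, and it is analytic in $u$ on $|bu/w|<1$. For the left-hand side, each term $\rphis{1}{1}{u}{0}{q,cq^n}$ is entire in $u$ and $c$, so one only needs uniform convergence of the bilateral sum on compact subsets of the parameter space. Using the identity $\rphis{1}{1}{u}{0}{q,x}=(x;q)_\infty\,\rphis{0}{1}{-}{x}{q,ux}$ from \eqref{eq:0phi1-transform} and the $\theta$-product identity \eqref{eq:thetaprodid} to control the growth of $(cq^n;q)_\infty$ and $(bq^n;q)_\infty$ as $n\to-\infty$, one sees that the Gaussian factor $q^{\hf n(n-1)}$ dominates all contributions, giving an entire-in-$c$ function that is analytic in $u$ on a neighbourhood of $|u|\le q^{-N}$ for every fixed $N\in\NN$. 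Since $q^\Z$ accumulates at both $0$ and $\infty$, the identity theorem applied to the meromorphic function in $c$ then extends the identity from $c\in q^\Z$ to all $c$ in the domain of meromorphy, and a further analytic continuation in $u$ extends it from $|u|<1$ to $u\in q^{-\NN}$.

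The main obstacle is the convergence estimate needed in the second step, because the natural domain of convergence of $\rphis{2}{1}{-bv/w,-wq/cu}{bq/c}{q,u}$ in Lemma \ref{lem:2phi1=sum 1phi1 1phi1} is $|u|<1$, which excludes $u\in q^{-\NN}\setminus\{1\}$; consequently one cannot read the extended identity off as a pointwise statement but only as one between analytic continuations. Concretely, one must bound $|\rphis{1}{1}{u}{0}{q,cq^n}|$ uniformly for $u$ in a compact subset of $\C$ (possibly containing points with $|u|\gg 1$) and for $cq^n$ of arbitrary modulus. The bound $|\rphis{1}{1}{u}{0}{q,cq^n}|\le C\,|(cq^n;q)_\infty|\le C'\,\max(1,|c|^{|n|})\,q^{-\hf|n|(|n|-1)}$, combined with a similar estimate for $\rphis{1}{1}{v}{0}{q,bq^n}$, shows that the weighted bilateral sum converges absolutely and uniformly on compacts thanks to the $q^{\hf n(n-1)}$ factor. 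Once this uniformity is established, the extension in $c$ (using accumulation in $q^\Z$) and then in $u$ (to $q^{-\NN}$) proceed by standard applications of the identity theorem, completing the proof.
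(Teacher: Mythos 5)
Your Heine--transformation step checks out formally: with $A=-bv/w$, $B=-wq/cu$, $C=bq/c$, $z=u$ one has $C/B=-bu/w$, $Bz=-wq/c$, $ABz/C=v$, and the prefactors cancel to give the right-hand side of Lemma \ref{lem:2phi1=sum 1phi1 1phi1 A}; the paper itself points out this formal relation in the sentence preceding the lemma. The gap is in the analytic continuation, and it is not a technicality. Your key estimate --- that the Gaussian weight $q^{\hf n(n-1)}$ dominates the two ${}_1\varphi_1$ factors --- is false. By \eqref{eq:0phi1-transform}, for generic parameters $\rphis{1}{1}{u}{0}{q,xq^n}$ is comparable to $(xq^n;q)_\infty$ as $n\to-\infty$, hence of size roughly $q^{-\hf n^2}$; the product of \emph{two} such factors is of size $q^{-n^2}$, and the single weight $|w^n|q^{\hf n(n-1)}\approx q^{\hf n^2}$ cannot control it. The bilateral sum therefore diverges for generic $(c,u)$. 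It converges only through two distinct special mechanisms: for $c\in q^\Z$ the cancellation $\rphis{1}{1}{u}{0}{q,q^{1+m}}=u^{-m}\rphis{1}{1}{u}{0}{q,q^{1-m}}$ makes the first factor \emph{decay} like $u^{-n}$ (the regime of Lemma \ref{lem:2phi1=sum 1phi1 1phi1}), and for $u\in q^{-\NN}$ the first ${}_1\varphi_1$ terminates and grows only polynomially in $q^{-n}$ (the regime of Lemma \ref{lem:2phi1=sum 1phi1 1phi1 A}). The set where the left-hand side is defined is thus essentially the union of the two slices $\{c\in q^\Z\}\cup\{u\in q^{-\NN}\}$, which has empty interior in the $(c,u)$-parameter space, so there is no connected open set on which both sides are analytic and the identity theorem can transport the identity from one regime to the other. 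A secondary problem: even a continuation in $c$ alone would rest on the fact that $q^\Z$ accumulates only at $0$, and you would have to verify that $c=0$ is an interior point of the common domain of analyticity; this is not ``manifest,'' since the apparent poles of the right-hand side at $c=-wq^{1+j}$ accumulate precisely at $0$ (they do cancel against zeros of $(-cu/w;q)_\infty$ and of the numerator parameters, but that must be checked).

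The paper proves the lemma directly and this is the robust route: for $u=q^{-k}$ the first ${}_1\varphi_1$ is a finite sum, so the left-hand side is an absolutely convergent triple sum; summing over $n$ first via Jacobi's triple product identity and the $\te$-product identity \eqref{eq:thetaprodid}, and then evaluating the resulting \emph{finite} sum over the terminating index by the $q$-binomial theorem, collapses everything to the stated ${}_2\varphi_1$. Your strategy could in principle be repaired by a two-stage continuation --- first in $u$ at fixed $c\in q^\Z$ (on the disc $|bu/w|<1$, using the cancellation mechanism to get local uniformity), then in $c$ at fixed $u\in q^{-\NN}$ (where the first factor is a polynomial in $c$) --- but carrying this out requires exactly the case-by-case convergence analysis that your single uniform estimate was meant to avoid, and at that point the direct computation is shorter.
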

\begin{proof}
Let us denote the infinite sum on the left hand side by $S$. We write $u=q^{-k}$ with $k \in \N_0$, then by definition of the $_1\varphi_1$-series, we have
\[
S = \sum_{n=-\infty}^\infty \sum_{m=0}^k \sum_{l=0}^\infty \frac{(q^{-k};q)_m (v;q)_l }{(q;q)_m (q;q)_l } q^{\hf m(m-1)} (-c)^m q^{\hf l(l-1)} (-b)^l q^{\hf n(n-1)} (wq^{m+l})^n.
\]
This double sum converges absolutely, so we may first sum over $n$. Using Jacobi's triple product identity \cite[(II.28)]{GaspR} we find
\[
\begin{split}
\sum_{n=-\infty}^\infty q^{\hf n(n-1)} (wq^{m+l})^n &= (q,-wq^{m+l}, -q^{1-m-l}/w;q)_\infty \\
& = w^{-(m+l)} q^{-\hf m(m-1)} q^{-\hf l(l-1)} q^{-lm} (q,-w,-q/w;q)_\infty.
\end{split}
\]
Here the second equality follows from the $\te$-product identity \eqref{eq:thetaprodid}. Now $S$ reduces to
\[
S= (q,-w,-q/w;q)_\infty \sum_{l=0}^\infty \sum_{m=0}^k \frac{ (v;q)_l }{(q;q)_l} (-b/w)^l  \frac{ (q^{-k};q)_m }{(q;q)_m} (-cq^{-l}/w)^m.
\]
The sum over $m$ can be evaluated with the $q$-binomial formula \cite[(II.3)]{GaspR};
\[
\begin{split}
\sum_{m=0}^k \frac{ (q^{-k};q)_m }{(q;q)_m} (-cq^{-l}/w)^m &= \frac{(-cq^{-l-k}/w;q)_\infty}{ (-cq^{-l}/w;q)_\infty} =\frac{ (-cq^{-l-k}/w;q)_l (-cq^{-k}/w;q)_\infty} {(-cq^{-l}/w;q)_l (-c/w;q)_\infty}\\
&= q^{-kl}\frac{(-wq^{1+k}/c;q)_l (-cq^{-k}/w;q)_\infty }{ (-wq/c;q)_l (-c/w;q)_\infty },
\end{split}
\]
using \cite[(I.9)]{GaspR}. We see that $S$ becomes a multiple of a single sum,
\[
S = \frac{ (q,-w,-q/w, -cq^{-k}/w;q)_\infty }{(-c/w;q)_\infty} \sum_{l=0}^\infty \frac{ (-wq^{1+k}/c,v;q)_l }{(q,-wq/c;q)_l} \left(-\frac{ bq^{-k}}{w}\right)^l.
\]
The sum is the $_2\varphi_1$-series in the lemma.
\end{proof}

\begin{remark} \label{rem:symmetry}
In Lemmas \ref{lem:2phi1=sum 1phi1 1phi1} and \ref{lem:2phi1=sum 1phi1 1phi1 A} the sum $\Sigma$ on the left hand side has an obvious symmetry $(u,c)\leftrightarrow (v,b)$. On the right hand side this symmetry is not at all obvious, so there must be a $_2\varphi_1$-transformation behind this symmetry. Let us see how the symmetry follows from known transformation formulas.

Applying the three-term transformation formula \cite[(III.31)]{GaspR} we find
\[
\begin{split}
&(bq/c;q)_\infty \rphis{2}{1}{-bv/w, -wq/cu}{bq/c}{q,u} =\\
 &\ \frac{ (v,bq/c,c/b;q)_\infty }{ (-cuv/wq,-wq/bv;q)_\infty } \rphis{2}{1}{-wq/cv, q/v}{-q^2w/cuv}{q,-wq/bu} \\
 &+ \frac{c}{b} \frac{ (v,cq/b,-wq/cu, -qw/cv, -buv/wq, -q^2w/buv;q)_\infty }{ (u,-wq/bu, -qw/bv, -uvc/wq, -q^2w/uvc;q)_\infty } \rphis{2}{1}{-wq/bv, -cu/w}{cq/b}{q,v},
\end{split}
\]
where we also applied Heine's transformation \cite[(III.3)]{GaspR} for the second $_2\varphi_1$ on the right hand side. Observe that the second $_2\varphi_1$-function on the right hand side is the same as the $_2\varphi_1$-function on the left hand side after the substitutions $(u,v,c,b) \mapsto (v,u,b,c)$, which is exactly the symmetry we are looking for. This shows that the first $_2\varphi_1$-function on the right hand side must vanish, which implies the condition $v \in q^{-\N_0}$ or $c/b \in q^\Z$. Assuming one of these conditions, the symmetry $(u,c) \leftrightarrow (v,b)$ for $\Sigma$ is still not clear at this point, because of all the $q$-shifted factorials in front of the $_2\varphi_1$-function. To take care of these factors we need to apply the $\te$-product identity \eqref{eq:thetaprodid} several times. Let us assume that $v=q^{-k}$, $k \in \N_0$, then
\begin{align*}
\frac{(-cu/w, -wq/cu;q)_\infty}{ (-q^2w/cuv,-vuc/wq;q)_\infty }&= \left( - \frac{wq}{cu}\right)^{k+1} q^{\hf k(k+1)},\\
\frac{ (-buv/wq, -q^2w/buv;q)_\infty }{ (-bu/w, -wq/bu;q)_\infty } &= \left( - \frac{bu}{wq}\right)^{k+1} q^{-\hf k (k+1)},\\
\frac{ (-wq/cv;q)_\infty}{(-c/w,-wq/c;q)_\infty } &= \frac{(-c/wq)^k q^{\hf k(k-1)}}{(-cv/w;q)_\infty },\\
\frac{1}{(-wq/bv;q)_\infty} &= \left( \frac{-wq}{b}\right)^k q^{\hf k(k-1)} \frac{(-bv/w;q)_\infty}{(-b/w,-qw/b;q)_\infty},
\end{align*}
which leads to
\[
\Sigma = \frac{ (q,-w,-q/w,cq/b, -bv/w,v;q)_\infty }{ (-vc/w, -b/w, -wq/b;q)_\infty} \rphis{2}{1}{-wq/bv, -cu/w}{cq/b}{q,v}.
\]
Comparing this with the right hand side in Lemma \ref{lem:2phi1=sum 1phi1 1phi1} the symmetry $(u,c)\leftrightarrow (v,b)$ is now clear. In case $b/c \in q^\Z$ similar computations must be used.

Observe that the conditions $b/c \in q^\Z$ and $v \in q^{-\N_0}$ correspond to Lemmas \ref{lem:2phi1=sum 1phi1 1phi1} and \ref{lem:2phi1=sum 1phi1 1phi1 A}, respectively.
\end{remark}
We are now ready to obtain a $_2\varphi_1$-expression for the function $S(t;p_1,p_2,n)$.

\begin{prop} \label{prop:S=2phi1}
The function $S(t;p_1,p_2,n)$ defined by \eqref{eq:S=sum1phi1 1phi1} can be written as a multiple of a $_2\varphi_1$-function:
\[
\begin{split}
S(t;p_1,p_2,n) =&\, p_2^n q^{\hf n (n-1)} |p_1p_2|\, \nu(p_1)\nu(p_2) c_q^2 \sqrt{ (-\kappa(p_1),-\kappa(p_2);q^2)_\infty }\\
& \times\frac{ (q^2, -q^2/\kappa(p_2), -tq^{3-n}/p_1p_2, -p_1p_2q^{n-1}/t, p_1q^{1-n}/p_2t;q^2)_\infty }{ ( |p_1|q^{1+n}/|p_2|t, -p_1|p_2|q^{-n-1}/t, -tq^{n+3}/p_1|p_2|;q^2)_\infty }\\
& \times (\sgn(p_1p_2)q^{2+2n};q^2)_\infty \rphis{2}{1}{p_2q^{1+n}/p_1t, p_2 t q^{1+n}/p_1}{ \sgn(p_1 p_2) q^{2+2n} }{q^2,-q^2/\kappa(p_2)}.
\end{split}
\]
\end{prop}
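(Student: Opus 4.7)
The strategy is to reduce the defining series \eqref{eq:S=sum1phi1 1phi1} to an instance of Lemma \ref{lem:2phi1=sum 1phi1 1phi1}, taken in base $Q = q^2$, and then use a Heine transformation to recast the resulting $_2\varphi_1$ in the form stated. I would first parametrize the index by $z=\sgn(p_1)q^k$, $k\in\Z$, so that $|z|=q^k$, $\chi(z)=k$, $\kappa(z)=\sgn(p_1)q^{2k}$, and $\kappa(\sgn(p_1p_2)q^{-n}z)=\sgn(p_2)q^{2k-2n}$. This turns the arguments of the two $_1\varphi_1$-factors into $cQ^k$ and $bQ^k$ with $c=\sgn(p_1)q^2$ and $b=\sgn(p_2)q^{2-2n}$. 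Expanding $\nu(p_1/z)\nu(p_2q^n/z)$ through Definition \ref{defn:chikappanuands} and collecting the $k$-dependent powers of $q$ together with the exterior factor $(\sgn(p_1p_2)t)^k/|z|$ produces, up to a $k$-independent constant, exactly $W^kQ^{k(k-1)/2}$, where after using $q^{\chi(p_i)}=|p_i|$ and $\sgn(p_1p_2)|p_1p_2|=p_1p_2$ one finds
\[
W \,=\, \sgn(p_1p_2)\,t\,q^{3-\chi(p_1)-\chi(p_2)-n}\,=\,\frac{tq^{3-n}}{p_1p_2}\,.
\]

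With this identification, Lemma \ref{lem:2phi1=sum 1phi1 1phi1} in base $Q=q^2$, applied with $u=-q^2/\kappa(p_1)$ and $v=-q^2/\kappa(p_2)$, produces a quotient of infinite $q^2$-Pochhammer symbols multiplied by a $_2\varphi_1(-bv/W,-WQ/(cu);bQ/c;Q,u)$. A direct computation gives
\[
-\frac{bv}{W}=\frac{p_1q^{1-n}}{p_2t},\qquad -\frac{WQ}{cu}=\frac{p_1tq^{1-n}}{p_2},\qquad \frac{bQ}{c}=\sgn(p_1p_2)q^{2-2n},
\]
so that the argument is $u=-\sgn(p_1)q^2/p_1^2$. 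To pass to the $_2\varphi_1$ in the statement, whose argument is $-q^2/\kappa(p_2)$, I would apply Heine's transformation \cite[(III.3)]{GaspR}, which sends a general $_2\varphi_1(A,B;C;Q,z)$ to a multiple of $_2\varphi_1(C/A,C/B;C;Q,ABz/C)$. Here one checks $ABz/C=-q^2/\kappa(p_2)$, and the sign identity $\sgn(p_1p_2)\sgn(p_1)=\sgn(p_2)$ converts $C/A$ and $C/B$ into $p_2q^{1+n}/(p_1t)$ and $p_2tq^{1+n}/p_1$; finally a repeated application of the $\theta$-product identity \eqref{eq:thetaprodid} reshuffles the denominator parameter from $\sgn(p_1p_2)q^{2-2n}$ to $\sgn(p_1p_2)q^{2+2n}$ while rebalancing the accompanying infinite products.

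For the convergence constraints $|u|<1$ and $|bu/W|<1$ required by Lemma \ref{lem:2phi1=sum 1phi1 1phi1}, the plan is to appeal to analytic continuation: the series \eqref{eq:S=sum1phi1 1phi1} defines an entire function of $t$ on $\C\setminus\{0\}$, and the right-hand side of the proposition does as well, so once the identity is established on any open subset of $t$-values it extends to all $t\in\C\setminus\{0\}$. In the degenerate case where the denominator parameter in the intermediate $_2\varphi_1$ lands in $q^{-2\N_0}$, one appeals instead to Lemma \ref{lem:2phi1=sum 1phi1 1phi1 A} (again with analytic continuation in a free parameter) to cover that configuration. The main obstacle is the bookkeeping: one has to track the signs $\sgn(p_i)$, $\sgn(p_1p_2)$, the absolute values $|p_i|=\sgn(p_i)p_i$, and the many $q^2$-Pochhammer factors contributed by Lemma \ref{lem:2phi1=sum 1phi1 1phi1}, Heine's transformation and the $\theta$-product identity. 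In particular, combining the $\nu(p_1)\nu(p_2)$ normalization with the constant $C(p_1,p_2,n)$ from the definition of $S$ and moving the $\sqrt{(-\kappa(p_i);q^2)_\infty}$-factors through various Pochhammer cancellations is where most of the labor lies, but no new conceptual ingredient beyond the identities already used to establish Lemmas \ref{lem:2phi1=sum 1phi1 1phi1} and \ref{lem:2phi1=sum 1phi1 1phi1 A} is needed.
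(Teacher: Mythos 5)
Your reduction is the same as the paper's: substitute $z=\sgn(p_1)q^k$, recognize the $k$-dependent factors as $w^kq^{k(k-1)}$ with $w=tq^{3-n}/(p_1p_2)$, and invoke Lemma \ref{lem:2phi1=sum 1phi1 1phi1} (or Lemma \ref{lem:2phi1=sum 1phi1 1phi1 A}) in base $q^2$. The problem is your choice of which ${}_1\varphi_1$-factor plays the role of $(u,c)$ and which plays $(v,b)$. The paper takes $u=-q^2/\kappa(p_2)$, $c=\sgn(p_2)q^{2-2n}$, $v=-q^2/\kappa(p_1)$, $b=\sgn(p_1)q^2$; with that assignment the lemma's right-hand side is \emph{verbatim} the expression in the proposition (argument $-q^2/\kappa(p_2)$, denominator parameter $bq^4/c=\sgn(p_1p_2)q^{2+2n}$, and all the displayed Pochhammer prefactors), so no further transformation is needed. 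Your swapped assignment lands instead on a ${}_2\varphi_1$ with argument $-q^2/\kappa(p_1)$ and denominator parameter $\sgn(p_1p_2)q^{2-2n}$, and the conversion you propose does not work: Heine's transformation \cite[(III.3)]{GaspR} \emph{preserves} the denominator parameter, so it cannot turn $\sgn(p_1p_2)q^{2-2n}$ into $\sgn(p_1p_2)q^{2+2n}$, and your claimed new numerator parameters are wrong --- one actually gets $C/A=\sgn(p_1p_2)\,p_2tq^{1-n}/p_1$, not $p_2tq^{1+n}/p_1$; the two differ by $\sgn(p_1p_2)q^{2n}$, which no amount of $\te$-product reshuffling of the exterior infinite products can absorb, since those identities never touch the parameters inside a ${}_2\varphi_1$. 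The genuine relation between your ${}_2\varphi_1$ and the stated one is exactly the $(u,c)\leftrightarrow(v,b)$ symmetry analyzed in Remark \ref{rem:symmetry}, which requires the three-term transformation \cite[(III.31)]{GaspR} together with the vanishing of one of the three terms (valid here because either $c/b\in q^{2\Z}$ or $v\in q^{-2\N_0}$, by a case analysis on signs). The cleanest repair is simply to swap the roles of $(u,c)$ and $(v,b)$ at the outset, which is what the paper does.

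Two smaller remarks. First, your analytic continuation in $t$ legitimately handles the constraint $|bu/w|<1$ (which involves $t$ through $w$), but $|u|<1$ is a condition on $p_1$ or $p_2$ alone, which ranges over a discrete set; there one either continues analytically in $u$ as a free parameter of the lemma, or observes that the correct dichotomy is $\sgn(p_2)=+$ (so $c\in q^{2\Z}$ and Lemma \ref{lem:2phi1=sum 1phi1 1phi1} applies) versus $\sgn(p_2)=-$ (so $u=q^2/p_2^2\in q^{-2\N_0}$ and Lemma \ref{lem:2phi1=sum 1phi1 1phi1 A} applies). Second, your identification of $W$ and the bookkeeping of the constant $C(p_1,p_2,n)\,\nu(p_1)\nu(p_2q^n)=K$ are correct, so once the assignment is fixed the rest of your argument goes through.
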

\begin{proof}
We substitute $z=\sgn(p_1)q^k$, $k \in \Z$, in \eqref{eq:S=sum1phi1 1phi1}, then
\[
\begin{split}
S(t;p_1,p_2,n)&= K \, \sum_{k=-\infty}^\infty \left(\frac{tq^{3-n}}{p_1p_2}\right)^k q^{k (k-1)}\rphis{1}{1}{-\sgn(p_1)q^2/p_1^2}{0}{q^2, \sgn(p_1)q^{2+2k}} \\
& \qquad \qquad \times \rphis{1}{1}{-\sgn(p_2)q^2/p_2^2}{0}{q^2, \sgn(p_2)q^{2+2k-2n}},\\
K &= q^{\hf n(n-1)} p_2^n |p_1p_2|\, \nu(p_1)\nu(p_2)\, c_q^2 \sqrt{ (-\kappa(p_1), -\kappa(p_2);q^2)}.
\end{split}
\]
Now we apply Lemmas \ref{lem:2phi1=sum 1phi1 1phi1} and \ref{lem:2phi1=sum 1phi1 1phi1 A}, with $q$ replaced by $q^2$, and
\[
w = \frac{tq^{3-n}}{p_1p_2}, \quad u=-\sgn(p_2)\frac{q^2}{p_2^2}, \quad v=-\sgn(p_1)\frac{q^2}{p_1^2}, \quad b=\sgn(p_1)q^{2}, \quad c=\sgn(p_2)q^{2-2n},
\]
to obtain the desired expression.
\end{proof}
The function $S(t;p_1,p_2,n)$ can be written in terms of several other $_2\varphi_1$-functions using the following result.

\begin{lemma} \label{lem:symmetryS}
The function $S(t;p_1,p_2,n)$ satisfies the following symmetry relations:
\[
\begin{split}
S(t;p_1,p_2,n) &= (qt)^n S(t;p_2,p_1,-n)\\
&= (-q)^n\, \sgn(p_1)^{\chi(p_1)}
\, \sgn(p_2)^{\chi(p_2)+n}\, \sgn(p_1p_2)\, S(\sgn(p_1p_2)t^{-1};p_1,p_2,-n),\\
&=(-t)^n \sgn(p_1)^{\chi(p_1)+n} \sgn(p_2)^{\chi(p_2)} \sgn(p_1p_2) S(\sgn(p_1p_t)t^{-1};p_2,p_1,n).
\end{split}
\]
\end{lemma}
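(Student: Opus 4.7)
The plan is to prove the first symmetry by a direct change of variables in the series definition \eqref{eq:S=sum1phi1 1phi1}, to observe that the second symmetry is already established as Lemma \ref{lem:propertiesofS}(i), and to derive the third symmetry by composing the first two.

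For the first symmetry $S(t;p_1,p_2,n) = (qt)^n S(t;p_2,p_1,-n)$, I would start from the series expression for $S(t;p_2,p_1,-n)$, whose sum is indexed by $z \in \sgn(p_2)q^\Z$, and make the substitution $z = \sgn(p_1p_2)q^{-n}z'$. This is a bijection $\sgn(p_1)q^\Z \to \sgn(p_2)q^\Z$, and it transforms the summand term-by-term as follows. The factors $1/|z|$ and $(\sgn(p_1p_2)t)^{\chi(z)}$ combine to give $(\sgn(p_1p_2)t)^{\chi(z')}/|z'|$ times a global factor $q^n(\sgn(p_1p_2)t)^{-n}$. The $\nu$-factors $\nu(p_2/z)\nu(p_1q^{-n}/z)$ become $\nu(p_2q^n/z')\nu(p_1/z')$, since $\nu$ depends only on $|\cdot|$. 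Finally, using $\kappa(x)=\sgn(x)x^2$ and the key identity $\sgn(p_1p_2)\sgn(z') = \sgn(p_2)$ for $z' \in \sgn(p_1)q^\Z$, one checks that $q^2\kappa(z) = q^2\kappa(\sgn(p_1p_2)q^{-n}z')$ and $q^2\kappa(\sgn(p_2p_1)q^n z) = q^2\kappa(z')$, so the two ${}_1\vp_1$ factors become exactly those in the summand of $S(t;p_1,p_2,n)$ (in the opposite order, which is immaterial). Combined with the easily-computed ratio $C(p_2,p_1,-n)/C(p_1,p_2,n) = \sgn(p_1p_2)^{-n}q^{-2n}$ of normalization constants, this yields
\[
S(t;p_2,p_1,-n) = \sgn(p_1p_2)^{-n}q^{-2n}\cdot q^n(\sgn(p_1p_2)t)^{-n}\cdot S(t;p_1,p_2,n) = (qt)^{-n}S(t;p_1,p_2,n),
\]
which gives the first symmetry.

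The second symmetry of Lemma \ref{lem:symmetryS} coincides verbatim with Lemma \ref{lem:propertiesofS}(i) and requires no further argument. For the third symmetry, I would substitute the second symmetry (applied to $S(t;p_2,p_1,-n)$ with $p_1 \leftrightarrow p_2$ and $n \to -n$) into the first:
\[
S(t;p_1,p_2,n) = (qt)^n \cdot (-q)^{-n}\sgn(p_2)^{\chi(p_2)}\sgn(p_1)^{\chi(p_1)-n}\sgn(p_1p_2) S(\sgn(p_1p_2)t^{-1};p_2,p_1,n).
\]
The elementary identities $(qt)^n(-q)^{-n} = (-t)^n$ and $\sgn(p_1)^{-n} = \sgn(p_1)^n$ (since $\sgn(p_1) \in \{\pm 1\}$) then rewrite this in the stated form.

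The main technical obstacle is the sign bookkeeping in the change-of-variables step, in particular verifying that the two ${}_1\vp_1$ factors of $S(t;p_2,p_1,-n)$ (involving $\kappa(p_2)$ and $\kappa(p_1)$ respectively) are correctly rerouted after the shift $z \mapsto \sgn(p_1p_2)q^{-n}z'$ so that their arguments match those in $S(t;p_1,p_2,n)$. Once this sign tracking is carried out, no transformation formulas for basic hypergeometric series are needed; the argument is a direct manipulation of the series definition together with elementary arithmetic in powers of $q$, $t$ and the signs $\sgn(p_i)$.
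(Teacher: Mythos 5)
Your derivation of the first symmetry relation is correct and is essentially the paper's own argument (the paper shifts the summation variable $z\mapsto zq^n$ in \eqref{eq:S=sum1phi1 1phi1}; your substitution $z=\sgn(p_1p_2)q^{-n}z'$ is the same idea with the sign bookkeeping made explicit), and deriving the third relation by composing the first two is also exactly what the paper does.

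The gap is in the second relation. You dispose of it by citing Lemma \ref{lem:propertiesofS}(i), but that is circular: Lemma \ref{lem:propertiesofS} is a statement in the main body whose proof is explicitly deferred to the appendix --- the paper says, immediately after its statement, ``See Appendix \ref{appB:functionS}, and in particular Proposition \ref{prop:S=2phi1} and Lemma \ref{lem:symmetryS}, for a proof of Lemma \ref{lem:propertiesofS}.'' So the second relation of Lemma \ref{lem:symmetryS} \emph{is} the proof of Lemma \ref{lem:propertiesofS}(i), not the other way around, and it must be established independently. It is also not obtainable by elementary series manipulation alone: the natural move is the substitution $z\mapsto z^{-1}$, but this sends the arguments $q^2\ka(z)$ and $q^2\ka(\sgn(p_1p_2)q^{-n}z)$ of the two ${}_1\vp_1$-factors to $q^2/\ka(z)$-type arguments, and one genuinely needs the inversion identities
\[
\rphis{1}{1}{a}{0}{q,q^{1+n}} = a^{-n}\, \rphis{1}{1}{a}{0}{q,q^{1-n}}, \qquad n\in\Z,
\]
(obtained by comparing coefficients of $t$ in \eqref{eq:1phi1transformation}) and, for the terminating case $a=q^{-n}$ with $n\in\NN$,
\[
\rphis{1}{1}{q^{-n}}{0}{q,qy} = y^{n}\, \rphis{1}{1}{q^{-n}}{0}{q,q/y}, \qquad y\in\C\setminus\{0\},
\]
to convert each factor back into the form appearing in $S(\sgn(p_1p_2)t^{-1};p_1,p_2,-n)$; the sign and power prefactors $(-q)^n\sgn(p_1)^{\chi(p_1)}\sgn(p_2)^{\chi(p_2)+n}\sgn(p_1p_2)$ come precisely from the $a^{-n}$ and $y^n$ factors in these transformations. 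Your closing claim that ``no transformation formulas for basic hypergeometric series are needed'' is therefore false for the second relation, and the proof as written does not establish it.
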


\begin{proof}
The first symmetry relation follows from replacing the summation variable $z$ by $zq^{n}$ in definition \eqref{eq:S=sum1phi1 1phi1}.

Comparing coefficients of $t$ in \eqref{eq:1phi1transformation} gives the transformation formula
\[
\rphis{1}{1}{a}{0}{q,q^{1+n}} = a^{-n} \rphis{1}{1}{a}{0}{q,q^{1-n}}, \qquad n \in \Z.
\]
Furthermore, as a special case of \cite[Prop.~6.6]{KoelKCMP} we have
\[
\rphis{1}{1}{q^{-n}}{0}{q,q y} = y^{n} \rphis{1}{1}{q^{-n}}{0}{q,q/y},\qquad n \in \N_0,\ y \in \C\setminus\{0\}.
\]
To both $_1\varphi_1$-functions in \eqref{eq:S=sum1phi1 1phi1} we apply one of the above transformations; the second one in case the $_1\varphi_1$ is a terminating series, the first transformation otherwise. Now we change the summation variable from $z$ to $z^{-1}$ to obtain the second symmetry relation.

The third relation follows from combining the first two relations.
\end{proof}

Proposition \ref{prop:S=2phi1} and the symmetry relations from Lemma \ref{lem:symmetryS} imply transformation formulas between the $_2\varphi_1$-series involved. For instance, the first symmetry relation in Lemma \ref{lem:symmetryS} together with an application of the $\te$-product identity \eqref{eq:thetaprodid}, corresponds to the transformation described in Remark \ref{rem:symmetry}.

We also need the following asymptotic results for the function $S$.

\begin{lemma} \label{lem:asymptoticsforS}
Assume $t \in \C\setminus\{0\}$ and $k,n \in \Z$.
\begin{enumerate}[(i)]
\item
For $k \to -\infty$,
\[
S(t;q^k,q^k,n) = \mathcal O(q^{-nk}).
\]
\item Let $\si,\tau \in \{-,+\}$, then there exist constants $C_1,C_2$ independent of $k$, such that
\[
S(t;\si q^k, \tau q^k,n) = (\si \tau q^3)^k \big(C_1t^{-k} + C_2 t^k\big)\Big(1+\mathcal O(q^{2k})\Big),
\]
for $k \to \infty$.
\item Let $p_1\in I_q$ and $\tau \in \{-,+\}$, then for $k \to \infty$,
\[
S(t;p_1,\tau q^k, k+n) =\mathcal O(q^k).
\]
\item Let $p_1 \in I_q$, then for $k \to -\infty$,
\[
S(t;p_1,q^k,k+n) = \mathcal O\big(q^{\hf k^2} (p_1tq^{n-\hf})^k \big).
\]

\end{enumerate}
\end{lemma}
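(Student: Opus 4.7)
\textbf{Proof plan for Lemma \ref{lem:asymptoticsforS}.}  The natural starting point is the explicit $_2\varphi_1$ representation of $S(t;p_1,p_2,n)$ supplied by Proposition \ref{prop:S=2phi1}, combined with the three symmetry relations of Lemma \ref{lem:symmetryS} which let one move large/small parameters between the two slots $p_1,p_2$ and between $t$ and $t^{-1}$.  In every case the proof reduces to (a) tracking the leading $q$-power of each prefactor $q$-shifted factorial via repeated applications of the $\theta$-product identity \eqref{eq:thetaprodid}, and (b) controlling a $_2\varphi_1$ whose argument and/or parameters degenerate in the relevant limit.  For (b) we will use the transformations \cite[(III.1)--(III.4)]{GaspR} to bring the series into a form with bounded argument and parameters converging to finite limits; once that is done the $_2\varphi_1$-factor contributes $\mathcal O(1)$ and the claimed asymptotic reduces to the explicit $q$-power bookkeeping of the prefactor.

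For part (i), $p_1=p_2=q^k$ and $k\to-\infty$, Proposition \ref{prop:S=2phi1} gives a $_2\varphi_1$ with argument $-q^{2-2k}\to-\infty$; a single application of Heine's transformation \cite[(III.2)]{GaspR} (or the three-term relation) rewrites this as a $_2\varphi_1$ with argument bounded by a fixed power of $q$, whose limit as $k\to-\infty$ is finite.  One then checks that the products $\nu(p_1)\nu(p_2)$ and $\sqrt{(-\kappa(p_1),-\kappa(p_2);q^2)_\infty}$ contribute a $q^{k^2}$-type factor which is exactly cancelled, up to an $n$-independent constant, by the $\theta$-product rearrangement of $(-q^{2}/\kappa(p_2);q^2)_\infty$ and the two factors $(-tq^{3-n}/p_1p_2,-p_1p_2q^{n-1}/t;q^2)_\infty$, leaving a net $q^{-nk}$ growth.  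For part (ii), $p_1=\sigma q^k$, $p_2=\tau q^k$, $k\to+\infty$, the same $_2\varphi_1$ now has argument $-\tau q^{2-2k}$ with $|{\cdot}|\to\infty$ and is handled by Jackson's transformation \cite[(III.4)]{GaspR}, or equivalently by splitting $(-tq^{3-n}/p_1p_2,-p_1p_2q^{n-1}/t;q^2)_\infty$ via \eqref{eq:thetaprodid} into two pieces depending on which of $t$ or $t^{-1}$ is extracted.  Each of the two resulting contributions is a bounded $_2\varphi_1$ multiplied by an explicit prefactor; their sum produces the two terms $C_1 t^{-k}+C_2 t^k$, and the factor $(\sigma\tau q^3)^k$ emerges by cancelling the quadratic $q$-powers in $\nu(p_1)\nu(p_2)$ against those in $(-q^2/\kappa(p_2);q^2)_\infty$, keeping track of the signs through $\sgn(p_1p_2)=\sigma\tau$.

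For parts (iii) and (iv) the cleanest route is to first apply the first symmetry $S(t;p_1,p_2,n)=(qt)^n S(t;p_2,p_1,-n)$ of Lemma \ref{lem:symmetryS}, so that the small/large parameter occupies the first slot of the $_2\varphi_1$-ratio in Proposition \ref{prop:S=2phi1}, and then expand.  For $k\to+\infty$ (case (iii)) one numerator parameter of the $_2\varphi_1$ acquires the form $q^{1+k}/(\cdots)\to\infty$, which after a $\theta$-product rearrangement is absorbed into a terminating $_2\varphi_1$, leaving $\mathcal O(q^k)$.  For $k\to-\infty$ (case (iv)) the dominant contribution is simply the Gaussian $\nu(p_2)=\nu(q^k)=q^{\frac12(k-1)(k-2)}$, which after cancellation with factors from $|p_1p_2|$, $p_2^n$, and the two $q$-shifted factorials in $t$ (rewritten via \eqref{eq:thetaprodid} to extract a $t^k$) yields the quoted $q^{\frac12 k^2}(p_1 tq^{n-\tfrac12})^k$ bound.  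The main obstacle throughout is the careful combinatorial bookkeeping of these competing quadratic $q$-powers: each $\theta$-product rearrangement shifts the exponents by a quadratic in $k$, and the claimed asymptotics hold only because these quadratic contributions cancel exactly, up to terms linear in $k$, across the different factors of Proposition \ref{prop:S=2phi1}.  Once this cancellation is checked, the $_2\varphi_1$ transformations to bounded-argument form finish the proof.
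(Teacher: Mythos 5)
Your overall skeleton --- Proposition \ref{prop:S=2phi1} for the ${}_2\varphi_1$ representation of $S$, the symmetries of Lemma \ref{lem:symmetryS}, the $\theta$-product identity \eqref{eq:thetaprodid} for the prefactors, and ${}_2\varphi_1$ transformations for the degenerating series --- is the same as the paper's, but several of the concrete mechanisms you name would not work as stated. In (i) you have the direction of the degeneration backwards: with $p_1=p_2=q^k$ and $k\to-\infty$ the argument $-q^2/\kappa(p_2)=-q^{2-2k}$ tends to $0$ (since $2-2k\to+\infty$ and $0<q<1$), not to $-\infty$; no Heine transformation is needed, the ${}_2\varphi_1$ simply tends to $1$, and the whole content of (i) is the $\theta$-product bookkeeping of the prefactor, which yields $q^{-nk}$ times bounded quantities.

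More seriously, in (ii) the argument does blow up ($-\tau q^{2-2k}$ with $k\to+\infty$), and the two-term asymptotics $C_1t^{-k}+C_2t^k$ cannot be produced by either tool you propose: Jackson's transformation \cite[(III.4)]{GaspR} is a one-term identity (a single prefactor times a ${}_2\varphi_2$), and a $\theta$-product rearrangement of $(-tq^{3-n}/p_1p_2,-p_1p_2q^{n-1}/t;q^2)_\infty$ produces a single monomial in $t$, not a sum of two. What is needed --- and what the paper uses --- is the three-term connection formula \cite[(III.32)]{GaspR}, which writes the ${}_2\varphi_1$ at large argument as a sum of two ${}_2\varphi_1$'s with argument $-\si q^{2k}\to 0$; the two connection coefficients, after $\theta$-product simplification, are exactly where the competing powers $t^{-k}$ and $t^{k}$ come from. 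Likewise in (iii), the claim that a parameter tending to infinity can be ``absorbed into a terminating ${}_2\varphi_1$'' by a $\theta$-product rearrangement is not a legitimate step --- nothing terminates here; the paper instead applies \cite[(III.4)]{GaspR} to convert to a ${}_2\varphi_2$ with controlled parameters and then extracts the $q^k$ decay from the prefactor via \eqref{eq:thetaprodid}. Your sketch of (iv) (Gaussian decay from $\nu(q^k)$ plus linear corrections, accessed through a symmetry of Lemma \ref{lem:symmetryS}) does match the paper's argument in substance, although the paper reaches the stated normalization via the second symmetry relation ($t\mapsto t^{-1}$, $n\mapsto-n$) rather than the slot-swapping one.
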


\begin{proof}
(i) We use Proposition \ref{prop:S=2phi1} to write $S(t;q^k,q^k,n)$ as a multiple of a $_2\varphi_1$-series. Using the $\te$-product identity \eqref{eq:thetaprodid} we find
\[
\frac{ (-tq^{3-n-2k},-q^{n-1+2k}/t;q^2)_\infty }{ (-tq^{n+1-2k}, -q^{2k-n-1}/t;q^2)_\infty} = q^{-2nk} t^n
\]
and
\[
q^{2k}\nu(q^k)^2 (-\kappa(q^k),-q^{2}/\kappa(q^k);q^2)_\infty = q^2 (-1,-q^2;q^2)_\infty,
\]
so that
\[
\begin{split}
S(t;q^k,q^k,n) =&\, c_q^2q^{2-nk} t^n q^{\hf n (n-1)}
\frac{ (-1,-q^2, q^2, q^{1-n}/t;q^2)_\infty }{ (q^{1+n}/t;q^2)_\infty }\\
& \times (q^{2+2n};q^2)_\infty \rphis{2}{1}{q^{1+n}/t, t q^{1+n}}{ q^{2+2n} }{q^2,-q^{2-2k}}.
\end{split}
\]
From this expression it is clear that $S(t;q^k,q^k,n) = \mathcal O(q^{-nk})$ for $k \to -\infty$.

(ii) Write $S(t;\si q^k, \tau q^k,n)$ as a multiple of a $_2\varphi_1$-function using Proposition \ref{prop:S=2phi1}. Using the three-term transformation formula \cite[III.32]{GaspR} and the $\te$-product identity \eqref{eq:thetaprodid} we find
\[
\begin{split}
&S(t;\si q^k, \tau q^k, n) = c_q^2\, (\si\tau q^3)^k \sqrt{ (-\si q^{2k}, -\tau q^{2k};q^2)_\infty } \frac{ (q^2, \si\tau q^{1-n}/t, -\si\tau q^{n-1}/t, -\si\tau tq^{3-n};q^2)_\infty }{ (q^{n+1}/t, -\si q^{-n-1}/t, -\si tq^{3+n};q^2)_\infty} \\
&\quad \times \Bigg\{ t^{-k} \frac{ (\si \tau tq^{1+n}, tq^{1+n}, -\si q^{3+n}/t, -\si tq^{-1-n};q^2)_\infty }{ (t^2, -\tau q^{2k};q^2)_\infty } \rphis{2}{1}{ \si\tau q^{1+n}/t, q^{1-n}/t}{q^2/t^2}{q^2, -\si q^{2k} } \\
& \qquad  + t^k \frac{ (\si \tau q^{1+n}/t, q^{1+n}/t, -\si tq^{3+n}, -\si q^{-1-n}/t;q^2)_\infty }{ (t^{-2}, -\tau q^{2k};q^2)_\infty } \rphis{2}{1}{ \si\tau t q^{1+n}, tq^{1-n}}{q^2t^2}{q^2, -\si q^{2k} } \Bigg\}.
\end{split}
\]
From this expression the result follows.

(iii) By Proposition \ref{prop:S=2phi1} and \cite[(III.4)]{GaspR} there exists a constant $C_1$, which is independent of $k$, such that
\[
\begin{split}
S(t;p_1,\tau q^k,& k+n) = C_1 (\tau q)^k q^{k(k+n)} q^{\hf(k+n)(k+n-1)}
 q^{\hf (k-1)(k-2)} \sqrt{ (-\tau q^{2k};q^2)_\infty } \\
\times &(\tau p_1 q^{1-2k-n}/t, -\tau p_1 q^{2k+n-1}/t, -\tau t q^{3-n-2k}/p_1;q^2)_\infty \\
\times &(\tau \,\sgn(p_1) q^{2+2k+2n};q^2)_\infty \rphis{2}{2}{ \tau q^{1+2k+n}/p_1t, |p_1|q^{1+n}/t }{ \tau\, \sgn(p_1) q^{2+2k+2n}, -q^{3+n}/p_1t}{q^2; -tq^{3+n}/p_1}.
\end{split}
\]
Using the $\te$-product identity \eqref{eq:thetaprodid} twice, we find
\[
\begin{split}
(\tau p_1 q^{1-2k-n}/t,-\tau p_1 q^{2k+n-1}/t, -\tau t q^{3-n-2k}/p_1;q^2)_\infty = C_2 \frac{(-1)^k q^{-2nk} q^{-2k(k-1)}}{ (\tau t q^{1+2k+n}/p_1;q^2)_\infty }.
\end{split}
\]
Now we see that for large $k$ there exists a constant $C_3$, independent of $k$, such that
\[
|S(t;p_1,\tau q^k, k+n)| \leq C_3 q^k.
\]

(iv) Assume $k<0$. By Proposition \ref{prop:S=2phi1} we have
\[
\begin{split}
S&(t;p_1, q^k , -n-k) = C_1\,q^{-k(n+k)} q^{\hf(n+k)(n+k+1)} q^k q^{\hf(k-1)(k-2)} \sqrt{ (-q^{2k};q^2)_\infty} \\
& \times \frac{ (-q^{2-2k}, \sgn(p_1) q^{2-2n-2k};q^2)_\infty }{ (|p_1|q^{1-n-2k}/t, -p_1 q^{2k+n-1}/t, -tq^{3-2k-n}/p_1;q^2)_\infty} \rphis{2}{1}{ q^{1-n}/p_1 t, tq^{1-n}/p_1 }{\sgn(p_1) q^{2-2n-2k} }{q^2; -q^{2-2k} }.
\end{split}
\]
for a certain constant $C_1$ independent of $k$. Using the $\te$-product identity \eqref{eq:thetaprodid} we have
\begin{gather*}
(-p_1 q^{2k+n-1}/t, -tq^{3-2k-n}/p_1;q^2)_\infty = C_2 \Big(\frac{ t}{p_1q^{n-1}}\Big)^k q^{-k(k-1)},\\
(-q^{2k};q^2)_\infty = C_3 \frac{ q^{-k(k-1)} }{(-q^{2-2k};q^2)_\infty },
\end{gather*}
so that, for large $|k|$, there is a constant $C_4$ such that
\[
|S(t;p_1, q^k , -n-k)| \leq  C_4\, q^{\hf k^2} q^{(n-\frac32)k} |p_1/t|^k.
\]
Now the result follows from the second symmetry relation in Lemma \ref{lem:symmetryS}.
\end{proof}

\subsection{Al-Salam--Chihara polynomials} \label{ssecB:AlSCpol}
The spectral analysis of Jacobi operators on $\ell^2(\N_0)$ and $\ell^2(\Z)$ plays an
essential role in this paper. We refer to Berezanski\u\i\ \cite[Ch.7]{Bere}, Pruitt \cite{Prui},
Masson and Repka \cite{MassR}, Kakehi \cite{Kake}, see also \cite[App.~A]{KoelSRIMS}, for general information on Jacobi operators on $\ell^2(\Z)$. We use \cite{KoelLaredo} for general reference. The spectral decomposition of the Jacobi operators we encounter are described with the help of certain special functions, namely the Al-Salam--Chihara polynomials and the little $q$-Jacobi functions. In this subsection we collect some results and notations for the Al-Salam--Chihara polynomials. Results for little $q$-Jacobi functions are given in the next subsection.

The Al-Salam--Chihara polynomials were introduced by Al-Salam and Chihara in \cite{AlSaC} to classify all orthogonal polynomials satisfying a convolution type property. These polynomials also have been studied by Askey and Ismail \cite[\S 3]{AskeI}. The Al-Salam--Chihara polynomials form subfamily of the Askey-Wilson polynomials Askey and Wilson \cite{AskeW}, Gasper and Rahman \cite[\S\S 7.5-7]{GaspR}.

Consider $a,b \in \R \setminus \{0\}$. For $n \in \NN$,
the Al-Salam--Chihara polynomials
$P_n(\cdot\,;a,b\mid q)\colon \C \to \C$ are defined by
\begin{equation}\label{eq:AlSCpol}
\begin{aligned}
P_n(\mu(y);a,b\mid q) &=  a^{-n} (ab;q)_n \, {}_3\vp_2\left(
\begin{array}{c}
q^{-n},\, ay,\, a/y \\ ab,\, 0
\end{array}; q,q\right) \\
&=  (a/y;q)_n\, y^n \, {}_2\vp_1\left(
\begin{array}{c}
q^{-n},\, by \\ q^{1-n}y/a
\end{array};
q, \frac{q}{ya}\right),
\end{aligned}
\end{equation}
for $y \in \C \setminus \{0\}$. The equality in
\eqref{eq:AlSCpol} follows from \cite[(III.7)]{GaspR} and holds if
$q^{1-n} y/a \not\in q^{-\NN}$. We see that for $x=\mu(y) \in \mathbb R$ the
polynomials $P_n(x)=P_n(x;a,b\mid q)$ are real-valued. The Al-Salam--Chihara polynomials satisfy the three-term recurrence relation
\begin{equation}\label{eq:ttrAlSCpol}
2x\, P_n(x) = P_{n+1}(x) + q^n(a+b)\, P_n(x) + (1-q^n)(1-abq^{n-1})\, P_{n-1}(x)
\end{equation}
with initial condition $P_{-1}(x)=0$, $P_0(x)=1$. From this relation we see that the Al-Salam--Chihara polynomials are symmetric in $a$ and $b$. Favard's Theorem gives that these polynomials are orthogonal with respect to a positive measure on the real line for $ab<1$, which from now on we assume to hold. The measure can be determined from the asymptotic behaviour of the Al-Salam--Chihara polynomials as the degree tends to infinity. This behaviour is determined by
\begin{equation}\label{eq:asymptbehaviourAlSCpol}
\begin{split}
&\frac{(abq^n;q)_\infty}{\sqrt{(q,ab;q)_\infty}}
\, P_n(\mu(y);a,b\mid q)  = \\
&\ c(y;a,b\mid q)\, \, y^n
\rphis{2}{1}{ay, by}{qy^2}{q, q^{n+1}} +
c(y^{-1};a,b\mid q)\, \, y^{-n}
\rphis{2}{1}{a/y, b/y}{qy^{-2}}{q, q^{n+1}},
\end{split}
\end{equation}
valid if $y^2 \not \in q^{\Z}$, where
\begin{equation}\label{eq:cfunctionforAlSCpols}
c(y;a,b\mid q) = \frac{(a/y,b/y;q)_\infty}
{(y^{-2};q)_\infty \sqrt{(q,ab;q)_\infty}}.
\end{equation}
We extend the $c$-function $c(\cdot\,;a,b\mid q)$
by continuity to all points of $\C$ where possible.

The asymptotic behaviour can be obtained as a limiting case
($b,c\to 0$) of the asymptotic behaviour of the
Askey-Wilson polynomials \cite[(7.5.9)]{GaspR},
or by using \cite[(3.3.5)]{GaspR} with
$(a,b,c,z)\mapsto(ay,by,qy^2,q^{n+1})$ and next
\cite[(1.4.6)]{GaspR}, \eqref{eq:AlSCpol} and the $\theta$-product
identity \eqref{eq:thetaprodid}.
See also \cite[\S 3.1]{AskeI} for the asymptotic behaviour
using Darboux's method including the cases $x=\pm 1$.

The corresponding orthonormal Al-Salam--Chihara
polynomials $p_n(\cdot\,;a,b\mid q) : \C \to \C$ are defined by
\begin{equation}\label{eq:ortonAlSCpol}
p_n(x;a,b\mid q) = \frac{1}{\sqrt{ (q,ab;q)_n}} \, P_n(x;a,b\mid q)
\end{equation}
for all $x \in \C$. The orthonormal Al-Salam--Chihara
polynomials satisfy the recurrence relation
\begin{equation}\label{eq:recAlSCpol}
\begin{split}
&2x\, p_n(x)\, =
c_n\, p_{n+1}(x) + d_n\, p_n(x) + c_{n-1}\, p_{n-1}(x), \\
&c_n\, = \sqrt{(1-q^{n+1})(1-abq^n)}, \quad d_n = q^n(a+b),
\end{split}
\end{equation}
and initial conditions $p_{-1}(x)=0$, $p_0(x)=1$. Note that the
coefficients $c_n$ and $d_n$ are bounded, since we
assume $0<q<1$. Under our assumption $ab<1$ the
Al-Salam--Chihara polynomials are orthogonal with respect to a
positive measure on $\R$;
\begin{equation}\label{eq:orthoAlSCpol}
\int_\R p_n(x;a,b\mid q)p_m(x;a,b\mid q)\, dm(x;a,b\mid q) = \de_{n,m},
\end{equation}
where the measure $dm(\cdot;a,b\mid q)$ is defined by
\begin{equation} \label{eq:measureASC}
\begin{split}
 \int_\R f(x)\, dm(x;a,b\mid q) =& \frac{(q,ab;q)_\infty}{2\pi} \int_0^\pi f(\cos\psi)
\frac{(e^{2i\psi},e^{-2i\psi};q)_\infty}
{(ae^{i\psi},ae^{-i\psi},be^{i\psi},be^{-i\psi};q)_\infty}\, d\psi \\
& + \sum_{\stackrel{\scriptstyle{r\in\NN}}{\scriptstyle{|aq^r|>1}}}
f\bigl( \mu(aq^r)\bigr) w_r(a;b\mid q) +
\sum_{\stackrel{\scriptstyle{r\in\NN}}{\scriptstyle{|bq^r|>1}}}
f\bigl( \mu(bq^r)\bigr) w_r(b;a\mid q),
\end{split}
\end{equation}
with
\[
w_r(a;b\mid q) = \frac{(a^{-2};q)_\infty (a^2,ab;q)_r (1-a^2q^{2r})} {(b/a;q)_\infty (q,aq/b;q)_r
(1-a^2)} q^{-r^2} a^{-3r}b^{-r}.
\]
Note that the weight function in \eqref{eq:orthoAlSCpol} is
very explicit. It can be rewritten in terms of the $c$-function
\eqref{eq:cfunctionforAlSCpols} as
\begin{equation}\label{eq:orthoAlSCpolRES}
\begin{split}
 \int_\R f(x)\, dm(x;a,b\mid q) &=
\frac{1}{2\pi} \int_0^\pi  f(\cos\psi) \frac{d\psi}{c(e^{i\psi};a,b\mid  q)
c(e^{-i\psi};a,b\mid  q)} \\
& + \sum_{s\in D} f(\mu(s)) \Res{w=s} \frac{1} {w\, c(w;a,b\mid  q)
c(w^{-1};a,b\mid  q)},
\end{split}
\end{equation}
where the set $D$ is given by
\[
D= D(a,b\mid  q) = \{ s\in \C \mid |s|>1,\ c(s;a,b\mid  q)=0\},
\]
and we assume that the zeroes of the $c$-function in $D$ are simple. The two sets of discrete mass points in the measure in \eqref{eq:orthoAlSCpol} are finite. If $ab>0$, at most one of the sets of discrete mass points can occur, since we also assume $ab<1$. If $ab<0$, then both series of discrete mass points can occur.

Consider the corresponding Jacobi operator on
$\ell^2(\NN)$ equipped with the standard orthonormal basis
$\{e_n\}_{n=0}^\infty$,
\begin{equation}\label{eq:JacobioperAlSCpol}
2 J e_n = c_n\, e_{n+1} + d_n \, e_n + c_{n-1}\, e_{n-1},
\end{equation}
with $c_n$ and $d_n$ as in \eqref{eq:recAlSCpol}, initially
defined on the dense domain of finite linear combinations
of the basis vectors. Since the coefficients are bounded,
$J$ extends uniquely to a bounded self-adjoint operator on
$\ell^2(\NN)$. If we need to stress the dependence on the parameters, we write $J=J(a,b\mid q)$. The resolution of the identity for the self-adjoint extension of $J$ can be described with the orthonormal Al-Salam--Chihara polynomials and the corresponding orthogonality measure.
\begin{thm} \label{thm:spectraldecompASC}
The Jacobi operator $J$ extends uniquely to a bounded self-adjoint operator on
$\ell^2(\NN)$. Let $E_J$ be the resolution of the identity for the self-adjoint extension of $J$, then for any Borel set ${\mathcal B}\subset \R$ and
$u=\sum_{n=0}^\infty u_ne_n, v=\sum_{n=0}^\infty v_ne_n\in \ell^2(\NN)$
we have
\begin{equation}\label{eq:residJacopAlScpol}
\langle E_J({\mathcal B})u,v\rangle_{\ell^2(\NN)}
= \int_{\mathcal B} {\mathcal F}_Ju(x) \overline{{\mathcal
F}_Jv(x)}\, dm(x;a,b\mid q), \qquad {\mathcal F}_Ju(x)
= \sum_{n=0}^\infty u_n p_n(x;a,b\mid q).
\end{equation}
\end{thm}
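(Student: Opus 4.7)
The plan is to reduce the statement to the standard fact that the Jacobi-matrix action is unitarily equivalent, via the polynomial eigenfunction transform, to multiplication by $x$ on $L^2(\R,dm)$. I would carry it out in four short steps.

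First, observe that because $0<q<1$ the sequences $c_n=\sqrt{(1-q^{n+1})(1-abq^n)}$ and $d_n=q^n(a+b)$ in \eqref{eq:recAlSCpol} are bounded (in fact $d_n\to 0$ and $c_n\to 1$). Hence the operator $J$ initially defined on finitely supported vectors in $\ell^2(\NN)$ by \eqref{eq:JacobioperAlSCpol} is bounded and symmetric, so it extends uniquely to a bounded self-adjoint operator on $\ell^2(\NN)$, which I still denote by $J$.

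Second, define the formal transform $\mathcal F_J$ on finite linear combinations of the $e_n$ by $\mathcal F_J e_n = p_n(\,\cdot\,;a,b\mid q)$. The orthogonality relation \eqref{eq:orthoAlSCpol} says exactly that $\mathcal F_J$ is isometric from this dense subspace of $\ell^2(\NN)$ into $L^2(\R,dm(\,\cdot\,;a,b\mid q))$, hence extends uniquely to an isometry $\mathcal F_J\colon \ell^2(\NN)\to L^2(\R,dm)$. The explicit formula $\mathcal F_J u(x)=\sum_n u_n p_n(x;a,b\mid q)$ for $u=\sum_n u_n e_n\in\ell^2(\NN)$ then follows by continuity together with the Cauchy--Schwarz inequality applied to the series.

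Third, the boundedness of $J$ implies that the associated Hamburger moment problem is determinate and that $\operatorname{supp}(dm)$ is compact (contained in the spectrum of $J$, hence in $[-\|J\|,\|J\|]$). Consequently the polynomials are dense in $L^2(\R,dm)$, so $\mathcal F_J$ is unitary. Rewriting the three-term recurrence \eqref{eq:recAlSCpol} as $2x\,p_n(x) = c_n p_{n+1}(x) + d_n p_n(x)+c_{n-1}p_{n-1}(x)$ shows that $\mathcal F_J J = M(x)\mathcal F_J$ on the dense domain, and taking closures gives $\mathcal F_J J \mathcal F_J^{*}=M(x)$ on $L^2(\R,dm)$.

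Finally, the spectral theorem applied to the multiplication operator $M(x)$ yields the resolution of the identity $E_{M(x)}(\mathcal B)=M(\chi_{\mathcal B})$ for every Borel set $\mathcal B\subseteq\R$. Transporting back via the unitary $\mathcal F_J$ gives $E_J(\mathcal B)=\mathcal F_J^{*}\,M(\chi_{\mathcal B})\,\mathcal F_J$, which for $u,v\in\ell^2(\NN)$ unpacks to the identity
\[
\langle E_J(\mathcal B)u,v\rangle_{\ell^2(\NN)} = \int_{\mathcal B} (\mathcal F_J u)(x)\,\overline{(\mathcal F_J v)(x)}\,dm(x;a,b\mid q),
\]
as claimed. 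The only non-routine ingredient is density of the polynomials in $L^2(\R,dm)$; I expect this to be the main conceptual point, but it is immediate from the boundedness of $J$ (or equivalently from the compactness of $\operatorname{supp}(dm)$, visible in \eqref{eq:measureASC}), so no extra work beyond citing the standard Hamburger/Stieltjes moment-problem result is required.
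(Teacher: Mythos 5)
Your argument is correct and complete: boundedness of the recurrence coefficients gives the unique bounded self-adjoint extension, the orthogonality relations \eqref{eq:orthoAlSCpol} make $\mathcal F_J$ an isometry, compactness of $\operatorname{supp}(dm)$ (visible directly in \eqref{eq:measureASC}) gives density of polynomials and hence unitarity, and the recurrence intertwines $J$ with $M(x)$, from which the spectral resolution follows. The paper does not write out a proof of this theorem — it is recalled as a standard fact with references to Berezanski\u\i\ and to \cite{KoelLaredo} — and the argument given in those sources is exactly the one you present, so your proposal matches the paper's (implicit) approach.
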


For the purposes in this paper we want to rewrite
the orthogonality relations
\eqref{eq:orthoAlSCpol} for the Al-Salam--Chihara
polynomials as orthogonality relations on
$L^2(I(a,b\mid q))$, where $I(a,b\mid q)$ is the support of
$dm(\cdot\,\, ;a,b\mid q)$, so
\begin{equation}\label{eq:defIabqforASCpol}
\begin{split}
I(a,b\mid q) &= [-1,1] \cup \mu(D(a,b\mid q)), \\
D(a,b\mid q) &= \{\,aq^r\mid r\in\NN, |aq^r|>1 \}
\cup \{\,bq^r\mid r\in\NN, |bq^r|>1 \},
\end{split}
\end{equation}
in accordance with \eqref{eq:orthoAlSCpolRES}.
On $[-1,1]$ we take the Lebesgue measure, and on
the discrete part we take the counting measure. Now define for
$0<|\psi|<\pi$
\begin{equation}\label{eq:deffnabqforASCpol}
\begin{split}
h_n(\cos\psi;a,b\mid q) &= \sqrt{\frac{1}{2\pi |\sin\psi|}}
\ \frac{p_n(\cos\psi;a,b\mid q)}
{|c(e^{i\psi};a, b\mid q)|}, \\
h_n(\mu(eq^r);a,b\mid q) &=
\sqrt{w_r(e;f\mid q)} \, p_n(\mu(eq^r);a,b\mid q),
\end{split}
\end{equation}
where $e$ is either $a$ or $b$, and $f$ is the other parameter,
and $|eq^r|>1$ with $r\in\NN$. So
$\{\, h_n(\cdot\, ;a,b\mid q)\}_{n=0}^\infty$ is
an orthonormal basis for $L^2(I(a,b\mid q))$. It
follows in particular that
\begin{equation}\label{eq:orthonAlScrelationsdisc}
\sum_{n=0}^\infty h_n(\mu(x);a,b\mid q)\,
h_n(\mu(y);a,b\mid q) = \de_{x,y}, \qquad x,y\in D(a,b\mid q),
\end{equation}
so that the functions $h_n(\mu(x);a,b\mid q)$, $n\in \N_0$, have
$\ell^2$-norm $1$ for $x\in D(a,b\mid q)$. The orthogonality relations \eqref{eq:orthonAlScrelationsdisc} can also be proved directly using the $q$-binomial theorem and the $q$-Saalsch\"utz formula  \cite[(II.3),(II.12)]{GaspR}, and it is related to a discrete measure on $q^{-\NN}$ for which only a finite number of moments exist.

The polynomials $p_n(\,\cdot\,;a,b\mid q)$ and the $c$-function are symmetric in $a,b$, which implies the symmetry relation
\begin{equation} \label{eq:symmetryhn1}
h_n(\,\cdot\,;a,b\mid q) = h_n(\,\cdot\,;b,a\mid q).
\end{equation}
Another symmetry that we need is
\begin{equation} \label{eq:symmetryhn2}
h_n(\,\cdot\,;a,b\mid q) = (-1)^n h_n(-\,\cdot\,;-a,-b;q),
\end{equation}
which follows from writing out explicitly $h_n$ as a multiple of a $_2\varphi_1$-function.

The asymptotic behaviour of the orthonormal basis of
$L^2(I(a,b\mid q))$ as the degree $n$ tends to $\infty$ can be
obtained from \eqref{eq:asymptbehaviourAlSCpol}. For $0 < |\psi| < \pi$ we find
\begin{equation}\label{eq:asymptoticnormalAlSCfcont}
h_n(\cos\psi;a,b\mid q) = \sqrt{\frac{2}{\pi | \sin\psi|}}
\frac{\Re \bigl( e^{in\psi} c(e^{i\psi};a,b\mid  q)\bigr)}
{|c(e^{i\psi};a,b\mid  q)|} \bigl( 1+ {\mathcal O}(q^n)\bigr),
\qquad n\to\infty,
\end{equation}
and see \cite[\S 3.1]{AskeI} for the case $x=\pm 1$. Observe that the expression is symmetric with
respect to $\psi \leftrightarrow -\psi$. On the discrete spectrum the zeroes of the $c$-function make the first term on the right hand side of \eqref{eq:asymptbehaviourAlSCpol} vanish, so that the behaviour of $h_n$ is given by
\begin{equation}\label{eq:asymptoticnormalAlSCfdisc}
h_n(\mu(aq^r);a,b\mid q) =
(aq^r)^{-n} \sqrt{w_r(a;b\mid q)}\, c(1/aq^r;a,b\mid q) \,
\bigl( 1+ {\mathcal O}(q^n)\bigr), \qquad n\to \infty.
\end{equation}
This implies $h_\cdot(x;a,b\mid q) \in \ell^2(\N_0)$ for $x$ in the discrete spectrum.
The expression for $h_n(\mu(bq^k);a,b\mid q)$ follows from \eqref{eq:asymptoticnormalAlSCfdisc} by interchanging $a$ and $b$ in the right hand side. We can also reformulate
\eqref{eq:asymptoticnormalAlSCfdisc} as
\begin{equation}\label{eq:asymptoticnormalAlSCfdiscRES}
h_n(\mu(s);a,b\mid q) =
s^{-n} \sqrt{\Res{w=s} \frac{c(w^{-1};a,b\mid q)}
{w \, c(w;a,b\mid q)}} \,
\bigl( 1+ {\mathcal O}(q^n)\bigr), \qquad n\to \infty,
\end{equation}
for $s\in D(a,b\mid  q)$, assuming such zeroes of the $c$-function are simple.

In this paper we need a certain contiguous relations for the Al-Salam--Chihara polynomials. The contiguous relation can be looked upon as an operator that can be used for a Darboux factorization of the Jacobi operator $J$.
\begin{lemma} \label{lem:contiguousASC}
The orthonormal basis functions $h_n(x;a,b\mid q)$ satisfy
\[
\sqrt{1-2bx+b^2}\, h_n(x;a,b\mid q) =
\sqrt{1-abq^n}\, h_n(x;a,bq\mid q) - b \sqrt{1-q^n}\, h_{n-1}(x;a,bq\mid q),
\]
for $x \in I(a,b\mid q)$.
\end{lemma}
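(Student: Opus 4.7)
My approach is to reduce the identity for the orthonormal functions $h_n$ to the standard contiguous relation
\[
P_n(x;a,b\mid q) = P_n(x;a,bq\mid q) - b(1-q^n)\, P_{n-1}(x;a,bq\mid q)
\]
for the unnormalized Al-Salam--Chihara polynomials, and then to derive that relation in one line from the classical generating function of the Al-Salam--Chihara polynomials.

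The first step is to compare the weight factors that relate $h_n(\cdot;a,b\mid q)$ to the polynomial $p_n(\cdot;a,b\mid q)$ against the corresponding factors with $b$ replaced by $bq$. A direct cancellation of $q$-shifted factorials in \eqref{eq:cfunctionforAlSCpols} gives
\[
\frac{c(y;a,b\mid q)}{c(y;a,bq\mid q)}=\frac{1-b/y}{\sqrt{1-ab}},
\]
so on the continuous part $x=\cos\psi$ one obtains
\[
\frac{|c(e^{i\psi};a,b\mid q)|^2}{|c(e^{i\psi};a,bq\mid q)|^2}=\frac{(1-be^{i\psi})(1-be^{-i\psi})}{1-ab}=\frac{1-2bx+b^2}{1-ab}.
\]
A parallel computation with the discrete weights from \eqref{eq:measureASC} yields the same ratio
\[
\frac{w_r(a;bq\mid q)}{w_r(a;b\mid q)}=\frac{(1-abq^r)(1-bq^{-r}/a)}{1-ab}=\frac{1-2bx+b^2}{1-ab}
\]
at each discrete mass point $x=\mu(aq^r)$, and symmetrically at $x=\mu(bq^{r+1})$ for $r\geq 0$. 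Combining these ratios with the definition of $h_n$ and with the elementary identity $(q,ab;q)_n=(1-q^n)(1-ab)(q,abq;q)_{n-1}$ shows, after a short bookkeeping, that the lemma is equivalent to the polynomial identity stated above.

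For the polynomial identity itself I would invoke the classical generating function
\[
\sum_{n=0}^\infty \frac{P_n(\mu(y);a,b\mid q)}{(q;q)_n}\,t^n=\frac{(at,bt;q)_\infty}{(ty,t/y;q)_\infty},
\]
which is a standard consequence of \eqref{eq:AlSCpol} via the $q$-binomial theorem. Factoring $(bt;q)_\infty=(1-bt)(bqt;q)_\infty$ gives
\[
\sum_n\frac{P_n(x;a,b\mid q)}{(q;q)_n}t^n=(1-bt)\sum_n\frac{P_n(x;a,bq\mid q)}{(q;q)_n}t^n,
\]
and equating the coefficient of $t^n$ (using $(q;q)_n/(q;q)_{n-1}=1-q^n$) yields the desired relation immediately.

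The point where I expect the most care to be required is the boundary case $x=\mu(b)$ with $|b|>1$: this point lies in $I(a,b\mid q)$ but not in $I(a,bq\mid q)$, so $h_n(\cdot;a,bq\mid q)$ must be extended off its support before the reduction can be applied. However, $\sqrt{1-2bx+b^2}$ vanishes at $x=\mu(b)$, so the left-hand side is zero; the vanishing of the naturally extended right-hand side then has to be verified separately, either by using the zero of the factor $(bq/y;q)_\infty$ inside $c(y;a,bq\mid q)$ at $y=b$ or by a direct residue computation. Away from this degenerate point the reduction is uniform across the continuous and discrete spectra, and the generating-function step is routine.
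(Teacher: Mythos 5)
Your proof is correct and follows essentially the same route as the paper: reduce the statement to the contiguous relation $P_n(x;a,b\mid q)=P_n(x;a,bq\mid q)-b(1-q^n)P_{n-1}(x;a,bq\mid q)$ and then transfer it to the orthonormal functions $h_n$ by computing the ratios of $|c|^2$ on the continuous spectrum and of the discrete weights $w_r$ at the mass points, including the convention that $h_n$ vanishes off its support at the degenerate point $\mu(b)$. The only (immaterial) difference is that you derive the polynomial contiguous relation from the generating function, whereas the paper quotes the connection coefficient formula or a direct series manipulation.
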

\begin{proof}
From the connection coefficient formula \cite[\S 6]{AskeW}, \cite[\S 7.6]{GaspR} it follows that
\begin{equation}\label{eq:contigrelAlScpols}
P_n(x;a,b\mid q) = P_n(x;a,bq\mid q) - b (1-q^n)\, P_{n-1}(x;a,bq\mid q).
\end{equation}
This can also be obtained directly from the second explicit expression of $P_n$ in \eqref{eq:AlSCpol} by writing out the $_2\varphi_1$-function as a sum, and using the identity $(by;q)_k = (bqy;q)_k -by(1-q^k)(bqy;q)_k$.  Rewriting \eqref{eq:contigrelAlScpols} for the orthonormal basis $h_n(x;a,b\mid q)$, $x\in I(a,b\mid q)$, gives the desired relation. For $x=\cos\psi$ this follows directly from \eqref{eq:ortonAlSCpol}, \eqref{eq:deffnabqforASCpol}, and for $x$ in the
discrete spectrum  this is a consequence of
\begin{equation*}
\frac{w_r(a;bq\mid q)}{w_r(a;b\mid q)} = \frac{(1-abq^r)(1-q^{-r}b/a)}{1-ab}, \qquad
\frac{w_{r-1}(bq;a\mid q)}{w_r(b;a\mid q)} = \frac{(1-b^2q^r)(1-q^{-r})}{1-ab}.
\end{equation*}
Here we use the convention that $h_n(x;a,b\mid q)=0$ for $x\not\in I(a,b\mid q)$.
\end{proof}

\subsection{Little $q$-Jacobi functions} \label{ssecB:litteqJacobi}
In this subsection we collect the results and notations for the
little $q$-Jacobi functions needed in this paper. The little
$q$-Jacobi functions are the kernel of an explicit transform pair that is
related to the spectral analysis of the hypergeometric $q$-difference equation, and they arise as matrix elements for the quantum $SU(1,1)$ group, see \cite{MasuMNNSU}.  References for this subsection are Kakehi \cite{Kake}, Kakehi et al. \cite{KakeMU}, and also \cite[App. A]{KoelSRIMS}, \cite{KoelLaredo}, \cite{KoelSTempe}.

The hypergeometric $q$-difference equation, see \cite[Exerc. 1.13]{GaspR},
can be rewritten as
\begin{equation}\label{eq:hypqdiffeq}
(c-abz)\, u(qz) + \bigl( (a+b)z-c-q\bigr)\, u(z) + (q-z)\, u(z/q)=0
\end{equation}
for a function $u(z)$ and one explicit solution of
\eqref{eq:hypqdiffeq} is $u(z)= {}_2\vp_1(a,b;c;q,z)$.

Using the hypergeometric $q$-difference we find solutions to
\begin{equation}\label{eq:2ndordqdiffeq}
2x\, f_k(x) = (1-\frac{q^{1+k}}{z})\, f_{k+1}(x)
+ q^{k}\frac{c+q}{dz}\, f_k(x) + (1-\frac{cq^{k}}{d^2z})\, f_{k-1}(x),
\end{equation}
where we assume from now on that $z<0$, $c>0$, and
$d\in\R\backslash\{0\}$. For more general sets of parameters, see
\cite[App. A]{KoelSRIMS}.
Indeed, we find the solution,
\begin{equation}\label{eq:2ndordqdiffeqBHS}
f_k(\mu(y)) = (c,z,q/z;q)_\infty  d^{-k}\,
{}_2\vp_1\left( \begin{array}{c}
dy,\, d/y \\ c
\end{array};q,zq^{-k}\right),
\end{equation}
where we from now on assume $0<c<1$ in order to avoid
complications for $c\in q^{-\NN}$. We use the notation
$f_k(x)=f_k(x;c,d;z\mid q)$ if we want to stress the dependence
on the parameters. Note that replacing $c$ and $d$ by
$q^2/c$ and $qd/c$ leaves \eqref{eq:2ndordqdiffeq} invariant,
hence $f_k(x;q^2/c,qd/c;z\mid q)$ is also a solution to
\eqref{eq:2ndordqdiffeq}, as can also be checked directly from
\eqref{eq:hypqdiffeq}. These solutions are linearly
independent for $c\not=q$.

The equation \eqref{eq:2ndordqdiffeq} can also be viewed for
$k\geq 0$ as the recurrence relation for the (suitably renormalized) associated
Al-Salam--Chihara polynomials, and the description of the solution space matches Gupta,
Ismail and Masson \cite{GuptIM}.

Next we define
\begin{equation}\label{eq:2ndordqdiffeqBHS3}
F_k(y) = y^k \, {}_2\vp_1\left( \begin{array}{c} dy,\, qdy/c
\\qy^2 \end{array}; q, \frac{q^{1+k}c}{d^2z}\right),
\qquad y^2\notin q^{-\N},
\end{equation}
then, for $y \neq \pm 1$, $F_k(y)$ and $F_k(y^{-1})$ define two linearly independent
solutions to \eqref{eq:2ndordqdiffeqBHS} as follows easily from
\eqref{eq:hypqdiffeq}. We use the notation $F_k(y^{\pm 1}) =
F_k(y^{\pm 1};c,d;z\mid q)$ if we want to stress the
dependence on the parameters. Note that $F_k(y^{\pm 1})$ are
invariant under replacing $c$ and $d$ by $q^2/c$ and $qd/c$.
Since the solution space to \eqref{eq:2ndordqdiffeq} is
two-dimensional there are relations between the solutions; in particular,
\begin{equation}\label{eq:cfunctions}
f_k(\mu(y)) = c(y)\, F_k(y) + c(y^{-1})\,
F_k(y^{-1}), \quad c(y) = \frac{(c/dy,d/y,dzy,q/dzy;q)_\infty}
{(y^{-2};q)_\infty}
\end{equation}
which follows from \cite[(4.3.2)]{GaspR} for $y^2\notin q^\Z$.
As in the previous subsection we extend this
$c$-function by continuity to all points of $\C$ where possible.
We use the notation $c(y;c,d;z\mid q)$ if we want to
stress the dependence on the parameters. Note that this
$c$-function is different from the one for the Al-Salam--Chihara polynomials in Section
\ref{ssecB:AlSCpol}. In this subsection $c(y)$ is defined by \eqref{eq:cfunctions}.

The corresponding orthonormal recurrence relation, i.e., the
normalization which makes the corresponding Jacobi operator
symmetric, is
\begin{equation}\label{eq:reclqJacf}
\begin{split}
&2x\, u_k(x) \, = a_k\, u_{k+1}(x) + b_k\, u_k(x) + a_{k-1}\, u_{k-1}(x), \\
&a_k \, = \sqrt{\left(1-\frac{q^{k+1}}{z}\right)\left(1-\frac{cq^{k+1}}{d^2z}\right)},
\qquad b_k =\frac{q^{k}(c+q)}{dz}.
\end{split}
\end{equation}
Note that we assume $z<0$, $0<c<1$,
$d\in\R\setminus\{0\}$, so that the square root is well-defined. We put
\begin{equation}\label{eq:deflqJacfweight}
\rho_k^2 = \frac{(cq^{1+k}/d^2z;q)_\infty} {(q^{1+k}/z;q)_\infty}
= \left( \frac{c}{d^2}\right)^{-k} \frac{(zq^{-k},
d^2z/c, cq/d^2z;q)_\infty} {(d^2zq^{-k}/c, z, q/z;q)_\infty},
\end{equation}
where the second expression follows from the $\te$-product identity \eqref{eq:thetaprodid},
then $u_k(z)=\rho_k f_k(z)$ satisfies \eqref{eq:reclqJacf} if and only if $f_k(z)$ satisfies \eqref{eq:2ndordqdiffeq}. We use the notation $\rho_k(c,d;z\mid q)$ if we want to stress
the dependence on the parameters. Now the following orthogonality relations hold;
\begin{equation}\label{eq:ortholqJacf}
\int_\R \rho_kf_k(x) \rho_lf_l(x)\, d\nu(x;c,d;z\mid q) = \de_{k,l},
\end{equation}
where the measure $d\nu$ is defined by
\[
\begin{split}
\int_\R g(x)\, d\nu(x;c,d;z\mid q) =& \frac{1}{2\pi}
\int_0^\pi g(\cos\psi) \frac{d\psi}{|c(e^{i\psi})|^2} +
\sum_{\stackrel{\scriptstyle{r\in\Z}}{\scriptstyle{|q^{1-r}/dz|>1}}}
g(\mu(q^{1-r}/dz)) v_r  \\
& + \sum_{\stackrel{\scriptstyle{r\in\NN}}{\scriptstyle{|cq^r/d|>1}}}
g(\mu(cq^r/d)) w_r +
\sum_{\stackrel{\scriptstyle{r\in\NN}}{\scriptstyle{|dq^r|>1}}}
g(\mu(dq^r)) w^\prime_r,
\end{split}
\]
with
\[
\begin{split}
& c(y) = c(y;c,d;z\mid q), \\
& v_r = \frac{-(1-q^{2-2r}/d^2z^2)\, (dz)^{2(1-r)} q^{-(r-2)(r-1)}}
{(q,q,cq^{1-r}/d^2z,q^{1-r}/z,czq^{r-1},d^2zq^{r-1};q)_\infty}, \\
& w_r = \frac{(d^2/c^2;q)_\infty}
{(q,c,d^2/c,cz,d^2z/c,cq/d^2z,q/zc;q)_\infty}
\frac{(1-c^2q^{2r}/d^2)}{(1-c^2/d^2)}
\frac{(c^2/d^2,c;q)_r}{(q,cq/d^2;q)_r}\, c^{-r},
\\
& w^\prime_r = \frac{(d^{-2};q)_\infty}
{(q,c,c/d^2,d^2z,z,q/z,q/d^2z;q)_\infty} \frac{(1-d^2q^{2r})}{(1-d^2)}
\frac{(d^2,c;q)_r}{(q,qd^2/c;q)_r}\,c^{-r}.
\end{split}
\]
If we want to stress the dependence on the parameters we use the notation $w_r(c,d;z\mid q)$, $w^\prime_r(c,d;z\mid q)$ and $v_r(c,d;z\mid q)$ for the weights in \eqref{eq:ortholqJacf}.
Note that at most one of the last two sets of discrete mass points can occur, since we assume $0<c<1$. The first set of discrete mass points always occurs.
The orthogonality measure \eqref{eq:ortholqJacf} can be rewritten in terms of the $c$-function;
\begin{equation}\label{eq:ortholqJacfRES}
\int_\R g(x)\, d\nu(x;c,d;z\mid q) =  \frac{1}{2\pi}
\int_0^\pi g(\cos\psi) \frac{d\psi}{|c(e^{i\psi})|^2}
+ \sum_{s\in D} g(\mu(s)) \Res{w=s} \frac{1}{w\, c(w)c(w^{-1})},
\end{equation}
where we assume that the zeroes of the $c$-function are simple, and where the set $D$ is defined by
\[
D=D(c,d;z\mid q) = \{ s\in \C \mid |s|>1,\, c(s)=0\}.
\]
See Kakehi \cite{Kake}, and \cite[App. A]{KoelSRIMS} for a
bit more general situation, \cite{KoelLaredo} for an introduction,
and \cite{KoelSTempe} for a general scheme of function
transforms with basic hypergeometric kernel of which
\eqref{eq:ortholqJacf} is part.

Denote by $L$ the corresponding (doubly infinite)
Jacobi operator on $\ell^2(\Z)$ with orthonormal basis
$\{ e_k\}_{k\in\Z}$, i.e.,
\begin{equation}\label{eq:JacobioperatorreclqJacf}
2L\, e_k \, = a_k\, e_{k+1} + b_k\, e_k + a_{k-1}\, e_{k-1},
\end{equation}
with $a_k$ and $b_k$ defined as in \eqref{eq:reclqJacf}, and $L$ initially defined on the dense domain of finite linear combinations of the basis vectors. We write $L=L(c,d,z\mid q)$\index{L@$L(c,d,z\mid q)$}
if we need to stress the dependence on the parameters. The operator $L$ is unbounded, because the coefficients tend to $\pm\infty$ as $k\to-\infty$. Its adjoint is given by the same formula
\eqref{eq:JacobioperatorreclqJacf} with its maximal domain, i.e. ${\mathcal D}^\ast = \{ v=\sum_k v_ke_k\in\ell^2(\Z) \mid \sum_k (a_kv_{k+1} + b_k v_k + a_{k-1}v_{k-1})e_k\in \ell^2(\Z)\}$. From Section  4.5 of \cite{KoelLaredo} we have the following result. Note that we need to switch from the basis
$e_k$ to $e_{-k}$ of $\ell^2(\Z)$ for the correspondence with \cite{KoelLaredo}.
\begin{thm} \label{thm:spectraldecompL}
The operator $L$ is essentially self-adjoint for $0<c \leq  q^2$. In this case the resolution of the identity $E_L$ for the unique self-adjoint extension of $L$ is given by
\[
\langle E_L({\mathcal B})u,v\rangle_{\ell^2(\Z)} =
\int_{\mathcal B} {\mathcal F}_Lu(x) \overline{{\mathcal F}_Lv(x)}\,
d\nu(x;c,d;z\mid q), \qquad {\mathcal F}_Lu(x) =
\sum_{k=-\infty}^\infty u_k \rho_k f_k(x),
\]
for any Borel set ${\mathcal B}\subset \R$
and any $u=\sum_k u_ke_k, v=\sum_k v_ke_k\in \ell^2(\Z)$.
\end{thm}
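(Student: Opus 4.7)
The approach is Weyl's limit point--limit circle theory for doubly-infinite Jacobi operators. Essential self-adjointness of the minimal operator on the dense domain of finitely supported sequences is equivalent to both ends $k \to +\infty$ and $k \to -\infty$ being in the limit point case: for some (equivalently, every) $\la \in \C \setminus \R$, the space of $\ell^2$-solutions of $(L-\la)u = 0$ at each end is at most one-dimensional, and no nonzero solution lies in $\ell^2(\Z)$. Having explicit bases of solutions in terms of ${}_2\vp_1$-series, this reduces to an asymptotic computation at both ends. Writing $\la = \mu(y)$ and $u_k = \rho_k g_k$, the recurrence $(L - \la)u = 0$ becomes \eqref{eq:2ndordqdiffeq}, for which two bases of the solution space are $\{f_k(\la; c, d; z\mid q),\ f_k(\la; q^2/c, qd/c; z\mid q)\}$ and $\{F_k(y^{\pm 1}; c, d; z\mid q)\}$, linked via the $c$-function identity \eqref{eq:cfunctions}.

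For the asymptotic analysis, at $k \to +\infty$ the argument $q^{1+k}c/d^2z$ in \eqref{eq:2ndordqdiffeqBHS3} tends to zero, so the ${}_2\vp_1$ in $F_k$ tends to $1$, giving $F_k(y^{\pm 1}) = y^{\pm k}(1 + \mathcal{O}(q^k))$; and $\rho_k \to 1$ by \eqref{eq:deflqJacfweight}. Thus for $|y| < 1$ the solution $\rho_k F_k(y)$ decays exponentially at $+\infty$ while $\rho_k F_k(y^{-1})$ blows up, establishing the limit point case at $+\infty$. At $k \to -\infty$, $zq^{-k} \to 0$ so the ${}_2\vp_1$ in \eqref{eq:2ndordqdiffeqBHS} tends to $1$, yielding
$$
f_k(\la; c, d; z\mid q) \sim (c,z,q/z;q)_\infty\, d^{-k}, \qquad f_k(\la; q^2/c, qd/c; z\mid q) \sim (q^2/c,z,q/z;q)_\infty\, (qd/c)^{-k}.
$$
Combined with $\rho_k^2 \sim (d^2/c)^k$ from the second form of \eqref{eq:deflqJacfweight}, the squared norms of the corresponding $u_k$ behave as $c^{-k}$ and $(c/q^2)^k$ respectively. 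Under the hypothesis $0 < c \leq q^2$ the second behaves as $(q^2/c)^{|k|} \geq 1$, so this solution fails to lie in $\ell^2$ at $-\infty$; the limit point case at $-\infty$ follows, and hence $L$ is essentially self-adjoint.

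For the spectral resolution, the orthogonality relations \eqref{eq:ortholqJacf} show that $\{\rho_k f_k(\cdot)\}_{k \in \Z}$ is orthonormal in $L^2(\R, d\nu)$. To obtain the stated formula for $E_L$ one needs completeness: the map $\mathcal{F}_L : e_k \mapsto \rho_k f_k$ extends to a unitary $\ell^2(\Z) \to L^2(\R, d\nu)$ intertwining $L$ with multiplication by $x$. One route is to compute the resolvent $\langle (L-\la)^{-1}e_k,e_l\rangle$ explicitly as a product of the two solution types divided by the Wronskian (equivalently, the $c$-function of \eqref{eq:cfunctions}), and then apply Stone's formula: the absolutely continuous contribution yields the $|c(e^{i\psi})|^{-2}\,d\psi$ density on $[-1,1]$, while the poles at zeros of $c(y)c(y^{-1})$ give the discrete mass points with residues matching $v_r,\,w_r,\,w'_r$. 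Combining this spectral density with the eigenfunction expansion gives the stated identity for $E_L({\mathcal B})$.

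The chief obstacle is the borderline case $c = q^2$ in the asymptotic analysis at $-\infty$, where the second solution $\rho_k f_k(\cdot; q^2/c, qd/c; z\mid q)$ is only marginally non-$\ell^2$ (its squared norm tends to a nonzero constant rather than diverging exponentially). Here one must control the $(1+o(1))$ error uniformly so that no hidden exponential decay enters; this uses the ${}_2\vp_1$ transformation formulas \cite{GaspR} and a careful expansion of $\rho_k$ via the $\te$-product identity \eqref{eq:thetaprodid}. The remainder of the argument—Stone's formula and identification of residues with the explicit weights in \eqref{eq:ortholqJacf}—is a routine but lengthy bookkeeping exercise following the pattern of \cite{KoelLaredo}.
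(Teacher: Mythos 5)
The paper offers no proof of Theorem \ref{thm:spectraldecompL}: it is imported verbatim from \cite[\S 4.5]{KoelLaredo} (see the sentence immediately preceding the statement). Your reconstruction follows exactly the method of that source --- Weyl limit point/limit circle at both ends of $\Z$, the two explicit ${}_2\vp_1$ solution bases $f_k(\cdot;c,d;z\mid q)$, $f_k(\cdot;q^2/c,qd/c;z\mid q)$ at $-\infty$ and $F_k(y^{\pm1})$ at $+\infty$ linked by \eqref{eq:cfunctions}, and the Green's-function/Stone's-formula identification of $d\nu$ --- and your asymptotic bookkeeping is right: $|\rho_kf_k(\cdot;c,d;z\mid q)|^2\sim c^{-k}$ and $|\rho_kf_k(\cdot;q^2/c,qd/c;z\mid q)|^2\sim(c/q^2)^k$ as $k\to-\infty$, so the second solution fails to be $\ell^2$ at $-\infty$ precisely when $0<c\le q^2$. (The clause ``no nonzero solution lies in $\ell^2(\Z)$'' then comes for free from the limit point property at both ends, via the vanishing of the boundary forms $[u,\bar v]_{\pm\infty}$ on $D(L^*)$; alternatively one checks $c(y^{-1})\neq0$ for $\mu(y)\notin\R$.)

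The one point that needs repair is your treatment of the borderline $c=q^2$. The difficulty there is not that the second solution is ``only marginally non-$\ell^2$'' and the $(1+o(1))$ error must be controlled: the second solution $f_k(x;q^2/c,qd/c;z\mid q)$ is not defined at all when $c=q^2$, since the lower parameter of the ${}_2\vp_1$ in \eqref{eq:2ndordqdiffeqBHS} becomes $q^2/c=1$, the prefactor $(q^2/c;q)_\infty$ vanishes and the series itself has vanishing denominators $(1;q)_n$ --- this is the same degeneration the paper flags for the pair $f,g$ at $c=q$. A clean uniform fix that covers all of $0<c\le q^2$, including the endpoint, is the constancy of the Wronskian \eqref{eq:Wronskian}: since $a_k\sim q^{k+1}\sqrt{c}/|d||z|$ and the $\ell^2$-at-$-\infty$ solution $u_k=\rho_kf_k(\cdot;c,d;z\mid q)$ satisfies $|u_k|\sim \mathrm{const}\cdot c^{-k/2}$, the product $a_k|u_k|\sim \mathrm{const}\cdot(\sqrt{c}/q)^{|k|}$ stays bounded as $k\to-\infty$ exactly when $c\le q^2$; hence for any solution $v$ with $[u,v]=W\neq0$ one gets $|v_k|+|v_{k+1}|\ge|W|/C$ for all large negative $k$, so $v\notin\ell^2$ at $-\infty$ and the end is limit point. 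With that substitution your argument is complete and coincides with the proof in the cited reference.
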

In \cite[Prop.4.5.3]{KoelLaredo} it is also proved that $L$ has deficiency indices $(1,1)$ in case $q^2 < c < 1$, $c \neq q$, hence $L$ has self-adjoint extensions. In the proof linear independence of certain functions $wf(z)$ and $wg(z)$ (see \cite{KoelLaredo}) is used, which is no longer valid in case $c=q$. The special case $c=q$ is also needed in this paper, and we treat this case in Appendix \ref{app:jacobi}.

In this paper it is convenient to rewrite the orthogonality relations \eqref{eq:ortholqJacf} as orthogonality relations on $L^2(I(c,d;z\mid q))$, where $I(c,d;z\mid q)$ is the support of $d\nu(\cdot\, ;c,d;z\mid q)$. So
\begin{equation}\label{eq:defIabqforlqJacfun}
\begin{split}
I(c,d;z\mid q) = &\, [-1,1] \cup \mu\big(D(c,d;z\mid q)\big), \\
D(c,d;z\mid q) = &\, \Big\{ dq^r\mid r\in\NN, |aq^r|>1 \Big\}
\cup \Big\{ \frac{c}{d}q^r\mid r\in\NN, |\frac{c}{d}q^r|>1 \Big\} \\
&\, \cup \Big\{ \frac{q^{1-r}}{dz} \mid r\in\Z, |\frac{q^{1-r}}{dz}|>1 \Big\}
\end{split}
\end{equation}
in accordance with \eqref{eq:ortholqJacfRES}. On $[-1,1]$ we take the Lebesgue measure, and on
the discrete part we take the counting measure. We now define the
function $j_k(x;c,d;z\mid q)\in L^2(I(c,d;z\mid q))$ by
\begin{equation}\label{eq:orthonorlqJacfun}
\begin{split}
j_k(\cos\psi;c,d;z\mid q) &=
\frac{\rho_k(c,d;z\mid q)\, f_k(\cos\psi;c,d;z\mid q)}
{\sqrt{2\pi|\sin\psi|}\, |c(e^{i\psi};c,d;z\mid q)|},
\qquad 0<|\psi|<\pi, \\
j_k(\mu(q^{1-r}/dz);c,d;z\mid q) &= \sqrt{v_r(c,d;z\mid q)}\,
\rho_k(c,d;z\mid q)\,
f_k(\mu(q^{1-r}/dz);c,d;z\mid q), \\
j_k(\mu(cq^r/d);c,d;z\mid q) &= \sqrt{w_r(c,d;z\mid q)}\, \rho_k(c,d;z\mid q)\,
f_k(\mu(cq^r/d);c,d;z\mid q), \\
j_k(\mu(dq^r);c,d;z\mid q) &=
\sqrt{w^\prime_r(c,d;z\mid q)}\, \rho_k(c,d;z\mid q)\, f_k(\mu(dq^r);c,d;z\mid q),
\end{split}
\end{equation}
so that $\{ j_k(\cdot\,  ;c,d;z\mid q)\}_{k\in\Z}$ yields an orthonormal basis for $L^2(I(c,d;z\mid q))$. We use the convention that $j_k(x;c,d;z\mid q)=0$ for
$x\notin I(c,d;z\mid q)$. In particular this implies that
\begin{equation}\label{eq:orthorefororthonorlqJacfun}
\sum_{k\in\Z} j_k(\mu(x);c,d;z\mid q) j_k(\mu(y);c,d;z\mid q)  =
\de_{x,y}, \qquad x,y \in D(c,d;z\mid q),
\end{equation}
so that $\{ j_k(\mu(x);c,d;z\mid q)\}_{k\in\Z}$ has $\ell^2$-norm $1$ for $x\in D(c,d;z\mid q)$.

The asymptotic behaviour of $j_k(x;c,d;z\mid q)$ as $k\to -\infty$ follows from
\begin{equation}\label{eq:asymplqJacfunkto-infty}
\rho_k\, f_k(x) = (\sgn(d)\sqrt{c})^k \,
(c,d^2z/c,cq/d^2z;q)_\infty \bigl( 1+ {\mathcal O}(q^{-k})\bigr), \qquad
x\in\C,
\end{equation}
which is an immediate consequence of \eqref{eq:2ndordqdiffeqBHS} and
\eqref{eq:deflqJacfweight}. For the asymptotic behaviour as $k\to\infty$ we use \eqref{eq:cfunctions}, \eqref{eq:2ndordqdiffeqBHS3}, \eqref{eq:deflqJacfweight}, and we proceed
analogously as in the derivation of \eqref{eq:asymptoticnormalAlSCfcont}.
This gives
\begin{equation}\label{eq:asymplqJacfunktoinftycont}
j_k(\cos\psi ;c,d;z\mid q) = \sqrt{\frac{2}{\pi |\sin\psi|}}
\frac{\Re\bigl( c(e^{i\psi};c,d;z\mid q)\, e^{ik\psi}\bigr)}
{|c(e^{i\psi};c,d;z\mid q)|}
\bigl( 1+ {\mathcal O}(q^k)\bigr)\,\bigr), \qquad k\to \infty,
\end{equation}
for $0<|\psi|<\pi$. Note that the expression is symmetric with respect to $\psi\leftrightarrow -\psi$.
The asymptotic behaviour in the
discrete mass points as $k\to\infty$ follows
similarly as \eqref{eq:asymptoticnormalAlSCfdisc}. The behaviour is
$\ell^2$, and for $k\to\infty$ we have
\begin{equation}\label{eq:asymplqJacfunktoinftydisc}
\begin{split}
j_k(\mu(q^{1-r}/dz);c,d;z\mid q) &=
\bigl(q^{1-r}/dz\bigr)^{-k} \sqrt{v_r(c,d;z\mid q)}\,
c(q^{r-1}dz;c,d;z\mid q)\, \bigl( 1+ {\mathcal O}(q^k)\bigr),  \\
j_k(\mu(cq^r/d);c,d;z\mid q) &= \bigl(cq^r/d)^{-k} \sqrt{w_r(c,d;z\mid q)}\,
c(dq^{-r}/c;c,d;z\mid q)\, \bigl( 1+ {\mathcal O}(q^k)\bigr),  \\
j_k(\mu(dq^r);c,d;z\mid q) &=
\bigl( dq^r \bigr)^{-k} \sqrt{w^\prime_r(c,d;z\mid q)}\, c(q^{-r}/d;c,d;z\mid q)\,
\bigl(1+ {\mathcal O}(q^k)\bigr).
\end{split}
\end{equation}
We can rewrite \eqref{eq:asymplqJacfunktoinftydisc},
cf.~\eqref{eq:asymptoticnormalAlSCfdiscRES},
\begin{equation}\label{eq:asymplqJacfunktoinftydiscRES}
j_k(\mu(s);c,d;z\mid q)  =
s^{-k} \sqrt{\Res{w=s} \frac{c(w^{-1};c,d;z\mid q)}
{w \, c(w;c,d;z\mid q)}} \,
\bigl( 1+ {\mathcal O}(q^k)\bigr), \qquad k\to \infty,
\end{equation}
for $s\in D(c,d;z\mid q)$ assuming the zeroes of the $c$-function are simple.

We will need a contiguous relation for the normalized
little $q$-Jacobi functions, which can be obtained from
the $q$-derivative of the ${}_2\vp_1$-series.
\begin{lemma} \label{lem:contiguouslittleqJac}
The orthonormal basis functions $j_k(x ;c,d;z\mid q)$ satisfy
\[
\begin{split}
\sqrt{1-2x/d+d^{-2}}\,& j_k(x;qc,qd;z\mid q) = \\
&\frac{1}{d}\sqrt{1-\frac{q^k}{z}}\, j_{k-1}(x;c,d;z\mid q)
-\sqrt{1-\frac{cq^k}{d^2z}}\, j_k(x;c,d;z\mid q),
\end{split}
\]
for $x \in I(c,d;z\mid q)$.
\end{lemma}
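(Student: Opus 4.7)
The proof will follow the same template as Lemma \ref{lem:contiguousASC}: first establish a contiguous relation at the level of the unnormalized functions $f_k(x;c,d;z\mid q)$ from \eqref{eq:2ndordqdiffeqBHS}, and then translate it into the orthonormal basis $j_k$ by keeping careful track of the normalization constants. The identity to establish at the unnormalized level, after some bookkeeping, should be
\[
f_k(x;c,d;z\mid q) - d\,f_{k-1}(x;c,d;z\mid q) \,=\, (1-q)\,z\,(1-dy)(1-d/y)\,f_k(x;qc,qd;z\mid q),
\]
where $x=\mu(y)$. Note that $(1-dy)(1-d/y) = 1 - 2x/d \cdot d + d^2$, so after dividing by $d$ the factor $\sqrt{1-2x/d+d^{-2}}$ will appear naturally once square roots are extracted.

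The first step is to derive this identity from the $q$-derivative formula for the basic hypergeometric series, namely
\[
\frac{{}_2\vp_1(a,b;c;q,w) - {}_2\vp_1(a,b;c;q,qw)}{(1-q)w} \,=\, \frac{(1-a)(1-b)}{1-c}\,{}_2\vp_1(qa,qb;qc;q,w),
\]
applied to $a=dy$, $b=d/y$, $w=zq^{-k}$. The shift $w\mapsto qw$ produces the parameter shift $k\mapsto k-1$, while the right-hand side is precisely (up to the known constant $(qc,z,q/z;q)_\infty(qd)^{-k}$) the function $f_k(x;qc,qd;z\mid q)$. Using $(c;q)_\infty = (1-c)(qc;q)_\infty$ the Pochhammer prefactors cancel cleanly, yielding the displayed identity.

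The second step is to multiply both sides by $\rho_k(c,d;z\mid q)$ (see \eqref{eq:deflqJacfweight}) and absorb the result into the normalized functions. A direct calculation gives
\[
\frac{\rho_k(qc,qd;z\mid q)}{\rho_k(c,d;z\mid q)} \,=\, \sqrt{1-\tfrac{cq^k}{d^2z}}, \qquad \frac{\rho_{k-1}(c,d;z\mid q)}{\rho_k(c,d;z\mid q)} \,=\, \sqrt{\tfrac{1-cq^k/d^2z}{1-q^k/z}},
\]
so after multiplying the contiguous relation by $\sqrt{1-cq^k/(d^2z)}/((1-q)z d)$ and dividing by $(1-dy)(1-d/y)/d = -\sqrt{1-2x/d+d^{-2}}\cdot(\text{sign factors})$, all unnormalized terms turn into $\rho_k f_k$, $\rho_{k-1}f_{k-1}$, $\rho_k(qc,qd;z\mid q)f_k(\cdot;qc,qd;z\mid q)$, matching the three terms in the lemma.

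The final step—and the main obstacle—is passing from $\rho_k f_k$ to $j_k$ on the full support $I(c,d;z\mid q)$. On the continuous part $x=\cos\psi$ this amounts to computing the ratio $|c(e^{i\psi};qc,qd;z\mid q)|/|c(e^{i\psi};c,d;z\mid q)|$ from \eqref{eq:cfunctions}; the $q$-shifted factorials telescope down to $|1-e^{-i\psi}/(dz e^{i\psi})|/|(1-de^{-i\psi})(1-dze^{i\psi})|$ type expressions whose modulus conspires with the factor $\sqrt{1-2x/d+d^{-2}}$. On each family of discrete mass points one must similarly compare $w_r$, $w_r'$ and $v_r$ at parameters $(c,d;z\mid q)$ versus $(qc,qd;z\mid q)$, analogously to the Al-Salam--Chihara case in the proof of Lemma \ref{lem:contiguousASC}. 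Because the three discrete series of mass points shift in a mildly non-uniform way when $(c,d)\mapsto (qc,qd)$, this bookkeeping is the most delicate part, but each case reduces to a short identity of $q$-shifted factorials verifiable via the $\theta$-product identity \eqref{eq:thetaprodid}.
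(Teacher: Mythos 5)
Your proposal follows the paper's proof exactly: derive a contiguous relation for the unnormalized $f_k$ from the $q$-derivative formula for the ${}_2\vp_1$-series (the paper cites \cite[Exerc.~1.12]{GaspR} for precisely this), then pass to the $j_k$ by tracking $\rho_k$, the $c$-function on the continuous spectrum, and the weights $w_r$, $w_r'$, $v_r$ on the three families of mass points (where, as you anticipate, the index shifts non-uniformly: $w_r\mapsto w_r$ but $w'_r\mapsto w'_{r-1}$ and $v_r\mapsto v_{r-1}$). The only thing to fix is the displayed unnormalized identity: the $q$-derivative formula has no $(1-q)$ (one has ${}_2\vp_1(a,b;c;q,w)-{}_2\vp_1(a,b;c;q,qw)=\frac{(1-a)(1-b)}{1-c}\,w\,{}_2\vp_1(qa,qb;qc;q,w)$), and since $f_k$ carries the prefactor $d^{-k}$ the correct relation is
\[
f_k(x;c,d;z\mid q) - \frac{1}{d}\, f_{k-1}(x;c,d;z\mid q) = z\,(1-2dx+d^2)\, f_k(x;qc,qd;z\mid q),
\]
i.e.\ $\tfrac1d$ rather than $d$ in front of $f_{k-1}$ and no factor $(1-q)$; with these constants the normalization ratios $\rho_k(qc,qd;z\mid q)/\rho_k(c,d;z\mid q)=\sqrt{1-cq^k/d^2z}$ and $c(y;c,d;z\mid q)/c(y;qc,qd;z\mid q)=dz(d-y)$ reproduce the stated lemma. (Your two ratio formulas for $\rho$ are correct as written, so the slips are confined to the intermediate display and would surface immediately in the final matching.)
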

\begin{proof}
A direct calculation, or see \cite[Exerc. 1.12]{GaspR}, shows that
\begin{equation}\label{eq:qdifffor2vp1}
f_k(x;c,d;z\mid q) - \frac{1}{d}\, f_{k+1}(x;c,d;z\mid q) =
z (1-2dx+d^2)\, f_k(x;qc,qd;z\mid q).
\end{equation}
Rewriting \eqref{eq:qdifffor2vp1} for the orthonormal basis $j_k(x ;c,d;z\mid q)$ then gives the desired contiguous relation. For $x=\cos\psi$ this is immediate from \eqref{eq:orthonorlqJacfun},
\eqref{eq:deflqJacfweight} and \eqref{eq:ortholqJacf}. For $x$ in the discrete spectrum it follows from
\[
\begin{split}
\frac{w_r(qc,qd;z\mid q)}{w_r(c,d;z\mid q)} &= d^2z^2 (1-cq^r)(1-d^2q^{-r}/c), \\
\frac{w^\prime_{r-1}(qc,qd;z\mid q)}{w^\prime_r(c,d;z\mid q)} &=
d^2z^2 (1- d^2q^r)(1-q^{-r}),\\
 \frac{v_{r-1}(qc,qd;z\mid q)}{v_r(c,d;z\mid q)} &=
d^2z^2 (1-d^2zq^{r-1})(1-q^{1-r}/z). \qedhere
\end{split}
\]
\end{proof}

Yet another result for the little $q$-Jacobi functions needed
in this paper is related to a symmetry property that
follows from Heine's transformation \cite[(1.4.6)]{GaspR} and
analytic continuation;
\begin{equation}\label{eq:HeineforlqJf}
{}_2\vp_1\left( \begin{array}{c} dy,\, d/y \\ c \end{array};q,zq^{-k}\right) =
\frac{(zd^2q^{-k}/c;q)_\infty}{(zq^{-k};q)_\infty}
{}_2\vp_1\left( \begin{array}{c} cy/d,\, c/dy \\ c
\end{array};q,q^{-k}\frac{zd^2}{c}\right).
\end{equation}
Together with \eqref{eq:2ndordqdiffeqBHS} and \eqref{eq:deflqJacfweight}
this implies the symmetry
\begin{equation}\label{eq:symmetryforlqJacf}
\rho_k(c,d;z\mid q)\, f_k(x;c,d;z\mid q) =
\rho_k(c,\frac{c}{d};\frac{zd^2}{c}\mid q)\,
f_k(x;c,\frac{c}{d};\frac{zd^2}{c}\mid q).
\end{equation}
The action on the parameters is an involution, and
$I(x;c,d;z\mid q) =I(c,c/d;zd^2/c\mid q)$. Moreover, we have
\[
\begin{split}
c(y;c,d;z\mid q) &=c(y;c,\frac{c}{d};\frac{zd^2}{c}\mid q), \\
v_k(c,d;z\mid q) &=v_k(c,\frac{c}{d};\frac{zd^2}{c}\mid q), \\
\ w_k(c,d;z\mid q) &=w^\prime_k(c,\frac{c}{d};\frac{zd^2}{c}\mid q),
\end{split}
\]
which implies
\begin{equation} \label{eq:symmetryjk1}
j_k(x;c,d;z\mid q) = j_k(x;c,c/d;zd^2/c\mid q).
\end{equation}
This shows that in the special case $d^2/c\in q^\Z$, we can transfer the multiplication by a power of $q$ in $z$ to a shift in the index $k$. Using
\eqref{eq:thetaprodid} we obtain for $p\in\Z$
\begin{equation*}
\begin{split}
\rho_k(c,d;zq^{-p}\mid q)\, f_k(x;c,d;zq^{-p}\mid q) &= (-dz)^p q^{-\hf p(p+1)} \,
\rho_{k+p}(c,d;z\mid q)\, f_{k+p}(x;c,d;z\mid q), \\
c(y;c,d;zq^{-p}\mid q) &= (-dzy)^p q^{-\hf p(p+1)} \, c(y;c,d;z\mid q),
\\
v_r(c,d;zq^{-p}\mid q) &= (dz)^{-2p} q^{p(p+1)} \, v_{r-p}(c,d;z\mid q),
\\
w_r(c,d;zq^{-p}\mid q) &= (dz)^{-2p} q^{p(p+1)} \, w_r(c,d;z\mid q),
\\
w^\prime_r(c,d;zq^{-p}\mid q) &= (dz)^{-2p} q^{p(p+1)} \, w^\prime_r(c,d;z\mid q).
\end{split}
\end{equation*}
Moreover, $I(c,d;zq^{-p}\mid q) =I(c,d;z\mid q)$ and so
\begin{equation} \label{eq:symmetryjk2}
j_k(x;c,d;zq^{-p}\mid q) =(\sgn(d))^p\,j_{k+p}(x;c,d;z\mid q).
\end{equation}
Combining gives the following special case
\begin{equation}\label{eq:specialsymmetryjk}
j_k(x;q,q^{\hf(1-p)};z\mid q) = j_{k+p}(x;q,q^{\hf(1+p)};z\mid q),
\end{equation}
for all $x\in I(q,q^{\hf(1-p)};z\mid q) = I(q,q^{\hf(1+p)};z\mid q)$.

\subsection{Explicit formulas for the function $A$} \label{ssecB:formulasforA}
Here we write out explicitly the functions $A=A(\,\cdot\,;p,m,\ep,\et)$, $p \in q^\Z$, $m \in \Z$ and $\ep,\et \in \{-,+\}$. These functions are used in \S\ref{ssec:generatorsofhatM} for the description of the polar decomposition of the elements $Q(p_1,p_2,n) \in \hat M$, and they are used later on in \S\ref{ssec:discreteseries} and \S\ref{ssec:principalseries} to describe explicitly the actions of the generators of $\hat M$ on $\cL_{p,x}$ in the discrete series and principal series corepresentations. The functions $A$ are essentially special cases of the $c$-functions for Al-Salam--Chihara polynomials and little $q$-Jacobi functions, divided by their absolute value. We only give the formulas for $A(\la)$ with $\la=e^{i\psi} \in \T_0$.

For $\ep=+$, $\et=-$,
\[
\begin{split}
A(\la;&p,m,+,-) = \\
&(-1)^m \la^{1-m-\chi(p)}\sqrt{\frac{2}{\pi|\sin \psi|}}\, \frac{ (q\la/p, -q^{1-2m}\la/p;q^2)_\infty }{ (\la^2;q^2)_\infty } \Bigg( \frac{ (\la^{\pm 2};q^2)_\infty }{ (q\la^{\pm 1}/p, -q^{1-2m}\la^{\pm 1}/p;q^2)_\infty } \Bigg)^\hf,
\end{split}
\]
and for $\ep=-$, $\et=+$,
\[
\begin{split}
A(\la;p,m,-,+) =
\la \sqrt{\frac{2}{\pi|\sin \psi|}}\, \frac{ (pq\la, -pq^{1+2m}\la ;q^2)_\infty }{ (\la^2;q^2)_\infty } \Bigg( \frac{ (\la^{\pm 2};q^2)_\infty }{ (pq\la^{\pm 1}, -pq^{1+2m}\la^{\pm 1};q^2)_\infty } \Bigg)^\hf.
\end{split}
\]
For $\ep=\et=-$,
\[
\begin{split}
A(\la;&p,m,-,-) =\\
& (-1)^{m+1} \la \sqrt{\frac{2}{\pi|\sin \psi|}}\, \frac{ (-pq\la, -pq^{1+2m}\la;q^2)_\infty }{ (\la^2;q^2)_\infty } \Bigg( \frac{ (\la^{\pm 2};q^2)_\infty }{ (-pq\la^{\pm 1}, -pq^{1+2m}\la^{\pm 1};q^2)_\infty } \Bigg)^\hf,
\end{split}
\]
for $\chi(p)+m \geq 0$, and for $\chi(p)+m<0$,
\[
\begin{split}
A&(\la;p,m,-,-) = \\
&(-1)^{m+1} \la^{1-m-\chi(p)}\sqrt{\frac{2}{\pi|\sin \psi|}}\, \frac{ (-q\la/p, -q^{1-2m}\la/p;q^2)_\infty }{ (\la^2;q^2)_\infty } \Bigg( \frac{ (\la^{\pm 2};q^2)_\infty }{ (-q\la^{\pm 1}/p, -q^{1-2m}\la^{\pm 1}/p;q^2)_\infty } \Bigg)^\hf.
\end{split}
\]
For $\ep=\et=+$,
\begin{align*}
A(\la;p,m,+,+) =\, &(-1)^{m+\chi(p)} \la^{-m-\chi(p)} \frac{ (-q\la/p, -pq^{1+2m}\la, pq^{3+2m}/\la, q^{-1-2m}\la/p;q^2)_\infty }{ (\la^2;q^2)_\infty } \\
& \times\sqrt{\frac{2}{\pi|\sin \psi|}}\, \Bigg( \frac{ (\la^{\pm 2};q^2)_\infty }{ (-q\la^{\pm 1}/p, -pq^{1+2m}\la^{\pm 1}, pq^{3+2m}\la^{\pm 1}, q^{-1-2m}\la^{\pm 1}/p;q^2)_\infty } \Bigg)^\hf,
\intertext{for $m \geq 0$, and for $m <0$,}
A(\la;p,m,+,+) =\, &(-1)^{m}\sqrt{\frac{2}{\pi|\sin \psi|}}\, \frac{ (-pq\la, -q^{1-2m}\la/p, q^{3-2m}/\la p, pq^{-1+2m}\la;q^2)_\infty }{ (\la^2;q^2)_\infty } \\
& \times \Bigg( \frac{ (\la^{\pm 2};q^2)_\infty }{ (-pq\la^{\pm 1}, -q^{1-2m}\la^{\pm 1}/p, q^{3-2m}\la^{\pm 1}/p, pq^{-1+2m}\la^{\pm 1};q^2)_\infty } \Bigg)^\hf.
\end{align*}

\section{Special case of a Jacobi operator} \label{app:jacobi}
In this section we study the special case $c=q$ of the Jacobi operator $L=L_c=L(c,d,z\mid q)$ defined by \eqref{eq:JacobioperatorreclqJacf}. For special choices of $c$, $d$ and $z$, the operator $L$ is a certain  restriction of $E_0^\dag E_0$ or the Casimir operator (see Section \ref{ssec:spectraldecompCasimir}). The operator $L(q,d,z\mid q)$ that we consider in this subsection corresponds to the case $\ep=\et=+$, $m=0$.

Let $\cF(\Z)$ be the space of complex-valued functions on $\Z$. We study the linear operator
$\tilde{L}_c \colon \cF(\Z) \rightarrow \cF(\Z)$, given by
\[
2\,(\tilde{L}_c u)_k = a_{k-1}(c)\,u_{k-1} + b_k(c)\, u_k + a_k(c)\, u_{k+1}
\]
for all $u \in \cF(\Z)$ and $k \in \Z$. The coefficients $a_k(c)$ and $b_k(c)$ are given by \eqref{eq:reclqJacf}, and we write $a_k(c), b_k(c)$ instead of $a_k, b_k$ to stress the dependence on the parameter $c$. Recall from Section  \ref{ssecB:litteqJacobi} that $d \in \R \setminus \{0\}$ and $z \in (-\infty,0)$, so that both terms in the square root are positive, and $a_k>0$ and $b_k \in \R$. We define the linear operator $L\colon \cK(\Z) \rightarrow \cK(\Z)$ as the restriction of $\tilde{L}_c$ to $\cK(\Z)$, the linear subspace of finite linear combinations of basis vectors, i.e., the subspace of compactly supported functions in $\cF(\Z)$. Then $(L_c, \cK(\Z))$ is an unbounded symmetric operator on the Hilbert space $\ell^2(\Z)$. Moreover, the unboundedness occurs as $k\to-\infty$, since in this case the coefficients $a_k(c)$ and $b_k(c)$ grow exponentially. Note that for $k\to\infty$ the coefficients $a_k(c)$ and $b_k(c)$ remain bounded.

In this subsection we need the Wronskian associated to the Jacobi operator $L$;
\begin{equation} \label{eq:Wronskian}
[u,v]_k = a_k \bigl( u_{k+1}v_k - u_k v_{k+1}\bigr),
\end{equation}
see \cite[(4.2.3)]{KoelLaredo}. Two eigenfunctions $u,v$ of $L$ are linearly independent if and only if $[u,v]\neq 0$.

The remainder of this subsection furnishes the proof the following result.
\begin{thm} \label{thm:Jacobioperatorforcasec=q}
Consider $u \in \ell^2(\Z)$ so that $\tilde{L}_q(u) \in \ell^2(\Z)$
and so that there exists a function $f\colon \R_{\geq 0} \rightarrow \C$
that is differentiable in $0$ and satisfies $f(0) \not= 0$ and
$u_{-k} = q^{\frac{k}{2}}\,f(q^k)$ for all $k \in \N$.
Then there exists a unique self-adjoint extension $T$ of $L_q$
so that $u \in D(T)$. Moreover, if $v \in \ell^2(\Z)$, $\tilde{L}_q(v) \in \ell^2(\Z)$
and if there exists a function $g \colon \R_{\geq 0} \rightarrow \C$ that is differentiable in $0$ and satisfies $v_{-k} = q^{\frac{k}{2}}\,g(q^k)$ for all $k \in \N$, then $v \in D(T)$ as
well.

The resolution of the identity $E_{T}$ for the self-adjoint extension $T$ of $L_q$ is given by
\[
\langle E_{T}({\mathcal B})u,v\rangle_{\ell^2(\Z)} =
\int_{\mathcal B} {\mathcal F}_Tu(x) \overline{{\mathcal F}_Tv(x)}\,
d\nu(x;q,d;z\mid q), \qquad {\mathcal F}_Tu(x) =
\sum_{k=-\infty}^\infty u_k \rho_k f_k(x),
\]
for any Borel set ${\mathcal B}\subset \R$
and any $u=\sum_k u_ke_k, v=\sum_k v_ke_k\in \ell^2(\Z)$.
\end{thm}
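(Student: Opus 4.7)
The plan is to use von Neumann's deficiency space theory combined with a Weyl-Kodaira style spectral analysis, following the general outline of \cite[\S 4.5]{KoelLaredo} but adapted to handle the degenerate case $c=q$ explicitly.

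First, I would show that $L_q$ has deficiency indices $(1,1)$. At $+\infty$ the coefficients $a_k(q)$, $b_k(q)$ stay bounded, so standard limit point/limit circle analysis for Jacobi operators shows every solution of $L_q u = \lambda u$ lies in $\ell^2(\N)$ near $+\infty$; at $-\infty$, where the coefficients grow like $(\sqrt{q})^k$, there is exactly one linearly independent $\ell^2$ solution near $-\infty$, namely the ``principal'' solution $k \mapsto \rho_k f_k(\lambda)$, whose asymptotic $\rho_k f_k(x) \sim \sgn(d)^k\, q^{k/2}\,C$ as $k\to -\infty$ follows from \eqref{eq:asymplqJacfunkto-infty}. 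A second linearly independent solution must therefore fail to be $\ell^2$ at $-\infty$, yielding deficiency indices $(1,1)$ and a one-parameter family of self-adjoint extensions parametrized by a boundary condition at $-\infty$.

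Next I would identify the self-adjoint extension singled out by the hypothesis on $u$. Writing $\phi_k(\lambda) = \rho_k f_k(\lambda)$ and choosing any second linearly independent solution $\psi_k(\lambda)$ of $L_q \psi = \lambda \psi$, one computes the boundary form $[u,v]_{-\infty} = \lim_{k\to-\infty}[u,v]_k$ using the Wronskian \eqref{eq:Wronskian}. The hypothesis $u_{-k} = q^{k/2}f(q^k)$ with $f$ differentiable at $0$ and $f(0)\neq 0$ matches exactly the leading asymptotics of $\phi_{-k}(\lambda)$: writing $u = f(0)\,\phi + r$, the remainder $r$ inherits an extra factor $O(q^k)$ from differentiability of $f$, so that $[r,\phi]_{-\infty} = 0$. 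This forces $[u,\phi]_{-\infty} = 0$, and by the general theory of symmetric Jacobi operators this identity determines a unique self-adjoint extension $T$. The second assertion — that any $v$ with the weaker asymptotic $v_{-k}=q^{k/2}g(q^k)$ also lies in $D(T)$ — reduces to the same Wronskian computation, now without the nonvanishing hypothesis, since the boundary form depends only on the leading $q^{k/2}$ behavior.

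Finally, the resolution of the identity is obtained by the Weyl-Kodaira transform machinery for Jacobi operators. The transform $\mathcal{F}_T u(x) = \sum_k u_k \rho_k f_k(x)$ is initially defined on $\cK(\Z)$; the orthogonality relations \eqref{eq:ortholqJacf} and the fact that $\{\rho_k f_k(\cdot)\}_{k\in\Z}$ form a complete system in $L^2(\R,d\nu(\cdot;q,d;z\mid q))$ (which follows once the correct self-adjoint extension has been fixed, via the Parseval/Plancherel theorem of \cite[\S 4.5]{KoelLaredo}) show $\mathcal{F}_T$ extends to a unitary $\ell^2(\Z)\to L^2(d\nu)$ intertwining $T$ with multiplication by $x$. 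The stated formula for $E_T(\mathcal{B})$ then follows from the spectral theorem.

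The main obstacle is the boundary value $c=q$: the argument in \cite[Prop.~4.5.3]{KoelLaredo} relies on the linear independence of two explicit $_2\vp_1$-solutions $wf(z)$ and $wg(z)$, a property which degenerates precisely at $c=q$ (the two solutions coincide, and one expects a logarithmic second solution). Bypassing this requires a different construction of a second linearly independent solution at $c=q$ — for instance by differentiating a one-parameter family of solutions in $c$ at $c=q$, or by a direct contour/residue argument — and then verifying that this second solution genuinely fails to be $\ell^2$ near $-\infty$, so that the boundary form $[\,\cdot\,,\phi]_{-\infty}$ is nondegenerate and selects a unique extension.
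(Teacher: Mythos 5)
Your overall architecture is the same as the paper's: deficiency indices $(1,1)$, a second eigenfunction obtained by differentiating the family $c\mapsto\tilde f(c,y)-\tilde g(c,y)$ at $c=q$, a Wronskian boundary condition at $-\infty$ selecting the extension, and the Weyl--Kodaira machinery of \cite[\S 4.5]{KoelLaredo} for the resolution of the identity. However, your limit point/limit circle classification is backwards, and this is not a cosmetic slip. At $+\infty$ the coefficients are bounded and the two solutions behave like $y^k$ and $y^{-k}$ by \eqref{eq:2ndordqdiffeqBHS3}; for $\mu(y)=\pm i$ with $|y|<1$ exactly one of these is square summable, so the operator is in the \emph{limit point} case at $+\infty$, not "every solution lies in $\ell^2(\N)$ near $+\infty$". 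Conversely, at $-\infty$ the operator is in the \emph{limit circle} case: the second solution $\tilde h(y)=\lim_{c\to q}(\tilde f(c,y)-\tilde g(c,y))/(c-q)$ satisfies $\tilde h(y)_{-k}=q^{k/2}(r_k+k\,h(q^k))$ with $h(0)\neq 0$ (Lemma \ref{lem:h=r+f}), and $k\,q^{k/2}$ \emph{is} square summable, so both solutions are $\ell^2$ near $-\infty$ (whence $\tilde F(q,y)\in\ell^2(\Z)$, Lemma \ref{lem:F=r+h}, and the failure of essential self-adjointness). Your closing program --- to verify that the second solution "genuinely fails to be $\ell^2$ near $-\infty$" so that the boundary form is nondegenerate --- would therefore fail if attempted; and if it were true, combined with limit point at $+\infty$ it would make $L_q$ essentially self-adjoint and the theorem vacuous. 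Your own premises are also internally inconsistent: only one $\ell^2$ solution at $-\infty$ is precisely the limit point case there, which admits no boundary condition at $-\infty$.

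The correct mechanism, which your sketch misses, is that the two $\ell^2$ solutions at $-\infty$ are distinguished by the presence or absence of the extra factor $k$ in $k\,q^{k/2}$, and it is exactly this factor that makes the Wronskian limit $\lim_{k\to\infty}[v,\tilde F(q,y)]_{-k}=\frac{q}{d|z|}\,g(0)\,h(0)$ finite and generically nonzero for vectors with $v_{-k}=q^{k/2}g(q^k)$. The self-adjoint extensions are $T_\la$ with domain cut out by $\lim_{k\to\infty}[w,\la\tilde F(q,y)+\bar\la\tilde F(q,\bar y)]_{-k}=0$, and the hypothesis $f(0)\neq 0$ forces $\Re(\la h(0))=0$, which determines $\la$ up to sign and hence $T$ uniquely; any $v$ with the stated asymptotics then satisfies the same condition automatically. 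Your proposed condition $[u,\phi]_{-\infty}=0$ with $\phi=\rho_\cdot f_\cdot(\la)$ does not by itself establish this: one must still show it is a nondegenerate boundary condition on the deficiency space, i.e.\ that $[\tilde F(q,y),\phi]_{-\infty}\neq 0$, which again requires the asymptotics of the second solution and the nonvanishing $h(0)\neq 0$. Finally, for the spectral formula you still need to check that $\tilde f(q,y)$ itself lies in $D(T)$ (so that the Green's function is built from $\tilde f$ at $-\infty$ and $\tilde F$ at $+\infty$); this is immediate from $\tilde f(q,y)_{-k}=q^{k/2}g(q^k)$ and the second assertion of the theorem, and is the step that identifies $E_T$ with the measure $d\nu(\,\cdot\,;q,d;z\mid q)$ of Theorem \ref{thm:spectraldecompL}.
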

Observe that the resolution of the identity is the same as in Theorem \ref{thm:spectraldecompL} with $c=q$.\\

For the proof we need the eigenfunctions of the operator $L_c$. For $c \in (0,1)$ and $y \in \C \setminus \{0\}$, let us denote
\begin{equation} \label{eq:tilde f}
\begin{split}
\tilde{f}(c,y)_k &= \rho_k(c,d;z\mid q) f_k(\mu(y);c,d;z\mid q),\\
\tilde{g}(c,y)_k &= \rho_k(c,d;z\mid q) f_k(\mu(y);q^2/c,qd/c;z\mid q),
\end{split}
\end{equation}
where $f_k$ and $\rho_k$ are defined by \eqref{eq:2ndordqdiffeqBHS} and \eqref{eq:deflqJacfweight}, respectively. From Section \ref{ssecB:litteqJacobi} we know that $\tilde f$ and $\tilde g$ are both solutions of the eigenvalue equation $L_cu = \mu(y) u$. Another solution is the function
\[
\tilde{F}(c,y)_k = \rho_k(c,d;z\mid q) F_k(y;c,d;z\mid q),
\]
see \eqref{eq:2ndordqdiffeqBHS3} for the definition of $F_k$.

In \cite[Section 4.5]{KoelLaredo} it is shown that the operator $L_c$ has deficiency indices $(1,1)$ in case $q^2<c<1$, $q\neq c$. The proof of this fact relies on the fact that the functions $\tilde f(c,y)$ and $\tilde g(c,y)$ are both in the space $\{ u \mid L^* u = zu, \ \sum_{k=-\infty}^0 |u_k|^2<\infty\}$ for $z=\mu(y) \in \C\setminus\R$. In case $c=q$, we have $\tilde{f}(q,y) = \tilde{g}(q,y)$, so we must provide another eigenvector for $\tilde{L}_q$.

\begin{defn}\label{def:eigenfunction h}
Let $y \in \C \setminus \{0\}$. We define $\tilde{h}(y) \in \cF(\Z)$
\begin{equation*}
\tilde{h}(y)_k = \lim_{c \rightarrow q} \, \frac{ \tilde{f}(c,y)_k -
\tilde{g}(c,y)_k }{c - q},
\hspace{4ex} \text{ for all } k \in \Z.
\end{equation*}
\end{defn}

For $c \in (q^2,1)$, we have
\[
\tilde{L}_c \left(\, \frac{\tilde{f}(c,y) -
\tilde{g}(c,y)}{c-q}\,\right) =
\mu(y)\,\,\frac{\tilde{f}(c,y) -
\tilde{g}(c,y)}{c-q} \ .
\]
Since the coefficients $a_k(c), b_k(c)$ of $\tilde{L}_c$
depend continuously on $c$, and $\mu(y)$ is independent of $c$, the
above equality together with Definition \ref{def:eigenfunction h}
imply that $\tilde{L}_q \,\tilde{h}(y) = \mu(y)\, \tilde{h}(y)$.

Let us establish the asymptotics of $\tilde{h}(y)_k$ as $k \rightarrow -\infty$.
\begin{lemma} \label{lem:h=r+f}
Consider $y \in \C \setminus \{0\}$. Then there exists a convergent sequence $(r_k)_{k=1}^\infty$ in $\C$ and a differentiable function $f : \R^+ \rightarrow \C$ such that $f(0) \not= 0$ and
$\tilde{h}(y)_{-k} = q^{\frac{k}{2}}\,(r_k + k\,f(q^k))$ for all $k \in \N$.
\end{lemma}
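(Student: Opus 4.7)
The plan is to realize $\tilde h(y)_k$ as a first-order Taylor coefficient in $c$ at $c=q$, and then read off its asymptotic as $k \to -\infty$ from the explicit basic-hypergeometric formulas \eqref{eq:2ndordqdiffeqBHS} and \eqref{eq:deflqJacfweight}. First I would record the invariance $\rho_k(c,d;z\mid q) = \rho_k(q^2/c,qd/c;z\mid q)$, which is immediate from \eqref{eq:deflqJacfweight} since $(q^2/c)/(qd/c)^2 = c/d^2$. Consequently $\tilde g(c,y)_k = \tilde F(q^2/c,qd/c;y)_k$, where
\[
\tilde F(c,d;y)_k := \rho_k(c,d;z\mid q)\,f_k(\mu(y);c,d;z\mid q),
\]
and a short Taylor expansion at $c=q$ gives
\[
\tilde h(y)_k \;=\; \bigl[\,2\partial_c + \tfrac{d}{q}\partial_d\bigr]\,\tilde F(c,d;y)_k\bigm|_{c=q},
\]
the coefficients $2$ and $d/q$ arising from $\partial_c[q^2/c]|_{c=q}=-1$ and $\partial_c[qd/c]|_{c=q}=-d/q$.

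The next step is to establish, via the second expression for $\rho_k^2$ in \eqref{eq:deflqJacfweight} (derived from the $\theta$-product identity \eqref{eq:thetaprodid}) combined with \eqref{eq:2ndordqdiffeqBHS}, the factorisation
\[
\tilde F(c,d;y)_{-m} \;=\; \sigma^m\,c^{m/2}\,G(c,d;zq^m),\qquad \sigma:=\sgn(d),
\]
where
\[
G(c,d;t) = (c,z,q/z;q)_\infty\,\sqrt{\tfrac{(d^2z/c,\,cq/d^2z;q)_\infty}{(z,\,q/z;q)_\infty}}\,\sqrt{\tfrac{(t;q)_\infty}{(d^2t/c;q)_\infty}}\,{}_2\vp_1\bigl(dy,\,d/y;\,c;\,q,\,t\bigr).
\]
The function $G$ is jointly analytic in $(c,d,t)$ near $(q,d,0)$ (with $d$ denoting the fixed parameter of the lemma), and $G(q,d;0)\ne 0$ because the last square root equals $1$ at $t=0$ and the other $q$-shifted factorials are all nonzero for $z<0$, $d^2>0$, $0<q<1$. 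The essential feature is that the factor $(c/d^2)^m$ inside $\rho_{-m}^2$ combines after taking square root with the $d^m$ coming from $f_{-m}$ to produce the simple prefactor $\sigma^m c^{m/2}$, with all remaining $m$-dependence funnelled through the single scalar $zq^m$.

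Applying $[2\partial_c + (d/q)\partial_d]$ at $c=q$ and observing that $\sigma$ is locally constant in $d$, the sole source of a factor of $m$ is $\partial_c[c^{m/2}]=(m/2)c^{m/2-1}$, yielding
\[
\tilde h(y)_{-m} \;=\; \sigma^m q^{m/2}\Bigl[\,\tfrac{m}{q}\,G(q,d;zq^m)\;+\;\bigl(2\partial_c G + \tfrac{d}{q}\partial_d G\bigr)(q,d;zq^m)\,\Bigr].
\]
In the regime $d>0$ relevant for the application of this lemma (so $\sigma=1$), setting $f(t) := G(q,d;zt)/q$ and $r_m := (2\partial_c G + (d/q)\partial_d G)(q,d;zq^m)$ produces the claimed identity $\tilde h(y)_{-m} = q^{m/2}(r_m + m\,f(q^m))$: analyticity of $G$ in $t$ at $0$ yields differentiability of $f$ at $0$ with $f(0) = G(q,d;0)/q\ne 0$, and convergence of $(r_m)$ is immediate from $zq^m \to 0$.

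The main technical obstacle is the clean factorisation in the second step. The products $(cq^{1-m}/d^2z;q)_\infty$ and $(q^{1-m}/z;q)_\infty$ entering $\rho_{-m}^2$ both diverge as $m\to\infty$, and \eqref{eq:thetaprodid} must be applied simultaneously to both so that their singular parts cancel, leaving a single monomial $(c/d^2)^m$ with a remainder that packages into the analytic factor $\sqrt{(zq^m;q)_\infty/(d^2zq^m/c;q)_\infty}$ inside $G$. A secondary subtlety is the consistent choice of square-root branch needed to pass from $\sqrt{(c/d^2)^m} = c^{m/2}/|d|^m$ to $\sigma^m c^{m/2}$ after multiplication by $d^m$.
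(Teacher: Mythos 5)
Your proof is correct and follows essentially the same route as the paper's: both realize $\tilde{h}(y)_{-k}$ as a $c$-derivative at $c=q$ of the explicit formulas \eqref{eq:2ndordqdiffeqBHS} and \eqref{eq:deflqJacfweight}, extract the factor of $k$ by differentiating the exponential-in-$k$ prefactor (the paper from $(q/c)^k$, you from $c^{m/2}$ after symmetrizing via $\rho_k(c,d;z\mid q)=\rho_k(q^2/c,qd/c;z\mid q)$ and the chain rule), and use the $\theta$-product identity \eqref{eq:thetaprodid} to package the remaining $k$-dependence into a function of $q^k$ that is differentiable at $0$. Your explicit restriction to $d>0$ is harmless: it holds in every application of the lemma, and the paper's own step $w_{-k}(c)=c^{k/2}D(c,q^k)$ tacitly assumes it as well, since for $d<0$ both computations acquire an extra factor $\sgn(d)^k$.
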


\begin{proof}
Define the $C^\infty$-functions
$B,C \colon (q^2,1) \times [0,\infty) \rightarrow \C$ such that
\[
B(c,x) = \,\rphis{2}{1}{dy, d/y}{c}{q , z x} \hspace{5ex}
\text{and} \hspace{5ex} C(c,x) =
\,\rphis{2}{1}{qdy/c, qd/yc}{q^2/c}{q ,z x}
\]
for all $c \in (q^2,1)$, $x \in \R^+$.
We have for $c \in (q^2,1)$, $k\in\Z$,  that
\[
\tilde{f}(c,y)_{-k} - \tilde{g}(c,y)_{-k} = w_{-k}(c)\,(\,B(c,q^k) -
(q/c)^k\, C(c,q^k)\,),
\]
where
\[
w_k(c) = (c,z,q/z;q)_\infty d^{-k} \rho_k(c,d;z\mid q).
\]
Therefore,
\begin{equation*}
\tilde{h}(y)_{-k} =  w_{-k}(q)\,\bigl(\,(\partial_1 B)(q,q^k) -
(\partial_1 C)(q,q^k) + (k/q)\, C(q,q^k)\,\bigr)
\end{equation*}
Now define the $C^\infty$-function
$D \colon (q^2,1) \times [0,\infty) \rightarrow \C$ such that
\[
D(c,x) =(c;q)_\infty \sqrt{\frac{(z x;q)_\infty \,(d^2 z/c, qc/d^2 z,z,q/z;q)_\infty}
{(d^2 z x/c;q)_\infty}} \hspace{5ex} \text{ for
all } c \in (q^2,1),\, x \in \R^+ \ .
\]
Now \eqref{eq:deflqJacfweight} shows that $w_{-k}(c) = c^{\frac{k}{2}}\,D(c,q^k)$ for
all $c \in (q^2,1)$. Thus,
\begin{equation*}
\tilde{h}(y)_{-k}  =  q^{\frac{k}{2}}\, D(q,q^k)\,\big(
(\partial_1 B)(q,q^k) - (\partial_1 C)(q,q^k) \big)\,
 +\, q^{-1}\, k\,q^{\frac{k}{2}}\,\,D(q,q^k)\, C(q,q^k)\, \, .
\end{equation*}
Note that
$\displaystyle{q^{-1}\, D(q,0) C(q,0) = q^{-1}\, D(q,0) =
q^{-1}(c;q)_\infty\,\sqrt{(d^2 z/q, q^2/d^2z,z,q/z;q)_\infty}
> 0}$. So the lemma follows.
\end{proof}

\begin{lemma} \label{lem:F=r+h}
Let $y \in \C \setminus \R$, $|y| < 1$. Then
$\tilde{F}(q,y)$ belongs to $\ell^2(\Z)$ and there exists a
convergent sequence $(r_k)_{k=1}^\infty$ in $\C$ and a
differentiable function $h : \R^+ \rightarrow \C$ so that
$h(0) \not= 0$ and
$\tilde{F}(q,y)_{-k} = q^{\frac{k}{2}}\,(r_k + k\,h(q^k))$
for all $k \in \N$.
\end{lemma}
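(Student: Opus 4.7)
The plan is to express $\tilde{F}(q,y)$ as an explicit linear combination of $\tilde{f}(q,y)$ and $\tilde{h}(y)$, and then read off the desired asymptotic of $\tilde{F}(q,y)_{-k}$ from the asymptotic forms of $\tilde{f}(q,y)_{-k}$ and $\tilde{h}(y)_{-k}$ that are already at our disposal.

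First I would exploit the manifest symmetry that $\tilde{F}(c,y)$ is invariant under $(c,d)\mapsto(q^2/c,qd/c)$ (both $\rho_k$ and $F_k$ are invariant under this substitution, as is seen by direct inspection). Combined with \eqref{eq:cfunctions} applied at both parameter sets, this writes $\tilde{f}(c,y)$ and $\tilde{g}(c,y)=\tilde{f}(q^2/c,qd/c,y)$ as linear combinations of the common pair $\tilde{F}(c,y),\tilde{F}(c,y^{-1})$. Subtracting, dividing by $c-q$ and sending $c\to q$ gives
\[
\tilde{h}(y)=\alpha(y)\,\tilde{F}(q,y)+\alpha(y^{-1})\,\tilde{F}(q,y^{-1}),\qquad \alpha(y)=\lim_{c\to q}\frac{c(y;c,d;z\mid q)-c(y;q^2/c,qd/c;z\mid q)}{c-q},
\]
the limit existing because the $c$-functions are entire in $c$. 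Combined with the $c=q$ case of \eqref{eq:cfunctions}, namely $\tilde{f}(q,y)=c(y;q,d;z\mid q)\tilde{F}(q,y)+c(y^{-1};q,d;z\mid q)\tilde{F}(q,y^{-1})$, and inverting the resulting $2\times 2$ system yields $\tilde{F}(q,y)=A\,\tilde{f}(q,y)+B\,\tilde{h}(y)$ with $B=-c(y^{-1};q,d;z\mid q)/\Delta$ and $\Delta=c(y;q,d;z\mid q)\alpha(y^{-1})-c(y^{-1};q,d;z\mid q)\alpha(y)$.

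The crucial step is to verify $\Delta\ne 0$ and $B\ne 0$. The latter is immediate: for $y\in\C\setminus\R$ every $q$-shifted factorial in the explicit formula for $c(y^{-1};q,d;z\mid q)$ has a non-real argument and is therefore nonzero. For $\Delta$ I would compute the constant Wronskian $[\tilde{f}(q,y),\tilde{h}(y)]$ in two ways. At $k\to+\infty$, $\rho_k\to 1$ and $\tilde{F}(q,y^{\pm 1})_k\sim y^{\pm k}$, yielding $[\tilde{f},\tilde{h}]=\Delta(y-y^{-1})$. At $k\to-\infty$, the factorization from the proof of Lemma \ref{lem:h=r+f} gives $\tilde{f}(q,y)_{-k}=q^{k/2}D(q,q^k)B(q,q^k)$ with $D(q,0)\ne 0$ and $B(q,0)=1$, while Lemma \ref{lem:h=r+f} itself gives $\tilde{h}(y)_{-k}=q^{k/2}(r_k+kf(q^k))$ with $f(0)\ne 0$; together with the asymptotic $a_k\sim q^{k+3/2}/(|z||d|)$ as $k\to-\infty$ for $c=q$, a direct computation gives $[\tilde{f},\tilde{h}]=qD(q,0)f(0)/(|z||d|)\ne 0$. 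Since $y\notin\R$ forces $y\ne\pm 1$, one has $y-y^{-1}\ne 0$, and hence $\Delta\ne 0$.

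Given the linear relation, substituting both asymptotics produces
\[
\tilde{F}(q,y)_{-k}=q^{k/2}\bigl(A\,D(q,q^k)B(q,q^k)+B\,r_k+kB\,f(q^k)\bigr)=q^{k/2}\bigl(r'_k+k\,h(q^k)\bigr),
\]
with $r'_k=A\,D(q,q^k)B(q,q^k)+B\,r_k$ convergent as $k\to\infty$, and with $h=B\,f$ differentiable on $[0,\infty)$ and $h(0)=Bf(0)\ne 0$, proving the asymptotic claim. The $\ell^2$-property follows at once: at $-\infty$ the dominant term $kBf(0)q^{k/2}$ gives $\sum k^2q^{|k|}<\infty$, while at $+\infty$ the leading term $\tilde{F}(q,y)_k\sim y^k$ gives $\sum|y|^{2k}<\infty$ since $|y|<1$. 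The main obstacle is the subleading Wronskian computation certifying $\Delta\ne 0$: one must track the leading and first subleading terms of $\tilde{f}(q,y)_{-k}$ and $\tilde{h}(y)_{-k}$ carefully enough that the cancellation in $\tilde{f}_{k+1}\tilde{h}_k-\tilde{f}_k\tilde{h}_{k+1}$, once multiplied by the diverging factor $a_k$, leaves a nonzero finite constant.
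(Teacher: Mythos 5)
Your proposal is correct, and its skeleton coincides with the paper's: both proofs express $\tilde{F}(q,y)$ as a linear combination of $\tilde{f}(q,y)$ and $\tilde{h}(y)$, reduce the lemma to the nonvanishing of the $\tilde{h}(y)$-coefficient, and then read off the asymptotics at $-\infty$ and the $\ell^2$-decay at $+\infty$ exactly as you do. Where you genuinely diverge is in how that nonvanishing is certified. The paper gets linear independence of $\tilde{f}(q,y)$ and $\tilde{h}(y)$ cheaply from their different growth rates at $-\infty$ (one is $O(q^{k/2})$, the other of true order $k\,q^{k/2}$), writes $\tilde{F}(q,y)=\la\,\tilde{f}(q,y)+\nu\,\tilde{h}(y)$, and concludes $\nu\neq 0$ from $[\tilde{f}(q,y),\tilde{F}(q,y)]=\nu\,[\tilde{f}(q,y),\tilde{h}(y)]$ together with the nonvanishing of $[\tilde{f}(q,y),\tilde{F}(q,y)]$, which is simply \emph{cited} from \cite{KoelLaredo}. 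You instead make everything explicit: the connection formula $\tilde{h}(y)=\al(y)\tilde{F}(q,y)+\al(y^{-1})\tilde{F}(q,y^{-1})$ obtained by differentiating \eqref{eq:cfunctions} at $c=q$ (legitimate, since $\tilde F$ is invariant under $(c,d)\mapsto(q^2/c,qd/c)$ and the $c$-functions are analytic in $c$), the resulting coefficient $B=-c(y^{-1};q,d;z\mid q)/\De$, the observation that $c(y^{-1};q,d;z\mid q)\neq 0$ because all its $q$-shifted factorials have non-real arguments, and the two-sided Wronskian evaluation giving $\De\neq 0$. Since $[\tilde{f}(q,y),\tilde{F}(q,y)]=-c(y^{-1};q,d;z\mid q)\,(y-y^{-1})$, your check that $c(y^{-1};q,d;z\mid q)\neq 0$ is precisely a direct, self-contained proof of the fact the paper imports; the price you pay is the delicate $-\infty$ limit of $[\tilde{f}(q,y),\tilde{h}(y)]$, which the paper avoids entirely. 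That limit is sound as you set it up: the divergent factor $a_{-k-1}\sim q^{-k+1/2}/(|d||z|)$ is compensated by the $q^{k+1/2}$ from the two solutions, and the potentially dangerous terms of the form $k\bigl(G(q^{k})f(q^{k+1})-G(q^{k+1})f(q^{k})\bigr)$ vanish in the limit by exactly the mechanism of Lemma \ref{lem:r(fg-gf)}, so your ``main obstacle'' is real but fully surmountable with the tools already in the appendix.
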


\begin{proof}
Definition \eqref{eq:2ndordqdiffeqBHS} and \eqref{eq:deflqJacfweight} imply
that $\tilde{f}(q,y)_{-k}/q^{\frac{k}{2}}$ converges as
$k \rightarrow \infty$. Since $\tilde{f}(q,y)_{-k}/k q^{\frac{k}{2}}$
converges to $0$ as $k \rightarrow 0$ and, by Lemma \ref{lem:h=r+f},
$\tilde{h}(y)_{-k}/k q^{\frac{k}{2}}$ converges to a non-zero number
as $k \rightarrow 0$, we conclude
that $\tilde{f}(q,y)$ and $\tilde{h}(y)$ are linearly independent.

Because $\tilde{f}(q,y)$, $\tilde{h}(y)$ and
$\tilde{F}(q,y)$ belong to the eigenspace of
$\tilde{L}_q$ for the eigenvalue $\mu(y)$, and since such an eigenspace is always two-dimensional,
there exist complex numbers $\la$ and $\nu$ so that
$\tilde{F}(q,y) = \la\, \tilde{f}(q,y) + \nu \,\tilde{h}(y)$.
Clearly, this gives $[\tilde{f}(q,y),\tilde{F}(q,y)] = \nu\,[\tilde{f}(q,y),\tilde{h}(y)]$, see \eqref{eq:Wronskian}. By \cite[last Eq.~of (4.5.4)]{KoelLaredo} we know that
$[\tilde{f}(q,y),\tilde{F}(q,y)] \not= 0$, implying that
$\nu \not= 0$. Hence, Lemma \ref{lem:h=r+f} and the remarks
in the beginning of this proof guarantee the existence of a
convergent sequence $(r_k)_{k=1}^\infty$ in $\C$
and a differentiable function $h : \R^+ \rightarrow \C$
so that $h(0) \not= 0$ and
$\tilde{F}(q,y)_{-k} = q^{\frac{k}{2}}\,(r_k + k\,h(q^k))$
for all $k \in \N$. So we immediately get that
$\tilde{F}(q,y)_k$ is $\ell^2$ as $k \rightarrow -\infty$.
Definition \eqref{eq:2ndordqdiffeqBHS3} and \eqref{eq:deflqJacfweight} imply that
$\tilde{F}(q,y)_k$ is $\ell^2$ as $k \rightarrow \infty$, since $|y|<1$.
So we conclude that $\tilde{F}(q,y) \in \ell^2(\Z)$.
\end{proof}

Note that Lemma \ref{lem:F=r+h} applies to
$y=(1-\sqrt{2}\,)\,i$, so $\mu(y) = i$. Since $\tilde{F}(q,y)$
belongs to $\ell^2(\Z)$, the vector $\tilde{F}(q,y)$ belongs to
$D(L_q^*)$ and $L_q^*(\tilde{F}(q,y)) = i\,\tilde{F}(q,y)$.
This implies that $L_q$ is not essentially self-adjoint.

\begin{lemma} \label{lem:r(fg-gf)}
Let $f,g : \R \rightarrow \C$ be functions that are
differentiable in $0$,  $(r_k)_{k=1}^\infty$ a sequence in $\R$
such that $(r_k q^k)_{k=1}^\infty$ converges to $0$.
Then
$\bigl(\,r_k\,(f(q^{k-1})\,g(q^k)- f(q^k)\,g(q^{k-1}))\,\bigr)_{k=1}^\infty$
converges to $0$.
\end{lemma}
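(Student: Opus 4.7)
The plan is to extract the first-order Taylor expansions of $f$ and $g$ at $0$ and observe that the ``$q$-Wronskian-like'' expression $f(q^{k-1})g(q^k) - f(q^k)g(q^{k-1})$ has a leading term of order $q^{k-1}$ after cancellation, which will then be killed when multiplied by $r_k$ thanks to the hypothesis $r_k q^k \to 0$.

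First, using differentiability of $f$ and $g$ at $0$, I would write
\[
f(x) = f(0) + f'(0)\,x + \varphi(x), \qquad g(x) = g(0) + g'(0)\,x + \gamma(x),
\]
where $\varphi(x)/x \to 0$ and $\gamma(x)/x \to 0$ as $x \to 0$. Expanding both products $f(q^{k-1})g(q^k)$ and $f(q^k)g(q^{k-1})$ and subtracting, the constant terms $f(0)g(0)$ cancel, and so do the symmetric contributions $f'(0)g'(0)\,q^{2k-1}$; the only asymmetric terms among the polynomial pieces are the two cross terms $f(0)g'(0)$ and $f'(0)g(0)$, where the roles of $q^k$ and $q^{k-1}$ are swapped between the two products. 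This yields
\[
f(q^{k-1})g(q^k) - f(q^k)g(q^{k-1}) = (q^k - q^{k-1})\bigl(f(0)g'(0) - f'(0)g(0)\bigr) + E_k,
\]
where $E_k$ collects the six remaining error contributions, each of which is either of the form $a\bigl(\gamma(q^k) - \gamma(q^{k-1})\bigr)$, $c\bigl(\varphi(q^{k-1}) - \varphi(q^k)\bigr)$, a mixed term $b\bigl(q^{k-1}\gamma(q^k) - q^k\gamma(q^{k-1})\bigr)$ or its analogue with $\varphi$, or the double-error term $\varphi(q^{k-1})\gamma(q^k) - \varphi(q^k)\gamma(q^{k-1})$. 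A direct check using $\varphi(q^j) = o(q^j)$ and $\gamma(q^j) = o(q^j)$ shows that each such contribution is $o(q^{k-1})$, so $E_k = o(q^{k-1})$.

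Finally, I would multiply through by $r_k$ and rewrite every power of $q^{k-1}$ in terms of $q^k$. The leading term becomes $(1 - q^{-1})\bigl(f(0)g'(0) - f'(0)g(0)\bigr)\,(r_k q^k)$, which tends to $0$ by hypothesis, while the error contribution factors as $r_k E_k = q^{-1}(r_k q^k) \cdot (E_k / q^{k-1}) \to 0 \cdot 0 = 0$. Adding the two contributions gives the claim.

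The argument is essentially bookkeeping and I do not foresee any real obstacle; the only subtlety is to verify that differentiability at the single point $x = 0$ suffices to control all cross terms, but this is immediate once one notes the identity $r_k q^{k-1} = q^{-1}(r_k q^k)$ that converts every stray power $q^{k-1}$ into a multiple of the convergent quantity $r_k q^k$.
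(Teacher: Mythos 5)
Your proof is correct: the Taylor expansion $f(x)=f(0)+f'(0)x+\varphi(x)$ with $\varphi(x)=o(x)$ is exactly what differentiability at $0$ gives, the antisymmetric cancellation you describe does leave only the cross terms $(q^k-q^{k-1})\bigl(f(0)g'(0)-f'(0)g(0)\bigr)$ plus an error $E_k=o(q^{k-1})$, and the final multiplication by $r_k=q^{-1}(r_kq^k)q^{-(k-1)}$ kills both pieces. However, your route differs from the paper's. The paper does not expand in Taylor series at all; it simply inserts the midpoint $f(q^k)g(q^k)$ to write
\[
f(q^{k-1})g(q^k)-f(q^k)g(q^{k-1})=\bigl(f(q^{k-1})-f(q^k)\bigr)g(q^k)+f(q^k)\bigl(g(q^k)-g(q^{k-1})\bigr),
\]
then factors out $r_kq^k$ and observes that the difference quotients $\bigl(f(q^{k-1})-f(q^k)\bigr)/q^k$ and $\bigl(g(q^k)-g(q^{k-1})\bigr)/q^k$ are bounded (since $(f(x)-f(0))/x$ is bounded near $0$), while $f(q^k)$, $g(q^k)$ are bounded by continuity. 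That argument is shorter, never needs the value of the derivative, and avoids tracking six error terms; your version buys a sharper statement (you identify the leading coefficient $f(0)g'(0)-f'(0)g(0)$ explicitly), which is more than the lemma requires but costs more bookkeeping. Both are complete proofs.
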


\begin{proof}
For $k \in \N$, write
\[
\begin{split}
r_k\,(f(q^{k-1})&\,g(q^k)- f(q^k)\,g(q^{k-1})) = \\
&(r_k q^k)\,\,\left(\ \frac{f(q^{k-1}) - f(q^k)}{q^k}\,\,g(q^k)
+ f(q^k)\,\,\frac{g(q^{k}) - g(q^{k-1})}{q^k}\ \right)
\end{split}
\]
and observe that
\[
\left(\,\frac{f(q^{k-1}) - f(q^k)}{q^k}\,\right)_{k=1}^\infty
\hspace{10ex} \text{and} \hspace{10ex} \left(\,\frac{g(q^{k}) -
g(q^{k-1})}{q^k}\,\right)_{k=1}^\infty
\]
are bounded because $f$ and $g$ are differentiable in $0$.
\end{proof}

We are now ready to prove Theorem \ref{thm:Jacobioperatorforcasec=q}.

\begin{proof}[Proof of Theorem \ref{thm:Jacobioperatorforcasec=q}]
We set $y = (1-\sqrt{2}\,)\,i$, then $\mu(y) = i$. Consider $\la \in \T$.
We define a linear operator $T_\la$ in $\ell^2(\Z)$ such that
$$
D(T_\la) = \Big\{\,w \in \ell^2(\Z)  \mid \tilde{L}_q(w)
\in \ell^2(\Z) \text{ and } \lim_{k \rightarrow
\infty}\,[w,\la\,\tilde{F}(q,y) +
\bar{\la}\,\tilde{F}(q,\bar{y})]_{-k} = 0 \Big\}
$$
and $T_\la$ is the restriction of $\tilde{L}_q$ to $D(T_\la)$. Here we use the Wronskian $[\cdot,\cdot]$ defined by \eqref{eq:Wronskian}. We know by
\cite[Lemma~(4.2.3)]{KoelLaredo} that $T_\la$ is
a self-adjoint extension of $L_q$ and that every self-adjoint
extension arises in this way.

By Lemma \ref{lem:F=r+h} there exists a convergent sequence
$(r_k)_{k=1}^\infty$ in $\C$, a differentiable function
$h \colon \R^+ \rightarrow \C$ such that $h(0) \not= 0$
and $\tilde{F}(q,y)_{-k} = q^{\frac{k}{2}}\,(r_k + k\,h(q^k))$ for
all $k \in \N$.
Take $v \in \ell^2(\Z)$ such that $\tilde{L}_q(v) \in \ell^2(\Z)$
and such that there exists a function
$g \colon \R^+ \rightarrow \C$ that is differentiable in 0 and
satisfies  $v_{-k} = q^{\frac{k}{2}}\,g(q^k)$ for all $k \in \N$. Let
us calculate $\lim_{k \rightarrow \infty}\,[v,\tilde{F}(q,y)]_{-k}$.

For $k \in \N$,
\[
\begin{split}
 q^{-k+\frac{1}{2}}&\,\bigl(v_{-k+1} \,
\tilde{F}(q,y)_{-k} - v_{-k}\, \tilde{F}(q,y)_{-k+1}\bigr)
\\
&  = g(q^{k-1})\,(r_k + k\, h(q^k)) -
g(q^k)\,(r_{k-1} +  (k-1)\, h(q^{k-1}))
\\
& = (\,g(q^{k-1})\,r_k -
g(q^k)\,r_{k-1}\,) +  k\, (\,g(q^{k-1})\,h(q^k) -
g(q^k)\,h(q^{k-1})\,) +  g(q^k)\,h(q^{k-1}).
\end{split}
\]
The  first term converges to $0$, since $g$ is continuous
and $\{ r_k\}_k$ is convergent.
Since $(k\,q^k)_{k=1}^{\infty}$ converges to $0$, Lemma
\ref{lem:r(fg-gf)} implies that the second term of the above sum
converges to $0$ as $k \rightarrow \infty$. Therefore
the above expression converges to
$g(0)\,h(0)$ as $k \rightarrow \infty$.
Since $a_{-k}(c) = q^{1-k}\frac{\sqrt{c}}{d|z|}(1+{\mathcal O}(q^k))$,
this implies that
$\lim_{k \rightarrow \infty} [v,\tilde{F}(q,y)]_{-k}
= \frac{q}{d |z|}\,g(0)\,h(0)$. Since
$\overline{\tilde{F}(q,y)_k} = \tilde{F}(q,\bar{y})_k$
for all $k \in \Z$ by the assumptions $z<0$ and
$d\in\R\backslash\{ 0\}$,
we see that
\begin{equation} \label{eq:[v,laF+laF]}
\begin{split}
\lim_{k \rightarrow \infty} [v,\la \tilde{F}(q,y)
+ \bar{\la} \tilde{F}(q,\bar{y})]_{-k} &=
\frac{q}{d|z|}\,g(0)\,(\la\,h(0) +
\overline{\la\,h(0)}\,) \\
&= \frac{2\,q}{d|z|}\,g(0)\,\Re(\la\,h(0))\ .
\end{split}
\end{equation}

If we use this equality for $v=u$ and $g=f$, we see that $u$ belongs to the domain of $T_\la$ if and only if $\Re(\la\,h(0)) = 0$. Notice that such a $\la$ clearly exists and is determined up to a sign, but that $T_\la = T_{-\la}$. So we have proved the existence and uniqueness of the self-adjoint extension $T$. Equation \eqref{eq:[v,laF+laF]} also guarantees that an element $v$ satisfying the properties described in the lemma belongs to $D(T)$.

The spectral decomposition of a self-adjoint extension $T$ of the Jacobi operator $L_c$, for $0<c\leq q^2$, is determined in \cite[\S4.5]{KoelLaredo} from eigenfunctions $\Phi_y$ and $\phi_y$ for eigenvalue $\mu(y)$, $0<|y|<1$, such that $\Phi(y) \in \ell^2(\N)$ and $\phi(y) \in \ell^2(-\N)$, see \cite[\S4.3.2]{KoelLaredo}. Here $\phi(y)$, extended to $\ell^2(\Z)$ by setting $\phi(y)_k=0$ for $k\geq 0$, must be an element of the domain of $T$. In case $0<c \leq q^2$ we have $\Phi(y)=\tilde F(c,y)$ and $\phi(y) = \tilde f(c,y)$, and these functions determine the spectral decomposition of $L_c$ from Theorem \ref{thm:spectraldecompL}. In order to find the spectral decomposition of $T_\lambda$ we need to find the right choices of $\Phi(y)$ and $\phi(y)$ in this case. Note that there is only one eigenfunction of $L_q$ for eigenvalue $\mu(y)$ in $\ell^2(\N)$, namely $\tilde F(q,y)$, so $\Phi(y) = \tilde F(q,y)$. There are two eigenfunctions in $\ell^2(-\N)$, namely $\tilde f(q,y)$ and $\tilde h(q,y)$, so $\phi(y)$ is a linear combination of these two functions. We show that $\phi(y) = \tilde f(q,y)$ is the right choice for $\phi(y)$ here. This implies that the spectral decomposition of $T_\lambda$ is the same as the spectral decomposition of $L$ from Theorem \ref{thm:spectraldecompL} (with $c=q$, of course). We only need to show that $\phi(y) \in D(T_\lambda)$, so it suffices to show that there exists a function $g:\R_{\geq 0}\rightarrow \C$, differentiable in $0$, such that $\tilde f(q,y)_{-k} = q^{k/2} g(q^k)$ for all $k \in \N$. But this follows directly from the definition of $\tilde f(q,y)$, see \eqref{eq:tilde f}, \eqref{eq:2ndordqdiffeqBHS} and \eqref{eq:deflqJacfweight}.
\end{proof}

\section{Proofs of some lemmas}\label{app:proofsoflemmas}

\subsection{Proof of Lemma \ref{lem:firststepEaffiliatedhatM}}
\label{ssecC:pflemfirststepEaffiliatedhatM}
We prove the following result. Let $p_1,p_2 \in I_q$ and $n \in \Z$. Then
\[
\langle \, \hat{J}\,Q(p_1,p_2,n) \hat{J} v , E_0^\dag \, w \rangle =
\langle \, \hat{J}\,Q(p_1,p_2,n) \hat{J} E_0 \, v ,  w \rangle, \qquad
\forall\, v,w \in \cK_0.
\]

\begin{proof}
Assume first that $v = f_{mpt}$ and $w = f_{lrs}$
for  $m,l \in \Z$ and $p,t,r,s \in I_q$. Then
\eqref{eq:dualE0forsu}, \eqref{eq:matrixelementsofQppnandhatJQppnhatJ}
and the last symmetry of \eqref{eq:symmetryforapxy}
imply
\begin{equation*}
\begin{split}
&(q - q^{-1}) \, \langle \, \hat{J}\,Q(p_1,p_2,n) \hat{J} v ,
E_0^\dag \, w \rangle \\
= &\  \de_{\chi(p_1 p/p_2 t),-m-n}\,
\de_{m+n-1,l}\,\de_{\sgn(pt) (p_2/p_1) q^{-m+1} s,r}
\ |p_1 p_2/p|\, (-1)^m \, \sgn(p)^{\chi(p)}\,\sgn(t)^{\chi(t)}
\\ &    \ \ \ \, \times\,
\Bigl[\,\sgn(s)\,q^{-\frac{l+1}{2}}\,|r/s|^{\frac{1}{2}}
\,\sqrt{1+\kappa(s)}\, |q
s|^{-1}\,a_{p_1}(t,qs)\,a_{p_2}(p,r)
\\ &    \ \ \ \ \ \ \ \ \ -
\,\sgn(r)\,q^{\frac{l+1}{2}}\,|s/r|^{\frac{1}{2}}\,
\sqrt{1+\kappa(q^{-1} r)}\,|s|^{-1}\,a_{p_1}(t,s)
\,a_{p_2}(p,q^{-1} r)
\,\Bigr].
\end{split}
\end{equation*}
Because of the presence of the three Kronecker deltas,
we can replace $|r/s|q^{-l-1}$ by $|p/t|q^{1-m}$. This
gives
\begin{equation}\label{eq:1pflemfirststepEaffiliatedhatM}
\begin{split}
&(q - q^{-1}) \, \langle \, \hat{J}\,Q(p_1,p_2,n) \hat{J} v ,
E_0^\dag \, w \rangle \\
= &\  \de_{\chi(p_1 p/p_2 t),-m-n}\,
\de_{m+n-1,l}\,\de_{\sgn(pt) (p_2/p_1) q^{-m+1} s,r}
\ |p_1 p_2/p|\, (-1)^m \, \sgn(p)^{\chi(p)}\,\sgn(t)^{\chi(t)}
\\ &    \ \ \ \, \times\,
\Bigl[\,\sgn(s)\,q^{-\frac{m-1}{2}}\,|p/t|^{\frac{1}{2}}
\,\sqrt{1+\kappa(s)}\, |q
s|^{-1}\,a_{p_1}(t,qs)\,a_{p_2}(p,r)
\\ &    \ \ \ \ \ \ \ \ \ -
\,\sgn(r)\,q^{\frac{m-1}{2}}\,|t/p|^{\frac{1}{2}}\,
\sqrt{1+\kappa(q^{-1} r)}\,|s|^{-1}\,a_{p_1}(t,s)
\,a_{p_2}(p,q^{-1} r)
\,\Bigr].
\end{split}
\end{equation}

For the other side of the required equation we similarly
derive from \eqref{eq:defE0forsu}, \eqref{eq:matrixelementsofQppnandhatJQppnhatJ}
and the last symmetry of \eqref{eq:symmetryforapxy}
that
\[
\begin{split}
&(q - q^{-1}) \, \langle \, \hat{J}\,Q(p_1,p_2,n) \hat{J} E_0\,v , w \rangle
 \\
=\, &\,  \de_{\chi(p_1 p/p_2 t),-m-n}\,\,
\de_{m+n-1,l}\,\de_{\sgn(pt) (p_2/p_1) q^{-m+1}
s,r}\,  \ |p_1 p_2/p|\, (-1)^m \,
\sgn(p)^{\chi(p)}\,\sgn(t)^{\chi(t)}
\\ &  \ \ \ \, \times\ \Bigl[\,
-q^{-\frac{m-1}{2}}\,|p/t|^{\frac{1}{2}}\,\sqrt{1+\kappa(q^{-1} t)}\,
|s|^{-1}\,a_{p_1}(q^{-1} t,s)\,a_{p_2}(p,r)
\\ &   \  \ \ \ \ \ \ \ \ \ + \,
q^{\frac{m-1}{2}}\,|t/p|^{\frac{1}{2}}\,\sqrt{1+\kappa(p)}\,
|qs|^{-1}\,a_{p_1}(t,s)\,a_{p_2}(qp,r)
\,\Bigr].
\end{split}
\]
Comparing this expression with
\eqref{eq:1pflemfirststepEaffiliatedhatM} we see that
we need the $q$-contiguous relations of Lemma
\ref{lemB:qcontiguousrelapxy}. Using the
first equality of Lemma \ref{lemB:qcontiguousrelapxy}
for $a_{p_1}(q^{-1} t,s)$ and the second equality of
Lemma \ref{lemB:qcontiguousrelapxy}
for $a_{p_2}(qp,r)$ gives
\begin{equation*}
\begin{split}
&(q - q^{-1}) \, \langle \, \hat{J}\,
Q(p_1,p_2,n) \hat{J} E_0\,v , w \rangle
\\ &  =  \de_{\chi(p_1 p/p_2t),-m-n}\,\,
\de_{m+n-1,l}\,\,\de_{\sgn(pt)
(p_2/p_1) q^{-m+1} s,r} \
 |p_1 p_2/p|\, (-1)^m \, \sgn(p)^{\chi(p)}\,\sgn(t)^{\chi(t)}
\\ &   \ \ \ \times \   \Bigl[\,
\sgn(s)\, q^{-\frac{m-1}{2}}\,|p/t|^{\frac{1}{2}}\,\sqrt{1+\kappa(s)}\,
|qs|^{-1}\,a_{p_1}(t,qs)\,a_{p_2}(p,r)
\\ &    \ \ \ \ \ \ \ \ \ - \,
(st/qp_1)\, q^{-\frac{m-1}{2}}\,|p/t|^{\frac{1}{2}}
\,|s|^{-1}\, a_{p_1}(t,s)\,a_{p_2}(p,r)
\\ &    \ \ \ \ \ \ \ \ \ - \,\sgn(r)\,
q^{\frac{m-1}{2}}\,|t/p|^{\frac{1}{2}}\,
\sqrt{1+\kappa(q^{-1}r)}\,|s|^{-1}\,a_{p_1}(t,s)
\,a_{p_2}(p,q^{-1}r)
\\ &    \ \ \ \ \ \ \ \ \ + \,(pr/q p_2)
\,q^{\frac{m-1}{2}}\,|t/p|^{\frac{1}{2}}\,|s|^{-1}
\,a_{p_1}(t,s)\,a_{p_2}(p,r)\,\Bigr]\ .
\end{split}
\end{equation*}
Comparing this expression with \eqref{eq:1pflemfirststepEaffiliatedhatM}
we see that
\begin{equation*}
\begin{split}
& (q - q^{-1}) \, \langle \, \hat{J}\,Q(p_1,p_2,n) \hat{J} E_0\,v , w \rangle
 = (q - q^{-1}) \, \langle \, \hat{J}\,Q(p_1,p_2,n) \hat{J} v , E_0^\dag \, w
\rangle
\\ &   +\, \de_{\chi(p_1 p/p_2 t),-m-n}\,\,
\de_{m+n-1,l}\,\,\de_{\sgn(pt) (p_2/p_1) q^{-m+1}
s,r} \  |p_1 p_2/p|\, (-1)^m \, \sgn(p)^{\chi(p)}\,\sgn(t)^{\chi(t)}
\, |pt|^{\frac{1}{2}}\,
\\ &  \ \ \ \ \ \ \times \ |qs|^{-1}\, a_{p_1}(t,s)\,a_{p_2}(p,r)\
\Bigl[\,- \sgn(t)\, (s/p_1)\, q^{-\frac{m-1}{2}} + \sgn(p)\,(r/p_2)
\,q^{\frac{m-1}{2}}\,\Bigr]
\end{split}
\end{equation*}
If the Kronecker $\delta$-function $\de_{\sgn(pt) (p_2/p_1) q^{-m+1} s,r}$ is non-zero, then the term in square brackets equals $0$, thus
$\langle \, \hat{J}\,Q(p_1,p_2,n) \hat{J} E_0\,v , w \rangle =
\langle \, \hat{J}\,Q(p_1,p_2,n) \hat{J} v , E_0^\dag \,
w \rangle$ for $v = f_{mpt}$ and $w = f_{lrs}$. By linearity the lemma holds for all $v,w \in \cK_0$.
\end{proof}

\subsection{Proof of Lemma \ref{lem:sgncommutationQppnwithCasimir0}} \label{app:proofofcommutionQwithCasimir}
Here we prove the following result: For $u,v\in\cK_0$, $p_1,p_2\in I_q$ and $n\in\Z$, we have
\begin{equation} \label{eq:proofoflemma}
\langle Q(p_1,p_2,n)\, u, \Om_0\, v\rangle
= \sgn(p_1p_2)\, \langle Q(p_1,p_2,n)\,\Om_0\, u, v\rangle.
\end{equation}

The proof depends on properties of the functions $a_p(\cdot,\cdot)$. One of the properties is the second-order $q$-difference equation from Lemma \ref{lem:2ndorderqdifferenceq}. The other properties we need are essentially the contiguous relations from Lemma \ref{lemB:qcontiguousrelapxy}. We state these relations in the following lemma.

\begin{lemma} \label{lem:contiguous}
Consider $x,y, p \in I_q$, then
\[
\sqrt{1+\kappa(y/q)}\,\, a_p(x,y/q) =
\frac{py}{qx}\,\,a_p(x,y) + \sqrt{1+\kappa(p)}\,\,a_{qp}(x,y),
\]
and
\[
\sqrt{1+\kappa(y)}\,\, a_p(x,q y) = \frac{py}{x}\,\,a_p(x,y) +
\sqrt{1+\kappa(p/q)}\,\,a_{p/q}(x,y) \ .
\]
\end{lemma}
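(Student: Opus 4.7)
The plan is to derive both relations in Lemma \ref{lem:contiguous} from the contiguous relations already established in Lemma \ref{lemB:qcontiguousrelapxy}, which shift the first argument of $a_p(x,y)$, by using the symmetries in \eqref{eq:symmetryforapxy} to convert a shift in $x$ into the desired mixed shift in $y$ and in the subscript $p$. The key observation is that the first symmetry of \eqref{eq:symmetryforapxy} effectively swaps $y$ and $p$, while the second symmetry swaps $x$ and $y$; composing these transforms a contiguous relation in $x$ into one involving $y$ and $p$ simultaneously.

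Concretely, to prove the first identity of Lemma \ref{lem:contiguous}, I would start from the first relation of Lemma \ref{lemB:qcontiguousrelapxy}, applied with the roles of $p$ and $y$ interchanged, i.e.\ as a relation for $a_y(q^{-1}x, p)$, $a_y(x,p)$ and $a_y(x,qp)$. Invoking the first symmetry of \eqref{eq:symmetryforapxy} then rewrites this as an identity among $a_p(q^{-1}x,y)$, $a_p(x,y)$ and $a_{qp}(x,y)$, i.e.\ an $x$-shift relation that already involves a subscript shift $p \mapsto qp$. Next I would swap the roles of $x$ and $y$ in that identity (which is legal since it holds for all $x,y,p \in I_q$) and finally apply the second symmetry of \eqref{eq:symmetryforapxy} to transport the shift from the first argument to the second argument, yielding the stated formula $\sqrt{1+\kappa(y/q)}\, a_p(x,y/q) = (py/qx)\, a_p(x,y) + \sqrt{1+\kappa(p)}\, a_{qp}(x,y)$. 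The second identity of Lemma \ref{lem:contiguous} is obtained by exactly the same two-step symmetry procedure, but starting from the second relation of Lemma \ref{lemB:qcontiguousrelapxy} (the one involving $a_p(qx,y)$) in place of the first.

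The routine part of the argument consists of substituting the explicit expressions given by the symmetries; the main obstacle, and the only real bookkeeping that needs care, is the correct handling of the sign and prefactor bookkeeping. Each application of a symmetry inserts factors $(-1)^{\chi(\cdot)}$, $\sgn(\cdot)^{\chi(\cdot)}$ and $|\cdot/\cdot|$, and under a shift $z \mapsto q^{\pm 1}z$ the exponent $\chi(z)$ changes by $\mp 1$ so that $\sgn(z)^{\chi(q^{\pm 1}z)} = \sgn(z)^{\mp 1}\sgn(z)^{\chi(z)}$ and $(-1)^{\chi(q^{\pm 1}z\,\cdot)}$ picks up an extra sign. These little sign shifts must be tracked at each of the three places where the symmetry is used (once to move between $a_p(\cdot)$ and $a_y(\cdot)$ on each term of the Lemma~\ref{lemB:qcontiguousrelapxy} relation, and once to apply the second symmetry after swapping $x$ and $y$), and then verified to collapse to give exactly the unsigned prefactors $(py/qx)$ and $1$ appearing on the right-hand side of Lemma \ref{lem:contiguous}; the $|y/p|$ factor from the first symmetry cancels against its counterpart from the other term, and the explicit $\sgn$-factors surviving in the intermediate identity (in particular a $\sgn(p)$ in front of the $a_{qp}$-term) are annihilated by the parity of $\chi(qp) = \chi(p)-1$ when the second symmetry is applied. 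Once these cancellations are verified in one of the two relations, the analogous cancellation in the second relation is essentially a mirror computation, replacing the pair $(q^{-1}x, qp)$ with the pair $(qx, p/q)$ throughout.
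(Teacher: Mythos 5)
Your proposal is correct and follows essentially the same route as the paper: both derive Lemma \ref{lem:contiguous} by transporting the contiguous relations of Lemma \ref{lemB:qcontiguousrelapxy} through the symmetries \eqref{eq:symmetryforapxy}, the only (immaterial) difference being which symmetry of \eqref{eq:symmetryforapxy} and which of the two relations of Lemma \ref{lemB:qcontiguousrelapxy} you pair with each identity. One small slip in your bookkeeping description: $\chi(qp)=\chi(p)+1$, not $\chi(p)-1$, but since $\sgn(p)^{\pm1}=\sgn(p)$ this does not affect the parity argument or the conclusion.
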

\begin{proof}
One uses the last equation of \eqref{eq:symmetryforapxy} to write $a_p(x,q^{-1} y)$ in terms of $a_x(p,q^{-1} y)$. Then apply the second relation of Lemma \ref{lemB:qcontiguousrelapxy} and use \eqref{eq:symmetryforapxy} again to obtain the first equality.
The second equality is proved in the same way using the first relation of Lemma \ref{lemB:qcontiguousrelapxy}.
\end{proof}

\begin{proof}[Proof of \eqref{eq:proofoflemma}]
Let $l,m,n \in \Z$ and $p_1,p_2,p,r,\si,\tau \in I_q$. We will establish
\begin{equation} \label{eq:crucialpartincommutationlemma}
\langle  Q(p_1,p_2,n)\,f_{m,p,t} , \Om_0\,f_{l,r,s}\rangle =
\sgn(p_1 p_2)\,\, \langle\, Q(p_1,p_2,n)\,\Om_0\,f_{m,p,t} , f_{l,r,s} \rangle\ .
\end{equation}
by writing out both sides of this identity in terms of matrix coefficients \eqref{eq:matrixelementsofQppnandhatJQppnhatJ} of $Q(p_1,p_2,n)$.

Let us first consider the left hand side, which we call $S_L$ for convenience, of \eqref{eq:crucialpartincommutationlemma}. From the explicit action \eqref{eq:actioncasimir} of $\Om_0$ on $f_{mpt}$ we find
\[
\begin{split}
2S_L =&\ (q^{l-1} r\,|s| + q^{-l-1} s\,|r|\,) \,
\langle  Q(p_1,p_2,n)\,f_{m,p,t} , f_{l,r,s}\rangle\\
 &   - \sgn(rs)\,\,\sqrt{(1+\kappa(r))(1+\kappa(s))}\,\,
\langle  Q(p_1,p_2,n)\,f_{m,p,t} , f_{l,qr,qs}\rangle\\
&   - \sgn(rs)\,\,\sqrt{(1+\kappa(q^{-1}r))(1+\kappa(q^{-1}s))}\,\,
\langle  Q(p_1,p_2,n)\,f_{m,p,t} , f_{l,q^{-1}r,q^{-1}s}\rangle.
\end{split}
\]
In terms of the matrix coefficients \eqref{eq:matrixelementsofQppnandhatJQppnhatJ} of $Q(p_1,p_2,n)$, we have
\[
\begin{split}
2S_L =&\  \de_{|p_1 p/p_2 t|,q^{m-n}}\,\de_{m-n,l}\, \de_{\sgn(pt)(p_2/p_1)q^m s ,r}
\\  & \times \Big[\, (q^{l-1} r\,|s| + q^{-l-1} s\,|r|\,) \,\,
\left|\frac{t}{s}\right|\,a_t(p_1,s)\,a_p(p_2,r) \\
&  \quad - \sgn(rs)\,\sqrt{(1+\kappa(r)) (1+\kappa(s))} \,\,\left|\frac{t}{qs}\right|\,\,a_t(p_1,qs)\,a_p(p_2,qr) \\
& \quad - \sgn(rs)\,\sqrt{(1+\kappa(r/q))(1+\kappa(s/q))}\,\,
\left|\frac{tq}{s}\right|\,\,a_t(p_1,s/q)\,a_p(p_2,r/q)\,\Big].
\end{split}
\]
From the $q$-contiguous relations of Lemma \ref{lem:contiguous} it follows that
\[
\begin{split}
&\sqrt{(1+\kappa(r)) (1+\kappa(s))}\,a_t(p_1,qs)\,a_p(p_2,qr) = \\
&\quad\Big(\frac{ts}{p_1}\,a_t(p_1,s) + \sqrt{1+\kappa(t/q)}\,a_{t/q}(p_1,s)\Big)
\Big(\frac{pr}{p_2}\,a_p(p_2,r) + \sqrt{1+ \kappa(p/q)}\,a_{p/q}(p_2,r)\Big)
\end{split}
\]
and
\[
\begin{split}
&\sqrt{(1+\kappa(r/q))(1+\kappa(s/q))}\, a_t(p_1,s/q)\,a_p(p_2,r/q) = \\
&\quad\Big(\frac{ts}{q p_1}\,a_t(p_1,s) + \sqrt{1+\kappa(t)}\,a_{qt}(p_1,s)\Big)
\Big(\frac{pr}{q p_2}\,a_p(p_2,r) + \sqrt{1+ \kappa(p)}\,a_{q p}(p_2,r)\Big),
\end{split}
\]
which implies
\[
\begin{split}
2S_L =& \ \de_{|p_1 p/p_2 t|,q^{m-n}}\,\de_{m-n,l}\, \de_{\sgn(pt)(p_2/p_1)q^m s,r} \\
&  \Bigg[\, (q^{l-1} r\,|s| + q^{-l-1} s\,|r|\,) \, \left|\frac{t}{s}\right|\,a_t(p_1,s)\,a_p(p_2,r) \\
&  -\sgn(rs)\,\left|\frac{t}{s}\right|\,q^{-1}\, \sqrt{(1+\kappa(t/q))(1+\kappa(p/q))} \,\,a_{t/q}(p_1,s)\,a_{p/q}(p_2,r) \\
& -\sgn(rs)\,\left|\frac{t}{s}\right| q\,\sqrt{(1+ \kappa(t))(1+\kappa(p))} \,a_{qp}(p_1,s)\,a_{qp}(p_2,r)\\
&  -\sgn(rs)\,\left|\frac{t}{s}\right|\Big( \frac{pr}{qp_2} A_t(p_1,s) a_p(p_2,r)+ \frac{ts}{qp_1} A_p(p_2,r) a_t(p_1,s) \Big)\ \Bigg],
\end{split}
\]
where
\[
\begin{split}
A_x&(y,z)=
\frac{xz}{y}\,a_x(y,z) + \sqrt{1+\kappa(x/q)}\,\,a_{x/q}(y,z) + q\,\sqrt{1+\kappa(x)} \,a_{qx}(y,z).
\end{split}
\]
The expression of $A_x(y,z)$ simplifies by Lemma \ref{lemB:qcontiguousrelapxy} to
\[
\frac{xz}{y}A_x(y,z) = \big(\kappa(z)-\kappa(x)\big) a_x(y,z).
\]
Since $\de_{|p_1 p/p_2 t|,q^{m-n}}\,\de_{m-n,l}\, \de_{\sgn(pt)(p_2/p_1)q^m s,r} = 0$
unless $p r = \sgn(p_1 p_2)\, q^{m+l} st p_2^2/p_1^2$, we now get
\[
\begin{split}
 2 S_L &= \de_{|p_1 p/p_2 t|,q^{m-n}}\,\de_{m-n,l}\, \de_{\sgn(pt)(p_2/p_1)q^m s,r}\\
&  \Big[ (q^{l-1} r\,|s| +
q^{-l-1} s\,|r|\,) \,\,\left|\frac{t}{s}\right|\,a_t(p_1,s)\,a_p(p_2,r) \\
&  - \sgn(rs)\,\left|\frac{t}{s}\right|\,q^{-1}\,\sqrt{(1+\kappa(t/q))
  (1+\kappa(p/q))}\,\,a_{t/q}(p_1,s)\,a_{p/q}(p_2,r)\\
& - \sgn(rs)\,\left|\frac{t}{s}\right|\, q\,\sqrt{(1+ \kappa(t))(1+\kappa(p))} \,a_{qp}(p_1,s)\,a_{qp}(p_2,r) \\
& - \sgn(rs)\,q^{m+l-1}\, \left|\frac{tp_2}{sp_1}\right|
\,(\kappa(s) - \kappa(t))\,a_{t}(p_1,s)\, a_p(p_2,r) \\
& - \sgn(rs)\,q^{-m-l-1}\, \left|\frac{tp_1}{sp_2}\right|\,
(\kappa(r) - \kappa(p))\,a_p(p_2,r) \, a_t(p_1,s) \ \Big].
\end{split}
\]
Unless
$\sgn(p_1 p_2) = \sgn(r s) \sgn(p t)$, $|p_2/p_1| q^m = |r/s|$
and $q^l |p_2/p_1| = |p/t|$, the above expression is zero. Thus,
\begin{equation} \label{eq:lefthandside}
\begin{split}
2S_L  = & \ \de_{|p_1 p/p_2 t|,q^{m-n}}\,\de_{m-n,l}\,
\de_{\sgn(pt)(p_2/p_1)q^m s,r}\,\sgn(p_1 p_2)\, \left|\frac{t}{s}\right| \\
&  \Big[\, (q^{m-1} p\,|t| + q^{-m-1} t\,|p|) \, a_t(p_1,s)\,a_p(p_2,r) \\
&  - \sgn(pt)\,q^{-1}\, \sqrt{(1+ \kappa(t/q))(1+ \kappa(p/q))}\,\, a_{t/q}(p_1,s)\,
a_{p/q}(p_2,r) \\
& - \sgn(pt)\,q\, \sqrt{(1+ \kappa(t)) (1+ \kappa(p))}\,\, a_{q p}(p_1,s)\,
a_{q p}(p_2,r)  \Big] .
\end{split}
\end{equation}

Next we write out the right hand side $S_R$ of \eqref{eq:crucialpartincommutationlemma}. Using the action \eqref{eq:actioncasimir} of $\Om_0$ again, we see that
\[
\begin{split}
2S_R = & \ (q^{m-1} p \,|t| + q^{-m-1} t\,|p|)\,
\langle\, Q(p_1,p_2,n)\,f_{m,p,t} , f_{l,r,s} \rangle \\
& - \sgn(pt)\,\sqrt{(1+\kappa(p))(1+\kappa(t))}\,
\langle  Q(p_1,p_2,n)\,f_{m,qp,qt} , f_{l,r,s} \rangle \\
& - \sgn(pt)\,\sqrt{(1+\kappa(q^{-1}p))(1+\kappa(q^{-1}t))}\,
\langle  Q(p_1,p_2,n)\,f_{m,q^{-1}p,q^{-1}t} , f_{l,r,s} \rangle.
\end{split}
\]
Writing this out in terms of the matrix coefficients of $Q(p_1,p_2,n)$, see \eqref{eq:matrixelementsofQppnandhatJQppnhatJ}, we obtain
\[
\begin{split}
2S_R =& \  \de_{|p_1 p/p_2 t|,q^{m-n}}\,\de_{m-n,l}\, \de_{\sgn(pt)(p_2/p_1)q^m s,r}\, \left|\frac{t}{s}\right| \\
& \Big[ (q^{m-1} p\,|t| + q^{-m-1} p\,|t|) \, a_t(p_1,s)\,a_p(p_2,r) \\
& - \sgn(pt)\,q^{-1}\, \sqrt{(1+ \kappa(q^{-1} t))(1+ \kappa(q^{-1}p))}\, a_{q^{-1} t}(p_1,s)\,
a_{q^{-1} p}(p_2,r) \\
&- \sgn(pt)\,q\, \sqrt{(1+ \kappa(t))(1+ \kappa(p))}\,\, a_{q t}(p_1,s)\, a_{q p}(p_2,r) \ \bigr].
\end{split}
\]
Comparing this with \eqref{eq:lefthandside} we see that $S_L=S_R$, hence \eqref{eq:crucialpartincommutationlemma} holds.
\end{proof}

\subsection{Proof of Lemma \ref{lem:fundlemasymptoticsQ}} \label{app:fundlemasymptoticsQ}
We prove the following result:
Let $f\colon J(p,m,\ep,\et)\to \C$ be bounded, and consider the function
\begin{equation*}
\begin{split}
g(w) = &(-1)^{m'} (\et')^{\chi(p_1p_2)+m} \,  q^{n+m}p_1^2\,
\frac{(\ep'\et')^{\chi(w)}}{|w|} \\& \times
 \sum_{z \in J(p,m,\ep,\et)}\, \frac{f(z)}{|z|}\,\,
a_{p_1}(z,w)\,a_{p_2}(\ep\,\et\,q^m p\, z ,
\ep' \et' q^{m'} p\, w),
\end{split}
\end{equation*}
for $w\in J(p,m',\ep',\et')$.
\begin{enumerate}
\item If $f(z) \sim A  \, t^{-\chi(z)}$ as $z\to 0$
for some $A \in \C$ and $t\in \C$, $|t|>1$, then
\[
g(w)  \sim  A\, t^{-\chi(w)}\, \et^n \, s(\ep,\ep')\,
s(\et,\et')\ S(\ep\et/ t;p_1,p_2,n), \qquad \text{as}\ w\to 0.
\]
\item If $f(z) \sim  \Re ( A e^{-i\psi\chi(z)})$ as $z\to 0$ for some $A \in \C$ and $\psi \in \mathbb R$, then
\[
g(w) \sim \, \et^n \, s(\ep,\ep')\,
s(\et,\et')\ \Re\bigl( A e^{-i\psi\chi(w)} S(\ep\et e^{-i\psi};p_1,p_2,n)\bigr),
\qquad \text{as $w\to 0$.}
\]
\end{enumerate}
Here we use the notation $f(z)\sim g(z)$ as $z\to 0$, for $\lim_{z\to 0} \bigl( f(z)-g(z)\bigr) = 0$. The function $S(\cdot;p_1,p_2,n)$ is defined by \eqref{eq:S=sum1phi1 1phi1}.
\begin{proof}
The proof is based on splitting the sum in $g(w)$, and taking limits in both parts of the sum using Tannery's theorem, i.e., the dominated convergence theorem for infinite sums.

First of all, the boundedness of $f$ together with Lemma \ref{lem:estimate2} implies that the sum by which $g(w)$ is defined is absolutely convergent. Let us denote $\theta = \ep\et\,q^m p$, $\theta' = \ep'\et'\,q^{m'} p$ and $r = \min\{q,q/|\theta|\}$. Now we split the sum for $g$ into a part with $|z| >r$ and a part with $|z|\leq r$. First we consider the part with $|z|>r$. We define, for $y \in J(p,m',\ep',\et')$, \[
B(y) = \frac{(\ep'\et')^{-\chi(y)}}{|y|}\,\,
\sum_{\substack{z \in J(p,m,\ep,\et)\\|z| > r  }}
\frac{1}{|z|}\,a_{p_1}(z,y)\,
a_{p_2}(\theta z, \theta' y)\,f(z) \ .
\]
By Lemma \ref{lem:estimate1} there exists a constant
$D > 0$ so that
\begin{equation} \label{eq:inequality1}
|a_{p_1}(z,y)\,a_{p_2}(\theta z, \theta' y)|
\leq D\,\nu(p_1/y)\,\nu(p_2/\theta' y)\, |z|^{\chi(p_1/y)}\,|\theta
z|^{\chi(p_2/\theta' y)}
\end{equation}
for all $z \in J(p,m,\ep,\et)$ and $y \in J(p,m',\ep',\et')$ satisfying $|z| > r$ and $|y| < r$.
Since, by assumption, $f$ is bounded, inequality \eqref{eq:inequality1} and Tannery's theorem imply that $B(y)\to 0$
as $y \to 0$.

Next consider the remaining sum over $z\in J(p,m,\ep,\et)$, $|z|\leq r$, for all $y \in J(p,m',\ep',\et')$. We go over to a new summation parameter $x=z/y$, so that it follows from $\sgn(z)=\ep$ that $\sgn(x)=\sgn(p_1)$. This gives
\begin{equation}\label{eq:newsummationparameter}
\begin{split}
&\frac{(\ep' \et')^{\chi(y)}}{|y|}\,\,
\sum_{\substack{x \in J(p,m,\ep,\et)\\ |z|\leq r }} \
\frac{1}{|z|}\,a_{p_1}(z,y)\,a_{p_2}
(\theta z, \theta' y)\,f(z) \\
&   = \frac{(\ep'\et')^{\chi(y)}}{|y|}\,\,
\sum_{\substack{x \in \sgn(p_1) q^\Z \\ |x| \leq r/|y| }}
\  \frac{1}{|y x|}\,a_{p_1}(y x,y)\,
a_{p_2}(\theta y x, \theta' y)\,f(y x).
\end{split}
\end{equation}
Let $F:J(p,m,\ep,\et)\to \C$ be a bounded function such that
\[
f(y) =
\begin{cases}
t^{-\chi(y)}F(y),& \text{if}\ f(w) \sim  A t^{-\chi(w)},\ \text{as } w \to 0,\\
\Re\big(e^{-i\psi \chi(y)} F(y)\big), & \text{if}\ f(w) \sim \Re(A e^{-i\psi \chi(w)}),\ \text{as } w \to 0.
\end{cases}
\]
Observe that this implies $\lim_{y \to 0}F(y)=A$. Now for $y \in J(p,m',\ep',\et')$ and $|t|\geq 1$, we define
\begin{equation}\label{eq:explexprsCy}
C(y;t)  = \frac{(\ep'\et')^{\chi(y)}}{|y|}\,\,
\sum_{\substack{x \in \sgn(p_1)q^\Z \\ |x| \leq \frac{r}{|y|} }}  \
\frac{1}{|y x|}\,a_{p_1}(y x,y)\, a_{p_2}(\theta y x, \theta' y)
 \, t^{-\chi(x)} F(yx)\, .
\end{equation}
We now consider the asymptotic behaviour of $C(y;t)$ as $y \to 0$.

Let us first see that we can take termwise limits in \eqref{eq:explexprsCy}. For $x \in \sgn(p_1) q^\Z$ satisfying $|x| \leq \frac{r}{|y|}$, we have by Definition \ref{def:functionap},
\begin{equation} \label{eq:explexprsCy2}
\begin{split}
&  \frac{(\ep'\et')^{\chi(y)}}{|y^2x|}\, a_{p_1}(y x,y)\, a_{p_2}(\theta y x, \theta' y) \,
t^{-\chi(x)}\,F(yx)\\
&   \ =  \frac{(\ep'\et')^{\chi(y)}}{|y^2x|} \,c_q^2\, s(\ep,\ep')\, s(\et,\et') \,
(-1)^{\chi(p_1 p_2)} \, (-\ep')^{\chi(yx)}\,(-\et')^{\chi(\theta y
x)}\, |y|^2\,|\theta'|\,\, t^{-\chi(x)}\,F(yx) \\
&  \qquad \times\ \nu(p_1/x) \, \nu(p_2 q^{n}/x)
\sqrt{(-\kappa(p_1),-\kappa(p_2);q^2)_\infty} \,\,
\sqrt{\frac{(-\kappa(y),-\kappa(\theta' y);q^2)_\infty}
{(-\kappa(y x),-\kappa(\theta\, y x);q^2)_\infty}} \\
&  \qquad \times \ \Psis{-q^2/\kappa(p_1)}{q^2 \kappa(yx/p_1)}{q^2,\,q^2
\kappa(x)}\ \Psis{-q^2/\kappa(p_2)}{q^2 \kappa(\theta yx/p_2)}
{q^2,\,q^2 \kappa\big(\sgn(p_1 p_2) q^{-n}x\big)} \\ \\
&   \ = (-1)^{\chi(p_1 p_2 p)+m}\,\,s(\ep,\ep')\, s(\et,\et')\, (\et')^{\chi(p)+m}\,\,q^m\,p \,\,c_q^2\,
q^{n}\,\sqrt{(-\kappa(p_1),-\kappa(p_2);q^2)_\infty} \, \\
& \qquad \times \  (\ep'\et'/t)^{\chi(x)}\, F(yx)
\,   \nu(p_1/x)\,\nu(p_2 q^{n}/x)\,\, \sqrt{\frac{(-\kappa(y),-\kappa(\theta' y);q^2)_\infty}
{(-\kappa(y x),-\kappa(\theta\, y x);q^2)_\infty}} \\
& \qquad \times \ |x|^{-1}\,(q^2\kappa(yx/p_1),q^2 \kappa(\theta yx/p_2) ;q^2)_\infty\\
&  \qquad \times \  \rphis{1}{1}{-q^2/\kappa(p_1)} {q^2 \kappa(yx/p_1)}{q^2,\,q^2 \kappa(x)}\
\rphis{1}{1}{-q^2/\kappa(p_2)}{q^2 \kappa(\theta yx/p_2)} {q^2,\,q^2 \kappa\big(\sgn(p_1 p_2) q^{-n}x\big)}\,.
\end{split}
\end{equation}
Assuming for the moment that we can apply Tannery's Theorem, we see from the last expression that
$C(y;t)$ converges to
\[
A\,\,(-1)^{\chi(p_1 p_2 p)+m} \,\frac{q^m\,p}{|p_1 p_2|} \,\,(\et')^{\chi(p)+m}\,\sgn(p_2)^n \, s(\ep',\ep)\,
s(\et', \et)\,   S(\ep\et/t;p_1,p_2,n)
\]
as $y\to 0$, using \eqref{eq:S=sum1phi1 1phi1}. This proves the lemma.

In order to be able to apply Tannery's Theorem, we need to estimate the summand by a term independent
of $y$. For small $x$ such an estimate follows from \eqref{eq:explexprsCy2}, since $F(y)\to A$ as $y\to 0$
and the functions $\nu$ are small. It remains to give an estimate for large $x$ uniformly for $|y|\leq q^l$ for some $l\in\Z$. By \eqref{eq:symmetryforapxy}
we have
\[
\begin{split}
 \frac{1}{|y^2x|}&\,|a_{p_1}(y x,y)|\,
|a_{p_2}( \theta y x, \theta' y)| \, |t^{-\chi(x)}\,F(yx)| \\
=&\, \frac{1}{|y^2x|}\,|a_{p_1}(y,y x)|\, |a_{p_2}( \theta' y, \theta yx)| \,
|t^{-\chi(x)}\,F(yx)| \\
=&\,  c_q^2\,|\theta x t^{-\chi(x)}F(yx)|\, \sqrt{(-\kappa(p_1),-\kappa(p_2);q^2)_\infty}\,
 \nu(p_1x)\,\nu(p_2\,q^{-n}\,x)\,  \\
 & \quad \times \sqrt{\frac{(-\kappa(y x),-\kappa(\theta\,y x);q^2)_\infty}
{(-\kappa(y),-\kappa(\theta' y);q^2)_\infty}}\, \left| \Psis{-q^2/\kappa(p_1)}{q^2 \kappa(y/p_1)}{q^2,\,q^2 /\kappa(x)}\right|\ \\
&  \quad \times \left|
\Psis{-q^2/\kappa(p_2)}{q^2 \kappa(\theta' y/p_2)} {q^2,\,q^2 \kappa(\sgn(p_1 p_2)\,q^{n }/x)}\right| \, .
\end{split}
\]
The $\Psi$-functions are bounded for $|x|$ large and
$|y|\leq q^l$. Put $|x|=q^{-k}$, then using the boundedness of $F$ and the $\theta$-product identity \eqref{eq:thetaprodid}, we find
\[
\begin{split}
|xt^{-\chi(x)}\,F(yx)&|\,
\nu(p_1x)\,\nu(p_2\,q^{-n}\,x)\,
\sqrt{(-\kappa(y x),-\kappa(\theta\,y x);q^2)_\infty} \\
&\leq D_1 |xt^{-\chi(x)}|\,
\nu(p_1x)\,\nu(p_2\,q^{-n}\,x)\,
\sqrt{(-q^{2l-2k},-|\theta|\,q^{2l-2k});q^2)_\infty}  \\
&= D_2 |t q^{n+1+l}/p_1p_2|^k |\theta|^{\hf k} \sqrt{ (-q^{2-l}, -q^{2-l}/|\theta|;q^2)_k }\\
& \leq D_3 |t q^{n+1+l}/p_1p_2|^k |\theta|^{\hf k},
\end{split}
\]
where the constants $D_i$ are independent of $x$. We see that for $l$ large enough this gives us the desired estimate.
\end{proof}


\printindex


\end{document}